\providecommand{\tabularnewline}{\\}
\theoremstyle{plain}
\newtheorem{thm}{\protect\theoremname}
\theoremstyle{remark}
\newtheorem{rem}{\protect\remarkname}
\theoremstyle{plain}
\newtheorem{prop}{\protect\propositionname}
\theoremstyle{plain}
\newtheorem{choice}{\protect\choicename}
\theoremstyle{plain}
\newtheorem{lem}{\protect\lemmaname}
\theoremstyle{remark}
\newtheorem{summary}{\protect\summaryname}
\theoremstyle{plain}
\newtheorem{cor}{\protect\corollaryname}
\providecommand{\choicename}{Choice}
\providecommand{\corollaryname}{Corollary}
\providecommand{\lemmaname}{Lemma}
\providecommand{\propositionname}{Proposition}
\providecommand{\remarkname}{Remark}
\providecommand{\summaryname}{Summary}
\providecommand{\theoremname}{Theorem}
\begin{document}
\title{Finite-time singularity via multi-layer degenerate pendula for the
2D Boussinesq equation with uniform $C^{1,\sqrt{\frac{4}{3}}-1-\varepsilon}\cap L^{2}$
force}
\author{Diego Córdoba\thanks{Instituto de Ciencias Matemáticas CSIC-UAM-UCM-UC3M, Spain. E-mail:
dcg@icmat.es}, Andrés Laín-Sanclemente\thanks{Instituto de Ciencias Matemáticas CSIC-UAM-UCM-UC3M, Spain. E-mail:
andres.lain@icmat.es} and Luis Martínez-Zoroa\thanks{University of Basel, Switzerland. E-mail: luis.martinezzoroa@unibas.ch}}
\maketitle
\begin{abstract}
We establish the existence of compactly supported solutions of the
inviscid incompressible 2D Boussinesq equation with $C^{1,\sqrt{\frac{4}{3}}-1-\varepsilon}\cap L^{2}$
force that develop a singularity in finite time. Importantly, the
force preserves this regularity at the blow-up time. Moreover, the
forces in the vorticity and density equations have compact support.
The mechanism behind the blow-up is an accumulated hysteresis effect
on the vorticity caused by an infinite chain of ``degenerate'' pendula
and flickering density.
\end{abstract}
\tableofcontents{}

\section{Introduction}

\subsection{Physical model}

The main object of study in the present paper will be the forced inviscid
incompressible 2D Boussinesq system, which is given by
\begin{equation}
\begin{aligned}\frac{\partial u}{\partial t}+u\cdot\nabla u & =-\nabla P-\rho\hat{e}_{1}+f_{u} & \left(\text{momentum equation}\right),\\
\frac{\partial\rho}{\partial t}+u\cdot\nabla\rho & =f_{\rho} & \left(\text{energy equation}\right),\\
\nabla\cdot u & =0 & \left(\text{incompressibility constraint}\right).
\end{aligned}
\label{eq:Boussinesq system velocity formulation}
\end{equation}
These equations describe the evolution of the velocity field $u:\left[0,T\right]\times\mathbb{R}^{2}\to\mathbb{R}$,
the hydrodynamic pressure $P:\left[0,T\right]\times\mathbb{R}^{2}\to\mathbb{R}$
and the density fluctuations $\rho:\left[0,T\right]\times\mathbb{R}^{2}\to\mathbb{R}$
of a compressible fluid subject to gravity, weak external heating
$f_{\rho}:\left[0,T\right]\times\mathbb{R}^{2}\to\mathbb{R}$ and
external inertial forcing $f_{u}:\left[0,T\right]\times\mathbb{R}^{2}\to\mathbb{R}^{2}$
in the limit of small Mach number and ``powerful'' gravity. In other
words, these equations can be deduced from the compressible Euler
equations through appropriate approximations (see \cite{Boussinesq}
and \cite{Oberbeck} for more details). Physically speaking, $\hat{e}_{1}$
is vertical and points upwards, as it represents the opposite direction
to that of gravity, while $\hat{e}_{2}$ constitutes some other direction
perpendicular to $\hat{e}_{1}$ given by the right-hand-rule. On the
one hand, the Boussinesq system has found great practical applicability
in the study of stratified flows in geophysical fluid dynamics (see,
for instance, \cite{Majda} and \cite{Rieutord}). On the other hand,
from a purely mathematical perspective, the study of \eqref{eq:Boussinesq system velocity formulation}
is of great interest, because of the deep correspondence that exists
between the 2D Boussinesq system and the incompressible axisymmetric
3D Euler equations (see \cite{Constantin} and \cite{Majda Bertozzi}).

The incompressibility constraint $\nabla\cdot u=0$ is a consequence
of the vanishing Mach number limit, in the same way that it appears
in the compressible-to-incompressible limit of the Euler equations.
It is important to emphasize that $\rho$ only denotes a fluctuation
with respect to a fixed mean fluid density and, consequently, can
have both positive and negative values. This means that the term $\rho\hat{e}_{1}$
in the momentum equation represents the buoyancy force due to these
density variations and is, consequently, a first order term. Actually,
the main effect of gravity, the zeroth order term, is ``included''
in the pressure term. The force $f_{u}$ plays the same role as an
external force in the momentum equation of the compressible Euler
system. Moreover, another important feature of the Boussinesq system
is that the density is closely related to the temperature, so much
so that one can think of $-\rho$ as the temperature fluctuations
with respect to a fixed background temperature. It is this intricate
connection that allows us to think of the source term $f_{\rho}$
as an external energy contribution achieved through heating (for example).
Nonetheless, it is important to point out that this $f_{\rho}$ must
also be a first order term, in the sense that it must be a correction
to a situation of no external heating, in the same way that $\rho$
is a correction to the background density. This is why we call $f_{\rho}$
a ``weak'' external heating.

If one wishes to conceive a lab model of system \eqref{eq:Boussinesq system velocity formulation},
one could consider a very thin and vertically oriented sheet of gas
contained between two walls. These choices account for the fact that
the flow is two-dimensional and for the effect of gravity. On the
one hand, the source term $f_{\rho}$ could be brought into reality
by installing an array of infinitesimal piezoelectric crystals on
one of the walls. Through these, one could add and subtract heat from
the system. On the other hand, the term $f_{u}$ could be realized
through an array of infinitesimal fans built on the other wall.

Similarly to what happens in the incompressible Euler equations, taking
the curl in the momentum equation, one can eliminate not only the
pressure but also the incompressibility constraint. Indeed, taking
the curl in \eqref{eq:Boussinesq system velocity formulation} provides
\begin{equation}
\begin{aligned}\frac{\partial\omega}{\partial t}+u\cdot\nabla\omega & =\frac{\partial\rho}{\partial x_{2}}+f_{\omega} & \left(\text{vorticity equation}\right),\\
\frac{\partial\rho}{\partial t}+u\cdot\nabla\rho & =f_{\rho} & \left(\text{energy equation}\right),
\end{aligned}
\label{eq:Boussinesq system vorticity formulation}
\end{equation}
where $\omega=\frac{\partial u_{2}}{\partial x_{1}}-\frac{\partial u_{1}}{\partial x_{2}}$
is the fluid vorticity, $f_{\omega}=\frac{\partial f_{u_{2}}}{\partial x_{1}}-\frac{\partial f_{u_{1}}}{\partial x_{2}}$
is a source term and the velocity field $u$ can be recovered from
the vorticity as $u=\nabla^{\perp}\Delta^{-1}\omega$. Throughout
the paper, we will focus on system \eqref{eq:Boussinesq system vorticity formulation},
which, although equivalent to \eqref{eq:Boussinesq system velocity formulation},
only contains two scalar unknowns and is, therefore, easier to deal
with. Despite that $f_{\omega}$ and $f_{\rho}$ are not forces in
the light of physics, for the sake of simplicity, from this point
onward, we will refer to them as so.

\subsection{Context}

Local existence of smooth solutions of \eqref{eq:Boussinesq system velocity formulation}
was first established by Chae and Nam in \cite{Chae Nam} (1997).
Afterwards, Chae, Kim and Nam were able to extend this existence result
to $C^{1,\alpha}$ solutions ($0<\alpha<1$) in \cite{Chae Kim Nam}
(1999). On the one hand, criteria for the formation of singularities
are provided in the works of Chae and Nam \cite{Chae Nam} (1997);
Chae, Kim and Nam \cite{Chae Kim Nam} (1999); Danchin \cite{Danchin}
(2013) and Chae and Constantin \cite{chae constantin} (2021). On
the other hand, sufficient conditions for continuation of solutions
are given by Danchin in \cite{Danchin} (2013) and Chae and Wolf in
\cite{Chae Wolf} (2019).

As has been pointed out before, one very important peculiarity of
the 2D incompressible Boussinesq system is that it shares a deep connection
with the incompressible axisymmetric 3D Euler equations \cite{Constantin,Majda Bertozzi}.
Actually, this connection is so profound that, in multiple occasions,
singularities found for the 2D Boussinesq system have been ``translated''
to the 3D Euler equations, as we shall see shortly. As a consequence
of this, we have favored a joint exposition of the singularities of
both incompressible 2D Boussinesq and incompressible 3D Euler instead
of presenting only the ones associated to 2D Boussinesq. One more
comment before we begin: unless otherwise stated, all singularities
mentioned have finite energy.

The search for singularities in Euler and Boussinesq started in the
1990's through numerical simulations accomplished by Pumir and Siggia
\cite{Pumir Siggia} and E and Shu \cite{E Shu}. These groups achieved
results that contradicted one another. The next important numerical
milestone is due to Luo and Hou \cite{Luo Hou} (2014), whose work
was in agreement with the existence of finite-time singularities in
the axisymmetric 3D Euler equations in a cylindrical domain with boundary.
Furthermore, a one-dimensional model constructed by Choi, Hou, Kiselev,
Luo, Sverak and Yao in \cite{Choi Hou Kiselev Luo Sverak Yao} (2014)
was shown by the authors to develop a finite-time blow-up from a smooth
initial datum. The ideas behind the simulation \cite{Luo Hou} could
be made rigorous in a work of Chen and Hou \cite{Chen Hou C1alpha}
(2019), where they found finite-time blow-ups of solutions of both
2D Boussinesq and axisymmetric 3D Euler without swirl with $C^{1,\alpha}$
initial velocity (also $C^{1,\alpha}$ density in the case of Boussinesq)
with $0<\alpha\ll1$ in certain bounded domains with $C^{1,\alpha}$
boundary. Actually, by means of a computer-assisted proof, Chen and
Hou were able to extend the aforementioned result to $C^{\infty}$
initial data in some bounded domains with $C^{\infty}$ boundary in
\cite{Chen Hou Cinfity I,Chen Hou Cinfty II} (2022-2023).

Among rigorous results, the first finite-time blow-up was proved by
Sarria and Wu \cite{Sarria Wu} (2015) for solutions of the 2D Boussinesq
equation with infinite energy in the infinite strip $\left[0,1\right]\times\mathbb{R}^{+}$
with smooth initial data. Elgindi and Jeong \cite{Elgindi Jeong Boussinesq}
(2017) were also able to find a finite-time singularity with Lipschitz
initial data (and smooth away from the origin) and finite energy for
2D Boussinesq in another type of unbounded domain with boundary. Inspired
by this result, Elgindi and Jeong \cite{Elgindi Jeong Euler} (2018)
developed the first finite-time singularity for the incompressible
3D Euler equations with $C^{1,\alpha}$ initial data ($0<\alpha<1$)
except at a point in a certain unbounded domain with boundary. The
first blow-up result in the whole space was achieved by Elgindi in
\cite{Elgindi C1alpha I} (2019) using a self-similar $C^{1,\alpha}\left(\mathbb{R}^{3}\right)$
solution of axisymmetric 3D Euler without swirl with $0<\alpha\ll1$,
which contained infinite energy. However, by adding a $C^{1,\alpha}\left(\mathbb{R}^{3}\right)\cap L^{2}\left(\mathbb{R}^{3}\right)$
force, the solution could be made to have finite energy. Moreover,
a stability analysis performed by Elgindi, Ghoul and Masmoudi in \cite{Elgindi C1alpha II}
(2019) proved the existence of asymptotically self-similar solutions
of axisymmetric 3D Euler without swirl with $C^{1,\alpha}\left(\mathbb{R}^{3}\right)$
initial datum ($0<\alpha\ll1$) with finite energy without the need
of an external force. This technique of asymptotically self-similar
solutions was refined by Elgindi and Pasqualotto in \cite{Elgindi Boussinesq I,Elgindi Boussinesq II}
(2023), where, by means of a computer-assisted proof, they constructed
finite-time singularities for both 2D Boussinesq and axisymmetric
3D Euler with swirl for $C^{1,\alpha}$ initial data ($0<\alpha\ll1$)
in the whole space. In the case of Boussinesq, the initial data are
$C^{\infty}$ except at the origin, and, in the case of Euler, the
initial velocity is $C^{\infty}$ except at a circle. Meanwhile, Huang,
Qin, Wang and Wei devised in \cite{Huang-Qin-Wang-Wei} (2023) an
exact self-similar blow-up with smooth profiles for the Hou-Luo model,
which is a one-dimensional model of the 2D Boussinesq equation. Lastly,
Wang, Lai, Gómez-Serrano and Buckmaster \cite{Wang Lai G=0000F3mez-Serrano Buckmaster}
(2022) have found a numerical candidate for an asymptotically self-similar
blow-up profile both for the 2D Boussinesq and axisymmetric 3D Euler
equations in the whole space using physics-informed neural networks,
which could be the basis of a future computer-assisted proof of singularities
with smooth initial data.

A completely different blow-up mechanism for the axisymmetric 3D Euler
equations without swirl with $C^{1,\alpha}$ initial datum ($0<\alpha\ll1$)
was presented in the work of Córdoba, Martínez-Zoroa and Zheng \cite{Euler no autosimilar}
(2023). Instead of making use of self-similar coordinates, the singularity
follows from an infinite chain of ODEs that governs the evolution
of a family of vorticity bumps. This was the first blow-up without
boundary where the initial datum was $C^{\infty}$ except at a single
point, where it was $C^{1,\alpha}$. Inspired by this result, Chen
showed in \cite{Chen} (2023) that the initial conditions of \cite{Chen Hou C1alpha}
and \cite{Elgindi C1alpha I,Elgindi C1alpha II} can be modified so
as to be smooth except at one point.

Yet another technique that has provided finite-time singularities
is the one uncovered by Córdoba and Martínez-Zoroa in \cite{Fuerza Euler}
(2023), where the authors accomplish a finite-time blow-up with smooth
initial datum for the 3D Euler equations with uniform $C^{1,\frac{1}{2}-\varepsilon}\cap L^{2}$
force. Using this method, Córdoba, Martínez-Zoroa and Zheng \cite{Fuerza Navier Stokes}
(2024) were able to prove a finite-time blow-up with smooth initial
datum for the hypodissipative Navier-Stokes equations with a force
in $L_{t}^{1}C_{x}^{1,\varepsilon}\cap L_{t}^{\infty}L_{x}^{2}$.
More recently, refining this procedure, Córdoba and Martínez-Zoroa
\cite{Fuerza IPM} (2024) have managed to construct finite-time singularities
for the 2D incompressible porous media equation (IPM) with a smooth
source. The current paper also employs an external force to carry
out the blow-up and, in this sense, it should be understood as a ``sibling''
of \cite{Fuerza Euler}, \cite{Fuerza Navier Stokes} and \cite{Fuerza IPM}.
Nevertheless, as we shall comment in greater detail in subsection
\ref{subsec:differences and similarities}, our construction displays
some features that make it clearly distinct from its ``siblings''.

Apart from singularities, another attribute of the incompressible
2D Boussinesq equations that has been extensively studied is the question
of stability of stratified solutions. Elgindi and Widmayer \cite{Elgindi Widmayer}
(2014) proved the stability in Sobolev spaces of the stationary stratified
solution $\left(\rho,u\right)=\left(-y,\left(0,0\right)\right)$ for
small times in the whole plane. Later, Tao, Wu, Zhao and Zheng \cite{Tao Zhao Zheng}
(2020) were able to establish the long-time stability of $\left(\rho,u\right)=\left(\beta y,\left(0,0\right)\right)$
in Sobolev spaces in the viscous case in the periodic box. Moreover,
Masmoudi, Said-Houari and Zhao \cite{Masmoudi} (2020) showed the
stability of the Couette flow in Gevrey-$\frac{1}{s}$ spaces with
$\frac{1}{3}<s\le1$ in the viscous case in $\mathbb{T}\times\mathbb{R}$.
However, in the absence of viscosity, as presented by Bedrossian,
Bianchini, Zelati and Dolce in \cite{bedrossian bianchini zelati}
(2021), small perturbations in Gevrey-$\frac{1}{s}$ spaces ($\frac{1}{2}<s\le1$)
of the stratified Couette flow in $\mathbb{T}\times\mathbb{R}$ develop
a shear-buoyancy instability. Furthermore, recently, Jurja and Widmayer
\cite{Jurja Widmayer} (2024) have obtained the longest known timescale
of existence of perturbations of the stratified solution $\left(\rho,u\right)=\left(-y,\left(0,0\right)\right)$
in Sobolev spaces in the whole plane, improving on \cite{Elgindi Widmayer}.

Lastly, the strong ill-posedness in $L^{\infty}$ of the 2D Boussinesq
equations was established by Bianchini, Hientzsch and Iandoli in \cite{Bianchini Hientzch Iandoli}
(2023) and the growth of solutions was studied by Kiselev, Park and
Yao in \cite{Kiselev Park Yao} (2022), where they present a broad
class of initial data whose associated solutions display infinite
growth in infinite time, provided they exist globally in time.

\subsection{Main result}

Without further ado, we present the main result of the paper.
\begin{thm}
\label{thm:MAIN THM}Let $\alpha\in\left(0,\alpha_{*}\right)$, where
$\alpha_{*}=\sqrt{\frac{4}{3}}-1$. There are classical solutions
$\left(u,\rho\right)$ in $\left[0,1\right)\times\mathbb{R}^{2}$
of the forced Boussinesq system \eqref{eq:Boussinesq system velocity formulation}
that satisfy:
\begin{enumerate}
\item $\forall\varepsilon>0$
\[
u\in C_{t}^{\infty}C_{x,c}^{\infty}\left(\left[0,1-\varepsilon\right]\times\mathbb{R}^{2}\right),\quad\rho\in C_{t}^{\infty}C_{x,c}^{\infty}\left(\left[0,1-\varepsilon\right]\times\mathbb{R}^{2}\right).
\]
\item $\forall\varepsilon>0$
\[
f_{\omega}\in C_{t}^{\infty}C_{x,c}^{\infty}\left(\left[0,1-\varepsilon\right]\times\mathbb{R}^{2}\right),\quad f_{\rho}\in C_{t}^{\infty}C_{x,c}^{\infty}\left(\left[0,1-\varepsilon\right]\times\mathbb{R}^{2}\right).
\]
\item ~
\[
f_{\omega}\in C_{t}^{0}C_{x,c}^{\alpha}\left(\left[0,1\right]\times\mathbb{R}^{2}\right),\quad f_{\rho}\in C_{t}^{0}C_{x,c}^{1,\alpha}\left(\left[0,1\right]\times\mathbb{R}^{2}\right).
\]
In particular, $\left|\left|f_{\omega}\left(t,\cdot\right)\right|\right|_{C^{\alpha}\left(\mathbb{R}^{2}\right)}$
and $\left|\left|f_{\rho}\left(t,\cdot\right)\right|\right|_{C^{1,\alpha}\left(\mathbb{R}^{2}\right)}$
are uniformly bounded in $t\in\left[0,1\right]$.
\item $f_{\omega}\left(t,x\right)$ is odd in $x_{2}$ $\forall t\in\left[0,1\right]$.
\item There is a finite-time singularity at $t=1$, i.e.,
\[
\lim_{T\to1^{-}}\int_{0}^{T}\left|\left|\nabla\rho\left(t,\cdot\right)\right|\right|_{L^{\infty}\left(\mathbb{R}^{2};\mathbb{R}^{2}\right)}\mathrm{d}t=\infty.
\]
(See \cite{Chae Kim Nam} for the blow-up criterion).
\end{enumerate}
\end{thm}
\begin{rem}
\label{rem:local existence}The finite-time singularity presented
in Theorem \ref{thm:MAIN THM} takes place in the well-posedness regime
of the Boussinesq system \eqref{eq:Boussinesq system velocity formulation}.
Indeed, the argument used in \cite{Chae Kim Nam} to prove local existence
for initial data satisfying $\rho_{0}\in C^{1,\alpha}\left(\mathbb{R}^{2}\right)$,
$u_{0}\in C^{1,\alpha}\left(\mathbb{R}^{2}\right)$, $\omega_{0}\in L^{q}\left(\mathbb{R}^{2}\right)$
and $\nabla\rho_{0}\in L^{q}\left(\mathbb{R}^{2}\right)$ (for some
$1<q<2$) can be extended to the case $f_{\rho}\in C_{t}^{0}C_{x}^{1,\alpha}\left(\left[0,\mathcal{T}\right]\times\mathbb{R}^{2}\right)$,
$\nabla f_{\rho}\in C_{t}^{0}L_{x}^{q}\left(\left[0,\mathcal{T}\right]\times\mathbb{R}^{2}\right)$,
$f_{u}\in C_{t}^{0}C_{x}^{1,\alpha}\left(\left[0,\mathcal{T}\right]\times\mathbb{R}^{2}\right)$
and $f_{\omega}\in C_{t}^{0}L_{x}^{q}\left(\left[0,\mathcal{T}\right]\times\mathbb{R}^{2}\right)$
without changing its structure, obtaining that $\rho\in C_{t}^{0}C_{x}^{1,\alpha}\left(\left[0,T\right]\times\mathbb{R}^{2}\right)\cap C_{t}^{0}L_{x}^{q}\left(\left[0,T\right]\times\mathbb{R}^{2}\right)$
and $u\in C_{t}^{0}C_{x}^{1,\alpha}\left(\left[0,T\right]\times\mathbb{R}^{2}\right)\cap C_{t}^{0}L_{x}^{2}\left(\left[0,T\right]\times\mathbb{R}^{2}\right)$
for some $T>0$ sufficiently small, depending on $\left|\left|\rho_{0}\right|\right|_{C^{1,\alpha}}$,
$\left|\left|\nabla\rho_{0}\right|\right|_{L^{q}}$, $\left|\left|u_{0}\right|\right|_{C^{1,\alpha}}$,
$\left|\left|\omega_{0}\right|\right|_{L^{q}}$, $\left|\left|f_{\rho}\right|\right|_{C_{t}^{0}C_{x}^{1,\alpha}}$,
$\left|\left|\nabla f_{\rho}\right|\right|_{C_{t}^{0}L_{x}^{q}}$,
$\left|\left|f_{u}\right|\right|_{C_{t}^{0}C_{x}^{1,\alpha}}$, and
$\left|\left|f_{\omega}\right|\right|_{C_{t}^{0}L_{x}^{q}}$.
\end{rem}
\begin{rem}
\label{rem:force velocity equation}It is important to point out that,
if we consider the Boussinesq system in the velocity formulation (see
equation \eqref{eq:Boussinesq system velocity formulation}), the
force $f_{u}$ associated to our solution does not have compact support.
Nevertheless, $f_{u}\in C_{t}^{0}C_{x}^{1,\alpha}\left(\left[0,1\right]\times\mathbb{R}^{2}\right)\cap C_{t}^{0}L_{x}^{2}\left(\left[0,1\right]\times\mathbb{R}^{2}\right)$
because $f_{\omega}$ is compactly supported and has zero mean. Indeed,
since $f_{\omega}\left(t,\cdot\right)$ is odd in $x_{2}$, for $W>0$
large enough, we have
\[
\int_{\mathbb{R}^{2}}f_{\omega}\left(t,x\right)\mathrm{d}x=\int_{\left[-W,W\right]^{2}}f_{\omega}\left(t,x\right)\mathrm{d}x=\int_{-W}^{W}\underbrace{\int_{-W}^{W}\underbrace{f_{\omega}\left(t,\left(x_{1},x_{2}\right)\right)}_{\text{odd in }x_{2}}\mathrm{d}x_{2}}_{=0}\mathrm{d}x_{1}=0.
\]
\end{rem}
\begin{rem}[Regularity of the solution at the blow-up time]
\label{rem:regularity at blow-up}At the instant of the blow-up,
there is a regularity loss $r_{\text{loss}}\in\left(0,1\right)$ such
that
\[
u\left(1,\cdot\right)\notin C^{\beta}\left(\mathbb{R}^{2}\right),\quad\rho\left(1,\cdot\right)\notin C^{\beta}\left(\mathbb{R}^{2}\right)\quad\forall\beta\in\left(1-r_{\text{loss}},1\right).
\]
Choosing the parameters $\delta$ and $\mu$ of the solution small
enough, this regularity loss $r_{\text{loss}}$ can be made as small
as we wish, independently of $\alpha$. Nevertheless, it has an upper
limit $r_{\text{loss},\max}\left(\alpha\right)$, which does depend
on $\alpha$. We have
\[
\lim_{\alpha\to0}r_{\text{loss},\max}\left(\alpha\right)\geq\frac{\alpha_{*}}{2}\approx0.0774,\quad\lim_{\alpha\to\alpha_{*}}r_{\text{loss,\ensuremath{\max}}}\left(\alpha\right)=0.
\]
\end{rem}
\begin{rem}[Blow-up rate]
\label{rem:blow-up rate}Neither the vorticity $\omega$ nor the
gradient of the density $\nabla\rho$ have a well defined blow-up
rate. Nonetheless, one can find increasing time sequences $\left(t_{1,n}\right)_{n\in\mathbb{N}},\left(t_{2,n}\right)_{n\in\mathbb{N}}\subseteq\left[0,1\right]$
such that $\lim_{n\to\infty}t_{1,n}=\lim_{n\to\infty}t_{2,n}=1$ and
so that, $\forall\varepsilon>0$,
\[
\begin{aligned}\frac{1}{\left(1-t_{1,n}\right)^{\frac{1}{1-\gamma}}}\lesssim_{Y,\varepsilon,\delta,\gamma,\varphi} & \left|\left|\omega\left(t_{1,n},\cdot\right)\right|\right|_{L^{\infty}\left(\mathbb{R}^{2}\right)}\lesssim_{Y,\varepsilon,\delta,\gamma,\varphi}\frac{1}{\left(1-t_{1,n}\right)^{\frac{1+\varepsilon}{1-\gamma}}},\\
\frac{1}{\left(1-t_{1,n}\right)^{\frac{1}{1-\gamma}}}\lesssim_{Y,\varepsilon,\delta,\gamma,\varphi} & \left|\left|\frac{\partial\rho}{\partial x_{2}}\left(t_{1,n},\cdot\right)\right|\right|_{L^{\infty}\left(\mathbb{R}^{2}\right)}\lesssim_{Y,\varepsilon,\delta,\gamma,\varphi}\frac{1}{\left(1-t_{1,n}\right)^{\frac{1+\varepsilon}{1-\gamma}}},\\
\frac{1}{1-t_{2,n}}\lesssim_{Y,\varepsilon,\delta,\gamma,\varphi} & \left|\left|\omega\left(t_{2,n},\cdot\right)\right|\right|_{L^{\infty}\left(\mathbb{R}^{2}\right)}\lesssim_{Y,\varepsilon,\delta,\gamma,\varphi}\frac{1}{\left(1-t_{2,n}\right)^{1+\varepsilon}},\\
 & \left|\left|\frac{\partial\rho}{\partial x_{2}}\left(t_{2,n},\cdot\right)\right|\right|_{L^{\infty}\left(\mathbb{R}^{2}\right)}=0.
\end{aligned}
\]
$Y,\delta,\gamma,\varphi$ are different parameters of the construction
which will be introduced throughout the paper. For the moment, it
is enough to bear in mind that the value of $\gamma$ will lie very
close to one and that $\gamma\to1$ as $\alpha\to\alpha_{*}$. These
huge differences in the blow-up rate of different sequences are due
to the highly oscillatory behavior in time of both $\omega$ and $\frac{\partial\rho}{\partial x_{2}}$.
\end{rem}
\begin{rem}[Regularity of the time derivatives of $f_{\rho}$ and $f_{\omega}$]
\label{rem:regularity of time derivatives}Let $k_{0}\in\mathbb{N}$
be any whole number. Choosing the parameter $\delta$ of our construction
small enough (the smallness is determined by the value of $k_{0}$),
we can actually guarantee that
\[
f_{\omega}\in C_{t}^{k}C_{x,c}^{\alpha}\left(\left[0,1\right]\times\mathbb{R}^{2}\right),\quad f_{\rho}\in C_{t}^{k}C_{x,c}^{1,\alpha}\left(\left[0,1\right]\times\mathbb{R}^{2}\right),\quad\forall k\in\mathbb{N}\text{ s.t. }k\le k_{0}.
\]
Importantly, bear in mind that we can make $\delta$ as small as we
wish independently of the value of the regularity parameter $\alpha$.
In a nutshell, the above is true because differentiating once in time
roughly corresponds to dividing by the time scale $1-t_{n+1}\sim C^{-\delta\left(\frac{1}{1-\gamma}\right)^{n}}$.
Thus, differentiating $k$ times with respect to $t$ amounts roughly
to multiplying by the time scale $k$ times and we can compensate
this by taking $\delta$ small enough.
\end{rem}
\begin{rem}[Extension to axisymmetric 3D Euler]
\label{rem:extension to Euler}Thanks to the close resemblance that
exists between the axisymmetric incompressible 3D Euler equations
and the 2D Boussinesq system, it is possible to use the same construction
to prove a blow-up for the axisymmetric 3D Euler equations, as will
appear in a forthcoming paper \cite{Boussinesq a Euler}.
\end{rem}

\subsection{\label{subsec:ideas behind blow-up}Ideas behind the blow-up}

We decompose our vorticity as a countable sum of functions, i.e.,
$\omega=\sum_{n=1}^{\infty}\omega^{\left(n\right)}$. Furthermore,
we choose these $\left(\omega^{\left(n\right)}\right)_{n\in\mathbb{N}}$
so that the support of $\omega^{\left(n\right)}$ is much smaller
than the support of $\omega^{\left(n-1\right)}$. Each one of these
$\left(\omega^{\left(n\right)}\right)_{n\in\mathbb{N}}$ receives
the name of ``vorticity layer'' and we shall refer to $\omega^{\left(n\right)}$
as the $n$-th vorticity layer. We apply the same decomposition to
the density, i.e., $\rho=\sum_{n=1}^{\infty}\rho^{\left(n\right)}$,
with $\rho^{\left(n\right)}$ representing the $n$-th density layer.
$\omega^{\left(n\right)}$ and $\rho^{\left(n\right)}$ will both
have the same support. This means that, spatially speaking, the vorticity
and density layers look exactly the same. Consequently, we will just
write ``the $n$-th layer'' to refer to the support of either $\omega^{\left(n\right)}$
or $\rho^{\left(n\right)}$. These layers are introduced sequentially
in time, i.e., at the beginning ($t=0$), we will only have one layer
and, as time advances, more and more layers will appear in the construction.
The sequence of time instants when the layers are introduced, which
we denote by $\left(t_{n}\right)_{n\in\mathbb{N}}$, will have an
accumulation point: the blow-up time, which we will fix at $t=1$.

In a nutshell, and somewhat imprecisely, the mechanism behind the
finite-time blow-up is the following: Suppose we have already introduced
the first $n$ layers. Then, we use the force in the density equation
to create a new density layer of small amplitude in a certain location.
The interaction between the vorticity of the $n$-th layer $\omega^{\left(n\right)}$
and the new density layer $\rho^{\left(n+1\right)}$ leads to displacement
and deformation of layer $n+1$ and to an increase of the amplitude
of $\omega^{\left(n+1\right)}$. Actually, this growth is so significant,
that, by the time we introduce layer $n+2$ (at time $t=t_{n+2}$),
the amplitude of $\omega^{\left(n+1\right)}$ will greatly exceed
that of $\omega^{\left(n\right)}$ (see figure \ref{fig:two layer mechanism}).
In this manner, iterating this procedure an infinite number of times,
$\left|\left|\omega\left(t,\cdot\right)\right|\right|_{L^{\infty}\left(\mathbb{R}^{2}\right)}$
grows so much that it blows up (see figure \ref{fig:vorticity blow-up}).

\begin{figure}
\begin{centering}
\includegraphics[width=0.32\columnwidth]{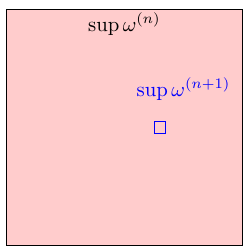}\includegraphics[width=0.32\columnwidth]{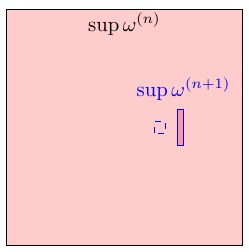}\includegraphics[width=0.32\columnwidth]{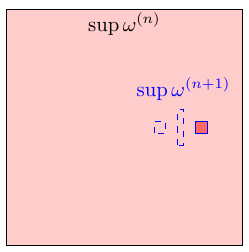}
\par\end{centering}
\caption{\label{fig:two layer mechanism}Three time snapshots of the evolution
of the supports and amplitudes of $\omega^{\left(n\right)}$ and $\omega^{\left(n+1\right)}$.
The support of $\omega^{\left(n\right)}$ is denoted by a square of
black edges, whereas the support of $\omega^{\left(n+1\right)}$ is
represented by quadrilaterals with blue edges. The amplitude of $\omega^{\left(n\right)}$
and $\omega^{\left(n+1\right)}$ is hinted by the intensity of the
red color that is used to fill the supports; the more intense, the
greater the amplitude. The left image corresponds to the instant $t_{n+1}$,
when a new density layer of small amplitude is introduced. After some
time, the $\left(n+1\right)$-th layer translates and deforms, leading
to the central image. During this time, the amplitude of $\omega^{\left(n+1\right)}$
also increases. Lastly, by the time instant $t_{n+2}$, when we will
introduce layer $n+2$, the $\left(n+1\right)$-th layer has evolved
to the situation of the right picture, with an even greater growth
of the amplitude of $\omega^{\left(n+1\right)}$. The fact that the
support of $\omega^{\left(n+1\right)}$ at time $t_{n+2}$ has the
same geometric form as at time $t_{n+1}$ is no arbitrary choice,
it reflects the actual behavior of our construction (see subsection
\ref{subsec:completion of the toy model} for more details).}

\end{figure}

\begin{figure}
\begin{minipage}[t]{0.49\columnwidth}%
\begin{center}
\includegraphics[width=0.65\columnwidth]{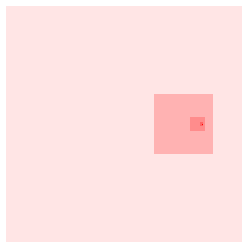}
\par\end{center}
\caption{\label{fig:vorticity blow-up}Schematic representation of the amplitude
of the vorticity. The more intense the red color is, the greater the
amplitude. This image represents a cascade of vorticity layers that
grow in amplitude but decrease in support.}
\end{minipage}\hfill{}%
\begin{minipage}[t]{0.49\columnwidth}%
\begin{flushright}
\includegraphics[width=0.8\columnwidth]{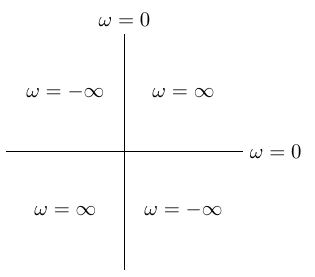}
\par\end{flushright}
\caption{\label{fig:vorticity singularity}Schematic representation of the
singularity developed by the vorticity at the blow-up time.}
\end{minipage}
\end{figure}

One curious quirk of our construction is that the growth of the vorticity
does not occur around a fixed point in space; i.e., if, for each time
instant $t\in\left[0,1\right)$, we consider the smallest possible
ball whose center lies in the $x_{1}$ axis and where $\left|\left|\omega\right|\right|_{L^{\infty}\left(\mathbb{R}^{2}\right)}$
is attained, we will see that this ball does not only get smaller
as time evolves, but also displaces to the right. However, as $t\to1$,
the center of this ball will converge to a specific point in the $x_{1}$
axis, around which an odd-odd discontinuity singularity will develop
for the vorticity (see figure \ref{fig:vorticity singularity} for
a graphical representation).

We have claimed that the interaction between $\omega^{\left(n\right)}$
and $\rho^{\left(n+1\right)}$ leads to a displacement and deformation
of $\omega^{\left(n+1\right)}$ and to an increase of its amplitude,
but we have given no justification for this fact. In subsection \ref{subsec:vorticity growth mechanism}
we will try to provide the main reasons why we should expect such
behavior. Furthermore, we will see that the displacement of a layer
with respect to the one that precedes it is given by an equation closely
related to that of a pendulum (we shall explore this connection in
subsection \ref{subsec:pendula}).

\subsubsection{\label{subsec:vorticity growth mechanism}Mechanism of growth for
the vorticity}

For now, consider only two layers: the background layer and the new
layer. We will denote the physical quantities of the background layer
with the superscript $\left(0\right)$, whereas the physical quantities
of the new layer will carry the superscript $\left(1\right)$. The
physical quantities of the background layer will have a spatial scale
of order $1$, while the physical quantities of the new layer will
have spatial scale $\frac{1}{a^{\left(1\right)}\left(t\right)}$ in
the $x_{1}$ axis and $\frac{1}{b^{\left(1\right)}\left(t\right)}$
in the $x_{2}$ axis. Take the following function as density of the
new layer
\begin{equation}
\rho^{\left(1\right)}\left(t,x\right)=-A^{\left(1\right)}\left(t\right)\sin\left(a^{\left(1\right)}\left(t\right)\left(x_{1}-c^{\left(1\right)}\left(t\right)\right)\right)\cos\left(b^{\left(1\right)}\left(t\right)x_{2}\right),\label{eq:intro form rho}
\end{equation}
where $A^{\left(1\right)}\left(t\right)$ is a function whose purpose
is to ``create'' this new density from nothing and eliminate it
if needed. On the one hand, $a^{\left(1\right)}\left(t\right)$ and
$b^{\left(1\right)}\left(t\right)$ are the frequencies of the sinusoidal
functions of $\rho^{\left(1\right)}$ and, consequently, their multiplicative
inverses are related to the scales of spatial variation of $\rho^{\left(1\right)}$.
Hence, studying the time evolution of $a^{\left(1\right)}\left(t\right)$
and $b^{\left(1\right)}\left(t\right)$ will tell us how the new layer
deforms. On the other hand, the point $\left(c^{\left(1\right)}\left(t\right),0\right)$
corresponds to the position of the center of the new layer. In this
way, the time dynamics of $c^{\left(1\right)}\left(t\right)$ provide
the displacement of the new layer.

Notice that the density given in \eqref{eq:intro form rho} does not
have compact support and, clearly, we would need it for the energy
of our solution to remain finite. Nonetheless, for the purpose of
explaining the mechanism of growth for the vorticity, the compact
support is not important and, actually, working without it makes all
computations much simpler. Hence, we will ignore this requirement
in this subsection. 

The main idea of our construction is that, with this density, the
vorticity displays hysteresis, i.e., it is possible to produce a growth
in vorticity that remains even after the density has been put out.
In simpler terms, when we turn on the density, the vorticity will
grow and, as we turn the density off, the vorticity will \textbf{not}
decrease. Let us informally see that this can be done. 

Let us suppose that our vorticity can be expressed as the sum of two
contributions $\omega=\omega^{\left(0\right)}+\omega^{\left(1\right)}$.
$\omega^{\left(0\right)}$ acts as the background vorticity, while
$\omega^{\left(1\right)}$ represents the vorticity generated by our
density \eqref{eq:intro form rho}. Likewise, the velocity field will
have two contributions, $u=u^{\left(0\right)}+u^{\left(1\right)}$.
On the other hand, we will assume that there is no background density,
i.e., $\rho=\rho^{\left(1\right)}$. Recall that the vorticity equation
is (see equation \eqref{eq:Boussinesq system vorticity formulation})
\[
\frac{\partial\omega}{\partial t}+u\cdot\nabla\omega=\frac{\partial\rho}{\partial x_{2}}+f_{\omega}.
\]
Expanding the quadratic terms, we obtain
\begin{equation}
\frac{\partial\omega^{\left(0\right)}}{\partial t}+\frac{\partial\omega^{\left(1\right)}}{\partial t}+u^{\left(0\right)}\cdot\nabla\omega^{\left(0\right)}+u^{\left(0\right)}\cdot\nabla\omega^{\left(1\right)}+u^{\left(1\right)}\cdot\nabla\omega^{\left(0\right)}+u^{\left(1\right)}\cdot\nabla\omega^{\left(1\right)}=\frac{\partial\rho^{\left(1\right)}}{\partial x_{2}}+f_{\omega}.\label{eq:intro vorticity equation 0}
\end{equation}
Suppose for now that the background $\omega^{\left(0\right)}$ is
a solution of the unforced 2D Euler equation, i.e.,
\begin{equation}
\frac{\partial\omega^{\left(0\right)}}{\partial t}+u^{\left(0\right)}\cdot\nabla\omega^{\left(0\right)}=0.\label{eq:intro assumption omega0}
\end{equation}
Then, equation \eqref{eq:intro vorticity equation 0} simplifies to
\[
\frac{\partial\omega^{\left(1\right)}}{\partial t}+u^{\left(0\right)}\cdot\nabla\omega^{\left(1\right)}+u^{\left(1\right)}\cdot\nabla\omega^{\left(0\right)}+u^{\left(1\right)}\cdot\nabla\omega^{\left(1\right)}=\frac{\partial\rho^{\left(1\right)}}{\partial x_{2}}+f_{\omega}.
\]
Moreover, assume, for now, that the newly generated velocity is small,
so small that the term $u^{\left(1\right)}\cdot\nabla\omega^{\left(0\right)}$
can be easily ``neglected''. ``Neglected'' means that is not a
leading order term and, consequently, we can use the force to absorb
it\footnote{Using the force to compensate non leading-order terms, the hope is
that the force will remain more regular than the solution at the blow-up
time.}. Furthermore, assume that this new velocity $u^{\left(1\right)}$
will be such that $u^{\left(1\right)}\cdot\nabla\omega^{\left(1\right)}=0$
(we will check this later). If all these things are true, we obtain
the simple equation
\begin{equation}
\frac{\partial\omega^{\left(1\right)}}{\partial t}+u^{\left(0\right)}\cdot\nabla\omega^{\left(1\right)}=\frac{\partial\rho^{\left(1\right)}}{\partial x_{2}}.\label{eq:intro vorticity equation 1}
\end{equation}
Notice that the force disappears because we have used it to cancel
``negligible'' terms. This equation is telling us that the new vorticity
is transported by the old velocity and grows in $\left|\left|\cdot\right|\right|_{L^{\infty}\left(\mathbb{R}^{2}\right)}$
by the effect of $\frac{\partial\rho^{\left(1\right)}}{\partial x_{2}}$.
To continue studying this problem, we need to know the functional
form of $u^{\left(0\right)}$. Since we expect some sort of functional
similarity between the same physical quantities of different layers,
it makes sense for $\omega^{\left(0\right)}$ to be similar in form
to $\omega^{\left(1\right)}$. Now, because of equation \eqref{eq:intro vorticity equation 1},
we expect $\omega^{\left(1\right)}\sim\frac{\partial\rho^{\left(1\right)}}{\partial x_{2}}$
(here, $\sim$ means ``similar in functional dependence'', we do
not imply a similarity in a certain norm). Differentiating with respect
to $x_{2}$ in \eqref{eq:intro form rho}, we get
\begin{equation}
\frac{\partial\rho^{\left(1\right)}}{\partial x_{2}}\left(t,x\right)=A^{\left(1\right)}\left(t\right)b^{\left(1\right)}\left(t\right)\sin\left(a^{\left(1\right)}\left(t\right)\left(x_{1}-c^{\left(1\right)}\left(t\right)\right)\right)\sin\left(b^{\left(1\right)}\left(t\right)x_{2}\right).\label{eq:intro derivative density}
\end{equation}
As we want $\omega^{\left(0\right)}$ to have a spatial scale of order
$1$, it makes sense to take
\begin{equation}
\omega^{\left(0\right)}\left(t,x\right)=2\sin\left(x_{1}\right)\sin\left(x_{2}\right).\label{eq:intro omega0}
\end{equation}
(The factor $2$ is for future convenience). We are not adding any
temporal dependence to $\omega^{\left(0\right)}$, because, thanks
to the layer splitting, the time scale of the background layer should
be much larger than the time scale of the new layer. This means that,
if we wish to calculate anything related to $\omega^{\left(1\right)}$
or $\rho^{\left(1\right)}$, we can essentially treat all physical
quantities of the background layer as constant in time (not in space,
though). The vorticity given in \eqref{eq:intro omega0} is so simple
that it is easy to check that it comes from the velocity field
\[
u^{\left(0\right)}\left(t,x\right)=\left(\begin{matrix}\sin\left(x_{1}\right)\cos\left(x_{2}\right)\\
-\cos\left(x_{1}\right)\sin\left(x_{2}\right)
\end{matrix}\right).
\]
And, with these expressions for $u^{\left(0\right)}$ and $\omega^{\left(0\right)}$,
it is immediate to check that $\frac{\partial\omega^{\left(0\right)}}{\partial t}=0$
and $u^{\left(0\right)}\cdot\nabla\omega^{\left(0\right)}=0$. Thus,
equation \eqref{eq:intro assumption omega0} is indeed satisfied.
A key property of this velocity field is that it generates a hyperbolic
point for $x_{1}=\frac{\pi}{2}$ and $x_{2}=0$.

Recall that we wanted to solve equation \eqref{eq:intro vorticity equation 1}.
Now that we have an expression for $u^{\left(0\right)}$, we can actually
compute the change of variables that eliminates transport. For this,
we want to solve the ODE
\begin{equation}
\frac{\partial\Phi^{\left(1\right)}}{\partial t}\left(t,x\right)=u^{\left(0\right)}\left(t,\Phi^{\left(1\right)}\left(t,x\right)\right)=\left(\begin{matrix}\sin\left(\Phi_{1}^{\left(1\right)}\left(t,x\right)\right)\cos\left(\Phi_{2}^{\left(1\right)}\left(t,x\right)\right)\\
-\cos\left(\Phi_{1}^{\left(1\right)}\left(t,x\right)\right)\sin\left(\Phi_{2}^{\left(1\right)}\left(t,x\right)\right)
\end{matrix}\right),\quad\Phi^{\left(1\right)}\left(0,x\right)=x.\label{eq:intro ODE transport}
\end{equation}
Let us focus first on calculating the particle trajectories for points
in the $x_{1}$ axis, i.e., assume $x_{2}=0$. Then, the ODE above
actually guarantees that $\Phi_{2}^{\left(1\right)}\left(t,x\right)=0$
for all time. Consequently, we just need to solve
\begin{equation}
\frac{\partial\Phi_{1}^{\left(1\right)}}{\partial t}\left(t,\left(x_{1},0\right)\right)=\sin\left(\Phi_{1}^{\left(1\right)}\left(t,\left(x_{1},0\right)\right)\right).\label{eq:intro ODE transport easy}
\end{equation}
This is the ODE of an inverted degenerate half-pendulum (see subsection
\ref{subsec:pendula} for more details) and can be integrated explicitly
(see Lemma \ref{lem:the good ODE}) to obtain
\[
\sin\left(\Phi_{1}^{\left(1\right)}\left(t,\left(x_{1},0\right)\right)\right)=\frac{1}{\cosh\left(t_{\max}^{\left(1\right)}\left(x_{1}\right)-t\right)},\quad t_{\max}^{\left(1\right)}\left(x_{1}\right)=\ln\left(\frac{1+\cos\left(x_{1}\right)}{\sin\left(x_{1}\right)}\right).
\]
With this done, our next task is to find the particle trajectories
outside of the $x_{1}$ axis. Nonetheless, since we expect the spatial
scale of our new layer to be very small in comparison to our background
layer, it makes sense to settle for a first order spatial Taylor expansion
of ODE \eqref{eq:intro ODE transport}. We will do this expansion
using the point $\left(c\left(0\right),0\right)$ as center. In this
manner, we obtain
\begin{equation}
\Phi^{\left(1\right)}\left(t,x\right)=\left(\begin{matrix}\Phi_{1}^{\left(1\right)}\left(t,\left(c^{\left(1\right)}\left(0\right),0\right)\right)+\frac{\cosh\left(t_{\max}^{\left(1\right)}\left(c\left(0\right)\right)\right)}{\cosh\left(t_{\max}^{\left(1\right)}\left(c\left(0\right)\right)-t\right)}x_{1}\\
\left(\frac{\cosh\left(t_{\max}^{\left(1\right)}\left(c\left(0\right)\right)\right)}{\cosh\left(t_{\max}^{\left(1\right)}\left(c\left(0\right)\right)-t\right)}\right)^{-1}x_{2}
\end{matrix}\right)+O\left(\left|\left|\left(x_{1}-c^{\left(1\right)}\left(0\right),x_{2}\right)\right|\right|_{2}^{2}\right),\label{eq:intro expansion transport}
\end{equation}
where $\left|\left|\cdot\right|\right|_{2}$ should be understood
as the Euclidean norm in $\mathbb{R}^{2}$. In what follows, we will
denote $t_{\max}^{\left(1\right)}\left(c\left(0\right)\right)\equiv t_{\max}^{\left(1\right)}$
for ease of notation.

Next, recall that the equation for the density is (see equation \eqref{eq:Boussinesq system vorticity formulation})
\begin{equation}
\frac{\partial\rho}{\partial t}+u\cdot\nabla\rho=f_{\rho}.\label{eq:intro equation density}
\end{equation}
Making our two layers explicit, as $\rho^{\left(0\right)}=0$, we
have
\[
\frac{\partial\rho^{\left(1\right)}}{\partial t}+u^{\left(0\right)}\cdot\nabla\rho^{\left(1\right)}+u^{\left(1\right)}\cdot\nabla\rho^{\left(1\right)}=f_{\rho}.
\]
Again, suppose that $u^{\left(1\right)}$ is so small that the term
$u^{\left(1\right)}\cdot\nabla\rho^{\left(1\right)}$ can be neglected,
i.e., compensated for with the force. Then, we basically arrive to
the transport equation
\begin{equation}
\frac{\partial\rho^{\left(1\right)}}{\partial t}+u^{\left(0\right)}\cdot\nabla\rho^{\left(1\right)}=0,\label{eq:intro equation density simp}
\end{equation}
were the force has disappeared because we have used it to compensate
$u^{\left(1\right)}\cdot\nabla\rho^{\left(1\right)}$. Equation \eqref{eq:intro equation density simp}
means that our density is just transported by the old velocity. With
this in mind, we will take $a^{\left(1\right)}\left(t\right)$, $b^{\left(1\right)}\left(t\right)$
and $c^{\left(1\right)}\left(t\right)$ in equation \eqref{eq:intro form rho}
so that \eqref{eq:intro equation density simp} is satisfied at first
order in the spatial expansion of $\Phi^{\left(1\right)}\left(t,x\right)$
without the need of the force. Moreover, we will choose $a^{\left(1\right)}\left(0\right)=b^{\left(1\right)}\left(0\right)$.
Hence, we take
\[
c^{\left(1\right)}\left(t\right)\coloneqq\Phi_{1}^{\left(1\right)}\left(t,\left(c^{\left(1\right)}\left(0\right),0\right)\right),\quad a^{\left(1\right)}\left(t\right)\coloneqq b^{\left(1\right)}\left(0\right)\left(\frac{\cosh\left(t_{\max}^{\left(1\right)}\right)}{\cosh\left(t_{\max}^{\left(1\right)}-t\right)}\right)^{-1},\quad b^{\left(1\right)}\left(t\right)\coloneqq b^{\left(1\right)}\left(0\right)\frac{\cosh\left(t_{\max}^{\left(1\right)}\right)}{\cosh\left(t_{\max}^{\left(1\right)}-t\right)}.
\]
(Bear in mind that we actually have to take $a^{\left(1\right)}\left(t\right)$,
$b^{\left(1\right)}\left(t\right)$ and $c^{\left(1\right)}\left(t\right)$
so that they match the \textbf{inverse} of the expansion obtained
in \eqref{eq:intro expansion transport}). Recall that $a^{\left(1\right)}\left(t\right)$
and $b^{\left(1\right)}\left(t\right)$ are related to the deformation
of the new layer, while $\left(c^{\left(1\right)}\left(t\right),0\right)$
represents the position of its center. By the equation above and equation
\eqref{eq:intro ODE transport easy}, we see that $c^{\left(1\right)}\left(t\right)$
is a solution of the ODE
\begin{equation}
\frac{\mathrm{d}c^{\left(1\right)}}{\mathrm{d}t}\left(t\right)=\sin\left(c^{\left(1\right)}\left(t\right)\right),\label{eq:intro ODE center}
\end{equation}
which we shall study further in subsection \ref{subsec:pendula} and
in Lemma \ref{lem:the good ODE}. Moreover, notice that $a^{\left(1\right)}\left(t\right)b^{\left(1\right)}\left(t\right)=\left[b^{\left(1\right)}\left(0\right)\right]^{2}$
is constant in time, as expected by the incompressibility of the fluid.
We will use the force $f_{\rho}$ to compensate the higher order terms
of the Taylor expansion. With these choices of $a^{\left(1\right)}\left(t\right)$,
$b^{\left(1\right)}\left(t\right)$, $c^{\left(1\right)}\left(t\right)$
and of the external force $f_{\rho}$, \eqref{eq:intro form rho}
actually becomes an exact solution of \eqref{eq:intro equation density}.
Consequently, equation \eqref{eq:intro derivative density} is true
and becomes
\[
\begin{aligned}\frac{\partial\rho^{\left(1\right)}}{\partial x_{2}}\left(t,x\right) & =b^{\left(1\right)}\left(0\right)\frac{\cosh\left(t_{\max}^{\left(1\right)}\right)}{\cosh\left(t_{\max}^{\left(1\right)}-t\right)}A^{\left(1\right)}\left(t\right)\sin\left(b^{\left(1\right)}\left(0\right)\frac{\cosh\left(t_{\max}^{\left(1\right)}\right)}{\cosh\left(t_{\max}^{\left(1\right)}-t\right)}x_{2}\right)\cdot\\
 & \quad\cdot\sin\left(b^{\left(1\right)}\left(0\right)\left(\frac{\cosh\left(t_{\max}^{\left(1\right)}\right)}{\cosh\left(t_{\max}^{\left(1\right)}-t\right)}\right)^{-1}\left(x_{1}-c^{\left(1\right)}\left(t\right)\right)\right).
\end{aligned}
\]
Using the first order expansion of the particle trajectories \eqref{eq:intro expansion transport},
we obtain that
\begin{equation}
\frac{\partial\rho^{\left(1\right)}}{\partial x_{2}}\left(t,\Phi^{\left(1\right)}\left(t,x\right)\right)=b^{\left(1\right)}\left(0\right)\frac{\cosh\left(t_{\max}^{\left(1\right)}\right)}{\cosh\left(t_{\max}^{\left(1\right)}-t\right)}A^{\left(1\right)}\left(t\right)\sin\left(x_{1}\right)\sin\left(x_{2}\right)+O\left(\left|\left|\left(x_{1}-c^{\left(1\right)}\left(0\right),x_{2}\right)\right|\right|_{2}^{2}\right).\label{eq:intro derivative rho in transport}
\end{equation}
On the other hand, we can express equation \eqref{eq:intro vorticity equation 1}
under this same change of variables, which eliminates the transport
term and leads to
\[
\frac{\partial}{\partial t}\left(\omega^{\left(1\right)}\left(t,\Phi^{\left(1\right)}\left(t,x\right)\right)\right)=\frac{\partial\rho^{\left(1\right)}}{\partial x_{2}}\left(t,\Phi^{\left(1\right)}\left(t,x\right)\right)+O\left(\left|\left|\left(x_{1}-c^{\left(1\right)}\left(0\right),x_{2}\right)\right|\right|_{2}^{2}\right).
\]
Again, we will use the force to compensate the higher order terms,
which means we can basically focus on solving
\[
\frac{\partial}{\partial t}\left(\omega^{\left(1\right)}\left(t,\Phi^{\left(1\right)}\left(t,x\right)\right)\right)=\frac{\partial\rho^{\left(1\right)}}{\partial x_{2}}\left(t,\Phi^{\left(1\right)}\left(t,x\right)\right),
\]
which is easy. Using the expression given in \eqref{eq:intro derivative rho in transport},
we obtain
\begin{equation}
\omega^{\left(1\right)}\left(t,\Phi^{\left(1\right)}\left(t,x\right)\right)-\omega^{\left(1\right)}\left(0,x\right)=b^{\left(1\right)}\left(0\right)\left(\int_{0}^{t}\frac{\cosh\left(t_{\max}^{\left(1\right)}\right)}{\cosh\left(t_{\max}^{\left(1\right)}-s\right)}A^{\left(1\right)}\left(s\right)\mathrm{d}s\right)\sin\left(x_{1}\right)\sin\left(x_{2}\right).\label{eq:intro expression omega(1)}
\end{equation}
The first key property to notice here is that, if we take $A^{\left(1\right)}\left(t\right)\ge0$,
the growth that $\omega^{\left(1\right)}$ experiences does not depend
directly on $A^{\left(1\right)}\left(t\right)$, only on its integral.
This means that we can start from $A^{\left(1\right)}\left(0\right)=0$,
make $A^{\left(1\right)}\left(t\right)$ grow, which will make the
amplitude of the vorticity grow as well, and then make $A^{\left(1\right)}\left(t\right)=0$
again and, although this will stop the growth of the amplitude of
$\omega^{\left(1\right)}$, it will never make it shrink. In order
words, we can start from zero density and zero vorticity, then introduce
some density, which will make the vorticity grow, and then, when we
try to go back to the initial state in density (zero density), we
will see that the vorticity does not return to its initial state (zero
vorticity); it sort of ``remembers'' what has happened. This is
the hysteresis effect we were talking about before.

If we undo the change of variables in \eqref{eq:intro expression omega(1)}
we arrive to
\begin{equation}
\begin{aligned}\omega^{\left(1\right)}\left(t,x\right)-\omega^{\left(1\right)}\left(0,x\right) & =\overbrace{b^{\left(1\right)}\left(0\right)\left(\int_{0}^{t}\frac{\cosh\left(t_{\max}^{\left(1\right)}\right)}{\cosh\left(t_{\max}^{\left(1\right)}-s\right)}A^{\left(1\right)}\left(s\right)\mathrm{d}s\right)}^{\text{growth in amplitude}}\sin\left(b^{\left(1\right)}\left(0\right)\overbrace{\frac{\cosh\left(t_{\max}^{\left(1\right)}\right)}{\cosh\left(t_{\max}^{\left(1\right)}-t\right)}}^{\text{deformation}}x_{2}\right)\cdot\\
 & \quad\cdot\sin\left(b^{\left(1\right)}\left(0\right)\overbrace{\left(\frac{\cosh\left(t_{\max}^{\left(1\right)}\right)}{\cosh\left(t_{\max}^{\left(1\right)}-t\right)}\right)^{-1}}^{\text{deformation}}\left(x_{1}-\overbrace{c^{\left(1\right)}\left(t\right)}^{\text{displacement}}\right)\right).
\end{aligned}
\label{eq:intro omega 1 true scale}
\end{equation}
Hence, in expression \eqref{eq:intro omega 1 true scale}, we can
see the three effects that time evolution has on the vorticity of
the new layer, which we mentioned in the informal description at the
beginning of subsection \ref{subsec:ideas behind blow-up}: the growth
of the amplitude, the deformation and the displacement in the $x_{1}$
direction (given by $c^{\left(1\right)}\left(t\right)$). Moreover,
we see that, making $b^{\left(1\right)}\left(0\right)$ very big,
we can make the spatial scale of $\omega^{\left(1\right)}\left(t,x\right)$
very small compared to the spatial scale of the background $\omega^{\left(0\right)}$,
as we wanted. Besides, it is easy to see that this vorticity comes
from the velocity field
\begin{equation}
\begin{aligned}u^{\left(1\right)}\left(t,x\right)-u^{\left(1\right)}\left(0,x\right) & =-b^{\left(1\right)}\left(0\right)\left(\int_{0}^{t}\frac{\cosh\left(t_{\max}^{\left(1\right)}\right)}{\cosh\left(t_{\max}^{\left(1\right)}-s\right)}A^{\left(1\right)}\left(s\right)\mathrm{d}s\right)\cdot\\
 & \quad\cdot\left(\begin{matrix}\frac{1}{b^{\left(1\right)}\left(0\right)\left(\frac{\cosh\left(t_{\max}^{\left(1\right)}\right)}{\cosh\left(t_{\max}^{\left(1\right)}-t\right)}\right)}\cos\left(b^{\left(1\right)}\left(0\right)\frac{\cosh\left(t_{\max}^{\left(1\right)}\right)}{\cosh\left(t_{\max}^{\left(1\right)}-t\right)}x_{2}\right)\cdot\\
\cdot\sin\left(b^{\left(1\right)}\left(0\right)\left(\frac{\cosh\left(t_{\max}^{\left(1\right)}\right)}{\cosh\left(t_{\max}^{\left(1\right)}-t\right)}\right)^{-1}\left(x_{1}-c^{\left(1\right)}\left(t\right)\right)\right)\\
\\-\frac{1}{b^{\left(1\right)}\left(0\right)\left(\frac{\cosh\left(t_{\max}^{\left(1\right)}\right)}{\cosh\left(t_{\max}^{\left(1\right)}-t\right)}\right)^{-1}}\sin\left(b^{\left(1\right)}\left(0\right)\frac{\cosh\left(t_{\max}^{\left(1\right)}\right)}{\cosh\left(t_{\max}^{\left(1\right)}-t\right)}x_{2}\right)\cdot\\
\cdot\cos\left(b^{\left(1\right)}\left(0\right)\left(\frac{\cosh\left(t_{\max}^{\left(1\right)}\right)}{\cosh\left(t_{\max}^{\left(1\right)}-t\right)}\right)^{-1}\left(x_{1}-c^{\left(1\right)}\left(t\right)\right)\right)
\end{matrix}\right).
\end{aligned}
\label{eq:intro u1 true scale}
\end{equation}
From expressions \eqref{eq:intro omega 1 true scale} and \eqref{eq:intro u1 true scale},
one can check that $u^{\left(1\right)}\cdot\nabla\omega^{\left(1\right)}=0$,
as we had assumed. Lastly, if we suppose $u^{\left(1\right)}\left(0,x\right)=0$
and we evaluate the velocity in the first order approximation of the
particle trajectories, we get
\begin{equation}
\begin{aligned}u^{\left(1\right)}\left(t,\Phi^{\left(1\right)}\left(t,x\right)\right) & =-b^{\left(1\right)}\left(0\right)\left(\int_{0}^{t}\frac{\cosh\left(t_{\max}^{\left(1\right)}\right)}{\cosh\left(t_{\max}^{\left(1\right)}-s\right)}A^{\left(1\right)}\left(s\right)\mathrm{d}s\right)\left(\begin{matrix}\frac{1}{b^{\left(1\right)}\left(0\right)\left(\frac{\cosh\left(t_{\max}^{\left(1\right)}\right)}{\cosh\left(t_{\max}^{\left(1\right)}-t\right)}\right)}\sin\left(x_{1}\right)\cos\left(x_{2}\right)\\
-\frac{1}{b^{\left(1\right)}\left(0\right)\left(\frac{\cosh\left(t_{\max}^{\left(1\right)}\right)}{\cosh\left(t_{\max}^{\left(1\right)}-t\right)}\right)^{-1}}\cos\left(x_{1}\right)\sin\left(x_{2}\right)
\end{matrix}\right).\end{aligned}
\label{eq:intro u1 with transport}
\end{equation}

Expression \eqref{eq:intro u1 with transport} will come in handy
as reference when we start with the rigorous construction. Contrary
to what we have done here, where we have started from the density
and deduced everything from there, we will depart from the velocity
field or, to be more precise, from the stream function. This will
allow us to avoid working with the singular integral operator through
which one obtains the velocity from the vorticity, making the construction
completely explicit in the spatial variables. Furthermore, it will
make it straightforward to ensure that all physical quantities have
compact support. In spite of this, the mechanism of growth for the
vorticity, although somewhat hidden, will be exactly the same.

\subsubsection{\label{subsec:pendula}Where are the pendula?}

We depart from equation \eqref{eq:intro ODE center}:
\begin{equation}
\dot{z}=\sin\left(z\right),\quad z\left(t\right)=c^{\left(1\right)}\left(t\right),\label{eq:degenerate half-pendulum}
\end{equation}
where $c^{\left(1\right)}\left(t\right)$ was the displacement in
the $x_{1}$ axis of the center of the new layer with respect to the
background layer. If we differentiate again, employing the double
angle formula, we obtain
\[
\ddot{z}=\cos\left(z\right)\dot{z}=\sin\left(z\right)\cos\left(z\right)=\frac{1}{2}\sin\left(2z\right).
\]
Doing the change of variables $y=2z$, we get
\[
\ddot{y}=2\ddot{z}=\sin\left(2z\right)=\sin\left(y\right)\iff\ddot{y}-\sin\left(y\right)=0.
\]
In other words, $y\left(t\right)=2z\left(t\right)=2c^{\left(1\right)}\left(t\right)$
acts like a pendulum of mass $1$, length $1$ and gravity $-1$,
i.e., an inverted pendulum.

Actually, if we consider $x=y+\pi$, we can write
\[
\ddot{x}=\ddot{y}=\sin\left(y\right)=-\sin\left(x\right)\iff\ddot{x}+\sin\left(x\right)=0.
\]
Thereby, $x\left(t\right)=2z\left(t\right)+\pi=2c^{\left(1\right)}\left(t\right)+\pi$
behaves like a real pendulum of mass $1$, length $1$ and gravity
$1$. This is why we say that $z\left(t\right)=c^{\left(1\right)}\left(t\right)$,
the displacement in the $x_{1}$ axis of the center of a layer with
respect to the preceding layer, is an inverted half-pendulum: the
$+\pi$ factor is responsible for the ``inverted'' part and the
$2$ explains the ``half''. 

But, where does the ``degenerate'' part come from? To answer this
question, we need to consider the mechanical energy of the pendulum
$x\left(t\right)$. This total energy is a preserved quantity and
is given by the sum of the kinetic and potential energies, i.e., 
\begin{equation}
E=T\left(x\right)+V\left(x\right)=\frac{1}{2}\dot{x}^{2}-\cos\left(x\right).\label{eq:total energy pendulum}
\end{equation}
Writing $x\left(t\right)=2z\left(t\right)+\pi$, we arrive to
\[
E=2\dot{z}^{2}+\cos\left(2z\right).
\]
Employing equation \eqref{eq:degenerate half-pendulum} and using
the double angle formula, we infer that
\[
E=2\sin^{2}\left(z\right)+\left[\cos^{2}\left(z\right)-\sin^{2}\left(z\right)\right]=1.
\]
In other words, the pendulum $x\left(t\right)$ has mechanical energy
$1$. This means that it reaches the unstable position $x=\pi$ with
zero kinetic energy (since $-\cos\left(\pi\right)=1$ and \eqref{eq:total energy pendulum}
must be satisfied). Moreover, the pendulum needs infinite time to
reach this unstable position. This is why we call this pendulum ``degenerate''.

Consequently, we say that the displacement of the center of a layer
with respect to the preceding layer,
\[
c^{\left(1\right)}\left(t\right)=z\left(t\right)=\overbrace{\frac{1}{2}}^{\text{halves}}\left(\overbrace{x\left(t\right)}^{\text{real pendulum}}\overbrace{-\pi}^{\text{inverts}}\right),
\]
is a degenerate (reaches the unstable position with zero kinetic energy
and in infinite time) inverted (gravity goes in the opposite direction,
as indicated by the $-\pi$ factor) half (the $\frac{1}{2}$ factor)
pendulum.

\subsubsection{\label{subsec:basic idea of the construction}How does the construction
actually look like?}

In order to formalize the ideas presented at the beginning of subsection
\ref{subsec:ideas behind blow-up}, we will actually follow a different
approach than the one presented above. Instead of starting from the
density and computing the associated growth in the vorticity, we shall
prescribe a particular stream function and choose the density afterwards
so that the vorticity displays the desired hysteresis effect. This
will greatly simplify the computations and the burdens due to adding
the cutoff functions. In this manner, the proof can be summed up as
the following set of instructions:
\begin{enumerate}
\item \label{enu:stream function}Take as stream function
\[
\begin{aligned}\psi\left(t,x\right) & =\sum_{n\in\mathbb{N}}\psi^{\left(n\right)}\left(t,x\right),\\
\psi^{\left(n\right)}\left(t,x\right) & =B_{n}\left(t\right)\varphi\left(\lambda_{n}a_{n}\left(t\right)\left(x_{1}-c_{1}^{\left(n\right)}\left(t\right)\right)\right)\varphi\left(\lambda_{n}b_{n}\left(t\right)\left(x_{2}-c_{2}^{\left(n\right)}\left(t\right)\right)\right)\cdot\\
 & \quad\cdot\sin\left(a_{n}\left(t\right)\left(x_{1}-c_{1}^{\left(n\right)}\left(t\right)\right)\right)\sin\left(b_{n}\left(t\right)\left(x_{2}-c_{2}^{\left(n\right)}\left(t\right)\right)\right),
\end{aligned}
\]
where $B_{n}\left(t\right)\equiv0$ $\forall t\in\left[0,t_{n}\right]$,
$t_{n}\in\left[0,1\right]$ being the moment in time when we introduce
the $n$-th layer, and $\varphi\in C_{c}^{\infty}\left(\mathbb{R}\right)$
with $\left.\varphi\right|_{\left[-8\pi,8\pi\right]}\equiv1$ and
$\text{supp}\varphi\subseteq\left[-16\pi,16\pi\right]$. The function
$\varphi$ is there to assure that our solution has compact support.
It is easy to check that this stream function generates a velocity
which is compatible with the one given in equation \eqref{eq:intro u1 true scale}
(for this, ignore all terms that have non-zero order in the parameter
$\lambda_{n}$).
\item \label{enu:force leading order}This stream function will \textbf{not}
be a solution of the unforced 2D Euler equation. Consequently, take
advantage of the force in the vorticity equation $f_{\omega}$ and
the term $\frac{\partial\rho}{\partial x_{2}}$ appropriately to compensate
the errors. In other words, choose the density $\rho$ so that $\frac{\partial\rho}{\partial x_{2}}$
balances the leading order term of the error and use the force $f_{\omega}$
for the other terms. This step is key, because, although $\limsup_{t\to1}\left|\left|\frac{\partial\rho}{\partial x_{2}}\left(t,\cdot\right)\right|\right|_{L^{\infty}\left(\mathbb{R}^{2}\right)}$
will blow-up, $\limsup_{t\to1}\left|\left|f_{\omega}\left(t,\cdot\right)\right|\right|_{L^{\infty}\left(\mathbb{R}^{2}\right)}$
will not.
\item \label{enu:force non-leading order}Use the force in the density equation
$f_{\rho}$ to cancel out the errors due to the choice of the density
in point 2.
\item \label{enu:optimize parameters}Optimize the parameters $B_{n}\left(t\right)$,
$a_{n}\left(t\right)$, $b_{n}\left(t\right)$, $c_{1}^{\left(n\right)}\left(t\right)$,
$c_{2}^{\left(n\right)}\left(t\right)$ and $\lambda_{n}$ to make
the forces $f_{\rho}$ and $f_{\omega}$ as regular as possible.
\end{enumerate}

\subsection{\label{subsec:differences and similarities}Differences and similarities
with other blow-ups}

We will start with a comparison with Elgindi's and Pasqualotto's construction
(see \cite{Elgindi Boussinesq I,Elgindi Boussinesq II}), since it
is the known blow-up of 2D Boussinesq that lies closest to the one
we present in this paper. In both cases, the domain considered is
the whole plane $\mathbb{R}^{2}$ and the solutions that arise have
compact support. Here is where the similarities end. While Elgindi's
and Pasqualotto's result addresses the unforced 2D Boussinesq system,
the external force is a required component of our solution. Besides,
Elgindi's and Pasqualotto's solution is asymptotically self-similar
while ours is not. Moreover, in Elgindi's and Pasqualotto's result,
the limiting regularity appears in the initial condition, i.e., in
the solution itself, and is $C^{1,\alpha}\left(\mathbb{R}^{2}\right)\cap C^{\infty}\left(\mathbb{R}^{2}\setminus\left\{ 0\right\} \right)$
with very small $\alpha$ ($0<\alpha\ll1$), whereas, in our case,
the solution itself is $C^{\infty}\left(\mathbb{R}^{2}\right)$ except
at the blow-up time and the limiting regularity comes forth in the
forces $f_{\rho}$ and $f_{u}$ (only at the blow-up time) and is
$C^{1,\alpha}\left(\mathbb{R}^{2}\right)\cap C^{\infty}\left(\mathbb{R}^{2}\setminus\left\{ \text{one point}\right\} \right)$
with ``non-small'' $\alpha$ constrained by $0<\alpha<\sqrt{\frac{4}{3}}-1$.
Another key difference is that Elgindi's and Pasqualotto's initial
condition satisfies $\rho_{0},u_{0}\in H^{2+\delta}\left(\mathbb{R}^{2}\right)$
for some small $\delta$ and, consequently, enters in the well-posedness
regime in Sobolev spaces of the 2D Boussinesq equations, while in
contrast we think that our forces $f_{\rho}$ and $f_{u}$ fail to
belong to any subcritical Sobolev spaces and, thus, our solution does
not lie in the well-posedness regime in $L^{2}\left(\mathbb{R}^{2}\right)$-based
Sobolev spaces. Lastly, another dissimilarity is that Elgindi and
Pasqualotto needed to execute a computer-assisted proof for a certain
part of their argument, whereas our proof is conducted with pen and
paper.

Now, we will put our result alongside its ``siblings'' \cite{Fuerza Euler}
and \cite{Fuerza IPM}, the finite-time singularities with external
forcing for incompressible 3D Euler and IPM. In all three results,
a series of layers with decreasing support is used to generate the
growth. These layers are introduced sequentially in time and the increase
of the amplitude of a layer is mainly caused by its interaction with
the layer that immediately precedes it. Similarly, the support of
each layer is always contained in the support of the preceding layer.
Moreover, in all three cases, the functional form of the fluid variables
of each layer is chosen so that the pure quadratic terms (i.e., the
self-interactions) vanish at zeroth order. Furthermore, the regularity
loss suffered by the solutions at the blow-up time is comparable in
all three situations. The methods employed in Euler and Boussinesq
both rely on the shaping of a hyperbolic point to foster the growth
in each layer, whereas no hyperbolic points are used in IPM. Because
of this fact, the deformation experienced by the layers is substantially
more pronounced in Euler and Boussinesq than in IPM. Nevertheless,
a key difference between Boussinesq and Euler is that, in Euler, the
deformation only happens one-way; while, in Boussinesq, it takes place
both-ways. In other words, in Boussinesq, our layers start with a
certain shape that gets deformed, but, later, they recover the shape
they had at the beginning. Meanwhile, in Euler, the deformation only
increases with time. 

Let us continue mentioning features of the construction of Boussinesq
for which no analogies in Euler and IPM can be found. Boussinesq is
the only case where the centers of the layers experience translation.
It should be noted that this is no ``cosmetic'' effect, i.e., the
displacement of the layers is required for the blow-up to occur. Moreover,
the fact that the density and the velocity are independent variables
in Boussinesq leads to both unknowns having dissimilar behaviors.
This does not happen in IPM because there is only one scalar unknown
and, although the behavior of a velocity layer in Euler does depend
on the direction considered, the ``cyclic rotation'' of the construction
means that all three axes display the same behavior globally. This
clear distinction between velocity and density in Boussinesq can be
clearly seen in the fact that the amplitude of a density layer increases
and later decreases, while the amplitude of the associated vorticity
layer only increases. Another example is the fact that only one density
layer is active at any given time, whereas all already introduced
vorticity layers are active simultaneously. An interesting consequence
of this first fact is that $\left|\left|\frac{\partial\rho}{\partial x_{2}}\left(t,\cdot\right)\right|\right|_{L^{\infty}\left(\mathbb{R}^{2}\right)}$,
whose time integral blows-up at $t=1$, vanishes at a certain sequence
of times that accumulates at $t=1$ (as explained in remark \ref{rem:blow-up rate}).
Of course, these three features have no analogues in Euler or IPM.
In addition, the blow-up rate of Boussinesq, even though it is not
well-defined, can be said to be power-like in time at most, while
the blow-ups of Euler and IPM have a faster-than-power-like blow-up
rate. Lastly, in Boussinesq, there is an explicit ideal model (where
the inverted degenerate half-pendula play a vital role) that describes
the displacement and deformation of each layer, whereas analogous
models are not provided for Euler or IPM.

\subsection{Section overview}

As easy and straightforward the steps presented in subsection \ref{subsec:basic idea of the construction}
may appear, the real construction is not as simple. Actually, the
steps \ref{enu:stream function} to \ref{enu:optimize parameters}
will not be done sequentially but, actually, in parallel. In section
\ref{sec:construction}, we will first reformulate the Boussinesq
system \eqref{eq:Boussinesq system vorticity formulation} to obtain
a system of equations for each layer. Then, we will pick up the thread
of point \ref{enu:stream function} of subsection \ref{subsec:basic idea of the construction},
bearing in mind what we have learned in subsection \ref{subsec:vorticity growth mechanism}
and introducing the stream function of the solution. Afterwards, we
will jump to point \ref{enu:optimize parameters}, slowly finding
properties of the parameters $B_{n}\left(t\right)$, $a_{n}\left(t\right)$,
$b_{n}\left(t\right)$, $c_{1}^{\left(n\right)}\left(t\right)$, $c_{2}^{\left(n\right)}\left(t\right)$
and $\lambda_{n}$ and of the change of variables introduced in subsection
\ref{subsec:vorticity growth mechanism}. We will start by obtaining
the ODEs that govern the temporal evolution of the parameters, but
we will find that these are too complicated to treat directly, which
will force us to construct ideal models, which are approximations
of the real ODEs. Asking for closeness between the solutions of the
ideal models and the solutions of the real ODEs will give us some
hints about how some parameters should be selected. Moreover, the
inverted degenerate half-pendula will have a prominent presence in
these ideal models. 

Somewhere in the middle of section \ref{sec:construction}, we will
choose the form of the density, accomplishing points \ref{enu:force leading order}
and \ref{enu:force non-leading order}. Throughout the section, to
obtain some guidance in choosing the parameters, we shall constantly
present properties we wish our solution to have and, from those properties,
we will deduce restrictions on the value of the parameters. At the
end of the section, the value of some parameters will still be unknown
but the main ideas of the mechanism will be completely determined.
At this point, we will still have no idea about the maximum regularity
of the forces.

In section \ref{sec:time evolution}, we will rigorously prove that
the ideal models presented in section \ref{sec:construction} are
close to their real counterparts. We will need this to be able to
use the properties of the ideal models when bounding the forces of
the density and vorticity equations. Computing these bounds will be
the tasks tackled in sections \ref{sec:bounds for density force}
and \ref{sec:bounds for vorticity force}. With the bounds of the
forces available, in section \ref{sec:final parameter optimization and closing arguments},
we will finally be able to compute the maximum regularity of the forces
and to obtain the necessary values of the remaining parameters for
that maximum regularity to be achieved, finishing point \ref{enu:optimize parameters}.

\subsection{Functional spaces and notation}

Let $p\in\left[1,\infty\right]$, $d\in\mathbb{N}$ and $w\in\mathbb{R}^{d}$.
We will use the notation $\left|\left|w\right|\right|_{p}$ to refer
to the $p$-norm of a vector in the Euclidean space $\mathbb{R}^{d}$,
i.e., 
\[
\left|\left|w\right|\right|_{p}\coloneqq\left(\sum_{n=1}^{d}\left|x_{n}\right|^{p}\right)^{\frac{1}{p}}.
\]
There will be no confusion with norms in the Lebesgue spaces $L^{p}\left(\mathbb{R}^{d}\right)$,
because we will write these as $\left|\left|\cdot\right|\right|_{L^{p}\left(\mathbb{R}^{d}\right)}$.

Let $d\in\mathbb{N}$ and $F\subseteq\mathbb{R}^{d}$ be closed in
$\mathbb{R}^{d}$. We recall that a function $f:F\to\mathbb{R}$ is
said to be Hölder of parameter $\alpha\in\left(0,1\right)$ (denoted
$f\in C^{\alpha}\left(F\right)$) if
\[
\left|\left|f\right|\right|_{\dot{C}^{\alpha}\left(F\right)}\coloneqq\sup_{x,y\in F}\frac{\left|f\left(x\right)-f\left(y\right)\right|}{\left|\left|x-y\right|\right|_{2}^{\alpha}}<\infty.
\]
$\left|\left|\cdot\right|\right|_{\dot{C}^{\alpha}\left(F\right)}$
is a seminorm. The set $C^{\alpha}\left(F\right)$ can be made into
a Banach space by considering the norm
\[
\left|\left|f\right|\right|_{C^{\alpha}\left(F\right)}\coloneqq\max\left\{ \left|\left|f\right|\right|_{L^{\infty}\left(F\right)},\left|\left|f\right|\right|_{\dot{C}^{\alpha}\left(F\right)}\right\} .
\]
Likewise, we will work with the Hölder spaces 
\[
C^{k,\alpha}\left(F\right)=\left\{ f:F\to\mathbb{R}\text{ of class }C^{k}\text{ and with all derivatives of order }k\text{ belonging to }C^{\alpha}\left(F\right)\right\} .
\]
We will equip this space with the norm
\[
\left|\left|f\right|\right|_{C^{k,\alpha}\left(F\right)}\coloneqq\max\left\{ \left|\left|f\right|\right|_{C^{k}\left(F\right)},\left|\left|f\right|\right|_{\dot{C}^{k,\alpha}\left(F\right)}\right\} ,
\]
where
\[
\left|\left|f\right|\right|_{C^{k}\left(F\right)}=\max_{0\le m\le k}\left|\left|f\right|\right|_{\dot{C}^{m}\left(F\right)},\quad\left|\left|f\right|\right|_{\dot{C}^{k}\left(F\right)}=\max_{\left|\beta\right|=k}\left|\left|\mathrm{D}^{\beta}f\right|\right|_{L^{\infty}\left(F\right)},
\]
\[
\left|\left|f\right|\right|_{\dot{C}^{k,\alpha}\left(F\right)}=\max_{\left|\beta\right|=k}\left|\left|\mathrm{D}^{\beta}f\right|\right|_{\dot{C}^{\alpha}\left(F\right)},\quad\mathrm{D}^{\beta}f\coloneqq\frac{\partial^{\left|\beta\right|}f}{\partial x_{1}^{\beta_{1}}\dots\partial x_{d}^{\beta_{d}}},\quad\beta\in\mathbb{N}_{0}^{\beta},\quad\left|\beta\right|=\beta_{1}+\dots+\beta_{d}.
\]

We will make use of the following two properties of the $\left|\left|\cdot\right|\right|_{\dot{C}^{\alpha}}$-seminorm:
\begin{enumerate}
\item $\forall f,g\in C^{\alpha}\left(F\right)$, we have
\begin{equation}
\left|\left|fg\right|\right|_{\dot{C}^{\alpha}\left(F\right)}\le\left|\left|f\right|\right|_{L^{\infty}\left(F\right)}\left|\left|g\right|\right|_{\dot{C}^{\alpha}\left(F\right)}+\left|\left|f\right|\right|_{\dot{C}^{\alpha}\left(F\right)}\left|\left|g\right|\right|_{L^{\infty}\left(F\right)},\label{eq:property Calpha multiplication}
\end{equation}
\item $\forall f\in C^{\alpha}\left(F\right)$ and $\forall\varphi\in C^{1}\left(F;\varphi\left(F\right)\right)$,
we have
\begin{equation}
\left|\left|f\circ\varphi\right|\right|_{\dot{C}^{\alpha}\left(F\right)}\leq K_{0}\left(d\right)\left|\left|\varphi\right|\right|_{\dot{C}^{1}\left(F;\varphi\left(F\right)\right)}^{\alpha}\left|\left|f\right|\right|_{\dot{C}^{\alpha}\left(\varphi\left(F\right)\right)}.\label{eq:property Calpha composition}
\end{equation}
\end{enumerate}
Similarly, if $k\neq0$,
\begin{enumerate}
\item $\forall f,g\in C^{k,\alpha}\left(F\right)$, we have
\begin{equation}
\left|\left|fg\right|\right|_{C^{k,\alpha}\left(F\right)}\leq K_{1}\left(d\right)\left|\left|f\right|\right|_{C^{k,\alpha}\left(F\right)}\left|\left|g\right|\right|_{C^{k,\alpha}\left(F\right)},\label{eq:property Ckalpha multiplication}
\end{equation}
\item $\forall f\in C^{k,\alpha}\left(F\right)$ and $\forall\varphi\in C^{k-1,\alpha}\left(F;\varphi\left(F\right)\right)$,
we have
\begin{equation}
\left|\left|f\circ\varphi\right|\right|_{C^{k,\alpha}\left(F\right)}\le K_{2}\left(d\right)\left(1+\max_{\left|\beta\right|=1}\left|\left|\mathrm{D}^{\beta}\varphi\right|\right|_{C^{k-1,\alpha}\left(F;\varphi\left(F\right)\right)}\right)^{k+\alpha}\left|\left|f\right|\right|_{C^{k,\alpha}\left(\varphi\left(F\right)\right)}.\label{eq:property Ckalpha composition}
\end{equation}
\end{enumerate}
Lastly, when considering vector-valued functions $f\in C^{\alpha}\left(F;\mathbb{R}^{q}\right)$
(with $q\in\mathbb{N}$), we will take
\[
\begin{aligned}\left|\left|f\right|\right|_{L^{\infty}\left(F;\mathbb{R}^{q}\right)} & =\max_{i=1,\dots,q}\left|\left|f_{i}\right|\right|_{L^{\infty}\left(F\right)},\\
\left|\left|f\right|\right|_{\dot{C}^{\alpha}\left(F;\mathbb{R}^{q}\right)} & =\max_{i=1,\dots,q}\left|\left|f_{i}\right|\right|_{\dot{C}^{\alpha}\left(F\right)},\\
\left|\left|f\right|\right|_{C^{\alpha}\left(F;\mathbb{R}^{q}\right)} & =\max_{i=1,\dots,q}\left|\left|f_{i}\right|\right|_{C^{\alpha}\left(F\right)},\\
\left|\left|f\right|\right|_{C^{k}\left(F;\mathbb{R}^{q}\right)} & =\max_{i=1,\dots,q}\left|\left|f_{i}\right|\right|_{C^{k}\left(F\right)},\\
\left|\left|f\right|\right|_{\dot{C}^{k}\left(F;\mathbb{R}^{q}\right)} & =\max_{i=1,\dots,q}\left|\left|f_{i}\right|\right|_{\dot{C}^{k}\left(F\right)},\\
\left|\left|f\right|\right|_{C^{k,\alpha}\left(F;\mathbb{R}^{q}\right)} & =\max_{i=1,\dots,q}\left|\left|f_{i}\right|\right|_{C^{k,\alpha}\left(F\right)}.
\end{aligned}
\]

\subsubsection{Marginal Hölder serminorms}

Another concept that will come in handy is that of marginal Hölder
seminorms. Let $F=F_{1}\times F_{2}$, with $F_{1},F_{2}\subseteq\mathbb{R}$
closed. Given $f:F\to\mathbb{R}$, we define
\begin{equation}
\left|\left|f\left(\cdot\right)\right|\right|_{\dot{C}_{1}^{\alpha}\left(F\right)}\coloneqq\sup_{x_{2}\in F_{2}}\left|\left|f\left(\cdot,x_{2}\right)\right|\right|_{\dot{C}^{\alpha}\left(F_{1}\right)},\quad\left|\left|f\left(\cdot\right)\right|\right|_{\dot{C}_{2}^{\alpha}\left(F\right)}\coloneqq\sup_{x_{1}\in F_{1}}\left|\left|f\left(x_{1},\cdot\right)\right|\right|_{\dot{C}^{\alpha}\left(F_{2}\right)}.\label{eq:marginal seminorms}
\end{equation}
It is clear that, by definition, these seminorms are smaller or equal
then the ``joint'' Hölder seminorm $\left|\left|f\right|\right|_{\dot{C}^{\alpha}\left(F\right)}$.
Moreover, it is possible to recover the ``joint'' Hölder seminorm
from the marginal Hölder seminorms as follows:
\begin{equation}
\begin{aligned}\left|\left|f\right|\right|_{\dot{C}^{\alpha}\left(F\right)} & =\sup_{{\footnotesize \begin{matrix}x,y\in F\\
x\neq y
\end{matrix}}}\frac{\left|f\left(x\right)-f\left(y\right)\right|}{\left|\left|x-y\right|\right|_{2}^{\alpha}}=\sup_{{\footnotesize \begin{matrix}x,y\in F\\
x\neq y
\end{matrix}}}\frac{\left|f\left(x_{1},x_{2}\right)-f\left(x_{1},y_{2}\right)+f\left(x_{1},y_{2}\right)-f\left(y_{1},y_{2}\right)\right|}{\left|\left|x-y\right|\right|_{2}^{\alpha}}\leq\\
 & \leq\sup_{{\footnotesize \begin{matrix}x,y\in F\\
x\neq y
\end{matrix}}}\frac{\left|f\left(x_{1},x_{2}\right)-f\left(x_{1},y_{2}\right)\right|}{\left|\left|x-y\right|\right|_{2}^{\alpha}}+\sup_{{\footnotesize \begin{matrix}x,y\in F\\
x\neq y
\end{matrix}}}\frac{\left|f\left(x_{1},y_{2}\right)-f\left(y_{1},y_{2}\right)\right|}{\left|\left|x-y\right|\right|_{2}^{\alpha}}\leq\\
 & \leq\sup_{{\footnotesize \begin{matrix}x,y\in F\\
x\neq y
\end{matrix}}}\left|\left|f\right|\right|_{\dot{C}_{2}^{\alpha}\left(F\right)}\underbrace{\frac{\left|x_{2}-y_{2}\right|^{\alpha}}{\left|\left|x-y\right|\right|_{2}^{\alpha}}}_{\leq1}+\sup_{{\footnotesize \begin{matrix}x,y\in F\\
x\neq y
\end{matrix}}}\left|\left|f\right|\right|_{\dot{C}_{1}^{\alpha}\left(F\right)}\underbrace{\frac{\left|x_{1}-y_{1}\right|^{\alpha}}{\left|\left|x-y\right|\right|_{2}^{\alpha}}}_{\leq1}\leq\left|\left|f\right|\right|_{\dot{C}_{2}^{\alpha}\left(F\right)}+\left|\left|f\right|\right|_{\dot{C}_{1}^{\alpha}\left(F\right)}.
\end{aligned}
\label{eq:Holder seminorm by marginals}
\end{equation}

\section{\label{sec:construction}Construction}

Instead of providing all the details of the construction up front
and proving that it leads to the main result of the paper, we will
introduce the ingredients slowly, trying to motivate every step we
do and every choice we make. Actually, we will just start from these
two ideas:
\begin{enumerate}
\item Under an appropriate change of variables, the velocity field of a
layer should look like
\[
u\sim\left(\sin\left(x_{1}\right)\cos\left(x_{2}\right),-\cos\left(x_{1}\right)\sin\left(x_{2}\right)\right).
\]
\item Under an appropriate change of variables, the density of a layer should
look like
\[
\rho\sim\sin\left(x_{1}\right)\cos\left(x_{2}\right).
\]
\end{enumerate}
These two ideas come from equations \eqref{eq:intro u1 with transport}
and \eqref{eq:intro form rho}, respectively. 

As we will be choosing the value of certain parameters as we progress
through the proof, in order to make it more clear when we are doing
so, we will use a special environment just for this purpose: the environment
``Choice''. Before we start, let us provide two pieces of advise: 
\begin{enumerate}
\item Bear in mind that, when reading a certain result, all the Choices
that appear before in the paper act as additional implicit assumptions.
\item In you wish to read the proof in detail, we strongly recommend to
print section \ref{sec:useful formulae} in paper and to have it on
hand to consult when necessary. By doing this, we believe that a lot
of computations will be easier to follow.
\end{enumerate}

\subsection{The layer equations}

We depart from Boussinesq's system, which, in vorticity formulation
(see equation \eqref{eq:Boussinesq system vorticity formulation}),
is given by
\begin{equation}
\begin{aligned}\frac{\partial\omega}{\partial t}+u\cdot\nabla\omega & =\frac{\partial\rho}{\partial x_{2}}+f_{\omega},\\
\frac{\partial\rho}{\partial t}+u\cdot\nabla\rho & =f_{\rho},
\end{aligned}
\label{eq:construction Bousinessq vorticity}
\end{equation}
where $\omega$ is the fluid vorticity, $u=\nabla^{\perp}\Delta^{-1}\omega$
is the fluid velocity, $\rho$ are the fluid density fluctuations,
$f_{\omega}$ are the external forces of the vorticity equation and
$f_{\rho}$ are the external forces of the density equation.

We shall arrange our fluid in layers (indexed by $\mathbb{N}$), where
each layer will have a much smaller size than the previous one. Every
physical quantity (velocity, vorticity, density, ...) will be assigned
a capital letter ($U$, $\Omega$, $P$, ...)\footnote{We remind the reader of the fact that $P$ is also ``capital rho''.
There will be no possibility of misunderstanding it with the pressure,
since the pressure will not appear in the construction.} and a lower case letter ($u$, $\omega$, $\rho$, ...). To denote
the contribution of the $n$-th layer to a certain physical quantity,
we will use the small case letter along with the super-index $\left(n\right)$,
whereas we will employ the capital letter with the same super-index
to refer to the value of the physical quantity when the first $n$
layers are taken into account. In this way,
\begin{itemize}
\item $U^{\left(n\right)}$ denotes the combined velocity of the first $n$
layers, while $u^{\left(n\right)}$ refers to the velocity of the
$n$-th layer;
\item $\Omega^{\left(n\right)}$ denotes the combined vorticity of the first
$n$ layers, while $\omega^{\left(n\right)}$ refers to the vorticity
of the $n$-th layer;
\item $P^{\left(n\right)}$ denotes the combined density of the first $n$
layers, while $\rho^{\left(n\right)}$ refers to the density of the
$n$-th layer;
\item $F_{\omega}^{\left(n\right)}$ denotes the combined force in the vorticity
equation when the first $n$ layers are considered, whereas $f_{\omega}^{\left(n\right)}$
refers to the force in the vorticity equation associated to the $n$-th
layer;
\item $F_{\rho}^{\left(n\right)}$ denotes the combined force in the density
equation when the first $n$ layers are considered, whereas $f_{\rho}^{\left(n\right)}$
refers to the force in the density equation associated to the $n$-th
layer.
\end{itemize}
Furthermore, clearly, we have
\[
U^{\left(n\right)}=\sum_{m=1}^{n}u^{\left(m\right)},\quad\Omega^{\left(n\right)}=\sum_{m=1}^{n}\omega^{\left(m\right)},\quad P^{\left(n\right)}=\sum_{m=1}^{n}\rho^{\left(m\right)},\quad F_{\omega}^{\left(n\right)}=\sum_{m=1}^{n}f_{\omega}^{\left(m\right)},\quad F_{\rho}^{\left(n\right)}=\sum_{m=1}^{n}f_{\rho}^{\left(m\right)}.
\]

Since we are going to work with layers, it will be helpful to rewrite
equation \eqref{eq:construction Bousinessq vorticity} in terms of
the vorticity, velocity and density of each layer. This is exactly
what we achieve in the following Proposition.
\begin{prop}[Boussinesq in layers]
\label{prop:Bousinessq in layers}With the aforementioned notation,
the following are equivalent:
\begin{enumerate}
\item $\forall n\in\mathbb{N}$, we have
\[
\begin{aligned}\frac{\partial\Omega^{\left(n\right)}}{\partial t}+U^{\left(n\right)}\cdot\nabla\Omega^{\left(n\right)} & =\frac{\partial P^{\left(n\right)}}{\partial x_{2}}+F_{\omega}^{\left(n\right)},\\
\frac{\partial P^{\left(n\right)}}{\partial t}+U^{\left(n\right)}\cdot\nabla P^{\left(n\right)} & =F_{\rho}^{\left(n\right)},
\end{aligned}
\]
\item $\forall n\in\mathbb{N}$, taking $\Omega^{\left(0\right)}=0=P^{\left(0\right)}$,
we have
\[
\begin{aligned}\frac{\partial\omega^{\left(n\right)}}{\partial t}+U^{\left(n-1\right)}\cdot\nabla\omega^{\left(n\right)}+u^{\left(n\right)}\cdot\nabla\Omega^{\left(n-1\right)}+u^{\left(n\right)}\cdot\nabla\omega^{\left(n\right)} & =\frac{\partial\rho^{\left(n\right)}}{\partial x_{2}}+f_{\omega}^{\left(n\right)},\\
\frac{\partial\rho^{\left(n\right)}}{\partial t}+U^{\left(n-1\right)}\cdot\nabla\rho^{\left(n\right)}+u^{\left(n\right)}\cdot\nabla P^{\left(n-1\right)}+u^{\left(n\right)}\cdot\nabla\rho^{\left(n\right)} & =f_{\rho}^{\left(n\right)}.
\end{aligned}
\]
\end{enumerate}
\end{prop}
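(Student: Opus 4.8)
The plan is to reduce the equivalence to a single telescoping identity between the ``residuals'' of the two systems. First I would adopt the convention $U^{\left(0\right)}=\Omega^{\left(0\right)}=P^{\left(0\right)}=F_{\omega}^{\left(0\right)}=F_{\rho}^{\left(0\right)}=0$, under which the level-$0$ instance of the system in item~(1) holds trivially and is consistent with the hypothesis $\Omega^{\left(0\right)}=0=P^{\left(0\right)}$ that appears in item~(2). For $n\ge0$ I would define
\[
R_{\omega}^{\left(n\right)}\coloneqq\frac{\partial\Omega^{\left(n\right)}}{\partial t}+U^{\left(n\right)}\cdot\nabla\Omega^{\left(n\right)}-\frac{\partial P^{\left(n\right)}}{\partial x_{2}}-F_{\omega}^{\left(n\right)},\qquad R_{\rho}^{\left(n\right)}\coloneqq\frac{\partial P^{\left(n\right)}}{\partial t}+U^{\left(n\right)}\cdot\nabla P^{\left(n\right)}-F_{\rho}^{\left(n\right)},
\]
so that item~(1) is precisely the assertion $R_{\omega}^{\left(n\right)}=R_{\rho}^{\left(n\right)}=0$ for every $n$, while $R_{\omega}^{\left(0\right)}=R_{\rho}^{\left(0\right)}=0$ holds by the convention.

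The key step is to compute $R_{\omega}^{\left(n\right)}-R_{\omega}^{\left(n-1\right)}$ and $R_{\rho}^{\left(n\right)}-R_{\rho}^{\left(n-1\right)}$ for $n\ge1$. Here I would use the elementary decompositions $\Omega^{\left(n\right)}=\Omega^{\left(n-1\right)}+\omega^{\left(n\right)}$, $P^{\left(n\right)}=P^{\left(n-1\right)}+\rho^{\left(n\right)}$, $U^{\left(n\right)}=U^{\left(n-1\right)}+u^{\left(n\right)}$, $F_{\omega}^{\left(n\right)}=F_{\omega}^{\left(n-1\right)}+f_{\omega}^{\left(n\right)}$, $F_{\rho}^{\left(n\right)}=F_{\rho}^{\left(n-1\right)}+f_{\rho}^{\left(n\right)}$, together with the bilinearity of the transport operator $\left(V,g\right)\mapsto V\cdot\nabla g$, in order to expand
\[
U^{\left(n\right)}\cdot\nabla\Omega^{\left(n\right)}-U^{\left(n-1\right)}\cdot\nabla\Omega^{\left(n-1\right)}=U^{\left(n-1\right)}\cdot\nabla\omega^{\left(n\right)}+u^{\left(n\right)}\cdot\nabla\Omega^{\left(n-1\right)}+u^{\left(n\right)}\cdot\nabla\omega^{\left(n\right)},
\]
and likewise for the density term. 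Substituting these into the two displays above, one sees that $R_{\omega}^{\left(n\right)}-R_{\omega}^{\left(n-1\right)}$ is exactly the residual of the first equation of item~(2) at level $n$, and $R_{\rho}^{\left(n\right)}-R_{\rho}^{\left(n-1\right)}$ that of the second.

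With this identity in hand both implications are immediate. If item~(1) holds, then all the $R$'s vanish, hence so do their successive differences, which is item~(2). Conversely, if item~(2) holds, then each difference $R_{\omega}^{\left(m\right)}-R_{\omega}^{\left(m-1\right)}$ and $R_{\rho}^{\left(m\right)}-R_{\rho}^{\left(m-1\right)}$ vanishes, and summing from $m=1$ to $n$ together with $R_{\omega}^{\left(0\right)}=R_{\rho}^{\left(0\right)}=0$ yields $R_{\omega}^{\left(n\right)}=R_{\rho}^{\left(n\right)}=0$, i.e.\ item~(1). I do not expect any genuine obstacle; the argument is pure bookkeeping, the only point meriting a line of care being the base case $n=1$, where $U^{\left(0\right)}=\Omega^{\left(0\right)}=P^{\left(0\right)}=0$ collapses the two equations of item~(2) to the $n=1$ equations of item~(1). (Equivalently, one may bypass the residual notation altogether: subtract the level-$(n-1)$ equations of item~(1) from the level-$n$ ones for the forward direction, and sum the equations of item~(2) over the layers for the converse.)
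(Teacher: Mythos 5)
Your proof is correct. The heart of it — the identity
\[
U^{\left(n\right)}\cdot\nabla\Omega^{\left(n\right)}-U^{\left(n-1\right)}\cdot\nabla\Omega^{\left(n-1\right)}=U^{\left(n-1\right)}\cdot\nabla\omega^{\left(n\right)}+u^{\left(n\right)}\cdot\nabla\Omega^{\left(n-1\right)}+u^{\left(n\right)}\cdot\nabla\omega^{\left(n\right)}
\]
and its density analogue — is exactly the computation the paper performs for the implication $1\implies2$ (subtracting the level-$\left(n-1\right)$ equations from the level-$n$ ones), so in that direction the two arguments coincide. Where you diverge is the converse: the paper sums the layer equations of item (2) from $n=1$ to $m$, combines $U^{\left(n-1)}\cdot\nabla\omega^{\left(n\right)}+u^{\left(n\right)}\cdot\nabla\omega^{\left(n\right)}=U^{\left(n\right)}\cdot\nabla\omega^{\left(n\right)}$, and then reassembles the quadratic terms by an explicit interchange of the order of summation in a double sum; you instead observe that the same single differencing identity is an unconditional algebraic fact, so the layer residual at level $n$ equals $R^{\left(n\right)}-R^{\left(n-1\right)}$, and the converse follows by telescoping from $R^{\left(0\right)}=0$. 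Your packaging is more economical — it proves both directions from one identity and avoids the reindexing computation entirely — while the paper's summation makes visible how the individual interaction terms $u^{\left(l\right)}\cdot\nabla\omega^{\left(n\right)}$ recombine into the full transport term $U^{\left(m\right)}\cdot\nabla\Omega^{\left(m\right)}$. Both are complete; the only point to keep explicit (which you do) is the convention $U^{\left(0\right)}=\Omega^{\left(0\right)}=P^{\left(0\right)}=F_{\omega}^{\left(0\right)}=F_{\rho}^{\left(0\right)}=0$, consistent with the empty sums in the paper's notation, so that $R^{\left(0\right)}=0$ and the base case $n=1$ collapses correctly.
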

\begin{rem}
This Proposition provides us with the equation that the physical quantities
of each layer must satisfy so that the total contributions of the
physical quantities when the first $n$ layers are considered be always
a solution of the Boussinesq system, independently of the value of
$n\in\mathbb{N}$. Notice that $U^{\left(n-1\right)}\cdot\nabla\omega^{\left(n\right)}$
represents the transport of the ``new'' vorticity due to the ``old''
velocity, while $u^{\left(n\right)}\cdot\nabla\Omega^{\left(n-1\right)}$
depicts the transport of the ``old'' vorticity due to the ``new''
velocity, and $u^{\left(n\right)}\cdot\nabla\omega^{\left(n\right)}$
encodes the transport of the ``new'' vorticity by the ``new''
velocity. We will call this last term ``the pure quadratic term''.
We have the same analogous behavior in the density equation.
\end{rem}
\begin{proof}
~
\begin{itemize}
\item $1\implies2$. We depart from statement 1 and we write $\Omega^{\left(n\right)}=\Omega^{\left(n-1\right)}+\omega^{\left(n\right)}$,
acting analogously with the other physical quantities. In this way,
we obtain that
\[
\begin{aligned}\frac{\partial\Omega^{\left(n-1\right)}}{\partial t}+\frac{\partial\omega^{\left(n\right)}}{\partial t}+\\
+\left(U^{\left(n-1\right)}+u^{\left(n\right)}\right)\cdot\left(\nabla\Omega^{\left(n-1\right)}+\nabla\omega^{\left(n\right)}\right) & =\frac{\partial P^{\left(n-1\right)}}{\partial x_{2}}+\frac{\partial\rho^{\left(n\right)}}{\partial x_{2}}+\\
 & \quad+F_{\omega}^{\left(n-1\right)}+f_{\omega}^{\left(n\right)}.\\
\frac{\partial P^{\left(n-1\right)}}{\partial t}+\frac{\partial\rho^{\left(n\right)}}{\partial t}+\\
+\left(U^{\left(n-1\right)}+u^{\left(n\right)}\right)\cdot\left(\nabla P^{\left(n-1\right)}+\nabla\rho^{\left(n\right)}\right) & =F_{\rho}^{\left(n-1\right)}+f_{\rho}^{\left(n\right)}.
\end{aligned}
\]
Utilizing the equations of statement 1 for $n-1$, some terms of the
equations above cancel each other, which leads to the equations of
statement 2.
\item $2\implies1$. We depart from statement 2 and we sum the equations
from $n=1$ to $m$. In other words, we consider
\[
\begin{aligned}\sum_{n=1}^{m}\left[\frac{\partial\omega^{\left(n\right)}}{\partial t}+U^{\left(n-1\right)}\cdot\nabla\omega^{\left(n\right)}+u^{\left(n\right)}\cdot\nabla\Omega^{\left(n-1\right)}+u^{\left(n\right)}\cdot\nabla\omega^{\left(n\right)}\right] & =\sum_{n=1}^{m}\left[\frac{\partial\rho^{\left(n\right)}}{\partial x_{2}}+f_{\omega}^{\left(n\right)}\right],\\
\sum_{n=1}^{m}\left[\frac{\partial\rho^{\left(n\right)}}{\partial t}+U^{\left(n-1\right)}\cdot\nabla\rho^{\left(n\right)}+u^{\left(n\right)}\cdot\nabla P^{\left(n-1\right)}+u^{\left(n\right)}\cdot\nabla\rho^{\left(n\right)}\right] & =\sum_{n=1}^{m}f_{\rho}^{\left(n\right)}.
\end{aligned}
\]
Firstly, we combine the quadratic terms $U^{\left(n-1\right)}\cdot\nabla\omega^{\left(n\right)}+u^{\left(n\right)}\cdot\nabla\omega^{\left(n\right)}=U^{\left(n\right)}\cdot\nabla\omega^{\left(n\right)}$,
which leads to 
\[
\begin{aligned}\sum_{n=1}^{m}\left[\frac{\partial\omega^{\left(n\right)}}{\partial t}+U^{\left(n\right)}\cdot\nabla\omega^{\left(n\right)}+u^{\left(n\right)}\cdot\nabla\Omega^{\left(n-1\right)}\right] & =\sum_{n=1}^{m}\left[\frac{\partial\rho^{\left(n\right)}}{\partial x_{2}}+f_{\omega}^{\left(n\right)}\right],\\
\sum_{n=1}^{m}\left[\frac{\partial\rho^{\left(n\right)}}{\partial t}+U^{\left(n\right)}\cdot\nabla\rho^{\left(n\right)}+u^{\left(n\right)}\cdot\nabla P^{\left(n-1\right)}\right] & =\sum_{n=1}^{m}f_{\rho}^{\left(n\right)}.
\end{aligned}
\]
Now, using the relation between the capital and small case letters,
it is straightforward that
\[
\begin{aligned}\frac{\partial\Omega^{\left(m\right)}}{\partial t}+\sum_{n=1}^{m}\left[U^{\left(n\right)}\cdot\nabla\omega^{\left(n\right)}+u^{\left(n\right)}\cdot\nabla\Omega^{\left(n-1\right)}\right] & =\frac{\partial P^{\left(m\right)}}{\partial x_{2}}+F_{\omega}^{\left(m\right)},\\
\frac{\partial P^{\left(m\right)}}{\partial t}+\sum_{n=1}^{m}\left[U^{\left(n\right)}\cdot\nabla\rho^{\left(n\right)}+u^{\left(n\right)}\cdot\nabla P^{\left(n-1\right)}\right] & =F_{\rho}^{\left(m\right)}.
\end{aligned}
\]
Nevertheless, at first glance, it is not clear what happens to the
quadratic terms. However, recalling that
\[
U^{\left(n\right)}=\sum_{l=1}^{n}u^{\left(l\right)},\quad\Omega^{\left(n\right)}=\sum_{l=1}^{n}\omega^{\left(l\right)},
\]
one can deduce
\[
\begin{aligned}\sum_{n=1}^{m}\left[U^{\left(n\right)}\cdot\nabla\omega^{\left(n\right)}+u^{\left(n\right)}\cdot\nabla\Omega^{\left(n-1\right)}\right] & =\sum_{n=1}^{m}\left[\left(\sum_{l=1}^{n}u^{\left(l\right)}\right)\cdot\nabla\omega^{\left(n\right)}+u^{\left(n\right)}\cdot\nabla\left(\sum_{l=1}^{n-1}\omega^{\left(l\right)}\right)\right]=\\
 & =\sum_{n=1}^{m}\left[\sum_{l=1}^{n}\left(u^{\left(l\right)}\cdot\nabla\omega^{\left(n\right)}\right)+\sum_{l=1}^{n-1}\left(u^{\left(n\right)}\cdot\nabla\omega^{\left(l\right)}\right)\right].
\end{aligned}
\]
Interchanging the order of summation in the first sum, we get
\[
\sum_{n=1}^{m}\left[U^{\left(n\right)}\cdot\nabla\omega^{\left(n\right)}+u^{\left(n\right)}\cdot\nabla\Omega^{\left(n-1\right)}\right]=\left[\sum_{l=1}^{m}\sum_{n=l}^{m}\left(u^{\left(l\right)}\cdot\nabla\omega^{\left(n\right)}\right)+\sum_{n=1}^{m}\sum_{l=1}^{n-1}\left(u^{\left(n\right)}\cdot\nabla\omega^{\left(l\right)}\right)\right].
\]
Exchanging the labels $n$ and $l$ in the second sum, we arrive to
\[
\begin{aligned}\sum_{n=1}^{m}\left[U^{\left(n\right)}\cdot\nabla\omega^{\left(n\right)}+u^{\left(n\right)}\cdot\nabla\Omega^{\left(n-1\right)}\right] & =\left[\sum_{l=1}^{m}\sum_{n=l}^{m}\left(u^{\left(l\right)}\cdot\nabla\omega^{\left(n\right)}\right)+\sum_{l=1}^{m}\sum_{n=1}^{l-1}\left(u^{\left(l\right)}\cdot\nabla\omega^{\left(n\right)}\right)\right]=\\
 & =\sum_{l=1}^{m}\sum_{n=1}^{m}\left(u^{\left(l\right)}\cdot\nabla\omega^{\left(n\right)}\right)=\sum_{l=1}^{m}\left[u^{\left(l\right)}\cdot\nabla\left(\sum_{n=1}^{m}\omega^{\left(n\right)}\right)\right]=\\
 & =\left(\sum_{l=1}^{m}u^{\left(l\right)}\right)\cdot\nabla\Omega^{\left(m\right)}=U^{\left(m\right)}\cdot\nabla\Omega^{\left(m\right)}.
\end{aligned}
\]
The same argument can be analogously applied to the density equation.
Hence, it is clear that we can prove statement 1 from statement 2.
\end{itemize}
\end{proof}

\subsection{Change of variables}

In subsection \ref{subsec:vorticity growth mechanism} we argued that
the velocity of the background layer and that of the new layer should
have a ``similar'' functional dependence. ``Similar'' meant that,
under an appropriate change of variables, they should look the same.
We will extrapolate this idea to all layers, i.e., under appropriate
changes of variables, the expression of a physical quantity should
have the same functional form across all layers. To exploit this functional
``similarity'' between layers, it will be useful to rewrite the
equations of the second statement of Proposition \ref{prop:Bousinessq in layers}
under an arbitrary change of variables $\phi^{\left(n\right)}\left(t,x\right)$.
This is exactly the purpose of the next Proposition.
\begin{prop}
\label{prop:change of variables}Let $n\in\mathbb{N}$. The layer
equations of Proposition \ref{prop:Bousinessq in layers}, under the
change of variables $x\leftarrow\phi^{\left(n\right)}\left(t,x\right)$,
become
\[
\begin{aligned} & \frac{\partial\widetilde{\omega^{\left(n\right)}}^{n}}{\partial t}\left(t,x\right)+\\
 & +\left(\widetilde{U^{\left(n-1\right)}}^{n}\left(t,x\right)-\frac{\partial\phi^{\left(n\right)}}{\partial t}\left(t,x\right)\right)\cdot\left[\nabla\left(\phi^{\left(n\right)}\right)\left(t,x\right)\right]^{-1}\cdot\nabla\widetilde{\omega^{\left(n\right)}}^{n}\left(t,x\right)+\\
 & +\widetilde{u^{\left(n\right)}}^{n}\left(t,x\right)\cdot\left[\nabla\left(\phi^{\left(n\right)}\right)\left(t,x\right)\right]^{-1}\cdot\nabla\widetilde{\Omega^{\left(n-1\right)}}^{n}\left(t,x\right)+\\
 & +\widetilde{u^{\left(n\right)}}^{n}\left(t,x\right)\cdot\left[\nabla\left(\phi^{\left(n\right)}\right)\left(t,x\right)\right]^{-1}\cdot\nabla\widetilde{\omega^{\left(n\right)}}^{n}\left(t,x\right)=\\
= & \left(\begin{matrix}0 & 1\end{matrix}\right)\cdot\left[\nabla\left(\phi^{\left(n\right)}\right)\left(t,x\right)\right]^{-1}\cdot\nabla\widetilde{\rho^{\left(n\right)}}^{n}\left(t,x\right)+\widetilde{f_{\omega}^{\left(n\right)}}^{n}\left(t,x\right),
\end{aligned}
\]
\[
\begin{aligned} & \frac{\partial\widetilde{\rho^{\left(n\right)}}^{n}}{\partial t}\left(t,x\right)+\\
 & +\left(\widetilde{U^{\left(n-1\right)}}^{n}\left(t,x\right)-\frac{\partial\phi^{\left(n\right)}}{\partial t}\left(t,x\right)\right)\cdot\left[\nabla\left(\phi^{\left(n\right)}\right)\left(t,x\right)\right]^{-1}\cdot\nabla\widetilde{\rho^{\left(n\right)}}^{n}\left(t,x\right)+\\
 & +\widetilde{u^{\left(n\right)}}^{n}\left(t,x\right)\cdot\left[\nabla\left(\phi^{\left(n\right)}\right)\left(t,x\right)\right]^{-1}\cdot\nabla\widetilde{P^{\left(n-1\right)}}^{n}\left(t,x\right)+\\
 & +\widetilde{u^{\left(n\right)}}^{n}\left(t,x\right)\cdot\left[\nabla\left(\phi^{\left(n\right)}\right)\left(t,x\right)\right]^{-1}\cdot\nabla\widetilde{\rho^{\left(n\right)}}^{n}\left(t,x\right)=\\
= & \widetilde{f_{\rho}^{\left(n\right)}}^{n}\left(t,x\right),
\end{aligned}
\]
where $\widetilde{g}^{n}\left(t,x\right)=g\left(t,\phi^{\left(n\right)}\left(t,x\right)\right)$
and $\mathrm{J}^{-1}$ denotes the inverse of the jacobian matrix.
\end{prop}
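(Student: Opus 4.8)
The plan is to reduce everything to the chain rule for composition with the map $\phi^{(n)}(t,\cdot)$ (which, for each fixed $t$, must be invertible for the change of variables even to make sense), applied term by term to the two layer equations of the second statement of Proposition \ref{prop:Bousinessq in layers}. The only genuine input is the pair of identities relating a function evaluated along $\phi^{(n)}$ to its pullback $\widetilde{g}^{n}(t,x)=g(t,\phi^{(n)}(t,x))$: differentiating in space gives $\nabla_{x}\widetilde{g}^{n}(t,x)=(\nabla g)(t,\phi^{(n)}(t,x))\,\mathrm{J}\phi^{(n)}(t,x)$, hence $(\nabla g)(t,\phi^{(n)}(t,x))=\nabla_{x}\widetilde{g}^{n}(t,x)\,\mathrm{J}^{-1}\phi^{(n)}(t,x)$; and differentiating in time gives $\partial_{t}\widetilde{g}^{n}(t,x)=(\partial_{t}g)(t,\phi^{(n)}(t,x))+(\nabla g)(t,\phi^{(n)}(t,x))\cdot\partial_{t}\phi^{(n)}(t,x)$, so that $(\partial_{t}g)(t,\phi^{(n)}(t,x))=\partial_{t}\widetilde{g}^{n}(t,x)-\big[\nabla_{x}\widetilde{g}^{n}(t,x)\,\mathrm{J}^{-1}\phi^{(n)}(t,x)\big]\cdot\partial_{t}\phi^{(n)}(t,x)$.

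First I would evaluate each of the two layer equations at the point $(t,\phi^{(n)}(t,x))$, which merely means composing every factor with $\phi^{(n)}$; the velocity factors $\widetilde{U^{(n-1)}}^{n}$, $\widetilde{u^{(n)}}^{n}$ and the force factors $\widetilde{f_{\omega}^{(n)}}^{n}$, $\widetilde{f_{\rho}^{(n)}}^{n}$ then appear by definition. Next, for every factor of the form $(\nabla g)(t,\phi^{(n)})$ — namely $\nabla\omega^{(n)}$ (twice), $\nabla\Omega^{(n-1)}$ and $\nabla\rho^{(n)}$ in the vorticity equation, and the analogous ones in the density equation — I substitute the first identity above, turning it into $\nabla_{x}\widetilde{g}^{n}\,\mathrm{J}^{-1}\phi^{(n)}$; the buoyancy term $\partial\rho^{(n)}/\partial x_{2}$ is first written as $\left(\begin{matrix}0 & 1\end{matrix}\right)\cdot\nabla\rho^{(n)}$ before doing so. For the time-derivative factor $(\partial_{t}\omega^{(n)})(t,\phi^{(n)})$ (resp. $(\partial_{t}\rho^{(n)})(t,\phi^{(n)})$) I substitute the second identity. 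Finally I regroup the two terms containing $\nabla_{x}\widetilde{\omega^{(n)}}^{n}\,\mathrm{J}^{-1}\phi^{(n)}$ that are carried respectively by $\widetilde{U^{(n-1)}}^{n}$ and by $-\partial_{t}\phi^{(n)}$ into the single advecting velocity $\widetilde{U^{(n-1)}}^{n}-\partial_{t}\phi^{(n)}$, using bilinearity of the dot product; this reproduces the displayed vorticity equation verbatim. The density equation is handled identically, with $P^{(n-1)}$ and $\rho^{(n)}$ in place of $\Omega^{(n-1)}$ and $\omega^{(n)}$ and with no buoyancy term on the right-hand side.

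I do not expect any real obstacle: the argument is a bookkeeping exercise, and the only points requiring care are the convention for the Jacobian $\mathrm{J}\phi^{(n)}$ (whether $\nabla g$ is treated as a row or a column vector, which fixes on which side one multiplies by $\mathrm{J}^{-1}\phi^{(n)}$) and the tacit assumption that $\phi^{(n)}(t,\cdot)$ is a diffeomorphism so that $\mathrm{J}^{-1}\phi^{(n)}$ is well defined — this is part of what it means to perform the change of variables and will be ensured by the explicit form of $\phi^{(n)}$ chosen later in the construction. One could instead first compress each layer equation into the form $\partial_{t}g+V\cdot\nabla g=h$ and quote the standard transformation rule for a transport operator under a time-dependent change of frame; but since the non-transport terms such as $u^{(n)}\cdot\nabla\Omega^{(n-1)}$ must also be pulled back, it is cleanest to transform every term separately as above.
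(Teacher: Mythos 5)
Your proposal is correct and follows essentially the same route as the paper: establish the two chain-rule identities for spatial and temporal derivatives of the pullback $\widetilde{g}^{n}$, evaluate the layer equations of Proposition \ref{prop:Bousinessq in layers} at $x\leftarrow\phi^{\left(n\right)}\left(t,x\right)$, substitute term by term, and regroup the $\widetilde{U^{\left(n-1\right)}}^{n}$ and $-\partial_{t}\phi^{\left(n\right)}$ contributions into a single advecting factor. The paper's proof is exactly this bookkeeping, with the invertibility of $\phi^{\left(n\right)}\left(t,\cdot\right)$ likewise implicit and guaranteed by the later explicit Choice of $\phi^{\left(n\right)}$.
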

\begin{rem}
\label{rem:change of variables should be approximation of transport}As
we shall explain later, we would like the summands that appear in
these equations to be small in a certain $\left|\left|\cdot\right|\right|_{C^{k,\alpha}\left(\mathbb{R}^{2}\right)}$
norm. In both equations, there is a transport term (the one that has
$\widetilde{U^{\left(n-1\right)}}^{n}\left(t,x\right)$). By the form
of this term, it is evident that, the closer $\frac{\partial\phi^{\left(n\right)}}{\partial t}\left(t,x\right)$
is to $\widetilde{U^{\left(n-1\right)}}^{n}\left(t,x\right)$, the
smaller the transport term will be. In fact, if $\phi^{\left(n\right)}\left(t,x\right)$
were the change of variables associated to particle transport, we
would have $\frac{\partial\phi^{\left(n\right)}}{\partial t}\left(t,x\right)=U^{\left(n\right)}\left(t,\phi^{\left(n\right)}\left(t,x\right)\right)=\widetilde{U^{\left(n\right)}}^{n}\left(t,x\right)$.
This would not completely eliminate the transport term but it would
make it so that it exactly cancels the pure quadratic term. From this
fact, one can clearly see that it is of our interest that $\phi^{\left(n\right)}\left(t,x\right)$
be a certain approximation of the true particle trajectories of the
equation. 
\end{rem}
\begin{proof}
Let $g\in C^{1}\left(\left[0,1\right]\times\mathbb{R}^{2};\mathbb{R}\right)$
be a generic function. We would like to know how both the spatial
and the temporal derivatives change under the proposed change of variables.
We will begin with the spatial derivatives. The chain rule assures
us that
\begin{equation}
\begin{aligned} & \nabla\left(g\left(t,\phi^{\left(n\right)}\left(t,x\right)\right)\right)=\nabla\phi^{\left(n\right)}\left(t,x\right)\cdot\nabla g\left(t,\phi^{\left(n\right)}\left(t,x\right)\right)\iff\\
\iff & \nabla g\left(t,\phi^{\left(n\right)}\left(t,x\right)\right)=\left[\nabla\left(\phi^{\left(n\right)}\right)\left(t,x\right)\right]^{-1}\cdot\nabla\left(g\left(t,\phi^{\left(n\right)}\left(t,x\right)\right)\right)=\left[\nabla\left(\phi^{\left(n\right)}\right)\left(t,x\right)\right]^{-1}\cdot\nabla\widetilde{g}^{n}\left(t,x\right).
\end{aligned}
\label{eq:cv der esp}
\end{equation}
We continue with the temporal derivatives. The chain rules imposes
that
\[
\frac{\partial\widetilde{g}^{n}}{\partial t}\left(t,x\right)=\begin{aligned}\frac{\partial}{\partial t}\left(g\left(t,\phi^{\left(n\right)}\left(t,x\right)\right)\right) & =\frac{\partial g}{\partial t}\left(t,\phi^{\left(n\right)}\left(t,x\right)\right)+\frac{\partial\phi^{\left(n\right)}}{\partial t}\left(t,x\right)\cdot\nabla g\left(t,\phi^{\left(n\right)}\left(t,x\right)\right).\end{aligned}
\]
Making use of equation \eqref{eq:cv der esp}, it can be deduced that
\begin{equation}
\begin{aligned} & \frac{\partial\widetilde{g}^{n}}{\partial t}\left(t,x\right)=\frac{\partial g}{\partial t}\left(t,\phi^{\left(n\right)}\left(t,x\right)\right)+\frac{\partial\phi^{\left(n\right)}}{\partial t}\left(t,x\right)\cdot\left[\nabla\left(\phi^{\left(n\right)}\right)\left(t,x\right)\right]^{-1}\cdot\nabla\widetilde{g}^{n}\left(t,x\right)\iff\\
\iff & \frac{\partial g}{\partial t}\left(t,\phi^{\left(n\right)}\left(t,x\right)\right)=\frac{\partial\widetilde{g}^{n}}{\partial t}\left(t,x\right)-\frac{\partial\phi^{\left(n\right)}}{\partial t}\left(t,x\right)\cdot\left[\nabla\left(\phi^{\left(n\right)}\right)\left(t,x\right)\right]^{-1}\cdot\nabla\widetilde{g}^{n}\left(t,x\right).
\end{aligned}
\label{eq:cv der temp}
\end{equation}

Evaluating the equations of the second statement of Proposition \ref{prop:Bousinessq in layers}
in $x\leftarrow\phi^{\left(n\right)}\left(t,x\right)$ and employing
equations \eqref{eq:cv der esp} and \eqref{eq:cv der temp}, we are
able to write
\[
\begin{aligned} & \frac{\partial\widetilde{\omega^{\left(n\right)}}^{n}}{\partial t}\left(t,x\right)-\frac{\partial\phi^{\left(n\right)}}{\partial t}\left(t,x\right)\cdot\left[\nabla\left(\phi^{\left(n\right)}\right)\left(t,x\right)\right]^{-1}\cdot\nabla\widetilde{\omega^{\left(n\right)}}^{n}\left(t,x\right)+\\
 & +\widetilde{U^{\left(n-1\right)}}^{n}\left(t,x\right)\cdot\left[\nabla\left(\phi^{\left(n\right)}\right)\left(t,x\right)\right]^{-1}\cdot\nabla\widetilde{\omega^{\left(n\right)}}^{n}\left(t,x\right)+\\
 & +\widetilde{u^{\left(n\right)}}^{n}\left(t,x\right)\cdot\left[\nabla\left(\phi^{\left(n\right)}\right)\left(t,x\right)\right]^{-1}\cdot\nabla\widetilde{\Omega^{\left(n-1\right)}}^{n}\left(t,x\right)+\\
 & +\widetilde{u^{\left(n\right)}}^{n}\left(t,x\right)\cdot\left[\nabla\left(\phi^{\left(n\right)}\right)\left(t,x\right)\right]^{-1}\cdot\nabla\widetilde{\omega^{\left(n\right)}}^{n}\left(t,x\right)=\\
= & \left(\begin{matrix}0 & 1\end{matrix}\right)\cdot\left[\nabla\left(\phi^{\left(n\right)}\right)\left(t,x\right)\right]^{-1}\cdot\nabla\widetilde{\rho^{\left(n\right)}}^{n}\left(t,x\right)+\widetilde{f_{\omega}^{\left(n\right)}}^{n}\left(t,x\right).
\end{aligned}
\]
\[
\begin{aligned} & \frac{\partial\widetilde{\rho^{\left(n\right)}}^{n}}{\partial t}\left(t,x\right)-\frac{\partial\phi^{\left(n\right)}}{\partial t}\left(t,x\right)\cdot\left[\nabla\left(\phi^{\left(n\right)}\right)\left(t,x\right)\right]^{-1}\cdot\nabla\widetilde{\rho^{\left(n\right)}}^{n}\left(t,x\right)+\\
 & +\widetilde{U^{\left(n-1\right)}}^{n}\left(t,x\right)\cdot\left[\nabla\left(\phi^{\left(n\right)}\right)\left(t,x\right)\right]^{-1}\cdot\nabla\widetilde{\rho^{\left(n\right)}}^{n}\left(t,x\right)+\\
 & +\widetilde{u^{\left(n\right)}}^{n}\left(t,x\right)\cdot\left[\nabla\left(\phi^{\left(n\right)}\right)\left(t,x\right)\right]^{-1}\cdot\nabla\widetilde{P^{\left(n-1\right)}}^{n}+\\
 & +\widetilde{u^{\left(n\right)}}^{n}\left(t,x\right)\cdot\left[\nabla\left(\phi^{\left(n\right)}\right)\left(t,x\right)\right]^{-1}\cdot\nabla\widetilde{\rho^{\left(n\right)}}^{n}\left(t,x\right)=\\
= & \widetilde{f_{\rho}^{\left(n\right)}}^{n}\left(t,x\right).
\end{aligned}
\]
Joining the first two lines of each equation, we arrive to the result
presented in the statement.
\end{proof}

\subsection{First choices of the construction}

\subsubsection{Time}

As we explained in subsection \ref{subsec:basic idea of the construction},
as time progresses, we will introduce layer after layer of density.
The following Choice is there just to make sure that the first layer
is introduced at $t=0$ and that the time of the blow-up is $t=1$.
\begin{choice}
\label{choice:time (initial)}Let $\left(t_{n}\right)_{n\in\mathbb{N}}\subseteq\left[0,1\right]$
be a sequence, where $t_{n}$ represents the moment when the $n$-th
layer is introduced. $\left(t_{n}\right)_{n\in\mathbb{N}}$ is an
increasing sequence and satisfies $t_{1}=0$ and $\lim_{n\to\infty}t_{n}=1$.
\end{choice}

\subsubsection{The change of variables $\phi^{\left(n\right)}\left(t,x\right)$}

Recall that, by Remark \ref{rem:change of variables should be approximation of transport},
$\phi^{\left(n\right)}\left(t,x\right)$ should resemble the change
of variables that eliminates transport in the equations given in Proposition
\ref{prop:change of variables}. Our goal is to choose $\phi^{\left(n\right)}\left(t,x\right)$
as simple as possible. Since the spatial expanse of each layer should
decrease with $n$, the size of layer $n$ should be much smaller
then the size of layer $n-1$. Thence, a linear approximation of the
true particle trajectories should suffice to account for the transport
of layer $n$ due to its preceding layers. In order words, it is reasonable
to take $\phi^{\left(n\right)}\left(t,x\right)$ as the spatially
linear approximation of the true particle trajectories (as we did
in subsection \ref{subsec:vorticity growth mechanism}). Nonetheless,
the same approach cannot be repeated in the time variable \footnote{Albeit it is true that, as we add new layers, the times considered
will become smaller and smaller, the characteristic time of each layer
will also decrease at the same rate. This implies that, if we focus
on one layer, the evolution times we will work with will be of the
same order than the characteristic time of the layer and, consequently,
will not be small enough for a Taylor expansion.}, where we will maintain all generality. 
\begin{choice}
\label{choice:phin}In light of the above, we take
\[
\phi^{\left(n\right)}\left(t,x\right)\coloneqq\phi^{\left(n\right)}\left(t,0\right)+\left(\frac{1}{a_{n}\left(t\right)}x_{1},\frac{1}{b_{n}\left(t\right)}x_{2}\right),
\]
where $\phi^{\left(n\right)}\left(t,0\right)$, $\frac{1}{a_{n}\left(t\right)}$
and $\frac{1}{b_{n}\left(t\right)}$ are functions to be determined.
Notice that we are choosing $\phi^{\left(n\right)}\left(t,x\right)$
in such a way that the center of the $n$-th layer corresponds to
$x=0$. We remark that this does not imply $\phi^{\left(n\right)}\left(t_{n},0\right)=0$.
In fact, we will choose $\phi^{\left(n\right)}\left(t_{n},0\right)=c^{\left(n\right)}$.
In this way, $c^{\left(n\right)}$ will be the initial position of
the center of the $n$-th layer.
\end{choice}
\begin{rem}
The domain of the change of variables defined in Choice \ref{choice:phin}
consists of the coordinates where the expressions for the physical
quantities look the same across all layers, i.e., the domain is independent
of $n\in\mathbb{N}$ and places the centers of all layers at $x=0$.
On the other hand, the image of the change of variables defined in
Choice \ref{choice:phin} is the real plane $\mathbb{R}^{2}$, where
the solution is going to be constructed. Thus, $\phi^{\left(n\right)}\left(t,x\right)$
maps the $n$-th layer from ``ideal'' coordinates to its real position.
\end{rem}
From Choice \ref{choice:phin}, we can easily derive the inverse change
of variables:
\begin{equation}
\left(\phi^{\left(n\right)}\right)^{-1}\left(t,x\right)=\left(a_{n}\left(t\right)\left(x_{1}-\phi_{1}^{\left(n\right)}\left(t,0\right)\right),b_{n}\left(t\right)\left(x_{2}-\phi_{2}^{\left(n\right)}\left(t,0\right)\right)\right).\label{eq:phin inverse}
\end{equation}
Then, we automatically obtain that
\begin{equation}
\mathrm{J}^{-1}\phi^{\left(n\right)}\left(t,x\right)=\mathrm{J}\left(\phi^{\left(n\right)}\right)^{-1}\left(t,\phi^{\left(n\right)}\left(t,x\right)\right)=\left(\begin{matrix}a_{n}\left(t\right) & 0\\
0 & b_{n}\left(t\right)
\end{matrix}\right).\label{eq:jacobian inverse}
\end{equation}

\subsubsection{Tilde operators}

This simple expression for the jacobian of the inverse change of variables
has the advantage that we can define a nabla tilde operator $\widetilde{\nabla}^{n}$,
which will allow us to simplify the equations of Proposition \ref{prop:change of variables}.
This $\widetilde{\nabla}^{n}$ operator is given by
\begin{equation}
\widetilde{\nabla}^{n}\coloneqq\left(a_{n}\left(t\right)\frac{\partial}{\partial x_{1}},b_{n}\left(t\right)\frac{\partial}{\partial x_{2}}\right).\label{eq:def tilde nabla}
\end{equation}
Consequently, we can define ``tilde super $n$'' versions of the
gradient, divergence and laplacian operators. Explicitly,
\[
\begin{aligned}\widetilde{\nabla}^{n}g & =\left(a_{n}\left(t\right)\frac{\partial g}{\partial x_{1}},b_{n}\left(t\right)\frac{\partial g}{\partial x_{2}}\right),\quad\text{ for }g:\mathbb{R}^{2}\to\mathbb{R},\\
\widetilde{\nabla}^{n}\cdot g & =a_{n}\left(t\right)\frac{\partial g_{1}}{\partial x_{1}}+b_{n}\left(t\right)\frac{\partial g_{2}}{\partial x_{2}},\quad\text{ for }g:\mathbb{R}^{2}\to\mathbb{R}^{2},\\
\widetilde{\Delta}^{n}g & =a_{n}\left(t\right)^{2}\frac{\partial^{2}g}{\partial x_{1}^{2}}+b_{n}\left(t\right)^{2}\frac{\partial^{2}g}{\partial x_{2}^{2}},\quad\text{ for }g:\mathbb{R}^{2}\to\mathbb{R}.
\end{aligned}
\]
Using the language of these operators and taking into account how
we have chosen $\phi^{\left(n\right)}\left(t,x\right)$ in Choice
\ref{choice:phin}, as
\begin{equation}
\widetilde{\nabla}^{n}g=\left(\begin{matrix}a_{n}\left(t\right) & 0\\
0 & b_{n}\left(t\right)
\end{matrix}\right)\left(\begin{matrix}\frac{\partial g}{\partial x_{1}} & \frac{\partial g}{\partial x_{2}}\end{matrix}\right)=\left[\nabla\left(\phi^{\left(n\right)}\right)\right]^{-1}\cdot\nabla g\quad\forall g\in C^{1}\left(\mathbb{R}^{2};\mathbb{R}\right)\label{eq:gradient tilde phi}
\end{equation}
by equation \eqref{eq:jacobian inverse}, we are able to rewrite the
equations of Proposition \ref{prop:change of variables} as follows:
\begin{equation}
\begin{aligned} & \frac{\partial\widetilde{\omega^{\left(n\right)}}^{n}}{\partial t}\left(t,x\right)+\\
 & +\left(\widetilde{U^{\left(n-1\right)}}^{n}\left(t,x\right)-\frac{\partial\phi^{\left(n\right)}}{\partial t}\left(t,0\right)-\left(\begin{matrix}-\frac{1}{a_{n}\left(t\right)^{2}}\frac{\mathrm{d}a_{n}}{\mathrm{d}t}\left(t\right)x_{1}\\
-\frac{1}{b_{n}\left(t\right)^{2}}\frac{\mathrm{d}b_{n}}{\mathrm{d}t}\left(t\right)x_{2}
\end{matrix}\right)\right)\cdot\widetilde{\nabla}^{n}\widetilde{\omega^{\left(n\right)}}^{n}\left(t,x\right)+\\
 & +\widetilde{u^{\left(n\right)}}^{n}\left(t,x\right)\cdot\widetilde{\nabla}^{n}\widetilde{\Omega^{\left(n-1\right)}}^{n}+\widetilde{u^{\left(n\right)}}^{n}\left(t,x\right)\cdot\widetilde{\nabla}^{n}\widetilde{\omega^{\left(n\right)}}^{n}\left(t,x\right)=\\
= & \left(\begin{matrix}0 & 1\end{matrix}\right)\cdot\left[\widetilde{\nabla}^{n}\widetilde{\rho^{\left(n\right)}}^{n}\left(t,x\right)\right]+\widetilde{f_{\omega}^{\left(n\right)}}^{n}\left(t,x\right),
\end{aligned}
\label{eq:Boussinesq 1 con tildes}
\end{equation}
\begin{equation}
\begin{aligned} & \frac{\partial\widetilde{\rho^{\left(n\right)}}^{n}}{\partial t}\left(t,x\right)+\\
 & +\left(\widetilde{U^{\left(n-1\right)}}^{n}\left(t,x\right)-\frac{\partial\phi^{\left(n\right)}}{\partial t}\left(t,0\right)-\left(\begin{matrix}-\frac{1}{a_{n}\left(t\right)^{2}}\frac{\mathrm{d}a_{n}}{\mathrm{d}t}\left(t\right)x_{1}\\
-\frac{1}{b_{n}\left(t\right)^{2}}\frac{\mathrm{d}b_{n}}{\mathrm{d}t}\left(t\right)x_{2}
\end{matrix}\right)\right)\cdot\widetilde{\nabla}^{n}\widetilde{\rho^{\left(n\right)}}^{n}\left(t,x\right)+\\
 & +\widetilde{u^{\left(n\right)}}^{n}\left(t,x\right)\cdot\widetilde{\nabla}^{n}\widetilde{P^{\left(n-1\right)}}^{n}\left(t,x\right)+\widetilde{u^{\left(n\right)}}^{n}\left(t,x\right)\cdot\widetilde{\nabla}^{n}\widetilde{\rho^{\left(n\right)}}^{n}\left(t,x\right)=\\
= & \widetilde{f_{\rho}^{\left(n\right)}}^{n}\left(t,x\right).
\end{aligned}
\label{eq:Boussinesq 2 con tildes}
\end{equation}
Furthermore, these ``tilde'' operators satisfy properties similar
to those fulfilled by the original operators.
\begin{lem}
\label{lem:operadores tilde}~
\begin{enumerate}
\item Given $f,g:\mathbb{R}^{2}\to\mathbb{R}$, we have
\[
\widetilde{\nabla}^{n}\left(fg\right)=g\widetilde{\nabla}^{n}f+f\widetilde{\nabla}^{n}g.
\]
\item Given $f:\mathbb{R}^{2}\to\mathbb{R}$ and $u:\mathbb{R}^{2}\to\mathbb{R}^{2}$,
we have
\[
\widetilde{\nabla}^{n}\cdot\left(fu\right)=f\widetilde{\nabla}^{n}\cdot u+\widetilde{\nabla}^{n}f\cdot u.
\]
\item Given $f,g:\mathbb{R}^{2}\to\mathbb{R}$, we have
\[
\widetilde{\Delta}^{n}\left(fg\right)=f\widetilde{\Delta}^{n}g+g\widetilde{\Delta}^{n}f+2\widetilde{\nabla}^{n}f\cdot\widetilde{\nabla}^{n}g.
\]
\end{enumerate}
\end{lem}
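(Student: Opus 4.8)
The plan is to observe that the whole lemma is just the ordinary Leibniz rule in disguise: since the scaling factors $a_{n}(t)$ and $b_{n}(t)$ depend only on time and not on the spatial variable $x$, each of the operators $\widetilde{\nabla}^{n}$, $\widetilde{\nabla}^{n}\cdot$ and $\widetilde{\Delta}^{n}$ defined in \eqref{eq:def tilde nabla} is, for fixed $t$, a constant-coefficient linear combination of the ordinary partial derivatives $\partial_{x_{1}}$, $\partial_{x_{2}}$ (and their second-order counterparts). Consequently the product rules for the tilde operators follow componentwise from the classical product rule, with the coefficients $a_{n}(t),b_{n}(t)$ simply passing through the differentiations untouched.

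First I would prove (1) directly from the definition. Applying the ordinary product rule in each slot,
\[
\widetilde{\nabla}^{n}\left(fg\right)=\left(a_{n}(t)\,\partial_{x_{1}}(fg),\;b_{n}(t)\,\partial_{x_{2}}(fg)\right)=\left(a_{n}(t)\left(g\,\partial_{x_{1}}f+f\,\partial_{x_{1}}g\right),\;b_{n}(t)\left(g\,\partial_{x_{2}}f+f\,\partial_{x_{2}}g\right)\right),
\]
and regrouping the terms carrying the factor $g$ and those carrying the factor $f$ gives $g\,\widetilde{\nabla}^{n}f+f\,\widetilde{\nabla}^{n}g$. Identity (2) is handled the same way: expand $\widetilde{\nabla}^{n}\cdot(fu)=a_{n}(t)\,\partial_{x_{1}}(fu_{1})+b_{n}(t)\,\partial_{x_{2}}(fu_{2})$ by the product rule, then separate the terms in which $f$ is left undifferentiated from those in which it is differentiated, obtaining $f\,\widetilde{\nabla}^{n}\cdot u+\widetilde{\nabla}^{n}f\cdot u$.

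For (3) I would use that $\widetilde{\Delta}^{n}=\widetilde{\nabla}^{n}\cdot\widetilde{\nabla}^{n}$, which is immediate because $a_{n},b_{n}$ are constant in $x$ (so $a_{n}\partial_{x_{1}}(a_{n}\partial_{x_{1}}g)=a_{n}^{2}\partial_{x_{1}}^{2}g$, and likewise for $x_{2}$). Then, chaining (1) and (2),
\[
\widetilde{\Delta}^{n}(fg)=\widetilde{\nabla}^{n}\cdot\left(g\,\widetilde{\nabla}^{n}f+f\,\widetilde{\nabla}^{n}g\right)=\left(g\,\widetilde{\Delta}^{n}f+\widetilde{\nabla}^{n}g\cdot\widetilde{\nabla}^{n}f\right)+\left(f\,\widetilde{\Delta}^{n}g+\widetilde{\nabla}^{n}f\cdot\widetilde{\nabla}^{n}g\right),
\]
which is exactly $f\,\widetilde{\Delta}^{n}g+g\,\widetilde{\Delta}^{n}f+2\,\widetilde{\nabla}^{n}f\cdot\widetilde{\nabla}^{n}g$. (Equivalently, one can expand $\widetilde{\Delta}^{n}(fg)=a_{n}^{2}\partial_{x_{1}}^{2}(fg)+b_{n}^{2}\partial_{x_{2}}^{2}(fg)$ using $\partial_{x_{i}}^{2}(fg)=g\,\partial_{x_{i}}^{2}f+f\,\partial_{x_{i}}^{2}g+2\,\partial_{x_{i}}f\,\partial_{x_{i}}g$ and collect terms.) There is no real obstacle: the only point worth noting is that the $t$-dependent factors commute with the spatial differential operators precisely because they are $x$-independent, which is what makes the verification mechanical; the lemma is recorded only for later convenience in manipulating equations \eqref{eq:Boussinesq 1 con tildes}--\eqref{eq:Boussinesq 2 con tildes}.
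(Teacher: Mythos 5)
Your proposal is correct and follows essentially the same route as the paper: parts (1) and (2) are verified componentwise via the ordinary Leibniz rule (the factors $a_{n}(t),b_{n}(t)$ being $x$-independent), and part (3) is obtained by writing $\widetilde{\Delta}^{n}=\widetilde{\nabla}^{n}\cdot\widetilde{\nabla}^{n}$ and chaining (1) and (2). No gaps.
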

\begin{proof}
~
\begin{enumerate}
\item Using the definition of $\widetilde{\nabla}^{n}$ and the Leibniz
rule, we deduce that
\[
\begin{aligned}\widetilde{\nabla}^{n}\left(fg\right) & =\left(a_{n}\left(t\right)\frac{\partial}{\partial x_{1}}\left(fg\right),b_{n}\left(t\right)\frac{\partial}{\partial x_{2}}\left(fg\right)\right)=\\
 & =\left(a_{n}\left(t\right)f\left(\frac{\partial g}{\partial x_{1}}+g\frac{\partial f}{\partial x_{1}}\right),b_{n}\left(t\right)\left(f\frac{\partial g}{\partial x_{2}}+g\frac{\partial f}{\partial x_{2}}\right)\right)=\\
 & =f\left(a_{n}\left(t\right)\frac{\partial g}{\partial x_{1}},b_{n}\left(t\right)\frac{\partial g}{\partial x_{2}}\right)+g\left(a_{n}\left(t\right)\frac{\partial f}{\partial x_{1}},b_{n}\left(t\right)\frac{\partial f}{\partial x_{2}}\right)=\\
 & =f\widetilde{\nabla}^{n}g+g\widetilde{\nabla}^{n}f.
\end{aligned}
\]
\item Employing the definition of $\widetilde{\nabla}^{n}$ and the Leibniz
rule, we arrive to
\[
\begin{aligned}\widetilde{\nabla}^{n}\cdot\left(fu\right) & =a_{n}\left(t\right)\frac{\partial}{\partial x_{1}}\left(fu_{1}\right)+b_{n}\left(t\right)\frac{\partial}{\partial x_{2}}\left(fu_{2}\right)=\\
 & =a_{n}\left(t\right)\left(f\frac{\partial u_{1}}{\partial x_{1}}+u_{1}\frac{\partial f}{\partial x_{1}}\right)+b_{n}\left(t\right)\left(f\frac{\partial u_{2}}{\partial x_{2}}+u_{2}\frac{\partial f}{\partial x_{2}}\right)=\\
 & =f\left[a_{n}\left(t\right)\frac{\partial u_{1}}{\partial x_{1}}+b_{n}\left(t\right)\frac{\partial u_{2}}{\partial x_{2}}\right]+\left(u_{1},u_{2}\right)\cdot\left(a_{n}\left(t\right)\frac{\partial f}{\partial x_{1}},b_{n}\left(t\right)\frac{\partial f}{\partial x_{2}}\right)=\\
 & =f\widetilde{\nabla}^{n}\cdot u+u\cdot\widetilde{\nabla}^{n}f.
\end{aligned}
\]
\item Making use of points $1$ and $2$, we obtain
\[
\begin{aligned}\widetilde{\Delta}^{n}\left(fg\right) & =\widetilde{\nabla}^{n}\cdot\left(\widetilde{\nabla}^{n}\left(fg\right)\right)=\widetilde{\nabla}^{n}\cdot\left(g\widetilde{\nabla}^{n}f+f\widetilde{\nabla}^{n}g\right)=\\
 & =\widetilde{\nabla}^{n}g\cdot\widetilde{\nabla}^{n}f+g\widetilde{\Delta}^{n}f+\widetilde{\nabla}^{n}f\cdot\widetilde{\nabla}^{n}g+f\widetilde{\Delta}^{n}g=\\
 & =g\widetilde{\Delta}^{n}f+f\widetilde{\Delta}^{n}g+2\widetilde{\nabla}^{n}f\cdot\widetilde{\nabla}^{n}g.
\end{aligned}
\]
\end{enumerate}
\end{proof}
\begin{prop}
\label{prop:relations psi n tilde derivatives}If $\psi^{\left(n\right)}$
is the stream function of the $n$-th layer, we have $\widetilde{u^{\left(n\right)}}^{n}\left(t,x\right)=-\widetilde{\nabla}^{n\perp}\widetilde{\psi^{\left(n\right)}}^{n}\left(t,x\right)$
and $\widetilde{\omega^{\left(n\right)}}^{n}\left(t,x\right)=-\widetilde{\Delta}^{n}\widetilde{\psi^{\left(n\right)}}^{n}\left(t,x\right)$.
\end{prop}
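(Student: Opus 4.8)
The plan is to read the two identities as the natural compatibility between the stream-function representation of a layer and the $\widetilde{\nabla}^{n}$-calculus introduced above; the whole argument is the chain rule, exploiting that $a_{n}(t)$ and $b_{n}(t)$ do not depend on the spatial variable.

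The key preliminary observation is the conjugated chain rule $(\nabla g)(t,\phi^{(n)}(t,x))=\widetilde{\nabla}^{n}\widetilde{g}^{n}(t,x)$, valid for every scalar $g\in C^{1}$. This is exactly \eqref{eq:cv der esp} combined with the explicit inverse Jacobian \eqref{eq:jacobian inverse} and the definition \eqref{eq:def tilde nabla} of $\widetilde{\nabla}^{n}$: since $\mathrm{J}^{-1}\phi^{(n)}=\mathrm{diag}(a_{n}(t),b_{n}(t))$ has entries independent of $x$, conjugating $\partial_{x_{i}}$ by $\phi^{(n)}$ simply rescales it by $a_{n}$ or $b_{n}$, which is the content of \eqref{eq:gradient tilde phi}. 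Applying this componentwise to the two entries of a vector field $v$ also gives the divergence version $(\nabla\cdot v)(t,\phi^{(n)}(t,x))=\widetilde{\nabla}^{n}\cdot\widetilde{v}^{n}(t,x)$.

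Next I would recall that, by definition of the stream function (with the sign convention forced by $u=\nabla^{\perp}\Delta^{-1}\omega$ and already used for the background layer in \eqref{eq:intro omega0}), one has $u^{(n)}=-\nabla^{\perp}\psi^{(n)}=-(\nabla\psi^{(n)})^{\perp}$ and $\omega^{(n)}=-\Delta\psi^{(n)}=-\nabla\cdot\nabla\psi^{(n)}$ in the physical variables. Since the rotation $(\cdot)^{\perp}$ is a fixed linear map, it commutes with evaluation at $\phi^{(n)}(t,x)$, so the velocity identity follows immediately from the conjugated chain rule: $\widetilde{u^{(n)}}^{n}=-\left(\widetilde{\nabla}^{n}\widetilde{\psi^{(n)}}^{n}\right)^{\perp}=-\widetilde{\nabla}^{n\perp}\widetilde{\psi^{(n)}}^{n}$. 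For the vorticity identity I would apply the conjugated chain rule twice, writing $\Delta=\nabla\cdot\nabla$: $\widetilde{\omega^{(n)}}^{n}=-\widetilde{\nabla}^{n}\cdot\widetilde{\nabla\psi^{(n)}}^{n}=-\widetilde{\nabla}^{n}\cdot\left(\widetilde{\nabla}^{n}\widetilde{\psi^{(n)}}^{n}\right)=-\widetilde{\Delta}^{n}\widetilde{\psi^{(n)}}^{n}$, the last equality being the definition of $\widetilde{\Delta}^{n}$ (again using that $a_{n},b_{n}$ are constant in $x$).

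I do not anticipate a genuine obstacle: the statement is bookkeeping once one notices that $a_{n}$ and $b_{n}$ pass through spatial derivatives. The two points that demand a little care are keeping the sign conventions for $\nabla^{\perp}$ and for the stream function consistent with those used elsewhere in the paper, and, in the second identity, observing that one needs the divergence form of the conjugated chain rule (not merely the gradient form), since $\widetilde{\nabla}^{n}$ appears as both the inner and the outer operator when $\widetilde{\Delta}^{n}\widetilde{\psi^{(n)}}^{n}$ is unfolded --- though both forms have the same one-line justification.
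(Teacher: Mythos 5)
Your proposal is correct and follows essentially the same route as the paper: both reduce the statement to the chain-rule identities \eqref{eq:cv der esp}--\eqref{eq:jacobian inverse} (equivalently \eqref{eq:gradient tilde phi}), transferring $u^{(n)}=-\nabla^{\perp}\psi^{(n)}$ and $\omega^{(n)}=-\Delta\psi^{(n)}$ to the tilde variables, with the paper writing the computation out componentwise while you package it as a conjugated chain rule for gradient and divergence. No gaps.
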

\begin{proof}
Because $\psi^{\left(n\right)}$ is the stream function of the $n$-th
layer, we have
\[
\begin{aligned}\widetilde{u^{\left(n\right)}}^{n}\left(t,x\right) & =u^{\left(n\right)}\left(t,\phi^{\left(n\right)}\left(t,x\right)\right)=-\nabla^{\perp}\psi^{\left(n\right)}\left(t,\phi^{\left(n\right)}\left(t,x\right)\right)=\\
 & =\left(\begin{matrix}\frac{\partial\psi^{\left(n\right)}}{\partial x_{2}}\left(t,\phi^{\left(n\right)}\left(t,x\right)\right)\\
-\frac{\partial\psi^{\left(n\right)}}{\partial x_{1}}\left(t,\phi^{\left(n\right)}\left(t,x\right)\right)
\end{matrix}\right).
\end{aligned}
\]
Recalling equations \eqref{eq:cv der esp} and \eqref{eq:jacobian inverse},
we obtain that
\[
\begin{aligned}\frac{\partial\psi^{\left(n\right)}}{\partial x_{2}}\left(t,\phi^{\left(n\right)}\left(t,x\right)\right) & =b_{n}\left(t\right)\frac{\partial}{\partial x_{2}}\left(\psi^{\left(n\right)}\left(t,\phi^{\left(n\right)}\left(t,x\right)\right)\right)=b_{n}\left(t\right)\frac{\partial\widetilde{\psi^{\left(n\right)}}}{\partial x_{2}}\left(t,x\right),\\
\frac{\partial\psi^{\left(n\right)}}{\partial x_{1}}\left(t,\phi^{\left(n\right)}\left(t,x\right)\right) & =a_{n}\left(t\right)\frac{\partial}{\partial x_{1}}\left(\psi^{\left(n\right)}\left(t,\phi^{\left(n\right)}\left(t,x\right)\right)\right)=a_{n}\left(t\right)\frac{\partial\widetilde{\psi^{\left(n\right)}}}{\partial x_{1}}\left(t,x\right).
\end{aligned}
\]
Hence,
\[
\widetilde{u^{\left(n\right)}}^{n}\left(t,x\right)=\left(\begin{matrix}b_{n}\left(t\right)\frac{\partial\widetilde{\psi^{\left(n\right)}}}{\partial x_{2}}\left(t,x\right)\\
-a_{n}\left(t\right)\frac{\partial\widetilde{\psi^{\left(n\right)}}}{\partial x_{1}}\left(t,x\right)
\end{matrix}\right)=-\widetilde{\nabla}^{n\perp}\widetilde{\psi^{\left(n\right)}}^{n}\left(t,x\right),
\]
where we have used definition \eqref{eq:def tilde nabla} in the last
step.

Concerning the second statement, again, as $\psi^{\left(n\right)}$
is the stream function of the $n$-th layer, we must have
\[
\begin{aligned}\widetilde{\omega^{\left(n\right)}}^{n}\left(t,x\right) & =\omega^{\left(n\right)}\left(t,\phi^{\left(n\right)}\left(t,x\right)\right)=-\Delta\psi^{\left(n\right)}\left(t,\phi^{\left(n\right)}\left(t,x\right)\right)=\\
 & =-\frac{\partial^{2}\psi^{\left(n\right)}}{\partial x_{1}^{2}}\left(t,\phi^{\left(n\right)}\left(t,x\right)\right)-\frac{\partial^{2}\psi^{\left(n\right)}}{\partial x_{2}^{2}}\left(t,\phi^{\left(n\right)}\left(t,x\right)\right)=\\
 & =-\frac{\partial}{\partial x_{1}}\left(\frac{\partial\psi^{\left(n\right)}}{\partial x_{1}}\right)\left(t,\phi^{\left(n\right)}\left(t,x\right)\right)-\frac{\partial}{\partial x_{2}}\left(\frac{\partial\psi^{\left(n\right)}}{\partial x_{2}}\right)\left(t,\phi^{\left(n\right)}\left(t,x\right)\right).
\end{aligned}
\]
Thanks to equations \eqref{eq:cv der esp} and \eqref{eq:jacobian inverse},
we may write
\[
\begin{aligned}\widetilde{\omega^{\left(n\right)}}^{n}\left(t,x\right) & =-a_{n}\left(t\right)\frac{\partial}{\partial x_{1}}\left(\frac{\partial\psi^{\left(n\right)}}{\partial x_{1}}\left(t,\phi^{\left(n\right)}\left(t,x\right)\right)\right)-b_{n}\left(t\right)\frac{\partial}{\partial x_{2}}\left(\frac{\partial\psi^{\left(n\right)}}{\partial x_{2}}\left(t,\phi^{\left(n\right)}\left(t,x\right)\right)\right)=\\
 & =-a_{n}\left(t\right)^{2}\frac{\partial^{2}}{\partial x_{1}^{2}}\left(\psi^{\left(n\right)}\left(t,\phi^{\left(n\right)}\left(t,x\right)\right)\right)-b_{n}\left(t\right)^{2}\frac{\partial^{2}}{\partial x_{2}^{2}}\left(\psi^{\left(n\right)}\left(t,\phi^{\left(n\right)}\left(t,x\right)\right)\right)=\\
 & =-a_{n}\left(t\right)^{2}\frac{\partial^{2}\widetilde{\psi^{\left(n\right)}}^{n}}{\partial x_{1}^{2}}\left(t,x\right)-b_{n}\left(t\right)^{2}\frac{\partial^{2}\widetilde{\psi^{\left(n\right)}}^{n}}{\partial x_{2}^{2}}\left(t,x\right)=-\widetilde{\Delta}^{n}\widetilde{\psi^{\left(n\right)}}^{n}\left(t,x\right),
\end{aligned}
\]
where we have used definition \eqref{eq:def tilde nabla} in the last
step.
\end{proof}

\subsubsection{Stream function}

As happens in the 2D Euler equations, we can condense the information
about the velocity field and the vorticity of the fluid in the stream
function, which is a scalar function. In other words, if we prescribe
the stream function of each layer, we can automatically compute, via
differentiation, all other physical quantities that appear in the
equation of the vorticity except for $\frac{\partial\rho^{\left(n\right)}}{\partial x_{2}}$
and $f_{\omega}^{\left(n\right)}$. Actually, as one can check by
applying Proposition \ref{prop:relations psi n tilde derivatives},
taking the stream function as
\begin{equation}
\widetilde{\psi^{\left(n\right)}}^{n}\left(t,x\right)\coloneqq B_{n}\left(t\right)\sin\left(x_{1}\right)\sin\left(x_{2}\right)\label{eq:stream function}
\end{equation}
generates the velocity field
\[
\widetilde{u^{\left(n\right)}}^{n}\left(t,x\right)\coloneqq\left(\begin{matrix}B_{n}\left(t\right)b_{n}\left(t\right)\sin\left(x_{1}\right)\cos\left(x_{2}\right)\\
-B_{n}\left(t\right)a_{n}\left(t\right)\cos\left(x_{1}\right)\sin\left(x_{2}\right)
\end{matrix}\right),
\]
which looks very similar to the velocity field \eqref{eq:intro u1 with transport}
we saw in subsection \eqref{subsec:vorticity growth mechanism}. In
this way, the shortness of expression \eqref{eq:stream function}
is a clear incentive to work with the stream function. Moreover, it
is much easier to add cutoffs to a stream function than to add them
to a velocity field. This is because, by construction, every velocity
field that comes from a stream function will automatically be divergence
free. Thereby, it is enough to add the cutoff function multiplying
the expression \eqref{eq:stream function}, as we shall do shortly.
Additionally, this comes with the advantage that all the physical
quantities involved in our construction (velocity, vorticity, density
and both external forces) will be compactly supported (to see this,
recall that $u^{\left(n\right)}=-\nabla^{\perp}\psi^{\left(n\right)}$
and $\omega^{\left(n\right)}=-\Delta\psi^{\left(n\right)}$ and take
a look at the second statement of Proposition \ref{prop:Bousinessq in layers}).
\begin{choice}
\label{choice:psin}Based on all the aforementioned remarks, we shall
construct a flux whose stream function is
\[
\Psi^{\left(\infty\right)}\left(t,x\right)=\sum_{n=1}^{\infty}\psi^{\left(n\right)}\left(t,x\right),\quad\widetilde{\psi^{\left(n\right)}}^{n}\left(t,x\right)=B_{n}\left(t\right)\eta^{\left(n\right)}\left(x\right)f\left(x\right),
\]
where
\[
f\left(x\right)=\sin\left(x_{1}\right)\sin\left(x_{2}\right),\quad\eta^{\left(n\right)}\left(x\right)=\varphi\left(\lambda_{n}x_{1}\right)\varphi\left(\lambda_{n}x_{2}\right),
\]
$\varphi\in C_{c}^{\infty}\left(\mathbb{R}\right)$ is even, satisfies
$\varphi\equiv1$ in the interval $\left[-8\pi,8\pi\right]$ and $\varphi\equiv0$
in $\mathbb{R}\setminus\left[-16\pi,16\pi\right]$, $\left(\lambda_{n}\right)_{n\in\mathbb{N}}\subseteq\left(0,1\right)$
is a sequence whose values will be chosen later and $B_{n}\left(t\right)$
is a free parameter that will also be chosen later.
\end{choice}
As we will see, the flux associated to this stream function is not
an exact solution of the Boussinesq system, i.e., error terms appear.
We will use the force to compensate these error terms. Consequently,
the flux we present will indeed be a solution of the forced Boussinesq
system. The main appeal of this construction is that the force will
be substantially more regular then the solution at the blow-up time.
This means that the external force is not the mechanism behind the
finite-time singularity. Besides, our construction will be completely
explicit in the spatial variables and compactly supported, as we will
see.

\subsection{Velocity and vorticity}

In this subsection, we will provide explicit expressions for the velocity
and the vorticity and we will obtain some bounds that will be needed
later on.
\begin{prop}
\label{prop:computations vorticity}We have
\[
\begin{aligned}\widetilde{u^{\left(n\right)}}^{n}\left(t,x\right) & =B_{n}\left(t\right)\varphi\left(\lambda_{n}x_{1}\right)\varphi\left(\lambda_{n}x_{2}\right)\left(\begin{matrix}b_{n}\left(t\right)\sin\left(x_{1}\right)\cos\left(x_{2}\right)\\
-a_{n}\left(t\right)\cos\left(x_{1}\right)\sin\left(x_{2}\right)
\end{matrix}\right)+\\
 & \quad+\lambda_{n}B_{n}\left(t\right)\left(\begin{matrix}b_{n}\left(t\right)\varphi\left(\lambda_{n}x_{1}\right)\varphi'\left(\lambda_{n}x_{2}\right)\\
-a_{n}\left(t\right)\varphi'\left(\lambda_{n}x_{1}\right)\varphi\left(\lambda_{n}x_{2}\right)
\end{matrix}\right)\sin\left(x_{1}\right)\sin\left(x_{2}\right).
\end{aligned}
\]
\[
\begin{aligned}\widetilde{\omega^{\left(n\right)}}^{n}\left(t,x\right) & =B_{n}\left(t\right)\left(a_{n}\left(t\right)^{2}+b_{n}\left(t\right)^{2}\right)\varphi\left(\lambda_{n}x_{1}\right)\varphi\left(\lambda_{n}x_{2}\right)\sin\left(x_{1}\right)\sin\left(x_{2}\right)+\\
 & \quad-\lambda_{n}^{2}B_{n}\left(t\right)a_{n}\left(t\right)^{2}\varphi''\left(\lambda_{n}x_{1}\right)\varphi\left(\lambda_{n}x_{2}\right)\sin\left(x_{1}\right)\sin\left(x_{2}\right)+\\
 & \quad-\lambda_{n}^{2}B_{n}\left(t\right)b_{n}\left(t\right)^{2}\varphi\left(\lambda_{n}x_{1}\right)\varphi''\left(\lambda_{n}x_{2}\right)\sin\left(x_{1}\right)\sin\left(x_{2}\right)+\\
 & \quad-2\lambda_{n}B_{n}\left(t\right)a_{n}\left(t\right)^{2}\varphi'\left(\lambda_{n}x_{1}\right)\varphi\left(\lambda_{n}x_{2}\right)\cos\left(x_{1}\right)\sin\left(x_{2}\right)+\\
 & \quad-2\lambda_{n}B_{n}\left(t\right)b_{n}\left(t\right)^{2}\varphi\left(\lambda_{n}x_{1}\right)\varphi'\left(\lambda_{n}x_{2}\right)\sin\left(x_{1}\right)\cos\left(x_{2}\right).
\end{aligned}
\]
\end{prop}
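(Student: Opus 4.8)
The plan is to reduce the computation to differentiating the completely explicit stream function $\widetilde{\psi^{\left(n\right)}}^{n}\left(t,x\right)=B_{n}\left(t\right)\eta^{\left(n\right)}\left(x\right)f\left(x\right)$ fixed in Choice \ref{choice:psin}, and then to invoke Proposition \ref{prop:relations psi n tilde derivatives} together with the Leibniz-type identities of Lemma \ref{lem:operadores tilde}. First I would recall from Proposition \ref{prop:relations psi n tilde derivatives} that $\widetilde{u^{\left(n\right)}}^{n}=-\widetilde{\nabla}^{n\perp}\widetilde{\psi^{\left(n\right)}}^{n}$ and $\widetilde{\omega^{\left(n\right)}}^{n}=-\widetilde{\Delta}^{n}\widetilde{\psi^{\left(n\right)}}^{n}$, so the whole task is to apply $\widetilde{\nabla}^{n}$ and $\widetilde{\Delta}^{n}$ to the product $\eta^{\left(n\right)}f$; the scalar $B_{n}\left(t\right)$ pulls out of both operators, which act only in the spatial variables even though they carry the time-dependent coefficients $a_{n}\left(t\right),b_{n}\left(t\right)$.

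For the velocity, I would use the product rule of Lemma \ref{lem:operadores tilde}(1), namely $\widetilde{\nabla}^{n}\left(\eta^{\left(n\right)}f\right)=\eta^{\left(n\right)}\widetilde{\nabla}^{n}f+f\widetilde{\nabla}^{n}\eta^{\left(n\right)}$. Using the definition \eqref{eq:def tilde nabla} of $\widetilde{\nabla}^{n}$ and $f\left(x\right)=\sin\left(x_{1}\right)\sin\left(x_{2}\right)$, one gets $\widetilde{\nabla}^{n}f=\left(a_{n}\left(t\right)\cos\left(x_{1}\right)\sin\left(x_{2}\right),b_{n}\left(t\right)\sin\left(x_{1}\right)\cos\left(x_{2}\right)\right)$ and, by the chain rule applied to $\eta^{\left(n\right)}\left(x\right)=\varphi\left(\lambda_{n}x_{1}\right)\varphi\left(\lambda_{n}x_{2}\right)$, $\widetilde{\nabla}^{n}\eta^{\left(n\right)}=\lambda_{n}\left(a_{n}\left(t\right)\varphi'\left(\lambda_{n}x_{1}\right)\varphi\left(\lambda_{n}x_{2}\right),b_{n}\left(t\right)\varphi\left(\lambda_{n}x_{1}\right)\varphi'\left(\lambda_{n}x_{2}\right)\right)$. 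Multiplying by $-B_{n}\left(t\right)$ and taking $\left(\cdot\right)^{\perp}$, the contribution of $\widetilde{\nabla}^{n}f$ yields the first (zeroth order in $\lambda_{n}$) summand of the claimed formula, and the contribution of $\widetilde{\nabla}^{n}\eta^{\left(n\right)}$ yields the second, $O\left(\lambda_{n}\right)$, summand.

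For the vorticity, I would instead invoke Lemma \ref{lem:operadores tilde}(3): $\widetilde{\Delta}^{n}\left(\eta^{\left(n\right)}f\right)=\eta^{\left(n\right)}\widetilde{\Delta}^{n}f+f\widetilde{\Delta}^{n}\eta^{\left(n\right)}+2\widetilde{\nabla}^{n}\eta^{\left(n\right)}\cdot\widetilde{\nabla}^{n}f$. Here $\widetilde{\Delta}^{n}f=-\left(a_{n}\left(t\right)^{2}+b_{n}\left(t\right)^{2}\right)f$, $\widetilde{\Delta}^{n}\eta^{\left(n\right)}=\lambda_{n}^{2}\left(a_{n}\left(t\right)^{2}\varphi''\left(\lambda_{n}x_{1}\right)\varphi\left(\lambda_{n}x_{2}\right)+b_{n}\left(t\right)^{2}\varphi\left(\lambda_{n}x_{1}\right)\varphi''\left(\lambda_{n}x_{2}\right)\right)$, and the cross term is read off from the two tilde-gradients already computed. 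Multiplying by $-B_{n}\left(t\right)$ and regrouping gives exactly the five lines displayed: $\eta^{\left(n\right)}\widetilde{\Delta}^{n}f$ produces the first line, $f\widetilde{\Delta}^{n}\eta^{\left(n\right)}$ splits into the second and third lines, and $2\widetilde{\nabla}^{n}\eta^{\left(n\right)}\cdot\widetilde{\nabla}^{n}f$ splits into the fourth and fifth lines.

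There is no genuine obstacle here: the statement is a purely mechanical consequence of Proposition \ref{prop:relations psi n tilde derivatives} and Lemma \ref{lem:operadores tilde}, and the only thing requiring care is the bookkeeping of the cutoff derivatives $\varphi',\varphi''$ and of the signs incurred by taking the perpendicular and the (negative) tilde-Laplacian. I would simply carry out the elementary trigonometric differentiations and collect like terms.
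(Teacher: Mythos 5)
Your proposal is correct and follows essentially the same route as the paper: the velocity is obtained by applying Proposition \ref{prop:relations psi n tilde derivatives} directly to the stream function of Choice \ref{choice:psin}, and the vorticity by combining that proposition with the product/Laplacian identities of Lemma \ref{lem:operadores tilde}, with all the displayed terms accounted for exactly as you describe.
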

\begin{rem}
The Proposition above is telling us that, due to the cutoff functions,
both the velocity and the vorticity differ from the ideal objective
introduced in subsection \ref{subsec:vorticity growth mechanism}.
Nevertheless, in both cases, we can express the physical quantity
as the sum of one term that has the desired structure (and has zero
order in $\lambda_{n}$) and some other terms which are proportional
to $\lambda_{n}$. Consequently, if $\lambda_{n}$ is ``small'',
both the velocity and the vorticity should behave ``similarly''
to the case without cutoff functions.
\end{rem}
\begin{proof}
The expression of the velocity follows from applying Proposition \ref{prop:relations psi n tilde derivatives}
to the choice of $\psi^{\left(n\right)}$ taken in Choice \ref{choice:psin}.
For the vorticity, by Proposition \ref{prop:relations psi n tilde derivatives}
and Lemma \ref{lem:operadores tilde}, we have
\[
\begin{aligned}\widetilde{\omega^{\left(n\right)}}^{n}\left(t,x\right) & =-\widetilde{\Delta}^{n}\widetilde{\psi^{\left(n\right)}}^{n}\left(t,x\right)=-B_{n}\left(t\right)\eta^{\left(n\right)}\left(x\right)\widetilde{\Delta}^{n}f\left(x\right)-B_{n}\left(t\right)f\left(x\right)\widetilde{\Delta}^{n}\eta^{\left(n\right)}\left(x\right)-2\widetilde{\nabla}^{n}\eta^{\left(n\right)}\left(x\right)\cdot\widetilde{\nabla}^{n}f\left(x\right)=\\
 & =B_{n}\left(t\right)\varphi\left(\lambda_{n}x_{1}\right)\varphi\left(\lambda_{n}x_{2}\right)\left(a_{n}\left(t\right)^{2}+b_{n}\left(t\right)^{2}\right)\sin\left(x_{1}\right)\sin\left(x_{2}\right)+\\
 & \quad-\lambda_{n}^{2}B_{n}\left(t\right)\left(a_{n}\left(t\right)^{2}\varphi''\left(\lambda_{n}x_{1}\right)\varphi\left(\lambda_{n}x_{2}\right)+b_{n}\left(t\right)^{2}\varphi\left(\lambda_{n}x_{1}\right)\varphi''\left(\lambda_{n}x_{2}\right)\right)\sin\left(x_{1}\right)\sin\left(x_{2}\right)+\\
 & \quad-2\lambda_{n}B_{n}\left(t\right)\left(\begin{matrix}a_{n}\left(t\right)\varphi'\left(\lambda_{n}x_{1}\right)\varphi\left(\lambda_{n}x_{2}\right)\\
b_{n}\left(t\right)\varphi\left(\lambda_{n}x_{1}\right)\varphi'\left(\lambda_{n}x_{2}\right)
\end{matrix}\right)\cdot\left(\begin{matrix}a_{n}\left(t\right)\cos\left(x_{1}\right)\sin\left(x_{2}\right)\\
b_{n}\left(t\right)\sin\left(x_{1}\right)\cos\left(x_{2}\right)
\end{matrix}\right).
\end{aligned}
\]
Expanding the expressions above, we obtain the statement.
\end{proof}
\begin{prop}
\label{prop:form of the velocity}There are functions $V^{\left(n\right)},W^{\left(n\right)}:\left[0,1\right]\times\mathbb{R}^{2}\to\mathbb{R}^{2}$
such that
\[
\begin{aligned}\widetilde{u^{\left(n\right)}}^{n}\left(t,x\right) & =\widetilde{V^{\left(n\right)}}^{n}\left(t,x\right)+\lambda_{n}\widetilde{W^{\left(n\right)}}^{n}\left(t,x\right),\\
\widetilde{V^{\left(n\right)}}^{n}\left(t,x\right) & =B_{n}\left(t\right)\varphi\left(\lambda_{n}x_{1}\right)\varphi\left(\lambda_{n}x_{2}\right)\left(\begin{matrix}b_{n}\left(t\right)\sin\left(x_{1}\right)\cos\left(x_{2}\right)\\
-a_{n}\left(t\right)\cos\left(x_{1}\right)\sin\left(x_{2}\right)
\end{matrix}\right).
\end{aligned}
\]
Furthermore, given $\alpha\in\left(0,1\right)$,
\[
{\small \begin{matrix}\begin{aligned}\left|\left|V_{1}^{\left(n\right)}\left(t,\cdot\right)\right|\right|_{L^{\infty}\left(\mathbb{R}^{2}\right)} & \le B_{n}\left(t\right)b_{n}\left(t\right),\\
\left|\left|W_{1}^{\left(n\right)}\left(t,\cdot\right)\right|\right|_{L^{\infty}\left(\mathbb{R}^{2}\right)} & \lesssim_{\varphi}B_{n}\left(t\right)b_{n}\left(t\right),\\
\left|\left|V_{1}^{\left(n\right)}\left(t,\cdot\right)\right|\right|_{\dot{C}^{\alpha}\left(\mathbb{R}^{2}\right)} & \lesssim_{\varphi}B_{n}\left(t\right)b_{n}\left(t\right)\max\left\{ a_{n}\left(t\right)^{\alpha},b_{n}\left(t\right)^{\alpha}\right\} ,\\
\left|\left|W_{1}^{\left(n\right)}\left(t,\cdot\right)\right|\right|_{\dot{C}^{\alpha}\left(\mathbb{R}^{2}\right)} & \lesssim_{\varphi}B_{n}\left(t\right)b_{n}\left(t\right)\max\left\{ a_{n}\left(t\right)^{\alpha},b_{n}\left(t\right)^{\alpha}\right\} ,
\end{aligned}
 &  & \begin{aligned}\left|\left|V_{2}^{\left(n\right)}\left(t,\cdot\right)\right|\right|_{L^{\infty}\left(\mathbb{R}^{2}\right)} & \le B_{n}\left(t\right)a_{n}\left(t\right),\\
\left|\left|W_{2}^{\left(n\right)}\left(t,\cdot\right)\right|\right|_{L^{\infty}\left(\mathbb{R}^{2}\right)} & \lesssim_{\varphi}B_{n}\left(t\right)a_{n}\left(t\right),\\
\left|\left|V_{2}^{\left(n\right)}\left(t,\cdot\right)\right|\right|_{\dot{C}^{\alpha}\left(\mathbb{R}^{2}\right)} & \lesssim_{\varphi}B_{n}\left(t\right)a_{n}\left(t\right)\max\left\{ a_{n}\left(t\right)^{\alpha},b_{n}\left(t\right)^{\alpha}\right\} ,\\
\left|\left|W_{2}^{\left(n\right)}\left(t,\cdot\right)\right|\right|_{\dot{C}^{\alpha}\left(\mathbb{R}^{2}\right)} & \lesssim_{\varphi}B_{n}\left(t\right)a_{n}\left(t\right)\max\left\{ a_{n}\left(t\right)^{\alpha},b_{n}\left(t\right)^{\alpha}\right\} ,
\end{aligned}
\end{matrix}}
\]
where $\lesssim_{\varphi}$ means that there is $M\left(\varphi\right)>0$
such that
\[
\text{left hand side}\leq M\left(\varphi\right)\cdot\text{right hand side}.
\]
The constants $M\left(\varphi\right)$ are different in each case.
\end{prop}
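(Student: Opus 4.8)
The plan is to read the functions $V^{(n)}$ and $W^{(n)}$ straight off Proposition~\ref{prop:computations vorticity}. That Proposition already displays $\widetilde{u^{(n)}}^{n}$ as a structured term having exactly the announced form of $\widetilde{V^{(n)}}^{n}$, plus $\lambda_{n}$ times
\[
\widetilde{W^{(n)}}^{n}(t,x)=B_{n}(t)\left(\begin{matrix} b_{n}(t)\,\varphi(\lambda_{n}x_{1})\,\varphi'(\lambda_{n}x_{2})\\ -a_{n}(t)\,\varphi'(\lambda_{n}x_{1})\,\varphi(\lambda_{n}x_{2})\end{matrix}\right)\sin(x_{1})\sin(x_{2}).
\]
I would therefore simply \emph{define} $V^{(n)}$ and $W^{(n)}$ by transporting these two expressions back through $(\phi^{(n)})^{-1}$; since $\phi^{(n)}$ is the affine diffeomorphism of $\mathbb{R}^{2}$ of Choice~\ref{choice:phin}, this yields genuine functions on $[0,1]\times\mathbb{R}^{2}$, and substituting \eqref{eq:phin inverse} gives fully explicit formulae, for instance
\[
V_{1}^{(n)}(t,x)=B_{n}(t)\,b_{n}(t)\,\varphi\big(\lambda_{n}a_{n}(t)(x_{1}-\phi_{1}^{(n)}(t,0))\big)\,\varphi\big(\lambda_{n}b_{n}(t)(x_{2}-\phi_{2}^{(n)}(t,0))\big)\,\sin\big(a_{n}(t)(x_{1}-\phi_{1}^{(n)}(t,0))\big)\,\cos\big(b_{n}(t)(x_{2}-\phi_{2}^{(n)}(t,0))\big),
\]
and analogously for $V_{2}^{(n)},W_{1}^{(n)},W_{2}^{(n)}$ (the scalar prefactor becoming $a_{n}(t)$ in the second components, and one $\varphi$ being replaced by $\varphi'$ in $W_{1}^{(n)},W_{2}^{(n)}$).

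From these formulae the $L^{\infty}$ bounds are immediate: since $\|\varphi\|_{L^{\infty}}\le 1$, $|\sin|\le 1$, $|\cos|\le 1$, and $\varphi'$ has sup-norm a constant depending only on $\varphi$, we get $\|V_{1}^{(n)}(t,\cdot)\|_{L^{\infty}(\mathbb{R}^{2})}\le B_{n}(t)b_{n}(t)$ and $\|W_{1}^{(n)}(t,\cdot)\|_{L^{\infty}(\mathbb{R}^{2})}\lesssim_\varphi B_{n}(t)b_{n}(t)$, with the symmetric statements for the second components. For the H\"older seminorms I would use the one-dimensional scaling identity $\|g(c\,\cdot)\|_{\dot C^{\alpha}(\mathbb{R})}=c^{\alpha}\|g\|_{\dot C^{\alpha}(\mathbb{R})}$ ($c>0$), together with the facts that $\|\varphi\|_{\dot C^{\alpha}(\mathbb{R})}$ and $\|\varphi'\|_{\dot C^{\alpha}(\mathbb{R})}$ are finite (because $\varphi\in C_{c}^{\infty}(\mathbb{R})$) and $\|\sin\|_{\dot C^{\alpha}(\mathbb{R})},\|\cos\|_{\dot C^{\alpha}(\mathbb{R})}\le 2$. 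Since $\lambda_{n}\in(0,1)$, this bounds the $\dot C^{\alpha}$ seminorm of each of the four factors making up $V_{1}^{(n)}(t,\cdot)$ by $\lesssim_\varphi a_{n}(t)^{\alpha}$ or $\lesssim_\varphi b_{n}(t)^{\alpha}$. Feeding these into the Leibniz-type inequality \eqref{eq:property Calpha multiplication}, applied three times to the four-fold product (each resulting summand being a product of three sup-norms $\lesssim_\varphi 1$ and one seminorm $\lesssim\max\{a_{n}(t)^{\alpha},b_{n}(t)^{\alpha}\}$), yields
\[
\|V_{1}^{(n)}(t,\cdot)\|_{\dot C^{\alpha}(\mathbb{R}^{2})}\lesssim_\varphi B_{n}(t)\,b_{n}(t)\,\max\{a_{n}(t)^{\alpha},b_{n}(t)^{\alpha}\},
\]
and the other three seminorm bounds follow from the identical computation.

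There is no genuinely difficult step here; the one point that must not be overlooked is that the claimed estimates concern the \emph{un-tilded} functions $V^{(n)},W^{(n)}$, so one has to transport the expressions of Proposition~\ref{prop:computations vorticity} back through $(\phi^{(n)})^{-1}$ before estimating --- and it is precisely the Jacobian factors $a_{n}(t),b_{n}(t)$ introduced by this affine change of variables (cf.~\eqref{eq:phin inverse} and \eqref{eq:jacobian inverse}) that are responsible for the $\max\{a_{n}(t)^{\alpha},b_{n}(t)^{\alpha}\}$ appearing on the right-hand sides. One could alternatively invoke the composition inequality \eqref{eq:property Calpha composition} rather than writing the composed formulae explicitly, but since $\phi^{(n)}$ is affine the direct substitution is shorter and keeps all constants transparent.
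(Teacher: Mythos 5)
Your proposal is correct and follows essentially the same route as the paper's proof: the decomposition is read off Proposition \ref{prop:computations vorticity}, the $L^{\infty}$ bounds follow from the trivial sup bounds on the factors, and the Hölder bounds follow from the product rule \eqref{eq:property Calpha multiplication} combined with the effect of the affine change of variables. The only cosmetic difference is that you substitute $(\phi^{(n)})^{-1}$ explicitly and use the exact scaling of the one-dimensional seminorm, whereas the paper first bounds the tilde quantities (getting $\lesssim_{\varphi}B_{n}(t)b_{n}(t)$, resp.\ $B_{n}(t)a_{n}(t)$) and then applies the composition inequality \eqref{eq:property Calpha composition} with $\left|\left|(\phi^{(n)})^{-1}(t,\cdot)\right|\right|_{\dot{C}^{1}(\mathbb{R}^{2};\mathbb{R}^{2})}=\max\left\{ a_{n}(t),b_{n}(t)\right\}$; both yield the same factor $\max\left\{ a_{n}(t)^{\alpha},b_{n}(t)^{\alpha}\right\}$.
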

\begin{rem}
Notice that, although we define the functions $V^{\left(n\right)}$
and $W^{\left(n\right)}$ via the change of variables $\phi^{\left(n\right)}\left(t,x\right)$,
the bounds we present are for the functions $V^{\left(n\right)}$
and $W^{\left(n\right)}$ themselves and not for $\widetilde{V^{\left(n\right)}}^{n}$
and $\widetilde{W^{\left(n\right)}}^{n}$. We do this, because, even
though the expressions are much simpler to write with the change of
variables $\phi^{\left(n\right)}\left(t,x\right)$, we are not trying
to make the force under this change of variables be bounded; what
we need is the force itself (without any change of variables) to be
bounded.
\end{rem}
\begin{proof}
Taking
\[
\widetilde{W^{\left(n\right)}}^{n}\left(t,x\right)=B_{n}\left(t\right)\left(\begin{matrix}b_{n}\left(t\right)\varphi\left(\lambda_{n}x_{1}\right)\varphi'\left(\lambda_{n}x_{2}\right)\\
-a_{n}\left(t\right)\varphi'\left(\lambda_{n}x_{1}\right)\varphi\left(\lambda_{n}x_{2}\right)
\end{matrix}\right)\sin\left(x_{1}\right)\sin\left(x_{2}\right),
\]
the decomposition given in the statement is clearly true.

Next, we proceed to compute the required $\left|\left|\cdot\right|\right|_{L^{\infty}\left(\mathbb{R}^{2}\right)}$
and $\left|\left|\cdot\right|\right|_{\dot{C}^{\alpha}\left(\mathbb{R}^{2}\right)}$
norms. Using that $\left|\left|\varphi\right|\right|_{L^{\infty}\left(\mathbb{R}^{2}\right)}=1$
(see Choice \ref{choice:psin}) and recalling that $\left|\left|\cdot\right|\right|_{L^{\infty}\left(\mathbb{R}^{2}\right)}$
is invariant under diffeomorphisms of the domain, we immediately deduce
the bounds in $\left|\left|\cdot\right|\right|_{L^{\infty}\left(\mathbb{R}^{2}\right)}$
for $V^{\left(n\right)}$ and $W^{\left(n\right)}$ given in the statement.
Indeed, schematically, we have
\[
\begin{aligned}\widetilde{V^{\left(n\right)}}^{n}\left(t,x\right) & =B_{n}\left(t\right)\underbrace{\varphi\left(\lambda_{n}x_{1}\right)}_{\le1}\underbrace{\varphi\left(\lambda_{n}x_{2}\right)}_{\le1}\left(\begin{matrix}b_{n}\left(t\right)\overbrace{\sin\left(x_{1}\right)}^{\le1}\overbrace{\cos\left(x_{2}\right)}^{\le1}\\
-a_{n}\left(t\right)\underbrace{\cos\left(x_{1}\right)}_{\le1}\underbrace{\sin\left(x_{2}\right)}_{\le1}
\end{matrix}\right),\\
\widetilde{W^{\left(n\right)}}^{n}\left(t,x\right) & =B_{n}\left(t\right)\left(\begin{matrix}b_{n}\left(t\right)\overbrace{\varphi\left(\lambda_{n}x_{1}\right)}^{\le1}\overbrace{\varphi'\left(\lambda_{n}x_{2}\right)}^{\le\left|\left|\varphi\right|\right|_{\dot{C}^{1}\left(\mathbb{R}\right)}}\\
-a_{n}\left(t\right)\underbrace{\varphi'\left(\lambda_{n}x_{1}\right)}_{\le\left|\left|\varphi\right|\right|_{\dot{C}^{1}\left(\mathbb{R}\right)}}\underbrace{\varphi\left(\lambda_{n}x_{2}\right)}_{\le1}
\end{matrix}\right)\underbrace{\sin\left(x_{1}\right)}_{\le1}\underbrace{\sin\left(x_{2}\right)}_{\le1}.
\end{aligned}
\]

For the $\left|\left|\cdot\right|\right|_{\dot{C}^{\alpha}\left(\mathbb{R}^{2}\right)}$
seminorm, a little more work is required. Equation \eqref{eq:property Calpha multiplication}
tells us that, when computing the $\left|\left|\cdot\right|\right|_{\dot{C}^{\alpha}\left(\mathbb{R}^{2}\right)}$
seminorm of $\widetilde{V^{\left(n\right)}}^{n}$ or $\widetilde{W^{\left(n\right)}}^{n}$,
we will obtain one summand for every factor that has some space dependence.
In each of those summands, one factor will be a $\left|\left|\cdot\right|\right|_{\dot{C}^{\alpha}\left(\mathbb{R}^{2}\right)}$
seminorm and the others will be $\left|\left|\cdot\right|\right|_{L^{\infty}\left(\mathbb{R}^{2}\right)}$
norms. Furthermore, thanks to equation \eqref{eq:property Calpha composition},
we can bound
\[
\left|\left|\varphi\left(\lambda_{n}\cdot\right)\right|\right|_{\dot{C}^{\alpha}\left(\mathbb{R}\right)}\le K_{0}\left(1\right)\lambda_{n}^{\alpha}\left|\left|\varphi\right|\right|_{\dot{C}^{\alpha}\left(\mathbb{R}\right)}.
\]
As $\varphi\in C_{c}^{\infty}\left(\mathbb{R}\right)$, we have $\left|\left|\varphi\right|\right|_{\dot{C}^{\alpha}\left(\mathbb{R}\right)}\le\left|\left|\varphi\right|\right|_{\dot{C}^{1}\left(\mathbb{R}\right)}$.
Similarly, we have $\left|\left|\sin\left(\cdot\right)\right|\right|_{\dot{C}^{\alpha}\left(\mathbb{R}\right)}\le\left|\left|\sin\left(\cdot\right)\right|\right|_{\dot{C}^{1}\left(\mathbb{R}\right)}=1$
and $\left|\left|\cos\left(\cdot\right)\right|\right|_{\dot{C}^{\alpha}\left(\mathbb{R}\right)}\le\left|\left|\cos\left(\cdot\right)\right|\right|_{\dot{C}^{1}\left(\mathbb{R}\right)}=1$.
Thereby, schematically, our situation is the following:
\[
\widetilde{V_{1}^{\left(n\right)}}^{n}\left(t,x\right)=B_{n}\left(t\right)b_{n}\left(t\right)\underbrace{\overbrace{\varphi\left(\lambda_{n}x_{1}\right)}^{{\footnotesize \begin{matrix}\text{bounded in}\\
\dot{C}^{\alpha}\left(\mathbb{R}^{2}\right)\text{ by }\\
K_{0}\left(1\right)\lambda_{n}^{\alpha}\left|\left|\varphi\right|\right|_{\dot{C}^{1}\left(\mathbb{R}\right)}
\end{matrix}}}}_{{\footnotesize \begin{matrix}\text{bounded in}\\
L^{\infty}\left(\mathbb{R}^{2}\right)\text{ by }1
\end{matrix}}}\underbrace{\overbrace{\varphi\left(\lambda_{n}x_{2}\right)}^{{\footnotesize \begin{matrix}\text{bounded in}\\
\dot{C}^{\alpha}\left(\mathbb{R}^{2}\right)\text{ by }\\
K_{0}\left(1\right)\lambda_{n}^{\alpha}\left|\left|\varphi\right|\right|_{\dot{C}^{1}\left(\mathbb{R}\right)}
\end{matrix}}}}_{{\footnotesize \begin{matrix}\text{bounded in}\\
L^{\infty}\left(\mathbb{R}^{2}\right)\text{ by }1
\end{matrix}}}\underbrace{\overbrace{\sin\left(x_{1}\right)}^{{\footnotesize \begin{matrix}\text{bounded in}\\
\dot{C}^{\alpha}\left(\mathbb{R}^{2}\right)\text{ by }1
\end{matrix}}}}_{{\footnotesize \begin{matrix}\text{bounded in}\\
L^{\infty}\left(\mathbb{R}^{2}\right)\text{ by }1
\end{matrix}}}\underbrace{\overbrace{\cos\left(x_{2}\right)}^{{\footnotesize \begin{matrix}\text{bounded in}\\
\dot{C}^{\alpha}\left(\mathbb{R}^{2}\right)\text{ by }1
\end{matrix}}}}_{{\footnotesize \begin{matrix}\text{bounded in}\\
L^{\infty}\left(\mathbb{R}^{2}\right)\text{ by }1
\end{matrix}}}.
\]
Consequently, we arrive to
\[
\left|\left|\widetilde{V_{1}^{\left(n\right)}}^{n}\left(t,\cdot\right)\right|\right|_{\dot{C}^{\alpha}\left(\mathbb{R}^{2}\right)}\le B_{n}\left(t\right)b_{n}\left(t\right)\left[K_{0}\left(1\right)\lambda_{n}^{\alpha}\left|\left|\varphi\right|\right|_{\dot{C}^{1}\left(\mathbb{R}\right)}+K_{0}\left(1\right)\lambda_{n}^{\alpha}\left|\left|\varphi\right|\right|_{\dot{C}^{1}\left(\mathbb{R}\right)}+1+1\right].
\]
Since $\lambda_{n}\le1$ by Choice \ref{choice:psin}, we can write
\[
\left|\left|\widetilde{V_{1}^{\left(n\right)}}^{n}\left(t,\cdot\right)\right|\right|_{\dot{C}^{\alpha}\left(\mathbb{R}^{2}\right)}\le2\left(1+K\left(1\right)\left|\left|\varphi\right|\right|_{\dot{C}^{1}\left(\mathbb{R}\right)}\right)B_{n}\left(t\right)b_{n}\left(t\right).
\]
Acting analogously, we get
\[
\begin{aligned}\widetilde{V_{2}^{\left(n\right)}}^{n}\left(t,\cdot\right) & =-B_{n}\left(t\right)a_{n}\left(t\right)\underbrace{\overbrace{\varphi\left(\lambda_{n}x_{1}\right)}^{{\footnotesize \begin{matrix}\text{bounded in}\\
\dot{C}^{\alpha}\left(\mathbb{R}^{2}\right)\text{ by }\\
K_{0}\left(1\right)\lambda_{n}^{\alpha}\left|\left|\varphi\right|\right|_{\dot{C}^{1}\left(\mathbb{R}\right)}
\end{matrix}}}}_{{\footnotesize \begin{matrix}\text{bounded in}\\
L^{\infty}\left(\mathbb{R}^{2}\right)\text{ by }1
\end{matrix}}}\underbrace{\overbrace{\varphi\left(\lambda_{n}x_{2}\right)}^{{\footnotesize \begin{matrix}\text{bounded in}\\
\dot{C}^{\alpha}\left(\mathbb{R}^{2}\right)\text{ by }\\
K_{0}\left(1\right)\lambda_{n}^{\alpha}\left|\left|\varphi\right|\right|_{\dot{C}^{1}\left(\mathbb{R}\right)}
\end{matrix}}}}_{{\footnotesize \begin{matrix}\text{bounded in}\\
L^{\infty}\left(\mathbb{R}^{2}\right)\text{ by }1
\end{matrix}}}\underbrace{\overbrace{\cos\left(x_{1}\right)}^{{\footnotesize \begin{matrix}\text{bounded in}\\
\dot{C}^{\alpha}\left(\mathbb{R}^{2}\right)\text{ by }1
\end{matrix}}}}_{{\footnotesize \begin{matrix}\text{bounded in}\\
L^{\infty}\left(\mathbb{R}^{2}\right)\text{ by }1
\end{matrix}}}\underbrace{\overbrace{\sin\left(x_{2}\right)}^{{\footnotesize \begin{matrix}\text{bounded in}\\
\dot{C}^{\alpha}\left(\mathbb{R}^{2}\right)\text{ by }1
\end{matrix}}}}_{{\footnotesize \begin{matrix}\text{bounded in}\\
L^{\infty}\left(\mathbb{R}^{2}\right)\text{ by }1
\end{matrix}}},\\
\widetilde{W_{1}^{\left(n\right)}}^{n}\left(t,\cdot\right) & =B_{n}\left(t\right)b_{n}\left(t\right)\underbrace{\overbrace{\varphi\left(\lambda_{n}x_{1}\right)}^{{\footnotesize \begin{matrix}\text{bounded in}\\
\dot{C}^{\alpha}\left(\mathbb{R}^{2}\right)\text{ by }\\
K_{0}\left(1\right)\lambda_{n}^{\alpha}\left|\left|\varphi\right|\right|_{\dot{C}^{1}\left(\mathbb{R}\right)}
\end{matrix}}}}_{{\footnotesize \begin{matrix}\text{bounded in}\\
L^{\infty}\left(\mathbb{R}^{2}\right)\text{ by }1
\end{matrix}}}\underbrace{\overbrace{\varphi'\left(\lambda_{n}x_{2}\right)}^{{\footnotesize \begin{matrix}\text{bounded in}\\
\dot{C}^{\alpha}\left(\mathbb{R}^{2}\right)\text{ by }\\
K_{0}\left(1\right)\lambda_{n}^{\alpha}\left|\left|\varphi\right|\right|_{\dot{C}^{2}\left(\mathbb{R}\right)}
\end{matrix}}}}_{{\footnotesize \begin{matrix}\text{bounded in}\\
L^{\infty}\left(\mathbb{R}^{2}\right)\text{ by}\\
\left|\left|\varphi\right|\right|_{\dot{C}^{1}\left(\mathbb{R}\right)}
\end{matrix}}}\underbrace{\overbrace{\sin\left(x_{1}\right)}^{{\footnotesize \begin{matrix}\text{bounded in}\\
\dot{C}^{\alpha}\left(\mathbb{R}^{2}\right)\text{ by }1
\end{matrix}}}}_{{\footnotesize \begin{matrix}\text{bounded in}\\
L^{\infty}\left(\mathbb{R}^{2}\right)\text{ by }1
\end{matrix}}}\underbrace{\overbrace{\sin\left(x_{2}\right)}^{{\footnotesize \begin{matrix}\text{bounded in}\\
\dot{C}^{\alpha}\left(\mathbb{R}^{2}\right)\text{ by }1
\end{matrix}}}}_{{\footnotesize \begin{matrix}\text{bounded in}\\
L^{\infty}\left(\mathbb{R}^{2}\right)\text{ by }1
\end{matrix}}},\\
\widetilde{W_{2}^{\left(n\right)}}^{n}\left(t,\cdot\right) & =-B_{n}\left(t\right)a_{n}\left(t\right)\underbrace{\overbrace{\varphi'\left(\lambda_{n}x_{1}\right)}^{{\footnotesize \begin{matrix}\text{bounded in}\\
\dot{C}^{\alpha}\left(\mathbb{R}^{2}\right)\text{ by }\\
K_{0}\left(1\right)\lambda_{n}^{\alpha}\left|\left|\varphi\right|\right|_{\dot{C}^{2}\left(\mathbb{R}\right)}
\end{matrix}}}}_{{\footnotesize \begin{matrix}\text{bounded in}\\
L^{\infty}\left(\mathbb{R}^{2}\right)\text{ by}\\
\left|\left|\varphi\right|\right|_{\dot{C}^{1}\left(\mathbb{R}\right)}
\end{matrix}}}\underbrace{\overbrace{\varphi\left(\lambda_{n}x_{2}\right)}^{{\footnotesize \begin{matrix}\text{bounded in}\\
\dot{C}^{\alpha}\left(\mathbb{R}^{2}\right)\text{ by }\\
K_{0}\left(1\right)\lambda_{n}^{\alpha}\left|\left|\varphi\right|\right|_{\dot{C}^{1}\left(\mathbb{R}\right)}
\end{matrix}}}}_{{\footnotesize \begin{matrix}\text{bounded in}\\
L^{\infty}\left(\mathbb{R}^{2}\right)\text{ by }1
\end{matrix}}}\underbrace{\overbrace{\sin\left(x_{1}\right)}^{{\footnotesize \begin{matrix}\text{bounded in}\\
\dot{C}^{\alpha}\left(\mathbb{R}^{2}\right)\text{ by }1
\end{matrix}}}}_{{\footnotesize \begin{matrix}\text{bounded in}\\
L^{\infty}\left(\mathbb{R}^{2}\right)\text{ by }1
\end{matrix}}}\underbrace{\overbrace{\sin\left(x_{2}\right)}^{{\footnotesize \begin{matrix}\text{bounded in}\\
\dot{C}^{\alpha}\left(\mathbb{R}^{2}\right)\text{ by }1
\end{matrix}}}}_{{\footnotesize \begin{matrix}\text{bounded in}\\
L^{\infty}\left(\mathbb{R}^{2}\right)\text{ by }1
\end{matrix}}},
\end{aligned}
\]
and, as a consequence,
\[
\begin{aligned}\left|\left|\widetilde{V_{2}^{\left(n\right)}}^{n}\left(t,\cdot\right)\right|\right|_{\dot{C}^{\alpha}\left(\mathbb{R}^{2}\right)} & \lesssim_{\varphi}B_{n}\left(t\right)a_{n}\left(t\right),\\
\left|\left|\widetilde{W_{1}^{\left(n\right)}}^{n}\left(t,\cdot\right)\right|\right|_{\dot{C}^{\alpha}\left(\mathbb{R}^{2}\right)} & \lesssim_{\varphi}B_{n}\left(t\right)b_{n}\left(t\right),\\
\left|\left|\widetilde{W_{2}^{\left(n\right)}}^{n}\left(t,\cdot\right)\right|\right|_{\dot{C}^{\alpha}\left(\mathbb{R}^{2}\right)} & \lesssim_{\varphi}B_{n}\left(t\right)a_{n}\left(t\right).
\end{aligned}
\]

Lastly, to obtain the expressions of the statement, we need to write
$\left|\left|V_{1}^{\left(n\right)}\left(t,\cdot\right)\right|\right|_{\dot{C}^{\alpha}\left(\mathbb{R}^{2}\right)}$
in terms of $\left|\left|\widetilde{V_{1}^{\left(n\right)}}^{n}\left(t,x\right)\right|\right|_{\dot{C}^{\alpha}\left(\mathbb{R}^{2}\right)}$.
To achieve this, we will appeal to equation \eqref{eq:property Calpha composition}.
In this way,
\[
\left|\left|V_{1}^{\left(n\right)}\left(t,\cdot\right)\right|\right|_{\dot{C}^{\alpha}\left(\mathbb{R}^{2}\right)}=\left|\left|\widetilde{V_{1}^{\left(n\right)}}^{n}\left(t,\left(\phi^{\left(n\right)}\right)^{-1}\left(t,\cdot\right)\right)\right|\right|_{\dot{C}^{\alpha}\left(\mathbb{R}^{2}\right)}\lesssim\left|\left|\left(\phi^{\left(n\right)}\right)^{-1}\left(t,\cdot\right)\right|\right|_{\dot{C}^{1}\left(\mathbb{R}^{2};\mathbb{R}^{2}\right)}^{\alpha}\left|\left|\widetilde{V_{1}^{\left(n\right)}}^{n}\left(t,\cdot\right)\right|\right|_{\dot{C}^{\alpha}\left(\mathbb{R}^{2}\right)}.
\]
Making use of equation \eqref{eq:jacobian inverse}, we infer that
\[
\left|\left|\left(\phi^{\left(n\right)}\right)^{-1}\left(t,\cdot\right)\right|\right|_{\dot{C}^{1}\left(\mathbb{R}^{2};\mathbb{R}^{2}\right)}=\max\left\{ a_{n}\left(t\right),b_{n}\left(t\right)\right\} .
\]
Combining all the ingredients mentioned above, we arrive to the bound
of the statement for $\left|\left|V_{1}^{\left(n\right)}\left(t,\cdot\right)\right|\right|_{\dot{C}^{\alpha}\left(\mathbb{R}^{2}\right)}$.
The other bounds can be obtained analogously.
\end{proof}

\subsection{First look at the transport term}

Coming back to equations \eqref{eq:Boussinesq 1 con tildes} and \eqref{eq:Boussinesq 2 con tildes},
we see that the term
\[
\left(\widetilde{U^{\left(n-1\right)}}^{n}\left(t,x\right)-\frac{\partial\phi^{\left(n\right)}}{\partial t}\left(t,0\right)-\left(\begin{matrix}-\frac{1}{a_{n}\left(t\right)^{2}}\frac{\mathrm{d}a_{n}}{\mathrm{d}t}\left(t\right)x_{1}\\
-\frac{1}{b_{n}\left(t\right)^{2}}\frac{\mathrm{d}b_{n}}{\mathrm{d}t}\left(t\right)x_{2}
\end{matrix}\right)\right)
\]
can be made ``small'' if we take $\frac{\partial\phi^{\left(n\right)}}{\partial t}\left(t,0\right)$,
$a_{n}\left(t\right)$ and $b_{n}\left(t\right)$ such that
\[
\frac{\partial\phi^{\left(n\right)}}{\partial t}\left(t,0\right)+\left(\begin{matrix}-\frac{1}{a_{n}\left(t\right)^{2}}\frac{\mathrm{d}a_{n}}{\mathrm{d}t}\left(t\right)x_{1}\\
-\frac{1}{b_{n}\left(t\right)^{2}}\frac{\mathrm{d}b_{n}}{\mathrm{d}t}\left(t\right)x_{2}
\end{matrix}\right)
\]
be a first order spatial Taylor expansion of $\widetilde{U^{\left(n-1\right)}}^{n}$.
In this way, to fulfill our objective, we would need $\frac{\partial\phi^{\left(n\right)}}{\partial t}\left(t,0\right)$
to be the zeroth order term of the Taylor expansion of $\widetilde{U^{\left(n-1\right)}}^{n}$
and $\left(\begin{matrix}-\frac{1}{a_{n}\left(t\right)^{2}}\frac{\mathrm{d}a_{n}}{\mathrm{d}t}\left(t\right)x_{1}\\
-\frac{1}{b_{n}\left(t\right)^{2}}\frac{\mathrm{d}b_{n}}{\mathrm{d}t}\left(t\right)x_{2}
\end{matrix}\right)$ to be the first order term of the Taylor expansion of $\widetilde{U^{\left(n-1\right)}}^{n}$.
Nonetheless, it is not evident that this can be done out of the box,
since a generic first order spatial Taylor expansion of $\widetilde{U^{\left(n-1\right)}}^{n}\left(t,x\right)$
would have dependence on $x_{2}$ in the first component and dependence
on $x_{1}$ in the second component and these dependencies do not
appear in the term shown. Thankfully, the next Proposition tells us
that we can achieve our goal if we choose all layer centers on the
$x_{1}$ axis.
\begin{prop}
\label{prop:relation between anbn and jacobian}Let $n\in\mathbb{N}$
and consider times $t\in\left[t_{n},1\right]$, where $\left(t_{n}\right)_{n\in\mathbb{N}}$
is the sequence defined in Choice \ref{choice:time (initial)}. Suppose
that $\forall m\in\mathbb{N}$ with $m\le n$, we have $\left|\phi_{1}^{\left(n\right)}\left(t,0\right)-\phi_{1}^{\left(m\right)}\left(t,0\right)\right|\le\frac{8\pi}{a_{m}\left(t\right)}$
$\forall t\in\left[t_{n},1\right]$. Then, it is possible to have
\begin{equation}
\frac{\partial\phi^{\left(n\right)}}{\partial t}\left(t,0\right)+\left(\begin{matrix}-\frac{1}{a_{n}\left(t\right)^{2}}\frac{\mathrm{d}a_{n}}{\mathrm{d}t}\left(t\right)x_{1}\\
-\frac{1}{b_{n}\left(t\right)^{2}}\frac{\mathrm{d}b_{n}}{\mathrm{d}t}\left(t\right)x_{2}
\end{matrix}\right)=\widetilde{U^{\left(n-1\right)}}^{n}\left(t,0\right)+\mathrm{J}\widetilde{U^{\left(n-1\right)}}^{n}\left(t,0\right)\cdot\left(\begin{matrix}x_{1}\\
x_{2}
\end{matrix}\right)\quad\forall t\in\left[t_{n},1\right]\label{eq:Taylor development}
\end{equation}
provided that, $\forall t\in\left[t_{n},1\right]$,
\[
\begin{alignedat}{1}\phi_{2}^{\left(n\right)}\left(t,0\right) & =0,\\
\frac{\partial\phi_{1}^{\left(n\right)}}{\partial t}\left(t,0\right) & =\sum_{m=1}^{n-1}B_{m}\left(t\right)b_{m}\left(t\right)\sin\left(a_{m}\left(t\right)\left(\phi_{1}^{\left(n\right)}\left(t,0\right)-\phi_{1}^{\left(m\right)}\left(t,0\right)\right)\right),\\
\frac{\mathrm{d}}{\mathrm{d}t}\left(\ln\left(b_{n}\left(t\right)\right)\right) & =\sum_{m=1}^{n-1}B_{m}\left(t\right)a_{m}\left(t\right)b_{m}\left(t\right)\cos\left(a_{m}\left(t\right)\left(\phi_{1}^{\left(n\right)}\left(t,0\right)-\phi_{1}^{\left(m\right)}\left(t,0\right)\right)\right),\\
\frac{\mathrm{d}}{\mathrm{d}t}\left(a_{n}\left(t\right)b_{n}\left(t\right)\right) & =0.
\end{alignedat}
\]
\end{prop}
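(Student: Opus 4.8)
The plan is to compute the right‑hand side of \eqref{eq:Taylor development} explicitly near $x=0$ — that is, the value $\widetilde{U^{(n-1)}}^{n}(t,0)$ and the Jacobian $\mathrm{J}\widetilde{U^{(n-1)}}^{n}(t,0)$, where $\widetilde{U^{(n-1)}}^{n}(t,x)=\sum_{m=1}^{n-1}u^{(m)}(t,\phi^{(n)}(t,x))$ — and then to match these against the left‑hand side, which is \emph{already} affine in $x$ (it is nothing but $\frac{\partial\phi^{(n)}}{\partial t}(t,x)$, since $\phi^{(n)}$ is affine in $x$ by Choice \ref{choice:phin}). Thus \eqref{eq:Taylor development} is equivalent to a pair of scalar identities among the constant terms and a pair among the linear coefficients, and the four displayed conditions will turn out to be precisely those.

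First I would reduce to the ``no cutoff'' situation. For $m\le n-1$ write $\phi^{(n)}(t,x)=\phi^{(m)}(t,\xi)$ with $\xi=(\phi^{(m)})^{-1}(t,\phi^{(n)}(t,x))$; by \eqref{eq:phin inverse} and the affine form of $\phi^{(n)}$,
\[
\xi_{1}=a_{m}(t)\bigl(\phi_{1}^{(n)}(t,0)-\phi_{1}^{(m)}(t,0)\bigr)+\frac{a_{m}(t)}{a_{n}(t)}\,x_{1},\qquad
\xi_{2}=b_{m}(t)\bigl(\phi_{2}^{(n)}(t,0)-\phi_{2}^{(m)}(t,0)\bigr)+\frac{b_{m}(t)}{b_{n}(t)}\,x_{2}.
\]
The first of the four conditions gives $\phi_{2}^{(n)}(t,0)=0$, and — the same condition being imposed on every layer of the construction — also $\phi_{2}^{(m)}(t,0)=0$ for all $m\le n$; combined with the hypothesis $\lvert\phi_{1}^{(n)}(t,0)-\phi_{1}^{(m)}(t,0)\rvert\le\frac{8\pi}{a_{m}(t)}$ and $\lambda_{m}<1$ from Choice \ref{choice:psin}, at $x=0$ one gets $\lvert\lambda_{m}\xi_{1}\rvert\le8\pi\lambda_{m}<8\pi$ and $\lvert\lambda_{m}\xi_{2}\rvert=0$. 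These being \emph{strict} bounds, they persist on an open neighbourhood of $x=0$, where therefore $\varphi(\lambda_{m}\xi_{1})=\varphi(\lambda_{m}\xi_{2})=1$ and $\varphi'(\lambda_{m}\xi_{1})=\varphi'(\lambda_{m}\xi_{2})=0$. Hence, on that neighbourhood, Proposition \ref{prop:computations vorticity} reduces $u^{(m)}(t,\phi^{(n)}(t,x))$ to its zeroth‑order‑in‑$\lambda_{m}$ part $B_{m}(t)\bigl(b_{m}(t)\sin\xi_{1}\cos\xi_{2},\,-a_{m}(t)\cos\xi_{1}\sin\xi_{2}\bigr)$, which is all that is needed to read off the function and its first derivatives at $x=0$.

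Next I would evaluate, differentiate and match. Set $d_{m}(t)=\phi_{1}^{(n)}(t,0)-\phi_{1}^{(m)}(t,0)$ and $S(t)=\sum_{m=1}^{n-1}B_{m}(t)a_{m}(t)b_{m}(t)\cos(a_{m}(t)d_{m}(t))$. At $x=0$, $\xi=(a_{m}(t)d_{m}(t),0)$, so $u^{(m)}(t,\phi^{(n)}(t,0))=(B_{m}b_{m}\sin(a_{m}d_{m}),\,0)$ and summation yields $\widetilde{U^{(n-1)}}^{n}(t,0)=\bigl(\sum_{m=1}^{n-1}B_{m}b_{m}\sin(a_{m}d_{m}),\,0\bigr)$. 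For the Jacobian at $x=0$, the two off‑diagonal entries vanish because each carries a factor $\sin\xi_{2}$, zero at $x_{2}=0$; differentiation picks up the scale factors $\partial_{x_{1}}\xi_{1}=a_{m}/a_{n}$ and $\partial_{x_{2}}\xi_{2}=b_{m}/b_{n}$, so after summation $\mathrm{J}\widetilde{U^{(n-1)}}^{n}(t,0)$ is the diagonal matrix with entries $S(t)/a_{n}(t)$ and $-S(t)/b_{n}(t)$. Substituting into the right‑hand side of \eqref{eq:Taylor development} and equating componentwise with the left‑hand side $\frac{\partial\phi^{(n)}}{\partial t}(t,0)+\bigl(-\frac{\dot a_{n}}{a_{n}^{2}}x_{1},\,-\frac{\dot b_{n}}{b_{n}^{2}}x_{2}\bigr)$ gives exactly: the constant parts yield the stated ODE for $\phi_{1}^{(n)}(t,0)$ together with $\frac{\partial\phi_{2}^{(n)}}{\partial t}(t,0)=0$ (automatic since $\phi_{2}^{(n)}(t,0)\equiv0$); the $x_{2}$‑coefficients yield $\dot b_{n}/b_{n}=S(t)$, i.e. the stated ODE for $\ln b_{n}$; and the $x_{1}$‑coefficients yield $\dot a_{n}/a_{n}=-S(t)$, which is forced by combining the previous one with $\frac{\mathrm{d}}{\mathrm{d}t}(a_{n}b_{n})=0$ (equivalently $\dot a_{n}/a_{n}=-\dot b_{n}/b_{n}$). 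This establishes \eqref{eq:Taylor development} under the four conditions.

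The only step that is not routine bookkeeping is the cutoff reduction: the hypothesis $\lvert\phi_{1}^{(n)}(t,0)-\phi_{1}^{(m)}(t,0)\rvert\le\frac{8\pi}{a_{m}(t)}$ is only a \emph{weak} inequality, so one genuinely uses $\lambda_{m}<1$ to upgrade it to the strict bound $\lvert\lambda_{m}\xi_{1}\rvert<8\pi$ — this is what makes $\varphi$ locally constant near $x=0$, hence $\varphi',\varphi''$ locally zero, which in turn licenses discarding \emph{every} $\lambda_{m}$‑term of Proposition \ref{prop:computations vorticity} when computing $\widetilde{U^{(n-1)}}^{n}(t,0)$ and its Jacobian there. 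Everything else reduces to the chain rule together with $\sin 0=0$, $\cos 0=1$ and $\frac{\mathrm{d}}{\mathrm{d}t}\ln f=\dot f/f$.
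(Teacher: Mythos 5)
Your proposal is correct and follows essentially the same route as the paper's proof: use the hypothesis (together with $\phi_{2}^{\left(m\right)}\left(t,0\right)=0$ for the earlier layers and $\lambda_{m}<1$) to place the arguments inside the region where $\varphi\equiv1$, so the cutoff and $\varphi'$ terms drop out, then evaluate $\widetilde{U^{\left(n-1\right)}}^{n}$ and its Jacobian at $x=0$ and match the affine pieces, which is exactly the paper's computation in equations \eqref{eq:phi as sum of u layers}--\eqref{eq:jacobian at x=00003D0}. The only cosmetic difference is direction: you verify that the four stated conditions imply \eqref{eq:Taylor development}, while the paper derives those conditions from requiring \eqref{eq:Taylor development} and checks consistency of $\phi_{2}^{\left(n\right)}\left(t,0\right)\equiv0$; the content is the same.
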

\begin{rem}
The assumption presented in Proposition \ref{prop:relation between anbn and jacobian}
is just a technicality that we need to guarantee that the center of
layer $n$ will never exit the zone where the cutoff functions of
past layers equal $1$. In Proposition \ref{prop:layer center never leafs cutoff area},
we will prove that the assumption is true.
\end{rem}
\begin{proof}
By definition of $U^{\left(n-1\right)}$,
\[
\begin{aligned}\widetilde{U^{\left(n-1\right)}}^{n}\left(t,x\right) & =\sum_{m=1}^{n-1}\widetilde{u^{\left(m\right)}}^{n}\left(t,x\right)=\sum_{m=1}^{n-1}u^{\left(m\right)}\left(t,\phi^{\left(n\right)}\left(t,x\right)\right)=\\
 & =\sum_{m=1}^{n-1}u^{\left(m\right)}\left(t,\phi^{\left(m\right)}\left(t,\left(\phi^{\left(m\right)}\right)^{-1}\left(t,\phi^{\left(n\right)}\left(t,x\right)\right)\right)\right)=\\
 & =\sum_{m=1}^{n-1}\widetilde{u^{\left(m\right)}}^{m}\left(t,\left(\phi^{\left(m\right)}\right)^{-1}\left(t,\phi^{\left(n\right)}\left(t,x\right)\right)\right).
\end{aligned}
\]
Using equation \eqref{eq:phin inverse}, we deduce that
\begin{equation}
\widetilde{U^{\left(n-1\right)}}^{n}\left(t,x\right)=\sum_{m=1}^{n-1}\widetilde{u^{\left(m\right)}}^{m}\left(t,\left(\begin{matrix}a_{m}\left(t\right)\left(\phi_{1}^{\left(n\right)}\left(t,x\right)-\phi_{1}^{\left(m\right)}\left(t,0\right)\right)\\
b_{m}\left(t\right)\left(\phi_{2}^{\left(n\right)}\left(t,x\right)-\phi_{2}^{\left(m\right)}\left(t,0\right)\right)
\end{matrix}\right)\right).\label{eq:phi as sum of u layers}
\end{equation}
By Proposition \ref{prop:computations vorticity}, 
\[
\begin{aligned}\widetilde{u^{\left(n\right)}}^{n}\left(t,x\right) & =B_{n}\left(t\right)\varphi\left(\lambda_{n}x_{1}\right)\varphi\left(\lambda_{n}x_{2}\right)\left(\begin{matrix}b_{n}\left(t\right)\sin\left(x_{1}\right)\cos\left(x_{2}\right)\\
-a_{n}\left(t\right)\cos\left(x_{1}\right)\sin\left(x_{2}\right)
\end{matrix}\right)+\\
 & \quad+\lambda_{n}B_{n}\left(t\right)\left(\begin{matrix}b_{n}\left(t\right)\varphi\left(\lambda_{n}x_{1}\right)\varphi'\left(\lambda_{n}x_{2}\right)\\
-a_{n}\left(t\right)\varphi'\left(\lambda_{n}x_{1}\right)\varphi\left(\lambda_{n}x_{2}\right)
\end{matrix}\right)\sin\left(x_{1}\right)\sin\left(x_{2}\right).
\end{aligned}
\]
As we are considering times $t\in\left[t_{n},1\right]$ and $\left|\phi_{1}^{\left(n\right)}\left(t,0\right)-\phi_{1}^{\left(m\right)}\left(t,0\right)\right|\le\frac{8\pi}{a_{m}\left(t\right)}$
$\forall t\in\left[t_{n},1\right]$ and $\forall m,n\in\mathbb{N}^{2}$
with $m\le n$, since $\lambda_{n}\le1$ and $\left.\varphi\right|_{\left[-8\pi,8\pi\right]}\equiv1$
by Choice \eqref{choice:psin}, when evaluating the velocities in
the expressions presented in equation \eqref{eq:phi as sum of u layers},
we will always be in the region of $\varphi$ where its value is identically
$1$. This means that, $\forall t\in\left[t_{n},1\right]$ and $\forall m\in\left\{ 1,\dots,n-1\right\} $,
we may write
\[
\widetilde{u^{\left(m\right)}}^{m}\left(t,x\right)=\left(\begin{matrix}B_{m}\left(t\right)b_{m}\left(t\right)\sin\left(x_{1}\right)\cos\left(x_{2}\right)\\
-B_{m}\left(t\right)a_{m}\left(t\right)\cos\left(x_{1}\right)\sin\left(x_{2}\right)
\end{matrix}\right).
\]
Consequently, equation \eqref{eq:phi as sum of u layers} becomes
\begin{equation}
\begin{aligned}\widetilde{U^{\left(n-1\right)}}^{n}\left(t,x\right) & =\sum_{m=1}^{n-1}\left(\begin{matrix}B_{m}\left(t\right)b_{m}\left(t\right)\sin\left(a_{m}\left(t\right)\left(\phi_{1}^{\left(n\right)}\left(t,x\right)-\phi_{1}^{\left(m\right)}\left(t,0\right)\right)\right)\cdot\\
\cdot\cos\left(b_{m}\left(t\right)\left(\phi_{2}^{\left(n\right)}\left(t,x\right)-\phi_{2}^{\left(m\right)}\left(t,0\right)\right)\right)\\
\\-B_{m}\left(t\right)a_{m}\left(t\right)\cos\left(a_{m}\left(t\right)\left(\phi_{1}^{\left(n\right)}\left(t,x\right)-\phi_{1}^{\left(m\right)}\left(t,0\right)\right)\right)\cdot\\
\cdot\sin\left(b_{m}\left(t\right)\left(\phi_{2}^{\left(n\right)}\left(t,x\right)-\phi_{2}^{\left(m\right)}\left(t,0\right)\right)\right)
\end{matrix}\right).\end{aligned}
\label{eq:velocity background layers}
\end{equation}
Now, let us compute $\mathrm{J}\widetilde{U^{\left(n-1\right)}}^{n}\left(t,x\right)$,
which is straightforward from \eqref{eq:velocity background layers}.
Taking into account that
\[
\frac{\partial\phi_{1}^{\left(n\right)}}{\partial x_{1}}\left(t,x\right)=\frac{1}{a_{n}\left(t\right)},\quad\frac{\partial\phi_{2}^{\left(n\right)}}{\partial x_{2}}=\frac{1}{b_{n}\left(t\right)},\quad\frac{\partial\phi_{1}^{\left(n\right)}}{\partial x_{2}}\left(t,x\right)=0=\frac{\partial\phi_{2}^{\left(n\right)}}{\partial x_{1}}\left(t,x\right)
\]
by Choice \ref{choice:phin}, we arrive to
\begin{equation}
\begin{aligned}\frac{\partial\widetilde{U_{1}^{\left(n-1\right)}}^{n}}{\partial x_{1}}\left(t,x\right) & =\sum_{m=1}^{n-1}\frac{B_{m}\left(t\right)b_{m}\left(t\right)a_{m}\left(t\right)}{a_{n}\left(t\right)}\cos\left(a_{m}\left(t\right)\left(\phi_{1}^{\left(n\right)}\left(t,x\right)-\phi_{1}^{\left(m\right)}\left(t,0\right)\right)\right)\cdot\\
 & \quad\cdot\cos\left(b_{m}\left(t\right)\left(\phi_{2}^{\left(n\right)}\left(t,x\right)-\phi_{2}^{\left(m\right)}\left(t,0\right)\right)\right),\\
\frac{\partial\widetilde{U_{1}^{\left(n-1\right)}}^{n}}{\partial x_{2}}\left(t,x\right) & =-\sum_{m=1}^{n-1}\frac{B_{m}\left(t\right)b_{m}\left(t\right)^{2}}{b_{n}\left(t\right)}\sin\left(a_{m}\left(t\right)\left(\phi_{1}^{\left(n\right)}\left(t,x\right)-\phi_{1}^{\left(m\right)}\left(t,0\right)\right)\right)\cdot\\
 & \quad\cdot\sin\left(b_{m}\left(t\right)\left(\phi_{2}^{\left(n\right)}\left(t,x\right)-\phi_{2}^{\left(m\right)}\left(t,0\right)\right)\right),\\
\frac{\partial\widetilde{U_{2}^{\left(n-1\right)}}^{n}}{\partial x_{1}}\left(t,x\right) & =\sum_{m=1}^{n-1}\frac{B_{m}\left(t\right)a_{m}\left(t\right)^{2}}{a_{n}\left(t\right)}\sin\left(a_{m}\left(t\right)\left(\phi_{1}^{\left(n\right)}\left(t,x\right)-\phi_{1}^{\left(m\right)}\left(t,0\right)\right)\right)\cdot\\
 & \quad\cdot\sin\left(b_{m}\left(t\right)\left(\phi_{2}^{\left(n\right)}\left(t,x\right)-\phi_{2}^{\left(m\right)}\left(t,0\right)\right)\right),\\
\frac{\partial\widetilde{U_{2}^{\left(n-1\right)}}^{n}}{\partial x_{2}}\left(t,x\right) & =-\sum_{m=1}^{n-1}\frac{B_{m}\left(t\right)a_{m}\left(t\right)b_{m}\left(t\right)}{b_{n}\left(t\right)}\cos\left(a_{m}\left(t\right)\left(\phi_{1}^{\left(n\right)}\left(t,x\right)-\phi_{1}^{\left(m\right)}\left(t,0\right)\right)\right)\cdot\\
 & \quad\cdot\cos\left(b_{m}\left(t\right)\left(\phi_{2}^{\left(n\right)}\left(t,x\right)-\phi_{2}^{\left(m\right)}\left(t,0\right)\right)\right).
\end{aligned}
\label{eq:jacobian complete}
\end{equation}
As we have remarked in the short text before this Proposition, in
order for equation \eqref{eq:Taylor development} to be true we need
the diagonal terms of $\mathrm{J}\widetilde{U^{\left(n-1\right)}}^{n}$
to vanish at $x=0$. How can we achieve that? Well, if all layer centers
were positioned in the $x_{1}$ axis, i.e., if we had $\phi_{2}^{\left(n\right)}\left(t,0\right)\equiv0$
$\forall n\in\mathbb{N}$, it is clear that the diagonal terms would
indeed vanish. Actually, we would get
\begin{equation}
\begin{aligned}\frac{\partial\widetilde{U_{1}^{\left(n-1\right)}}^{n}}{\partial x_{1}}\left(t,0\right) & =\sum_{m=1}^{n-1}\frac{B_{m}\left(t\right)b_{m}\left(t\right)a_{m}\left(t\right)}{a_{n}\left(t\right)}\cos\left(a_{m}\left(t\right)\left(\phi_{1}^{\left(n\right)}\left(t,0\right)-\phi_{1}^{\left(m\right)}\left(t,0\right)\right)\right),\\
\frac{\partial\widetilde{U_{1}^{\left(n-1\right)}}^{n}}{\partial x_{2}}\left(t,0\right) & =0,\\
\frac{\partial\widetilde{U_{2}^{\left(n-1\right)}}^{n}}{\partial x_{1}}\left(t,0\right) & =0,\\
\frac{\partial\widetilde{U_{2}^{\left(n-1\right)}}^{n}}{\partial x_{2}}\left(t,0\right) & =-\sum_{m=1}^{n-1}\frac{B_{m}\left(t\right)a_{m}\left(t\right)b_{m}\left(t\right)}{b_{n}\left(t\right)}\cos\left(a_{m}\left(t\right)\left(\phi_{1}^{\left(n\right)}\left(t,0\right)-\phi_{1}^{\left(m\right)}\left(t,0\right)\right)\right).
\end{aligned}
\label{eq:jacobian at x=00003D0}
\end{equation}
But, is it consistent to require $\phi_{2}^{\left(n\right)}\left(t,0\right)\equiv0$
$\forall n\in\mathbb{N}$? To answer this question we have to study
the zeroth order term of the expansion. If we want \eqref{eq:Taylor development}
to be satisfied, evaluating the equation at $x=0$, we deduce that
$\frac{\partial\phi^{\left(n\right)}}{\partial t}\left(t,x\right)=\widetilde{U^{\left(n-1\right)}}^{n}\left(t,0\right)$
and using equation \eqref{eq:velocity background layers} we arrive
to the following ODE system:
\[
\frac{\partial\phi^{\left(n\right)}}{\partial t}\left(t,0\right)=\begin{aligned}\widetilde{U^{\left(n-1\right)}}^{n}\left(t,0\right) & =\sum_{m=1}^{n-1}\left(\begin{matrix}B_{m}\left(t\right)b_{m}\left(t\right)\sin\left(a_{m}\left(t\right)\left(\phi_{1}^{\left(n\right)}\left(t,0\right)-\phi_{1}^{\left(m\right)}\left(t,0\right)\right)\right)\cdot\\
\cdot\cos\left(b_{m}\left(t\right)\left(\phi_{2}^{\left(n\right)}\left(t,0\right)-\phi_{2}^{\left(m\right)}\left(t,0\right)\right)\right)\\
\\-B_{m}\left(t\right)a_{m}\left(t\right)\cos\left(a_{m}\left(t\right)\left(\phi_{1}^{\left(n\right)}\left(t,0\right)-\phi_{1}^{\left(m\right)}\left(t,0\right)\right)\right)\cdot\\
\cdot\sin\left(b_{m}\left(t\right)\left(\phi_{2}^{\left(n\right)}\left(t,0\right)-\phi_{2}^{\left(m\right)}\left(t,0\right)\right)\right)
\end{matrix}\right).\end{aligned}
\]
Now, notice that $\phi_{2}^{\left(n\right)}\left(t,0\right)\equiv0$
$\forall n\in\mathbb{N}$ is indeed a solution of the second ODE.
Consequently, it is perfectly fine to demand $\phi_{2}^{\left(n\right)}\left(t,0\right)\equiv0$
$\forall n\in\mathbb{N}$. Under that assumption, the first ODE above
reduces to the one given in the statement. Concerning the jacobian
\eqref{eq:jacobian at x=00003D0}, after this choice, we have just
two non-zero components and we have two free parameters $a_{n}\left(t\right)$
and $b_{n}\left(t\right)$. Hence, if we want equation \eqref{eq:Taylor development}
to be satisfied we clearly need
\[
\begin{aligned}-\frac{1}{a_{n}\left(t\right)^{2}}\frac{\mathrm{d}a_{n}}{\mathrm{d}t}\left(t\right) & =\sum_{m=1}^{n-1}\frac{B_{m}\left(t\right)b_{m}\left(t\right)a_{m}\left(t\right)}{a_{n}\left(t\right)}\cos\left(a_{m}\left(t\right)\left(\phi_{1}^{\left(n\right)}\left(t,0\right)-\phi_{1}^{\left(m\right)}\left(t,0\right)\right)\right),\\
-\frac{1}{b_{n}\left(t\right)^{2}}\frac{\mathrm{d}b_{n}}{\mathrm{d}t}\left(t\right) & =-\sum_{m=1}^{n-1}\frac{B_{m}\left(t\right)b_{m}\left(t\right)a_{m}\left(t\right)}{b_{n}\left(t\right)}\cos\left(a_{m}\left(t\right)\left(\phi_{1}^{\left(n\right)}\left(t,0\right)-\phi_{1}^{\left(m\right)}\left(t,0\right)\right)\right).
\end{aligned}
\]
These conditions are equivalent to
\[
\begin{aligned}\frac{\mathrm{d}}{\mathrm{d}t}\left(\ln\left(a_{n}\left(t\right)\right)\right) & =-\sum_{m=1}^{n-1}B_{m}\left(t\right)b_{m}\left(t\right)a_{m}\left(t\right)\cos\left(a_{m}\left(t\right)\left(\phi_{1}^{\left(n\right)}\left(t,0\right)-\phi_{1}^{\left(m\right)}\left(t,0\right)\right)\right),\\
\frac{\mathrm{d}}{\mathrm{d}t}\left(\ln\left(b_{n}\left(t\right)\right)\right) & =\sum_{m=1}^{n-1}B_{m}\left(t\right)b_{m}\left(t\right)a_{m}\left(t\right)\cos\left(a_{m}\left(t\right)\left(\phi_{1}^{\left(n\right)}\left(t,0\right)-\phi_{1}^{\left(m\right)}\left(t,0\right)\right)\right).
\end{aligned}
\]
Summing both equations it is evident that
\[
\frac{\mathrm{d}}{\mathrm{d}t}\left(\ln\left(a_{n}\left(t\right)b_{n}\left(t\right)\right)\right)=0\iff\frac{\mathrm{d}}{\mathrm{d}t}\left(a_{n}\left(t\right)b_{n}\left(t\right)\right)=0.
\]
\end{proof}
\begin{choice}
\label{choice:anbncn}We will take $\phi^{\left(n\right)}\left(t,0\right)$,
$a_{n}\left(t\right)$ and $b_{n}\left(t\right)$ as Proposition \ref{prop:relation between anbn and jacobian}
dictates for equation \eqref{eq:Taylor development} to be satisfied.
\end{choice}
As a consequence of Proposition \ref{prop:relation between anbn and jacobian},
provided that its assumptions are true, equations \eqref{eq:Boussinesq 1 con tildes}
and \eqref{eq:Boussinesq 2 con tildes} become 
\begin{equation}
\begin{aligned} & \frac{\partial\widetilde{\omega^{\left(n\right)}}^{n}}{\partial t}\left(t,x\right)+\\
 & +\left(\widetilde{U^{\left(n-1\right)}}^{n}\left(t,x\right)-\widetilde{U^{\left(n-1\right)}}^{n}\left(t,0\right)-\mathrm{J}\widetilde{U^{\left(n-1\right)}}^{n}\left(t,0\right)\cdot\left(\begin{matrix}x_{1}\\
x_{2}
\end{matrix}\right)\right)\cdot\widetilde{\nabla}^{n}\widetilde{\omega^{\left(n\right)}}^{n}\left(t,x\right)+\\
 & +\widetilde{u^{\left(n\right)}}^{n}\left(t,x\right)\cdot\widetilde{\nabla}^{n}\widetilde{\Omega^{\left(n-1\right)}}^{n}+\widetilde{u^{\left(n\right)}}^{n}\left(t,x\right)\cdot\widetilde{\nabla}^{n}\widetilde{\omega^{\left(n\right)}}^{n}\left(t,x\right)=\\
= & \left(\begin{matrix}0 & 1\end{matrix}\right)\cdot\left[\widetilde{\nabla}^{n}\widetilde{\rho^{\left(n\right)}}^{n}\left(t,x\right)\right]+\widetilde{f_{\omega}^{\left(n\right)}}^{n}\left(t,x\right),
\end{aligned}
\label{eq:Boussinesq vorticity Taylor}
\end{equation}
\begin{equation}
\begin{aligned} & \frac{\partial\widetilde{\rho^{\left(n\right)}}^{n}}{\partial t}\left(t,x\right)+\\
 & +\left(\widetilde{U^{\left(n-1\right)}}^{n}\left(t,x\right)-\widetilde{U^{\left(n-1\right)}}^{n}\left(t,0\right)-\mathrm{J}\widetilde{U^{\left(n-1\right)}}^{n}\left(t,0\right)\cdot\left(\begin{matrix}x_{1}\\
x_{2}
\end{matrix}\right)\right)\cdot\widetilde{\nabla}^{n}\widetilde{\rho^{\left(n\right)}}^{n}\left(t,x\right)+\\
 & +\widetilde{u^{\left(n\right)}}^{n}\left(t,x\right)\cdot\widetilde{\nabla}^{n}\widetilde{P^{\left(n-1\right)}}^{n}\left(t,x\right)+\widetilde{u^{\left(n\right)}}^{n}\left(t,x\right)\cdot\widetilde{\nabla}^{n}\widetilde{\rho^{\left(n\right)}}^{n}\left(t,x\right)=\\
= & \widetilde{f_{\rho}^{\left(n\right)}}^{n}\left(t,x\right).
\end{aligned}
\label{eq:Boussinesq density Taylor}
\end{equation}

\subsection{\label{subsec:transport of the layer centers}Transport of the layer
centers}

In the last section, through Choice \ref{choice:anbncn}, we already
selected which ODEs satisfy $\phi^{\left(n\right)}\left(t,0\right)$,
$a_{n}\left(t\right)$ and $b_{n}\left(t\right)$. In this section,
we will find (without rigorous proof) the toy model for the behavior
of $\phi_{1}^{\left(n\right)}\left(t,0\right)$: a weighted sum of
degenerate half-pendula\footnote{In subsection \ref{subsec:vorticity growth mechanism} we had just
one pendulum because we were considering just two layers. When $n$
layers are considered, $n-1$ pendula appear.}.

By Choice \ref{choice:anbncn} and Proposition \ref{prop:relation between anbn and jacobian},
we have 
\begin{equation}
\frac{\partial\phi_{1}^{\left(n\right)}}{\partial t}\left(t,0\right)=\sum_{m=1}^{n-1}B_{m}\left(t\right)b_{m}\left(t\right)\sin\left(a_{m}\left(t\right)\left(\phi_{1}^{\left(n\right)}\left(t,0\right)-\phi_{1}^{\left(m\right)}\left(t,0\right)\right)\right)\quad\forall t\in\left[t_{n},1\right].\label{eq:ODE for phi1n}
\end{equation}
The following two intuitive ideas will come into play here.
\begin{enumerate}
\item For all summands $1\le m\le n-2$, we can actually replace $\phi_{1}^{\left(n\right)}\left(t,0\right)$
with $\phi_{1}^{\left(m+1\right)}\left(t,0\right)$ without changing
the solution much, because we do not expect the velocity of layer
$m$ to tell the difference between the layers $m+1$ and $n$. In
other words, we expect $a_{m}\left(t\right)\left(\phi^{\left(n\right)}\left(t,0\right)-\phi^{\left(m+1\right)}\left(t,0\right)\right)$
to be very small in comparison to $a_{m}\left(t\right)\phi_{1}^{\left(m+1\right)}\left(t,0\right)$.
\item Since we are considering times $t\in\left[t_{n},1\right]$, as we
did in subsection \ref{subsec:vorticity growth mechanism}, we should
be able to treat the parameters of past layers as constant in time.
That is, we expect to be able to approximate $B_{m}\left(t\right)\sim B_{m}\left(1\right)$,
$b_{m}\left(t\right)\sim b_{m}\left(1\right)$ and $a_{m}\left(t\right)\sim a_{m}\left(1\right)$.
\end{enumerate}
With these ideas clear, we define
\[
\Xi^{\left(n\right)}\left(t\right)\coloneqq\phi_{1}^{\left(n\right)}\left(t,0\right)-\phi_{1}^{\left(n-1\right)}\left(t,0\right)\quad\forall n\in\mathbb{N},
\]
where we are taking $\phi_{1}^{\left(0\right)}\left(t,0\right)\equiv0$.
Using equation \eqref{eq:ODE for phi1n}, we are able to deduce the
following equation for $\Xi^{\left(n\right)}$.
\begin{equation}
\begin{aligned}\frac{\mathrm{d}\Xi^{\left(n\right)}}{\mathrm{d}t}\left(t\right) & =\frac{\partial\phi_{1}^{\left(n\right)}}{\partial t}\left(t,0\right)-\frac{\partial\phi_{1}^{\left(n-1\right)}}{\partial t}\left(t,0\right)=\\
 & =B_{n-1}\left(t\right)b_{n-1}\left(t\right)\sin\left(a_{n-1}\left(t\right)\Xi^{\left(n\right)}\left(t\right)\right)+\\
 & \quad+\sum_{m=1}^{n-2}B_{m}\left(t\right)b_{m}\left(t\right)\left[\sin\left(a_{m}\left(t\right)\left(\phi_{1}^{\left(n\right)}\left(t,0\right)-\phi_{1}^{\left(m\right)}\left(t,0\right)\right)\right)+\right.\\
 & \qquad\left.-\sin\left(a_{m}\left(t\right)\left(\phi_{1}^{\left(n-1\right)}\left(t,0\right)-\phi_{1}^{\left(m\right)}\left(t,0\right)\right)\right)\right].
\end{aligned}
\label{eq:ODE JI}
\end{equation}
We shall denote the first summand by $I_{1}$ and the sum from $m=1$
to $n-2$ by $I_{2}$. Next, we will see that, if the intuitive ideas
exposed are to be true, $I_{2}$ should be small in comparison to
$I_{1}$. As $\sin\left(\cdot\right)$ is a Lipschitz function with
constant $1$, we deduce that
\[
I_{2}\sim\sum_{m=1}^{n-2}B_{m}\left(t\right)b_{m}\left(t\right)a_{m}\left(t\right)\left[\phi_{1}^{\left(n\right)}\left(t,0\right)-\phi_{1}^{\left(n-1\right)}\left(t,0\right)\right]=\sum_{m=1}^{n-2}B_{m}\left(t\right)b_{m}\left(t\right)a_{m}\left(t\right)\Xi^{\left(n\right)}\left(t\right).
\]
By looking at equation \eqref{eq:ODE JI}, notice that, once $\Xi^{\left(n\right)}\left(t\right)$
covers one period of $\sin\left(a_{n-1}\left(t\right)\Xi^{\left(n\right)}\left(t\right)\right)$,
we should have $I_{1}=0$. If $I_{1}$ really is to dominate $I_{2}$,
we should not expect $\Xi^{\left(n\right)}\left(t\right)$ to travel
much more than one period. Thereby, we expect $a_{n-1}\left(t\right)\Xi^{\left(n\right)}\left(t\right)\sim O\left(1\right)$,
which means that we should have $\Xi^{\left(n\right)}\left(t\right)\sim\frac{1}{a_{n-1}\left(t\right)}$.
With this in mind, an educated guess for the order of the sum is
\[
I_{2}\sim\frac{1}{a_{n-1}\left(t\right)}\sum_{m=1}^{n-2}B_{m}\left(t\right)a_{m}\left(t\right)b_{m}\left(t\right).
\]
On the other hand, since $\sin\left(a_{n-1}\left(t\right)\Xi^{\left(n\right)}\left(t\right)\right)\sim O\left(1\right)$,
another educated guess for the order of $I_{1}$ is
\[
I_{1}\sim B_{n-1}\left(t\right)b_{n-1}\left(t\right).
\]
Consequently, if we divide $\frac{I_{2}}{I_{1}}$, we obtain
\[
\frac{I_{2}}{I_{1}}\sim\frac{\sum_{m=1}^{n-2}B_{m}\left(t\right)a_{m}\left(t\right)b_{m}\left(t\right)}{B_{n-1}\left(t\right)a_{n-1}\left(t\right)b_{n-1}\left(t\right)}.
\]
As we can see, numerator and denominator are closely related. Suppose,
for a moment, that we are only considering $t=1$. Since we expect
the spatial scale of our layers to decrease with $n\in\mathbb{N}$,
by virtue of Choice \ref{choice:phin}, both $a_{n}\left(1\right)$
and $b_{n}\left(1\right)$ should increase in $n\in\mathbb{N}$. For
now, we have no idea about the behavior of $B_{n}\left(1\right)$,
so we do not know whether $B_{n}\left(1\right)a_{n}\left(1\right)b_{n}\left(1\right)$
increases or decreases in $\mathbb{N}$. Nevertheless, notice that
$I_{1}$ represents the effect of layer $n-1$, while $I_{2}$ represents
the effect of all past layers except for $n-1$. Our intuitive ideas
tell us that only layer $n-1$ should matter. This means that we should
have $\frac{I_{2}}{I_{1}}\ll1$. For that to happen, we clearly need,
at least, that $B_{n}\left(1\right)a_{n}\left(1\right)b_{n}\left(1\right)$
increases in $n\in\mathbb{N}$. Actually, it is even possible to deduce
some information about how fast $B_{n}\left(1\right)a_{n}\left(1\right)b_{n}\left(1\right)$
has to grow. If one takes $B_{n}\left(1\right)a_{n}\left(1\right)b_{n}\left(1\right)$
growing as a power of $n\in\mathbb{N}$, it is easy to check that
$\frac{\sum_{m=1}^{n-2}B_{m}\left(1\right)b_{m}\left(1\right)a_{m}\left(1\right)}{B_{n-1}\left(1\right)b_{n-1}\left(1\right)a_{n-1}\left(1\right)}$
actually grows in $n$. However, if we take $B_{n}\left(1\right)a_{n}\left(1\right)b_{n}\left(1\right)$
growing exponentially in $n\in\mathbb{N}$, we have $\sum_{m=1}^{n-2}B_{m}\left(1\right)b_{m}\left(1\right)a_{m}\left(1\right)\sim B_{n-2}\left(1\right)b_{n-2}\left(1\right)a_{n-2}\left(1\right)$
and, consequently, it is possible to make $\frac{\sum_{m=1}^{n-2}B_{m}\left(1\right)b_{m}\left(1\right)a_{m}\left(1\right)}{B_{n-1}\left(1\right)b_{n-1}\left(1\right)a_{n-1}\left(1\right)}$
as small as we wish. Thus, $B_{n}\left(1\right)a_{n}\left(1\right)b_{n}\left(1\right)$
should, at minimum, grow exponentially in $n\in\mathbb{N}$. This
motives the following Choice:
\begin{choice}
\label{choice:Bnanbn}We shall take $B_{n}\left(1\right)a_{n}\left(1\right)b_{n}\left(1\right)=M_{n}$
$\forall n\in\mathbb{N}$, where $\left(M_{n}\right)_{n\in\mathbb{N}}\subseteq\mathbb{R}^{+}$
grows, at least, exponentially in $n\in\mathbb{N}$.
\end{choice}
\begin{rem}
\label{rem:amplitude of vorticity explodes}According to Proposition
\ref{prop:computations vorticity}, the only zeroth order term in
$\lambda_{n}$ of the vorticity $\widetilde{\omega^{\left(n\right)}}^{n}\left(1,x\right)$
has amplitude
\[
B_{n}\left(1\right)\left(a_{n}\left(1\right)^{2}+b_{n}\left(1\right)^{2}\right)=B_{n}\left(1\right)a_{n}\left(1\right)b_{n}\left(1\right)\left(\frac{a_{n}\left(1\right)}{b_{n}\left(1\right)}+\frac{b_{n}\left(1\right)}{a_{n}\left(1\right)}\right).
\]
Using the fact that the function $z\to z+\frac{1}{z}$ is bounded
below by $2$, we deduce that
\[
B_{n}\left(1\right)\left(a_{n}\left(1\right)^{2}+b_{n}\left(1\right)^{2}\right)\ge2B_{n}\left(1\right)a_{n}\left(1\right)b_{n}\left(1\right).
\]
Now, notice that, under Choice \ref{choice:Bnanbn}, we obtain
\[
B_{n}\left(1\right)\left(a_{n}\left(1\right)^{2}+b_{n}\left(1\right)^{2}\right)\ge2M_{n},
\]
where $M_{n}$ grows, at least, exponentially in $n\in\mathbb{N}$.
Consequently, we infer that the amplitude of our vorticity blows up
at $t=1$ when $n\to\infty$, which is a good sign for a blow-up.
\end{rem}
Under Choice \ref{choice:Bnanbn}, we should be able to neglect the
term with the sum in ODE \eqref{eq:ODE for phi1n}, obtaining the
approximate ODE
\[
\frac{\mathrm{d}\Xi_{0}^{\left(n\right)}}{\mathrm{d}t}\left(t\right)=B_{n-1}\left(t\right)b_{n-1}\left(t\right)\sin\left(a_{n-1}\left(t\right)\Xi_{0}^{\left(n\right)}\left(t\right)\right).
\]
Furthermore, if we assume that $B_{n-1}\left(t\right)\sim B_{n-1}\left(1\right)$,
$b_{n-1}\left(t\right)\sim b_{n-1}\left(1\right)$ and $a_{n-1}\left(t\right)\sim b_{n-1}\left(1\right)$
(as hinted in subsection \ref{subsec:vorticity growth mechanism}),
we get to
\begin{equation}
\frac{\mathrm{d}\Xi_{0}^{\left(n\right)}}{\mathrm{d}t}\left(t\right)=B_{n-1}\left(1\right)b_{n-1}\left(1\right)\sin\left(a_{n-1}\left(1\right)\Xi_{0}^{\left(n\right)}\left(t\right)\right).\label{eq:ODE JI0}
\end{equation}
Now, let us see that, from this ODE, we can obtain the ODE of an inverted
degenerate half-pendulum (see equation \eqref{eq:degenerate half-pendulum}).
If we multiply at both sides by $a_{n-1}\left(1\right)$ and we undertake
the change of variables 
\begin{equation}
\tau=B_{n-1}\left(1\right)b_{n-1}\left(1\right)a_{n-1}\left(1\right)\left(t-t_{n}\right),\quad F_{n}\left(\tau\right)=a_{n-1}\left(1\right)\Xi_{0}^{\left(n\right)}\left(t\right),\label{eq:change of variables JI0 Fn}
\end{equation}
we arrive to the ODE
\[
\begin{aligned} & a_{n-1}\left(1\right)\frac{\mathrm{d}\Xi_{0}^{\left(n\right)}}{\mathrm{d}t}=B_{n-1}\left(1\right)b_{n-1}\left(1\right)a_{n-1}\left(1\right)\sin\left(a_{n-1}\left(1\right)\Xi_{0}^{\left(n\right)}\right)\iff\\
\iff & \frac{\mathrm{d}\left(a_{n-1}\left(1\right)\Xi_{0}^{\left(n\right)}\right)}{\mathrm{d}\left(B_{n-1}\left(1\right)b_{n-1}\left(1\right)a_{n-1}\left(1\right)t\right)}\left(t\right)=\sin\left(a_{n-1}\left(1\right)\Xi_{0}^{\left(n\right)}\left(t\right)\right)\iff\frac{\mathrm{d}F_{n}}{\mathrm{d}\tau}\left(\tau\right)=\sin\left(F_{n}\left(\tau\right)\right),
\end{aligned}
\]
which is the ODE that appears in \eqref{eq:degenerate half-pendulum}.
Moreover, $F_{n}\left(0\right)=a_{n-1}\left(1\right)\Xi_{0}^{\left(n\right)}\left(t_{n}\right)$.
Hence, we have indeed obtained that the toy model for the behavior
of $\Xi^{\left(n\right)}$ is, under an appropriate change of variables,
an inverted degenerate half-pendulum. Then, we can express $\phi_{1}^{\left(n\right)}\left(t\right)$
as a sum of $\left(\Xi_{0}^{\left(m\right)}\left(t\right)\right)_{m=1}^{n}$.
Indeed, consider
\[
\sum_{m=1}^{n}\Xi_{0}^{\left(m\right)}\left(t\right)\sim\sum_{m=1}^{n}\Xi^{\left(m\right)}\left(t\right)=\sum_{m=1}^{n}\left(\phi_{1}^{\left(m\right)}\left(t,0\right)-\phi_{1}^{\left(m-1\right)}\left(t,0\right)\right)=\phi_{1}^{\left(n\right)}\left(t,0\right)-\phi_{1}^{\left(0\right)}\left(t,0\right),
\]
where we have taken advantage of the fact that the sum is telescopic.
Recall that, by convention, we had $\phi_{1}^{\left(0\right)}\left(t,0\right)\equiv0$.
Therefore,
\begin{equation}
\phi_{1}^{\left(n\right)}\left(t,0\right)\sim\sum_{m=1}^{n}\Xi_{0}^{\left(m\right)}\left(t\right)\sim\sum_{m=1}^{n}\overbrace{\frac{1}{a_{m}\left(1\right)}}^{\text{weight}}\overbrace{\left(a_{m}\left(1\right)\Xi_{0}^{\left(m\right)}\left(t\right)\right)}^{{\footnotesize \begin{matrix}\text{inverted degenerate}\\
\text{half-pendulum}
\end{matrix}}}\label{eq:phi_1 weighted sum of half-pendula}
\end{equation}
and, consequently, $\phi_{1}^{\left(n\right)}\left(t,0\right)$ actually
behaves like a weighted sum of inverted degenerate half-pendula, provided
that $\Xi^{\left(n\right)}\left(t\right)$ and $\Xi_{0}^{\left(n\right)}\left(t\right)$
are close.

After explaining how the degenerate half-pendula appear in the toy
model of $\phi_{1}^{\left(n\right)}\left(t,0\right)$, we now dedicate
some time to studying equation \eqref{eq:degenerate half-pendulum}
in detail.
\begin{lem}
\label{lem:the good ODE}Any solution to the initial value problem
\[
\frac{\mathrm{d}F}{\mathrm{d}t}\left(t\right)=\sin\left(F\left(t\right)\right),\quad F\left(0\right)\in\left[0,\pi\right],
\]
satisfies:
\begin{enumerate}
\item $F\left(t\right)\in\left[0,\pi\right]$ $\forall t\in\mathbb{R}$.
\item ~
\[
\frac{\sin\left(F\left(t\right)\right)}{\sin\left(F\left(0\right)\right)}=2\frac{1+\cos\left(F\left(0\right)\right)}{\sin^{2}\left(F\left(0\right)\right)}\frac{1}{\mathrm{e}^{t}+\left(\frac{1+\cos\left(F\left(0\right)\right)}{\sin\left(F\left(0\right)\right)}\right)^{2}\mathrm{e}^{-t}}\quad\forall t\in\mathbb{R}.
\]
\item The time of maximum growth of $\frac{\sin\left(F\left(t\right)\right)}{\sin\left(F\left(0\right)\right)}$
is given by 
\[
t_{\max}=\ln\left(\frac{1+\cos\left(F\left(0\right)\right)}{\sin\left(F\left(0\right)\right)}\right)
\]
 and that maximum growth is
\[
\max_{t\in\mathbb{R}}\frac{\sin\left(F\left(t\right)\right)}{\sin\left(F\left(0\right)\right)}=\frac{1}{\sin\left(F\left(0\right)\right)}.
\]
\item The equation
\[
\frac{\sin\left(F\left(t\right)\right)}{\sin\left(F\left(0\right)\right)}=1
\]
has two solutions
\[
t=0,\quad t=2t_{\max}=2\ln\left(\frac{1+\cos\left(F\left(0\right)\right)}{\sin\left(F\left(0\right)\right)}\right).
\]
\item Actually, using the $t_{\max}$ introduced in point 3, we have
\[
\sin\left(F\left(t\right)\right)=\frac{1}{\cosh\left(t_{\max}-t\right)}.
\]
In figure \ref{fig:graph ideal model}, we represent $\sin\left(F\left(t\right)\right)$
for different values of $t_{\max}$.
\end{enumerate}
\end{lem}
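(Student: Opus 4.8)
The equation $\dot F=\sin F$ is autonomous and separable, so the plan is to integrate it explicitly and then read off all five assertions from the closed form. First I would note that $\sin$ is globally Lipschitz, so the initial value problem has a unique solution, and since $|\dot F|\le 1$ this solution is defined for all $t\in\mathbb R$. For point 1, observe that $F\equiv 0$ and $F\equiv\pi$ are equilibria: if $F(0)\in\{0,\pi\}$ the solution is constant, and if $F(0)\in(0,\pi)$ then uniqueness forbids the trajectory from reaching either equilibrium, so $F(t)\in(0,\pi)$ for all $t$. I would also point out that points 2--5 only make sense when $\sin F(0)\ne 0$, i.e. $F(0)\in(0,\pi)$, which I assume from here on.

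\textbf{The explicit formula (point 2).} Separating variables, $\frac{\mathrm dF}{\sin F}=\mathrm dt$, and using $\int\csc F\,\mathrm dF=\ln\bigl|\tan(F/2)\bigr|$ followed by exponentiation gives $\tan\bigl(F(t)/2\bigr)=\tan\bigl(F(0)/2\bigr)\,\mathrm e^{t}$. Setting $c:=\cot\bigl(F(0)/2\bigr)=\frac{1+\cos F(0)}{\sin F(0)}>0$ and substituting into $\sin F=\frac{2\tan(F/2)}{1+\tan^{2}(F/2)}$, a short simplification yields $\frac{\sin F(t)}{\sin F(0)}=\frac{c^{2}+1}{\mathrm e^{t}+c^{2}\mathrm e^{-t}}$. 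The only nontrivial step is the identity $c^{2}+1=\frac{(1+\cos F(0))^{2}+\sin^{2}F(0)}{\sin^{2}F(0)}=\frac{2\bigl(1+\cos F(0)\bigr)}{\sin^{2}F(0)}=\frac{2c}{\sin F(0)}$, which rewrites this exactly as the formula in the statement.

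\textbf{Consequences (points 3--5).} These are elementary manipulations of the closed form. For point 3, since $\frac{\sin F(t)}{\sin F(0)}$ is a positive constant times $\frac{1}{\mathrm e^{t}+c^{2}\mathrm e^{-t}}$, it is maximal exactly when $\mathrm e^{t}+c^{2}\mathrm e^{-t}$ is minimal, which by AM--GM (or by differentiating) occurs at $\mathrm e^{t}=c$, i.e. $t_{\max}=\ln c=\ln\frac{1+\cos F(0)}{\sin F(0)}$, where the denominator equals $2c$; substituting back gives the maximal value $\frac{1}{\sin F(0)}$, consistent with $\sin F(t)\le 1$. For point 4, the equation $\frac{\sin F(t)}{\sin F(0)}=1$ becomes $\mathrm e^{t}+c^{2}\mathrm e^{-t}=c^{2}+1$, i.e. $(\mathrm e^{t}-1)(\mathrm e^{t}-c^{2})=0$, with roots $t=0$ and $t=2\ln c=2t_{\max}$. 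For point 5, writing $c=\mathrm e^{t_{\max}}$ gives $\mathrm e^{t}+c^{2}\mathrm e^{-t}=\mathrm e^{t_{\max}}\bigl(\mathrm e^{t-t_{\max}}+\mathrm e^{t_{\max}-t}\bigr)=2c\cosh(t_{\max}-t)$, and since $\sin F(t)=\frac{2c}{\mathrm e^{t}+c^{2}\mathrm e^{-t}}$ we conclude $\sin F(t)=\frac{1}{\cosh(t_{\max}-t)}$.

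\textbf{Main obstacle.} There is no deep obstacle: the lemma reduces to separation of variables together with half-angle identities. The points requiring care are handling the degenerate data $F(0)\in\{0,\pi\}$ separately in point 1 (where the ratios in points 2--5 are not even defined), and keeping signs straight so that $\tan(F(0)/2)>0$ and $c>0$, which is ensured by $F(0)\in(0,\pi)$.
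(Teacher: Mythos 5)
Your proposal is correct and follows essentially the same route as the paper: uniqueness plus the equilibria $0,\pi$ for point 1, separation of variables to an explicit formula for point 2, and elementary analysis of $\mathrm e^{t}+c^{2}\mathrm e^{-t}$ for points 3--5. Your execution of point 2 via $\tan(F/2)$ and the half-angle identity (rather than the paper's squaring and solving for $1+\cos F(t)$), your factorization $(\mathrm e^{t}-1)(\mathrm e^{t}-c^{2})=0$ in point 4, and your explicit handling of the degenerate data $F(0)\in\{0,\pi\}$ are minor streamlinings of the same argument, and all the computations check out.
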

\begin{figure}[h]
\begin{centering}
\includegraphics[width=0.8\linewidth]{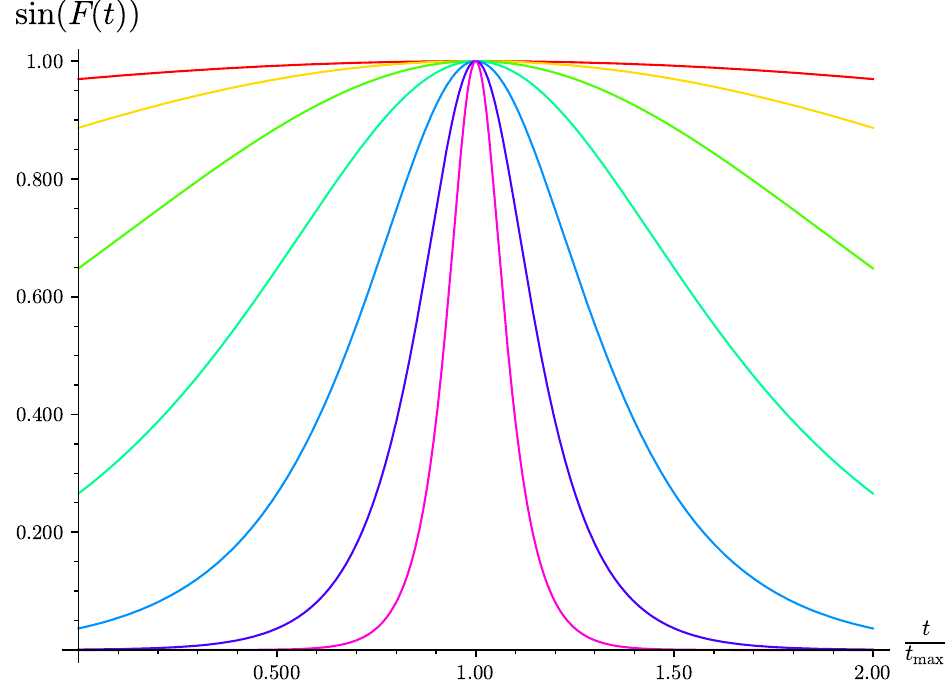}
\par\end{centering}
\caption{\label{fig:graph ideal model}Profile of $\sin\left(F\left(t\right)\right)$
for the following values of $t_{\max}$ (from red to violet): $\frac{1}{4}$,
$\frac{1}{2}$, $1$, $2$, $4$, $8$, $16$. As we will see later,
as $n$ advances, $t_{\max}$ will also grow. This means that, as
we advance through the layers, the profile of the ideal model will
become ever pointier.}

\end{figure}

\begin{proof}
~
\begin{enumerate}
\item First note that, as $\sin\left(\cdot\right)\in C^{\infty}\left(\mathbb{R}\right)$,
by the Existence and Uniqueness Theorem, the solution to the ODE must
also be $C^{\infty}$ in its interval of existence. As $\left|\frac{\mathrm{d}F}{\mathrm{d}t}\left(t\right)\right|\le1$,
we know that our solution must also exist globally in time. Uniqueness
of solutions guarantees that the flow map is one-to-one, i.e., two
solutions for different initial conditions may never acquire the same
value at the same time instant. As $F\left(t\right)\equiv0$ and $F\left(t\right)\equiv\pi$
are stationary solutions, we conclude that any solution that starts
with an initial condition $F\left(0\right)\in\left(0,\pi\right)$
can never escape the open interval $\left(0,\pi\right)$ in finite
time.
\item We ``rearrange'' the ODE of the statement to obtain
\[
\frac{\mathrm{d}F}{\sin\left(F\right)}=\mathrm{d}t.
\]
Integrating at both sides leads to
\[
\left[-\ln\left(\left|\frac{1+\cos\left(F\left(s\right)\right)}{\sin\left(F\left(s\right)\right)}\right|\right)\right]_{s=0}^{s=t}=t-0.
\]
Equivalently,
\[
\ln\left(\left|\frac{\frac{1+\cos\left(F\left(0\right)\right)}{\sin\left(F\left(0\right)\right)}}{\frac{1+\cos\left(F\left(t\right)\right)}{\sin\left(F\left(t\right)\right)}}\right|\right)=t.
\]
We may rewrite this as
\[
\ln\left(\left|\frac{\sin\left(F\left(t\right)\right)}{\sin\left(F\left(0\right)\right)}\right|\right)-\ln\left(\left|\frac{1+\cos\left(F\left(t\right)\right)}{1+\cos\left(F\left(0\right)\right)}\right|\right)=t.
\]
In particular, point 1 is telling us that $\sin\left(F\left(t\right)\right)\ge0$
$\forall t\in\mathbb{R}$. Thus, we may eliminate the $\left|\cdot\right|$
in the first logarithm. Furthermore, we always have $1+\cos\left(\cdot\right)\ge0$
and, as a consequence, we can also eliminate the $\left|\cdot\right|$
in the second logarithm. Taking exponentials at both sides, we obtain
\begin{equation}
\frac{\sin\left(F\left(t\right)\right)}{\sin\left(F\left(0\right)\right)}=\frac{1+\cos\left(F\left(t\right)\right)}{1+\cos\left(F\left(0\right)\right)}\mathrm{e}^{t}.\label{eq:f first eq}
\end{equation}
In what follows, we will do some rearrangements to express $1+\cos\left(F\left(t\right)\right)$
as a function of $\sin\left(F\left(t\right)\right)$. Squaring both
sides, we get to
\[
\begin{aligned} & \frac{\sin^{2}\left(F\left(t\right)\right)}{\sin^{2}\left(F\left(0\right)\right)}=\frac{\left(1+\cos\left(F\left(t\right)\right)\right)^{2}}{\left(1+\cos^ {}\left(F\left(0\right)\right)\right)^{2}}\mathrm{e}^{2t}\iff\\
\iff & \left(\frac{1+\cos\left(F\left(0\right)\right)}{\sin\left(F\left(0\right)\right)}\right)^{2}=\frac{1+\cos^{2}\left(F\left(t\right)\right)+2\cos\left(F\left(t\right)\right)}{\sin^{2}\left(F\left(t\right)\right)}\mathrm{e}^{2t}=\\
 & =\frac{-1+\cos^{2}\left(F\left(t\right)\right)+2+2\cos\left(F\left(t\right)\right)}{\sin^{2}\left(F\left(t\right)\right)}\mathrm{e}^{2t}.
\end{aligned}
\]
Using the fundamental trigonometric identity, we are allowed to write
\[
\begin{aligned} & \left(\frac{1+\cos\left(F\left(0\right)\right)}{\sin\left(F\left(0\right)\right)}\right)^{2}=\frac{-\sin^{2}\left(F\left(t\right)\right)+2+2\cos\left(F\left(t\right)\right)}{\sin^{2}\left(F\left(t\right)\right)}\mathrm{e}^{2t}\iff\\
\iff & \left(\frac{1+\cos\left(F\left(0\right)\right)}{\sin\left(F\left(0\right)\right)}\right)^{2}=\left(-1+2\frac{1+\cos\left(F\left(t\right)\right)}{\sin^{2}\left(F\left(t\right)\right)}\right)\mathrm{e}^{2t}.
\end{aligned}
\]
From here, we can solve for $1+\cos\left(F\left(t\right)\right)$.
Indeed, we obtain
\[
\begin{aligned} & \left(\frac{1+\cos\left(F\left(0\right)\right)}{\sin\left(F\left(0\right)\right)}\right)^{2}\mathrm{e}^{-2t}=-1+2\frac{1+\cos\left(F\left(t\right)\right)}{\sin^{2}\left(F\left(t\right)\right)}\iff\\
\iff & 1+\cos\left(F\left(t\right)\right)=\frac{1}{2}\sin^{2}\left(F\left(t\right)\right)\left[1+\left(\frac{1+\cos\left(F\left(0\right)\right)}{\sin\left(F\left(0\right)\right)}\right)^{2}\mathrm{e}^{-2t}\right].
\end{aligned}
\]
Introducing this find back in equation \eqref{eq:f first eq}, we
arrive to
\[
\begin{aligned}\frac{\sin\left(F\left(t\right)\right)}{\sin\left(F\left(0\right)\right)} & =\frac{1}{2}\sin^{2}\left(F\left(t\right)\right)\frac{1+\left(\frac{1+\cos\left(F\left(0\right)\right)}{\sin\left(F\left(0\right)\right)}\right)^{2}\mathrm{e}^{-2t}}{1+\cos\left(F\left(0\right)\right)}\mathrm{e}^{t}.\end{aligned}
\]
Lastly, we can solve for $\sin\left(F\left(t\right)\right)$:
\[
\begin{aligned}\frac{\sin\left(F\left(t\right)\right)}{\sin\left(F\left(0\right)\right)} & =2\frac{1+\cos\left(F\left(0\right)\right)}{\sin^{2}\left(F\left(0\right)\right)}\frac{1}{1+\left(\frac{1+\cos\left(F\left(0\right)\right)}{\sin\left(F\left(0\right)\right)}\right)^{2}\mathrm{e}^{-2t}}\mathrm{e}^{-t}=\\
 & =2\frac{1+\cos\left(F\left(0\right)\right)}{\sin^{2}\left(F\left(0\right)\right)}\frac{1}{\mathrm{e}^{t}+\left(\frac{1+\cos\left(F\left(0\right)\right)}{\sin\left(F\left(0\right)\right)}\right)^{2}\mathrm{e}^{-t}}.
\end{aligned}
\]
\item We want to find the point where the function
\begin{equation}
g\left(t\right)\coloneqq\mathrm{e}^{t}+\left(\frac{1+\cos\left(F\left(0\right)\right)}{\sin\left(F\left(0\right)\right)}\right)^{2}\mathrm{e}^{-t}\label{eq:defg}
\end{equation}
attains its minimum. This will clearly coincide with the point where
$\frac{\sin\left(F\left(t\right)\right)}{\sin\left(F\left(0\right)\right)}$
attains its maximum. Differentiating, we obtain
\[
\frac{\mathrm{d}g}{\mathrm{d}t}\left(t\right)=\mathrm{e}^{t}-\left(\frac{1+\cos\left(F\left(0\right)\right)}{\sin\left(F\left(0\right)\right)}\right)^{2}\mathrm{e}^{-t}.
\]
Solving for $\frac{\mathrm{d}g}{\mathrm{d}t}\left(t\right)=0$ yields
\[
\frac{\mathrm{d}g}{\mathrm{d}t}\left(t\right)=0\iff\mathrm{e}^{t}=\left(\frac{1+\cos\left(F\left(0\right)\right)}{\sin\left(F\left(0\right)\right)}\right)^{2}\mathrm{e}^{-t}\iff\mathrm{e}^{2t}=\left(\frac{1+\cos\left(F\left(0\right)\right)}{\sin\left(F\left(0\right)\right)}\right)^{2}.
\]
Taking logarithms, we deduce that
\[
t=\ln\left(\frac{1+\cos\left(F\left(0\right)\right)}{\sin\left(F\left(0\right)\right)}\right).
\]
It is clear that this critical point must be a minimum of $g$ since
it is the only critical point and $\lim_{t\to\infty}g\left(t\right)=\infty=\lim_{t\to-\infty}g\left(t\right)$.
Hence,
\[
\min_{t\in\mathbb{R}}g\left(t\right)=\frac{1+\cos\left(F\left(0\right)\right)}{\sin\left(F\left(0\right)\right)}+\left(\frac{1+\cos\left(F\left(0\right)\right)}{\sin\left(F\left(0\right)\right)}\right)^{2}\frac{1}{\frac{1+\cos\left(F\left(0\right)\right)}{\sin\left(F\left(0\right)\right)}}=2\frac{1+\cos\left(F\left(0\right)\right)}{\sin\left(F\left(0\right)\right)}.
\]
Consequently,
\[
\max_{t\in\mathbb{R}}\frac{\sin\left(F\left(t\right)\right)}{\sin\left(F\left(0\right)\right)}=2\frac{1+\cos\left(F\left(0\right)\right)}{\sin^{2}\left(F\left(0\right)\right)}\frac{1}{2\frac{1+\cos\left(F\left(0\right)\right)}{\sin\left(F\left(0\right)\right)}}=\frac{1}{\sin\left(F\left(0\right)\right)}.
\]
\item It is trivial that $t=0$ is a solution of the equation. To find the
other one, recall the function $g\left(t\right)$ we defined in equation
\eqref{eq:defg}. Looking at the expression of point 2, it is clear
that the equation $\frac{\sin\left(F\left(t\right)\right)}{\sin\left(F\left(0\right)\right)}=1$
will have multiple solutions if and only if there is a $t\neq0$ such
that $g\left(t\right)=g\left(0\right)$. Let us solve this equation.
\[
g\left(t\right)=g\left(0\right)\iff\mathrm{e}^{t}+\left(\frac{1+\cos\left(F\left(0\right)\right)}{\sin\left(F\left(0\right)\right)}\right)^{2}\mathrm{e}^{-t}=1+\left(\frac{1+\cos\left(F\left(0\right)\right)}{\sin\left(F\left(0\right)\right)}\right)^{2}.
\]
We multiply by $\mathrm{e}^{t}$ at both sides, which leads to
\[
\mathrm{e}^{2t}-\left[1+\left(\frac{1+\cos\left(F\left(0\right)\right)}{\sin\left(F\left(0\right)\right)}\right)^{2}\right]\mathrm{e}^{t}+\left(\frac{1+\cos\left(F\left(0\right)\right)}{\sin\left(F\left(0\right)\right)}\right)^{2}=0.
\]
This is a second degree equation for $\mathrm{e}^{t}$. Making use
of the solution formula, we deduce
\[
\begin{aligned}\mathrm{e}^{t} & =\frac{1+\left(\frac{1+\cos\left(F\left(0\right)\right)}{\sin\left(F\left(0\right)\right)}\right)^{2}\pm\sqrt{\left[1+\left(\frac{1+\cos\left(F\left(0\right)\right)}{\sin\left(F\left(0\right)\right)}\right)^{2}\right]^{2}-4\left(\frac{1+\cos\left(F\left(0\right)\right)}{\sin\left(F\left(0\right)\right)}\right)^{2}}}{2}=\\
 & =\frac{1+\left(\frac{1+\cos\left(F\left(0\right)\right)}{\sin\left(F\left(0\right)\right)}\right)^{2}\pm\sqrt{\left[1-\left(\frac{1+\cos\left(F\left(0\right)\right)}{\sin\left(F\left(0\right)\right)}\right)^{2}\right]^{2}}}{2}=\\
 & =\frac{1+\left(\frac{1+\cos\left(F\left(0\right)\right)}{\sin\left(F\left(0\right)\right)}\right)^{2}\pm\left[1-\left(\frac{1+\cos\left(F\left(0\right)\right)}{\sin\left(F\left(0\right)\right)}\right)^{2}\right]}{2}=\left\{ \begin{matrix}1,\\
\left(\frac{1+\cos\left(F\left(0\right)\right)}{\sin\left(F\left(0\right)\right)}\right)^{2}.
\end{matrix}\right.
\end{aligned}
\]
Taking logarithms, we conclude that
\[
t=0\quad\lor\quad t=2\ln\left(\frac{1+\cos\left(F\left(0\right)\right)}{\sin\left(F\left(0\right)\right)}\right)=2t_{\max}.
\]
\item We depart from point 2:
\[
\frac{\sin\left(F\left(t\right)\right)}{\sin\left(F\left(0\right)\right)}=2\frac{1+\cos\left(F\left(0\right)\right)}{\sin^{2}\left(F\left(0\right)\right)}\frac{1}{\mathrm{e}^{t}+\left(\frac{1+\cos\left(F\left(0\right)\right)}{\sin\left(F\left(0\right)\right)}\right)^{2}\mathrm{e}^{-t}}.
\]
Multiplying by $\sin\left(F\left(0\right)\right)$ at both sides,
we arrive to
\[
\sin\left(F\left(t\right)\right)=2\left(\frac{1+\cos\left(F\left(0\right)\right)}{\sin\left(F\left(0\right)\right)}\right)\frac{1}{\mathrm{e}^{t}+\left(\frac{1+\cos\left(F\left(0\right)\right)}{\sin\left(F\left(0\right)\right)}\right)^{2}\mathrm{e}^{-t}}.
\]
Because
\[
t_{\max}=\ln\left(\frac{1+\cos\left(F\left(0\right)\right)}{\sin\left(F\left(0\right)\right)}\right)\iff\mathrm{e}^{t_{\max}}=\frac{1+\cos\left(F\left(0\right)\right)}{\sin\left(F\left(0\right)\right)},
\]
we infer that
\[
\sin\left(F\left(t\right)\right)=2\mathrm{e}^{t_{\max}}\frac{1}{\mathrm{e}^{t}+\mathrm{e}^{2t_{\max}}\mathrm{e}^{-t}}=\frac{2}{\mathrm{e}^{\left(t-t_{\max}\right)}+\mathrm{e}^{\left(t_{\max}-t\right)}}=\frac{2}{2\cosh\left(t_{\max}-t\right)}=\frac{1}{\cosh\left(t_{\max}-t\right)}.
\]
\end{enumerate}
\end{proof}
\begin{rem}
\label{rem:relation delta M}Taking a look at the change of variables
\eqref{eq:change of variables JI0 Fn}, we can see that
\[
\sin\left(a_{n-1}\left(1\right)\Xi_{0}^{\left(n\right)}\left(t\right)\right)=\sin\left(F_{n}\left(B_{n-1}\left(1\right)a_{n-1}\left(1\right)b_{n-1}\left(1\right)\left(t-t_{n}\right)\right)\right)\quad\forall t\in\left[t_{n},1\right].
\]
If we express the equality above via the change of variables
\[
\hat{\hat{t}}=\frac{t-t_{n}}{1-t_{n}}\iff t=t_{n}+\left(1-t_{n}\right)\hat{\hat{t}},
\]
which maps the interval $\left[t_{n},1\right]$ into the interval
$\left[0,1\right]$, we obtain
\[
\sin\left(a_{n-1}\left(1\right)\Xi_{0}^{\left(n\right)}\left(t_{n}+\left(1-t_{n}\right)\hat{\hat{t}}\right)\right)=\sin\left(F_{n}\left(B_{n-1}\left(1\right)a_{n-1}\left(1\right)b_{n-1}\left(1\right)\left(1-t_{n}\right)\hat{\hat{t}}\right)\right).
\]
Using Choice \ref{choice:Bnanbn}, we deduce that
\begin{equation}
\sin\left(a_{n-1}\left(1\right)\Xi_{0}^{\left(n\right)}\left(t_{n}+\left(1-t_{n}\right)\hat{\hat{t}}\right)\right)=\sin\left(F_{n}\left(M_{n-1}\left(1-t_{n}\right)\hat{\hat{t}}\right)\right)=\frac{1}{\cosh\left(\hat{\hat{t}}_{\max}^{\left(n\right)}-M_{n-1}\left(1-t_{n}\right)\hat{\hat{t}}\right)},\label{eq:a lot of sines}
\end{equation}
where we have applied Lemma \ref{lem:the good ODE} in the last equality
and
\[
\hat{\hat{t}}_{\max}^{\left(n\right)}=\ln\left(\frac{1+\cos\left(a_{n-1}\left(1\right)\Xi_{0}^{\left(n\right)}\left(t_{n}\right)\right)}{\sin\left(a_{n-1}\left(1\right)\Xi_{0}^{\left(n\right)}\left(t_{n}\right)\right)}\right).
\]
The relationship between $\hat{\hat{t}}_{\max}^{\left(n\right)}$
and $M_{n-1}\left(1-t_{n}\right)$ is important. We distinguish five
different possible behaviors:
\begin{itemize}
\item If $M_{n-1}\left(1-t_{n}\right)<\hat{\hat{t}}_{\max}^{\left(n\right)}$,
we have $\hat{\hat{t}}_{\max}^{\left(n\right)}-M_{n-1}\left(1-t_{n}\right)\hat{\hat{t}}>0$
$\forall\hat{\hat{t}}\in\left[0,1\right]$ and, consequently,\\
$\sin\left(F_{n}\left(M_{n-1}\left(1-t_{n}\right)\hat{\hat{t}}\right)\right)$
is an increasing function that never attains the value $\sin\left(F_{n}\left(M_{n-1}\left(1-t_{n}\right)\hat{\hat{t}}\right)\right)=1$.
\item If $M_{n-1}\left(1-t_{n}\right)=\hat{\hat{t}}_{\max}^{\left(n\right)}$,
$\sin\left(F_{n}\left(M_{n-1}\left(1-t_{n}\right)\hat{\hat{t}}\right)\right)$
is an increasing function that attains its maximum value $\sin\left(F_{n}\left(M_{n-1}\left(1-t_{n}\right)\hat{\hat{t}}\right)\right)=1$
at $\hat{\hat{t}}=1$.
\item If $\hat{\hat{t}}_{\max}^{\left(n\right)}<M_{n-1}\left(1-t_{n}\right)<2\hat{\hat{t}}_{\max}^{\left(n\right)}$,
$\sin\left(F_{n}\left(M_{n-1}\left(1-t_{n}\right)\hat{\hat{t}}\right)\right)$
first increases and then decreases, attains its maximum value $\sin\left(F_{n}\left(M_{n-1}\left(1-t_{n}\right)\hat{\hat{t}}\right)\right)=1$
at a certain point in the interval $\left[0,1\right]$ and satisfies
$\sin\left(F_{n}\left(M_{n-1}\left(1-t_{n}\right)\hat{\hat{t}}\right)\right)\neq\sin\left(F_{n}\left(0\right)\right)$
if $\hat{\hat{t}}\neq0$.
\item If $M_{n-1}\left(1-t_{n}\right)=2\hat{\hat{t}}_{\max}^{\left(n\right)}$,
$\sin\left(F_{n}\left(M_{n-1}\left(1-t_{n}\right)\hat{\hat{t}}\right)\right)$
first increases and then decreases, attains its maximum value $\sin\left(F_{n}\left(M_{n-1}\left(1-t_{n}\right)\hat{\hat{t}}\right)\right)=1$
at the point $\hat{\hat{t}}=\frac{1}{2}$ and $\sin\left(F_{n}\left(0\right)\right)=\sin\left(F_{n}\left(M_{n-1}\left(1-t_{n}\right)\right)\right)$.
\item If $M_{n-1}\left(1-t_{n}\right)>2\hat{\hat{t}}_{\max}^{\left(n\right)}$,
$\sin\left(F_{n}\left(M_{n-1}\left(1-t_{n}\right)\hat{\hat{t}}\right)\right)$
first increases and then decreases, attains its maximum value $\sin\left(F_{n}\left(M_{n-1}\left(1-t_{n}\right)\hat{\hat{t}}\right)\right)=1$
at a certain point in the interval $\left[0,\frac{1}{2}\right]$ and
there exists $\hat{\hat{t}}^{*}\in\left(0,1\right]$ such that $\sin\left(F_{n}\left(0\right)\right)=\sin\left(F_{n}\left(M_{n-1}\left(1-t_{n}\right)\hat{\hat{t}}^{*}\right)\right)$.
\end{itemize}
In figure \ref{fig:relation delta M}, we can see a graphical representation
of each of these cases. When we bound the force of the density equation
in section \ref{sec:bounds for density force}, we will find out which
behavior interests us the most. This casuistry clearly suggests that,
if we wish for all layers to have the same behavior, as $M_{n-1}\left(1-t_{n}\right)$
depends on $n\in\mathbb{N}$, $\hat{\hat{t}}_{\max}^{\left(n\right)}$
should also depend on $n\in\mathbb{N}$ in the same way. Nonetheless,
without knowing the dependence of $M_{n}$ and $1-t_{n}$ on $n\in\mathbb{N}$
we cannot progress further.

\begin{figure}[H]
\begin{centering}
\includegraphics[width=0.8\linewidth]{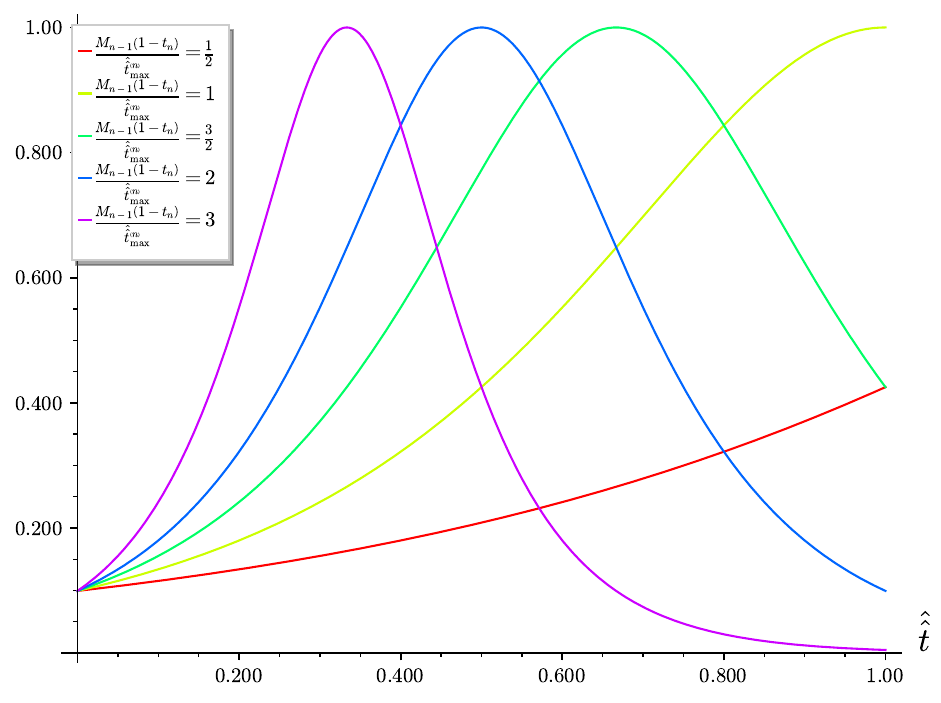}
\par\end{centering}
\caption{\label{fig:relation delta M}Profile of $\sin\left(a_{n-1}\left(1\right)\Xi_{0}^{\left(n\right)}\left(t_{n}+\left(1-t_{n}\right)\hat{\hat{t}}\right)\right)$
for $\hat{\hat{t}}_{\max}^{\left(n\right)}=3$ and different values
of the quotient $\frac{M_{n-1}\left(1-t_{n}\right)}{\hat{\hat{t}}_{\max}^{\left(n\right)}}$
as specified in the legend.}

\end{figure}
\end{rem}

\subsection{\label{subsec:cons transport term revisited}Transport term revisited}

Our next objective will be to estimate the $\left|\left|\cdot\right|\right|_{C^{\alpha}\left(\mathbb{R}^{2}\right)}$
norm of the transport term of the vorticity equation (see \eqref{eq:Boussinesq vorticity Taylor})
at $t=1$ and to see what conditions we need on $a_{n}\left(1\right)$,
$b_{n}\left(1\right)$ and $B_{n}\left(1\right)$ for the $\left|\left|\cdot\right|\right|_{C^{\alpha}\left(\mathbb{R}^{2}\right)}$
norm to be bounded. Nonetheless, first we need the following result
about $\widetilde{\nabla}^{n}\widetilde{\omega^{\left(n\right)}}^{n}\left(t,x\right)$.
\begin{prop}
\label{prop:form gradient omega}There are functions $\Gamma^{\left(n\right)},G^{\left(n\right)}:\left[0,1\right]\times\mathbb{R}^{2}\to\mathbb{R}^{2}$
such that
\[
\begin{aligned}\widetilde{\nabla}^{n}\widetilde{\omega^{\left(n\right)}}^{n}\left(t,x\right) & =\widetilde{\Gamma^{\left(n\right)}}^{n}\left(t,x\right)+\lambda_{n}\widetilde{G^{\left(n\right)}}^{n}\left(t,x\right),\\
\widetilde{\Gamma^{\left(n\right)}}^{n}\left(t,x\right) & =B_{n}\left(t\right)\left(a_{n}\left(t\right)^{2}+b_{n}\left(t\right)^{2}\right)\varphi\left(\lambda_{n}x_{1}\right)\varphi\left(\lambda_{n}x_{2}\right)\left(\begin{matrix}a_{n}\left(t\right)\cos\left(x_{1}\right)\sin\left(x_{2}\right)\\
b_{n}\left(t\right)\sin\left(x_{1}\right)\cos\left(x_{2}\right)
\end{matrix}\right).
\end{aligned}
\]
Moreover, given $\alpha\in\left(0,1\right)$, we have
\[
{\small \begin{matrix}\begin{aligned}\left|\left|\Gamma_{1}^{\left(n\right)}\left(t,\cdot\right)\right|\right|_{L^{\infty}\left(\mathbb{R}^{2}\right)} & \le B_{n}\left(t\right)a_{n}\left(t\right)\left(a_{n}\left(t\right)^{2}+b_{n}\left(t\right)^{2}\right),\\
\left|\left|G_{1}^{\left(n\right)}\left(t,\cdot\right)\right|\right|_{L^{\infty}\left(\mathbb{R}^{2}\right)} & \lesssim_{\varphi}B_{n}\left(t\right)a_{n}\left(t\right)\max\left\{ a_{n}\left(t\right)^{2},b_{n}\left(t\right)^{2}\right\} ,\\
\left|\left|\widetilde{\Gamma_{1}^{\left(n\right)}}^{n}\left(t,\cdot\right)\right|\right|_{\dot{C}^{\alpha}\left(\mathbb{R}^{2}\right)} & \lesssim_{\varphi}B_{n}\left(t\right)a_{n}\left(t\right)\left(a_{n}\left(t\right)^{2}+b_{n}\left(t\right)^{2}\right),\\
\left|\left|\widetilde{G_{1}^{\left(n\right)}}^{n}\left(t,\cdot\right)\right|\right|_{\dot{C}^{\alpha}\left(\mathbb{R}^{2}\right)} & \lesssim_{\varphi}B_{n}\left(t\right)a_{n}\left(t\right)\max\left\{ a_{n}\left(t\right)^{2},b_{n}\left(t\right)^{2}\right\} ,\\
\left|\left|\Gamma_{1}^{\left(n\right)}\left(t,\cdot\right)\right|\right|_{\dot{C}^{\alpha}\left(\mathbb{R}^{2}\right)} & \lesssim_{\varphi}B_{n}\left(t\right)a_{n}\left(t\right)\left(a_{n}\left(t\right)^{2}+b_{n}\left(t\right)^{2}\right)\cdot\\
 & \quad\cdot\max\left\{ a_{n}\left(t\right)^{\alpha},b_{n}\left(t\right)^{\alpha}\right\} ,\\
\left|\left|G_{1}^{\left(n\right)}\left(t,\cdot\right)\right|\right|_{\dot{C}^{\alpha}\left(\mathbb{R}^{2}\right)} & \lesssim_{\varphi}B_{n}\left(t\right)a_{n}\left(t\right)\cdot\\
 & \quad\cdot\max\left\{ a_{n}\left(t\right)^{2+\alpha},b_{n}\left(t\right)^{2+\alpha}\right\} ,
\end{aligned}
 &  & \begin{aligned}\left|\left|\Gamma_{2}^{\left(n\right)}\left(t,\cdot\right)\right|\right|_{L^{\infty}\left(\mathbb{R}^{2}\right)} & \le B_{n}\left(t\right)b_{n}\left(t\right)\left(a_{n}\left(t\right)^{2}+b_{n}\left(t\right)^{2}\right),\\
\left|\left|G_{2}^{\left(n\right)}\left(t,\cdot\right)\right|\right|_{L^{\infty}\left(\mathbb{R}^{2}\right)} & \lesssim_{\varphi}B_{n}\left(t\right)b_{n}\left(t\right)\max\left\{ a_{n}\left(t\right)^{2},b_{n}\left(t\right)^{2}\right\} ,\\
\left|\left|\widetilde{\Gamma_{2}^{\left(n\right)}}^{n}\left(t,\cdot\right)\right|\right|_{\dot{C}^{\alpha}\left(\mathbb{R}^{2}\right)} & \lesssim_{\varphi}B_{n}\left(t\right)b_{n}\left(t\right)\left(a_{n}\left(t\right)^{2}+b_{n}\left(t\right)^{2}\right),\\
\left|\left|\widetilde{G_{2}^{\left(n\right)}}^{n}\left(t,\cdot\right)\right|\right|_{\dot{C}^{\alpha}\left(\mathbb{R}^{2}\right)} & \lesssim_{\varphi}B_{n}\left(t\right)b_{n}\left(t\right)\max\left\{ a_{n}\left(t\right)^{2},b_{n}\left(t\right)^{2}\right\} ,\\
\left|\left|\Gamma_{2}^{\left(n\right)}\left(t,\cdot\right)\right|\right|_{\dot{C}^{\alpha}\left(\mathbb{R}^{2}\right)} & \lesssim_{\varphi}B_{n}\left(t\right)b_{n}\left(t\right)\left(a_{n}\left(t\right)^{2}+b_{n}\left(t\right)^{2}\right)\cdot\\
 & \quad\cdot\max\left\{ a_{n}\left(t\right)^{\alpha},b_{n}\left(t\right)^{\alpha}\right\} ,\\
\left|\left|G_{2}^{\left(n\right)}\left(t,\cdot\right)\right|\right|_{\dot{C}^{\alpha}\left(\mathbb{R}^{2}\right)} & \lesssim_{\varphi}B_{n}\left(t\right)b_{n}\left(t\right)\cdot\\
 & \quad\cdot\max\left\{ a_{n}\left(t\right)^{2+\alpha},b_{n}\left(t\right)^{2+\alpha}\right\} .
\end{aligned}
\end{matrix}}
\]
\end{prop}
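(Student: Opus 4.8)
The plan is to imitate, essentially verbatim, the argument used for Proposition \ref{prop:form of the velocity}: start from the closed-form expression for $\widetilde{\omega^{\left(n\right)}}^{n}$ supplied by Proposition \ref{prop:computations vorticity}, apply the operator $\widetilde{\nabla}^{n}=\left(a_{n}\left(t\right)\partial_{x_{1}},b_{n}\left(t\right)\partial_{x_{2}}\right)$ term by term via the Leibniz rule of Lemma \ref{lem:operadores tilde}, and then sort the resulting summands by their power of $\lambda_{n}$.

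First I would carry out the differentiation. The only summand of $\widetilde{\omega^{\left(n\right)}}^{n}$ of zeroth order in $\lambda_{n}$ is $B_{n}\left(a_{n}^{2}+b_{n}^{2}\right)\varphi\left(\lambda_{n}x_{1}\right)\varphi\left(\lambda_{n}x_{2}\right)\sin x_{1}\sin x_{2}$; applying $\widetilde{\nabla}^{n}$ to it and retaining only the piece in which the derivative does not land on a cutoff produces exactly $\widetilde{\Gamma^{\left(n\right)}}^{n}$ as written in the statement. Every other contribution — the piece in which $\widetilde{\nabla}^{n}$ differentiates one of the cutoffs in the leading summand, together with $\widetilde{\nabla}^{n}$ applied to the four summands of $\widetilde{\omega^{\left(n\right)}}^{n}$ that already carry $\lambda_{n}$ or $\lambda_{n}^{2}$ — is of the form $\left(\text{coefficient}\right)\cdot\varphi^{\left(k\right)}\left(\lambda_{n}x_{1}\right)\varphi^{\left(\ell\right)}\left(\lambda_{n}x_{2}\right)\cdot\left(\text{product of two trigonometric factors}\right)$ with $k+\ell\le3$, where the coefficient equals $B_{n}\left(t\right)$ times a monomial of degree three in $a_{n}\left(t\right),b_{n}\left(t\right)$ times some nonnegative power $\lambda_{n}^{j}$. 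Factoring out exactly one $\lambda_{n}$ defines $\widetilde{G^{\left(n\right)}}^{n}$; since $\lambda_{n}\le1$ by Choice \ref{choice:psin}, the leftover powers of $\lambda_{n}$ are harmless. A brief bookkeeping check shows that in the first component every one of these degree-three monomials is bounded by $2\,a_{n}\left(t\right)\max\left\{ a_{n}\left(t\right)^{2},b_{n}\left(t\right)^{2}\right\}$ and in the second by $2\,b_{n}\left(t\right)\max\left\{ a_{n}\left(t\right)^{2},b_{n}\left(t\right)^{2}\right\}$, the factor $2$ absorbing $a_{n}^{2}+b_{n}^{2}\le2\max\left\{ a_{n}^{2},b_{n}^{2}\right\}$.

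With the decomposition at hand, the norm estimates are mechanical and identical in spirit to Proposition \ref{prop:form of the velocity}. For the $L^{\infty}$ bounds I would use $\left|\left|\varphi\right|\right|_{L^{\infty}}=1$ and the boundedness of each $\varphi^{\left(k\right)}$, together with the invariance of $\left|\left|\cdot\right|\right|_{L^{\infty}}$ under the affine diffeomorphism $\phi^{\left(n\right)}$, so that $\left|\left|\Gamma_{i}^{\left(n\right)}\left(t,\cdot\right)\right|\right|_{L^{\infty}}=\left|\left|\widetilde{\Gamma_{i}^{\left(n\right)}}^{n}\left(t,\cdot\right)\right|\right|_{L^{\infty}}$ and likewise for $G_{i}^{\left(n\right)}$. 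For the tilde Hölder seminorms I would expand with \eqref{eq:property Calpha multiplication}, getting one summand per space-dependent factor, bound $\left|\left|\varphi^{\left(k\right)}\left(\lambda_{n}\cdot\right)\right|\right|_{\dot{C}^{\alpha}\left(\mathbb{R}\right)}\le K_{0}\left(1\right)\lambda_{n}^{\alpha}\left|\left|\varphi^{\left(k\right)}\right|\right|_{\dot{C}^{1}\left(\mathbb{R}\right)}\le K_{0}\left(1\right)\left|\left|\varphi\right|\right|_{\dot{C}^{k+1}\left(\mathbb{R}\right)}$ via \eqref{eq:property Calpha composition}, and use $\left|\left|\sin\left(\cdot\right)\right|\right|_{\dot{C}^{\alpha}},\left|\left|\cos\left(\cdot\right)\right|\right|_{\dot{C}^{\alpha}}\le1$; this gives the claimed bounds for $\left|\left|\widetilde{\Gamma_{i}^{\left(n\right)}}^{n}\right|\right|_{\dot{C}^{\alpha}}$ and $\left|\left|\widetilde{G_{i}^{\left(n\right)}}^{n}\right|\right|_{\dot{C}^{\alpha}}$. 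Finally, to pass to the genuine seminorms I would write $\Gamma_{i}^{\left(n\right)}\left(t,\cdot\right)=\widetilde{\Gamma_{i}^{\left(n\right)}}^{n}\left(t,\left(\phi^{\left(n\right)}\right)^{-1}\left(t,\cdot\right)\right)$, apply \eqref{eq:property Calpha composition}, and use $\left|\left|\left(\phi^{\left(n\right)}\right)^{-1}\left(t,\cdot\right)\right|\right|_{\dot{C}^{1}\left(\mathbb{R}^{2};\mathbb{R}^{2}\right)}=\max\left\{ a_{n}\left(t\right),b_{n}\left(t\right)\right\}$ from \eqref{eq:jacobian inverse}, which contributes the extra factor $\max\left\{ a_{n}\left(t\right)^{\alpha},b_{n}\left(t\right)^{\alpha}\right\}$ and hence the $\max\left\{ a_{n}^{2+\alpha},b_{n}^{2+\alpha}\right\}$-type bounds; the same for $G_{i}^{\left(n\right)}$.

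There is no genuine obstacle: the whole argument is a bounded amount of elementary differentiation followed by the same seminorm toolkit already deployed for Proposition \ref{prop:form of the velocity}. The single point that needs a little attention is the bookkeeping described above — one must check that after differentiating the $\lambda_{n}^{2}$-summands of $\widetilde{\omega^{\left(n\right)}}^{n}$ (whose coefficients carry $a_{n}^{2}$ or $b_{n}^{2}$) and picking up one further factor $a_{n}$ or $b_{n}$ from $\widetilde{\nabla}^{n}$, the resulting degree-three monomials still fit under $a_{n}\max\left\{ a_{n}^{2},b_{n}^{2}\right\}$ in the first component and under $b_{n}\max\left\{ a_{n}^{2},b_{n}^{2}\right\}$ in the second. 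This holds precisely because the first component of $\widetilde{\nabla}^{n}$ carries the factor $a_{n}$ and the second carries $b_{n}$, rather than the larger of the two. Beyond that one simply writes $\widetilde{G^{\left(n\right)}}^{n}$ out explicitly — a fixed finite number of explicit terms — and reads off the constants $M\left(\varphi\right)$, which depend only on $\left|\left|\varphi\right|\right|_{C^{3}\left(\mathbb{R}\right)}$.
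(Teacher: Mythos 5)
Your argument is correct and follows essentially the same route as the paper's proof: you identify $\widetilde{\Gamma^{\left(n\right)}}^{n}$ as the unique zeroth-order term in $\lambda_{n}$ because an extra $\lambda_{n}$ pops out whenever $\widetilde{\nabla}^{n}$ hits a cutoff, you bound the degree-three time monomials of $\widetilde{G^{\left(n\right)}}^{n}$ by $B_{n}\left(t\right)a_{n}\left(t\right)\max\left\{ a_{n}\left(t\right)^{2},b_{n}\left(t\right)^{2}\right\} $ in the first component and $B_{n}\left(t\right)b_{n}\left(t\right)\max\left\{ a_{n}\left(t\right)^{2},b_{n}\left(t\right)^{2}\right\} $ in the second using precisely the fact that the first (resp. second) component of $\widetilde{\nabla}^{n}$ carries $a_{n}\left(t\right)$ (resp. $b_{n}\left(t\right)$), and you obtain the seminorm bounds through \eqref{eq:property Calpha multiplication}, \eqref{eq:property Calpha composition} and \eqref{eq:jacobian inverse} exactly as the paper does. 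The only cosmetic slip is that your constants should involve $\left|\left|\varphi\right|\right|_{C^{4}\left(\mathbb{R}\right)}$ rather than $\left|\left|\varphi\right|\right|_{C^{3}\left(\mathbb{R}\right)}$, since $\varphi'''$ appears in $\widetilde{G^{\left(n\right)}}^{n}$ and its H\"older seminorm is controlled by the fourth derivative (as in the paper); this is immaterial because the bounds are stated as $\lesssim_{\varphi}$ with $\varphi\in C_{c}^{\infty}\left(\mathbb{R}\right)$.
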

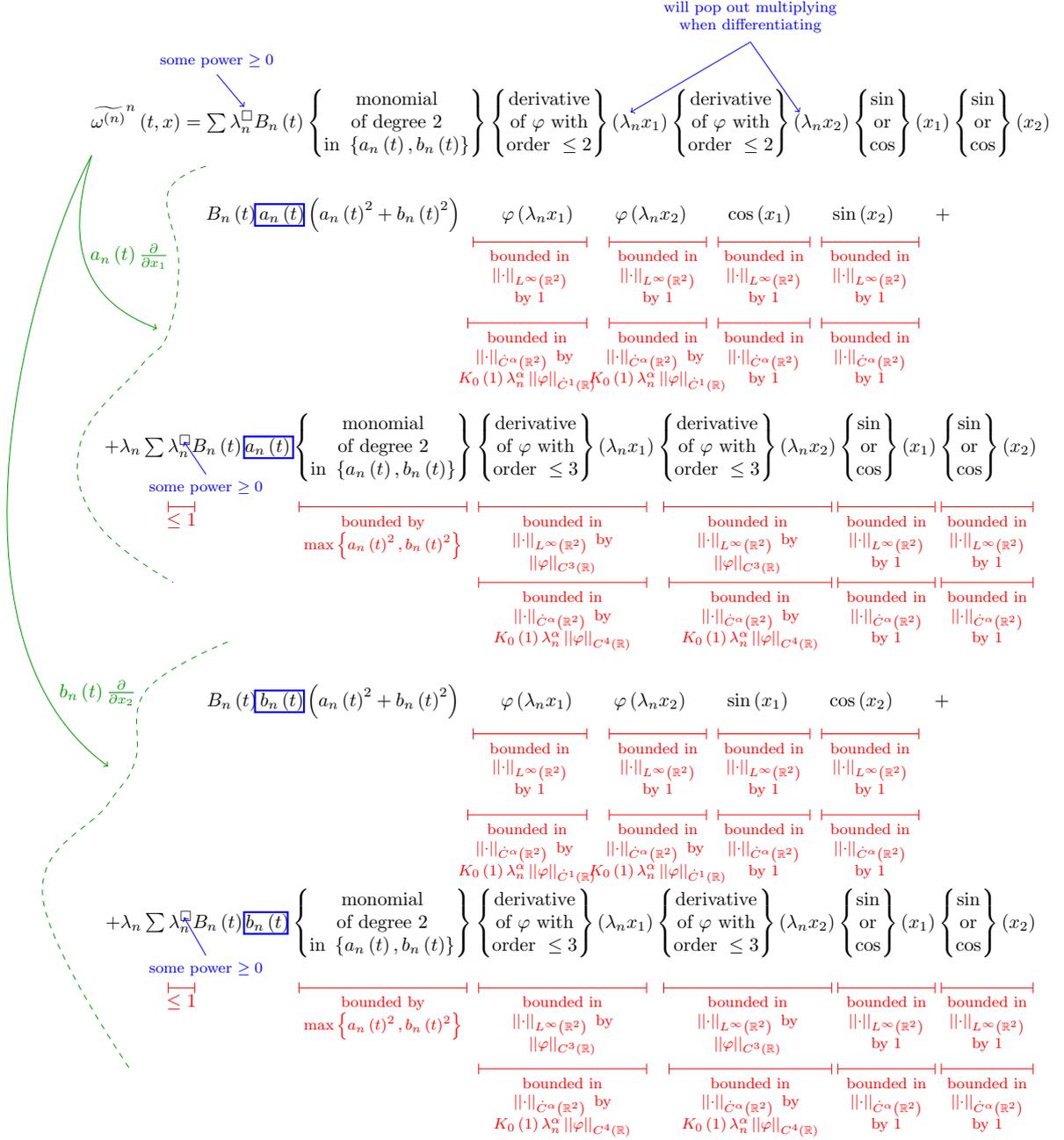
\begin{figure}[H]
\centering
\resizebox{\columnwidth}{!}{%
\begin{tikzpicture}
\node at (0,0) {$\widetilde{\omega^{\left(n\right)}}^{n}\left(t,x\right)=\sum\lambda_{n}^{\square}B_{n}\left(t\right)\left\{ \begin{matrix}\text{monomial}\\
\text{of degree }2\\
\text{in }\left\{ a_{n}\left(t\right),b_{n}\left(t\right)\right\} 
\end{matrix}\right\} \left\{ \begin{matrix}\text{derivative}\\
\text{of }\varphi\text{ with}\\
\text{order }\le2
\end{matrix}\right\} \left(\lambda_{n}x_{1}\right)\left\{ \begin{matrix}\text{derivative}\\
\text{of }\varphi\text{ with}\\
\text{order }\le2
\end{matrix}\right\} \left(\lambda_{n}x_{2}\right)\left\{ \begin{matrix}\sin\\
\text{or}\\
\cos
\end{matrix}\right\} \left(x_{1}\right)\left\{ \begin{matrix}\sin\\
\text{or}\\
\cos
\end{matrix}\right\} \left(x_{2}\right)$};
\draw [blue, ->] (-6.5,0.9) node [above] {\footnotesize some power $\ge0$}-- (-6,0.3);
\draw [blue, ->] (3.3,1.5) node (v1) {} node [above] {\footnotesize $\begin{matrix}\text{will pop out multiplying}\\
\text{when differentiating}
\end{matrix}$}-- (1.1,0.2);
\draw [blue, ->] (v1.center) -- (4.2,0.2);

\node at (0,-1.7) {$\phantom{-}B_{n}\left(t\right)a_{n}\left(t\right)\left(a_{n}\left(t\right)^{2}+b_{n}\left(t\right)^{2}\right)\qquad\varphi\left(\lambda_{n}x_{1}\right)\qquad\varphi\left(\lambda_{n}x_{2}\right)\qquad\cos\left(x_{1}\right)\qquad\sin\left(x_{2}\right)\qquad+$};
\draw [green!60!black, ->] (-8.8,-0.6) .. controls (-9.2,-1.9) and (-9.2,-3.2) .. (-7.6,-3.8) node [midway, right] {$a_{n}\left(t\right)\frac{\partial}{\partial x_{1}}$};
\draw [green!60!black, dashed] plot[smooth, tension=.7] coordinates {(-6.7,-0.8) (-7.4,-1.3) (-7.2,-2.6) (-7.4,-3.9) (-7.9,-4.5) (-9,-5.9) (-8.7,-7.4) (-7.3,-8.5)};
\draw [blue, line width = 1] (-5.8,-1.5) rectangle (-4.9,-1.9);
\draw [|-|, red](-1.8,-2.2) -- (0.3,-2.2) node [midway, below] {\footnotesize $\begin{matrix}\text{bounded in}\\
\left|\left|\cdot\right|\right|_{L^{\infty}\left(\mathbb{R}^{2}\right)}\\
\text{by }1
\end{matrix}$};
\draw [|-|, red](0.7,-2.2) -- (2.5,-2.2) node [midway, below] {\footnotesize $\begin{matrix}\text{bounded in}\\
\left|\left|\cdot\right|\right|_{L^{\infty}\left(\mathbb{R}^{2}\right)}\\
\text{by }1
\end{matrix}$};
\draw [|-|, red](2.7,-2.2) -- (4.4,-2.2) node [midway, below] {\footnotesize $\begin{matrix}\text{bounded in}\\
\left|\left|\cdot\right|\right|_{L^{\infty}\left(\mathbb{R}^{2}\right)}\\
\text{by }1
\end{matrix}$};
\draw [|-|, red](4.6,-2.2) -- (6.4,-2.2) node [midway, below] {\footnotesize $\begin{matrix}\text{bounded in}\\
\left|\left|\cdot\right|\right|_{L^{\infty}\left(\mathbb{R}^{2}\right)}\\
\text{by }1
\end{matrix}$};
\draw [|-|, red](-1.9,-3.7) -- (0.3,-3.7) node [midway, below] {\footnotesize $\begin{matrix}\text{bounded in}\\
\left|\left|\cdot\right|\right|_{\dot{C}^{\alpha}\left(\mathbb{R}^{2}\right)}\text{ by }\\
K_0\left(1\right)\lambda_{n}^{\alpha}\left|\left|\varphi\right|\right|_{\dot{C}^{1}\left(\mathbb{R}\right)}
\end{matrix}$};
\draw [|-|, red](0.7,-3.7) -- (2.5,-3.7) node [midway, below] {\footnotesize $\begin{matrix}\text{bounded in}\\
\left|\left|\cdot\right|\right|_{\dot{C}^{\alpha}\left(\mathbb{R}^{2}\right)}\text{ by }\\
K_0\left(1\right)\lambda_{n}^{\alpha}\left|\left|\varphi\right|\right|_{\dot{C}^{1}\left(\mathbb{R}\right)}
\end{matrix}$};
\draw [|-|, red](2.7,-3.7) -- (4.4,-3.7) node [midway, below] {\footnotesize $\begin{matrix}\text{bounded in}\\
\left|\left|\cdot\right|\right|_{\dot{C}^{\alpha}\left(\mathbb{R}^{2}\right)}\\
\text{by }1
\end{matrix}$};
\draw [|-|, red](4.6,-3.7) -- (6.4,-3.7) node [midway, below] {\footnotesize $\begin{matrix}\text{bounded in}\\
\left|\left|\cdot\right|\right|_{\dot{C}^{\alpha}\left(\mathbb{R}^{2}\right)}\\
\text{by }1
\end{matrix}$};

\node at (0,-6) {$+\lambda_{n}\sum\lambda_{n}^{\square}B_{n}\left(t\right)a_{n}\left(t\right)\left\{ \begin{matrix}\text{monomial}\\
\text{of degree }2\\
\text{in }\left\{ a_{n}\left(t\right),b_{n}\left(t\right)\right\} 
\end{matrix}\right\} \left\{ \begin{matrix}\text{derivative}\\
\text{of }\varphi\text{ with}\\
\text{order }\le3
\end{matrix}\right\} \left(\lambda_{n}x_{1}\right)\left\{ \begin{matrix}\text{derivative}\\
\text{of }\varphi\text{ with}\\
\text{order }\le3
\end{matrix}\right\} \left(\lambda_{n}x_{2}\right)\left\{ \begin{matrix}\sin\\
\text{or}\\
\cos
\end{matrix}\right\} \left(x_{1}\right)\left\{ \begin{matrix}\sin\\
\text{or}\\
\cos
\end{matrix}\right\} \left(x_{2}\right)$};
\draw [blue, ->] (-6.7,-6.5) node (v2) {} node [below] {\footnotesize some power $\ge0$}-- (-7.1,-5.9);
\draw [|-|, red] (-5,-7.1) -- (-1.9,-7.1) node [midway, below] {\footnotesize $\begin{matrix}\text{bounded by}\\
\max\left\{ a_{n}\left(t\right)^{2},b_{n}\left(t\right)^{2}\right\} 
\end{matrix}$};
\draw [|-|, red](-1.7,-7.1) -- (1.4,-7.1) node [midway, below] {\footnotesize $\begin{matrix}\text{bounded in}\\
\left|\left|\cdot\right|\right|_{L^{\infty}\left(\mathbb{R}^{2}\right)}\text{ by}\\
\left|\left|\varphi\right|\right|_{C^{3}\left(\mathbb{R}\right)}
\end{matrix}$};
\draw [|-|, red](1.7,-7.1) -- (4.8,-7.1) node [midway, below] {\footnotesize $\begin{matrix}\text{bounded in}\\
\left|\left|\cdot\right|\right|_{L^{\infty}\left(\mathbb{R}^{2}\right)}\text{ by}\\
\left|\left|\varphi\right|\right|_{C^{3}\left(\mathbb{R}\right)}
\end{matrix}$};
\draw [|-|, red](4.9,-7.1) -- (6.7,-7.1) node [midway, below] {\footnotesize $\begin{matrix}\text{bounded in}\\
\left|\left|\cdot\right|\right|_{L^{\infty}\left(\mathbb{R}^{2}\right)}\\
\text{by }1
\end{matrix}$};
\draw [|-|, red](6.8,-7.1) -- (8.5,-7.1) node [midway, below] {\footnotesize $\begin{matrix}\text{bounded in}\\
\left|\left|\cdot\right|\right|_{L^{\infty}\left(\mathbb{R}^{2}\right)}\\
\text{by }1
\end{matrix}$};
\draw [|-|, red](-1.7,-8.5) -- (1.4,-8.5) node [midway, below] {\footnotesize $\begin{matrix}\text{bounded in}\\
\left|\left|\cdot\right|\right|_{\dot{C}^{\alpha}\left(\mathbb{R}^{2}\right)}\text{ by}\\
K_0\left(1\right)\lambda_{n}^{\alpha}\left|\left|\varphi\right|\right|_{C^{4}\left(\mathbb{R}\right)}
\end{matrix}$};
\draw [|-|, red](1.8,-8.5) -- (4.8,-8.5) node [midway, below] {\footnotesize $\begin{matrix}\text{bounded in}\\
\left|\left|\cdot\right|\right|_{\dot{C}^{\alpha}\left(\mathbb{R}^{2}\right)}\text{ by}\\
K_0\left(1\right)\lambda_{n}^{\alpha}\left|\left|\varphi\right|\right|_{C^{4}\left(\mathbb{R}\right)}
\end{matrix}$};
\draw [|-|, red](4.9,-8.5) -- (6.7,-8.5) node [midway, below] {\footnotesize $\begin{matrix}\text{bounded in}\\
\left|\left|\cdot\right|\right|_{\dot{C}^{\alpha}\left(\mathbb{R}^{2}\right)}\\
\text{by }1
\end{matrix}$};
\draw [|-|, red](6.8,-8.5) -- (8.5,-8.5) node [midway, below] {\footnotesize $\begin{matrix}\text{bounded in}\\
\left|\left|\cdot\right|\right|_{\dot{C}^{\alpha}\left(\mathbb{R}^{2}\right)}\\
\text{by }1
\end{matrix}$};
\draw [blue, line width = 1] (-6,-5.8) rectangle (-5.1,-6.2);
\draw [|-|, red] (-7.4,-7.1) -- (-6.9,-7.1) node [midway, below] {$\le 1$};

\node at (0, -10.7) {$\phantom{-}B_{n}\left(t\right)b_{n}\left(t\right)\left(a_{n}\left(t\right)^{2}+b_{n}\left(t\right)^{2}\right)\qquad\varphi\left(\lambda_{n}x_{1}\right)\qquad\varphi\left(\lambda_{n}x_{2}\right)\qquad\sin\left(x_{1}\right)\qquad\cos\left(x_{2}\right)\qquad+$};
\draw [green!60!black, ->] (-8.8,-0.6) .. controls (-9.9,-2.6) and (-11.8,-9.5) .. (-8.5,-11.9) node [pos=0.85, right=4] {$b_{n}\left(t\right)\frac{\partial}{\partial x_{2}}$};
\draw [green!60!black, dashed] plot[smooth, tension=.7] coordinates {(-6.3,-9.6) (-7.8,-10.5) (-8,-11.8) (-8.5,-12.6) (-9.7,-14.3) (-8.1,-17.5)};
\draw [blue, line width = 1] (-5.8,-10.5) rectangle (-4.9,-10.9);
\draw [|-|, red](-1.8,-11.3) -- (0.3,-11.3) node [midway, below] {\footnotesize $\begin{matrix}\text{bounded in}\\
\left|\left|\cdot\right|\right|_{L^{\infty}\left(\mathbb{R}^{2}\right)}\\
\text{by }1
\end{matrix}$};
\draw [|-|, red](0.7,-11.3) -- (2.5,-11.3) node [midway, below] {\footnotesize $\begin{matrix}\text{bounded in}\\
\left|\left|\cdot\right|\right|_{L^{\infty}\left(\mathbb{R}^{2}\right)}\\
\text{by }1
\end{matrix}$};
\draw [|-|, red](2.7,-11.3) -- (4.4,-11.3) node [midway, below] {\footnotesize $\begin{matrix}\text{bounded in}\\
\left|\left|\cdot\right|\right|_{L^{\infty}\left(\mathbb{R}^{2}\right)}\\
\text{by }1
\end{matrix}$};
\draw [|-|, red](4.6,-11.3) -- (6.4,-11.3) node [midway, below] {\footnotesize $\begin{matrix}\text{bounded in}\\
\left|\left|\cdot\right|\right|_{L^{\infty}\left(\mathbb{R}^{2}\right)}\\
\text{by }1
\end{matrix}$};
\draw [|-|, red](-1.9,-12.8) -- (0.3,-12.8) node [midway, below] {\footnotesize $\begin{matrix}\text{bounded in}\\
\left|\left|\cdot\right|\right|_{\dot{C}^{\alpha}\left(\mathbb{R}^{2}\right)}\text{ by }\\
K_0\left(1\right)\lambda_{n}^{\alpha}\left|\left|\varphi\right|\right|_{\dot{C}^{1}\left(\mathbb{R}\right)}
\end{matrix}$};
\draw [|-|, red](0.7,-12.8) -- (2.5,-12.8) node [midway, below] {\footnotesize $\begin{matrix}\text{bounded in}\\
\left|\left|\cdot\right|\right|_{\dot{C}^{\alpha}\left(\mathbb{R}^{2}\right)}\text{ by }\\
K_0\left(1\right)\lambda_{n}^{\alpha}\left|\left|\varphi\right|\right|_{\dot{C}^{1}\left(\mathbb{R}\right)}
\end{matrix}$};
\draw [|-|, red](2.7,-12.8) -- (4.4,-12.8) node [midway, below] {\footnotesize $\begin{matrix}\text{bounded in}\\
\left|\left|\cdot\right|\right|_{\dot{C}^{\alpha}\left(\mathbb{R}^{2}\right)}\\
\text{by }1
\end{matrix}$};
\draw [|-|, red](4.6,-12.8) -- (6.4,-12.8) node [midway, below] {\footnotesize $\begin{matrix}\text{bounded in}\\
\left|\left|\cdot\right|\right|_{\dot{C}^{\alpha}\left(\mathbb{R}^{2}\right)}\\
\text{by }1
\end{matrix}$};

\node at (0,-14.8) {$+\lambda_{n}\sum\lambda_{n}^{\square}B_{n}\left(t\right)b_{n}\left(t\right)\left\{ \begin{matrix}\text{monomial}\\
\text{of degree }2\\
\text{in }\left\{ a_{n}\left(t\right),b_{n}\left(t\right)\right\} 
\end{matrix}\right\} \left\{ \begin{matrix}\text{derivative}\\
\text{of }\varphi\text{ with}\\
\text{order }\le3
\end{matrix}\right\} \left(\lambda_{n}x_{1}\right)\left\{ \begin{matrix}\text{derivative}\\
\text{of }\varphi\text{ with}\\
\text{order }\le3
\end{matrix}\right\} \left(\lambda_{n}x_{2}\right)\left\{ \begin{matrix}\sin\\
\text{or}\\
\cos
\end{matrix}\right\} \left(x_{1}\right)\left\{ \begin{matrix}\sin\\
\text{or}\\
\cos
\end{matrix}\right\} \left(x_{2}\right)$};
\draw [blue, ->] (-6.7,-15.4) node [below] {\footnotesize some power $\ge0$}-- (-7.1,-14.7);
\draw [|-|, red] (-5,-16) -- (-1.9,-16) node [midway, below] {\footnotesize $\begin{matrix}\text{bounded by}\\
\max\left\{ a_{n}\left(t\right)^{2},b_{n}\left(t\right)^{2}\right\} 
\end{matrix}$};
\draw [|-|, red](-1.7,-16) -- (1.4,-16) node [midway, below] {\footnotesize $\begin{matrix}\text{bounded in}\\
\left|\left|\cdot\right|\right|_{L^{\infty}\left(\mathbb{R}^{2}\right)}\text{ by}\\
\left|\left|\varphi\right|\right|_{C^{3}\left(\mathbb{R}\right)}
\end{matrix}$};
\draw [|-|, red](1.7,-16) -- (4.8,-16) node [midway, below] {\footnotesize $\begin{matrix}\text{bounded in}\\
\left|\left|\cdot\right|\right|_{L^{\infty}\left(\mathbb{R}^{2}\right)}\text{ by}\\
\left|\left|\varphi\right|\right|_{C^{3}\left(\mathbb{R}\right)}
\end{matrix}$};
\draw [|-|, red](4.9,-16) -- (6.7,-16) node [midway, below] {\footnotesize $\begin{matrix}\text{bounded in}\\
\left|\left|\cdot\right|\right|_{L^{\infty}\left(\mathbb{R}^{2}\right)}\\
\text{by }1
\end{matrix}$};
\draw [|-|, red](6.8,-16) -- (8.5,-16) node [midway, below] {\footnotesize $\begin{matrix}\text{bounded in}\\
\left|\left|\cdot\right|\right|_{L^{\infty}\left(\mathbb{R}^{2}\right)}\\
\text{by }1
\end{matrix}$};
\draw [|-|, red](-1.7,-17.5) -- (1.4,-17.5) node [midway, below] {\footnotesize $\begin{matrix}\text{bounded in}\\
\left|\left|\cdot\right|\right|_{\dot{C}^{\alpha}\left(\mathbb{R}^{2}\right)}\text{ by}\\
K_0\left(1\right)\lambda_{n}^{\alpha}\left|\left|\varphi\right|\right|_{C^{4}\left(\mathbb{R}\right)}
\end{matrix}$};
\draw [|-|, red](1.8,-17.5) -- (4.8,-17.5) node [midway, below] {\footnotesize $\begin{matrix}\text{bounded in}\\
\left|\left|\cdot\right|\right|_{\dot{C}^{\alpha}\left(\mathbb{R}^{2}\right)}\text{ by}\\
K_0\left(1\right)\lambda_{n}^{\alpha}\left|\left|\varphi\right|\right|_{C^{4}\left(\mathbb{R}\right)}
\end{matrix}$};
\draw [|-|, red](4.9,-17.5) -- (6.7,-17.5) node [midway, below] {\footnotesize $\begin{matrix}\text{bounded in}\\
\left|\left|\cdot\right|\right|_{\dot{C}^{\alpha}\left(\mathbb{R}^{2}\right)}\\
\text{by }1
\end{matrix}$};
\draw [|-|, red](6.8,-17.5) -- (8.5,-17.5) node [midway, below] {\footnotesize $\begin{matrix}\text{bounded in}\\
\left|\left|\cdot\right|\right|_{\dot{C}^{\alpha}\left(\mathbb{R}^{2}\right)}\\
\text{by }1
\end{matrix}$};
\draw [blue, line width = 1] (-6,-15) rectangle (-5.2,-14.6);
\draw [|-|, red] (-7.4,-16) -- (-6.9,-16) node [midway, below] {$\le 1$};
\end{tikzpicture}
}

\caption{\label{fig:proof form gradient omega}Diagrammatic summary of the
proof of Proposition \eqref{prop:form gradient omega}.}

\end{figure}

\begin{proof}
Figure \ref{fig:proof form gradient omega} contains a diagrammatic
summary of this proof. Consider the expression given in Proposition
\ref{prop:computations vorticity} for $\widetilde{\omega^{\left(n\right)}}^{n}\left(t,x\right)$.
It contains five summands and each one of them contains four parts:
a power of $\lambda_{n}$, a time dependent factor, a cutoff factor
(formed by a product of derivatives\footnote{Here, the word ``derivative'' is used in a broad sense, i.e., we
understand the function itself as a zeroth order derivative.} of $\varphi$) and a main factor (formed by a product of sines and
cosines). For example,
\[
\underbrace{\lambda_{n}^{2}}_{\text{power of }\lambda_{n}}\underbrace{B_{n}\left(t\right)a_{n}\left(t\right)^{2}}_{\text{time dependent factor}}\underbrace{\varphi''\left(\lambda_{n}x_{1}\right)\varphi\left(\lambda_{n}x_{2}\right)}_{\text{cutoff factor}}\underbrace{\sin\left(x_{1}\right)\sin\left(x_{2}\right)}_{\text{main factor}}.
\]
When applying the tilde gradient $\widetilde{\nabla}^{n}$ to one
of the summands, Leibniz's rule tells us that $\widetilde{\nabla}^{n}$
will hit either the cutoff factor or the main factor. Moreover, since
the cutoff function and its derivatives are always evaluated at $\lambda_{n}x_{1}$
or $\lambda_{n}x_{2}$, an extra $\lambda_{n}$ will ``pop out''
multiplying when $\widetilde{\nabla}^{n}$ acts on the cutoff factor.
This means that the only zeroth order term on $\lambda_{n}$ is obtained
when $\widetilde{\nabla}^{n}$ is applied to the main factor of the
first summand (the only one which contains no $\lambda_{n}$) and,
consequently, must be
\[
\widetilde{\Gamma^{\left(n\right)}}^{n}\left(t,x\right)=B_{n}\left(t\right)\left(a_{n}\left(t\right)^{2}+b_{n}\left(t\right)^{2}\right)\varphi\left(\lambda_{n}x_{1}\right)\varphi\left(\lambda_{n}x_{2}\right)\left(\begin{matrix}a_{n}\left(t\right)\cos\left(x_{1}\right)\sin\left(x_{2}\right)\\
b_{n}\left(t\right)\sin\left(x_{1}\right)\cos\left(x_{2}\right)
\end{matrix}\right).
\]
Hence, all other terms will contain at least one $\lambda_{n}$. This
means that the decomposition provided in the statement is correct
and that $\widetilde{G^{\left(n\right)}}^{n}\left(t,x\right)$ contains
no negative powers of $\lambda_{n}$. The following diagram, where
we have used equation \eqref{eq:property Calpha composition} and
the fact $\left|\left|\varphi\right|\right|_{\dot{C}^{\alpha}\left(\mathbb{R}\right)}\le\left|\left|\varphi\right|\right|_{\dot{C}^{1}\left(\mathbb{R}\right)}$
to estimate $\left|\left|\varphi\left(\lambda_{n}\cdot\right)\right|\right|_{\dot{C}^{\alpha}\left(\mathbb{R}\right)}$,
makes the computation of the $\left|\left|\cdot\right|\right|_{L^{\infty}\left(\mathbb{R}^{2}\right)}$
norm and $\left|\left|\cdot\right|\right|_{\dot{C}^{\alpha}\left(\mathbb{R}^{2}\right)}$
seminorm of $\widetilde{\Gamma^{\left(n\right)}}^{n}\left(t,x\right)$
immediate:
\[
\begin{aligned}\widetilde{\Gamma_{1}^{\left(n\right)}}^{n}\left(t,x\right) & =B_{n}\left(t\right)\left(a_{n}\left(t\right)^{2}+b_{n}\left(t\right)^{2}\right)a_{n}\left(t\right)\underbrace{\overbrace{\varphi\left(\lambda_{n}x_{1}\right)}^{{\footnotesize \begin{matrix}\text{bounded in}\\
\dot{C}^{\alpha}\left(\mathbb{R}^{2}\right)\text{ by}\\
K_{0}\left(1\right)\lambda_{n}^{\alpha}\left|\left|\varphi\right|\right|_{\dot{C}^{1}\left(\mathbb{R}\right)}
\end{matrix}}}}_{{\footnotesize {\footnotesize \begin{matrix}\text{bounded in}\\
L^{\infty}\left(\mathbb{R}^{2}\right)\text{ by }1
\end{matrix}}}}\underbrace{\overbrace{\varphi\left(\lambda_{n}x_{2}\right)}^{{\footnotesize \begin{matrix}\text{bounded in}\\
\dot{C}^{\alpha}\left(\mathbb{R}^{2}\right)\text{ by}\\
K_{0}\left(1\right)\lambda_{n}^{\alpha}\left|\left|\varphi\right|\right|_{\dot{C}^{1}\left(\mathbb{R}\right)}
\end{matrix}}}}_{{\footnotesize {\footnotesize \begin{matrix}\text{bounded in}\\
L^{\infty}\left(\mathbb{R}^{2}\right)\text{ by }1
\end{matrix}}}}\underbrace{\overbrace{\cos\left(x_{1}\right)}^{{\footnotesize \begin{matrix}\text{bounded in}\\
\dot{C}^{\alpha}\left(\mathbb{R}^{2}\right)\text{ by }1
\end{matrix}}}}_{{\footnotesize \begin{matrix}\text{bounded in}\\
L^{\infty}\left(\mathbb{R}^{2}\right)\text{ by }1
\end{matrix}}}\underbrace{\overbrace{\sin\left(x_{2}\right)}^{{\footnotesize \begin{matrix}\text{bounded in}\\
\dot{C}^{\alpha}\left(\mathbb{R}^{2}\right)\text{ by }1
\end{matrix}}}}_{{\footnotesize \begin{matrix}\text{bounded in}\\
L^{\infty}\left(\mathbb{R}^{2}\right)\text{ by }1
\end{matrix}}},\\
\widetilde{\Gamma_{2}^{\left(n\right)}}^{n}\left(t,x\right) & =B_{n}\left(t\right)\left(a_{n}\left(t\right)^{2}+b_{n}\left(t\right)^{2}\right)b_{n}\left(t\right)\underbrace{\overbrace{\varphi\left(\lambda_{n}x_{1}\right)}^{{\footnotesize \begin{matrix}\text{bounded in}\\
\dot{C}^{\alpha}\left(\mathbb{R}^{2}\right)\text{ by}\\
K_{0}\left(1\right)\lambda_{n}^{\alpha}\left|\left|\varphi\right|\right|_{\dot{C}^{1}\left(\mathbb{R}\right)}
\end{matrix}}}}_{{\footnotesize \begin{matrix}\text{bounded in}\\
L^{\infty}\left(\mathbb{R}^{2}\right)\text{ by }1
\end{matrix}}}\underbrace{\overbrace{\varphi\left(\lambda_{n}x_{2}\right)}^{{\footnotesize \begin{matrix}\text{bounded in}\\
\dot{C}^{\alpha}\left(\mathbb{R}^{2}\right)\text{ by}\\
K_{0}\left(1\right)\lambda_{n}^{\alpha}\left|\left|\varphi\right|\right|_{\dot{C}^{1}\left(\mathbb{R}\right)}
\end{matrix}}}}_{{\footnotesize \begin{matrix}\text{bounded in}\\
L^{\infty}\left(\mathbb{R}^{2}\right)\text{ by }1
\end{matrix}}}\underbrace{\overbrace{\sin\left(x_{1}\right)}^{{\footnotesize \begin{matrix}\text{bounded in}\\
\dot{C}^{\alpha}\left(\mathbb{R}^{2}\right)\text{ by }1
\end{matrix}}}}_{{\footnotesize \begin{matrix}\text{bounded in}\\
L^{\infty}\left(\mathbb{R}^{2}\right)\text{ by }1
\end{matrix}}}\underbrace{\overbrace{\cos\left(x_{2}\right)}^{{\footnotesize \begin{matrix}\text{bounded in}\\
\dot{C}^{\alpha}\left(\mathbb{R}^{2}\right)\text{ by }1
\end{matrix}}}}_{{\footnotesize \begin{matrix}\text{bounded in}\\
L^{\infty}\left(\mathbb{R}^{2}\right)\text{ by }1
\end{matrix}}}.
\end{aligned}
\]
As the $\left|\left|\cdot\right|\right|_{L^{\infty}\left(\mathbb{R}^{2}\right)}$
norm is invariant under diffeomorphisms, with the scheme above, we
already obtain the $\left|\left|\cdot\right|\right|_{L^{\infty}\left(\mathbb{R}^{2}\right)}$
norms we sought after. To obtain $\left|\left|\Gamma_{1}^{\left(n\right)}\left(t,\cdot\right)\right|\right|_{\dot{C}^{\alpha}\left(\mathbb{R}^{2}\right)}$
and $\left|\left|\Gamma_{2}^{\left(n\right)}\left(t,\cdot\right)\right|\right|_{\dot{C}^{\alpha}\left(\mathbb{R}^{2}\right)}$
from $\left|\left|\widetilde{\Gamma_{1}^{\left(n\right)}}^{n}\left(t,\cdot\right)\right|\right|_{\dot{C}^{\alpha}\left(\mathbb{R}^{2}\right)}$
and $\left|\left|\widetilde{\Gamma_{2}^{\left(n\right)}}^{n}\left(t,\cdot\right)\right|\right|_{\dot{C}^{\alpha}\left(\mathbb{R}^{2}\right)}$,
we turn to equation \eqref{eq:property Calpha composition}, which
implies that
\[
\left|\left|\Gamma_{1}^{\left(n\right)}\left(t,\cdot\right)\right|\right|_{\dot{C}^{\alpha}\left(\mathbb{R}^{2}\right)}=\left|\left|\widetilde{\Gamma_{1}^{\left(n\right)}}^{n}\left(t,\left(\phi^{\left(n\right)}\right)^{-1}\left(t,\cdot\right)\right)\right|\right|_{\dot{C}^{\alpha}\left(\mathbb{R}^{2}\right)}\lesssim\left|\left|\left(\phi^{\left(n\right)}\right)^{-1}\left(t,\cdot\right)\right|\right|_{\dot{C}^{1}\left(\mathbb{R}^{2};\mathbb{R}^{2}\right)}^{\alpha}\left|\left|\widetilde{\Gamma_{1}^{\left(n\right)}}^{n}\left(t,\cdot\right)\right|\right|_{\dot{C}^{\alpha}\left(\mathbb{R}^{2}\right)},
\]
where, thanks to equation \eqref{eq:jacobian inverse}, we know that
\[
\left|\left|\left(\phi^{\left(n\right)}\right)^{-1}\left(t,\cdot\right)\right|\right|_{\dot{C}^{1}\left(\mathbb{R}^{2};\mathbb{R}^{2}\right)}=\max\left\{ a_{n}\left(t\right),b_{n}\left(t\right)\right\} .
\]
With all these ingredients, we arrive to the bound of the statement.
The case of $\left|\left|\Gamma_{2}^{\left(n\right)}\left(t,\cdot\right)\right|\right|_{\dot{C}^{\alpha}\left(\mathbb{R}^{2}\right)}$
is completely analogous.

As is the case with $\widetilde{\omega^{\left(n\right)}}^{n}\left(t,x\right)$,
$\widetilde{G_{1}^{\left(n\right)}}^{n}\left(t,x\right)$ and $\widetilde{G_{2}^{\left(n\right)}}^{n}\left(t,x\right)$
will contain multiple summands and each one of them may be split in
four factors: a power of $\lambda_{n}$, a time dependent factor,
a cutoff factor and a main factor. Since $\lambda_{n}\le1$, the $\lambda_{n}$
factor can always be bounded by $1$. Concerning the time factors,
because $\widetilde{\nabla}^{n}$ multiplies the derivative with respect
to $x_{1}$ by $a_{n}\left(t\right)$ and the derivative with respect
to $x_{2}$ by $b_{n}\left(t\right)$, the time factors of the summands
of $\widetilde{G_{1}^{\left(n\right)}}^{n}\left(t,x\right)$ will
be $B_{n}\left(t\right)a_{n}\left(t\right)$ times a monomial of degree
$2$ in the variables $a_{n}\left(t\right)$ and $b_{n}\left(t\right)$.
Consequently, we can bound the time factor of any summand of $\widetilde{G_{1}^{\left(n\right)}}^{n}\left(t,x\right)$
by $B_{n}\left(t\right)a_{n}\left(t\right)\max\left\{ a_{n}\left(t\right)^{2},b_{n}\left(t\right)^{2}\right\} $.
Analogously, the time factor of one summand of $\widetilde{G_{2}^{\left(n\right)}}^{n}\left(t,x\right)$
will be $B_{n}\left(t\right)b_{n}\left(t\right)$ times a monomial
of degree $2$ in the variables $a_{n}\left(t\right)$ and $b_{n}\left(t\right)$.
Therefore, we can bound the time factor of any summand of $\widetilde{G_{2}^{\left(n\right)}}^{n}\left(t,x\right)$
by $B_{n}\left(t\right)b_{n}\left(t\right)\max\left\{ a_{n}\left(t\right)^{2},b_{n}\left(t\right)^{2}\right\} $.
Regarding the main factor, we know that, when $\widetilde{\nabla}^{n}$
is applied to it, we will still obtain a product of sines and cosines.
Furthermore, since the function $\varphi$ and its derivatives of
order $\le3$ (those are the ones that appear in $\widetilde{G^{\left(n\right)}}^{n}\left(t,x\right)$)
are bounded in $L^{\infty}\left(\mathbb{R}^{2}\right)$, we deduce
that the cutoff factor is bounded in $L^{\infty}\left(\mathbb{R}^{2}\right)$
by $\left|\left|\varphi\right|\right|_{C^{3}\left(\mathbb{R}\right)}^{2}$,
as this term contains two derivatives of $\varphi$. On the other
hand, we know that a product of sines and cosines is bounded in $L^{\infty}\left(\mathbb{R}^{2}\right)$
by $1$ and, consequently, the main factor is bounded by $1$. Thus,
we have the following picture:
\[
\begin{aligned} & \left|\left|\text{one summand of }\widetilde{G_{1}^{\left(n\right)}}^{n}\left(t,\cdot\right)\text{ or }\widetilde{G_{2}^{\left(n\right)}}^{n}\left(t,\cdot\right)\right|\right|_{L^{\infty}\left(\mathbb{R}^{2}\right)}\le\\
\le & \underbrace{\left(\text{power of }\lambda_{n}\right)}_{\le1}\underbrace{\left(\text{time factor}\right)}_{{\footnotesize \begin{matrix}\le B_{n}\left(t\right)a_{n}\left(t\right)\max\left\{ a_{n}\left(t\right)^{2},b_{n}\left(t\right)^{2}\right\} \text{ for }\widetilde{G_{1}^{\left(n\right)}}^{n}\\
\le B_{n}\left(t\right)b_{n}\left(t\right)\max\left\{ a_{n}\left(t\right)^{2},b_{n}\left(t\right)^{2}\right\} \text{ for }\widetilde{G_{2}^{\left(n\right)}}^{n}
\end{matrix}}}\underbrace{\left|\left|\text{cutoff factor}\right|\right|_{L^{\infty}\left(\mathbb{R}^{2}\right)}}_{\le\left|\left|\varphi\right|\right|_{C^{3}\left(\mathbb{R}\right)}^{2}}\underbrace{\left|\left|\text{main factor}\right|\right|_{L^{\infty}\left(\mathbb{R}^{2}\right)}}_{\le1}.
\end{aligned}
\]
With this information, we may conclude that
\[
\begin{aligned}\left|\left|\widetilde{G_{1}^{\left(n\right)}}^{n}\left(t,\cdot\right)\right|\right|_{L^{\infty}\left(\mathbb{R}^{2}\right)} & \lesssim_{\varphi}B_{n}\left(t\right)a_{n}\left(t\right)\max\left\{ a_{n}\left(t\right)^{2},b_{n}\left(t\right)^{2}\right\} ,\\
\left|\left|\widetilde{G_{2}^{\left(n\right)}}^{n}\left(t,\cdot\right)\right|\right|_{L^{\infty}\left(\mathbb{R}^{2}\right)} & \lesssim_{\varphi}B_{n}\left(t\right)b_{n}\left(t\right)\max\left\{ a_{n}\left(t\right)^{2},b_{n}\left(t\right)^{2}\right\} .
\end{aligned}
\]
Given that the $\left|\left|\cdot\right|\right|_{L^{\infty}\left(\mathbb{R}^{2}\right)}$
norm is invariant under diffeomorphisms of the domain, the same bounds
must be true for $\left|\left|G_{1}^{\left(n\right)}\left(t,\cdot\right)\right|\right|_{L^{\infty}\left(\mathbb{R}^{2}\right)}$
and $\left|\left|G_{2}^{\left(n\right)}\left(t,\cdot\right)\right|\right|_{L^{\infty}\left(\mathbb{R}^{2}\right)}$,
respectively.

Let us continue with the $\left|\left|\cdot\right|\right|_{\dot{C}^{\alpha}\left(\mathbb{R}^{2}\right)}$
norm. By equation \eqref{eq:property Calpha multiplication}, when
trying to compute $\left|\left|\widetilde{G_{1}^{\left(n\right)}}^{n}\left(t,\cdot\right)\right|\right|_{\dot{C}^{\alpha}\left(\mathbb{R}^{2}\right)}$
or $\left|\left|\widetilde{G_{2}^{\left(n\right)}}^{n}\left(t,\cdot\right)\right|\right|_{\dot{C}^{\alpha}\left(\mathbb{R}^{2}\right)}$,
we will obtain two summands for each summand of $\widetilde{G_{1}^{\left(n\right)}}\left(t,x\right)$
or $\widetilde{G_{2}^{\left(n\right)}}\left(t,x\right)$. In one of
them, the $\left|\left|\cdot\right|\right|_{\dot{C}^{\alpha}\left(\mathbb{R}^{d}\right)}$
seminorm will be applied to the cutoff factor and, in the other, it
will be applied to the main factor. In the cutoff factor, the derivatives
of $\varphi$ will be evaluated in $\lambda_{n}x_{1}$ or $\lambda_{n}x_{2}$.
Since $\varphi\in C_{c}^{\infty}\left(\mathbb{R}\right)$, we know
that the $\left|\left|\cdot\right|\right|_{\dot{C}^{\gamma}\left(\mathbb{R}\right)}$
seminorm of $\varphi$ and all its derivatives of order $\le3$ are
uniformly bounded in $\gamma\in\left(0,1\right)$ by $\left|\left|\varphi\right|\right|_{C^{4}\left(\mathbb{R}\right)}$.
(Note that the cutoff factor contains derivatives of $\varphi$ with
maximum order $3$). This means that, when calculating the $\left|\left|\cdot\right|\right|_{\dot{C}^{\alpha}\left(\mathbb{R}\right)}$
seminorm of the cutoff factor, we will obtain two summands (because
the cutoff factor is the product of two derivatives of $\varphi$),
where, making use of how the $\left|\left|\cdot\right|\right|_{\dot{C}^{\alpha}\left(\mathbb{R}\right)}$
changes under morphisms of the domain (see equation \eqref{eq:property Calpha composition}),
each one may be bounded by $\lambda_{n}^{\alpha}K_{0}\left(1\right)\left|\left|\varphi\right|\right|_{C^{4}\left(\mathbb{R}\right)}\left|\left|\varphi\right|\right|_{C^{3}\left(\mathbb{R}\right)}$.
As $\lambda_{n}\le1$, it can be assured that the $\left|\left|\cdot\right|\right|_{\dot{C}^{\alpha}\left(\mathbb{R}\right)}$
seminorm of the cutoff factor is bounded by $2K_{0}\left(1\right)\left|\left|\varphi\right|\right|_{C^{4}\left(\mathbb{R}\right)}\left|\left|\varphi\right|\right|_{C^{3}\left(\mathbb{R}\right)}$.
On the other hand, since $\left|\left|\sin\left(\cdot\right)\right|\right|_{\dot{C}^{\gamma}\left(\mathbb{R}\right)}=1=\left|\left|\cos\left(\cdot\right)\right|\right|_{\dot{C}^{\gamma}\left(\mathbb{R}\right)}=\left|\left|\sin\left(\cdot\right)\right|\right|_{L^{\infty}\left(\mathbb{R}\right)}=\left|\left|\cos\left(\cdot\right)\right|\right|_{L^{\infty}\left(\mathbb{R}\right)}$
$\forall\gamma\in\left(0,1\right)$, following a similar argument,
we may affirm that the $\left|\left|\cdot\right|\right|_{\dot{C}^{\alpha}\left(\mathbb{R}\right)}$
seminorm of the main factor is bounded by $2$. To summarize, we have
\[
\begin{aligned} & \left|\left|\text{one summand of }\widetilde{G_{1}^{\left(n\right)}}\left(t,x\right)\text{ or }\widetilde{G_{2}^{\left(n\right)}}\left(t,x\right)\right|\right|_{\dot{C}^{\alpha}\left(\mathbb{R}^{2}\right)}\le\\
\le & \underbrace{\left(\text{power of }\lambda_{n}\right)}_{\le1}\underbrace{\left(\text{time factor}\right)}_{{\footnotesize \begin{matrix}\le B_{n}\left(t\right)a_{n}\left(t\right)\max\left\{ a_{n}\left(t\right)^{2},b_{n}\left(t\right)^{2}\right\} \text{ for }\widetilde{G_{1}^{\left(n\right)}}\\
\le B_{n}\left(t\right)b_{n}\left(t\right)\max\left\{ a_{n}\left(t\right)^{2},b_{n}\left(t\right)^{2}\right\} \text{ for }\widetilde{G_{2}^{\left(n\right)}}
\end{matrix}}}\underbrace{\left|\left|\text{cutoff factor}\right|\right|_{\dot{C}^{\alpha}\left(\mathbb{R}^{2}\right)}}_{\le2K_{0}\left(1\right)\left|\left|\varphi\right|\right|_{C^{4}\left(\mathbb{R}\right)}\left|\left|\varphi\right|\right|_{C^{3}\left(\mathbb{R}\right)}}\underbrace{\left|\left|\text{main factor}\right|\right|_{L^{\infty}\left(\mathbb{R}^{2}\right)}}_{\le1}+\\
 & +\underbrace{\left(\text{power of }\lambda_{n}\right)}_{\le1}\underbrace{\left(\text{time factor}\right)}_{{\footnotesize \begin{matrix}\le B_{n}\left(t\right)a_{n}\left(t\right)\max\left\{ a_{n}\left(t\right)^{2},b_{n}\left(t\right)^{2}\right\} \text{ for }\widetilde{G_{1}^{\left(n\right)}}\\
\le B_{n}\left(t\right)b_{n}\left(t\right)\max\left\{ a_{n}\left(t\right)^{2},b_{n}\left(t\right)^{2}\right\} \text{ for }\widetilde{G_{2}^{\left(n\right)}}
\end{matrix}}}\underbrace{\left|\left|\text{cutoff factor}\right|\right|_{L^{\infty}\left(\mathbb{R}^{2}\right)}}_{\le\left|\left|\varphi\right|\right|_{C^{3}\left(\mathbb{R}\right)}^{2}}\underbrace{\left|\left|\text{main factor}\right|\right|_{\dot{C}^{\alpha}\left(\mathbb{R}^{2}\right)}.}_{\le2}
\end{aligned}
\]
 Consequently, 
\[
\begin{aligned}\left|\left|\widetilde{G_{1}^{\left(n\right)}}\left(t,\cdot\right)\right|\right|_{\dot{C}^{\alpha}\left(\mathbb{R}^{2}\right)} & \lesssim_{\varphi}B_{n}\left(t\right)a_{n}\left(t\right)\max\left\{ a_{n}\left(t\right)^{2},b_{n}\left(t\right)^{2}\right\} ,\\
\left|\left|\widetilde{G_{2}^{\left(n\right)}}\left(t,\cdot\right)\right|\right|_{\dot{C}^{\alpha}\left(\mathbb{R}^{2}\right)} & \lesssim_{\varphi}B_{n}\left(t\right)b_{n}\left(t\right)\max\left\{ a_{n}\left(t\right)^{2},b_{n}\left(t\right)^{2}\right\} .
\end{aligned}
\]
Lastly, taking into account how the $\left|\left|\cdot\right|\right|_{\dot{C}^{\alpha}\left(\mathbb{R}^{2}\right)}$
changes under morphisms of the domain (see equation \eqref{eq:property Calpha composition}),
we deduce that
\[
\left|\left|G_{1}^{\left(n\right)}\left(t,\cdot\right)\right|\right|_{\dot{C}^{\alpha}\left(\mathbb{R}^{2}\right)}=\left|\left|\widetilde{G_{1}^{\left(n\right)}}^{n}\left(t,\left(\phi^{\left(n\right)}\right)^{-1}\left(t,\cdot\right)\right)\right|\right|_{\dot{C}^{\alpha}\left(\mathbb{R}^{2}\right)}\lesssim\left|\left|\left(\phi^{\left(n\right)}\right)^{-1}\left(t,\cdot\right)\right|\right|_{\dot{C}^{1}\left(\mathbb{R}^{2};\mathbb{R}^{2}\right)}^{\alpha}\left|\left|\widetilde{G_{1}^{\left(n\right)}}^{n}\left(t,\cdot\right)\right|\right|_{\dot{C}^{\alpha}\left(\mathbb{R}^{2}\right)}.
\]
Employing equation \eqref{eq:jacobian inverse}, we arrive to
\[
\left|\left|\left(\phi^{\left(n\right)}\right)^{-1}\left(t,\cdot\right)\right|\right|_{\dot{C}^{1}\left(\mathbb{R}^{2};\mathbb{R}^{2}\right)}=\max\left\{ a_{n}\left(t\right),b_{n}\left(t\right)\right\} 
\]
and, as a consequence,
\[
\left|\left|G_{1}^{\left(n\right)}\left(t,\cdot\right)\right|\right|_{\dot{C}^{\alpha}\left(\mathbb{R}^{2}\right)}\lesssim_{\varphi}B_{n}\left(t\right)a_{n}\left(t\right)\max\left\{ a_{n}\left(t\right)^{2+\alpha},b_{n}\left(t\right)^{2+\alpha}\right\} .
\]
The case of $\left|\left|G_{2}^{\left(n\right)}\left(t,\cdot\right)\right|\right|_{\dot{C}^{\alpha}\left(\mathbb{R}^{2}\right)}$
is completely analogous.
\end{proof}
Next in line would be to calculate some bounds of the Taylor expansion
of $\widetilde{U^{\left(n-1\right)}}^{n}\left(t,x\right)$ that appears
in equations \eqref{eq:Boussinesq vorticity Taylor} and \eqref{eq:Boussinesq density Taylor}.
Nonetheless, doing this fully rigorously can prove to be very cumbersome
without knowing more about the specific choice of our parameters.
Therefore, we will undertake a simplified calculation for the moment.
The expression for the remainder of the Multivariate Taylor Theorem
assures us that
\[
\begin{aligned} & \widetilde{U^{\left(n-1\right)}}^{n}\left(t,x\right)-\widetilde{U^{\left(n-1\right)}}^{n}\left(t,0\right)-\mathrm{J}\widetilde{U^{\left(n-1\right)}}^{n}\left(t,0\right)\cdot\left(\begin{matrix}x_{1}\\
x_{2}
\end{matrix}\right)=\\
= & \frac{1}{2}\left[\int_{0}^{1}\left(1-s\right)\frac{\partial^{2}\widetilde{U^{\left(n-1\right)}}^{n}}{\partial x_{1}^{2}}\left(sx\right)\mathrm{d}s\right]x_{1}^{2}+\\
 & +\left[\int_{0}^{1}\left(1-s\right)\frac{\partial^{2}\widetilde{U^{\left(n-1\right)}}^{n}}{\partial x_{1}\partial x_{2}}\left(sx\right)\mathrm{d}s\right]x_{1}x_{2}+\\
 & +\frac{1}{2}\left[\int_{0}^{1}\left(1-s\right)\frac{\partial^{2}\widetilde{U^{\left(n-1\right)}}^{n}}{\partial x_{2}^{2}}\left(sx\right)\mathrm{d}s\right]x_{2}^{2}.
\end{aligned}
\]
To simplify things, instead of considering the whole expression above,
we will work just with the term
\[
\widetilde{I^{\left(n\right)}}^{n}\left(t,x\right)\coloneqq\left[\int_{0}^{1}\left(1-s\right)\frac{\partial^{2}\widetilde{U_{1}^{\left(n-1\right)}}^{n}}{\partial x_{1}^{2}}\left(sx\right)\mathrm{d}s\right]x_{1}^{2}.
\]
Furthermore, suppose we will evaluate the expression above only in
the zone where the cutoff factors that appear in the expression given
in Proposition \ref{prop:computations vorticity} for the velocity
are identically 1. Then, to calculate $\frac{\partial^{2}\widetilde{U_{1}^{\left(n-1\right)}}^{n}}{\partial x_{1}^{2}}\left(sx\right)$,
we can depart from equation \eqref{eq:jacobian complete} in the proof
of Proposition \ref{prop:relation between anbn and jacobian}. Taking
into account that
\[
\frac{\partial\phi_{1}^{\left(n\right)}}{\partial x_{1}}\left(t,x\right)=\frac{1}{a_{n}\left(t\right)},\quad\frac{\partial\phi_{2}^{\left(n\right)}}{\partial x_{2}}=\frac{1}{b_{n}\left(t\right)},\quad\frac{\partial\phi_{1}^{\left(n\right)}}{\partial x_{2}}\left(t,x\right)=0=\frac{\partial\phi_{2}^{\left(n\right)}}{\partial x_{1}}\left(t,x\right)
\]
by Choice \ref{choice:phin}, we arrive to
\[
\begin{aligned}\frac{\partial^{2}\widetilde{U_{1}^{\left(n-1\right)}}^{n}}{\partial x_{1}^{2}}\left(t,x\right) & =-\sum_{m=1}^{n-1}\frac{B_{m}\left(t\right)b_{m}\left(t\right)a_{m}\left(t\right)^{2}}{a_{n}\left(t\right)^{2}}\sin\left(a_{m}\left(t\right)\left(\phi_{1}^{\left(n\right)}\left(t,x\right)-\phi_{1}^{\left(m\right)}\left(t,0\right)\right)\right)\cdot\\
 & \quad\cdot\cos\left(b_{m}\left(t\right)\left(\phi_{2}^{\left(n\right)}\left(t,x\right)-\phi_{2}^{\left(m\right)}\left(t,0\right)\right)\right).
\end{aligned}
\]
We wish to calculate $\left|\left|I^{\left(n\right)}\left(t,\cdot\right)\right|\right|_{L^{\infty}\left(D^{\left(n\right)}\left(t\right)\right)}$,
where
\[
D^{\left(n\right)}\left(t\right)\coloneqq\left(\phi^{\left(n\right)}\right)^{-1}\left(t,\left[-\frac{16\pi}{\lambda_{n}},\frac{16\pi}{\lambda_{n}}\right]^{2}\right)
\]
and $\left(\phi^{\left(n\right)}\right)^{-1}\left(t,\cdot\right)$
should be understood as the preimage at fixed $t$. As the $\left|\left|\cdot\right|\right|_{L^{\infty}}$-norm
is invariant under diffeomorphisms, we have
\[
\left|\left|I^{\left(n\right)}\left(t,\cdot\right)\right|\right|_{L^{\infty}\left(D^{\left(n\right)}\left(t\right)\right)}=\left|\left|\widetilde{I^{\left(n\right)}}^{n}\left(t,\cdot\right)\right|\right|_{L^{\infty}\left(\left[-\frac{16\pi}{\lambda_{n}},\frac{16\pi}{\lambda_{n}}\right]^{2}\right)}.
\]
Bounding $\left|\sin\left(\cdot\right)\right|\le1$, $\left|\cos\left(\cdot\right)\right|\le1$
and $s\le1$ and $1-s\le1$ in the integral, we conclude that
\begin{equation}
\left|\left|I^{\left(n\right)}\left(t,\cdot\right)\right|\right|_{L^{\infty}\left(D^{\left(n\right)}\left(t\right)\right)}\lesssim\frac{\sum_{m=1}^{n-1}B_{m}\left(t\right)b_{m}\left(t\right)a_{m}\left(t\right)^{2}}{\lambda_{n}^{2}a_{n}\left(t\right)^{2}}.\label{eq:bound I}
\end{equation}

Now that we have obtained this bound for $I^{\left(n\right)}\left(t,x\right)$,
let us study the transport term
\[
\widetilde{T_{\omega}^{\left(n\right)}}^{n}\coloneqq\left(\widetilde{U^{\left(n-1\right)}}^{n}\left(t,x\right)-\widetilde{U^{\left(n-1\right)}}^{n}\left(t,0\right)-\mathrm{J}\widetilde{U^{\left(n-1\right)}}^{n}\left(t,0\right)\cdot\left(\begin{matrix}x_{1}\\
x_{2}
\end{matrix}\right)\right)\cdot\widetilde{\nabla}^{n}\widetilde{\omega^{\left(n\right)}}^{n}\left(t,x\right).
\]
Thanks to equation \eqref{eq:bound I} and Proposition \ref{prop:form gradient omega},
we know that a bound of $\left|\left|\widetilde{T_{\omega}^{\left(n\right)}}^{n}\right|\right|_{\dot{C}^{\alpha}\left(D^{\left(n\right)}\left(t\right)\right)}$
will contain the summand
\[
S\coloneqq\frac{\sum_{m=1}^{n-1}B_{m}\left(t\right)b_{m}\left(t\right)a_{m}\left(t\right)^{2}}{\lambda_{n}^{2}a_{n}\left(t\right)^{2}}B_{n}\left(t\right)a_{n}\left(t\right)\max\left\{ a_{n}\left(t\right)^{2+\alpha},b_{n}\left(t\right)^{2+\alpha}\right\} .
\]
To simplify things, consider the expression above only at $t=1$.
To simplify further, suppose that $a_{m}\left(1\right)\sim b_{m}\left(1\right)$.
In that case, the expression above takes the form
\[
S\sim\frac{\sum_{m=1}^{n-1}B_{m}\left(1\right)b_{m}\left(1\right)^{3}}{\lambda_{n}^{2}b_{n}\left(1\right)^{2}}B_{n}\left(1\right)b_{n}\left(1\right)^{3+\alpha}\sim\left(\sum_{m=1}^{n-1}B_{m}\left(1\right)b_{m}\left(1\right)^{3}\right)\frac{B_{n}\left(1\right)b_{n}\left(1\right)^{1+\alpha}}{\lambda_{n}^{2}}.
\]
Recall that, by Choice \ref{choice:Bnanbn}, we have $B_{n}\left(1\right)a_{n}\left(1\right)b_{n}\left(1\right)=M_{n}$.
Hence, we expect $B_{m}\left(1\right)b_{m}\left(1\right)^{3}$ to
grow even faster in $m$ than $M_{m}$. This means that $\sum_{m=1}^{n-1}B_{m}\left(1\right)b_{m}\left(1\right)^{3}$
should grow at least exponentially and, consequently, it should have
the same order of its last term. This leads to
\begin{equation}
S\sim B_{n-1}\left(1\right)b_{n-1}\left(1\right)^{3}\frac{B_{n}\left(1\right)b_{n}\left(1\right)^{1+\alpha}}{\lambda_{n}^{2}}.\label{eq:first bound transport}
\end{equation}
Let us ignore the $\lambda_{n}$ for now. If we wish for the expression
above to be bounded in $n$, we need $b_{n}\left(1\right)$ to grow
at least superexponentially in $n$, because an exponential growth
is not enough. Indeed, suppose $b_{n}\left(1\right)\sim L^{n},$ $B_{n}\left(1\right)\sim\frac{1}{L^{n}}$
with $L>1$. Then, we would have $B_{n}\left(1\right)b_{n}\left(1\right)^{2}\sim L^{n}$
(which grows exponentially, satisfying Choice \ref{choice:Bnanbn}),
but
\[
S\sim B_{n-1}\left(1\right)b_{n-1}\left(1\right)^{3}B_{n}\left(1\right)b_{n}\left(1\right)^{1+\alpha}\sim\frac{1}{L^{n-1}}L^{3\left(n-1\right)}\frac{1}{L^{n}}L^{n\left(1+\alpha\right)}=L^{\left(2+\alpha\right)n-2}
\]
and this expression is never bounded in $n\in\mathbb{N}$.

On the other hand, if $b_{n}\left(1\right)$ grows superexponentially
in $n$, then $\frac{b_{n-1}\left(1\right)}{b_{n}\left(1\right)}$
can be made as small as we want. To fix ideas, assume that $\frac{b_{n-1}\left(1\right)}{b_{n}\left(1\right)}\sim\frac{1}{b_{n}\left(1\right)^{\gamma}}$,
where $\gamma\in\left(0,1\right)$ is very close to $1$. Then, we
can write
\[
\begin{aligned}S & \sim B_{n-1}\left(1\right)b_{n-1}\left(1\right)^{3}B_{n}\left(1\right)b_{n}\left(1\right)^{1+\alpha}\sim B_{n-1}\left(1\right)b_{n-1}\left(1\right)^{3}\frac{B_{n}\left(1\right)b_{n}\left(1\right)^{2}}{b_{n}\left(1\right)^{1-\alpha}}\sim\\
 & \sim B_{n-1}\left(1\right)b_{n-1}\left(1\right)^{2}B_{n}\left(1\right)b_{n}\left(1\right)^{2}b_{n}\left(1\right)^{\alpha}\frac{b_{n-1}\left(1\right)}{b_{n}\left(1\right)}\sim\\
 & \sim\underbrace{B_{n-1}\left(1\right)b_{n-1}\left(1\right)^{2}}_{\sim M_{n-1}}\underbrace{B_{n}\left(1\right)b_{n}\left(1\right)^{2}}_{\sim M_{n}}b_{n}\left(1\right)^{\alpha}\frac{1}{b_{n}\left(1\right)^{\gamma}}\sim M_{n}M_{n-1}\frac{1}{b_{n}\left(1\right)^{\gamma-\alpha}}.
\end{aligned}
\]
If $M_{n}$ grows exponentially in $n\in\mathbb{N}$, as $b_{n}\left(1\right)$
grows superexponentially, as long as $\gamma>\alpha$, the factor
$M_{n}M_{n-1}$ will be negligible and the expression above will be
decreasing in $n$. If $M_{n}$ grows superexponentially in $n\in\mathbb{N}$,
but $M_{n}M_{n-1}$ grows slower than $b_{n}\left(1\right)^{\gamma-\alpha}$,
the expression above will still be decreasing in $n\in\mathbb{N}$.
Thereby, in any case, it would be adviceful to choose $b_{n}\left(1\right)$
such that
\[
\frac{b_{n-1}\left(1\right)}{b_{n}\left(1\right)}\sim\frac{1}{b_{n}\left(1\right)^{\gamma}}.
\]
Assuming the equality, one obtains
\[
b_{n-1}\left(1\right)=b_{n}\left(1\right)^{1-\gamma}\iff b_{n}\left(1\right)=b_{n-1}\left(1\right)^{\frac{1}{1-\gamma}}.
\]
Clearly, this recursion admits the solution
\[
b_{n}\left(1\right)=C^{\left(\frac{1}{1-\gamma}\right)^{n}},
\]
where $C>0$ is some constant.
\begin{choice}
\label{choice:anbn}With all the information presented above, it make
sense to take
\[
b_{n}\left(t\right)=C^{\left(1+k_{n}\left(t\right)\right)\left(\frac{1}{1-\gamma}\right)^{n}},\quad a_{n}\left(t\right)=C^{\left(1-k_{n}\left(t\right)\right)\left(\frac{1}{1-\gamma}\right)^{n}},
\]
where $C>1$, $\gamma\in\left(0,1\right)$ and $k_{n}\left(t\right)$
is a function that captures the time evolution of $b_{n}\left(t\right)$
and $a_{n}\left(t\right)$. Note that $\frac{\mathrm{d}}{\mathrm{d}t}\left(a_{n}\left(t\right)b_{n}\left(t\right)\right)=0$,
as required by Choice \ref{choice:anbncn}. Indeed, $\frac{\mathrm{d}}{\mathrm{d}t}\left(a_{n}\left(t\right)b_{n}\left(t\right)\right)=0$
is the reason why we only need one function to account for the time
evolution of both parameters $a_{n}\left(t\right)$ and $b_{n}\left(t\right)$.
Furthermore, notice that $k_{n}\left(t\right)$ is not signed, i.e.,
could be positive or negative.

Nevertheless, by Choice \ref{choice:phin}, it is clear that layer
$n$ has size of order $\frac{1}{a_{n}\left(t\right)}$ in the $x_{1}$
direction and $\frac{1}{b_{n}\left(t\right)}$ in the $x_{2}$ direction.
Consequently, if we want the support of our solution to be uniformly
bounded in time, we need $a_{n}\left(t\right)$ and $b_{n}\left(t\right)$
to remain greater than unity at all times. For that purpose, we need
to take $\sup_{n\in\mathbb{N}}\max_{t\in\left[t_{n},1\right]}\left|k_{n}\left(t\right)\right|<1$.
\end{choice}

\subsection{\label{subsec:cons pure quadratic term}Pure quadratic term}

To obtain more information about how we should choose $\lambda_{n}$,
it will be useful to study the pure quadratic term of the vorticity
equation.
\begin{prop}
\label{prop:bounds pure quadratic term}Consider the pure quadratic
term
\[
\widetilde{Q_{\omega}^{\left(n\right)}}^{n}\left(t,x\right)\coloneqq\widetilde{u^{\left(n\right)}}^{n}\left(t,x\right)\cdot\widetilde{\nabla}^{n}\widetilde{\omega^{\left(n\right)}}^{n}\left(t,x\right).
\]
Then,
\[
\begin{aligned}\left|\left|Q_{\omega}^{\left(n\right)}\left(t,\cdot\right)\right|\right|_{L^{\infty}\left(\mathbb{R}^{2}\right)} & \lesssim_{\varphi}\lambda_{n}B_{n}\left(t\right)^{2}a_{n}\left(t\right)b_{n}\left(t\right)\max\left\{ a_{n}\left(t\right)^{2},b_{n}\left(t\right)^{2}\right\} ,\\
\left|\left|Q_{\omega}^{\left(n\right)}\left(t,\cdot\right)\right|\right|_{\dot{C}^{\alpha}\left(\mathbb{R}^{2}\right)} & \lesssim_{\varphi}\lambda_{n}B_{n}\left(t\right)^{2}a_{n}\left(t\right)b_{n}\left(t\right)\max\left\{ a_{n}\left(t\right)^{2+\alpha},b_{n}\left(t\right)^{2+\alpha}\right\} .
\end{aligned}
\]
\end{prop}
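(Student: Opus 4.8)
The plan is to exploit the structural cancellation that, absent the cutoffs, the self-interaction $u^{\left(n\right)}\cdot\nabla\omega^{\left(n\right)}$ vanishes identically, so that the pure quadratic term carries an overall factor $\lambda_{n}$; everything else is estimated directly from the bounds already established in Propositions \ref{prop:form of the velocity} and \ref{prop:form gradient omega}. Concretely, I would plug the decompositions $\widetilde{u^{\left(n\right)}}^{n}=\widetilde{V^{\left(n\right)}}^{n}+\lambda_{n}\widetilde{W^{\left(n\right)}}^{n}$ and $\widetilde{\nabla}^{n}\widetilde{\omega^{\left(n\right)}}^{n}=\widetilde{\Gamma^{\left(n\right)}}^{n}+\lambda_{n}\widetilde{G^{\left(n\right)}}^{n}$ into $\widetilde{Q_{\omega}^{\left(n\right)}}^{n}=\widetilde{u^{\left(n\right)}}^{n}\cdot\widetilde{\nabla}^{n}\widetilde{\omega^{\left(n\right)}}^{n}$ and expand. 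The only zeroth-order-in-$\lambda_{n}$ term is $\widetilde{V^{\left(n\right)}}^{n}\cdot\widetilde{\Gamma^{\left(n\right)}}^{n}$, and the key point is that it vanishes identically: both factors carry the same cutoff $\varphi\left(\lambda_{n}x_{1}\right)\varphi\left(\lambda_{n}x_{2}\right)$, and the sinusoidal parts contribute $b_{n}\sin x_{1}\cos x_{2}\cdot a_{n}\cos x_{1}\sin x_{2}-a_{n}\cos x_{1}\sin x_{2}\cdot b_{n}\sin x_{1}\cos x_{2}=0$. (Conceptually, on the region where the cutoffs are $\equiv1$ one has $\widetilde{\Gamma^{\left(n\right)}}^{n}=\left(a_{n}^{2}+b_{n}^{2}\right)\widetilde{\nabla}^{n}\widetilde{\psi^{\left(n\right)}}^{n}$ and $\widetilde{V^{\left(n\right)}}^{n}=-\widetilde{\nabla}^{n\perp}\widetilde{\psi^{\left(n\right)}}^{n}$, and $\widetilde{\nabla}^{n\perp}g\cdot\widetilde{\nabla}^{n}g=0$.) Hence
\[
\widetilde{Q_{\omega}^{\left(n\right)}}^{n}=\lambda_{n}\left(\widetilde{V^{\left(n\right)}}^{n}\cdot\widetilde{G^{\left(n\right)}}^{n}+\widetilde{W^{\left(n\right)}}^{n}\cdot\widetilde{\Gamma^{\left(n\right)}}^{n}\right)+\lambda_{n}^{2}\,\widetilde{W^{\left(n\right)}}^{n}\cdot\widetilde{G^{\left(n\right)}}^{n},
\]
so the whole term is $O\left(\lambda_{n}\right)$.

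For the $\left|\left|\cdot\right|\right|_{L^{\infty}\left(\mathbb{R}^{2}\right)}$ estimate I would use that this norm is invariant under diffeomorphisms of the domain, reducing to a bound for $\widetilde{Q_{\omega}^{\left(n\right)}}^{n}$. Writing each scalar product as a sum over its two components and inserting the $\left|\left|\cdot\right|\right|_{L^{\infty}}$ bounds for $V_{i}^{\left(n\right)},W_{i}^{\left(n\right)}$ from Proposition \ref{prop:form of the velocity} and for $\Gamma_{i}^{\left(n\right)},G_{i}^{\left(n\right)}$ from Proposition \ref{prop:form gradient omega}, every cross-term is $\lesssim_{\varphi}B_{n}\left(t\right)^{2}a_{n}\left(t\right)b_{n}\left(t\right)\max\left\{a_{n}\left(t\right)^{2},b_{n}\left(t\right)^{2}\right\}$ (using $a_{n}^{2}+b_{n}^{2}\le2\max\left\{a_{n}^{2},b_{n}^{2}\right\}$), and since $\lambda_{n}^{2}\le\lambda_{n}$ this gives the claimed bound. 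For the $\left|\left|\cdot\right|\right|_{\dot{C}^{\alpha}\left(\mathbb{R}^{2}\right)}$ estimate I would first bound $\left|\left|\widetilde{Q_{\omega}^{\left(n\right)}}^{n}\left(t,\cdot\right)\right|\right|_{\dot{C}^{\alpha}\left(\mathbb{R}^{2}\right)}$ by applying the Leibniz-type inequality \eqref{eq:property Calpha multiplication} component by component, feeding in the $\left|\left|\cdot\right|\right|_{L^{\infty}}$ and $\left|\left|\cdot\right|\right|_{\dot{C}^{\alpha}}$ bounds for the \emph{tilde} quantities $\widetilde{V_{i}^{\left(n\right)}}^{n},\widetilde{W_{i}^{\left(n\right)}}^{n},\widetilde{\Gamma_{i}^{\left(n\right)}}^{n},\widetilde{G_{i}^{\left(n\right)}}^{n}$ which are the ones produced inside the proofs of the two propositions; this yields $\left|\left|\widetilde{Q_{\omega}^{\left(n\right)}}^{n}\right|\right|_{\dot{C}^{\alpha}}\lesssim_{\varphi}\lambda_{n}B_{n}^{2}a_{n}b_{n}\max\left\{a_{n}^{2},b_{n}^{2}\right\}$. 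Then I would transfer to $Q_{\omega}^{\left(n\right)}$ itself through the composition inequality \eqref{eq:property Calpha composition}, using $\left|\left|\left(\phi^{\left(n\right)}\right)^{-1}\left(t,\cdot\right)\right|\right|_{\dot{C}^{1}\left(\mathbb{R}^{2};\mathbb{R}^{2}\right)}=\max\left\{a_{n}\left(t\right),b_{n}\left(t\right)\right\}$ from \eqref{eq:jacobian inverse}; this multiplies the previous bound by $\max\left\{a_{n}^{\alpha},b_{n}^{\alpha}\right\}$, and $\max\left\{a_{n}^{\alpha},b_{n}^{\alpha}\right\}\max\left\{a_{n}^{2},b_{n}^{2}\right\}=\max\left\{a_{n}^{2+\alpha},b_{n}^{2+\alpha}\right\}$ is exactly the stated estimate.

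The genuine content of the argument is the identity $\widetilde{V^{\left(n\right)}}^{n}\cdot\widetilde{\Gamma^{\left(n\right)}}^{n}\equiv0$, which is what gains the extra power of $\lambda_{n}$; without it the pure quadratic term would be of the same order as the leading term and could not be absorbed by the force while keeping the force regular. The remaining steps are bookkeeping, and the only points that need a little care are (i) carrying out the product estimates component by component, so that \eqref{eq:property Calpha multiplication} is applied to scalar-valued products; and (ii) using the tilde $\dot{C}^{\alpha}$-seminorm bounds (those appearing inside the proofs of Propositions \ref{prop:form of the velocity} and \ref{prop:form gradient omega}) \emph{before} composing with $\left(\phi^{\left(n\right)}\right)^{-1}$, rather than the already-transferred non-tilde bounds, since otherwise one would pay the factor $\max\left\{a_{n}^{\alpha},b_{n}^{\alpha}\right\}$ twice.
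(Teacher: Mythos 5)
Your proposal is correct and follows essentially the paper's own argument: the same orthogonality cancellation $\widetilde{V^{\left(n\right)}}^{n}\cdot\widetilde{\Gamma^{\left(n\right)}}^{n}=0$ gains the factor $\lambda_{n}$, and the remaining cross-terms are bounded with the product estimates from Propositions \ref{prop:form of the velocity} and \ref{prop:form gradient omega}. The only difference is bookkeeping: you estimate the Hölder seminorm of the tilde quantity and transfer it once via \eqref{eq:property Calpha composition}, while the paper applies \eqref{eq:property Calpha multiplication} directly to the non-tilde decomposition using the already-transferred bounds — both yield exactly one factor $\max\left\{ a_{n}\left(t\right)^{\alpha},b_{n}\left(t\right)^{\alpha}\right\}$ and the stated estimate.
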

\begin{proof}
Taking a look at Propositions \ref{prop:form of the velocity} and
\ref{prop:form gradient omega}, notice that the only zeroth order
term in $\lambda_{n}$ that we could obtain form the dot product $\widetilde{u^{\left(n\right)}}^{n}\left(t,x\right)\cdot\widetilde{\nabla}^{n}\widetilde{\omega^{\left(n\right)}}^{n}\left(t,x\right)$
vanishes because of orthogonality, i.e., $\widetilde{V^{\left(n\right)}}^{n}\left(t,x\right)\cdot\widetilde{\Gamma^{\left(n\right)}}^{n}\left(t,x\right)=0$.
Using the aforementioned Propositions, we can write:
\[
\begin{aligned}Q_{\omega}^{\left(n\right)}\left(t,x\right) & =\left(V^{\left(n\right)}\left(t,x\right)+\lambda_{n}W^{\left(n\right)}\left(t,x\right)\right)\cdot\left(\Gamma^{\left(n\right)}\left(t,x\right)+\lambda_{n}G^{\left(n\right)}\left(t,x\right)\right)=\\
 & =\underbrace{V^{\left(n\right)}\left(t,x\right)\cdot\Gamma^{\left(n\right)}\left(t,x\right)}_{=0}+\lambda_{n}\left[\sum_{i=1}^{2}W_{i}^{\left(n\right)}\left(t,x\right)\cdot\Gamma_{i}^{\left(n\right)}\left(t,x\right)+\sum_{i=1}^{2}V_{i}^{\left(n\right)}\left(t,x\right)\cdot G_{i}^{\left(n\right)}\left(t,x\right)\right]+\\
 & \quad+\lambda_{n}^{2}\sum_{i=1}^{2}W_{i}^{\left(n\right)}\left(t,x\right)\cdot G_{i}^{\left(n\right)}\left(t,x\right).
\end{aligned}
\]
Next, using the bounds given in Propositions \ref{prop:form of the velocity}
and \ref{prop:form gradient omega}, we deduce that
\[
\begin{aligned}\left|\left|Q_{\omega}^{\left(n\right)}\left(t,\cdot\right)\right|\right|_{L^{\infty}\left(\mathbb{R}^{2}\right)} & \lesssim_{\varphi}\lambda_{n}\left[B_{n}\left(t\right)b_{n}\left(t\right)B_{n}\left(t\right)a_{n}\left(t\right)\left(a_{n}\left(t\right)^{2}+b_{n}\left(t\right)^{2}\right)+\right.\\
 & \qquad+B_{n}\left(t\right)a_{n}\left(t\right)B_{n}\left(t\right)b_{n}\left(t\right)\left(a_{n}\left(t\right)^{2}+b_{n}\left(t\right)^{2}\right)+\\
 & \qquad+B_{n}\left(t\right)b_{n}\left(t\right)B_{n}\left(t\right)a_{n}\left(t\right)\max\left\{ a_{n}\left(t\right)^{2},b_{n}\left(t\right)^{2}\right\} +\\
 & \qquad\left.+B_{n}\left(t\right)a_{n}\left(t\right)B_{n}\left(t\right)b_{n}\left(t\right)\max\left\{ a_{n}\left(t\right)^{2},b_{n}\left(t\right)^{2}\right\} \right]+\\
 & \quad+\lambda_{n}^{2}\left[B_{n}\left(t\right)b_{n}\left(t\right)B_{n}\left(t\right)a_{n}\left(t\right)\max\left\{ a_{n}\left(t\right)^{2},b_{n}\left(t\right)^{2}\right\} +\right.\\
 & \qquad\left.+B_{n}\left(t\right)a_{n}\left(t\right)B_{n}\left(t\right)b_{n}\left(t\right)\max\left\{ a_{n}\left(t\right)^{2},b_{n}\left(t\right)^{2}\right\} \right].
\end{aligned}
\]
Employing the fact that $\lambda_{n}^{2}\le\lambda_{n}$ as $\lambda_{n}\le1$
by Choice \ref{choice:psin}, we arrive to
\[
\left|\left|Q_{\omega}^{\left(n\right)}\left(t,\cdot\right)\right|\right|_{L^{\infty}\left(\mathbb{R}^{2}\right)}\lesssim_{\varphi}\lambda_{n}B_{n}\left(t\right)^{2}a_{n}\left(t\right)b_{n}\left(t\right)\max\left\{ a_{n}\left(t\right)^{2},b_{n}\left(t\right)^{2}\right\} .
\]

Similarly, bearing in mind how the $\left|\left|\cdot\right|\right|_{\dot{C}^{\alpha}\left(\mathbb{R}^{2}\right)}$
seminorm of a product is computed (see equation \eqref{eq:property Calpha multiplication}),
we obtain
\[
\begin{aligned}\left|\left|Q_{\omega}^{\left(n\right)}\left(t,\cdot\right)\right|\right|_{\dot{C}^{\alpha}\left(\mathbb{R}^{2}\right)} & \lesssim_{\varphi}\lambda_{n}\left[B_{n}\left(t\right)b_{n}\left(t\right)\max\left\{ a_{n}\left(t\right)^{\alpha},b_{n}\left(t\right)^{\alpha}\right\} B_{n}\left(t\right)a_{n}\left(t\right)\left(a_{n}\left(t\right)^{2}+b_{n}\left(t\right)^{2}\right)+\right.\\
 & \qquad+B_{n}\left(t\right)b_{n}\left(t\right)B_{n}\left(t\right)a_{n}\left(t\right)\left(a_{n}\left(t\right)^{2}+b_{n}\left(t\right)^{2}\right)\max\left\{ a_{n}\left(t\right)^{\alpha},b_{n}\left(t\right)^{\alpha}\right\} +\\
 & \qquad+B_{n}\left(t\right)a_{n}\left(t\right)\max\left\{ a_{n}\left(t\right)^{\alpha},b_{n}\left(t\right)^{\alpha}\right\} B_{n}\left(t\right)b_{n}\left(t\right)\left(a_{n}\left(t\right)^{2}+b_{n}\left(t\right)^{2}\right)+\\
 & \qquad+B_{n}\left(t\right)a_{n}\left(t\right)B_{n}\left(t\right)b_{n}\left(t\right)\left(a_{n}\left(t\right)^{2}+b_{n}\left(t\right)^{2}\right)\max\left\{ a_{n}\left(t\right)^{\alpha},b_{n}\left(t\right)^{\alpha}\right\} +\\
 & \qquad+B_{n}\left(t\right)b_{n}\left(t\right)\max\left\{ a_{n}\left(t\right)^{\alpha},b_{n}\left(t\right)^{\alpha}\right\} B_{n}\left(t\right)a_{n}\left(t\right)\max\left\{ a_{n}\left(t\right)^{2},b_{n}\left(t\right)^{2}\right\} +\\
 & \qquad+B_{n}\left(t\right)b_{n}\left(t\right)B_{n}\left(t\right)a_{n}\left(t\right)\max\left\{ a_{n}\left(t\right)^{2+\alpha},b_{n}\left(t\right)^{2+\alpha}\right\} +\\
 & \qquad+B_{n}\left(t\right)a_{n}\left(t\right)\max\left\{ a_{n}\left(t\right)^{\alpha},b_{n}\left(t\right)^{\alpha}\right\} B_{n}\left(t\right)b_{n}\left(t\right)\max\left\{ a_{n}\left(t\right)^{2},b_{n}\left(t\right)^{2}\right\} +\\
 & \qquad\left.+B_{n}\left(t\right)a_{n}\left(t\right)B_{n}\left(t\right)b_{n}\left(t\right)\max\left\{ a_{n}\left(t\right)^{2+\alpha},b_{n}\left(t\right)^{2+\alpha}\right\} \right]+\\
 & \quad+\lambda_{n}^{2}\left[B_{n}\left(t\right)b_{n}\left(t\right)\max\left\{ a_{n}\left(t\right)^{\alpha},b_{n}\left(t\right)^{\alpha}\right\} B_{n}\left(t\right)a_{n}\left(t\right)\max\left\{ a_{n}\left(t\right)^{2},b_{n}\left(t\right)^{2}\right\} +\right.\\
 & \qquad+B_{n}\left(t\right)b_{n}\left(t\right)B_{n}\left(t\right)a_{n}\left(t\right)\max\left\{ a_{n}\left(t\right)^{2+\alpha},b_{n}\left(t\right)^{2+\alpha}\right\} +\\
 & \qquad+B_{n}\left(t\right)a_{n}\left(t\right)\max\left\{ a_{n}\left(t\right)^{\alpha},b_{n}\left(t\right)^{\alpha}\right\} B_{n}\left(t\right)b_{n}\left(t\right)\max\left\{ a_{n}\left(t\right)^{2},b_{n}\left(t\right)^{2}\right\} +\\
 & \qquad\left.+B_{n}\left(t\right)a_{n}\left(t\right)B_{n}\left(t\right)b_{n}\left(t\right)\max\left\{ a_{n}\left(t\right)^{2+\alpha},b_{n}\left(t\right)^{2+\alpha}\right\} \right].
\end{aligned}
\]
As $\lambda_{n}^{2}\le\lambda_{n}$ by Choice \ref{choice:psin},
we can write
\[
\left|\left|Q_{\omega}^{\left(n\right)}\left(t,\cdot\right)\right|\right|_{\dot{C}^{\alpha}\left(\mathbb{R}^{2}\right)}\lesssim_{\varphi}\lambda_{n}B_{n}\left(t\right)^{2}a_{n}\left(t\right)b_{n}\left(t\right)\max\left\{ a_{n}\left(t\right)^{2+\alpha},b_{n}\left(t\right)^{2+\alpha}\right\} .
\]
\end{proof}
\begin{rem}
\label{rem:why we need lambdan}Assuming for simplicity that $a_{n}\left(t\right)\sim b_{n}\left(t\right)$,
thanks to Proposition \ref{prop:bounds pure quadratic term}, we obtain
\[
\begin{aligned}\left|\left|Q_{\omega}^{\left(n\right)}\left(t,\cdot\right)\right|\right|_{\dot{C}^{\alpha}\left(\mathbb{R}^{2}\right)} & \lesssim_{\varphi}\lambda_{n}\underbrace{B_{n}\left(t\right)a_{n}\left(t\right)b_{n}\left(t\right)}_{\sim M_{n}}\underbrace{B_{n}\left(t\right)b_{n}\left(t\right)^{2}}_{\sim M_{n}}b_{n}\left(t\right)^{\alpha}\sim\lambda_{n}M_{n}^{2}b_{n}\left(t\right)^{\alpha}.\end{aligned}
\]
As $b_{n}\left(t\right)$ grows superexponentially in $n$ according
to Choice \ref{choice:anbn}, the only way to prevent $\left|\left|Q_{\omega}^{\left(n\right)}\left(t,\cdot\right)\right|\right|_{\dot{C}^{\alpha}\left(\mathbb{R}^{2}\right)}$
from blowing up is to compensate with the factor $\lambda_{n}$, i.e.,
we have to make $\lambda_{n}$ decrease superexponentially in $n$.
\end{rem}
\begin{choice}
\label{choice:lambdan}In the light of the above, we take
\[
\lambda_{n}=C^{-\Lambda\left(\frac{1}{1-\gamma}\right)^{n}},
\]
where $\Lambda>0$ is to be determined.
\end{choice}
\begin{rem}
Choice \ref{choice:lambdan}, along with Remark \ref{rem:why we need lambdan}
and the explanation given before Choice \ref{choice:anbn}, reveals
that a compromise has to be made when choosing $\lambda_{n}$. Indeed,
on the one hand, in view of Remark \ref{rem:why we need lambdan},
we would want $\lambda_{n}$ to be as small as possible. However,
on the other hand, taking a look at the bound we found for one summand
of the transport term in equation \eqref{eq:first bound transport},
since $\lambda_{n}$ is in the denominator, it is easy to see that
we want $\lambda_{n}$ to be as close to $1$ as possible, since that
will make the bounds smaller.
\end{rem}

\subsection{Functional form of the density}

The next thing we are going to do is to choose which error terms of
equation \eqref{eq:Boussinesq vorticity Taylor} we are going to cancel
out with $\frac{\partial\rho}{\partial x_{2}}$. To do this, we will
turn to the introduction (see equation \eqref{eq:intro form rho}),
where we used a density of the form $\sin\left(x_{1}\right)\cos\left(x_{2}\right)$.
Thereby, we should search for a term in the equation of the vorticity
that has order zero in $\lambda_{n}$ and looks like $\sin\left(x_{1}\right)\sin\left(x_{2}\right)$
(note that the derivative with respect to $x_{2}$ of $\sin\left(x_{1}\right)\cos\left(x_{2}\right)$
is precisely $-\sin\left(x_{1}\right)\sin\left(x_{2}\right)$). We
have already studied the transport term in subsection \ref{subsec:cons transport term revisited}
and the pure quadratic term in subsection \ref{subsec:cons pure quadratic term}
and we have seen nothing similar to what we are looking for. This
is because all those terms come from a scalar product of two quantities
and, consequently, in each summand we will always have four trigonometric
factors. By the same reason, we should not expect to find the wanted
term in $\widetilde{u^{\left(n\right)}}^{n}\left(t,x\right)\cdot\widetilde{\nabla}^{n}\widetilde{\Omega^{\left(n-1\right)}}^{n}\left(t,x\right)$.
Then, the only term that remains is the temporal derivative $\frac{\partial\widetilde{\omega^{\left(n\right)}}^{n}}{\partial t}$
and, according to Proposition \ref{prop:computations vorticity},
it will only contain one zeroth order term in $\lambda_{n}$, which
is
\[
\frac{\mathrm{d}}{\mathrm{d}t}\left[B_{n}\left(t\right)\left(a_{n}\left(t\right)^{2}+b_{n}\left(t\right)^{2}\right)\right]\varphi\left(\lambda_{n}x_{1}\right)\varphi\left(\lambda_{n}x_{2}\right)\sin\left(x_{1}\right)\sin\left(x_{2}\right).
\]
Notice that, in view of equation \eqref{eq:intro derivative rho in transport},
leaving the cutoff factors aside, this term has the form we are looking
for. Nevertheless, there is a slight inconvenience. If we were to
set
\begin{equation}
\left(0,1\right)\cdot\widetilde{\nabla}^{n}\widetilde{\rho^{\left(n\right)}}^{n}\left(t,x\right)=\frac{\mathrm{d}}{\mathrm{d}t}\left[B_{n}\left(t\right)\left(a_{n}\left(t\right)^{2}+b_{n}\left(t\right)^{2}\right)\right]\varphi\left(\lambda_{n}x_{1}\right)\varphi\left(\lambda_{n}x_{2}\right)\sin\left(x_{1}\right)\sin\left(x_{2}\right),\label{eq:first attempt density}
\end{equation}
then, integrating with respect to $x_{2}$, we would get
\begin{equation}
\widetilde{\rho^{\left(n\right)}}^{n}\left(t,x\right)=\frac{1}{b_{n}\left(t\right)}\frac{\mathrm{d}}{\mathrm{d}t}\left[B_{n}\left(t\right)\left(a_{n}\left(t\right)^{2}+b_{n}\left(t\right)^{2}\right)\right]\varphi\left(\lambda_{n}x_{1}\right)\sin\left(x_{1}\right)\int_{0}^{x_{2}}\varphi\left(\lambda_{n}\xi\right)\sin\left(\xi\right)\mathrm{d}\xi,\label{eq:first try of rho}
\end{equation}
which would make our density non-explicit. As we will see shortly,
we can get an explicit $\widetilde{\rho^{\left(n\right)}}^{n}\left(t,x\right)$
by adding a term of order $1$ in $\lambda_{n}$. If we integrate
by parts in \eqref{eq:first try of rho}, we obtain
\[
\begin{aligned}\int_{0}^{x_{2}}\varphi\left(\lambda_{n}\xi\right)\sin\left(\xi\right)\mathrm{d}\xi & =-\varphi\left(\lambda_{n}x_{2}\right)\cos\left(x_{2}\right)+\lambda_{n}\int_{0}^{x_{2}}\varphi'\left(\lambda_{n}\xi\right)\cos\left(\xi\right)\mathrm{d}\xi.\end{aligned}
\]
Consequently,
\[
\begin{aligned}\widetilde{\rho^{\left(n\right)}}^{n}\left(t,x\right) & =\overbrace{-\frac{1}{b_{n}\left(t\right)}\frac{\mathrm{d}}{\mathrm{d}t}\left[B_{n}\left(t\right)\left(a_{n}\left(t\right)^{2}+b_{n}\left(t\right)^{2}\right)\right]\varphi\left(\lambda_{n}x_{1}\right)\varphi\left(\lambda_{n}x_{2}\right)\sin\left(x_{1}\right)\cos\left(x_{2}\right)}^{\text{this is the density we would like to have}}+\\
 & \quad+\underbrace{\lambda_{n}\frac{1}{b_{n}\left(t\right)}\frac{\mathrm{d}}{\mathrm{d}t}\left[B_{n}\left(t\right)\left(a_{n}\left(t\right)^{2}+b_{n}\left(t\right)^{2}\right)\right]\varphi\left(\lambda_{n}x_{1}\right)\sin\left(x_{1}\right)\int_{0}^{x_{2}}\varphi'\left(\lambda_{n}\xi\right)\cos\left(\xi\right)\mathrm{d}\xi}_{\eqqcolon\widetilde{E^{\left(n\right)}}^{n}\left(t,x\right)}.
\end{aligned}
\]
If we compute $\left(0,1\right)\cdot\widetilde{\nabla}^{n}\widetilde{E^{\left(n\right)}}^{n}\left(t,x\right)$,
we will get
\[
\left(0,1\right)\cdot\widetilde{\nabla}^{n}\widetilde{E^{\left(n\right)}}^{n}\left(t,x\right)=\lambda_{n}\frac{\mathrm{d}}{\mathrm{d}t}\left[B_{n}\left(t\right)\left(a_{n}\left(t\right)^{2}+b_{n}\left(t\right)^{2}\right)\right]\varphi\left(\lambda_{n}x_{1}\right)\varphi'\left(\lambda_{n}x_{2}\right)\sin\left(x_{1}\right)\cos\left(x_{2}\right).
\]
Hence, if instead of setting \eqref{eq:first attempt density}, we
impose
\begin{equation}
\begin{aligned}\left(0,1\right)\cdot\widetilde{\nabla}^{n}\widetilde{\rho^{\left(n\right)}}^{n}\left(t,x\right) & =\frac{\mathrm{d}}{\mathrm{d}t}\left[B_{n}\left(t\right)\left(a_{n}\left(t\right)^{2}+b_{n}\left(t\right)^{2}\right)\right]\varphi\left(\lambda_{n}x_{1}\right)\varphi\left(\lambda_{n}x_{2}\right)\sin\left(x_{1}\right)\sin\left(x_{2}\right)+\\
 & \quad-\lambda_{n}\frac{\mathrm{d}}{\mathrm{d}t}\left[B_{n}\left(t\right)\left(a_{n}\left(t\right)^{2}+b_{n}\left(t\right)^{2}\right)\right]\varphi\left(\lambda_{n}x_{1}\right)\varphi'\left(\lambda_{n}x_{2}\right)\sin\left(x_{1}\right)\cos\left(x_{2}\right),
\end{aligned}
\label{eq:second attempt density}
\end{equation}
we will obtain
\[
\widetilde{\rho^{\left(n\right)}}^{n}\left(t,x\right)=-\frac{1}{b_{n}\left(t\right)}\frac{\mathrm{d}}{\mathrm{d}t}\left[B_{n}\left(t\right)\left(a_{n}\left(t\right)^{2}+b_{n}\left(t\right)^{2}\right)\right]\varphi\left(\lambda_{n}x_{1}\right)\varphi\left(\lambda_{n}x_{2}\right)\sin\left(x_{1}\right)\cos\left(x_{2}\right),
\]
as we wanted. Thus, we will enforce \eqref{eq:second attempt density}
instead of \eqref{eq:first attempt density}, as we clarify in the
following Choice.
\begin{choice}
\label{choice:density}As a result of the above, we choose
\[
\begin{aligned}\left(0,1\right)\cdot\widetilde{\nabla}^{n}\widetilde{\rho^{\left(n\right)}}^{n}\left(t,x\right) & =\frac{\mathrm{d}}{\mathrm{d}t}\left[B_{n}\left(t\right)\left(a_{n}\left(t\right)^{2}+b_{n}\left(t\right)^{2}\right)\right]\varphi\left(\lambda_{n}x_{1}\right)\varphi\left(\lambda_{n}x_{2}\right)\sin\left(x_{1}\right)\sin\left(x_{2}\right)+\\
 & \quad-\lambda_{n}\frac{\mathrm{d}}{\mathrm{d}t}\left[B_{n}\left(t\right)\left(a_{n}\left(t\right)^{2}+b_{n}\left(t\right)^{2}\right)\right]\varphi\left(\lambda_{n}x_{1}\right)\varphi'\left(\lambda_{n}x_{2}\right)\sin\left(x_{1}\right)\cos\left(x_{2}\right),
\end{aligned}
\]
which leads to
\[
\widetilde{\rho^{\left(n\right)}}^{n}\left(t,x\right)=-\frac{1}{b_{n}\left(t\right)}\frac{\mathrm{d}}{\mathrm{d}t}\left[B_{n}\left(t\right)\left(a_{n}\left(t\right)^{2}+b_{n}\left(t\right)^{2}\right)\right]\varphi\left(\lambda_{n}x_{1}\right)\varphi\left(\lambda_{n}x_{2}\right)\sin\left(x_{1}\right)\cos\left(x_{2}\right).
\]
\end{choice}

\subsection{Choosing of $B_{n}\left(t\right)$}

If we take a look at the expression given for the density in Choice
\ref{choice:density}, we can see that the amplitude has two factors:
$\frac{1}{b_{n}\left(t\right)}$, which decreases superexponentially
with $n\in\mathbb{N}$, and $\frac{\mathrm{d}}{\mathrm{d}t}\left[B_{n}\left(t\right)\left(a_{n}\left(t\right)^{2}+b_{n}\left(t\right)^{2}\right)\right]$,
which we do not know too much about. By Choice \ref{choice:Bnanbn},
we know that $B_{n}\left(1\right)a_{n}\left(1\right)b_{n}\left(1\right)$
grows, at least, exponentially in $n\in\mathbb{N}$. If $a_{n}\left(t\right)\sim b_{n}\left(t\right)\sim b_{n}\left(1\right)$,
it makes sense for $B_{n}\left(t\right)\left(a_{n}\left(t\right)^{2}+b_{n}\left(t\right)^{2}\right)$
to increase in $n\in\mathbb{N}$ in the same manner, but we cannot
say much about $\frac{\mathrm{d}}{\mathrm{d}t}\left[B_{n}\left(t\right)\left(a_{n}\left(t\right)^{2}+b_{n}\left(t\right)^{2}\right)\right]$,
because we do not know anything about our time scale yet. Furthermore,
recalling the ideas we presented in the introduction, we wanted to
have a way to turn the density on and off. As $a_{n}\left(t\right)$
and $b_{n}\left(t\right)$ are already chosen, the only way to do
this is through $B_{n}\left(t\right)$. Moreover, notice that the
amplitude of the density would be much easier to study if we could
somehow ``factor out'' the dependence on $n$ with respect to the
dependence on $t$. With all these ideas in mind, it makes sense to
try
\[
\frac{1}{b_{n}\left(t\right)}\frac{\mathrm{d}}{\mathrm{d}t}\left[B_{n}\left(t\right)\left(a_{n}\left(t\right)^{2}+b_{n}\left(t\right)^{2}\right)\right]=z_{n}H^{\left(n\right)}\left(t\right),
\]
where $\left(z_{n}\right)_{n\in\mathbb{N}}$ is a certain sequence
we know nothing about (for the moment) and $H^{\left(n\right)}\left(t\right)$
is a bounded function such that $H^{\left(n\right)}\left(t\right)=0$
$\forall t\le t_{n}$. If this were the case, as we will have $B_{n}\left(0\right)=0$
$\forall n\in\mathbb{N}$, we would obtain
\[
B_{n}\left(t\right)\left(a_{n}\left(t\right)^{2}+b_{n}\left(t\right)^{2}\right)=z_{n}\int_{0}^{t}H^{\left(n\right)}\left(s\right)b_{n}\left(s\right)\mathrm{d}s.
\]
Bear in mind that, by Proposition \ref{prop:computations vorticity},
this term above is the amplitude of the leading order term of the
vorticity. Changing the sequence $\left(z_{n}\right)_{n\in\mathbb{N}}$
if necessary, we may assume, for the sake of simplicity, that $\int_{0}^{1}H^{\left(n\right)}\left(s\right)b_{n}\left(s\right)\mathrm{d}s=1$.
One way to do this is to take
\[
H^{\left(n\right)}\left(t\right)=\frac{h^{\left(n\right)}\left(t\right)}{\int_{0}^{1}h^{\left(n\right)}\left(s\right)b_{n}\left(s\right)\mathrm{d}s},
\]
where $h^{\left(n\right)}\left(t\right)$ is a function of $O\left(1\right)$
such that $\left.h^{\left(n\right)}\left(t\right)\right|_{\left[0,t_{n}\right]}\equiv0$.
\begin{choice}
\label{choice:amplitude density}All the aforementioned arguments
justify that we choose the amplitude of the density as
\[
\frac{1}{b_{n}\left(t\right)}\frac{\mathrm{d}}{\mathrm{d}t}\left[B_{n}\left(t\right)\left(a_{n}\left(t\right)^{2}+b_{n}\left(t\right)^{2}\right)\right]=z_{n}\frac{h^{\left(n\right)}\left(t\right)}{\int_{0}^{1}h^{\left(n\right)}\left(s\right)b_{n}\left(s\right)\mathrm{d}s}=z_{n}\frac{h^{\left(n\right)}\left(t\right)}{\int_{t_{n}}^{1}h^{\left(n\right)}\left(s\right)b_{n}\left(s\right)\mathrm{d}s},
\]
where $\left(z_{n}\right)_{n\in\mathbb{N}}\subseteq\mathbb{R}^{+}$
is a certain sequence yet to be determined and $\left(h^{\left(n\right)}\right)_{n\in\mathbb{N}}$
is a sequence of functions $h^{\left(n\right)}:\left[0,1\right]\to\left[0,1\right]$
such that $\left.h^{\left(n\right)}\right|_{\left[0,t_{n}\right]}=0$
$\forall n\in\mathbb{N}$.
\end{choice}
As a consequence of Choice \ref{choice:amplitude density}, we arrive
to
\[
\begin{aligned}B_{n}\left(t\right)\left(a_{n}\left(t\right)^{2}+b_{n}\left(t\right)^{2}\right) & =z_{n}\frac{\int_{t_{n}}^{t}h^{\left(n\right)}\left(s\right)b_{n}\left(s\right)\mathrm{d}s}{\int_{t_{n}}^{1}h^{\left(n\right)}\left(s\right)b_{n}\left(s\right)\mathrm{d}s},\\
B_{n}\left(t\right) & =\frac{z_{n}}{a_{n}\left(t\right)^{2}+b_{n}\left(t\right)^{2}}\frac{\int_{t_{n}}^{t}h^{\left(n\right)}\left(s\right)b_{n}\left(s\right)\mathrm{d}s}{\int_{t_{n}}^{1}h^{\left(n\right)}\left(s\right)b_{n}\left(s\right)\mathrm{d}s}.
\end{aligned}
\]
Furthermore, the sequences $\left(z_{n}\right)_{n\in\mathbb{N}}$
of Choice \ref{choice:amplitude density} and $\left(M_{n}\right)_{n\in\mathbb{N}}$
of Choice \ref{choice:Bnanbn} are related. Indeed, since
\[
\frac{\int_{t_{n}}^{t}h^{\left(n\right)}\left(s\right)b_{n}\left(s\right)\mathrm{d}s}{\int_{t_{n}}^{1}h^{\left(n\right)}\left(s\right)b_{n}\left(s\right)\mathrm{d}s}=1\quad\text{ at }t=1,
\]
we deduce that
\[
M_{n}=B_{n}\left(1\right)a_{n}\left(1\right)b_{n}\left(1\right)=\frac{z_{n}}{a_{n}\left(1\right)^{2}+b_{n}\left(1\right)^{2}}a_{n}\left(1\right)b_{n}\left(1\right).
\]
Using Choice \ref{choice:anbn}, we arrive to
\begin{equation}
\begin{aligned}M_{n} & =\frac{z_{n}}{C^{2\left(1-k_{n}\left(1\right)\right)\left(\frac{1}{1-\gamma}\right)^{n}}+C^{2\left(1+k_{n}\left(1\right)\right)\left(\frac{1}{1-\gamma}\right)^{n}}}C^{2\left(\frac{1}{1-\gamma}\right)^{n}}=\frac{z_{n}}{C^{-2k_{n}\left(1\right)\left(\frac{1}{1-\gamma}\right)^{n}}+C^{2k_{n}\left(1\right)\left(\frac{1}{1-\gamma}\right)^{n}}}=\\
 & =\frac{z_{n}}{2\cosh\left(2k_{n}\left(1\right)\left(\frac{1}{1-\gamma}\right)^{n}\right)}.
\end{aligned}
\label{eq:relation Mn and zn}
\end{equation}

\subsection{\label{subsec:bounding the time derivative of the density}Bounding
the time derivative of the density}

In this subsection, we will obtain the first pieces of information
concerning how we should choose our time scale and the profile of
$h^{\left(n\right)}\left(t\right)$.

Consider the time derivative of the density given in Choice \ref{choice:density}:
\[
\frac{\partial\widetilde{\rho^{\left(n\right)}}^{n}}{\partial t}\left(t,x\right)=-\frac{\mathrm{d}}{\mathrm{d}t}\left[\frac{1}{b_{n}\left(t\right)}\frac{\mathrm{d}}{\mathrm{d}t}\left[B_{n}\left(t\right)\left(a_{n}\left(t\right)^{2}+b_{n}\left(t\right)^{2}\right)\right]\right]\varphi\left(\lambda_{n}x_{1}\right)\varphi\left(\lambda_{n}x_{2}\right)\sin\left(x_{1}\right)\sin\left(x_{2}\right).
\]
Making use of Choice \ref{choice:amplitude density}, we can write
\[
\begin{aligned}\frac{\partial\widetilde{\rho^{\left(n\right)}}^{n}}{\partial t}\left(t,x\right) & =-\frac{\mathrm{d}}{\mathrm{d}t}\left[z_{n}\frac{h^{\left(n\right)}\left(t\right)}{\int_{t_{n}}^{1}h^{\left(n\right)}\left(s\right)b_{n}\left(s\right)\mathrm{d}s}\right]\varphi\left(\lambda_{n}x_{1}\right)\varphi\left(\lambda_{n}x_{2}\right)\sin\left(x_{1}\right)\sin\left(x_{2}\right)=\\
 & =-z_{n}\frac{\frac{\mathrm{d}h^{\left(n\right)}}{\mathrm{d}t}\left(t\right)}{\int_{t_{n}}^{1}h^{\left(n\right)}\left(s\right)b_{n}\left(s\right)\mathrm{d}s}\varphi\left(\lambda_{n}x_{1}\right)\varphi\left(\lambda_{n}x_{2}\right)\sin\left(x_{1}\right)\sin\left(x_{2}\right).
\end{aligned}
\]
We wish to calculate the $\left|\left|\cdot\right|\right|_{C^{1,\alpha}\left(\mathbb{R}^{2}\right)}$
norm of the expression above. Thanks to equation \eqref{eq:property Ckalpha composition},
we obtain that
\[
\left|\left|\varphi\left(\lambda_{n}\cdot\right)\right|\right|_{C^{1,\alpha}\left(\mathbb{R}\right)}\lesssim\left(1+\lambda_{n}\right)^{1+\alpha}\left|\left|\varphi\right|\right|_{C^{1,\alpha}\left(\mathbb{R}\right)}.
\]
Thereby, using equation \eqref{eq:property Ckalpha multiplication}
and employing the fact that $\left|\left|\sin\left(\cdot\right)\right|\right|_{C^{1,\alpha}\left(\mathbb{R}\right)}=1$,
we get to
\[
\left|\left|\frac{\partial\widetilde{\rho^{\left(n\right)}}^{n}}{\partial t}\left(t,\cdot\right)\right|\right|_{C^{1,\alpha}\left(\mathbb{R}^{2}\right)}\lesssim z_{n}\frac{\frac{\mathrm{d}h^{\left(n\right)}}{\mathrm{d}t}\left(t\right)}{\int_{t_{n}}^{1}h^{\left(n\right)}\left(s\right)b_{n}\left(s\right)\mathrm{d}s}\left(1+\lambda_{n}\right)^{2\left(1+\alpha\right)}\left|\left|\varphi\right|\right|_{C^{1,\alpha}\left(\mathbb{R}\right)}^{2}.
\]
Bounding $\alpha<1$ and $\lambda_{n}\le1$, we arrive to
\[
\left|\left|\frac{\partial\widetilde{\rho^{\left(n\right)}}^{n}}{\partial t}\left(t,\cdot\right)\right|\right|_{C^{1,\alpha}\left(\mathbb{R}^{2}\right)}\lesssim_{\varphi}z_{n}\frac{\frac{\mathrm{d}h^{\left(n\right)}}{\mathrm{d}t}\left(t\right)}{\int_{t_{n}}^{1}h^{\left(n\right)}\left(s\right)b_{n}\left(s\right)\mathrm{d}s}.
\]
Applying equation \eqref{eq:property Ckalpha composition}, we obtain
\[
\begin{aligned}\left|\left|\frac{\partial\widetilde{\rho^{\left(n\right)}}^{n}}{\partial t}\left(t,\left(\phi^{\left(n\right)}\right)^{-1}\left(t,\cdot\right)\right)\right|\right|_{C^{1,\alpha}\left(\mathbb{R}^{2}\right)} & \lesssim_{\varphi}\left(1+\max\left\{ \left|\left|\left(\phi^{\left(n\right)}\right)^{-1}\left(t,\cdot\right)\right|\right|_{\dot{C}^{1}\left(\mathbb{R}^{2};\mathbb{R}^{2}\right)},\left|\left|\left(\phi^{\left(n\right)}\right)^{-1}\left(t,\cdot\right)\right|\right|_{\dot{C}^{1,\alpha}\left(\mathbb{R}^{2};\mathbb{R}^{2}\right)}\right\} \right)^{1+\alpha}\cdot\\
 & \quad\cdot z_{n}\frac{\frac{\mathrm{d}h^{\left(n\right)}}{\mathrm{d}t}\left(t\right)}{\int_{t_{n}}^{1}h^{\left(n\right)}\left(s\right)b_{n}\left(s\right)\mathrm{d}s}.
\end{aligned}
\]
Now, taking into account equation \eqref{eq:jacobian inverse}, we
deduce that
\begin{equation}
\begin{aligned}\left|\left|\frac{\partial\widetilde{\rho^{\left(n\right)}}^{n}}{\partial t}\left(t,\left(\phi^{\left(n\right)}\right)^{-1}\left(t,\cdot\right)\right)\right|\right|_{C^{1,\alpha}\left(\mathbb{R}^{2}\right)} & \lesssim_{\varphi}\left(1+\max\left\{ a_{n}\left(t\right),b_{n}\left(t\right)\right\} \right)^{1+\alpha}z_{n}\frac{\frac{\mathrm{d}h^{\left(n\right)}}{\mathrm{d}t}\left(t\right)}{\int_{t_{n}}^{1}h^{\left(n\right)}\left(s\right)b_{n}\left(s\right)\mathrm{d}s}.\end{aligned}
\label{eq:rigorous bound time derivative density}
\end{equation}
As $h^{\left(n\right)}\left(s\right)$ is of order $1$ by Choice
\ref{choice:amplitude density}, it make sense to postulate that
\[
\int_{t_{n}}^{1}h^{\left(n\right)}\left(s\right)b_{n}\left(s\right)\mathrm{d}s\sim\left(1-t_{n}\right)\max_{s\in\left[t_{n},1\right]}b_{n}\left(s\right),\quad\frac{\mathrm{d}h^{\left(n\right)}}{\mathrm{d}t}\left(t\right)\sim\frac{1}{1-t_{n}},
\]
where $1-t_{n}$ is the time scale of layer $n$. Then, we would have
\[
\begin{aligned}\left|\left|\frac{\partial\widetilde{\rho^{\left(n\right)}}^{n}}{\partial t}\left(t,\left(\phi^{\left(n\right)}\right)^{-1}\left(t,\cdot\right)\right)\right|\right|_{C^{1,\alpha}\left(\mathbb{R}^{2}\right)} & \sim\frac{z_{n}}{\left(1-t_{n}\right)^{2}}\frac{\left(1+\max\left\{ a_{n}\left(t\right),b_{n}\left(t\right)\right\} \right)^{1+\alpha}}{\max_{s\in\left[t_{n},1\right]}b_{n}\left(s\right)}\sim\\
 & \sim\frac{z_{n}}{\left(1-t_{n}\right)^{2}}\frac{\max\left\{ 1,a_{n}\left(t\right),b_{n}\left(t\right)\right\} ^{1+\alpha}}{\max_{s\in\left[t_{n},1\right]}b_{n}\left(s\right)}.
\end{aligned}
\]
Resorting to Choice \ref{choice:anbn}, we can write
\[
\begin{aligned}\left|\left|\frac{\partial\widetilde{\rho^{\left(n\right)}}^{n}}{\partial t}\left(t,\left(\phi^{\left(n\right)}\right)^{-1}\left(t,\cdot\right)\right)\right|\right|_{C^{1,\alpha}\left(\mathbb{R}^{2}\right)} & \sim\frac{z_{n}}{\left(1-t_{n}\right)^{2}}\max\left\{ C^{-\left[1+\max_{s\in\left[t_{n},1\right]}k_{n}\left(s\right)\right]\left(\frac{1}{1-\gamma}\right)^{n}},\right.\\
 & \quad C^{\left[\alpha-\left(1+\alpha\right)k_{n}\left(t\right)-\max_{s\in\left[t_{n},1\right]}k_{n}\left(s\right)\right]\left(\frac{1}{1-\gamma}\right)^{n}},\\
 & \quad\left.C^{\left[\alpha+\left(1+\alpha\right)k_{n}\left(t\right)-\max_{s\in\left[t_{n},1\right]}k_{n}\left(s\right)\right]\left(\frac{1}{1-\gamma}\right)^{n}}\right\} .
\end{aligned}
\]
Let us study each term that appears independently.
\begin{itemize}
\item Since $t_{n}\xrightarrow{n\to\infty}1$, because $t=1$ is the time
of the explosion, $1-t_{n}$ has to decrease in $n\in\mathbb{N}$.
In other words, $\frac{1}{1-t_{n}}$ must grow in $n\in\mathbb{N}$.
\item What happens with $z_{n}$? By virtue of Choice \ref{choice:amplitude density},
we have
\[
B_{n}\left(t\right)\left(a_{n}\left(t\right)^{2}+b_{n}\left(t\right)^{2}\right)=z_{n}\frac{\int_{t_{n}}^{t}h^{\left(n\right)}\left(s\right)b_{n}\left(s\right)\mathrm{d}s}{\int_{t_{n}}^{1}h^{\left(n\right)}\left(s\right)b_{n}\left(s\right)\mathrm{d}s}.
\]
On the other hand, recall that $B_{n}\left(t\right)\left(a_{n}\left(t\right)^{2}+b_{n}\left(t\right)^{2}\right)$
is the amplitude of the leading term of the vorticity (see Proposition
\ref{prop:computations vorticity} and Remark \ref{rem:amplitude of vorticity explodes}).
Furthermore, by Remark \ref{rem:amplitude of vorticity explodes},
we know that $B_{n}\left(t\right)\left(a_{n}\left(t\right)^{2}+b_{n}\left(t\right)^{2}\right)$
grows, at least, exponentially in $n\in\mathbb{N}$. Since
\[
\frac{\int_{t_{n}}^{t}h^{\left(n\right)}\left(s\right)b_{n}\left(s\right)\mathrm{d}s}{\int_{t_{n}}^{1}h^{\left(n\right)}\left(s\right)b_{n}\left(s\right)\mathrm{d}s}=1\quad\text{ at }t=1,
\]
we conclude that $z_{n}$ grows, at least, exponentially in $n\in\mathbb{N}$.
\item In this manner, we know that $\frac{1}{1-t_{n}}$ grows in $n\in\mathbb{N}$
and that $z_{n}$ grows, at least, exponentially in $n\in\mathbb{N}$.
So, if we wish for the $\left|\left|\cdot\right|\right|_{C^{1,\alpha}\left(\mathbb{R}^{2}\right)}$
norm of the time derivative of the density to be bounded, we need
the $\max\left\{ \right\} $ term to be decreasing in $n\in\mathbb{N}$.
By its nature, either it grows superexponentially or it decreases
superexponentially. What are the conditions for it to shrink? As $\sup_{n\in\mathbb{N}}\max_{t\in\left[t_{n},1\right]}\left|k_{n}\left(t\right)\right|<1$
by Choice \ref{choice:anbn}, we infer that there is $\varepsilon>0$
such that
\[
1+\max_{s\in\left[t_{n},1\right]}k_{n}\left(s\right)\ge\varepsilon\quad\forall n\in\mathbb{N},
\]
so
\[
C^{-\left[1+\max_{s\in\left[t_{n},1\right]}k_{n}\left(s\right)\right]\left(\frac{1}{1-\gamma}\right)^{n}}\xrightarrow[n\to\infty]{}0
\]
superexponentially. The behavior of the other arguments of the max
depends on the signs of $k_{n}\left(t\right)$ and of $\max_{s\in\left[t_{n},1\right]}k_{n}\left(s\right)$.
If $k_{n}\left(t\right)$ and $\max_{s\in\left[t_{n},1\right]}k_{n}\left(s\right)$
are both negative, then 
\[
\alpha-\left(1+\alpha\right)k_{n}\left(t\right)-\max_{s\in\left[t_{n},1\right]}k_{n}\left(s\right)>0
\]
and the second term grows superexponentially in $n\in\mathbb{N}$,
which is not what we want.

Let us suppose, then, that $\max_{s\in\left[t_{n},1\right]}k_{n}\left(s\right)$
is nonnegative. Then, the second term will decrease with $n\in\mathbb{N}$
as long as
\[
\alpha-\left(1+\alpha\right)k_{n}\left(t\right)-\max_{s\in\left[t_{n},1\right]}k_{n}\left(s\right)<0\iff\alpha\left(1-k_{n}\left(t\right)\right)<\max_{s\in\left[t_{n},1\right]}k_{n}\left(s\right)+k_{n}\left(t\right).
\]
Since $\left|k_{n}\left(t\right)\right|<1$, $1-k_{n}\left(t\right)>0$
and, consequently, the condition above is equivalent to
\[
\alpha<\frac{\max_{s\in\left[t_{n},1\right]}k_{n}\left(s\right)+k_{n}\left(t\right)}{1-k_{n}\left(t\right)}.
\]
Studying the function
\[
f\left(x,y\right)\coloneqq\frac{y+x}{1-x}
\]
in the domain $D\coloneqq\left\{ \left(x,y\right)\in\left(0,1\right)\times\left(-1,1\right)\right\} $,
we see that $\frac{\partial f}{\partial x},\frac{\partial f}{\partial y}>0$
in $D$. This means that, so as to allow $\alpha$ to be as big as
possible, we must make $\max_{s\in\left[t_{n},1\right]}k_{n}\left(s\right)$
as large as we can and we should also try to make $k_{n}\left(t\right)$
large and close to $1$ for all times.

What happens with the third term? It will decrease superexponentially
in $n\in\mathbb{N}$ as long as
\begin{equation}
\begin{aligned} & \alpha+\left(1+\alpha\right)k_{n}\left(t\right)-\max_{s\in\left[t_{n},1\right]}k_{n}\left(s\right)<0\iff\alpha\left(1+k_{n}\left(t\right)\right)+k_{n}\left(t\right)<\max_{s\in\left[t_{n},1\right]}k_{n}\left(s\right)\iff\\
\iff & \alpha<\frac{\max_{s\in\left[t_{n},1\right]}k_{n}\left(s\right)-k_{n}\left(t\right)}{1+k_{n}\left(t\right)}.
\end{aligned}
\label{eq:alpha small for bn time derivative density}
\end{equation}
Studying the function
\[
f\left(x,y\right)\coloneqq\frac{y-x}{1+x}
\]
in the domain $D\coloneqq\left\{ \left(x,y\right)\in\left(0,1\right)\times\left(-1,1\right)\right\} $,
we see that $\frac{\partial f}{\partial x}<0$ and $\frac{\partial f}{\partial y}>0$
in $D$. This means that, like for the second term, it is of our interest
to choose $\max_{s\in\left[t_{n},1\right]}k_{n}\left(s\right)$ as
big as we can, but, simultaneously, we should not take $k_{n}\left(t\right)$
too big. This might seem paradoxical: we need to take $k_{n}\left(t\right)$
big and small at the same time... How can we escape this apparent
paradox? The answer is to look back at the rigorous bound \eqref{eq:rigorous bound time derivative density}.
To continue after that equation, we assumed that $\frac{\mathrm{d}h^{\left(n\right)}}{\mathrm{d}t}\left(t\right)\sim\frac{1}{1-t_{n}}$.
However, what would happen if we could make $\frac{\mathrm{d}h^{\left(n\right)}}{\mathrm{d}t}\left(t\right)=0$
when $k_{n}\left(t\right)$ is big? Then, we would not have to worry
about satisfying \eqref{eq:alpha small for bn time derivative density}
when $k_{n}\left(t\right)$ is big and we would just have to satisfy
it when $k_{n}\left(t\right)$ is small. This is a key idea of the
construction.

In any case, we see that it would be disadvantageous to let $z_{n}$
or $\frac{1}{1-t_{n}}$ grow superexponentially in $n\in\mathbb{N}$,
because, if it were the case, we would have to compensate that growth
with the $\max\left\{ \right\} $, further restricting the values
of $\alpha$ for which $\left|\left|\frac{\partial\widetilde{\rho^{\left(n\right)}}^{n}}{\partial t}\left(t,\left(\phi^{\left(n\right)}\right)^{-1}\left(t,\cdot\right)\right)\right|\right|_{C^{1,\alpha}\left(\mathbb{R}^{2}\right)}$
is bounded. Nevertheless, we will see in subsection \ref{subsec:time convergence of bn(t)}
that it is unavoidable and we will have to deal with this fact.
\end{itemize}
\begin{summary}
\label{sum:ideas time derivative density}In this subsection, we have
introduced the following ideas:
\begin{enumerate}
\item We need to make $\max_{s\in\left[t_{n},1\right]}k_{n}\left(s\right)$
big (big means close to $1$, because we have the restriction $\max_{s\in\left[t_{n},1\right]}\left|k_{n}\left(s\right)\right|<1$
by Choice \ref{choice:anbn}).
\item We should have $\frac{\mathrm{d}h^{\left(n\right)}}{\mathrm{d}t}\left(t\right)=0$
when $k_{n}\left(t\right)$ is big.
\item $z_{n}$ and $\frac{1}{1-t_{n}}$ should grow in $n\in\mathbb{N}$
as slowly as possible.
\end{enumerate}
\end{summary}

\subsection{Equation for $k_{n}\left(t\right)$}

In subsection \ref{subsec:transport of the layer centers}, we introduced
the toy model that governs the time dynamics of the position of the
layer centers. In this subsection, we will present another toy model:
the one that describes the time evolution of $k_{n}\left(t\right)$.
Understanding the dynamics of $k_{n}\left(t\right)$ will be important
to know when $k_{n}\left(t\right)$ is big and when it is small, which,
according to summary \ref{sum:ideas time derivative density}, is
a key requirement to bound the time derivative of the density.

Coming back to Choice \ref{choice:anbncn}, we have
\[
\frac{\mathrm{d}}{\mathrm{d}t}\left(\ln\left(b_{n}\left(t\right)\right)\right)=\sum_{m=1}^{n-1}B_{m}\left(t\right)b_{m}\left(t\right)a_{m}\left(t\right)\cos\left(a_{m}\left(t\right)\left(\phi_{1}^{\left(n\right)}\left(t,0\right)-\phi_{1}^{\left(m\right)}\left(t,0\right)\right)\right)\quad\forall t\in\left[t_{n},1\right].
\]
Using that 
\[
b_{n}\left(t\right)=C^{\left(\frac{1}{1-\gamma}\right)^{n}\left(1+k_{n}\left(t\right)\right)}
\]
by Choice \ref{choice:anbn} leads to
\[
\begin{aligned}\ln\left(b_{n}\left(t\right)\right) & =\left(1+k_{n}\left(t\right)\right)\left(\frac{1}{1-\gamma}\right)^{n}\ln\left(C\right),\\
\frac{\mathrm{d}}{\mathrm{d}t}\left(\ln\left(b_{n}\left(t\right)\right)\right) & =\left(\frac{1}{1-\gamma}\right)^{n}\ln\left(C\right)\frac{\mathrm{d}k_{n}}{\mathrm{d}t}\left(t\right).
\end{aligned}
\]
Consequently, we have
\begin{equation}
\frac{\mathrm{d}k_{n}}{\mathrm{d}t}\left(t\right)=\frac{1}{\ln\left(C\right)\left(\frac{1}{1-\gamma}\right)^{n}}\sum_{m=1}^{n-1}B_{m}\left(t\right)b_{m}\left(t\right)a_{m}\left(t\right)\cos\left(a_{m}\left(t\right)\left(\phi_{1}^{\left(n\right)}\left(t,0\right)-\phi_{1}^{\left(m\right)}\left(t,0\right)\right)\right)\quad\forall t\in\left[t_{n},1\right].\label{eq:real dkntdt}
\end{equation}
If we apply the same logic as in subsection \ref{subsec:transport of the layer centers},
the dominant term in the sum should be the one with $m=n-1$ and we
should be able to approximate $B_{n-1}\left(t\right)\sim B_{n-1}\left(1\right)$,
$b_{n-1}\left(t\right)\sim b_{n-1}\left(1\right)$ and $a_{n-1}\left(t\right)\sim a_{n-1}\left(1\right)$,
because we are considering times $t\in\left[t_{n},1\right]$. This
means that our toy model for $k_{n}\left(t\right)$ solves
\begin{equation}
\frac{\mathrm{d}\overline{k}_{n}\left(t\right)}{\mathrm{d}t}=\frac{B_{n-1}\left(1\right)b_{n-1}\left(1\right)a_{n-1}\left(1\right)}{\ln\left(C\right)\left(\frac{1}{1-\gamma}\right)^{n}}\cos\left(a_{n-1}\left(1\right)\Xi_{0}^{\left(n\right)}\left(t\right)\right)\quad\forall t\in\left[t_{n},1\right],\label{eq:ideal dkntdt}
\end{equation}
where $\Xi_{0}^{\left(n\right)}\left(t\right)$ is the toy model of
$\Xi^{\left(n\right)}\left(t\right)$ introduced in subsection \ref{subsec:transport of the layer centers}.
To continue, we need the following Lemma.
\begin{lem}
\label{lem:integral cos of the good ODE}Let $F\left(t\right)$ be
a solution to the initial value problem given in Lemma \ref{lem:the good ODE}.
Then,
\[
\int_{0}^{t}\cos\left(F\left(\tau\right)\right)\mathrm{d}\tau=\ln\left(\frac{\sin\left(F\left(t\right)\right)}{\sin\left(F\left(0\right)\right)}\right)=\ln\left(\frac{\cosh\left(t_{\max}\right)}{\cosh\left(t_{\max}-t\right)}\right).
\]
\end{lem}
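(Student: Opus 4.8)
The plan is to recognize that, along the flow of the ODE $\frac{\mathrm{d}F}{\mathrm{d}t}=\sin\left(F\right)$, the integrand $\cos\left(F\left(\tau\right)\right)$ is exactly the logarithmic derivative of $\sin\left(F\left(\tau\right)\right)$. Concretely, for $F\left(0\right)\in\left(0,\pi\right)$ Lemma \ref{lem:the good ODE}(1) guarantees $F\left(\tau\right)\in\left(0,\pi\right)$ for all $\tau$, hence $\sin\left(F\left(\tau\right)\right)>0$, so $\ln\left(\sin\left(F\left(\tau\right)\right)\right)$ is well defined and differentiable. Differentiating and using the ODE,
\[
\frac{\mathrm{d}}{\mathrm{d}\tau}\ln\left(\sin\left(F\left(\tau\right)\right)\right)=\frac{\cos\left(F\left(\tau\right)\right)}{\sin\left(F\left(\tau\right)\right)}\frac{\mathrm{d}F}{\mathrm{d}\tau}\left(\tau\right)=\frac{\cos\left(F\left(\tau\right)\right)}{\sin\left(F\left(\tau\right)\right)}\sin\left(F\left(\tau\right)\right)=\cos\left(F\left(\tau\right)\right).
\]
Then I would integrate this identity from $0$ to $t$ and apply the Fundamental Theorem of Calculus to obtain
\[
\int_{0}^{t}\cos\left(F\left(\tau\right)\right)\mathrm{d}\tau=\ln\left(\sin\left(F\left(t\right)\right)\right)-\ln\left(\sin\left(F\left(0\right)\right)\right)=\ln\left(\frac{\sin\left(F\left(t\right)\right)}{\sin\left(F\left(0\right)\right)}\right),
\]
which is the first claimed equality.

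For the second equality I would simply invoke point 5 of Lemma \ref{lem:the good ODE}, which states $\sin\left(F\left(t\right)\right)=\frac{1}{\cosh\left(t_{\max}-t\right)}$; evaluating at $t=0$ gives $\sin\left(F\left(0\right)\right)=\frac{1}{\cosh\left(t_{\max}\right)}$, so that
\[
\frac{\sin\left(F\left(t\right)\right)}{\sin\left(F\left(0\right)\right)}=\frac{\cosh\left(t_{\max}\right)}{\cosh\left(t_{\max}-t\right)},
\]
and taking logarithms finishes the proof. (One could alternatively derive the second equality directly from the explicit formula in point 2 of Lemma \ref{lem:the good ODE}, but routing through point 5 is cleaner.)

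There is essentially no hard step here: the lemma is a one-line computation once one spots that $\cos\left(F\right)$ is the logarithmic derivative of $\sin\left(F\right)$ under the flow. The only mild point worth stating explicitly is that $\sin\left(F\left(\tau\right)\right)$ never vanishes on the relevant time interval — which I would justify by combining Lemma \ref{lem:the good ODE}(1) with the standing assumption $F\left(0\right)\in\left(0,\pi\right)$ (the degenerate cases $F\left(0\right)\in\left\{0,\pi\right\}$ being excluded since then the statement itself is vacuous, the denominator $\sin\left(F\left(0\right)\right)$ being zero). If desired, I would also remark that the identity $\int_{0}^{t}\cos\left(F\right)=\ln\left(\sin\left(F\left(t\right)\right)/\sin\left(F\left(0\right)\right)\right)$ makes the monotonicity and sign behaviour of this integral transparent, since $\sin\left(F\left(t\right)\right)$ first increases to $1$ and then (possibly) decreases, exactly as catalogued in Remark \ref{rem:relation delta M}.
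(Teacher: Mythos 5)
Your proof is correct and is essentially the same argument as the paper's: the paper differentiates the ODE once to get $\frac{\mathrm{d}}{\mathrm{d}t}\ln\left(\frac{\mathrm{d}F}{\mathrm{d}t}\right)=\cos\left(F\right)$, which by $\frac{\mathrm{d}F}{\mathrm{d}t}=\sin\left(F\right)$ is exactly your identity $\frac{\mathrm{d}}{\mathrm{d}\tau}\ln\left(\sin\left(F\left(\tau\right)\right)\right)=\cos\left(F\left(\tau\right)\right)$, followed by the same appeal to point 5 of Lemma \ref{lem:the good ODE}. Your extra remark on the positivity of $\sin\left(F\right)$ is a harmless (and welcome) bit of added care.
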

\begin{proof}
By Lemma \ref{lem:the good ODE}, $F\left(t\right)$ satisfies
\begin{equation}
\frac{\mathrm{d}F}{\mathrm{d}t}\left(t\right)=\sin\left(F\left(t\right)\right).\label{eq:ODE of F}
\end{equation}
If we differentiate again, we obtain
\[
\frac{\mathrm{d}^{2}F}{\mathrm{d}t^{2}}\left(t\right)=\cos\left(F\left(t\right)\right)\frac{\mathrm{d}F}{\mathrm{d}t}\left(t\right),
\]
which can be rewritten as
\[
\frac{\mathrm{d}}{\mathrm{d}t}\left(\ln\left(\frac{\mathrm{d}F}{\mathrm{d}t}\left(t\right)\right)\right)=\cos\left(F\left(t\right)\right).
\]
Substituting this into the integral of the statement, we arrive to
\[
\int_{0}^{t}\cos\left(F\left(\tau\right)\right)\mathrm{d}\tau=\ln\left(\frac{\mathrm{d}F}{\mathrm{d}t}\left(t\right)\right)-\ln\left(\frac{\mathrm{d}F}{\mathrm{d}t}\left(0\right)\right)=\ln\left(\frac{\frac{\mathrm{d}F}{\mathrm{d}t}\left(t\right)}{\frac{\mathrm{d}F}{\mathrm{d}t}\left(0\right)}\right).
\]
Using \eqref{eq:ODE of F}, we obtain the first equality of the statement.
To obtain the second, we use point 5 of Lemma \ref{lem:the good ODE}.
\end{proof}
Now, with this information, coming back to equation \eqref{eq:ideal dkntdt},
we can integrate it explicitly to obtain
\[
\overline{k}_{n}\left(t\right)-\overline{k}_{n}\left(t_{n}\right)=\frac{B_{n-1}\left(1\right)b_{n-1}\left(1\right)a_{n-1}\left(1\right)}{\ln\left(C\right)\left(\frac{1}{1-\gamma}\right)^{n}}\int_{t_{n}}^{t}\cos\left(a_{n-1}\left(1\right)\Xi_{0}^{\left(n\right)}\left(s\right)\right)\mathrm{d}s.
\]
Resorting to the change of variables that appeared in equation \eqref{eq:change of variables JI0 Fn},
we get
\[
\overline{k}_{n}\left(t\right)-\overline{k}_{n}\left(t_{n}\right)=\frac{B_{n-1}\left(1\right)b_{n-1}\left(1\right)a_{n-1}\left(1\right)}{\ln\left(C\right)\left(\frac{1}{1-\gamma}\right)^{n}}\int_{t_{n}}^{t}\cos\left(F_{n}\left(B_{n-1}\left(1\right)a_{n-1}\left(1\right)b_{n-1}\left(1\right)\left(s-t_{n}\right)\right)\right)\mathrm{d}s,
\]
where $F_{n}$ satisfies the ODE presented in Lemma \ref{lem:the good ODE}.
Undertaking the change of variables
\[
\begin{aligned}w & =B_{n-1}\left(1\right)a_{n-1}\left(1\right)b_{n-1}\left(1\right)\left(s-t_{n}\right),\\
\mathrm{d}w & =B_{n-1}\left(1\right)a_{n-1}\left(1\right)b_{n-1}\left(1\right)\mathrm{d}s,\\
s=t_{n} & \iff w=0,\\
s=t & \iff w=B_{n-1}\left(1\right)a_{n-1}\left(1\right)b_{n-1}\left(1\right)\left(t-t_{n}\right),
\end{aligned}
\]
we arrive to
\[
\overline{k}_{n}\left(t\right)-\overline{k}_{n}\left(t_{n}\right)=\frac{1}{\ln\left(C\right)\left(\frac{1}{1-\gamma}\right)^{n}}\int_{0}^{B_{n-1}\left(1\right)a_{n-1}\left(1\right)b_{n-1}\left(1\right)\left(t-t_{n}\right)}\cos\left(F_{n}\left(w\right)\right)\mathrm{d}w.
\]
Using Lemma \ref{lem:integral cos of the good ODE}, we deduce that
\begin{equation}
\begin{aligned}\overline{k}_{n}\left(t\right)-\overline{k}_{n}\left(t_{n}\right) & =\frac{1}{\ln\left(C\right)\left(\frac{1}{1-\gamma}\right)^{n}}\ln\left(\frac{\sin\left(F_{n}\left(B_{n-1}\left(1\right)a_{n-1}\left(1\right)b_{n-1}\left(1\right)\left(t-t_{n}\right)\right)\right)}{\sin\left(F_{n}\left(0\right)\right)}\right)=\\
 & =\frac{1}{\ln\left(C\right)\left(\frac{1}{1-\gamma}\right)^{n}}\ln\left(\frac{\sin\left(a_{n-1}\left(1\right)\Xi_{0}^{\left(n\right)}\left(t\right)\right)}{\sin\left(a_{n-1}\left(1\right)\Xi_{0}^{\left(n\right)}\left(t_{n}\right)\right)}\right).
\end{aligned}
\label{eq:first version kn}
\end{equation}
Although it may not seem so at first glance, equation \eqref{eq:first version kn}
is very important to our construction, because it relates $\max_{s\in\left[0,1\right]}k_{n}\left(s\right)$
with $\sin\left(a_{n-1}\left(1\right)\Xi_{0}^{\left(n\right)}\left(t_{n}\right)\right)$,
which is connected to the $\hat{\hat{t}}_{\max}^{\left(n\right)}$
that appears in Remark \ref{rem:relation delta M}. Furthermore, we
see that the behavior of $\overline{k}_{n}\left(t\right)$ is deeply
connected to that of an inverted degenerate half-pendulum. Lastly,
the oscillatory behavior of $\sin\left(a_{n-1}\left(1\right)\Xi_{0}^{\left(n\right)}\left(t\right)\right)$
means that, if needed, we can make $\overline{k}_{n}\left(t\right)$
grow and then shrink.

\subsection{\label{subsec:peak into convergence of Bnt}Brief peak into the time
convergence of $B_{n}\left(t\right)$}

Recall that, in the introduction (see subsection \ref{subsec:vorticity growth mechanism}),
we argued that we could consider the parameters of old layers as constant
in time. Moreover, we have used this assumption multiple times when
deriving the toy models for $\Xi^{\left(n\right)}\left(t\right)$
and $k_{n}\left(t\right)$. Nonetheless, is it really true? Or, more
concretely, when is it true for $B_{n}\left(t\right)$?

Consider the simplest case, which is $n=1$. By Choice \ref{choice:anbncn},
we must have
\[
\frac{\mathrm{d}}{\mathrm{d}t}\left(\ln\left(b_{1}\left(t\right)\right)\right)=\sum_{m=1}^{1-1}B_{m}\left(t\right)a_{m}\left(t\right)b_{m}\left(t\right)\cos\left(a_{m}\left(t\right)\left(\phi_{1}^{\left(1\right)}\left(t,0\right)-\phi_{1}^{\left(m\right)}\left(t,0\right)\right)\right)=0.
\]
Consequently, $b_{1}\left(t\right)$ is constant in time. By Choice
\ref{choice:anbncn}, we know that
\[
\frac{\mathrm{d}}{\mathrm{d}t}\left(a_{n}\left(t\right)b_{n}\left(t\right)\right)=0
\]
and, as a consequence, $a_{1}\left(t\right)$ is also constant in
time. Let us check what happens with $B_{1}\left(t\right)$. By Choice
\ref{choice:amplitude density}, 
\[
B_{1}\left(t\right)=\frac{z_{1}}{a_{1}\left(t\right)^{2}+b_{1}\left(t\right)^{2}}\frac{\int_{t_{1}}^{t}h^{\left(1\right)}\left(s\right)b_{1}\left(s\right)\mathrm{d}s}{\int_{t_{1}}^{1}h^{\left(1\right)}\left(s\right)b_{1}\left(s\right)\mathrm{d}s}.
\]
As $b_{1}\left(t\right)$ and $a_{1}\left(t\right)$ are constant
in time, we obtain
\[
B_{1}\left(t\right)=\frac{z_{1}}{a_{1}\left(1\right)^{2}+b_{1}\left(1\right)^{2}}\frac{\int_{t_{1}}^{t}h^{\left(1\right)}\left(s\right)\mathrm{d}s}{\int_{t_{1}}^{1}h^{\left(1\right)}\left(s\right)\mathrm{d}s}.
\]
In this way,
\[
\frac{B_{1}\left(t\right)}{B_{1}\left(1\right)}=\frac{\int_{t_{1}}^{t}h^{\left(1\right)}\left(s\right)\mathrm{d}s}{\int_{t_{1}}^{1}h^{\left(1\right)}\left(s\right)\mathrm{d}s}.
\]
We would like the quantity above very close to $1$ for times $t\in\left[t_{2},1\right]$.
In fact, we can aim higher: we may take $\left.h^{\left(1\right)}\right|_{\left[t_{2},1\right]}\equiv0$,
which eliminates the problem entirely, as, in that case, $B_{1}\left(t\right)=B_{1}\left(1\right)$
$\forall t\in\left[t_{2},1\right]$. Furthermore, notice that, by
Choice \ref{choice:amplitude density}, the amplitude of each density
layer is proportional to $h^{\left(n\right)}$. Taking $\left.h^{\left(1\right)}\right|_{\left[t_{2},1\right]}\equiv0$
means that the amplitude of $\widetilde{\rho^{\left(1\right)}}^{1}\left(t,x\right)$
would be zero $\forall t\in\left[t_{2},1\right]$, i.e., at the time
when we turn on the second layer of density, the first layer of density
will already have disappeared. This, as one can imagine, will make
computations easier. The same argument works if we extend it to all
$h^{\left(n\right)}$'s by analogy: taking $\left.h^{\left(n\right)}\right|_{\left[t_{n+1},0\right]}\equiv0$
$\forall n\in\mathbb{N}$ guarantees that, as we already know that
$\left.h^{\left(n\right)}\right|_{\left[0,t_{n}\right]}\equiv0$ $\forall n\in\mathbb{N}$,
$\widetilde{\rho^{\left(n\right)}}^{n}\left(t,x\right)$ will only
be non-zero $\forall t\in\left[t_{n},t_{n+1}\right]$. Hence, at any
given time instant, there will only be one density layer which is
non-zero.
\begin{choice}
\label{choice:no two simultanous densities}Considering the above,
it makes sense to choose $\left.h^{\left(n\right)}\right|_{\left[t_{n+1},1\right]}\equiv0$,
where $h^{\left(n\right)}$ is the function introduced in Choice \ref{choice:amplitude density}.
Thereby, only one density layer is non-zero at any given moment.
\end{choice}

\subsection{\label{subsec:completion of the toy model}Completion of the toy
model}

In this subsection, we will use all the information we have acquired
until this moment to choose the profiles of $\overline{k}_{n}\left(t\right)$
and $h^{\left(n\right)}\left(t\right)$. By Choice \ref{choice:amplitude density},
we have $\left.h^{\left(n\right)}\right|_{\left[0,t_{n}\right]}\equiv0$.
By Choice \ref{choice:no two simultanous densities}, we know that
$\left.h^{\left(n\right)}\right|_{\left[t_{n+1},1\right]}\equiv0$.
Moreover, by Summary \ref{sum:ideas time derivative density}, we
know we should choose $\frac{\mathrm{d}h^{\left(n\right)}}{\mathrm{d}t}\left(t\right)=0$,
at least, when $\overline{k}_{n}\left(t\right)$ is big. How do we
reconcile these three requirements? It is clear that we cannot simply
take $h^{\left(n\right)}\equiv0$ in $\left[0,1\right]$, because,
in that case, in view of Choices \ref{choice:density} and \ref{choice:amplitude density},
we would have no density. Hence, we need $h^{\left(n\right)}$ to
increase and decrease again in the interval $\left[t_{n},t_{n+1}\right]$,
assuring that $h^{\left(n\right)}\equiv\text{constant}$ when $\overline{k}_{n}\left(t\right)$
is big. One way to this is the following:
\begin{enumerate}
\item Start with $\overline{k}_{n}\left(t_{n}\right)=0$ and $h^{\left(n\right)}\left(t_{n}\right)=0$.
\item Rapidly increase $h^{\left(n\right)}$ from $0$ to a certain value,
let us say, $1$. During this process, $\overline{k}_{n}\left(t\right)$
remains small, so, even if $\frac{\mathrm{d}h^{\left(n\right)}}{\mathrm{d}t}\sim\frac{1}{1-t_{n}}$,
as long as $\frac{1}{1-t_{n}}$ does not grow too fast (as pointed
out in Summary \ref{sum:ideas time derivative density}), $\left|\left|\frac{\partial\widetilde{\rho^{\left(n\right)}}^{n}}{\partial t}\left(t,\left(\phi^{\left(n\right)}\right)^{-1}\left(t,\cdot\right)\right)\right|\right|_{C^{1,\alpha}\left(\mathbb{R}^{2}\right)}$
should remain bounded in $n\in\mathbb{N}$.
\item While we maintain $h^{\left(n\right)}=1$, we let $\overline{k}_{n}\left(t\right)$
grow until it reaches its maximum. During this phase, as $\frac{\mathrm{d}h^{\left(n\right)}}{\mathrm{d}t}\left(t\right)=0$,
we have $\left|\left|\frac{\partial\widetilde{\rho^{\left(n\right)}}^{n}}{\partial t}\left(t,\left(\phi^{\left(n\right)}\right)^{-1}\left(t,\cdot\right)\right)\right|\right|_{C^{1,\alpha}\left(\mathbb{R}^{2}\right)}=0$.
\item Then, still maintaining $h^{\left(n\right)}=1$, we let $\overline{k}_{n}\left(t\right)$
decrease until it is almost zero. Again, during this phase, we have
$\left|\left|\frac{\partial\widetilde{\rho^{\left(n\right)}}^{n}}{\partial t}\left(t,\left(\phi^{\left(n\right)}\right)^{-1}\left(t,\cdot\right)\right)\right|\right|_{C^{1,\alpha}\left(\mathbb{R}^{2}\right)}=0$.
\item At this point, we rapidly decrease $h^{\left(n\right)}$ from $1$
to $0$, achieving $h^{\left(n\right)}\left(t_{n+1}\right)=0$. Again,
during this process, $\overline{k}_{n}\left(t\right)$ remains small,
so, even if $\frac{\mathrm{d}h^{\left(n\right)}}{\mathrm{d}t}\sim\frac{1}{1-t_{n}}$,
as long as $\frac{1}{1-t_{n}}$ does not grow too fast (as pointed
out in Summary \ref{sum:ideas time derivative density}), $\left|\left|\frac{\partial\widetilde{\rho^{\left(n\right)}}^{n}}{\partial t}\left(t,\left(\phi^{\left(n\right)}\right)^{-1}\left(t,\cdot\right)\right)\right|\right|_{C^{1,\alpha}\left(\mathbb{R}^{2}\right)}$
should remain bounded in $n\in\mathbb{N}$.
\item We end with $\overline{k}_{n}\left(1\right)=0$.
\end{enumerate}
Figure \ref{fig:time picture} displays this information schematically.

\begin{figure}[h]
\resizebox{\columnwidth}{!}{%
\begin{tikzpicture}
	\draw [->, line width = 1.5] (0,0) node [below] {$t_n$} --(5*1.618,0) node [right] {$t$};
	\draw [->, line width = 1.5, blue] (0,0) node [left] {$0$} --(0,5) node [above] {$k_n(t)$};
	\draw [->, line width = 1.5, red] (7.8,0) node [below] {$0$}--(7.8,5) node [above] {$h^{(n)}(t)$};
	\draw [line width = 1] (7.4,0.2)--++(0,-0.4) node [below] {$1$};
	\draw [line width = 1] (6.8,0.2)--++(0,-0.4) node [below] {$t_{n+1}$};
	
\draw [blue] plot[smooth, tension=0.7] coordinates {(0,0) (1.1,1.3) (2.4,2.8) (3.7,4) (5,2.9) (6.6,1) (7.4,0)};
\draw [blue, line width=1] (-0.2, 4) node [left] {$k_{\mathrm{max}}$} --++(0.4,0);
\draw [red, line width=1] (7.8-0.2, 4) --++(0.4,0) node [right] {$1$};

\draw [red] (0,0) -- (1,4) -- (5.8,4) -- (6.8,0) -- (7.4,0);
\draw [green!40!black, dashed, line width= 1] (1,0)--++(0,4.5);
\draw [green!40!black, dashed, line width= 1] (5.8,0)--++(0,4.5);
\draw [green!40!black, dashed, line width= 1] (6.8,0)--++(0,4.5);

\draw [green!40!black, <->] (1,4.5)--(5.8,4.5) node [midway, above] {$\frac{\mathrm{d}h^{(n)}}{\mathrm{d}t}=0$};
\draw [green!40!black, <->] (0,4.5)--(1,4.5) node [midway, above] {\footnotesize$k_n\ll1$};
\draw [green!40!black, <->] (5.8,4.5)--(6.8,4.5) node [midway, above] {\footnotesize$k_n\ll1$};
\end{tikzpicture}
}

\caption{\label{fig:time picture}Graphical representation of the evolution
of $k_{n}\left(t\right)$ and $h^{\left(n\right)}\left(t\right)$.}

\end{figure}

\begin{rem}
One reasonable question the reader might ask is why do we wish to
finish with $\overline{k}_{n}\left(1\right)=0$ instead of $\overline{k}_{n}\left(t_{n+1}\right)=0$.
The answer is that, because we expect $\frac{1-t_{n+1}}{1-t_{n}}$
to be small, both choices should work, but the computations will be
a little bit easier if we take $\overline{k}_{n}\left(1\right)=0$.
\end{rem}
Can we actually choose our parameters so that the construction presented
in Figure \ref{fig:time picture} is realized? First of all, notice
that we need $\overline{k}_{n}\left(t\right)$ to grow and then decrease.
By equation \eqref{eq:first version kn}, for this to happen, we require
$\sin\left(a_{n-1}\left(1\right)\Xi_{0}^{\left(n\right)}\left(t\right)\right)$
to increase and, then, decrease. Besides, as $\overline{k}_{n}\left(t_{n}\right)=0$,
we need $\overline{k}_{n}\left(1\right)=\overline{k}_{n}\left(t_{n}\right)$,
i.e.,
\[
\sin\left(a_{n-1}\left(1\right)\Xi_{0}^{\left(n\right)}\left(t_{n}\right)\right)=\sin\left(a_{n-1}\left(1\right)\Xi_{0}^{\left(n\right)}\left(1\right)\right).
\]
Using the change of variables provided in equation \eqref{eq:change of variables JI0 Fn},
we arrive to
\[
\sin\left(F_{n}\left(0\right)\right)=\sin\left(F_{n}\left(B_{n-1}\left(1\right)a_{n-1}\left(1\right)b_{n-1}\left(1\right)\left(1-t_{n}\right)\right)\right).
\]
By Choice \ref{choice:Bnanbn}, we can write
\[
\sin\left(F_{n}\left(0\right)\right)=\sin\left(F_{n}\left(M_{n-1}\left(1-t_{n}\right)\right)\right).
\]
Using Lemma \ref{lem:the good ODE} like in Remark \ref{rem:relation delta M}
provides
\begin{equation}
\frac{1}{\cosh\left(\hat{\hat{t}}_{\max}^{\left(n\right)}\right)}=\frac{1}{\cosh\left(\hat{\hat{t}}_{\max}^{\left(n\right)}-M_{n-1}\left(1-t_{n}\right)\right)},\label{eq:equality of cosh}
\end{equation}
where
\[
\hat{\hat{t}}_{\max}^{\left(n\right)}=\ln\left(\frac{1+\cos\left(a_{n-1}\left(1\right)\Xi_{0}^{\left(n\right)}\left(t_{n}\right)\right)}{\sin\left(a_{n-1}\left(1\right)\Xi_{0}^{\left(n\right)}\left(t_{n}\right)\right)}\right).
\]
Actually, employing Lemma \ref{lem:the good ODE}, we may rewrite
$\hat{\hat{t}}_{\max}^{\left(n\right)}$ a little bit differently:
\[
\sin\left(a_{n-1}\left(1\right)\Xi_{0}^{\left(n\right)}\left(t_{n}\right)\right)=\sin\left(F_{n}\left(0\right)\right)=\frac{1}{\cosh\left(\hat{\hat{t}}_{\max}^{\left(n\right)}\right)}\iff\hat{\hat{t}}_{\max}^{\left(n\right)}=\mathrm{arccosh}\left(\frac{1}{\sin\left(a_{n-1}\left(1\right)\Xi_{0}^{\left(n\right)}\left(t_{n}\right)\right)}\right).
\]
\eqref{eq:equality of cosh} can only be satisfied if
\begin{equation}
M_{n-1}\left(1-t_{n}\right)=2\hat{\hat{t}}_{\max}^{\left(n\right)}=2\mathrm{arccosh}\left(\frac{1}{\sin\left(a_{n-1}\left(1\right)\Xi_{0}^{\left(n\right)}\left(t_{n}\right)\right)}\right).\label{eq:periodicity of the evolution}
\end{equation}
Notice that, by choosing the relation between $M_{n-1}$ and $\hat{\hat{t}}_{\max}^{\left(n\right)}$
this way, we are in the next-to-last case of Remark \ref{rem:relation delta M}
(in other words, our choice is represented by the blue line of figure
\ref{fig:relation delta M}). This means that there is a time $t_{n}^{*}$
when $\sin\left(a_{n-1}\left(1\right)\Xi_{0}^{\left(n\right)}\left(t\right)\right)$
attains its maximum value $\sin\left(a_{n-1}\left(1\right)\Xi_{0}^{\left(n\right)}\left(t_{n}^{*}\right)\right)=1$.
By point 3 of Lemma \ref{lem:the good ODE}, this time must satisfy
the equation
\[
M_{n-1}\left(t_{n}^{*}-t_{n}\right)=\hat{\hat{t}}_{\max}^{\left(n\right)}.
\]
Consequently, by equation \eqref{eq:periodicity of the evolution},
it is exactly
\[
t_{n}^{*}=t_{n}+\frac{1-t_{n}}{2}.
\]

On the other hand, by equation \eqref{eq:first version kn}, taking
into account that $\overline{k}_{n}\left(t_{n}\right)=0$, we obtain
\begin{equation}
\max_{s\in\left[t_{n},1\right]}\overline{k}_{n}\left(s\right)=\overline{k}_{n}\left(t_{n}^{*}\right)=\frac{1}{\ln\left(C\right)\left(\frac{1}{1-\gamma}\right)^{n}}\ln\left(\frac{1}{\sin\left(a_{n-1}\left(1\right)\Xi_{0}^{\left(n\right)}\left(t_{n}\right)\right)}\right),\label{eq:max(kn)}
\end{equation}
since $\sin\left(a_{n-1}\left(1\right)\Xi_{0}^{\left(n\right)}\left(t_{n}^{*}\right)\right)=1$.
This means that $\max_{s\in\left[t_{n},1\right]}\overline{k}_{n}\left(s\right)$
is directly related to $\sin\left(a_{n-1}\left(1\right)\Xi_{0}^{\left(n\right)}\left(t_{n}\right)\right)$.
Indeed,
\[
\sin\left(a_{n-1}\left(1\right)\Xi_{0}^{\left(n\right)}\left(t_{n}\right)\right)=C^{-\left(\max_{s\in\left[t_{n},1\right]}\overline{k}_{n}\left(s\right)\right)\left(\frac{1}{1-\gamma}\right)^{n}}.
\]
Then, equation \eqref{eq:periodicity of the evolution} allows us
to solve for the time scale $1-t_{n}$:
\begin{equation}
1-t_{n}=\frac{2}{M_{n-1}}\mathrm{arccosh}\left(C^{\left(\max_{s\in\left[t_{n},1\right]}\overline{k}_{n}\left(s\right)\right)\left(\frac{1}{1-\gamma}\right)^{n}}\right).\label{eq:time scale}
\end{equation}
For such big numbers, we can approximate $\mathrm{arccosh}\left(x\right)\sim\ln\left(2x\right)$
(see Lemma \ref{lem:arccosh by ln}), which leads to
\[
1-t_{n}\sim\frac{2}{M_{n-1}}\left[\ln\left(C\right)\left(\frac{1}{1-\gamma}\right)^{n}\left(\max_{s\in\left[t_{n},1\right]}\overline{k}_{n}\left(s\right)\right)+\ln\left(2\right)\right].
\]
Since $\left(\frac{1}{1-\gamma}\right)^{n}$ grows in $n\in\mathbb{N}$,
for $n$ big enough, we may neglect $\ln\left(2\right)$ in comparison
to the other summand, which provides
\[
1-t_{n}\sim\frac{2}{M_{n-1}}\ln\left(C\right)\left(\frac{1}{1-\gamma}\right)^{n}\left(\max_{s\in\left[t_{n},1\right]}\overline{k}_{n}\left(s\right)\right).
\]
As $\left(\frac{1}{1-\gamma}\right)^{n}$ grows exponentially and
the time scale $1-t_{n}$ should decrease in $n\in\mathbb{N}$, we
deduce that $M_{n-1}$ should grow, at least, exponentially, which
is consistent with Choice \ref{choice:Bnanbn}. Moreover, if $M_{n-1}$
grows superexponentially, $1-t_{n}$ decreases superexponentially.
Furthermore, by Choice \ref{choice:time (initial)}, we should have
$t_{1}=0$. For this to happen, we would need
\[
1=1-0=1-t_{1}=\frac{2}{M_{0}}\ln\left(C\right)\left(\frac{1}{1-\gamma}\right)\left(\max_{s\in\left[0,1\right]}\overline{k}_{1}\left(s\right)\right).
\]
This can be achieved by taking $M_{0}$ appropriately.

In addition, this choice of parameters presents an interesting situation:
if we choose $\max_{s\in\left[t_{n},1\right]}\overline{k}_{n}\left(s\right)=k_{\max}$
$\forall n\in\mathbb{N}$, where $k_{\max}\in\left(0,1\right)$, under
an appropriate affine time transformation, $\overline{k}_{n}\left(t\right)$
has a well defined pointwise limit when $n\to\infty$. To see this,
recall that, by equation \eqref{eq:first version kn},
\[
\begin{aligned}\overline{k}_{n}\left(t\right) & =\frac{1}{\ln\left(C\right)\left(\frac{1}{1-\gamma}\right)^{n}}\ln\left(\frac{\sin\left(a_{n-1}\left(1\right)\Xi_{0}^{\left(n\right)}\left(t\right)\right)}{\sin\left(a_{n-1}\left(1\right)\Xi_{0}^{\left(n\right)}\left(t_{n}\right)\right)}\right)=\\
 & =\frac{1}{\ln\left(C\right)\left(\frac{1}{1-\gamma}\right)^{n}}\left[\ln\left(\sin\left(a_{n-1}\left(1\right)\Xi_{0}^{\left(n\right)}\left(t\right)\right)\right)+\ln\left(\frac{1}{\sin\left(a_{n-1}\left(1\right)\Xi_{0}^{\left(n\right)}\left(t_{n}\right)\right)}\right)\right].
\end{aligned}
\]
By equation \eqref{eq:max(kn)}, we obtain
\[
\overline{k}_{n}\left(t\right)=k_{\max}+\frac{\ln\left(\sin\left(a_{n-1}\left(1\right)\Xi_{0}^{\left(n\right)}\left(t\right)\right)\right)}{\ln\left(C\right)\left(\frac{1}{1-\gamma}\right)^{n}}.
\]
Resorting to the change of variables \eqref{eq:change of variables JI0 Fn}
and to Choice \ref{choice:Bnanbn}, we arrive to
\[
\overline{k}_{n}\left(t\right)=k_{\max}+\frac{\ln\left(\sin\left(F_{n}\left(M_{n-1}\left(t-t_{n}\right)\right)\right)\right)}{\ln\left(C\right)\left(\frac{1}{1-\gamma}\right)^{n}}.
\]
Using Lemma \ref{lem:the good ODE}, we deduce that
\[
\overline{k}_{n}\left(t\right)=k_{\max}+\frac{1}{\ln\left(C\right)\left(\frac{1}{1-\gamma}\right)^{n}}\ln\left(\frac{1}{\cosh\left(\hat{\hat{t}}_{\max}^{\left(n\right)}-M_{n-1}\left(t-t_{n}\right)\right)}\right).
\]
By equation \eqref{eq:periodicity of the evolution}, we have
\[
\overline{k}_{n}\left(t\right)=k_{\max}+\frac{1}{\ln\left(C\right)\left(\frac{1}{1-\gamma}\right)^{n}}\ln\left(\frac{1}{\cosh\left(\frac{1}{2}M_{n-1}\left(1-t_{n}\right)-M_{n-1}\left(t-t_{n}\right)\right)}\right).
\]
Introducing the change of variables
\begin{equation}
\hat{\hat{t}}=\frac{t-t_{n}}{1-t_{n}}\iff t=t_{n}+\left(1-t_{n}\right)\hat{\hat{t}},\label{eq:change thathat}
\end{equation}
we infer that
\[
\begin{aligned}\overline{k}_{n}\left(t_{n}+\left(1-t_{n}\right)\hat{\hat{t}}\right) & =k_{\max}+\frac{1}{\ln\left(C\right)\left(\frac{1}{1-\gamma}\right)^{n}}\ln\left(\frac{1}{\cosh\left(\frac{1}{2}M_{n-1}\left(1-t_{n}\right)-M_{n-1}\left(1-t_{n}\right)\hat{\hat{t}}\right)}\right)=\\
 & =k_{\max}+\frac{1}{\ln\left(C\right)\left(\frac{1}{1-\gamma}\right)^{n}}\ln\left(\frac{1}{\cosh\left(\frac{1}{2}M_{n-1}\left(1-t_{n}\right)\left(1-2\hat{\hat{t}}\right)\right)}\right).
\end{aligned}
\]
Using equation \eqref{eq:time scale} leads us to
\begin{equation}
\overline{k}_{n}\left(t_{n}+\left(1-t_{n}\right)\hat{\hat{t}}\right)=k_{\max}+\frac{1}{\ln\left(C\right)\left(\frac{1}{1-\gamma}\right)^{n}}\ln\left(\frac{1}{\cosh\left(\mathrm{arccosh}\left(C^{k_{\max}\left(\frac{1}{1-\gamma}\right)^{n}}\right)\left(1-2\hat{\hat{t}}\right)\right)}\right).\label{eq:kn second version exact}
\end{equation}
Now, for big enough $n$, we can approximate 
\[
\mathrm{arccosh}\left(C^{k_{\max}\left(\frac{1}{1-\gamma}\right)^{n}}\right)\sim\ln\left(2C^{k_{\max}\left(\frac{1}{1-\gamma}\right)^{n}}\right)=\ln\left(2\right)+k_{\max}\left(\frac{1}{1-\gamma}\right)^{n}\ln\left(C\right)\sim k_{\max}\left(\frac{1}{1-\gamma}\right)^{n}\ln\left(C\right).
\]
Therefore,
\[
\overline{k}_{n}\left(t_{n}+\left(1-t_{n}\right)\hat{\hat{t}}\right)\sim k_{\max}+\frac{1}{\ln\left(C\right)\left(\frac{1}{1-\gamma}\right)^{n}}\ln\left(\frac{1}{\cosh\left(k_{\max}\left(\frac{1}{1-\gamma}\right)^{n}\ln\left(C\right)\left(1-2\hat{\hat{t}}\right)\right)}\right).
\]
As long as $k_{\max}\left(\frac{1}{1-\gamma}\right)^{n}\left(1-2\hat{\hat{t}}\right)$
is big (which will happen for every $\hat{\hat{t}}\in\left[0,1\right]\setminus\left\{ \frac{1}{2}\right\} $
provided that $n$ is large enough), we can approximate $\cosh\left(x\right)\sim\frac{1}{2}\mathrm{e}^{\left|x\right|}$,
leading to
\begin{equation}
\begin{aligned}\overline{k}_{n}\left(t_{n}+\left(1-t_{n}\right)\hat{\hat{t}}\right) & \sim k_{\max}+\frac{1}{\ln\left(C\right)\left(\frac{1}{1-\gamma}\right)^{n}}\ln\left(\frac{1}{\frac{1}{2}\mathrm{e}^{k_{\max}\left(\frac{1}{1-\gamma}\right)^{n}\ln\left(C\right)\left|1-2\hat{\hat{t}}\right|}}\right)=\\
 & \sim k_{\max}+\underbrace{\frac{1}{\ln\left(C\right)\left(\frac{1}{1-\gamma}\right)^{n}}\ln\left(2\right)}_{\xrightarrow[n\to\infty]{}0}-\frac{k_{\max}\left(\frac{1}{1-\gamma}\right)^{n}\ln\left(C\right)}{\ln\left(C\right)\left(\frac{1}{1-\gamma}\right)^{n}}\left|1-2\hat{\hat{t}}\right|\sim\\
 & \sim k_{\max}-k_{\max}\left|1-2\hat{\hat{t}}\right|=k_{\max}\left(1-\left|1-2\hat{\hat{t}}\right|\right).
\end{aligned}
\label{eq:kn second version approximate}
\end{equation}
Recall that this pointwise convergence works $\forall\hat{\hat{t}}\in\left[0,1\right]\setminus\left\{ \frac{1}{2}\right\} $.
Nevertheless, it is easy to check that both \eqref{eq:kn second version exact}
and \eqref{eq:kn second version approximate} return the same value
for $\hat{\hat{t}}=\frac{1}{2}$, which means that \eqref{eq:kn second version approximate}
is valid (as a limit when $n\to\infty$) $\forall\hat{\hat{t}}\in\left[0,1\right]$.
Figure \ref{fig:limit profile kn} shows the profile of $\overline{k}_{n}\left(t\right)$
given in equation \eqref{eq:kn second version approximate}.

\begin{figure}[h]
\begin{centering}
\includegraphics[width=0.7\linewidth]{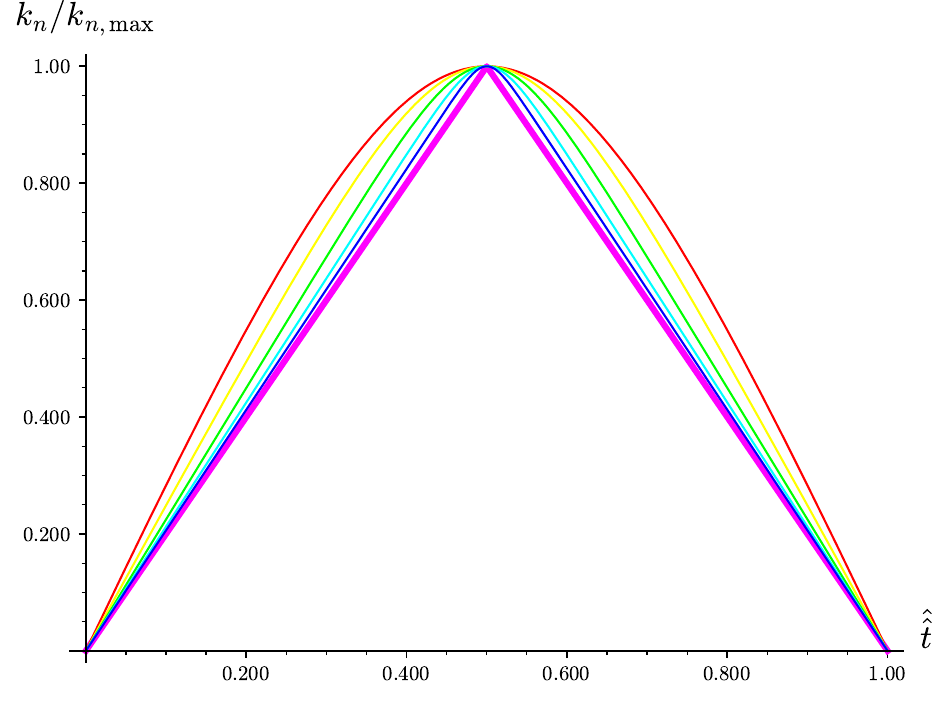}
\par\end{centering}
\caption{\label{fig:limit profile kn}The thick pink line represents the limit
profile of $\overline{k}_{n}\left(t_{n}+\left(1-t_{n}\right)\hat{\hat{t}}\right)$
when $n\to\infty$. The other lines correspond to  $n=1$ (red), $n=2$
(yellow), $n=3$ (green), $n=4$ (light blue) and $n=5$ (dark blue).}

\end{figure}

Another important property of $\overline{k}_{n}\left(t_{n}+\left(1-t_{n}\right)\hat{\hat{t}}\right)$
which is visible in figure \ref{fig:limit profile kn} is that its
convergence to $k_{\max}\left(1-\left|1-2\hat{\hat{t}}\right|\right)$
is monotone in $n\in\mathbb{N}$. In other words, we have the following
Lemma.
\begin{lem}
\label{lem:monotone convergence kn}$\forall n\in\mathbb{N}$ and
$\forall\hat{\hat{t}}\in\left[0,1\right]$, we have
\[
\overline{k}_{n+1}\left(t_{n+1}+\left(1-t_{n+1}\right)\hat{\hat{t}}\right)\le\overline{k}_{n}\left(t_{n}+\left(1-t_{n}\right)\hat{\hat{t}}\right).
\]
\end{lem}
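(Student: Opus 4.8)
The plan is to run everything off the \emph{exact} profile \eqref{eq:kn second version exact} and reduce the claim to one elementary monotonicity fact. Put $\beta:=\frac{1}{1-\gamma}>1$ and
\[
Q_{n}:=\mathrm{arccosh}\left(C^{\,k_{\max}\beta^{n}}\right),\qquad\text{so that}\qquad\cosh Q_{n}=C^{\,k_{\max}\beta^{n}},\qquad\ln\cosh Q_{n}=k_{\max}\beta^{n}\ln C.
\]
Since $C>1$ and $k_{\max}\in\left(0,1\right)$ (Choice \ref{choice:anbn}) we have $Q_{n}>0$ and $Q_{n}$ strictly increasing in $n$. Using that $\cosh$ is even and writing $s:=\left|1-2\hat{\hat{t}}\right|\in\left[0,1\right]$, equation \eqref{eq:kn second version exact} reads
\[
\overline{k}_{n}\left(t_{n}+\left(1-t_{n}\right)\hat{\hat{t}}\right)=k_{\max}-\frac{\ln\cosh\left(Q_{n}s\right)}{\ln\left(C\right)\beta^{n}}=k_{\max}\left(1-\frac{\ln\cosh\left(Q_{n}s\right)}{\ln\cosh Q_{n}}\right).
\]
As $k_{\max}>0$, the assertion $\overline{k}_{n+1}\le\overline{k}_{n}$ is therefore equivalent to $\frac{\ln\cosh\left(Q_{n+1}s\right)}{\ln\cosh Q_{n+1}}\ge\frac{\ln\cosh\left(Q_{n}s\right)}{\ln\cosh Q_{n}}$ for every $s\in\left[0,1\right]$, and since $Q_{n}$ increases with $n$ it suffices to show that, for each fixed $s\in\left[0,1\right]$, the map $Q\mapsto\frac{\ln\cosh\left(Qs\right)}{\ln\cosh Q}$ is non-decreasing on $\left(0,\infty\right)$. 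Differentiating in $Q$, this is equivalent to $g\left(Qs\right)\ge g\left(Q\right)$ for
\[
g\left(t\right):=\frac{t\tanh t}{\ln\cosh t},\qquad t>0,
\]
and, because $Qs\le Q$, the whole lemma follows once $g$ is shown to be non-increasing on $\left(0,\infty\right)$.

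The monotonicity of $g$ --- the only real content --- I would obtain by a ratio-of-integrals argument. From $\ln\cosh t=\int_{0}^{t}\tanh u\,\mathrm{d}u$ and $t\tanh t=\int_{0}^{t}\left(u\tanh u\right)'\mathrm{d}u$ one gets $g\left(t\right)=\bigl(\int_{0}^{t}\left(u\tanh u\right)'\mathrm{d}u\bigr)\big/\bigl(\int_{0}^{t}\tanh u\,\mathrm{d}u\bigr)$, a quotient of integrals of two functions that are positive on $\left(0,\infty\right)$ (indeed $\left(u\tanh u\right)'=\tanh u+\tfrac{u}{\cosh^{2}u}>0$). The elementary principle that $\bigl(\int_{0}^{t}\phi\bigr)\big/\bigl(\int_{0}^{t}h\bigr)$ is non-increasing whenever $\phi,h>0$ on $\left(0,\infty\right)$ and $\phi/h$ is non-increasing --- seen by differentiating and writing the numerator of the derivative as $\int_{0}^{t}h\left(u\right)h\left(t\right)\bigl[\tfrac{\phi\left(t\right)}{h\left(t\right)}-\tfrac{\phi\left(u\right)}{h\left(u\right)}\bigr]\mathrm{d}u\le0$ --- then reduces the task to checking that
\[
\frac{\left(u\tanh u\right)'}{\tanh u}=1+\frac{u}{\cosh u\sinh u}=1+\frac{2u}{\sinh\left(2u\right)}
\]
is non-increasing on $\left(0,\infty\right)$. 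This in turn follows from the classical fact that $v\mapsto v/\sinh v$ is decreasing on $\left(0,\infty\right)$: the numerator of its derivative is $\sinh v-v\cosh v$, which vanishes at $v=0$ and has derivative $-v\sinh v<0$, hence is negative for $v>0$. This completes the reduction.

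Finally, the values of $\hat{\hat{t}}$ with $s\in\left\{ 0,1\right\} $ are handled directly and are consistent with the inequality: for $\hat{\hat{t}}=\tfrac{1}{2}$ one has $s=0$ and both sides equal $k_{\max}$, while for $\hat{\hat{t}}\in\left\{ 0,1\right\} $ one has $s=1$ and both sides equal $0$ (recall $\overline{k}_{n}\left(t_{n}\right)=\overline{k}_{n}\left(1\right)=0$). I expect the main obstacle to be bookkeeping rather than analysis: one must perform the reduction from the exact identity \eqref{eq:kn second version exact} rather than from its asymptotic form \eqref{eq:kn second version approximate}, correctly absorb the sign of $1-2\hat{\hat{t}}$ into the even function $\cosh$, and lean on Choice \ref{choice:anbn} to know $Q_{n}\in\left(0,\infty\right)$ and $Q_{n}$ increasing. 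Once that is in place, the analytic heart --- monotonicity of $g\left(t\right)=t\tanh t/\ln\cosh t$ --- is short and requires no case distinctions.
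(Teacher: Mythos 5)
Your proof is correct, and it takes a genuinely different (and in fact tighter) route than the paper's. The paper argues directly from \eqref{eq:kn second version exact} by a case split on the sign of $1-2\hat{\hat{t}}$: for fixed $\hat{\hat{t}}\neq\frac{1}{2}$ the argument of $\cosh$ grows in absolute value with $n$, so $\ln\left(1/\cosh\left(\dots\right)\right)$ becomes more negative, and it then concludes by observing that the prefactor $\frac{1}{\ln\left(C\right)\left(\frac{1}{1-\gamma}\right)^{n}}$ shrinks as well. Those two monotonicities, however, pull in opposite directions on the product (a more negative factor times a smaller positive prefactor need not be more negative), and it is exactly this competition that your normalization settles: using $\ln\cosh Q_{n}=k_{\max}\left(\frac{1}{1-\gamma}\right)^{n}\ln C$ you rewrite the profile as $k_{\max}\left(1-\frac{\ln\cosh\left(Q_{n}s\right)}{\ln\cosh Q_{n}}\right)$ and reduce the lemma to the monotonicity in $Q$ of $\frac{\ln\cosh\left(Qs\right)}{\ln\cosh Q}$, which you establish through the monotone-ratio (l'H\^opital-type) lemma together with the decrease of $v/\sinh v$; every step of that chain checks out, including the identity $\frac{\left(u\tanh u\right)'}{\tanh u}=1+\frac{2u}{\sinh\left(2u\right)}$ and the boundary cases $s\in\left\{ 0,1\right\}$. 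What the paper's route buys is brevity and no auxiliary calculus lemma; what yours buys is an argument uniform in $\hat{\hat{t}}$ that genuinely resolves the numerator-versus-prefactor competition, so it is the more robust of the two. One cosmetic point: the range of $k_{\max}$ should be cited from Choice \ref{choice:ideal kn} (together with Choice \ref{choice:min requirements on C gamma and kmax}) rather than from Choice \ref{choice:anbn}.
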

\begin{proof}
Take a look at equation \eqref{eq:kn second version exact}:
\[
\overline{k}_{n}\left(t_{n}+\left(1-t_{n}\right)\hat{\hat{t}}\right)=k_{\max}+\frac{1}{\ln\left(C\right)\left(\frac{1}{1-\gamma}\right)^{n}}\ln\left(\frac{1}{\cosh\left(\mathrm{arccosh}\left(C^{k_{\max}\left(\frac{1}{1-\gamma}\right)^{n}}\right)\left(1-2\hat{\hat{t}}\right)\right)}\right).
\]
We shall consider three cases:
\begin{itemize}
\item $0\leq\hat{\hat{t}}<\frac{1}{2}$. Then, $1-2\hat{\hat{t}}>0$. In
this way, the argument of $\cosh$ is always positive and becomes
more positive as $n\in\mathbb{N}$ grows. Since $\cosh$ is increasing
for positive arguments, we deduce that $\cosh\left(\dots\right)$
(for fixed $\hat{\hat{t}}\in\left[0,\frac{1}{2}\right)$) grows in
$n\in\mathbb{N}$. Hence, $\frac{1}{\cosh\left(\dots\right)}$ is
decreasing in $n\in\mathbb{N}$ and, consequently, $\ln\left(\frac{1}{\cosh\left(\dots\right)}\right)$
is also decreasing in $n\in\mathbb{N}$. As $\frac{1}{\ln\left(C\right)\left(\frac{1}{1-\gamma}\right)^{n}}$
shrinks in $n\in\mathbb{N}$ as well, we conclude that $\overline{k}_{n}\left(t_{n}+\left(1-t_{n}\right)\hat{\hat{t}}\right)$
is decreasing in $n\in\mathbb{N}$.
\item $\hat{\hat{t}}=\frac{1}{2}$. In this case, the argument of $\cosh$
is zero and, consequently, the $\ln$ vanishes, leading to
\[
\overline{k}_{n}\left(t_{n}+\left(1-t_{n}\right)\hat{\hat{t}}\right)=k_{\max}\quad\forall n\in\mathbb{N}.
\]
Thus, the result is true for $\hat{\hat{t}}=\frac{1}{2}$.
\item $\frac{1}{2}<\hat{\hat{t}}\leq1$. Then, $1-2\hat{\hat{t}}<0$. In
this manner, the argument of $\cosh$ is always negative and becomes
more negative as $n\in\mathbb{N}$ grows. Since $\cosh$ is decreasing
for negative arguments, we deduce that $\cosh\left(\dots\right)$
(for fixed $\hat{\hat{t}}\in\left(\frac{1}{2},1\right]$) increases
in $n\in\mathbb{N}$. From here, the argument follows like in the
$0\le\hat{\hat{t}}<\frac{1}{2}$ case.
\end{itemize}
\end{proof}
\begin{cor}
\label{cor:ideal kn non negative}$\overline{k}_{n}\left(t\right)\ge0$
$\forall t\in\left[t_{n},1\right]$. 
\end{cor}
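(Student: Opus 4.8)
The plan is to read the sign of $\overline{k}_n$ straight off the explicit identity \eqref{eq:kn second version exact}, which already expresses the ideal model $\overline{k}_n$ in closed form. First I would pass to the normalized time variable of \eqref{eq:change thathat}: writing $\hat{\hat{t}}=\frac{t-t_n}{1-t_n}$, the interval $t\in[t_n,1]$ corresponds precisely to $\hat{\hat{t}}\in[0,1]$, so it suffices to prove $\overline{k}_n\bigl(t_n+(1-t_n)\hat{\hat{t}}\bigr)\ge 0$ for every $\hat{\hat{t}}\in[0,1]$.

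Setting $A_n\coloneqq C^{k_{\max}\left(\frac{1}{1-\gamma}\right)^{n}}$ and using $\ln\frac{1}{\cosh X}=-\ln\cosh X$, identity \eqref{eq:kn second version exact} reads
\[
\overline{k}_n\bigl(t_n+(1-t_n)\hat{\hat{t}}\bigr)=k_{\max}-\frac{1}{\ln(C)\left(\frac{1}{1-\gamma}\right)^{n}}\,\ln\cosh\!\bigl(\mathrm{arccosh}(A_n)\,(1-2\hat{\hat{t}})\bigr).
\]
Since $C>1$ by Choice \ref{choice:anbn}, the factor $\ln(C)\left(\frac{1}{1-\gamma}\right)^{n}$ is positive, so the asserted inequality $\overline{k}_n\ge 0$ is equivalent to $\ln\cosh\!\bigl(\mathrm{arccosh}(A_n)(1-2\hat{\hat{t}})\bigr)\le k_{\max}\ln(C)\left(\frac{1}{1-\gamma}\right)^{n}=\ln A_n$, i.e.\ to $\cosh\!\bigl(\mathrm{arccosh}(A_n)(1-2\hat{\hat{t}})\bigr)\le A_n$. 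This last bound is elementary: for $\hat{\hat{t}}\in[0,1]$ one has $\lvert 1-2\hat{\hat{t}}\rvert\le 1$; since $k_{\max}\in(0,1)$ and $C>1$ we have $A_n>1$, hence $\mathrm{arccosh}(A_n)\ge 0$; and $\cosh$ is even and nondecreasing on $[0,\infty)$. Combining these three facts, $\cosh\!\bigl(\mathrm{arccosh}(A_n)(1-2\hat{\hat{t}})\bigr)=\cosh\!\bigl(\mathrm{arccosh}(A_n)\lvert 1-2\hat{\hat{t}}\rvert\bigr)\le\cosh\!\bigl(\mathrm{arccosh}(A_n)\bigr)=A_n$, which is exactly what is needed. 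Undoing the change of variables \eqref{eq:change thathat} then gives $\overline{k}_n(t)\ge 0$ for all $t\in[t_n,1]$.

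I do not expect a genuine obstacle here: the corollary is just this monotonicity remark about $\cosh$, once \eqref{eq:kn second version exact} is in hand. The only point worth stressing is that \eqref{eq:kn second version exact} is an \emph{exact} identity, so the argument does not rely on any of the large-$n$ approximations used afterwards to pass to \eqref{eq:kn second version approximate}. An alternative route would be to combine Lemma \ref{lem:monotone convergence kn} --- which shows that $n\mapsto\overline{k}_n\bigl(t_n+(1-t_n)\hat{\hat{t}}\bigr)$ is nonincreasing --- with the fact that its pointwise limit $k_{\max}\bigl(1-\lvert 1-2\hat{\hat{t}}\rvert\bigr)$ is nonnegative; but that would additionally require justifying the limit rigorously (via $\mathrm{arccosh}(x)=\ln(x+\sqrt{x^2-1})$ and $\ln\cosh(x)\sim\lvert x\rvert$), so the direct computation above is the shorter path.
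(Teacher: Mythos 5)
Your proof is correct, but it takes a different route from the paper's. The paper deduces the corollary by combining Lemma \ref{lem:monotone convergence kn} (monotone decrease in $n$ at fixed $\hat{\hat{t}}$) with the limiting profile \eqref{eq:kn second version approximate}, bounding $\overline{k}_{n}\bigl(t_{n}+(1-t_{n})\hat{\hat{t}}\bigr)\ge\lim_{m\to\infty}\overline{k}_{m}\bigl(t_{m}+(1-t_{m})\hat{\hat{t}}\bigr)=k_{\max}\bigl(1-\lvert 1-2\hat{\hat{t}}\rvert\bigr)\ge0$ --- exactly the ``alternative route'' you mention and set aside. You instead read the sign directly off the exact identity \eqref{eq:kn second version exact}, reducing the claim to $\cosh\bigl(\mathrm{arccosh}(A_{n})(1-2\hat{\hat{t}})\bigr)\le A_{n}$, which follows from $\lvert1-2\hat{\hat{t}}\rvert\le1$, $A_{n}=C^{k_{\max}\left(\frac{1}{1-\gamma}\right)^{n}}>1$, and the evenness and monotonicity of $\cosh$ on $[0,\infty)$. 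Your argument is more self-contained: it uses only the exact formula and elementary properties of $\cosh$, whereas the paper's version leans on the pointwise limit \eqref{eq:kn second version approximate}, which was obtained through asymptotic approximations (though the monotonicity lemma itself is proved from the exact identity, so the paper's chain is also sound). What the paper's route buys is economy within its own narrative --- the monotone convergence and the limit profile were already established for other purposes, so the corollary falls out in one line; your route buys independence from the large-$n$ asymptotics. Both are valid, and your fairness in flagging the limit-justification issue for the alternative route is well placed.
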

\begin{proof}
Combining Lemma \ref{lem:monotone convergence kn} and equation \eqref{eq:kn second version approximate}
we can easily deduce that, $\forall n\in\mathbb{N}$ and $\forall\hat{\hat{t}}\in\left[0,1\right]$,
\[
\overline{k}_{n}\left(t_{n}+\left(1-t_{n}\right)\hat{\hat{t}}\right)\ge\lim_{m\to\infty}\overline{k}_{m}\left(t_{m}+\left(1-t_{m}\right)\hat{\hat{t}}\right)=k_{\max}\left(1-\left|1-2\hat{\hat{t}}\right|\right)\ge0.
\]
Recalling the change of variables given in \eqref{eq:change thathat},
we arrive to the statement.
\end{proof}
In this way, we have shown that $\overline{k}_{n}\left(t\right)$
actually behaves as portrayed in figure \ref{fig:time picture}. Now,
let us see that we can also take $h^{\left(n\right)}$ in agreement
with the profile given in that picture. The simplest way to do it
is to choose
\begin{equation}
h^{\left(n\right)}\left(t\right)=\left\{ \begin{matrix}0 &  & t\le t_{n},\\
\frac{t-t_{n}}{\left(1-t_{n}\right)\zeta} &  & t_{n}\le t\le t_{n}+\left(1-t_{n}\right)\zeta,\\
1 &  & t_{n}+\left(1-t_{n}\right)\zeta\le t\le t_{n+1}-\left(1-t_{n}\right)\zeta,\\
1-\frac{t-t_{n+1}+\left(1-t_{n}\right)\zeta}{\left(1-t_{n}\right)\zeta} &  & t_{n+1}-\left(1-t_{n}\right)\zeta\le t\le t_{n+1},\\
0 &  & t_{n+1}\le t\le1,
\end{matrix}\right.\label{eq:def hn first way}
\end{equation}
where $0<\zeta<\frac{1}{4}$ is a parameter that controls how fast
$h^{\left(n\right)}$ changes from $0$ to $1$ and viceversa. Notice
that, actually, $\zeta$ must be smaller than a number that lies a
little to the left of $\frac{1}{2}$ for the definition above to make
sense, i.e., for the time intervals no to overlap. As we will take
$\zeta>0$ very small, \eqref{eq:def hn first way} will always be
well-defined. Unfortunately, this choice of $h^{\left(n\right)}$
makes it continuous but not differentiable in time. We can make $h^{\left(n\right)}\left(t\right)$
of class $C^{\infty}$ in time if we, instead, take
\begin{equation}
\frac{\mathrm{d}h_{\varepsilon}^{\left(n\right)}}{\mathrm{d}t}\left(t\right)=\left\{ \begin{matrix}0 &  & t\le t_{n},\\
\frac{1}{\left(1-t_{n}\right)\zeta}\vartheta\left(\frac{t-t_{n}}{\varepsilon\left(1-t_{n}\right)\zeta}\right)\vartheta\left(-\frac{t-t_{n}-\left(1-t_{n}\right)\zeta}{\varepsilon\left(1-t_{n}\right)\zeta}\right) &  & t_{n}\le t\le t_{n}+\left(1-t_{n}\right)\zeta,\\
0 &  & t_{n}+\left(1-t_{n}\right)\zeta\le t\le t_{n+1}-\left(1-t_{n}\right)\zeta,\\
-\frac{1}{\left(1-t_{n}\right)\zeta}\vartheta\left(\frac{t-t_{n+1}+\left(1-t_{n}\right)\zeta}{\varepsilon\left(1-t_{n}\right)\zeta}\right)\vartheta\left(-\frac{t-t_{n+1}}{\varepsilon\left(1-t_{n}\right)\zeta}\right) &  & t_{n+1}-\left(1-t_{n}\right)\zeta\le t\le t_{n+1},\\
0 &  & t_{n+1}\le t\le1,
\end{matrix}\right.\label{eq:def dhndt second way}
\end{equation}
where $\vartheta:\mathbb{R}\to\left[0,1\right]$ is a $C^{\infty}$
transition function such that $\left.\vartheta\right|_{\left(-\infty,0\right]}\equiv0$
and $\left.\vartheta\right|_{\left[1,\infty\right)}\equiv1$ and $\varepsilon>0$
is a parameter that controls in how much time the sharp corners of
\eqref{eq:def hn first way} are smoothen out. One option is to choose
\[
\vartheta\left(x\right)=\frac{\mathrm{e}^{-\frac{1}{x}}\chi_{\left\{ x>0\right\} }\left(x\right)}{\mathrm{e}^{-\frac{1}{x}}\chi_{\left\{ x>0\right\} }\left(x\right)+\mathrm{e}^{-\frac{1}{1-x}}\chi_{\left\{ x<1\right\} }\left(x\right)}\quad\forall x\in\mathbb{R}.
\]
Integrating \eqref{eq:def dhndt second way} in time, we get that
$\forall t\in\left[t_{n}+\left(1-t_{n}\right)\zeta,t_{n+1}-\left(1-t_{n}\right)\zeta\right]$,
\begin{equation}
\begin{aligned}h_{\varepsilon}^{\left(n\right)}\left(t\right) & =h_{\varepsilon}\left(t_{n}+\left(1-t_{n}\right)\zeta\right)=\int_{t_{n}}^{t_{n}+\left(1-t_{n}\right)\zeta}\frac{1}{\left(1-t_{n}\right)\zeta}\overbrace{\vartheta\left(\frac{s-t_{n}}{\varepsilon\left(1-t_{n}\right)\zeta}\right)}^{\le1}\overbrace{\vartheta\left(-\frac{s-t_{n}-\left(1-t_{n}\right)\zeta}{\varepsilon\left(1-t_{n}\right)\zeta}\right)}^{\le1}\mathrm{d}s\le\\
 & \le\int_{t_{n}}^{t_{n}+\left(1-t_{n}\right)\zeta}\frac{1}{\left(1-t_{n}\right)\zeta}\mathrm{d}s=1,\\
h_{\varepsilon}^{\left(n\right)}\left(t\right) & =\int_{t_{n}}^{t_{n}+\left(1-t_{n}\right)\zeta}\overbrace{\frac{1}{\left(1-t_{n}\right)\zeta}\vartheta\left(\frac{s-t_{n}}{\varepsilon\left(1-t_{n}\right)\zeta}\right)\vartheta\left(-\frac{s-t_{n}-\left(1-t_{n}\right)\zeta}{\varepsilon\left(1-t_{n}\right)\zeta}\right)}^{\ge0}\mathrm{d}s\ge\\
 & \ge\int_{t_{n}+\left(1-t_{n}\right)\zeta\varepsilon}^{t_{n}+\left(1-t_{n}\right)\zeta-\left(1-t_{n}\right)\zeta\varepsilon}\frac{1}{\left(1-t_{n}\right)\zeta}\underbrace{\vartheta\left(\frac{s-t_{n}}{\varepsilon\left(1-t_{n}\right)\zeta}\right)}_{=1}\underbrace{\vartheta\left(-\frac{s-t_{n}-\left(1-t_{n}\right)\zeta}{\varepsilon\left(1-t_{n}\right)\zeta}\right)}_{=1}\mathrm{d}s=\\
 & \ge\int_{t_{n}+\left(1-t_{n}\right)\zeta\varepsilon}^{t_{n}+\left(1-t_{n}\right)\zeta-\left(1-t_{n}\right)\zeta\varepsilon}\frac{1}{\left(1-t_{n}\right)\zeta}\mathrm{d}s=1-2\varepsilon.
\end{aligned}
\label{eq:h_eps near one}
\end{equation}
Furthermore, by the symmetry behind definition \eqref{eq:def dhndt second way},
it is easy to see that
\begin{equation}
h_{\varepsilon}^{\left(n\right)}\left(t\right)=0\quad\forall t\in\left[t_{n+1},1\right].\label{eq:h_eps after tn+1}
\end{equation}

Thereby, it is indeed possible to reproduce the picture that appears
in figure \ref{fig:time picture}, both for $\overline{k}_{n}\left(t\right)$
and for $h^{\left(n\right)}\left(t\right)$. Next, we apply what we
have learned throughout this subsection to state a series of Choices
whose purpose is to select our parameters in such a way that the desired
construction is realized.
\begin{choice}
\label{choice:time picture} $\forall n\in\mathbb{N}$, we choose
$k_{n}\left(1\right)=\overline{k}_{n}\left(1\right)=0$ and $h^{\left(n\right)}\left(t\right)=h_{\varepsilon}^{\left(n\right)}\left(t\right)$
as given by equation \eqref{eq:def dhndt second way} with $0<\varepsilon<\frac{1}{4}$
and $0<\zeta<\frac{1}{4}$. The precise values of $\varepsilon$ and
$\zeta$ will be determined later.
\end{choice}
\begin{rem}
Notice that, thanks to equation \eqref{eq:h_eps after tn+1}, Choice
\ref{choice:time picture} is perfectly compatible with Choices \ref{choice:amplitude density}
and \ref{choice:no two simultanous densities}.
\end{rem}
\begin{rem}
\label{rem:effect of choice of kn(t) on an(t) and bn(t)}Assuming
that $k_{n}\left(t\right)\sim\overline{k}_{n}\left(t\right)$, if
we choose $\overline{k}_{n}\left(t\right)$ as portrayed by equation
\eqref{eq:kn second version approximate}, according to Choice \ref{choice:anbn},
$b_{n}\left(t\right)$ will be increasing in $t$ for $\hat{\hat{t}}\in\left[0,\frac{1}{2}\right]$
and decreasing in $t$ for $\hat{\hat{t}}\in\left[\frac{1}{2},1\right]$.
Undoing the change of variables \eqref{eq:change of variables JI0 Fn},
we obtain that $b_{n}\left(t\right)$ will be increasing in $t$ for
$t\in\left[t_{n},t_{n}+\frac{1-t_{n}}{2}\right]$ and decreasing in
$t$ for $t\in\left[t_{n}+\frac{1-t_{n}}{2},1\right]$. $a_{n}\left(t\right)$
has the exactly opposite behavior.
\end{rem}
\begin{choice}
\label{choice:ideal kn}We choose
\[
\max_{s\in\left[t_{n},1\right]}\overline{k}_{n}\left(s\right)=k_{\max}\quad\forall n\in\mathbb{N},
\]
where $0<k_{\max}<1$ and $\overline{k}_{n}\left(t\right)$ is given
by equation \eqref{eq:kn second version exact}.
\end{choice}
\begin{choice}
\label{choice:wnmax and Ji(tn)}We choose $\sin\left(a_{n-1}\left(1\right)\Xi^{\left(n\right)}\left(t_{n}\right)\right)$
so that
\[
M_{n-1}\left(1-t_{n}\right)=2\hat{\hat{t}}_{\max}^{\left(n\right)}=2\mathrm{arccosh}\left(\frac{1}{\sin\left(a_{n-1}\left(1\right)\Xi^{\left(n\right)}\left(t_{n}\right)\right)}\right),
\]
as dictated by equation \eqref{eq:periodicity of the evolution}.
Furthermore, in order to avoid the ambiguity of the arcsin function,
we select $\Xi^{\left(n\right)}\left(t_{n}\right)$ so that $0<a_{n-1}\left(1\right)\Xi^{\left(n\right)}\left(t_{n}\right)<\frac{\pi}{2}$.
\end{choice}

\subsection{\label{subsec:time convergence of bn(t)}Time convergence of $b_{n}\left(t\right)$}

After subsection \ref{subsec:completion of the toy model}, we have
already set the structure (although not the value) of most of the
parameters at our disposal. With ``structure'' we mean how a sequence
grows, what ODE does a time dependent parameter satisfy, etc. However,
there is one notorious exception: we have not chosen how $M_{n}$
(given in Choice \ref{choice:Bnanbn}) grows (we still have to decide
if we give it an exponential or superexponential growth) or, equivalently,
(thanks to equation \eqref{eq:time scale}), we have not chosen how
$1-t_{n}$ decreases. Nonetheless, we have seen in Summary \ref{sum:ideas time derivative density}
that an exponential growth might seem more advantageous. Unfortunately,
we will see in this section that exponential growth impedes being
able to consider the parameters of past layers as constants in time,
an assumption we have used multiple times throughout our construction.

By the arguments exposed in equation \eqref{eq:kn second version approximate},
assuming that $k_{n}\left(t\right)$ behaves like its ideal model
$\overline{k}_{n}\left(t\right)$, we expect to have
\[
k_{n}\left(t_{n}+\left(1-t_{n}\right)\hat{\hat{t}}\right)\xrightarrow[n\to\infty]{}k_{\max}\left(1-\left|1-2\hat{\hat{t}}\right|\right)\quad\forall\hat{\hat{t}}\in\left[0,1\right].
\]
As the only obstacle we found when taking the limit $n\to\infty$
to get from \eqref{eq:kn second version exact} to \eqref{eq:kn second version approximate}
was for $\hat{\hat{t}}=\frac{1}{2}$, we expect to have uniform convergence
far from $\hat{\hat{t}}=\frac{1}{2}$. This means that we expect
\[
k_{n}\left(t\right)\overset{n\to\infty}{\sim}k_{\max}\left(1-\left|1-2\frac{t-t_{n}}{1-t_{n}}\right|\right)
\]
uniformly in time near $t=1$. Near $t=1$, the contents of the absolute
value are always negative. In this way, we can write
\[
k_{n}\left(t\right)\overset{n\to\infty}{\sim}k_{\max}\left(1+1-2\frac{t-t_{n}}{1-t_{n}}\right)=2k_{\max}\left(1-\frac{t-t_{n}}{1-t_{n}}\right)=2k_{\max}\frac{1-t}{1-t_{n}}.
\]
We want $k_{n}\left(t\right)$ to be very close to zero when we start
the next layer, i.e., we want $k_{n}\left(t\right)\approx0$ $\forall t\in\left[t_{n+1},t\right]$.
Notice that we can bound
\[
k_{n}\left(t\right)=2k_{\max}\frac{1-t}{1-t_{n}}\le2k_{\max}\frac{1-t_{n+1}}{1-t_{n}}\quad\forall t\in\left[t_{n+1},1\right].
\]
This means that we should study
\begin{equation}
k_{n}\left(t_{n+1}\right)=2k_{\max}\frac{1-t_{n+1}}{1-t_{n}}.\label{eq:kn(tn+1)}
\end{equation}
If $1-t_{n}$ decreases exponentially, say, $1-t_{n}=T^{n}$ with
$T<1$, we have
\begin{equation}
k_{n}\left(t_{n+1}\right)=2k_{\max}\frac{T^{n+1}}{T^{n}}=2k_{\max}T,\label{eq:bound kn tn+1 exponential}
\end{equation}
which is a bound for $k_{n}\left(t_{n+1}\right)$ that is uniform
in $n\in\mathbb{N}$. As we will see, this is a problem. We wish for
$\left|\frac{b_{n}\left(t\right)}{b_{n}\left(1\right)}-1\right|$
to be small $\forall t\in\left[t_{n+1},1\right]$. Using Choices \ref{choice:anbn}
and \ref{choice:time picture}, we obtain
\begin{equation}
\frac{b_{n}\left(t\right)}{b_{n}\left(1\right)}=\frac{C^{\left(1+k_{n}\left(t\right)\right)\left(\frac{1}{1-\gamma}\right)^{n}}}{C^{\left(1+k_{n}\left(1\right)\right)\left(\frac{1}{1-\gamma}\right)^{n}}}=C^{k_{n}\left(t\right)\left(\frac{1}{1-\gamma}\right)^{n}}.\label{eq:quotient bn}
\end{equation}
Even if we know that $k_{n}\left(t\right)\le2k_{\max}T$ (see equation
\eqref{eq:bound kn tn+1 exponential}), the bound
\[
\frac{b_{n}\left(t\right)}{b_{n}\left(1\right)}\le C^{2k_{\max}T\left(\frac{1}{1-\gamma}\right)^{n}}
\]
is of no use for us because it is not uniform in $n\in\mathbb{N}$.
In other words, with this choice of $1-t_{n}$ we cannot make $\left|\frac{b_{n}\left(t\right)}{b_{n}\left(1\right)}-1\right|$
small $\forall n\in\mathbb{N}$.

On the other hand, if we choose $1-t_{n}=C^{-\delta\left(\frac{1}{1-\gamma}\right)^{n}}$
(with $\delta>0$ small)\footnote{We will actually choose $1-t_{n}$ a little differently, but, for
the purposes of this argument, this expression suffices to convey
what we wish to explain.}, coming back to equation \eqref{eq:kn(tn+1)}, we get to
\[
\begin{aligned}k_{n}\left(t_{n+1}\right) & =2k_{\max}\frac{C^{-\delta\left(\frac{1}{1-\gamma}\right)^{n+1}}}{C^{-\delta\left(\frac{1}{1-\gamma}\right)^{n}}}=2k_{\max}C^{-\delta\left(\frac{1}{1-\gamma}\right)^{n+1}+\delta\left(\frac{1}{1-\gamma}\right)^{n}}=\\
 & =2k_{\max}C^{-\delta\left(\frac{1}{1-\gamma}\right)^{n+1}\left[1-\left(1-\gamma\right)\right]}=2k_{\max}C^{-\delta\gamma\left(\frac{1}{1-\gamma}\right)^{n+1}}.
\end{aligned}
\]
In this case, the converge of $k_{n}\left(t_{n+1}\right)$ improves
superexponentially with $n\in\mathbb{N}$. Substituting in equation
\eqref{eq:quotient bn} leads to
\[
\frac{b_{n}\left(t\right)}{b_{n}\left(1\right)}\le\exp\left(\ln\left(C\right)2k_{\max}C^{-\delta\gamma\left(\frac{1}{1-\gamma}\right)^{n+1}}\left(\frac{1}{1-\gamma}\right)^{n}\right)\quad\forall t\in\left[t_{n+1},1\right].
\]
Since the superexponential decrease compensates the growth of $\left(\frac{1}{1-\gamma}\right)^{n}$,
we obtain that the approximation $b_{n}\left(t\right)\sim b_{n}\left(1\right)$
actually improves with $n\in\mathbb{N}$.

\begin{choice}
\label{choice:Mn}Taking all the above into account, we shall take
\[
M_{n}\coloneqq YC^{\delta\left(\frac{1}{1-\gamma}\right)^{n}}\quad\forall n\in\mathbb{N},
\]
where $\delta>0$ is ``small'' and $Y>0$. Under this choice (see
equation \eqref{eq:time scale}), the time scale $1-t_{n}$ becomes
\[
1-t_{n}=\frac{1}{Y}C^{-\delta\left(\frac{1}{1-\gamma}\right)^{n-1}}\mathrm{arccosh}\left(C^{k_{\max}\left(\frac{1}{1-\gamma}\right)^{n}}\right),
\]
where $Y$ is chosen so that $t_{1}=0$. Note: this $Y$ depends on
$C$ and $\delta$.
\end{choice}
\begin{rem}
\label{rem:initial condition}Under Choices \ref{choice:ideal kn},
\ref{choice:wnmax and Ji(tn)} and \ref{choice:Mn}, equation \eqref{eq:time scale}
is satisfied and becomes
\[
\sin\left(a_{n-1}\left(1\right)\Xi^{\left(n\right)}\left(t_{n}\right)\right)=C^{-k_{\max}\left(\frac{1}{1-\gamma}\right)^{n}}.
\]
Moreover, as $0<a_{n-1}\left(1\right)\Xi^{\left(n\right)}\left(t_{n}\right)<\frac{\pi}{2}$
by Choice \ref{choice:wnmax and Ji(tn)}, we may assure that
\[
a_{n-1}\left(1\right)\Xi^{\left(n\right)}\left(t_{n}\right)=\arcsin\left(C^{-k_{\max}\left(\frac{1}{1-\gamma}\right)^{n}}\right).
\]
\end{rem}

\subsection{Summary of the construction}

Our solution is given by
\[
\begin{aligned}\psi\left(t,x\right) & =\sum_{n=1}^{\infty}\psi^{\left(n\right)}\left(t,x\right),\\
\widetilde{\psi^{\left(n\right)}}^{n}\left(t,x\right) & =B_{n}\left(t\right)\varphi\left(\lambda_{n}x_{1}\right)\varphi\left(\lambda_{n}x_{2}\right)\sin\left(x_{1}\right)\sin\left(x_{2}\right),\\
\rho\left(t,x\right) & =\sum_{n=1}^{\infty}\rho^{\left(n\right)}\left(t,x\right),\\
\widetilde{\rho^{\left(n\right)}}^{n}\left(t,x\right) & =-\frac{1}{b_{n}\left(t\right)}\frac{\mathrm{d}}{\mathrm{d}t}\left[B_{n}\left(t\right)\left(a_{n}\left(t\right)^{2}+b_{n}\left(t\right)^{2}\right)\right]\varphi\left(\lambda_{n}x_{1}\right)\varphi\left(\lambda_{n}x_{2}\right)\sin\left(x_{1}\right)\cos\left(x_{2}\right),
\end{aligned}
\]
where
\begin{enumerate}
\item $\varphi\in C_{c}^{\infty}\left(\mathbb{R}\right)$ is any compactly
supported even smooth function that satisfies $\varphi\equiv1$ in
the interval $\left[-8\pi,8\pi\right]$ and $\varphi\equiv0$ in $\mathbb{R}\setminus\left[-16\pi,16\pi\right]$.
\item $\lambda_{n}=C^{-\Lambda\left(\frac{1}{1-\gamma}\right)^{n}}$ as
given by Choice \ref{choice:lambdan}. We still do not know the value
of $\Lambda$.
\item $\widetilde{\psi^{\left(n\right)}}^{n}\left(t,x\right)=\psi^{\left(n\right)}\left(t,\phi^{\left(n\right)}\left(t,x\right)\right)$,
where $\phi^{\left(n\right)}\left(t,x\right)$ is the change of variables
given in Choice \ref{choice:phin}:
\[
\phi^{\left(n\right)}\left(t,x\right)=\phi^{\left(n\right)}\left(t,0\right)+\left(\frac{1}{a_{n}\left(t\right)}x_{1},\frac{1}{b_{n}\left(t\right)}x_{2}\right).
\]
\item The temporal dynamics of $a_{n}\left(t\right)$, $b_{n}\left(t\right)$,
$B_{n}\left(t\right)$ and $\phi^{\left(n\right)}\left(t,0\right)$
are given by Choice \ref{choice:anbncn} and Proposition \ref{prop:relation between anbn and jacobian},
i.e., $\forall t\in\left[t_{n},1\right]$,
\[
\begin{alignedat}{1}\phi_{2}^{\left(n\right)}\left(t,0\right) & =0,\\
\frac{\partial\phi_{1}^{\left(n\right)}}{\partial t}\left(t,0\right) & =\sum_{m=1}^{n-1}B_{m}\left(t\right)b_{m}\left(t\right)\sin\left(a_{m}\left(t\right)\left(\phi_{1}^{\left(n\right)}\left(t,0\right)-\phi_{1}^{\left(m\right)}\left(t,0\right)\right)\right),\\
\frac{\mathrm{d}}{\mathrm{d}t}\left(\ln\left(b_{n}\left(t\right)\right)\right) & =\sum_{m=1}^{n-1}B_{m}\left(t\right)a_{m}\left(t\right)b_{m}\left(t\right)\cos\left(a_{m}\left(t\right)\left(\phi_{1}^{\left(n\right)}\left(t,0\right)-\phi_{1}^{\left(m\right)}\left(t,0\right)\right)\right),\\
\frac{\mathrm{d}}{\mathrm{d}t}\left(a_{n}\left(t\right)b_{n}\left(t\right)\right) & =0.
\end{alignedat}
\]
$\phi_{1}^{\left(n\right)}\left(t,0\right)$ evolves in time like
a weighted sum of inverted degenerate half-pendula (see equations
\eqref{eq:phi_1 weighted sum of half-pendula}, \eqref{eq:ODE JI0},
\eqref{eq:a lot of sines}, and Choices \ref{choice:wnmax and Ji(tn)},
\ref{choice:Mn}):
\[
\phi_{1}^{\left(n\right)}\left(t,0\right)\sim\sum_{m=1}^{n}\Xi_{0}^{\left(m\right)}\left(t\right)\sim\sum_{m=1}^{n}\overbrace{\frac{1}{a_{m}\left(1\right)}}^{\text{weight}}\overbrace{\left(a_{m}\left(1\right)\Xi_{0}^{\left(m\right)}\left(t\right)\right)}^{{\footnotesize \begin{matrix}\text{inverted degenerate}\\
\text{half-pendulum}
\end{matrix}}},
\]
where
\[
\begin{aligned}\frac{\mathrm{d}\Xi_{0}^{\left(n\right)}}{\mathrm{d}t}\left(t\right) & =B_{n-1}\left(1\right)b_{n-1}\left(1\right)\sin\left(a_{n-1}\left(1\right)\Xi_{0}^{\left(n\right)}\left(t\right)\right)\quad\forall t\in\left[t_{n},1\right],\\
\sin\left(a_{n-1}\left(1\right)\Xi_{0}^{\left(n\right)}\left(t_{n}+\left(1-t_{n}\right)\hat{\hat{t}}\right)\right) & =\frac{1}{\cosh\left(\mathrm{arccosh}\left(C^{k_{\max}\left(\frac{1}{1-\gamma}\right)^{n}}\right)\left(1-2\hat{\hat{t}}\right)\right)}\quad\forall\hat{\hat{t}}\in\left[0,1\right]
\end{aligned}
\]
and $\sin\left(a_{n-1}\left(1\right)\Xi_{0}^{\left(n\right)}\left(t_{n}+\left(1-t_{n}\right)\hat{\hat{t}}\right)\right)$
behaves like shown in figure \ref{fig:graph ideal model}.
\item The time scales $\left(t_{n}\right)_{n\in\mathbb{N}}$ are given by
Choices \ref{choice:time (initial)} and \ref{choice:Mn}:
\[
1-t_{n}=\frac{1}{Y}C^{-\delta\left(\frac{1}{1-\gamma}\right)^{n-1}}\mathrm{arccosh}\left(C^{k_{\max}\left(\frac{1}{1-\gamma}\right)^{n}}\right),
\]
where we have not fixed the value of $Y$, $\delta$ and $k_{\max}$
yet. Our construction corresponds to the blue line of figure \ref{fig:relation delta M}.
\item The initial conditions for $a_{n}\left(t\right)$ and $b_{n}\left(t\right)$
are specified through Choices \ref{choice:anbn} and \ref{choice:time picture}:
\[
b_{n}\left(t\right)=C^{\left(1+k_{n}\left(t\right)\right)\left(\frac{1}{1-\gamma}\right)^{n}},\quad a_{n}\left(t\right)=C^{\left(1-k_{n}\left(t\right)\right)\left(\frac{1}{1-\gamma}\right)^{n}},\quad k_{n}\left(1\right)=0,
\]
were $C$ is a parameter we still have not assigned a value to. By
equation \eqref{eq:kn second version approximate},
\[
k_{n}\left(t_{n}+\left(1-t_{n}\right)\hat{\hat{t}}\right)\overset{n\to\infty}{\sim}k_{\max}\left(1-\left|1-2\hat{\hat{t}}\right|\right)\quad\forall\hat{\hat{t}}\in\left[0,1\right],
\]
which is displayed graphically in figure \ref{fig:limit profile kn}.
\item The initial conditions $\phi^{\left(n\right)}\left(t_{n},0\right)=c^{\left(n\right)}$
are given indirectly by Proposition \ref{prop:relation between anbn and jacobian}
and Remark \ref{rem:initial condition}:
\[
\begin{aligned}a_{n-1}\left(1\right)\Xi^{\left(n\right)}\left(t_{n}\right) & =\arcsin\left(C^{-k_{\max}\left(\frac{1}{1-\gamma}\right)^{n}}\right),\\
\Xi^{\left(n\right)}\left(t\right) & =\phi_{1}^{\left(n\right)}\left(t,0\right)-\phi_{1}^{\left(n-1\right)}\left(t,0\right),\\
\phi_{2}^{\left(n\right)}\left(t,0\right) & \equiv0,
\end{aligned}
\]
where we take $\phi^{\left(0\right)}\left(t,0\right)\equiv0$.
\item $B_{n}\left(t\right)$ is given by Choice \ref{choice:amplitude density},
equation \eqref{eq:relation Mn and zn} and Choice \ref{choice:time picture}:
\[
B_{n}\left(t\right)=\frac{2M_{n}}{a_{n}\left(t\right)^{2}+b_{n}\left(t\right)^{2}}\frac{\int_{t_{n}}^{t}h^{\left(n\right)}\left(s\right)b_{n}\left(s\right)\mathrm{d}s}{\int_{t_{n}}^{1}h^{\left(n\right)}\left(s\right)b_{n}\left(s\right)\mathrm{d}s}.
\]
Here, $h^{\left(n\right)}\left(s\right)$ is a function given by Choice
\ref{choice:time picture} and equation \eqref{eq:def dhndt second way}
(see figure \ref{fig:time picture} for a schematic representation).
We still have to choose values for $\varepsilon$ and $\zeta$.
\item $M_{n}$ is given by Choice \ref{choice:Mn}:
\[
M_{n}\coloneqq YC^{\delta\left(\frac{1}{1-\gamma}\right)^{n}}.
\]
\end{enumerate}
Notice that, actually, because of how $B_{n}\left(t\right)$ and $t_{n}$
have been defined, for every $t\in\left[0,1\right)$, only a finite
number of summands in $\psi\left(t,\infty\right)=\sum_{n=1}^{\infty}\psi^{\left(n\right)}\left(t,x\right)$
are non zero. Thereby, for fixed $t<1$, this is always a finite sum,
i.e., the intrinsic limit associated to the series is fictitious.
In the case of the density $\rho\left(t,x\right)=\sum_{n=1}^{\infty}\rho^{\left(n\right)}\left(t,x\right)$,
because of Choice \ref{choice:no two simultanous densities}, only
one summand is non-zero, i.e., not only is the sum finite, it actually
encompasses one single element.

\section{\label{sec:time evolution}Time evolution}

The main objective of this section is to prove that the toy models
introduced for $\Xi^{\left(n\right)}\left(t\right)$ and $k_{n}\left(t\right)$
are close to their real counterparts, i.e., we will prove that $\Xi^{\left(n\right)}\left(t\right)$
converges to $\Xi_{0}^{\left(n\right)}\left(t\right)$ and that $k_{n}\left(t\right)$
converges to $\overline{k}_{n}\left(t\right)$. Moreover, we will
see that the assumption that appears in Proposition \ref{prop:relation between anbn and jacobian}
is superfluous.

In multiple results of this section and the ones that follow, we will
need some assumptions on the value of the parameters $C$, $\gamma$
and $k_{\max}$. In order not to repeat them in every proposition,
we state those requirements now in a Choice.
\begin{choice}
\label{choice:min requirements on C gamma and kmax}We will choose
$C$, $\gamma$ and $k_{\max}$ so that $C>2$, $\gamma\in\left(\frac{1}{2},1\right)$
and $k_{\max}\ge\frac{1}{100}$.
\end{choice}

\subsection{\label{subsec:time convergence}Time convergence of the parameters}

Before undertaking the main goal of this subsection, we will prove
rigorously that, thanks to Choice \ref{choice:Mn}, we can treat the
parameters $a_{n}\left(t\right)$, $b_{n}\left(t\right)$ and $B_{n}\left(t\right)$
of past layers as constant in time. To do this, we will need three
Lemmas. Lemma \ref{lem:estimate sum superexponential} basically tells
us that the sum of a superexponential sequence has the same order
of its last term, whereas Lemma \ref{lem:arccosh by ln} states that
$\mathrm{arccosh}\left(x\right)\sim\ln\left(2x\right)$ for big $x$.
On the other hand, Lemma \ref{lem:exponetial superexponential bound}
says that the product of an exponential increasing sequence times
a superexponential decreasing sequence is bounded by the superexponential
sequence with a slight change in the exponent.
\begin{lem}
\label{lem:estimate sum superexponential}Let $\gamma\in\left(\frac{1}{2},1\right)$,
$\delta>0$, $C>2$ and $n\in\mathbb{N}$. We have
\[
\sum_{m=1}^{n-1}C^{\delta\left(\frac{1}{1-\gamma}\right)^{m}}\lesssim_{\delta}C^{\delta\left(\frac{1}{1-\gamma}\right)^{n-1}}.
\]
\end{lem}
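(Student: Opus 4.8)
The plan is to factor out the largest term $C^{\delta\left(\frac{1}{1-\gamma}\right)^{n-1}}$ and show that the remaining sum is bounded by a constant depending only on $\delta$. Writing $q \coloneqq \frac{1}{1-\gamma} > 2$ (since $\gamma \in \left(\frac{1}{2},1\right)$), we have
\[
\sum_{m=1}^{n-1} C^{\delta q^{m}} = C^{\delta q^{n-1}} \sum_{m=1}^{n-1} C^{\delta\left(q^{m} - q^{n-1}\right)} = C^{\delta q^{n-1}} \sum_{j=0}^{n-2} C^{\delta\left(q^{n-1-j} - q^{n-1}\right)},
\]
where in the last step I reindexed by $j = n-1-m$. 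So it suffices to bound $\sum_{j=0}^{n-2} C^{-\delta q^{n-1}\left(1 - q^{-j}\right)}$ uniformly in $n$. The key elementary observation is that for $j \ge 1$ we have $1 - q^{-j} \ge 1 - q^{-1} \ge \frac{1}{2}$ (using $q > 2$), while the exponent $q^{n-1} \ge q^{j}$ whenever $n-1 \ge j$; more precisely I want a lower bound on $q^{n-1}\left(1-q^{-j}\right)$ that grows with $j$.

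The cleanest route: for each $j$ in the range $1 \le j \le n-2$, bound $q^{n-1}\left(1 - q^{-j}\right) \ge q^{j+1}\left(1 - q^{-j}\right) = q^{j+1} - q \ge q^{j+1} - q^{j} = q^{j}(q-1) \ge q^{j}$, using $q > 2$ so $q - 1 > 1$ and using $n - 1 \ge j+1$. Hence each summand with $j \ge 1$ is at most $C^{-\delta q^{j}}$, and the $j=0$ term is just $C^{0} = 1$. Therefore
\[
\sum_{j=0}^{n-2} C^{-\delta q^{n-1}\left(1 - q^{-j}\right)} \le 1 + \sum_{j=1}^{\infty} C^{-\delta q^{j}} \le 1 + \sum_{j=1}^{\infty} C^{-\delta \cdot 2 j} = 1 + \frac{C^{-2\delta}}{1 - C^{-2\delta}} \eqqcolon K(\delta) < \infty,
\]
where I used $q^{j} \ge 2j$ for $q \ge 2$ and $j \ge 1$, and summed the resulting geometric series (which converges since $C^{-2\delta} < 1$). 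Since $K(\delta)$ depends only on $\delta$ (and trivially on the fixed lower bounds $C > 2$, $\gamma > \frac12$ built into the hypotheses), this gives $\sum_{m=1}^{n-1} C^{\delta q^{m}} \le K(\delta)\, C^{\delta q^{n-1}}$, which is the claim.

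I do not anticipate a real obstacle here; the only mild subtlety is getting a clean, uniform-in-$n$ lower bound for the exponent $q^{n-1}\left(1-q^{-j}\right)$, and the chain $q^{n-1}(1-q^{-j}) \ge q^{j+1} - q \ge q^{j}$ handles that without case analysis once $q > 2$ is invoked. One should double-check the edge case $n \le 2$ (the sum is empty or a single term bounded by $C^{\delta q^{n-1}}$ directly), which is immediate. Everything else is a geometric-series estimate, so the lemma follows quickly from Choice \ref{choice:min requirements on C gamma and kmax}.
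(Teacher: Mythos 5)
Your proof is correct. It starts the same way as the paper's — factoring out the dominant term $C^{\delta\left(\frac{1}{1-\gamma}\right)^{n-1}}$ — but it handles the residual sum differently. The paper bounds every remaining summand by the \emph{same} quantity, using $1-\left(1-\gamma\right)^{n-1-m}\ge\gamma$, so the tail is at most $\left(n-2\right)C^{-\gamma\delta\left(\frac{1}{1-\gamma}\right)^{n-1}}\le\left(n-2\right)2^{-\delta 2^{n-2}}$, and then it invokes the fact that this last expression tends to $0$ (hence is bounded) in $n$. You instead extract a bound that decays in the offset $j=n-1-m$ from the top index, namely $C^{-\delta q^{j}}$ with $q=\frac{1}{1-\gamma}>2$, and dominate the tail by the $n$-independent convergent series $\sum_{j\ge1}C^{-2\delta j}$. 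Your route avoids the factor $n-2$ and the ``polynomial times superexponential'' limit argument, at the cost of the slightly more delicate chain $q^{n-1}\left(1-q^{-j}\right)\ge q^{j+1}-q\ge q^{j}$ (which is fine for $1\le j\le n-2$ and $q>2$). One cosmetic point: your constant $K\left(\delta\right)=1+\frac{C^{-2\delta}}{1-C^{-2\delta}}$ still contains $C$; since $x\mapsto\frac{x}{1-x}$ is increasing on $\left(0,1\right)$ and $C^{-2\delta}\le2^{-2\delta}$, you should replace it by $1+\frac{2^{-2\delta}}{1-2^{-2\delta}}$ to get a bound genuinely of the form $\lesssim_{\delta}$, as you indeed indicate. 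The edge cases $n\in\left\{1,2\right\}$ are handled as you say.
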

\begin{proof}
Notice that we can write
\begin{equation}
\begin{aligned}\sum_{m=1}^{n-1}C^{\delta\left(\frac{1}{1-\gamma}\right)^{m}} & =C^{\delta\left(\frac{1}{1-\gamma}\right)^{n-1}}\sum_{m=1}^{n-1}C^{\left[\delta\left(\frac{1}{1-\gamma}\right)^{m}-\delta\left(\frac{1}{1-\gamma}\right)^{n-1}\right]}=\\
 & =C^{\delta\left(\frac{1}{1-\gamma}\right)^{n-1}}\sum_{m=1}^{n-1}C^{-\delta\left(\frac{1}{1-\gamma}\right)^{n-1}\left[1-\left(1-\gamma\right)^{n-1-m}\right]}=\\
 & =C^{\delta\left(\frac{1}{1-\gamma}\right)^{n-1}}\left(1+\sum_{m=1}^{n-2}C^{-\delta\left(\frac{1}{1-\gamma}\right)^{n-1}\left[1-\left(1-\gamma\right)^{n-1-m}\right]}\right).
\end{aligned}
\label{eq:sum decomposed}
\end{equation}
As $\gamma\in\left(0,1\right)$, we have
\[
\left(1-\gamma\right)^{n-1-m}\le1-\gamma\quad\forall m\in\left\{ 1,\dots,n-2\right\} .
\]
Consequently, we can bound
\[
1-\left(1-\gamma\right)^{n-1-m}\ge1-\left(1-\gamma\right)=\gamma
\]
and, in this way,
\[
\sum_{m=1}^{n-2}C^{-\delta\left(\frac{1}{1-\gamma}\right)^{n-1}\left[1-\left(1-\gamma\right)^{n-1-m}\right]}\le\sum_{m=1}^{n-2}C^{-\gamma\delta\left(\frac{1}{1-\gamma}\right)^{n-1}}\le\left(n-2\right)C^{-\gamma\delta\left(\frac{1}{1-\gamma}\right)^{n-1}}.
\]
As $\gamma\ge\frac{1}{2}$ and $C>2$, we deduce that $\frac{1}{1-\gamma}\ge2$
and, as a consequence, we can write
\[
\sum_{m=1}^{n-2}C^{-\delta\left(\frac{1}{1-\gamma}\right)^{n-1}\left[1-\left(1-\gamma\right)^{n-1-m}\right]}\le\left(n-2\right)2^{-\frac{\delta}{2}2^{n-1}}=\left(n-2\right)2^{-\delta2^{n-2}}.
\]
Notice that this last expression is bounded in $n\in\mathbb{N}$,
since
\[
\lim_{n\to\infty}\left(n-2\right)2^{-\delta2^{n-2}}=0.
\]
Therefore, there is a constant $c\left(\delta\right)$ such that
\[
\sum_{m=1}^{n-2}C^{-\delta\left(\frac{1}{1-\gamma}\right)^{n-1}\left[1-\left(1-\gamma\right)^{n-1-m}\right]}\le c\left(\delta\right)\quad\forall n\in\mathbb{N}.
\]
Turning back to equation \eqref{eq:sum decomposed}, we conclude that
\[
\sum_{m=1}^{n-1}C^{\delta\left(\frac{1}{1-\gamma}\right)^{m}}\le C^{\delta\left(\frac{1}{1-\gamma}\right)^{n-1}}\left(1+c\left(\delta\right)\right),
\]
which proves the result.
\end{proof}
\begin{lem}
\label{lem:arccosh by ln}We have
\[
\mathrm{arccosh}\left(x\right)\le\ln\left(2x\right)\quad\forall x\in\left[1,\infty\right).
\]
\end{lem}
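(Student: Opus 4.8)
The statement to prove is $\mathrm{arccosh}(x) \le \ln(2x)$ for all $x \in [1,\infty)$.

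This is an elementary calculus inequality. The plan is to use the explicit formula $\mathrm{arccosh}(x) = \ln\!\left(x + \sqrt{x^2-1}\right)$, valid for $x \ge 1$. Once this is in hand, the inequality $\ln\!\left(x+\sqrt{x^2-1}\right) \le \ln(2x)$ is equivalent, by monotonicity of $\ln$, to $x + \sqrt{x^2-1} \le 2x$, i.e. to $\sqrt{x^2-1} \le x$. Since both sides are nonnegative on $[1,\infty)$, I can square: $\sqrt{x^2-1}\le x \iff x^2 - 1 \le x^2 \iff -1 \le 0$, which is trivially true. So the whole argument is a two-line chain of equivalences terminating in a tautology.

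First I would recall (or briefly justify) the identity $\mathrm{arccosh}(x) = \ln(x+\sqrt{x^2-1})$: it follows from solving $\cosh y = x$ for $y \ge 0$, writing $e^y + e^{-y} = 2x$, treating this as a quadratic in $e^y$ with positive root $e^y = x + \sqrt{x^2-1}$. Then I would simply string together: $\mathrm{arccosh}(x) = \ln(x+\sqrt{x^2-1}) \le \ln(x + x) = \ln(2x)$, where the inequality uses $\sqrt{x^2-1} \le \sqrt{x^2} = x$ together with the monotonicity of $\ln$. Equality holds exactly at $x=1$.

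There is essentially no obstacle here; the only thing to be mildly careful about is invoking the closed form of $\mathrm{arccosh}$ correctly and noting that $x^2 - 1 \ge 0$ on the stated domain so the square root is real. The argument I would write down is:

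\begin{proof}
For $x\in[1,\infty)$ we have the well-known identity
\[
\mathrm{arccosh}(x)=\ln\!\left(x+\sqrt{x^{2}-1}\right),
\]
which follows by solving $\cosh y=x$ with $y\ge 0$: writing $e^{y}+e^{-y}=2x$ and viewing this as a quadratic in $e^{y}$, the nonnegative branch gives $e^{y}=x+\sqrt{x^{2}-1}$. Since $x\ge 1$, we have $\sqrt{x^{2}-1}\le\sqrt{x^{2}}=x$, and hence, using the monotonicity of the logarithm,
\[
\mathrm{arccosh}(x)=\ln\!\left(x+\sqrt{x^{2}-1}\right)\le\ln\!\left(x+x\right)=\ln\left(2x\right),
\]
as claimed. (Equality holds precisely when $x=1$.)
\end{proof}
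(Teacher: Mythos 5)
Your proof is correct and takes a more direct route than the paper. The paper studies the quotient $F\left(x\right)=\frac{\mathrm{arccosh}\left(x\right)}{\ln\left(2x\right)}$, shows by differentiation that $F$ is increasing on $\left(1,\infty\right)$ and that $\lim_{x\to\infty}F\left(x\right)=1$, and concludes $F\le1$; you instead use the closed form $\mathrm{arccosh}\left(x\right)=\ln\left(x+\sqrt{x^{2}-1}\right)$ together with the trivial bound $\sqrt{x^{2}-1}\le x$ and the monotonicity of $\ln$, which settles the matter in one line and avoids the derivative computation entirely. Your argument is simpler and arguably preferable; the paper's quotient-monotonicity argument yields the extra (unused) information that the ratio increases to $1$, i.e.\ the bound is asymptotically sharp. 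One small correction to your parenthetical: equality never holds. At $x=1$ you have $\mathrm{arccosh}\left(1\right)=0<\ln 2$, and for $x>1$ the step $\sqrt{x^{2}-1}<x$ is strict; the inequality only becomes tight asymptotically as $x\to\infty$. This does not affect the validity of the main claim, which is exactly what the paper needs.
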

\begin{proof}
Recalling that
\[
\mathrm{arccosh}\left(x\right)=\ln\left(x+\sqrt{x^{2}-1}\right),
\]
we consider the function
\[
F\left(x\right)\coloneqq\frac{\mathrm{arccosh}\left(x\right)}{\ln\left(2x\right)}=\frac{\ln\left(x+\sqrt{x^{2}-1}\right)}{\ln\left(2x\right)}.
\]
Differentiating, we get
\[
\begin{aligned}F'\left(x\right) & =\frac{\frac{1+\frac{2x}{2\,\sqrt{x^{2}-1}}}{x+\sqrt{x^{2}-1}}\ln\left(2x\right)-\ln\left(x+\sqrt{x^{2}-1}\right)\frac{2}{2x}}{\left(\ln\left(2x\right)\right)^{2}}=\\
 & =\frac{\left(1+\frac{2x}{2\,\sqrt{x^{2}-1}}\right)2x\ln\left(2x\right)-2\ln\left(x+\sqrt{x^{2}-1}\right)\left(x+\sqrt{x^{2}-1}\right)}{\left(\ln\left(2x\right)\right)^{2}\left(x+\sqrt{x^{2}-1}\right)2x}.
\end{aligned}
\]
Since $\forall x\in\left[1,\infty\right)$,
\[
x+\sqrt{x^{2}-1}\le x+\sqrt{x^{2}}=2x
\]
and the function $z\to z\ln z$ is increasing $\forall x\in\left[1,\infty\right)$,
we obtain that
\[
2x\ln\left(2x\right)\ge2\ln\left(x+\sqrt{x^{2}-1}\right)\left(x+\sqrt{x^{2}-1}\right).
\]
Consequently,
\[
F'\left(x\right)\ge\frac{\frac{2x}{2\,\sqrt{x^{2}-1}}2x\ln\left(2x\right)}{\left(\ln\left(2x\right)\right)^{2}\left(x+\sqrt{x^{2}-1}\right)2x}=\frac{x}{\sqrt{x^{2}-1}\left(x+\sqrt{x^{2}-1}\right)\ln\left(2x\right)}>0\quad\forall x\in\left(1,\infty\right).
\]
Hence, $F$ is an increasing function. Moreover, it is easy to see
that $\lim_{x\to\infty}F\left(x\right)=1$. Thus, we conclude that
$F\left(x\right)\le1$ $\forall x\in\left[1,\infty\right)$, which
proves the result.
\end{proof}
\begin{lem}
\label{lem:exponetial superexponential bound}Let $n\in\mathbb{N}$,
$a\ge1$, $b>0$ and $\beta\in\left(0,1\right)$. Then,
\[
a^{n+1}C^{-ba^{n}}\le\frac{4\mathrm{e}^{-2}}{\left(1-\beta\right)^{2}b^{2}\ln\left(C\right)^{2}}C^{-\beta ba^{n}}.
\]
\end{lem}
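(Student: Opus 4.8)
The plan is to reduce the claim to a single-variable elementary estimate. First I would substitute $x := a^{n}$ and observe that, since $n \in \mathbb{N}$ (so $n \ge 1$) and $a \ge 1$, we have $a \le a^{n} = x$, whence $a^{n+1} = a \cdot a^{n} \le (a^{n})^{2} = x^{2}$. As $C^{-b a^{n}} > 0$, it therefore suffices to prove the stronger inequality
\[
x^{2}\, C^{-b x} \le \frac{4\mathrm{e}^{-2}}{(1-\beta)^{2} b^{2} \ln(C)^{2}}\, C^{-\beta b x},
\]
which, after multiplying both sides by $C^{\beta b x} > 0$, is equivalent to $x^{2}\, C^{-(1-\beta) b x} \le \dfrac{4\mathrm{e}^{-2}}{(1-\beta)^{2} b^{2} \ln(C)^{2}}$.

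Next I would set $\mu := (1-\beta)\, b \ln(C)$, which is strictly positive because $\beta \in (0,1)$, $b > 0$ and $C > 2 > 1$ (recall that Choice \ref{choice:min requirements on C gamma and kmax} forces $C > 2$), and rewrite $C^{-(1-\beta) b x} = \mathrm{e}^{-\mu x}$. The remaining task is then to bound the function $g(x) := x^{2}\mathrm{e}^{-\mu x}$ on $x > 0$ (a fortiori on $x \ge 1$). A direct computation gives $g'(x) = x(2 - \mu x)\mathrm{e}^{-\mu x}$, so on $(0,\infty)$ the only critical point is $x = 2/\mu$; since $g(0^{+}) = 0$ and $g(x) \to 0$ as $x \to \infty$, this critical point is the global maximum, and $g(2/\mu) = \dfrac{4}{\mu^{2}}\mathrm{e}^{-2}$.

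Substituting $\mu = (1-\beta) b \ln(C)$ back in yields $x^{2}\, C^{-(1-\beta) b x} \le \dfrac{4\mathrm{e}^{-2}}{(1-\beta)^{2} b^{2} \ln(C)^{2}}$ for every $x > 0$; then undoing the reductions — multiplying by $C^{-\beta b x}$, using $a^{n+1} \le x^{2}$, and recalling $x = a^{n}$ — gives the stated bound. I do not expect a genuine obstacle here: the only mildly non-obvious step is the reduction $a^{n+1} \le (a^{n})^{2}$, which is valid precisely because $n \ge 1$ and $a \ge 1$, and the rest is the standard fact that $\max_{x > 0} x^{2}\mathrm{e}^{-\mu x} = 4\mathrm{e}^{-2}/\mu^{2}$.
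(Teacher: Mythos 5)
Your proof is correct and follows essentially the same route as the paper: both reduce $a^{n+1}$ to $(a^{n})^{2}$ using $n\ge1$, $a\ge1$, and then maximize $z^{2}C^{-(1-\beta)bz}$ over $z>0$, the maximum being $4\mathrm{e}^{-2}/\bigl((1-\beta)^{2}b^{2}\ln(C)^{2}\bigr)$ at $z=2/\bigl((1-\beta)b\ln(C)\bigr)$. Your explicit appeal to $C>2$ from the standing Choice to ensure $\ln(C)>0$ is a harmless clarification of what the paper uses implicitly.
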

\begin{proof}
Notice that we can write
\[
a^{n+1}C^{-ba^{n}}=a^{n+1}C^{-\left(1-\beta\right)ba^{n}}C^{-\beta ba^{n}}.
\]
Let us call $z=a^{n}$. Then, as $n\ge1$ and $a\ge1$, we deduce
that $z=a^{n}\ge1$ and, in this way, $z^{\frac{n+1}{n}}\le z^{2}$.
Hence, we have
\begin{equation}
a^{n+1}C^{-ba^{n}}=z^{\frac{n+1}{n}}C^{-\left(1-\beta\right)bz}C^{-\beta ba^{n}}\le\underbrace{z^{2}C^{-\left(1-\beta\right)bz}}_{\eqqcolon F\left(z\right)}C^{-\beta ba^{n}}.\label{eq:first split}
\end{equation}
Clearly, $F\ge0$, $F\left(0\right)=0$ and $\lim_{z\to\infty}F\left(z\right)=0$.
Furthermore,
\[
F'\left(z\right)=2zC^{-\left(1-\beta\right)bz}+z^{2}C^{-\left(1-\beta\right)bz}\left(-\left(1-\beta\right)b\ln\left(C\right)\right)
\]
and the equation $F'\left(z\right)=0$ admits the only solution
\[
z=\frac{2}{\left(1-\beta\right)b\ln\left(C\right)}.
\]
This means that $\frac{2}{\left(1-\beta\right)b\ln\left(C\right)}$
must be the point where the absolute maximum of $F$ is attained and,
consequently,
\[
F\left(z\right)\le F\left(\frac{2}{\left(1-\beta\right)b\ln\left(C\right)}\right)=\frac{4}{\left(1-\beta\right)^{2}b^{2}\ln\left(C\right)^{2}}\mathrm{e}^{-2}\quad\forall z\in\left(0,\infty\right).
\]
Returning back to equation \eqref{eq:first split}, we arrive to
\[
a^{n+1}C^{-ba^{n}}\le\frac{4\mathrm{e}^{-2}}{\left(1-\beta\right)^{2}b^{2}\ln\left(C\right)^{2}}C^{-\beta ba^{n}}.
\]
\end{proof}
\begin{prop}
\label{prop:time convergence}Let $\beta\in\left(0,1\right)$. Provided
that $C$ is sufficiently big (let us say $C\ge\Upsilon\left(\beta,\delta\right)$),
then, $\forall n\in\mathbb{N}$ and $\forall t\in\left[t_{n+1},1\right]$,
\[
\left|\frac{b_{n}\left(t\right)}{b_{n}\left(1\right)}-1\right|\leq C^{-\beta\delta\gamma\left(\frac{1}{1-\gamma}\right)^{n}},\quad\left|\frac{a_{n}\left(t\right)}{a_{n}\left(1\right)}-1\right|\leq C^{-\beta\delta\gamma\left(\frac{1}{1-\gamma}\right)^{n}},\quad\left|\frac{B_{n}\left(t\right)}{B_{n}\left(1\right)}-1\right|\leq C^{-\beta\delta\gamma\left(\frac{1}{1-\gamma}\right)^{n}}.
\]
\end{prop}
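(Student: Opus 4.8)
The plan is to prove the three estimates essentially simultaneously, since $a_n(t)b_n(t)$ is constant in time by Choice \ref{choice:anbncn}, so $\frac{a_n(t)}{a_n(1)} = \left(\frac{b_n(t)}{b_n(1)}\right)^{-1}$, and the bound on $B_n$ will follow from the bound on $a_n^2+b_n^2$ together with the explicit formula for $B_n(t)$ from the summary of the construction and the fact that $h^{(n)}$ is already saturated ($=1$, up to an $\varepsilon$-error, by \eqref{eq:h_eps near one}) on $[t_{n+1},1]$. The heart of the matter is therefore to control $k_n(t)$ for $t\in[t_{n+1},1]$, because $\frac{b_n(t)}{b_n(1)} = C^{k_n(t)(\frac{1}{1-\gamma})^n}$ by Choice \ref{choice:anbn} and Choice \ref{choice:time picture} (cf.\ \eqref{eq:quotient bn}). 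So the whole Proposition reduces to showing
\[
\left|k_n(t)\right| \lesssim C^{-\beta'\delta\gamma\left(\frac{1}{1-\gamma}\right)^{n}}\cdot\left(1-\gamma\right)^{n}
\]
for a slightly larger $\beta'$, which after absorbing the polynomial factor $(\frac{1}{1-\gamma})^n$ into the superexponential decay (this is exactly where Lemma \ref{lem:exponetial superexponential bound} is used, with $a=\frac{1}{1-\gamma}$) gives the claimed bound.

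First I would bound $k_n(t)$ using the real ODE \eqref{eq:real dkntdt}. On $[t_{n+1},1]$ we have $h^{(n)}\equiv 0$ there, but $k_n$ keeps evolving because it is driven by the \emph{older} layers $m\le n-1$, whose amplitudes are not zero. The bound is: integrate \eqref{eq:real dkntdt} from $t$ to $1$ (recall $k_n(1)=0$ by Choice \ref{choice:time picture}), so $|k_n(t)| \le \frac{1}{\ln(C)(\frac{1}{1-\gamma})^{n}} \int_t^1 \sum_{m=1}^{n-1} B_m(s) a_m(s) b_m(s)\,ds$, bounding the cosine by $1$. Now $B_m(s)a_m(s)b_m(s) = \frac{2M_m}{a_m(s)^2+b_m(s)^2}a_m(s)b_m(s)\cdot(\text{ratio}) \le M_m$ since $\frac{2ab}{a^2+b^2}\le 1$ and the $h$-ratio is $\le 1$; with $M_m = YC^{\delta(\frac{1}{1-\gamma})^m}$ by Choice \ref{choice:Mn}, Lemma \ref{lem:estimate sum superexponential} gives $\sum_{m=1}^{n-1}M_m \lesssim_\delta YC^{\delta(\frac{1}{1-\gamma})^{n-1}}$. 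Finally $\int_t^1 ds \le 1-t_{n+1} \le 1-t_{n+1}$, and from Choice \ref{choice:Mn} together with Lemma \ref{lem:arccosh by ln} one has $1-t_{n+1} \lesssim \frac{1}{Y}C^{-\delta(\frac{1}{1-\gamma})^{n}}\cdot k_{\max}\ln(C)(\frac{1}{1-\gamma})^{n+1}$. Multiplying these three factors and the prefactor $\frac{1}{\ln(C)(\frac{1}{1-\gamma})^n}$ together, the $Y$'s cancel, the $\ln C$'s cancel, and one is left with a bound of the shape $(\text{poly in }(\tfrac{1}{1-\gamma})^n)\cdot C^{\delta(\frac{1}{1-\gamma})^{n-1} - \delta(\frac{1}{1-\gamma})^{n}}$; the exponent here is $-\delta\gamma(\frac{1}{1-\gamma})^{n}$ because $(\frac{1}{1-\gamma})^{n} - (\frac{1}{1-\gamma})^{n-1} = \gamma(\frac{1}{1-\gamma})^{n}$. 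That is the desired superexponential decay, up to the polynomial factor.

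The two remaining steps are routine bookkeeping. To pass from the bound on $k_n(t)$ to the bound on $\frac{b_n(t)}{b_n(1)}-1$: write $\frac{b_n(t)}{b_n(1)} = C^{k_n(t)(\frac{1}{1-\gamma})^n}=e^{k_n(t)\ln(C)(\frac{1}{1-\gamma})^n}$, use $|e^x-1|\le 2|x|$ for $|x|\le 1$ (which holds once $C\ge\Upsilon(\beta,\delta)$ is large enough, because $k_n(t)\ln(C)(\frac{1}{1-\gamma})^n$ is by the previous step a superexponentially small quantity \emph{even after} multiplying by $(\frac{1}{1-\gamma})^n\ln C$), and absorb the extra polynomial and logarithmic factors into the exponent by replacing $\gamma$-exponent $\delta\gamma$ by $\beta\delta\gamma$ via Lemma \ref{lem:exponetial superexponential bound} — this is precisely what the parameter $\beta<1$ in the statement buys us. The same argument gives the $a_n$ bound since $a_n(t)/a_n(1) = (b_n(t)/b_n(1))^{-1}$ and $|x^{-1}-1|\le 2|x-1|$ for $x$ near $1$. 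For $B_n$: from the summary, $B_n(t) = \frac{2M_n}{a_n(t)^2+b_n(t)^2}\cdot\frac{\int_{t_n}^t h^{(n)}b_n}{\int_{t_n}^1 h^{(n)}b_n}$; on $[t_{n+1},1]$ the ratio of integrals equals $1$ exactly (numerator and denominator agree since $h^{(n)}\equiv 0$ past $t_{n+1}$), so $\frac{B_n(t)}{B_n(1)} = \frac{a_n(1)^2+b_n(1)^2}{a_n(t)^2+b_n(t)^2}$, and the $a_n,b_n$ estimates just proved, together with $a_n\le b_n$ so that $a_n^2$ is a lower-order perturbation, give the claim. The main obstacle — and the only place that genuinely requires care — is making sure the polynomial factor $(\frac{1}{1-\gamma})^{n}$ arising in the $1-t_{n+1}$ estimate and again in the exponentiation step is genuinely swallowed by the superexponential decay uniformly in $n$; this is exactly the content of Lemma \ref{lem:exponetial superexponential bound} and is why the statement is phrased with an arbitrary $\beta\in(0,1)$ rather than $\beta=1$, and why $C$ must be taken large depending on $\beta$ and $\delta$.
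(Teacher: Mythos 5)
Your proposal is correct, and it takes a mildly but genuinely different route from the paper's proof. The paper argues by induction on $n$: the induction hypothesis is used solely to replace $B_{m}\left(s\right)a_{m}\left(s\right)b_{m}\left(s\right)$ for the past layers $m\le n-1$ by (a constant times) $B_{m}\left(1\right)a_{m}\left(1\right)b_{m}\left(1\right)=M_{m}$, after which it integrates $\frac{\mathrm{d}}{\mathrm{d}t}\ln\left(b_{n}\left(t\right)\right)$ and runs exactly the chain you describe (bound $1-t$ by $1-t_{n+1}$ via Choice \ref{choice:Mn}, Lemma \ref{lem:arccosh by ln}, Lemma \ref{lem:estimate sum superexponential}, then Lemma \ref{lem:exponetial superexponential bound} with $a=\frac{1}{1-\gamma}$ to trade $\delta\gamma$ for $\beta\delta\gamma$, then $C\ge\Upsilon\left(\beta,\delta\right)$ to kill the constant through the leftover $\frac{1}{\ln C}$, and finally the Lipschitz step $\left|\mathrm{e}^{x}-1\right|\lesssim\left|x\right|$). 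You avoid the induction entirely by the pointwise algebraic bound $B_{m}\left(s\right)a_{m}\left(s\right)b_{m}\left(s\right)=2M_{m}\frac{a_{m}\left(s\right)b_{m}\left(s\right)}{a_{m}\left(s\right)^{2}+b_{m}\left(s\right)^{2}}\cdot\frac{\int_{t_{m}}^{s}h^{\left(m\right)}b_{m}}{\int_{t_{m}}^{1}h^{\left(m\right)}b_{m}}\le M_{m}$ (using $z_{m}=2M_{m}$ from \eqref{eq:relation Mn and zn} with $k_{m}\left(1\right)=0$), a trick the paper itself uses later in \eqref{eq:bound Bn2anbnbn2 v1}; this makes the argument non-inductive and handles $n=1$ automatically (empty sum), which is a genuine simplification. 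Working with $k_{n}$ rather than $\ln b_{n}$ is only a renormalization, since $\ln\left(b_{n}\left(t\right)/b_{n}\left(1\right)\right)=k_{n}\left(t\right)\left(\frac{1}{1-\gamma}\right)^{n}\ln C$, and your accounting of the polynomial factor $\left(\frac{1}{1-\gamma}\right)^{n+1}$ and its absorption via Lemma \ref{lem:exponetial superexponential bound} (uniformly in $\gamma$, which matters since $\frac{1}{1-\gamma}$ is not bounded) matches the paper. Two small remarks: in the $B_{n}$ step, your appeal to ``$a_{n}\le b_{n}$'' is neither needed nor justifiable at this stage (the sign of $k_{n}\left(t\right)$ is only controlled by Proposition \ref{prop:convergence kn to ideal model}, which is proved \emph{after} and \emph{using} the present Proposition, so invoking it would be circular); instead, as in the paper, write $\frac{B_{n}\left(1\right)}{B_{n}\left(t\right)}-1$ as a convex combination of $\frac{a_{n}\left(t\right)^{2}}{a_{n}\left(1\right)^{2}}-1$ and $\frac{b_{n}\left(t\right)^{2}}{b_{n}\left(1\right)^{2}}-1$, which needs only the two ratio bounds already established. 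Second, since the statement is a clean inequality with no implicit constant, you must (as you indicate) verify the smallness condition for the Lipschitz step first and then absorb all multiplicative constants via the $\beta$-slack and the $\frac{1}{\ln C}$ from Lemma \ref{lem:exponetial superexponential bound}, with separate (slightly degraded) constants for the $a_{n}$ and $B_{n}$ conversions, exactly as the paper does with its cascade of constants $\frac{1}{384},\frac{1}{128},\frac{1}{32},\frac{1}{4}$.
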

\begin{proof}
We shall make use of an induction argument. For the case $n=1$, as
we have explained at the beginning of subsection \ref{subsec:peak into convergence of Bnt},
$a_{1}\left(t\right)$, $b_{1}\left(t\right)$ and $B_{1}\left(t\right)$
are all constant in the interval $\left[t_{2},1\right]$ (the latter
thanks to Choice \ref{choice:no two simultanous densities}). Therefore,
the statement is true for $n=1$.

Now, assume the statement is true for $n-1$ and lower and we shall
prove it for $n$. By Choice \ref{choice:anbncn}, we have
\[
\frac{\mathrm{d}}{\mathrm{d}t}\left(\ln\left(b_{n}\left(t\right)\right)\right)=\sum_{m=1}^{n-1}B_{m}\left(t\right)a_{m}\left(t\right)b_{m}\left(t\right)\cos\left(a_{m}\left(t\right)\left(\phi_{1}^{\left(n\right)}\left(t,0\right)-\phi_{1}^{\left(m\right)}\left(t,0\right)\right)\right).
\]
Integrating the equation above in the interval $\left[t,1\right]$
with $t\in\left[t_{n+1},1\right]$ provides
\[
\ln\left(\frac{b_{n}\left(1\right)}{b_{n}\left(t\right)}\right)=\sum_{m=1}^{n-1}\int_{t}^{1}B_{m}\left(s\right)a_{m}\left(s\right)b_{m}\left(s\right)\cos\left(a_{m}\left(s\right)\left(\phi_{1}^{\left(n\right)}\left(s,0\right)-\phi_{1}^{\left(m\right)}\left(s,0\right)\right)\right)\mathrm{d}s.
\]
Multiplying by $-1$ at both sides and bounding $\left|\cos\left(\cdot\right)\right|\le1$,
we arrive to
\begin{equation}
\left|\ln\left(\frac{b_{n}\left(t\right)}{b_{n}\left(1\right)}\right)\right|\le\sum_{m=1}^{n-1}\int_{t}^{1}B_{m}\left(s\right)a_{m}\left(s\right)b_{m}\left(s\right)\mathrm{d}s.\label{eq:ln(bs) first bound}
\end{equation}
Now, by the induction hypothesis, which we can apply as $t_{n+1}\ge t_{m}$
$\forall m\in\left\{ 1,\dots,n-1\right\} $ by Choice \ref{choice:time (initial)},
we know that, $\forall t\in\left[t_{n+1},1\right]$,
\[
\begin{aligned} & \left|\frac{b_{m}\left(t\right)}{b_{m}\left(1\right)}-1\right|\leq C^{-\beta\delta\gamma\left(\frac{1}{1-\gamma}\right)^{m}}\iff-C^{-\beta\delta\gamma\left(\frac{1}{1-\gamma}\right)^{m}}\leq\frac{b_{m}\left(t\right)}{b_{m}\left(1\right)}-1\leq C^{-\beta\delta\gamma\left(\frac{1}{1-\gamma}\right)^{m}}\iff\\
\iff & 1-C^{-\beta\delta\gamma\left(\frac{1}{1-\gamma}\right)^{m}}\le\frac{b_{m}\left(t\right)}{b_{m}\left(1\right)}\le1+C^{-\beta\delta\gamma\left(\frac{1}{1-\gamma}\right)^{m}}.
\end{aligned}
\]
In particular, we have
\[
b_{m}\left(t\right)\le\left(1+C^{-\beta\delta\gamma\left(\frac{1}{1-\gamma}\right)^{m}}\right)b_{m}\left(1\right)\le2b_{m}\left(1\right).
\]
Clearly, the same argument can be used for $a_{m}\left(t\right)$
and $B_{m}\left(t\right)$. Consequently, we may transform \eqref{eq:ln(bs) first bound}
into
\[
\begin{aligned}\left|\ln\left(\frac{b_{n}\left(t\right)}{b_{n}\left(1\right)}\right)\right| & \lesssim\sum_{m=1}^{n-1}B_{m}\left(1\right)a_{m}\left(1\right)b_{m}\left(1\right)\int_{t}^{1}\mathrm{d}s=\\
 & \lesssim\left(1-t\right)\sum_{m=1}^{n-1}B_{m}\left(1\right)a_{m}\left(1\right)b_{m}\left(1\right).
\end{aligned}
\]
Since we are considering times $t\in\left[t_{n+1},1\right]$, we have
\[
\left|\ln\left(\frac{b_{n}\left(t\right)}{b_{n}\left(1\right)}\right)\right|\lesssim\left(1-t_{n+1}\right)\sum_{m=1}^{n-1}B_{m}\left(1\right)a_{m}\left(1\right)b_{m}\left(1\right).
\]
Thereby, recalling Choices \ref{choice:Bnanbn} and \ref{choice:Mn},
we can write
\[
\left|\ln\left(\frac{b_{n}\left(t\right)}{b_{n}\left(1\right)}\right)\right|\lesssim\frac{1}{Y}C^{-\delta\left(\frac{1}{1-\gamma}\right)^{n}}\mathrm{arccosh}\left(C^{k_{\max}\left(\frac{1}{1-\gamma}\right)^{n+1}}\right)\sum_{m=1}^{n-1}YC^{\delta\left(\frac{1}{1-\gamma}\right)^{m}}.
\]
Recurring to Lemmas \ref{lem:estimate sum superexponential} and \ref{lem:arccosh by ln},
we find that
\begin{equation}
\begin{aligned}\left|\ln\left(\frac{b_{n}\left(t\right)}{b_{n}\left(1\right)}\right)\right| & \lesssim_{\delta}C^{-\delta\left(\frac{1}{1-\gamma}\right)^{n}}\ln\left(2C^{k_{\max}\left(\frac{1}{1-\gamma}\right)^{n+1}}\right)C^{\delta\left(\frac{1}{1-\gamma}\right)^{n-1}}=\\
 & \lesssim_{\delta}C^{-\delta\left(\frac{1}{1-\gamma}\right)^{n}}\left(\ln\left(2\right)+k_{\max}\left(\frac{1}{1-\gamma}\right)^{n+1}\ln\left(C\right)\right)C^{\delta\left(\frac{1}{1-\gamma}\right)^{n-1}}.
\end{aligned}
\label{eq:time convergence the log 1}
\end{equation}
Since $C>2$, $\gamma\ge\frac{1}{2}$, $k_{\max}\ge\frac{1}{100}$
by Choice \ref{choice:min requirements on C gamma and kmax} and $n\ge1$,
we have
\[
k_{\max}\left(\frac{1}{1-\gamma}\right)^{n+1}\ln\left(C\right)\ge\frac{1}{100}2^{2}\ln\left(2\right).
\]
This means that we can bound
\[
\ln\left(2\right)+k_{\max}\left(\frac{1}{1-\gamma}\right)^{n+1}\ln\left(C\right)\lesssim k_{\max}\left(\frac{1}{1-\gamma}\right)^{n+1}\ln\left(C\right).
\]
Consequently, equation \eqref{eq:time convergence the log 1} becomes
\[
\left|\ln\left(\frac{b_{n}\left(t\right)}{b_{n}\left(1\right)}\right)\right|\lesssim_{\delta}k_{\max}\ln\left(C\right)\left(\frac{1}{1-\gamma}\right)^{n+1}C^{-\delta\left(\frac{1}{1-\gamma}\right)^{n}\left[1-\left(1-\gamma\right)\right]}.
\]
As $k_{\max}\le1$ by Choice \ref{choice:ideal kn}, we deduce that
\begin{equation}
\begin{aligned}\left|\ln\left(\frac{b_{n}\left(t\right)}{b_{n}\left(1\right)}\right)\right| & \lesssim_{\delta}\ln\left(C\right)\left(\frac{1}{1-\gamma}\right)^{n+1}C^{-\delta\gamma\left(\frac{1}{1-\gamma}\right)^{n}}.\end{aligned}
\label{eq:time convergence the log 2}
\end{equation}
The next step is to show that, taking $C$ sufficiently large, we
can make the expression above as small as we want. By Lemma \ref{lem:exponetial superexponential bound},
taking $a\leftarrow\frac{1}{1-\gamma}\ge\frac{1}{1-\frac{1}{2}}\ge2$
(by Choice \ref{choice:min requirements on C gamma and kmax}) and
$b\leftarrow\delta\gamma$, we obtain that
\[
\left|\ln\left(\frac{b_{n}\left(t\right)}{b_{n}\left(1\right)}\right)\right|\lesssim_{\delta}\ln\left(C\right)\frac{4\mathrm{e}^{-2}}{\left(1-\beta\right)^{2}\delta^{2}\gamma^{2}\ln\left(C\right)^{2}}C^{-\beta\delta\gamma\left(\frac{1}{1-\gamma}\right)^{n}}.
\]
Since $\gamma\ge\frac{1}{2}$ by Choice \ref{choice:min requirements on C gamma and kmax},
we have
\[
\left|\ln\left(\frac{b_{n}\left(t\right)}{b_{n}\left(1\right)}\right)\right|\lesssim_{\delta}\frac{C^{-\beta\delta\gamma\left(\frac{1}{1-\gamma}\right)^{n}}}{\left(1-\beta\right)^{2}\delta^{2}\ln\left(C\right)}\lesssim_{\beta,\delta}\frac{C^{-\beta\delta\gamma\left(\frac{1}{1-\gamma}\right)^{n}}}{\ln\left(C\right)}.
\]
By the definition of $\lesssim_{\beta,\delta}$, there is $K_{0}\left(\beta,\delta\right)>0$
such that
\[
\left|\ln\left(\frac{b_{n}\left(t\right)}{b_{n}\left(1\right)}\right)\right|\leq K_{0}\left(\beta,\delta\right)\frac{C^{-\beta\delta\gamma\left(\frac{1}{1-\gamma}\right)^{n}}}{\ln\left(C\right)}.
\]
Taking $C$ sufficiently large, we can compensate the constant $K_{0}\left(\beta,\delta\right)$
with the factor $\ln\left(C\right)$. Indeed, provided that $C\ge\exp\left(384K_{0}\left(\beta,\delta\right)\right)\eqqcolon\Upsilon\left(\beta,\delta\right)$,
we can guarantee that
\begin{equation}
\left|\ln\left(\frac{b_{n}\left(t\right)}{b_{n}\left(1\right)}\right)\right|\leq\frac{1}{384}C^{-\beta\delta\gamma\left(\frac{1}{1-\gamma}\right)^{n}}.\label{eq:bound ln bn}
\end{equation}
Since the function $f_{1}\left(x\right)=\mathrm{e}^{x}-1$ is Lipschitz
in the interval $\left[-1,1\right]$ with constant $3$ and satisfies
$f_{1}\left(0\right)=0$, we can assure that
\begin{equation}
\begin{aligned}\left|\frac{b_{n}\left(t\right)}{b_{n}\left(1\right)}-1\right| & =\left|f_{1}\left(\ln\left(\frac{b_{n}\left(t\right)}{b_{n}\left(1\right)}\right)\right)-\overbrace{f_{1}\left(0\right)}^{=0}\right|\le\\
 & \le3\left|\ln\left(\frac{b_{n}\left(t\right)}{b_{n}\left(1\right)}\right)-0\right|\le\frac{1}{128}C^{-\beta\delta\gamma\left(\frac{1}{1-\gamma}\right)^{n}}
\end{aligned}
\label{eq:convergence bn statement improved}
\end{equation}
as long as $\left|\ln\left(\frac{b_{n}\left(t\right)}{b_{n}\left(1\right)}\right)\right|\le1$,
which we know to be always true thanks to equation \eqref{eq:bound ln bn}.
As the computations above are valid $\forall t\in\left[t_{n+1},1\right]$,
this proves the first inequality of the statement.

Let us continue with $a_{n}\left(t\right)$. By Choice \ref{choice:anbncn},
we know that
\[
a_{n}\left(1\right)b_{n}\left(1\right)=a_{n}\left(t\right)b_{n}\left(t\right)\quad\forall t\in\left[t_{n+1},1\right].
\]
Therefore,
\[
\left|\frac{a_{n}\left(t\right)}{a_{n}\left(1\right)}-1\right|=\left|\frac{a_{n}\left(1\right)b_{n}\left(1\right)}{b_{n}\left(t\right)a_{n}\left(1\right)}-1\right|=\left|\frac{b_{n}\left(1\right)}{b_{n}\left(t\right)}-1\right|=\left|\left(\frac{b_{n}\left(t\right)}{b_{n}\left(1\right)}-1+1\right)^{-1}-1\right|.
\]
Because the function
\[
f_{2}\left(x\right)=\left|\left(x+1\right)^{-1}-1\right|
\]
is locally Lipschitz in the interval $\left[-\frac{1}{2},\frac{1}{2}\right]$
with constant $4$ and $f_{2}\left(0\right)=0$, it can be deduced
that
\begin{equation}
\begin{aligned}\left|\frac{a_{n}\left(t\right)}{a_{n}\left(1\right)}-1\right| & =\left|f_{2}\left(\frac{b_{n}\left(t\right)}{b_{n}\left(1\right)}-1\right)-\overbrace{f_{2}\left(0\right)}^{=0}\right|\le\\
 & \le4\left|\frac{b_{n}\left(t\right)}{b_{n}\left(1\right)}-1\right|\le\frac{1}{32}C^{-\beta\delta\gamma\left(\frac{1}{1-\gamma}\right)^{n}},
\end{aligned}
\label{eq:convergence an improved}
\end{equation}
where we have made use of equation \eqref{eq:convergence bn statement improved}
in the last step, as long as $\left|\frac{b_{n}\left(t\right)}{b_{n}\left(1\right)}-1\right|\le\frac{1}{2}$,
which is true by equation \eqref{eq:convergence bn statement improved}. 

Lastly, we need to deal with $B_{n}\left(t\right)$. By Choice \ref{choice:amplitude density},
\[
B_{n}\left(t\right)=\frac{z_{n}}{a_{n}\left(t\right)^{2}+b_{n}\left(t\right)^{2}}\frac{\int_{t_{n}}^{t}h^{\left(n\right)}\left(s\right)b_{n}\left(s\right)\mathrm{d}s}{\int_{t_{n}}^{1}h^{\left(n\right)}\left(s\right)b_{n}\left(s\right)\mathrm{d}s}.
\]
As $t\in\left[t_{n+1},t\right]$, thanks to Choice \ref{choice:no two simultanous densities},
we know that $\left.h^{\left(n\right)}\right|_{\left[t_{n+1},1\right]}\equiv0$
and, consequently,
\[
\int_{t_{n}}^{t}h^{\left(n\right)}\left(s\right)b_{n}\left(s\right)\mathrm{d}s=\int_{t_{n}}^{1}h^{\left(n\right)}\left(s\right)b_{n}\left(s\right)\mathrm{d}s\quad\forall t\in\left[t_{n+1},1\right].
\]
Hence,
\[
B_{n}\left(t\right)=\frac{z_{n}}{a_{n}\left(t\right)^{2}+b_{n}\left(t\right)^{2}}\quad\forall t\in\left[t_{n+1},1\right].
\]
In this way,
\[
\begin{aligned}\frac{B_{n}\left(1\right)}{B_{n}\left(t\right)}-1 & =\frac{a_{n}\left(t\right)^{2}+b_{n}\left(t\right)^{2}}{a_{n}\left(1\right)^{2}+b_{n}\left(1\right)^{2}}-1=\frac{a_{n}\left(t\right)^{2}-a_{n}\left(1\right)^{2}}{a_{n}\left(1\right)^{2}+b_{n}\left(1\right)^{2}}+\frac{b_{n}\left(t\right)^{2}-b_{n}\left(1\right)^{2}}{a_{n}\left(1\right)^{2}+b_{n}\left(1\right)^{2}}=\\
 & =\underbrace{\frac{a_{n}\left(1\right)^{2}}{a_{n}\left(1\right)^{2}+b_{n}\left(1\right)^{2}}}_{\le1}\left(\frac{a_{n}\left(t\right)^{2}}{a_{n}\left(1\right)^{2}}-1\right)+\underbrace{\frac{b_{n}\left(1\right)^{2}}{a_{n}\left(1\right)^{2}+b_{n}\left(1\right)^{2}}}_{\le1}\left(\frac{b_{n}\left(t\right)^{2}}{b_{n}\left(1\right)^{2}}-1\right).
\end{aligned}
\]
Then,
\[
\begin{aligned}\left|\frac{B_{n}\left(1\right)}{B_{n}\left(t\right)}-1\right| & \le\left|\frac{a_{n}\left(t\right)^{2}}{a_{n}\left(1\right)^{2}}-1\right|+\left|\frac{b_{n}\left(t\right)^{2}}{b_{n}\left(1\right)^{2}}-1\right|=\\
 & \le\left|-2+2\frac{a_{n}\left(t\right)}{a_{n}\left(1\right)}+\left(\frac{a_{n}\left(t\right)}{a_{n}\left(1\right)}-1\right)^{2}\right|+\left|-2+2\frac{b_{n}\left(t\right)}{b_{n}\left(1\right)}+\left(\frac{b_{n}\left(t\right)}{b_{n}\left(1\right)}-1\right)^{2}\right|\le\\
 & \le2\left|\frac{a_{n}\left(t\right)}{a_{n}\left(1\right)}-1\right|+\left|\frac{a_{n}\left(t\right)}{a_{n}\left(1\right)}-1\right|^{2}+2\left|\frac{b_{n}\left(t\right)}{b_{n}\left(1\right)}-1\right|+\left|\frac{b_{n}\left(t\right)}{b_{n}\left(1\right)}-1\right|^{2}.
\end{aligned}
\]
Since the function
\[
f_{3}\left(x,y\right)=2x+x^{2}+2y+y^{2}
\]
is Lipschitz in the interval $\left[-1,1\right]^{2}$, satisfying
\[
\left|f_{3}\left(x_{2},y_{2}\right)-f_{3}\left(x_{1},y_{1}\right)\right|\le4\left(\left|x_{2}-x_{1}\right|+\left|y_{2}-y_{1}\right|\right),
\]
and $f_{3}\left(0,0\right)=0$, we deduce that
\[
\begin{aligned}\left|\frac{B_{n}\left(1\right)}{B_{n}\left(t\right)}-1\right| & \leq\left|f_{3}\left(\frac{a_{n}\left(t\right)}{a_{n}\left(1\right)}-1,\frac{b_{n}\left(t\right)}{b_{n}\left(1\right)}-1\right)-\overbrace{f_{3}\left(0,0\right)}^{=0}\right|\leq\\
 & \leq4\left(\left|\frac{a_{n}\left(t\right)}{a_{n}\left(1\right)}-1\right|+\left|\frac{b_{n}\left(t\right)}{b_{n}\left(1\right)}-1\right|\right),
\end{aligned}
\]
as long as 
\[
\left|\frac{a_{n}\left(t\right)}{a_{n}\left(1\right)}-1\right|\leq1,\quad\left|\frac{b_{n}\left(t\right)}{b_{n}\left(1\right)}-1\right|\leq1,
\]
which are true by equations \eqref{eq:convergence bn statement improved}
and \eqref{eq:convergence an improved}. By the aforementioned equations,
we also arrive to
\[
\left|\frac{B_{n}\left(1\right)}{B_{n}\left(t\right)}-1\right|\le\frac{4}{32}C^{-\beta\delta\gamma\left(\frac{1}{1-\gamma}\right)^{n}}+\frac{4}{128}C^{-\beta\delta\gamma\left(\frac{1}{1-\gamma}\right)^{n}}\le\frac{1}{4}C^{-\beta\delta\gamma\left(\frac{1}{1-\gamma}\right)^{n}}.
\]
Notice that, although we wanted a bound for $\left|\frac{B_{n}\left(t\right)}{B_{n}\left(1\right)}-1\right|$,
we have actually found a bound for $\left|\frac{B_{n}\left(1\right)}{B_{n}\left(t\right)}-1\right|$.
This is basically the same situation we faced when we had a bound
for $\left|\frac{b_{n}\left(t\right)}{b_{n}\left(1\right)}-1\right|$
and wanted to find a bound for $\left|\frac{a_{n}\left(t\right)}{a_{n}\left(1\right)}-1\right|$.
Thus, following the same argument, we may conclude that
\[
\left|\frac{B_{n}\left(t\right)}{B_{n}\left(1\right)}-1\right|\le C^{-\beta\delta\gamma\left(\frac{1}{1-\gamma}\right)^{n}}.
\]
\end{proof}

\subsection{Convergence of $\Xi^{\left(n\right)}\left(t\right)$ to $\Xi_{0}^{\left(n\right)}\left(t\right)$}

We shall continue with the main result of this section: the convergence
of $\Xi^{\left(n\right)}\left(t\right)$ to the ideal model $\Xi_{0}^{\left(n\right)}\left(t\right)$.
To accomplish this task, we will need some Lemmas.
\begin{lem}
\label{lem:equivalence sin with x(x-pi)}In the interval $\left[0,\pi\right]$,
we have
\[
\sin\left(x\right)\approx x\left(x-\pi\right),
\]
where $\approx$ denotes $\lesssim$ and $\gtrsim$ simultaneously.
\end{lem}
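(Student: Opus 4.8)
The statement to prove is the elementary two-sided estimate $\sin(x)\approx x(x-\pi)$ on $[0,\pi]$. Note first that on this interval $x(x-\pi)\le 0$ while $\sin(x)\ge 0$, so the intended meaning is $\sin(x)\approx -x(x-\pi)=x(\pi-x)$; I would state this at the outset to avoid a sign confusion, or equivalently work with $|\sin(x)|$ and $|x(x-\pi)|$. The plan is to introduce the auxiliary function
\[
g(x)\coloneqq\frac{\sin(x)}{x(\pi-x)}\quad\text{for }x\in(0,\pi),
\]
show that $g$ extends continuously to all of $[0,\pi]$ with strictly positive values, and conclude by compactness that $g$ is bounded above and below by positive constants, which is exactly $\sin(x)\lesssim x(\pi-x)$ and $\sin(x)\gtrsim x(\pi-x)$.

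The key steps, in order: (1) At the endpoints, compute the limits. By the standard limit $\sin(x)/x\to 1$ as $x\to 0^+$, we get $g(x)=\frac{\sin(x)}{x}\cdot\frac{1}{\pi-x}\to\frac{1}{\pi}$; by the substitution $x=\pi-y$ and $\sin(\pi-y)=\sin(y)$, symmetrically $g(x)\to\frac{1}{\pi}$ as $x\to\pi^-$. Hence $g$ extends to a continuous function $\bar g$ on the compact interval $[0,\pi]$ with $\bar g(0)=\bar g(\pi)=\frac1\pi$. (2) On the open interval $(0,\pi)$ we have $\sin(x)>0$ and $x(\pi-x)>0$, so $\bar g>0$ there; together with $\bar g(0)=\bar g(\pi)=\tfrac1\pi>0$ this gives $\bar g>0$ on all of $[0,\pi]$. (3) A positive continuous function on a compact set attains a positive minimum $m$ and a finite maximum $M$; thus $m\le \bar g(x)\le M$ for all $x$, i.e. $m\,x(\pi-x)\le \sin(x)\le M\,x(\pi-x)$, which is the claimed $\approx$ (with implicit constants $m$ and $M$; one can even make them explicit, e.g. $\frac{2}{\pi^2}\le g\le\frac{1}{\pi}$, but this is not needed).

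There is essentially no obstacle here; the only point requiring a moment of care is the sign/orientation remark at the very beginning, since a literal reading of "$\sin(x)\approx x(x-\pi)$" compares a nonnegative quantity with a nonpositive one. Once one agrees that $\approx$ is meant up to absolute values (or replaces $x(x-\pi)$ by $x(\pi-x)$), the argument is the routine "continuous positive function on a compact interval" compactness argument sketched above. I would present it in three short lines: the endpoint limits, positivity, and extraction of the constants via compactness.
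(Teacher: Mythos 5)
Your proposal is correct and follows essentially the same route as the paper: both consider the quotient $\sin\left(x\right)/\left(x\left(\pi-x\right)\right)$, extend it continuously to the endpoints with value $\frac{1}{\pi}$, note it never vanishes, and invoke compactness (Weierstrass) to extract the two positive constants. Your remark about the sign (the paper's displayed $x\left(x-\pi\right)$ is nonpositive, so the comparison is really with $x\left(\pi-x\right)$, which is exactly the quotient the paper itself uses in its proof) is a fair observation but does not change the argument.
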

\begin{proof}
Let us consider the quotient
\[
f\left(x\right)\coloneqq\frac{\sin\left(x\right)}{x\left(\pi-x\right)}.
\]
Notice that $f$ can be continuously extended at $x=0$ and $x=\pi$
with values $f\left(0\right)=f\left(\pi\right)=\frac{1}{\pi}$. As
the only zeroes of $\sin\left(x\right)$ in the interval $\left[0,\pi\right]$
are $x=0$ and $x=\pi$, this means that $f$ does not change sign
in $\left[0,\pi\right]$. By Weierstrass' Theorem, as $\left[0,\pi\right]$
is compact and $f$ is continuous, we deduce that $f$ attains a maximum
and a minimum value in $\left[0,\pi\right]$. Furthermore, we know
that this minimum value must be strictly positive because $f$ has
no zeros in $\left[0,\pi\right]$. This proves the result.
\end{proof}
\begin{lem}
\label{lem:bound sines 2}Let $\varepsilon\in\left[-1,1\right]$.
We have
\[
\left|\frac{\sin\left(\left(1+\varepsilon\right)x\right)}{\sin\left(x\right)}-1\right|\lesssim\frac{\left|\varepsilon\right|}{\pi-x}\quad\forall x\in\left[0,\pi\right).
\]
\end{lem}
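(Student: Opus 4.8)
The statement to prove is
\[
\left|\frac{\sin\left(\left(1+\varepsilon\right)x\right)}{\sin\left(x\right)}-1\right|\lesssim\frac{\left|\varepsilon\right|}{\pi-x}\quad\forall x\in\left[0,\pi\right),
\]
for $\varepsilon\in\left[-1,1\right]$. The plan is to reduce everything to the elementary estimate $\left|\sin(b)-\sin(a)\right|\le\left|b-a\right|$ in the numerator and the lower bound on $\left|\sin(x)\right|$ near the endpoints supplied by Lemma \ref{lem:equivalence sin with x(x-pi)}. First I would write
\[
\frac{\sin\left(\left(1+\varepsilon\right)x\right)}{\sin\left(x\right)}-1=\frac{\sin\left(\left(1+\varepsilon\right)x\right)-\sin\left(x\right)}{\sin\left(x\right)},
\]
so that the numerator is controlled by $\left|\sin\left(\left(1+\varepsilon\right)x\right)-\sin\left(x\right)\right|\le\left|\varepsilon x\right|\le\left|\varepsilon\right|\pi$, using that $\sin$ is $1$-Lipschitz and $x\le\pi$.

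For the denominator, by Lemma \ref{lem:equivalence sin with x(x-pi)} we have $\sin\left(x\right)\gtrsim x\left(\pi-x\right)$ on $\left[0,\pi\right]$, hence
\[
\left|\frac{\sin\left(\left(1+\varepsilon\right)x\right)}{\sin\left(x\right)}-1\right|\lesssim\frac{\left|\varepsilon\right|\pi}{x\left(\pi-x\right)}=\frac{\left|\varepsilon\right|\pi}{x}\cdot\frac{1}{\pi-x}.
\]
This is the desired bound for $x$ bounded away from $0$, say $x\in\left[1,\pi\right)$, since there $\frac{\pi}{x}\le\pi$ is an absolute constant. The remaining case is $x\in\left[0,1\right]$, where $\frac{1}{\pi-x}$ is bounded above by $\frac{1}{\pi-1}$, a constant; so there it suffices to show the left-hand side is $\lesssim\left|\varepsilon\right|$ directly. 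For $x$ small one cannot afford the crude bound $\left|\sin\left(\left(1+\varepsilon\right)x\right)-\sin\left(x\right)\right|\le\left|\varepsilon\right|x$ divided by $\sin(x)\sim x$ — actually that works fine and gives exactly $\lesssim\left|\varepsilon\right|$. Indeed on $\left[0,1\right]$ we have $\sin\left(x\right)\gtrsim x$ (again by Lemma \ref{lem:equivalence sin with x(x-pi)}, or just because $\sin x\ge \tfrac{2}{\pi}x$ there), so $\left|\frac{\sin\left(\left(1+\varepsilon\right)x\right)}{\sin\left(x\right)}-1\right|\le\frac{\left|\varepsilon\right|x}{\sin(x)}\lesssim\left|\varepsilon\right|\le\frac{\left|\varepsilon\right|}{\pi-1}\cdot(\pi-1)$, which gives $\lesssim\frac{\left|\varepsilon\right|}{\pi-x}$ on that subinterval. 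Combining the two ranges $\left[0,1\right]$ and $\left[1,\pi\right)$ yields the claim on all of $\left[0,\pi\right)$.

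I do not expect any genuine obstacle here; the only mildly delicate point is making sure the argument covers $x\to\pi^{-}$ (handled by the explicit $\frac{1}{\pi-x}$ factor coming out of Lemma \ref{lem:equivalence sin with x(x-pi)}) and $x\to0^{+}$ uniformly in $\varepsilon$ (handled by the competing factor $\frac{1}{x}$ against the numerator's $x$). One subtlety worth a line of care: when $\varepsilon$ can be negative, $\left(1+\varepsilon\right)x$ still lies in $\left[0,2\pi\right)\subseteq\mathbb{R}$, and the Lipschitz bound on $\sin$ is valid on all of $\mathbb{R}$, so no case distinction on the sign of $\varepsilon$ is needed. The whole proof is three or four lines once the interval is split at, say, $x=1$.
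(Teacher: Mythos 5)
Your proof is correct, and it follows the same skeleton as the paper's — write the quantity as $\frac{\sin((1+\varepsilon)x)-\sin(x)}{\sin(x)}$ and control the denominator through Lemma \ref{lem:equivalence sin with x(x-pi)}, which yields the factor $\frac{1}{\pi-x}$ — but you handle the numerator in a genuinely simpler way. The paper factors the expression as $\frac{\sin((1+\varepsilon)x)-\sin(x)}{x}\cdot\frac{x}{\sin(x)}$ and extracts the factor $|\varepsilon|$ from the first piece by expanding $\sin$ in its Taylor series, using the identity $(1+\varepsilon)^{2n+1}-1=\varepsilon\sum_{m=0}^{2n}(1+\varepsilon)^{m}$, and verifying convergence of the resulting series via the ratio test; this is the bulk of their proof. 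You replace all of that with the one-line observation that $\sin$ is $1$-Lipschitz, so $\left|\sin((1+\varepsilon)x)-\sin(x)\right|\le|\varepsilon|x$, which is exactly the bound $|f(x)|\le|\varepsilon|$ the paper works hard for (and with constant $1$). Your argument buys brevity and transparency; the paper's buys nothing extra here for this particular statement. Two small remarks: your split of $[0,\pi)$ at $x=1$ is unnecessary, since keeping the factor $x$ in the Lipschitz bound and dividing by $\sin(x)\gtrsim x(\pi-x)$ gives $\frac{|\varepsilon|x}{x(\pi-x)}=\frac{|\varepsilon|}{\pi-x}$ uniformly on $(0,\pi)$ in one step; and at $x=0$ the quotient is formally $0/0$, a point neither you nor the paper addresses, but it is harmless (interpret by continuity or restrict to $x>0$, which is all that is ever used).
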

\begin{proof}
Notice that we can write
\begin{equation}
\frac{\sin\left(\left(1+\varepsilon\right)x\right)}{\sin\left(x\right)}-1=\frac{\sin\left(\left(1+\varepsilon\right)x\right)-\sin\left(x\right)}{\sin\left(x\right)}=\underbrace{\frac{\sin\left(\left(1+\varepsilon\right)x\right)-\sin\left(x\right)}{x}}_{\eqqcolon f\left(x\right)}\underbrace{\frac{x}{\sin\left(x\right)}}_{\eqqcolon g\left(x\right)}.\label{eq:decomposition sines}
\end{equation}
To continue, we will bound both functions separately. To find a bound
for $f\left(x\right)$, we shall express $\sin\left(\cdot\right)$
via its Taylor series. Thereby,
\begin{equation}
f\left(x\right)=\frac{\sum_{n=0}^{\infty}\frac{\left(-1\right)^{n}}{\left(2n+1\right)!}\left(1+\varepsilon\right)^{2n+1}x^{2n+1}-\sum_{n=0}^{\infty}\frac{\left(-1\right)^{n}}{\left(2n+1\right)!}x^{2n+1}}{x}=\sum_{n=0}^{\infty}\frac{\left(-1\right)^{n}}{\left(2n+1\right)!}x^{2n}\left[\left(1+\varepsilon\right)^{2n+1}-1\right].\label{eq:f}
\end{equation}
Next, we shall take advantage of the fact that
\[
z^{n}-1=\left(1+z+\dots+z^{n-1}\right)\left(z-1\right).
\]
Evaluating at $z=1+\varepsilon$, we arrive to
\begin{equation}
\left(1+\varepsilon\right)^{n}-1=\varepsilon\sum_{m=0}^{n-1}\left(1+\varepsilon\right)^{m}.\label{eq:1-eps}
\end{equation}
Making use of \eqref{eq:1-eps} in \eqref{eq:f} provides
\[
f\left(x\right)=\varepsilon\sum_{n=0}^{\infty}\frac{\left(-1\right)^{n}}{\left(2n+1\right)!}x^{2n}\left(\sum_{m=0}^{2n}\left(1+\varepsilon\right)^{m}\right).
\]
Furthermore, clearly, as we are considering $x\in\left[0,\pi\right)$,
\begin{equation}
\left|f\left(x\right)\right|\le\left|\varepsilon\right|\sum_{n=0}^{\infty}\underbrace{\frac{1}{\left(2n+1\right)!}\pi^{2n}\left(\sum_{m=0}^{2n}\left(1+\varepsilon\right)^{m}\right)}_{\eqqcolon s_{n}}.\label{eq:def sn}
\end{equation}
Now, we will prove that this series is convergent. To do this, we
will apply D'Alambert's ratio test:
\[
\frac{s_{n+1}}{s_{n}}=\frac{\frac{1}{\left(2n+3\right)!}\pi^{2n+2}\left(\sum_{m=0}^{2n+2}\left(1+\varepsilon\right)^{m}\right)}{\frac{1}{\left(2n+1\right)!}\pi^{2n}\left(\sum_{m=0}^{2n}\left(1+\varepsilon\right)^{m}\right)}.
\]
We compute
\[
\sum_{m=0}^{2n}\left(1+\varepsilon\right)^{m}=\frac{\left(1+\varepsilon\right)^{2n+1}-1}{1+\varepsilon-1},\quad\sum_{m=0}^{2n+2}\left(1+\varepsilon\right)^{m}=\frac{\left(1+\varepsilon\right)^{2n+3}-1}{1+\varepsilon-1}.
\]
Thus,
\begin{equation}
\frac{s_{n+1}}{s_{n}}=\frac{\pi^{2}\left[\left(1+\varepsilon\right)^{2n+3}-1\right]}{\left(2n+3\right)\left(2n+2\right)\left[\left(1+\varepsilon\right)^{2n+1}-1\right]}.\label{eq:quotient sn}
\end{equation}
To continue, consider the function
\begin{equation}
\begin{aligned}h\left(\varepsilon\right) & \coloneqq\frac{\left(1+\varepsilon\right)^{2n+3}-1}{\left(1+\varepsilon\right)^{2n+1}-1}=\frac{\left(1+\varepsilon\right)^{2n+3}-\left(1+\varepsilon\right)^{2}+\left(1+\varepsilon\right)^{2}-1}{\left(1+\varepsilon\right)^{2n+1}-1}=\\
 & =\left(1+\varepsilon\right)^{2}\frac{\left(1+\varepsilon\right)^{2n+1}-1}{\left(1+\varepsilon\right)^{2n+1}-1}+\frac{\left(1+\varepsilon\right)^{2}-1}{\left(1+\varepsilon\right)^{2n+1}-1}=\\
 & =\left(1+\varepsilon\right)^{2}+\frac{\left(1+\varepsilon\right)^{2}-1}{\left(1+\varepsilon\right)^{2n+1}-1}.
\end{aligned}
\label{eq:h(eps)}
\end{equation}
It is evident that $\left(1+\varepsilon\right)^{2}$ is bounded for
$\varepsilon\in\left[-1,1\right]$. Now, for the other summand, let
us distinguish two cases. If $\varepsilon>0$, we have $1+\varepsilon>0$
and, consequently, as $n\ge1$,
\begin{equation}
\left(1+\varepsilon\right)^{2}\le\left(1+\varepsilon\right)^{2n+1}\iff\left(1+\varepsilon\right)^{2}-1\le\left(1+\varepsilon\right)^{2n+1}-1\iff\frac{\left(1+\varepsilon\right)^{2}-1}{\left(1+\varepsilon\right)^{2n+1}-1}\le1.\label{eq:h(eps) positive}
\end{equation}
On the other hand, if $\varepsilon<0$, we have $0\le1+\varepsilon<1$
and, as a consequence, as $n\ge1$,
\begin{equation}
\left(1+\varepsilon\right)^{2}\ge\left(1+\varepsilon\right)^{2n+1}\iff1-\left(1+\varepsilon\right)^{2}\le1-\left(1+\varepsilon\right)^{2n+1}\iff\frac{1-\left(1+\varepsilon\right)^{2}}{1-\left(1+\varepsilon\right)^{2n+1}}\le1.\label{eq:h(eps) negative}
\end{equation}
In other words, equations \eqref{eq:h(eps) positive} and \eqref{eq:h(eps) negative}
prove that
\[
\frac{\left(1+\varepsilon\right)^{2}-1}{\left(1+\varepsilon\right)^{2n+1}-1}\le1\quad\forall\varepsilon\in\left[-1,1\right]\setminus\left\{ 0\right\} .
\]
Taking the limit $\varepsilon\to0$, we deduce that the bound must
also be true for $\varepsilon=0$. Hence, by equation \eqref{eq:h(eps)},
it is true that $h\left(\varepsilon\right)\leq\left(1+1\right)^{2}+1=5$
$\forall\varepsilon\in\left[-1,1\right]$. Making use of this information
in equation \eqref{eq:quotient sn}, we infer that
\[
\frac{s_{n+1}}{s_{n}}\leq\frac{5\pi^{2}}{\left(2n+3\right)\left(2n+2\right)}\xrightarrow[n\to\infty]{}0.
\]
Consequently, the ratio test assures that the series $\sum_{n=0}^{\infty}s_{n}$
converges (absolutely) and, in this way, in the view of \eqref{eq:def sn},
it can be concluded that
\begin{equation}
\left|f\left(x\right)\right|\lesssim\left|\varepsilon\right|\quad\forall x\in\left[0,\pi\right].\label{eq:bound f}
\end{equation}
As for $g\left(x\right)$, by Lemma \ref{lem:equivalence sin with x(x-pi)},
we have
\begin{equation}
g\left(x\right)=\frac{x}{\sin\left(x\right)}\lesssim\frac{x}{x\left(\pi-x\right)}=\frac{1}{\pi-x}.\label{eq:bound g}
\end{equation}
Substituting \eqref{eq:bound f} and \eqref{eq:bound g} in \eqref{eq:decomposition sines}
provides the result.
\end{proof}
\begin{lem}
\label{lem:inverse of primitive of 1/sin(x)}Let $F$ denote a primitive
of $\frac{1}{\sin\left(x\right)}$. Then, if $x,y\in\left[0,\pi\right]$
and $\left|F\left(y\right)-F\left(x\right)\right|<1$, we have
\[
\left|y-x\right|\le\frac{\sin\left(x\right)\left|F\left(y\right)-F\left(x\right)\right|}{1-\left|F\left(y\right)-F\left(x\right)\right|}.
\]
In figure \ref{fig:primitive 1/sin(x)}, we can see the graph of $F$.
\end{lem}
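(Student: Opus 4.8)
Lemma (statement to prove). Let $F$ denote a primitive of $\frac{1}{\sin(x)}$. Then, if $x,y\in\left[0,\pi\right]$ and $\left|F\left(y\right)-F\left(x\right)\right|<1$, we have
\[
\left|y-x\right|\le\frac{\sin\left(x\right)\left|F\left(y\right)-F\left(x\right)\right|}{1-\left|F\left(y\right)-F\left(x\right)\right|}.
\]

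My proof plan.

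The idea is to estimate $|y-x|$ by comparing it with $|F(y)-F(x)|$ via the mean value theorem, controlling the derivative $F'=1/\sin$ along the segment $[x,y]$. Write, by the mean value theorem, $F(y)-F(x)=\frac{y-x}{\sin(\xi)}$ for some $\xi$ between $x$ and $y$, hence $|y-x|=\sin(\xi)\,|F(y)-F(x)|$. The task is then to bound $\sin(\xi)$ in terms of $\sin(x)$: I would show that $\sin(\xi)\le \frac{\sin(x)}{1-|F(y)-F(x)|}$ for every $\xi$ on the segment, which gives the claim immediately. Since $\sin$ is concave on $[0,\pi]$ and positive in the interior, the maximum of $\sin$ over $[x,y]$ is attained at one of the endpoints or, if the interval straddles $\pi/2$, possibly at $\pi/2$; so it suffices to bound $\sin(y)$ (the only genuinely dangerous endpoint) and, in the straddling case, $\sin(\pi/2)=1$, by $\frac{\sin(x)}{1-|F(y)-F(x)|}$.

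The key estimate to establish is therefore: if $u,v\in[0,\pi]$ with $|F(v)-F(u)|<1$ then $\sin(v)\le \frac{\sin(u)}{1-|F(v)-F(u)|}$. I would prove this by integrating the ODE satisfied by $\sin$ along level sets of $F$, exactly as in Lemma~\ref{lem:the good ODE}: the flow $\dot z=\sin(z)$ has $F$ as a (local) ``time'' coordinate, and along a trajectory $\frac{\mathrm d}{\mathrm dt}\ln(\sin(z))=\cos(z)$, which is bounded by $1$ in absolute value. Concretely, parametrizing the segment from $u$ to $v$ by the trajectory of $\dot z=\mathrm{sgn}(F(v)-F(u))\sin(z)$, one has $\bigl|\ln\sin(v)-\ln\sin(u)\bigr|=\Bigl|\int \cos(z)\,\mathrm dt\Bigr|\le |F(v)-F(u)|=:s<1$, so $\sin(v)\le e^{s}\sin(u)$. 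Finally I would use the elementary inequality $e^{s}\le \frac{1}{1-s}$ for $s\in[0,1)$ to get $\sin(v)\le\frac{\sin(u)}{1-s}$. Combining this (applied with $u=x$ and $v$ the endpoint realizing $\max_{[x,y]}\sin$, together with the trivial bound at $\pi/2$ in the straddling case, where moreover $\sin(\pi/2)=1\le e^{|F(\pi/2)-F(x)|}\sin(x)\le\frac{\sin(x)}{1-s}$ since $|F(\pi/2)-F(x)|\le s$) with $|y-x|=\sin(\xi)|F(y)-F(x)|\le\bigl(\max_{[x,y]}\sin\bigr)|F(y)-F(x)|$ yields the stated bound.

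The main obstacle — really the only subtlety — is the behavior near the endpoints $0$ and $\pi$, where $F$ blows up and $\sin$ vanishes, so one must be careful that the mean-value-theorem argument and the logarithmic-derivative computation remain valid (i.e.\ that $\sin$ does not vanish on the open segment). If $x$ or $y$ equals $0$ or $\pi$, then $F(x)$ or $F(y)=\pm\infty$, so $|F(y)-F(x)|<1$ forces both $x,y$ to lie in the open interval $(0,\pi)$; on any such closed subinterval $\sin>0$ and everything above is rigorous. The concavity argument for locating $\max_{[x,y]}\sin$ is routine, and the passage from $e^{s}$ to $\frac{1}{1-s}$ is a one-line calculus fact ($1-s\le e^{-s}$), so no heavy computation is needed.
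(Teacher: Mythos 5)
Your proof is correct, and it reaches the same inequality as the paper by a somewhat different mechanism. Both arguments begin with the mean value theorem to write $\left|y-x\right|=\sin\left(\xi\right)\left|F\left(y\right)-F\left(x\right)\right|$ (you apply it to $F$ at $x,y$; the paper applies it to $F^{-1}$ at $F\left(x\right),F\left(y\right)$, which is the same step in disguise), and both correctly note that the hypothesis $\left|F\left(y\right)-F\left(x\right)\right|<1$ confines everything to the open interval where $\sin>0$. The difference is in how $\sin$ at the intermediate point is compared with $\sin\left(x\right)$: the paper uses the additive bound $\sin\left(\xi\right)\le\sin\left(x\right)+\left|y-x\right|$ (Lipschitz continuity of $\sin$ together with monotonicity of $F^{-1}$) and then solves the implicit inequality $\left|y-x\right|\le\left(\sin\left(x\right)+\left|y-x\right|\right)\left|F\left(y\right)-F\left(x\right)\right|$ for $\left|y-x\right|$; you instead prove the multiplicative, Gronwall-type bound $\sin\left(v\right)\le\mathrm{e}^{\left|F\left(v\right)-F\left(u\right)\right|}\sin\left(u\right)$ from $\left|\ln\sin\left(v\right)-\ln\sin\left(u\right)\right|=\left|\int_{u}^{v}\frac{\cos z}{\sin z}\,\mathrm{d}z\right|\le\left|F\left(v\right)-F\left(u\right)\right|$, and then convert with $\mathrm{e}^{s}\le\frac{1}{1-s}$ for $s\in\left[0,1\right)$, which yields the bound directly with no implicit inequality to solve; this also ties the lemma more visibly to the flow $\dot{z}=\sin\left(z\right)$ of Lemma \ref{lem:the good ODE}. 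The paper's version is slightly shorter and more elementary; yours gives the cleaner intermediate statement $\sin\left(v\right)\le\mathrm{e}^{\left|F\left(v\right)-F\left(u\right)\right|}\sin\left(u\right)$, which is of independent use. One small simplification: the detour through $\max_{\left[x,y\right]}\sin$, concavity, and the case where the segment straddles $\frac{\pi}{2}$ is unnecessary — since $F$ is monotone, the MVT point $\xi$ satisfies $\left|F\left(\xi\right)-F\left(x\right)\right|\le\left|F\left(y\right)-F\left(x\right)\right|$, so your key estimate applied directly to $\xi$ already gives $\sin\left(\xi\right)\le\frac{\sin\left(x\right)}{1-\left|F\left(y\right)-F\left(x\right)\right|}$ and the conclusion follows at once.
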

\begin{figure}[h]
\begin{centering}
\includegraphics[width=0.7\linewidth]{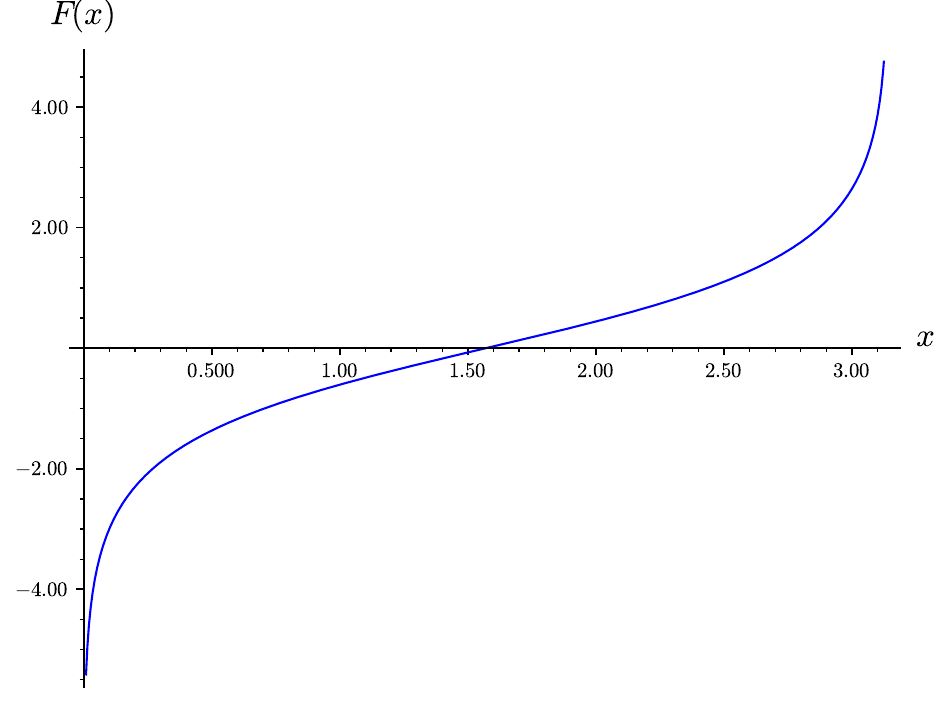}
\par\end{centering}
\caption{\label{fig:primitive 1/sin(x)}Graph of a primitive of $\frac{1}{\sin\left(x\right)}$.
Notice that the inverse of $F$ is well defined and has small derivative
near $0$ and $\pi$. This is the main idea behind the proof of Lemma
\ref{lem:inverse of primitive of 1/sin(x)}.}

\end{figure}

\begin{proof}
Consider $\frac{\mathrm{d}F}{\mathrm{d}x}=\frac{1}{\sin\left(x\right)}$
in the interval $x\in\left[0,\pi\right]$. As $\left|\sin\left(x\right)\right|$
is bounded above, we deduce that $\frac{\mathrm{d}F}{\mathrm{d}x}$
is never zero. Furthermore, $F$ is only ill-defined at $x=0$ and
$x=\pi$ and, in both cases, we have $\lim_{x\to0^{+}}\frac{\mathrm{d}F}{\mathrm{d}x}\left(x\right)=+\infty=\lim_{x\to\pi^{-}}\frac{\mathrm{d}F}{\mathrm{d}x}\left(x\right)$.
Therefore, we deduce that $\frac{\mathrm{d}F}{\mathrm{d}x}$ is always
strictly positive. In this way, $F$ is strictly increasing and, consequently,
has a well defined inverse $F^{-1}$ in the interval $\left[0,\pi\right]$.
By the Inverse Function Theorem, we have
\[
\frac{\mathrm{d}F^{-1}}{\mathrm{d}x}\left(F\left(x\right)\right)=\frac{1}{\frac{\mathrm{d}F}{\mathrm{d}x}\left(x\right)}=\sin\left(x\right).
\]
Let $x,x+h\in\left[0,\pi\right]$. The Mean Value Theorem assures
us that there exists 
\[
\xi\in\left[\min\left\{ F\left(x\right),F\left(x+h\right)\right\} ,\max\left\{ F\left(x\right),F\left(x+h\right)\right\} \right]
\]
such that
\[
\begin{aligned}h & =x+h-x=F^{-1}\left(F\left(x+h\right)\right)-F^{-1}\left(F\left(x\right)\right)=\frac{\mathrm{d}F^{-1}}{\mathrm{d}x}\left(\xi\right)\left(F\left(x+h\right)-F\left(x\right)\right)=\\
 & =\sin\left(F^{-1}\left(\xi\right)\right)\left(F\left(x+h\right)-F\left(x\right)\right).
\end{aligned}
\]
Since $\sin\left(\cdot\right)$ is a Lipschitz function with constant
$1$ and $F^{-1}$ is increasing (because $\frac{\mathrm{d}F^{-1}}{\mathrm{d}x}\left(x\right)>0$
$\forall x\in\left(0,\pi\right)$), we may write
\[
\begin{aligned}\left|h\right| & \le\left|\sin\left(F^{-1}\left(F\left(x\right)\right)\right)+\left|F^{-1}\left(\xi\right)-F^{-1}\left(F\left(x\right)\right)\right|\right|\left|F\left(x+h\right)-F\left(x\right)\right|\le\\
 & \le\left|\sin\left(F^{-1}\left(F\left(x\right)\right)\right)+\left|F^{-1}\left(F\left(x+h\right)\right)-F^{-1}\left(F\left(x\right)\right)\right|\right|\left|F\left(x+h\right)-F\left(x\right)\right|=\\
 & \le\left(\sin\left(x\right)+\left|h\right|\right)\left|F\left(x+h\right)-F\left(x\right)\right|.
\end{aligned}
\]
Solving for $\left|h\right|$ leads to
\[
\left|h\right|\le\frac{\sin\left(x\right)\left|F\left(x+h\right)-F\left(x\right)\right|}{1-\left|F\left(x+h\right)-F\left(x\right)\right|},
\]
which makes sense as long as $\left|F\left(x+h\right)-F\left(x\right)\right|<1$.
Rewriting $y=x+h$, we obtain the result of the statement.
\end{proof}
\begin{lem}
\label{lem:trap}Let $T>0$, $\left[a,c\right]\subseteq\mathbb{R}$
and $x\left(t\right)$ be the solution of the initial value problem
\[
\frac{\mathrm{d}x}{\mathrm{d}t}\left(t\right)=F\left(t,x\left(t\right)\right),\quad x\left(0\right)=a,
\]
where $F:\left[0,\infty\right)\times\left[a,c\right]\to\mathbb{R}$
is locally Lipschitz in its spatial variable. If
\begin{enumerate}
\item $F\left(t,a\right)>0$ $\forall t\in\left[0,T\right]$,
\item $F\left(t,z\right)\le\varepsilon$ $\forall t\in\left[0,T\right]$
and $\forall z\in\left[b,c\right]$, where $b\in\left(a,c\right)$,
\end{enumerate}
then, $x\left(t\right)\in\left[a,b+\varepsilon t\right]$ $\forall t\in\left[0,T\right]$
as long as $b+\varepsilon t\le c$.
\end{lem}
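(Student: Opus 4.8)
The plan is to prove Lemma \ref{lem:trap} as a pair of barrier (``trapping'') arguments — one for each endpoint of the interval $[a,b+\varepsilon t]$ — showing that the time‑dependent region $\{(t,z):a\le z\le b+\varepsilon t\}$ is forward invariant for the flow, together with a continuation argument that guarantees the solution actually exists on the whole relevant range. Since $F$ is locally Lipschitz in its spatial variable, Picard--Lindelöf produces a unique maximal solution $x$; the a priori bounds $x(t)\in[a,c]$ that the barrier arguments yield keep the trajectory in the domain of $F$, so by the standard continuation criterion $x$ cannot stop existing before the endpoint of the interval where $b+\varepsilon t\le c$, and it suffices to establish $x(t)\ge a$ and $x(t)\le b+\varepsilon t$ on the maximal interval.

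For the lower barrier $x(t)\ge a$ I would use hypothesis 1 only. Suppose towards a contradiction that $x$ dips below $a$ somewhere in $[0,T]$, and set $\tau=\sup\{r:\ x(s)\ge a\ \text{for all }s\in[0,r]\}$; then $\tau<T$, and by continuity $x(\tau)=a$, so $\dot x(\tau)=F(\tau,a)>0$. This forces $x(s)>a$ on a right neighbourhood of $\tau$, contradicting the definition of $\tau$. Hence $x(t)\ge a$ throughout; in particular $x(0)=a<b=b+\varepsilon\cdot 0$, so the trajectory starts strictly below the upper barrier.

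The upper barrier $x(t)\le b+\varepsilon t$ is the substantive part, and the delicate point is that hypothesis 2 controls $F$ only on the slab $z\in[b,c]$, so there is no information while $x\in(a,b)$. The trick is: fix $t$ with $b+\varepsilon s\le c$ for $s\in[0,t]$, and let $t_*=\sup\{s\in[0,t]:x(s)\le b\}$, which contains $0$ since $x(0)=a<b$. If $t_*=t$, then $x(t)\le b\le b+\varepsilon t$ and we are done. Otherwise $t_*<t$, and by continuity $x(t_*)=b$ while $x(s)\ge b$ on $[t_*,t]$; a short bootstrap, comparing $x(s)$ with the affine function $b+\varepsilon(s-t_*)$ and invoking $b+\varepsilon t\le c$, shows that $x(s)$ also stays $\le c$ on $[t_*,t]$, so $x(s)\in[b,c]$ there and hence $\dot x(s)=F(s,x(s))\le\varepsilon$ on all of $[t_*,t]$. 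Integrating gives $x(t)\le x(t_*)+\varepsilon(t-t_*)=b+\varepsilon(t-t_*)\le b+\varepsilon t$, using $\varepsilon\ge 0$ and $t_*\ge 0$.

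The main obstacle I anticipate is the bookkeeping inside that last bootstrap: hypothesis 2 is a non‑strict inequality $F\le\varepsilon$, so ``$x$ stays in $[b,c]$'' must be run as a first‑exit argument, and one must check that an exit through the upper face $z=c$ can only happen exactly at the endpoint where $b+\varepsilon t=c$ (because $x(s)\le b+\varepsilon(s-t_*)\le b+\varepsilon t\le c$ on the interval considered), which is excluded. I also note that the argument — and indeed the statement, for the interval $[a,b+\varepsilon t]$ to be nonempty — tacitly uses $\varepsilon\ge 0$; this is the regime in which the lemma will be applied, with $\varepsilon$ playing the role of a small positive error bound.
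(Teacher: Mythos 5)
Your proof is correct and follows essentially the same strategy as the paper: hypothesis 1 makes $z=a$ a lower barrier, and hypothesis 2 is used by integrating $\dot x\le\varepsilon$ while the trajectory sits in $[b,c]$, with the condition $b+\varepsilon t\le c$ keeping it inside that slab. Your choice of the \emph{last} time $t_*$ at which $x\le b$ (rather than the paper's first crossing followed by an informal iteration over subsequent crossings) and your explicit continuation remark are minor streamlinings, not a different route; only note that the boundary case $b+\varepsilon t=c$ is allowed by the statement, but your estimate $x(s)\le b+\varepsilon(s-t_*)$ already covers it since $F\le\varepsilon$ holds on the closed slab up to $z=c$.
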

\begin{proof}
As $F\left(t,a\right)>0$ $\forall t\in\left[0,T\right]$, any solution
that attains the value $a$ must be increasing when it attains it.
This means that, if a solution has crossed this value once, it may
never cross it again. Consequently, $x\left(t\right)\ge a$ $\forall t\in\left[0,T\right]$.
To prove that $x\left(t\right)\le b+\varepsilon t$ $\forall t\in\left[0,T\right]$,
consider two cases:
\begin{itemize}
\item $x\left(t\right)<b$ $\forall t\in\left[0,T\right]$. Then, we are
done.
\item $x\left(t^{*}\right)=b$ for some $t^{*}\in\left[0,T\right]$. If
the equation $x\left(t^{*}\right)=b$ has multiple solutions, let
$t^{*}$ be the first one. Then, we can integrate to obtain
\[
x\left(t\right)-b=x\left(t\right)-x\left(t^{*}\right)=\int_{t^{*}}^{t}\frac{\mathrm{d}x}{\mathrm{d}t}\left(s\right)\mathrm{d}s=\int_{t^{*}}^{t}F\left(s,x\left(s\right)\right)\mathrm{d}s\le\varepsilon\left(t-t^{*}\right)\le\varepsilon t.
\]
This argument is only valid as long as $b\le x\left(t\right)\le c$.
We can guarantee this last inequality if $b+\varepsilon t\le c$.
Concerning the first one ($b\le x\left(t\right)$), if, for some time
$t^{**}\in\left[t^{*},T\right]$, $x\left(t\right)$ crosses $b$
from the right to the left, then, we will trivially have $x\left(t\right)\le b\le b+\varepsilon t$
for the time that $x\left(t\right)$ remains to the left of $b$.
If $x\left(t\right)$ were to cross $b$ from the left to the right
again (let us say, at a certain $t^{***}\in\left[t^{**},T\right]$),
we could repeat the same argument to show that $x\left(t\right)\le b+\varepsilon t$
for the time that $x\left(t\right)$ remains to the right of $b\left(t\right)$.
In other words, iterating these arguments as needed, we can see that
the worst case is that $x\left(t\right)$ crosses $b$ from the left
to the right at $t=0$ and remains in that region growing at the maximum
speed $\varepsilon$, which is exactly what the term $b+\varepsilon t$
represents.
\end{itemize}
\end{proof}
\begin{prop}
\label{prop:JI convergence to ideal}Let $n\in\mathbb{N}$ with $n\ge2$
and $\beta,\beta',\beta''\in\left(0,1\right)$. Moreover, let
\begin{itemize}
\item $\Xi^{\left(n\right)}\left(t\right)\coloneqq\phi_{1}^{\left(n\right)}\left(t,0\right)-\phi_{1}^{\left(n-1\right)}\left(t,0\right)$
(for the definition of $\Xi^{\left(1\right)}\left(t\right)$, take
$\phi_{1}^{\left(0\right)}\left(t,0\right)\equiv0$),
\item $F_{n}\left(\tau\right)$ be the solution of the equation presented
in Lemma \ref{lem:the good ODE} with initial condition $F_{n}\left(0\right)=a_{n-1}\left(1\right)\Xi^{\left(n\right)}\left(t_{n}\right)$,
\item $\Xi_{0}^{\left(n\right)}\left(t\right)\coloneqq\frac{1}{a_{n-1}\left(1\right)}F_{n}\left(B_{n-1}\left(1\right)a_{n-1}\left(1\right)b_{n-1}\left(1\right)\left(t-t_{n}\right)\right)$.
\end{itemize}
Provided that $C$ is big enough (let us say $C\ge\Upsilon\left(\beta,\beta',\beta'',\delta\right)$),
there is $t_{n}^{*}\in\left[t_{n}+\frac{1-t_{n}}{2},1\right]$ (which
depends on $\beta,\beta',\beta''$ and $\delta$) such that
\begin{enumerate}
\item ~
\[
a_{n-1}\left(1\right)\left|\Xi^{\left(n\right)}\left(t\right)-\Xi_{0}^{\left(n\right)}\left(t\right)\right|\leq C^{-\beta\beta'\delta\gamma\left(\frac{1}{1-\gamma}\right)^{n-1}}\sin\left(a_{n-1}\left(1\right)\Xi_{0}^{\left(n\right)}\left(t\right)\right)\quad\forall t\in\left[t_{n},t_{n}+\frac{1-t_{n}}{2}\right],
\]
\item ~
\[
a_{n-1}\left(1\right)\left|\Xi^{\left(n\right)}\left(t\right)-\Xi_{0}^{\left(n\right)}\left(t\right)\right|\leq C^{-\beta\beta'\left(1-\beta''\right)\delta\gamma\left(\frac{1}{1-\gamma}\right)^{n-1}}\sin\left(a_{n-1}\left(1\right)\Xi_{0}^{\left(n\right)}\left(t\right)\right)\quad\forall t\in\left[t_{n}+\frac{1-t_{n}}{2},t_{n}^{*}\right],
\]
\item ~
\[
a_{n-1}\left(1\right)\left|\Xi^{\left(n\right)}\left(t\right)-\Xi_{0}^{\left(n\right)}\left(t\right)\right|\leq14C^{-\beta\beta'\min\left\{ \beta'',1-\beta''\right\} \delta\gamma\left(\frac{1}{1-\gamma}\right)^{n-1}}\quad\forall t\in\left[t_{n}^{*},1\right].
\]
\end{enumerate}
Consequently, taking $\beta''=\frac{1}{2}$,
\[
a_{n-1}\left(1\right)\left|\Xi^{\left(n\right)}\left(t\right)-\Xi_{0}^{\left(n\right)}\left(t\right)\right|\leq14C^{-\frac{1}{2}\beta\beta'\delta\gamma\left(\frac{1}{1-\gamma}\right)^{n-1}}\quad\forall t\in\left[t_{n},1\right].
\]
\end{prop}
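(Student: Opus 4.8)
The final estimate is an immediate consequence of parts (1)--(3): in (1) and (2) the bound on $a_{n-1}(1)\lvert\Xi^{(n)}-\Xi_0^{(n)}\rvert$ is a multiple of $\sin(a_{n-1}(1)\Xi_0^{(n)}(t))\le 1$ of a superexponentially small factor, while (3) is already of the asserted shape; taking $\beta''=\tfrac12$ one has $\min\{\beta'',1-\beta''\}=\tfrac12$ and all three contributions are dominated by $14\,C^{-\frac12\beta\beta'\delta\gamma(1/(1-\gamma))^{n-1}}$ on all of $[t_n,1]$. So the real task is (1)--(3), and the plan is a bootstrap comparison between the genuine equation \eqref{eq:ODE JI} for $\Xi^{(n)}$ and the inverted degenerate half-pendulum equation \eqref{eq:ODE JI0} for $\Xi_0^{(n)}$, organized around the primitive of $1/\sin$ so that Lemma \ref{lem:inverse of primitive of 1/sin(x)} can be applied.

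First I would rescale: set $\hat\theta(t):=a_{n-1}(1)\Xi^{(n)}(t)$ and $\hat\theta_0(t):=a_{n-1}(1)\Xi_0^{(n)}(t)=F_n\!\big(M_{n-1}(t-t_n)\big)$, using $B_{n-1}(1)a_{n-1}(1)b_{n-1}(1)=M_{n-1}$ (Choices \ref{choice:Bnanbn}, \ref{choice:Mn}), and note $\hat\theta_0(t_n)=\hat\theta(t_n)$ and, by Lemma \ref{lem:the good ODE}, $\dot{\hat\theta}_0=M_{n-1}\sin(\hat\theta_0)$. Differentiating \eqref{eq:ODE JI} and multiplying by $a_{n-1}(1)$ gives $\dot{\hat\theta}=a_{n-1}(1)B_{n-1}(t)b_{n-1}(t)\sin\!\big(a_{n-1}(t)\Xi^{(n)}(t)\big)+a_{n-1}(1)I_2=:M_{n-1}\sin(\hat\theta)+\mathcal E(t)$, and I would estimate $\mathcal E$ from three sources. (i) Freezing $a_{n-1}(t),b_{n-1}(t),B_{n-1}(t)$ at $t=1$: by Proposition \ref{prop:time convergence} (applicable since $t\in[t_n,1]=[t_{(n-1)+1},1]$) each ratio is $1+O\big(C^{-\beta\delta\gamma(1/(1-\gamma))^{n-1}}\big)$, producing a multiplicative error $\lesssim(\text{superexp small})\,M_{n-1}\sin(\hat\theta)$, plus, from the distortion of the argument of the sine, a term bounded via Lemma \ref{lem:bound sines 2} by $\lesssim(\text{superexp small})\,M_{n-1}\,\sin(\hat\theta)/(\pi-\hat\theta)$. (ii) The tail $I_2$: since its two sine arguments differ exactly by $\Xi^{(n)}(t)$ and $\sin$ is $1$-Lipschitz, $\lvert a_{n-1}(1)I_2\rvert\lesssim \hat\theta\sum_{m=1}^{n-2}B_m(t)a_m(t)b_m(t)\lesssim \hat\theta\,M_{n-2}$ (Proposition \ref{prop:time convergence} and Lemma \ref{lem:estimate sum superexponential}), and $M_{n-2}/M_{n-1}=C^{-\delta\gamma(1/(1-\gamma))^{n-1}}$. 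To legitimize the occurrences of $\hat\theta$ in these error bounds I would run the argument as a bootstrap: assuming $\hat\theta\in[0,\pi+1]$ on $[t_n,1]$, Lemma \ref{lem:trap} (drift strictly positive at $\hat\theta=0$, and $\lesssim\varepsilon$ near $\hat\theta=\pi$) self-improves this, since $\dot{\hat\theta}\le M_{n-1}\sin(\hat\theta)+\lvert\mathcal E\rvert$ and $M_{n-1}(1-t_n)=2\hat{\hat{t}}_{\max}^{\left(n\right)}\approx k_{\max}\ln(C)(1/(1-\gamma))^n$ by \eqref{eq:periodicity of the evolution} and Lemma \ref{lem:arccosh by ln}.

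With $\dot{\hat\theta}=M_{n-1}\sin(\hat\theta)+\mathcal E$ in hand, I would pass to $u:=F(\hat\theta)$ with $F$ a primitive of $1/\sin$, so $\dot u=M_{n-1}+\mathcal E/\sin(\hat\theta)$ and $\tfrac{d}{dt}F(\hat\theta_0)=M_{n-1}$ exactly, hence (equality at $t_n$)
\[
F(\hat\theta(t))-F(\hat\theta_0(t))=\int_{t_n}^{t}\frac{\mathcal E(s)}{\sin(\hat\theta(s))}\,ds .
\]
Here $\lvert\mathcal E/\sin(\hat\theta)\rvert\lesssim(\text{superexp small})\,M_{n-1}\big(1+1/(\pi-\hat\theta)\big)+M_{n-2}\,\hat\theta/\sin(\hat\theta)$. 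On $[t_n,t_n+\tfrac{1-t_n}{2}]$ one has $\hat\theta_0\le\pi/2$ and, after the bootstrap, $\hat\theta$ stays away from $\pi$, so $\pi-\hat\theta\gtrsim1$ and $\hat\theta/\sin(\hat\theta)\lesssim1$; integrating over a time span $\lesssim 1-t_n$ and absorbing the polynomial factor $M_{n-1}(1-t_n)\approx k_{\max}\ln(C)(1/(1-\gamma))^n$ into a slightly smaller exponent $\beta'$ (Lemma \ref{lem:exponetial superexponential bound}, with $a=1/(1-\gamma)$) yields $\lvert F(\hat\theta)-F(\hat\theta_0)\rvert\lesssim C^{-\beta\beta'\delta\gamma(1/(1-\gamma))^{n-1}}<1$; Lemma \ref{lem:inverse of primitive of 1/sin(x)} then converts this into part (1). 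Part (2) is the same computation continued past the peak $t_n+\tfrac{1-t_n}{2}$ up to the largest time $t_n^{*}\in[t_n+\tfrac{1-t_n}{2},1]$ at which $\pi-\hat\theta_0$ is still controlled from below at level $C^{-\beta''(\cdots)}$, the loss $1-\beta''$ in the exponent coming from $\sin(\hat\theta_0)/(\pi-\hat\theta_0)$ being allowed to reach that size.

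The main obstacle is part (3), the final stretch $[t_n^{*},1]$, where $\hat\theta_0$ (and $\hat\theta$) climb from $\pi/2$ toward $\pi$ and $\sin(\hat\theta_0)$ becomes superexponentially small — it returns to $\sin(a_{n-1}(1)\Xi^{(n)}(t_n))=C^{-k_{\max}(1/(1-\gamma))^n}$ at $t=1$ by \eqref{eq:periodicity of the evolution} — so the $F$-coordinate estimate degenerates and one must bound the absolute error directly. The saving features are that on $(\pi/2,\pi)$ the map $\hat\theta\mapsto\sin(\hat\theta)$ is \emph{decreasing}, so the linearization of $\tfrac{d}{dt}(\hat\theta-\hat\theta_0)=M_{n-1}\big(\sin(\hat\theta)-\sin(\hat\theta_0)\big)+\mathcal E$ is non-amplifying, and that there $\sin(\hat\theta)/(\pi-\hat\theta)\lesssim1$, so $\lvert\mathcal E\rvert\lesssim(\text{superexp small})\,M_{n-1}+M_{n-2}\hat\theta$ and $\int_{t_n^{*}}^{1}\lvert\mathcal E\rvert$ is superexponentially small (again $M_{n-1}(1-t_n)\approx k_{\max}\ln(C)(1/(1-\gamma))^n$). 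A Grönwall comparison exploiting the sign of $\sin(\hat\theta)-\sin(\hat\theta_0)$, started from the good bound on $\lvert\hat\theta(t_n^{*})-\hat\theta_0(t_n^{*})\rvert$ inherited from part (2), then gives $a_{n-1}(1)\lvert\Xi^{(n)}-\Xi_0^{(n)}\rvert\le 14\,C^{-\beta\beta'\min\{\beta'',1-\beta''\}\delta\gamma(1/(1-\gamma))^{n-1}}$ on $[t_n^{*},1]$, the universal $14$ absorbing the contributions of the transitional pieces and the bootstrap slack. Choosing $\beta''=\tfrac12$ and combining (1)--(3) produces the stated uniform estimate on $[t_n,1]$.
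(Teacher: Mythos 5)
Your plan for parts (1) and (2) is essentially the paper's own proof: pass to the primitive $F$ of $1/\sin$ so that the ideal flow becomes exactly linear in the $F$-coordinate, split the error into the ``frozen-coefficient'' part (controlled by Proposition \ref{prop:time convergence} together with Lemma \ref{lem:bound sines 2}, producing the $1/(\pi-\hat\theta)$ weight) and the tail $I_{2}$ (Lipschitz bound plus Lemma \ref{lem:estimate sum superexponential}), integrate over the layer time $1-t_{n}\approx 2\hat{\hat{t}}^{(n)}_{\max}/M_{n-1}$, absorb the resulting $\ln(C)\left(\tfrac{1}{1-\gamma}\right)^{n}$ factor via Lemma \ref{lem:exponetial superexponential bound} at the cost of the factor $\beta'$ (resp.\ $\beta''$), and convert back with Lemma \ref{lem:inverse of primitive of 1/sin(x)}; the self-consistency of the threshold $\pi-\hat\theta\gtrsim 1$ on the first half-interval is the same argument as the paper's choice of $K_{n}$. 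Two remarks. First, you define $t_{n}^{*}$ through the ideal trajectory $\hat\theta_{0}$ while the paper defines it through the real one; this is harmless but introduces a mild circularity (the $F$-estimate on $[t_{n}+\tfrac{1-t_{n}}{2},t_{n}^{*}]$ needs $1/(\pi-\hat\theta)\le K_{n}$ for the \emph{real} trajectory), which your bootstrap/continuity set-up must be made to close explicitly. Second, and this is the only genuine divergence: for part (3) the paper does not compare the two trajectories dynamically at all — after establishing, via the positivity of the leading term at the re-entry level, the smallness of the other terms beyond $\pi$, and Lemma \ref{lem:trap}, that the real trajectory is pinned within a superexponentially small neighbourhood of $\pi$ on $[t_{n}^{*},1]$, and noting that the monotone ideal trajectory is pinned there too, it concludes by the triangle inequality. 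Your signed Grönwall (using $\cos\le 0$ between the two trajectories so the comparison is non-amplifying) is a valid alternative finish, but note that it requires exactly the same trap-type input you already invoke — the drift positivity when $\hat\theta$ returns to its $t_{n}^{*}$-level and the bound on the overshoot past $\pi$ — to guarantee both solutions remain in the non-amplifying region; and once that confinement is in hand, both trajectories are individually within $O\!\left(C^{-\beta\beta'\min\{\beta'',1-\beta''\}\delta\gamma\left(\frac{1}{1-\gamma}\right)^{n-1}}\right)$ of $\pi$, so the comparison follows without any Grönwall — which is precisely the paper's ``sandwich''. So your extra step is correct but redundant rather than a simplification; everything else matches the paper's route, including the crucial observation (which you implicitly respect by confining Grönwall to the final stretch) that a naive Grönwall comparison over all of $[t_{n},1]$ would produce a bound growing like $C^{k_{\max}\left(\frac{1}{1-\gamma}\right)^{n}}$ and is unusable.
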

\begin{rem}
By the nature of equation \eqref{eq:ODE JIn}, its closeness to the
ideal model varies throughout the interval $\left[t_{n},1\right]$.
In the first half of the interval, where $\Xi_{0}^{\left(n\right)}\left(t\right)$
is increasing, we have a very good convergence to the ideal model.
However, as $\Xi_{0}^{\left(n\right)}\left(t\right)$ begins to decrease,
this convergence worsens. Finally, when $a_{n-1}\left(1\right)\Xi^{\left(n\right)}\left(t\right)$
nears $\pi$ enough, the leading term $B_{n-1}\left(t\right)b_{n-1}\left(t\right)\sin\left(a_{n-1}\left(1\right)\Xi^{\left(n\right)}\left(t\right)\right)$
becomes of the same order as the other terms in \eqref{eq:ODE JIn}
and we completely lose control of the distance $\left|\Xi^{\left(n\right)}\left(t\right)-\Xi_{0}^{\left(n\right)}\left(t\right)\right|$.
Nevertheless, we can still guarantee that $a_{n-1}\left(1\right)\Xi^{\left(n\right)}\left(t\right)$
remains close to $\pi$. As the ideal model also remains close to
$\pi$, they must be close to each other.
\end{rem}
\begin{proof}
Throughout the proof, we will present various requirements on $C$
of the form $C\ge\Upsilon_{i}\left(\text{parameters}\right)$. For
the proof to work, we need to take $C$ as the maximum of all the
$\Upsilon_{i}$. Moreover, observe that the $\Xi^{\left(n\right)}\left(t\right)$
and $\Xi_{0}^{\left(n\right)}\left(t\right)$ defined here are exactly
the same we introduced in subsection \ref{subsec:transport of the layer centers}.
There, we found that
\begin{equation}
\begin{aligned}\frac{\mathrm{d}\Xi^{\left(n\right)}}{\mathrm{d}t}\left(t\right) & =B_{n-1}\left(t\right)b_{n-1}\left(t\right)\sin\left(a_{n-1}\left(t\right)\Xi^{\left(n\right)}\left(t\right)\right)+\\
 & \quad+\sum_{m=1}^{n-2}B_{m}\left(t\right)b_{m}\left(t\right)\left[\sin\left(a_{m}\left(t\right)\left(\phi_{1}^{\left(n\right)}\left(t,0\right)-\phi_{1}^{\left(m\right)}\left(t,0\right)\right)\right)+\right.\\
 & \qquad\left.-\sin\left(a_{m}\left(t\right)\left(\phi_{1}^{\left(n-1\right)}\left(t,0\right)-\phi_{1}^{\left(m\right)}\left(t,0\right)\right)\right)\right]
\end{aligned}
\label{eq:ODE JIn}
\end{equation}
and
\begin{equation}
\frac{\mathrm{d}\Xi_{0}^{\left(n\right)}}{\mathrm{d}t}\left(t\right)=B_{n-1}\left(1\right)b_{n-1}\left(1\right)\sin\left(a_{n-1}\left(1\right)\Xi_{0}^{\left(n\right)}\left(t\right)\right).\label{eq:ODE Jin0}
\end{equation}
We will consider times $t\in\left[t_{n},1\right]$. Let us see that
$\Xi^{\left(n\right)}\left(t\right)$ and $\Xi_{0}^{\left(n\right)}\left(t\right)$
are actually close. To do this, one would normally study $\frac{\mathrm{d}}{\mathrm{d}t}\left(\left|\Xi^{\left(n\right)}\left(t\right)-\Xi_{0}^{\left(n\right)}\left(t\right)\right|\right)$
and, then, try to apply Grönwall's inequality. Nevertheless, in our
case, this approach fails to provide a bound that is not increasing
in $n\in\mathbb{N}$. Indeed, if one tries to apply Grönwall's inequality,
one obtains a bound of the form
\[
a_{n-1}\left(1\right)\left|\Xi^{\left(n\right)}\left(t\right)-\Xi_{0}^{\left(n\right)}\left(t\right)\right|\lesssim_{\delta}C^{-\beta\gamma\delta\left(\frac{1}{1-\gamma}\right)^{n-1}}C^{k_{\max}\left(\frac{1}{1-\gamma}\right)^{n}},
\]
which is increasing in $n\in\mathbb{N}$.

Hence, we have to try another idea. Inspiration comes from the following
fact: let $F\left(x\right)$ denote a primitive of $\frac{1}{\sin\left(x\right)}$
(see the proof of the first point of Lemma \ref{lem:the good ODE}
to find an expression for $F\left(x\right)$) and consider
\[
\frac{\mathrm{d}}{\mathrm{d}t}\left(F\left(a_{n-1}\left(1\right)\Xi_{0}^{\left(n\right)}\left(t\right)\right)\right).
\]
By equation \eqref{eq:ODE Jin0}, we deduce that
\[
\begin{alignedat}{1}\frac{\mathrm{d}}{\mathrm{d}t}\left(F\left(a_{n-1}\left(1\right)\Xi_{0}^{\left(n\right)}\left(t\right)\right)\right) & =\frac{a_{n-1}\left(1\right)\frac{\mathrm{d}\Xi_{0}^{\left(n\right)}}{\mathrm{d}t}\left(t\right)}{\sin\left(a_{n-1}\left(1\right)\Xi_{0}^{\left(n\right)}\left(t\right)\right)}=\frac{a_{n-1}\left(1\right)B_{n-1}\left(1\right)b_{n-1}\left(1\right)}{\sin\left(a_{n-1}\left(1\right)\Xi_{0}^{\left(n\right)}\left(t\right)\right)}\sin\left(a_{n-1}\left(1\right)\Xi_{0}^{\left(n\right)}\left(t\right)\right)=\\
 & =B_{n-1}\left(1\right)a_{n-1}\left(1\right)b_{n-1}\left(1\right)
\end{alignedat}
\]
is constant in $t$. This means that we expect $\frac{\mathrm{d}}{\mathrm{d}t}\left(F\left(a_{n-1}\left(1\right)\Xi^{\left(n\right)}\left(t\right)\right)\right)$
to be almost constant in $t$ as well. Consequently, it makes sense
to study
\[
\frac{\mathrm{d}}{\mathrm{d}t}\left(F\left(a_{n-1}\left(1\right)\Xi^{\left(n\right)}\left(t\right)\right)-F\left(a_{n-1}\left(1\right)\Xi_{0}^{\left(n\right)}\left(t\right)\right)\right).
\]
By the definition of $F$ and equations \eqref{eq:ODE JIn} and \eqref{eq:ODE Jin0},
we get
\[
\begin{aligned} & \frac{\mathrm{d}}{\mathrm{d}t}\left(F\left(a_{n-1}\left(1\right)\Xi^{\left(n\right)}\left(t\right)\right)-F\left(a_{n-1}\left(1\right)\Xi_{0}^{\left(n\right)}\left(t\right)\right)\right)=\\
= & a_{n-1}\left(1\right)\frac{B_{n-1}\left(t\right)b_{n-1}\left(t\right)\sin\left(a_{n-1}\left(t\right)\Xi^{\left(n\right)}\left(t\right)\right)}{\sin\left(a_{n-1}\left(1\right)\Xi^{\left(n\right)}\left(t\right)\right)}+\\
 & \quad+\frac{a_{n-1}\left(1\right)}{\sin\left(a_{n-1}\left(1\right)\Xi^{\left(n\right)}\left(t\right)\right)}\sum_{m=1}^{n-2}B_{m}\left(t\right)b_{m}\left(t\right)\left[\sin\left(a_{m}\left(t\right)\left(\phi_{1}^{\left(n\right)}\left(t,0\right)-\phi_{1}^{\left(m\right)}\left(t,0\right)\right)\right)+\right.\\
 & \qquad\left.-\sin\left(a_{m}\left(t\right)\left(\phi_{1}^{\left(n-1\right)}\left(t,0\right)-\phi_{1}^{\left(m\right)}\left(t,0\right)\right)\right)\right]+\\
 & \quad-\underbrace{a_{n-1}\left(1\right)\frac{B_{n-1}\left(1\right)b_{n-1}\left(1\right)\sin\left(a_{n-1}\left(1\right)\Xi_{0}^{\left(n\right)}\left(t\right)\right)}{\sin\left(a_{n-1}\left(1\right)\Xi_{0}^{\left(n\right)}\left(t\right)\right)}}_{=B_{n-1}\left(1\right)a_{n-1}\left(1\right)b_{n-1}\left(1\right)}.
\end{aligned}
\]
If we call
\[
\begin{aligned}I_{2}\coloneqq & \frac{a_{n-1}\left(1\right)}{\sin\left(a_{n-1}\left(1\right)\Xi^{\left(n\right)}\left(t\right)\right)}\sum_{m=1}^{n-2}B_{m}\left(t\right)b_{m}\left(t\right)\left[\sin\left(a_{m}\left(t\right)\left(\phi_{1}^{\left(n\right)}\left(t,0\right)-\phi_{1}^{\left(m\right)}\left(t,0\right)\right)\right)+\right.\\
 & \quad\left.-\sin\left(a_{m}\left(t\right)\left(\phi_{1}^{\left(n-1\right)}\left(t,0\right)-\phi_{1}^{\left(m\right)}\left(t,0\right)\right)\right)\right],
\end{aligned}
\]
we obtain
\begin{equation}
\begin{aligned} & \frac{\mathrm{d}}{\mathrm{d}t}\left(F\left(a_{n-1}\left(1\right)\Xi^{\left(n\right)}\left(t\right)\right)-F\left(a_{n-1}\left(1\right)\Xi_{0}^{\left(n\right)}\left(t\right)\right)\right)=\\
= & \underbrace{a_{n-1}\left(1\right)\frac{B_{n-1}\left(t\right)b_{n-1}\left(t\right)\sin\left(a_{n-1}\left(t\right)\Xi^{\left(n\right)}\left(t\right)\right)-B_{n-1}\left(1\right)b_{n-1}\left(1\right)\sin\left(a_{n-1}\left(1\right)\Xi^{\left(n\right)}\left(t\right)\right)}{\sin\left(a_{n-1}\left(1\right)\Xi^{\left(n\right)}\left(t\right)\right)}}_{\eqqcolon I_{1}}+I_{2}.
\end{aligned}
\label{eq:time derivative F}
\end{equation}
To continue, we will bound $I_{1}$ and $I_{2}$ separately.
\begin{itemize}
\item $I_{1}$. First of all, summing and subtracting some terms, we arrive
to
\[
\begin{aligned} & B_{n-1}\left(t\right)b_{n-1}\left(t\right)\sin\left(a_{n-1}\left(t\right)\Xi^{\left(n\right)}\left(t\right)\right)-B_{n-1}\left(1\right)b_{n-1}\left(1\right)\sin\left(a_{n-1}\left(1\right)\Xi^{\left(n\right)}\left(t\right)\right)=\\
= & B_{n-1}\left(t\right)b_{n-1}\left(t\right)\sin\left(a_{n-1}\left(t\right)\Xi^{\left(n\right)}\left(t\right)\right)-B_{n-1}\left(1\right)b_{n-1}\left(1\right)\sin\left(a_{n-1}\left(t\right)\Xi^{\left(n\right)}\left(t\right)\right)+\\
 & \quad+B_{n-1}\left(1\right)b_{n-1}\left(1\right)\sin\left(a_{n-1}\left(t\right)\Xi^{\left(n\right)}\left(t\right)\right)-B_{n-1}\left(1\right)b_{n-1}\left(1\right)\sin\left(a_{n-1}\left(1\right)\Xi^{\left(n\right)}\left(t\right)\right)=\\
= & B_{n-1}\left(1\right)b_{n-1}\left(1\right)\left(\frac{B_{n-1}\left(t\right)b_{n-1}\left(t\right)}{B_{n-1}\left(1\right)b_{n-1}\left(1\right)}-1\right)\sin\left(a_{n-1}\left(t\right)\Xi^{\left(n\right)}\left(t\right)\right)+\\
 & \quad+B_{n-1}\left(1\right)b_{n-1}\left(1\right)\left(\sin\left(a_{n-1}\left(t\right)\Xi^{\left(n\right)}\left(t\right)\right)-\sin\left(a_{n-1}\left(1\right)\Xi^{\left(n\right)}\left(t\right)\right)\right).
\end{aligned}
\]
Consequently,
\begin{equation}
\begin{aligned}I_{1} & =B_{n-1}\left(1\right)b_{n-1}\left(1\right)a_{n-1}\left(1\right)\left(\frac{B_{n-1}\left(t\right)b_{n-1}\left(t\right)}{B_{n-1}\left(1\right)b_{n-1}\left(1\right)}-1\right)\frac{\sin\left(a_{n-1}\left(t\right)\Xi^{\left(n\right)}\left(t\right)\right)}{\sin\left(a_{n-1}\left(1\right)\Xi^{\left(n\right)}\left(t\right)\right)}+\\
 & \quad+B_{n-1}\left(1\right)b_{n-1}\left(1\right)a_{n-1}\left(1\right)\left(\frac{\sin\left(a_{n-1}\left(t\right)\Xi^{\left(n\right)}\left(t\right)\right)}{\sin\left(a_{n-1}\left(1\right)\Xi^{\left(n\right)}\left(t\right)\right)}-1\right).
\end{aligned}
\label{eq:I1}
\end{equation}
To continue, we need a bound for each factor.
\begin{itemize}
\item By Choice \ref{choice:Bnanbn}, we have
\begin{equation}
B_{n-1}\left(1\right)b_{n-1}\left(1\right)a_{n-1}\left(1\right)=M_{n-1}.\label{eq:I11}
\end{equation}
\item Since
\[
\begin{aligned} & B_{n-1}\left(t\right)b_{n-1}\left(t\right)-B_{n-1}\left(1\right)b_{n-1}\left(1\right)=\\
= & \frac{1}{2}\left(B_{n-1}\left(t\right)-B_{n-1}\left(1\right)\right)\left(b_{n-1}\left(t\right)+b_{n-1}\left(1\right)\right)+\frac{1}{2}\left(B_{n-1}\left(t\right)+B_{n-1}\left(1\right)\right)\left(b_{n-1}\left(t\right)-b_{n-1}\left(1\right)\right),
\end{aligned}
\]
we can write
\[
\begin{aligned}\left(\frac{B_{n-1}\left(t\right)b_{n-1}\left(t\right)}{B_{n-1}\left(1\right)b_{n-1}\left(1\right)}-1\right) & =\frac{1}{2}\left[\left(\frac{B_{n-1}\left(t\right)}{B_{n-1}\left(1\right)}-1\right)\left(\frac{b_{n-1}\left(t\right)}{b_{n-1}\left(1\right)}+1\right)+\left(\frac{B_{n-1}\left(t\right)}{B_{n-1}\left(1\right)}+1\right)\left(\frac{b_{n-1}\left(t\right)}{b_{n-1}\left(1\right)}-1\right)\right]=\\
 & =\frac{1}{2}\left[\left(\frac{B_{n-1}\left(t\right)}{B_{n-1}\left(1\right)}-1\right)\left(\frac{b_{n-1}\left(t\right)}{b_{n-1}\left(1\right)}-1+2\right)+\right.\\
 & \qquad\left.+\left(\frac{B_{n-1}\left(t\right)}{B_{n-1}\left(1\right)}-1+2\right)\left(\frac{b_{n-1}\left(t\right)}{b_{n-1}\left(1\right)}-1\right)\right].
\end{aligned}
\]
As we are considering times $t\in\left[t_{n},1\right]$, provided
that $C$ is big enough, let us say $C\ge\Upsilon_{1}\left(\beta,\delta\right)$,
Proposition \ref{prop:time convergence} allows us to bound
\begin{equation}
\begin{aligned}\left|\frac{B_{n-1}\left(t\right)b_{n-1}\left(t\right)}{B_{n-1}\left(1\right)b_{n-1}\left(1\right)}-1\right| & \lesssim\left[\overbrace{\left|\frac{B_{n-1}\left(t\right)}{B_{n-1}\left(1\right)}-1\right|}^{\leq C^{-\beta\delta\gamma\left(\frac{1}{1-\gamma}\right)^{n-1}}}\overbrace{\left(\left|\frac{b_{n-1}\left(t\right)}{b_{n-1}\left(1\right)}-1\right|+2\right)}^{\leq2}+\right.\\
 & \quad\left.+\underbrace{\left(\left|\frac{B_{n-1}\left(t\right)}{B_{n-1}\left(1\right)}-1\right|+2\right)}_{\le2}\underbrace{\left|\frac{b_{n-1}\left(t\right)}{b_{n-1}\left(1\right)}-1\right|}_{\leq C^{-\beta\delta\gamma\left(\frac{1}{1-\gamma}\right)^{n-1}}}\right]\lesssim\\
 & \lesssim C^{-\beta\delta\gamma\left(\frac{1}{1-\gamma}\right)^{n-1}}.
\end{aligned}
\label{eq:I12}
\end{equation}
\item Notice that
\[
\frac{\sin\left(a_{n-1}\left(t\right)\Xi^{\left(n\right)}\left(t\right)\right)}{\sin\left(a_{n-1}\left(1\right)\Xi^{\left(n\right)}\left(t\right)\right)}=\frac{\sin\left(a_{n-1}\left(1\right)\left(\frac{a_{n-1}\left(t\right)}{a_{n-1}\left(1\right)}-1+1\right)\Xi^{\left(n\right)}\left(t\right)\right)}{\sin\left(a_{n-1}\left(1\right)\Xi^{\left(n\right)}\left(t\right)\right)}.
\]
As $\left|\frac{a_{n-1}\left(t\right)}{a_{n-1}\left(t\right)}-1\right|\le1$
by Proposition \ref{prop:time convergence} because $C\ge\Upsilon_{1}\left(\beta,\delta\right)$,
Lemma \ref{lem:bound sines 2} assures us that, taking
\[
x\leftarrow a_{n-1}\left(1\right)\Xi^{\left(n\right)}\left(t\right),\quad\varepsilon\leftarrow\frac{a_{n-1}\left(t\right)}{a_{n-1}\left(1\right)}-1,
\]
we must have
\[
\left|\frac{\sin\left(a_{n-1}\left(t\right)\Xi^{\left(n\right)}\left(t\right)\right)}{\sin\left(a_{n-1}\left(1\right)\Xi^{\left(n\right)}\left(t\right)\right)}\right|\lesssim\frac{\left|\frac{a_{n-1}\left(t\right)}{a_{n-1}\left(1\right)}-1\right|}{\pi-a_{n-1}\left(1\right)\Xi^{\left(n\right)}\left(t\right)}
\]
as long as
\[
0<a_{n-1}\left(1\right)\Xi^{\left(n\right)}\left(t\right)<\pi.
\]
Since $C\ge\Upsilon_{1}\left(\beta,\delta\right)$, Proposition \ref{prop:time convergence}
tells us, then, that
\begin{equation}
\left|\frac{\sin\left(a_{n-1}\left(t\right)\Xi^{\left(n\right)}\left(t\right)\right)}{\sin\left(a_{n-1}\left(1\right)\Xi^{\left(n\right)}\left(t\right)\right)}\right|\leq\frac{C^{-\beta\delta\gamma\left(\frac{1}{1-\gamma}\right)^{n-1}}}{\pi-a_{n-1}\left(1\right)\Xi^{\left(n\right)}\left(t\right)}.\label{eq:I13}
\end{equation}
\end{itemize}
Combining \eqref{eq:I11}, \eqref{eq:I12} and \eqref{eq:I13} in
\eqref{eq:I1} leads to
\[
\begin{aligned}\left|I_{1}\right| & \lesssim M_{n-1}C^{-\beta\delta\gamma\left(\frac{1}{1-\gamma}\right)^{n-1}}\left(1+\frac{C^{-\beta\delta\gamma\left(\frac{1}{1-\gamma}\right)^{n-1}}}{\pi-a_{n-1}\left(1\right)\Xi^{\left(n\right)}\left(t\right)}\right)+\\
 & \quad+M_{n-1}\frac{C^{-\beta\delta\gamma\left(\frac{1}{1-\gamma}\right)^{n-1}}}{\pi-a_{n-1}\left(1\right)\Xi^{\left(n\right)}\left(t\right)}.
\end{aligned}
\]
This expression is valid for the subset of times
\begin{equation}
\mathcal{T}_{n}\coloneqq\left\{ t\in\left[t_{n},1\right]:a_{n-1}\left(1\right)\Xi^{\left(n\right)}\left(t\right)<\pi\right\} .\label{eq:def bigTn}
\end{equation}
As $C^{-\beta\delta\gamma\left(\frac{1}{1-\gamma}\right)^{n-1}}\le1$
$\forall n\in\mathbb{N}$, we have
\[
\left(C^{-\beta\delta\gamma\left(\frac{1}{1-\gamma}\right)^{n-1}}\right)^{2}\leq C^{-\beta\delta\gamma\left(\frac{1}{1-\gamma}\right)^{n-1}}
\]
and, as the function $\frac{1}{\pi-x}$ is bounded below in the interval
$x\in\left(0,\pi\right)$, we may conclude that
\begin{equation}
\left|I_{1}\right|\lesssim M_{n-1}\frac{C^{-\beta\delta\gamma\left(\frac{1}{1-\gamma}\right)^{n-1}}}{\pi-a_{n-1}\left(1\right)\Xi^{\left(n\right)}\left(t\right)}\quad\forall t\in\mathcal{T}_{n}.\label{eq:the bound for I1 v0}
\end{equation}

\item $I_{2}$. Since $\sin\left(\cdot\right)$ is a Lipschitz function
of constant $1$, we can write
\[
\begin{aligned}\left|I_{2}\right| & \le\frac{a_{n-1}\left(1\right)}{\sin\left(a_{n-1}\left(1\right)\Xi^{\left(n\right)}\left(t\right)\right)}\sum_{m=1}^{n-2}B_{m}\left(t\right)b_{m}\left(t\right)a_{m}\left(t\right)\left|\overbrace{\phi_{1}^{\left(n\right)}\left(t,0\right)-\phi_{1}^{\left(n-1\right)}\left(t\right)}^{=\Xi^{\left(n\right)}\left(t\right)}\right|\le\\
 & \le\frac{a_{n-1}\left(1\right)\Xi^{\left(n\right)}\left(t\right)}{\sin\left(a_{n-1}\left(1\right)\Xi^{\left(n\right)}\left(t\right)\right)}\sum_{m=1}^{n-2}B_{m}\left(t\right)b_{m}\left(t\right)a_{m}\left(t\right)\quad\forall t\in\mathcal{T}_{n}.
\end{aligned}
\]
By Choice \ref{choice:anbncn} and Proposition \ref{prop:time convergence},
which we can apply because $C\ge\Upsilon_{1}\left(\beta,\delta\right)$
and because we are considering times $t\in\mathcal{T}_{n}\subseteq\left[t_{n},1\right]$,
repeating the same argument we presented in the proof of Proposition
\ref{prop:time convergence} to obtain $b_{m}\left(t\right)\le2b_{m}\left(1\right)$,
but applied to $B_{m}\left(t\right)$ instead of $b_{m}\left(t\right)$,
we arrive to
\[
\begin{aligned}\left|I_{2}\right| & \le\frac{a_{n-1}\left(1\right)\Xi^{\left(n\right)}\left(t\right)}{\sin\left(a_{n-1}\left(1\right)\Xi^{\left(n\right)}\left(t\right)\right)}\sum_{m=1}^{n-2}B_{m}\left(t\right)b_{m}\left(1\right)a_{m}\left(1\right)\lesssim\\
 & \lesssim\frac{a_{n-1}\left(1\right)\Xi^{\left(n\right)}\left(t\right)}{\sin\left(a_{n-1}\left(1\right)\Xi^{\left(n\right)}\left(t\right)\right)}\sum_{m=1}^{n-2}B_{m}\left(1\right)b_{m}\left(1\right)a_{m}\left(1\right)\quad\forall t\in\mathcal{T}_{n}.
\end{aligned}
\]
By Lemma \ref{lem:equivalence sin with x(x-pi)} and Choices \ref{choice:Bnanbn}
and \ref{choice:Mn}, we have
\[
\left|I_{2}\right|\lesssim\frac{1}{\pi-a_{n-1}\left(1\right)\Xi^{\left(n\right)}\left(t\right)}\sum_{m=1}^{n-2}YC^{\delta\left(\frac{1}{1-\gamma}\right)^{m}}\quad\forall t\in\mathcal{T}_{n}.
\]
By virtue of Lemma \ref{lem:estimate sum superexponential} (which
we can apply thanks to Choice \ref{choice:min requirements on C gamma and kmax})
and Choice \ref{choice:Mn}, we obtain
\begin{equation}
\left|I_{2}\right|\lesssim_{\delta}\frac{1}{\pi-a_{n-1}\left(1\right)\Xi^{\left(n\right)}\left(t\right)}YC^{\delta\left(\frac{1}{1-\gamma}\right)^{n-2}}=\frac{M_{n-2}}{\pi-a_{n-1}\left(1\right)\Xi^{\left(n\right)}\left(t\right)}\quad\forall t\in\mathcal{T}_{n}.\label{eq:the bound for I2}
\end{equation}
\end{itemize}
Unifying \eqref{eq:the bound for I1 v0} and \eqref{eq:the bound for I2}
back into \eqref{eq:time derivative F} leads to
\begin{equation}
\begin{aligned} & \left|\frac{\mathrm{d}}{\mathrm{d}t}\left(F\left(a_{n-1}\left(1\right)\Xi^{\left(n\right)}\left(t\right)\right)-F\left(a_{n-1}\left(1\right)\Xi_{0}^{\left(n\right)}\left(t\right)\right)\right)\right|\lesssim_{\delta}\\
\lesssim_{\delta} & \frac{1}{\pi-a_{n-1}\left(1\right)\Xi^{\left(n\right)}\left(t\right)}\left[M_{n-1}C^{-\beta\delta\gamma\left(\frac{1}{1-\gamma}\right)^{n-1}}+M_{n-2}\right]\quad\forall t\in\mathcal{T}_{n}.
\end{aligned}
\label{eq:bound time derivative primitive 1/sin(x)}
\end{equation}
Let $\mathcal{I}_{n}\subseteq\mathcal{T}_{n}\subseteq\left[t_{n},1\right]$
be the largest interval such that $t_{n}\in\mathcal{I}_{n}$ and
\begin{equation}
\frac{1}{\pi-a_{n-1}\left(1\right)\Xi^{\left(n\right)}\left(t\right)}\le K_{n}\quad\forall t\in\mathcal{I}_{n},\label{eq:Kn}
\end{equation}
where $\left(K_{n}\right)_{n\in\mathbb{N}}\subseteq\left(0,\infty\right)$.
In this manner, equation \eqref{eq:bound time derivative primitive 1/sin(x)}
becomes
\begin{equation}
\begin{aligned} & \left|\frac{\mathrm{d}}{\mathrm{d}t}\left(F\left(a_{n-1}\left(1\right)\Xi^{\left(n\right)}\left(t\right)\right)-F\left(a_{n-1}\left(1\right)\Xi_{0}^{\left(n\right)}\left(t\right)\right)\right)\right|\lesssim_{\delta}\\
\lesssim_{\delta} & K_{n}\left[M_{n-1}C^{-\beta\delta\gamma\left(\frac{1}{1-\gamma}\right)^{n-1}}+M_{n-2}\right]\quad\forall t\in\mathcal{I}_{n}.
\end{aligned}
\label{eq:bound time derivative primitive 1/sin(x) v2}
\end{equation}
Then, integrating in time at both sides of equation \eqref{eq:bound time derivative primitive 1/sin(x) v2},
we arrive to
\[
\begin{aligned} & \left|F\left(a_{n-1}\left(1\right)\Xi^{\left(n\right)}\left(t\right)\right)-F\left(a_{n-1}\left(1\right)\Xi_{0}^{\left(n\right)}\left(t\right)\right)\right|+\\
 & \quad-\overbrace{\left|F\left(a_{n-1}\left(1\right)\Xi^{\left(n\right)}\left(t_{n}\right)\right)-F\left(a_{n-1}\left(1\right)\Xi_{0}^{\left(n\right)}\left(t_{n}\right)\right)\right|}^{=0}\lesssim_{\delta}\\
\lesssim_{\delta} & \left(t-t_{n}\right)K_{n}\left[M_{n-1}C^{-\beta\delta\gamma\left(\frac{1}{1-\gamma}\right)^{n-1}}+M_{n-2}\right]\leq\left(1-t_{n}\right)K_{n}\left[M_{n-1}C^{-\beta\delta\gamma\left(\frac{1}{1-\gamma}\right)^{n-1}}+M_{n-2}\right]
\end{aligned}
\]
$\forall t\in\mathcal{I}_{n}$. Choice \ref{choice:Mn} implies that
\[
\begin{aligned} & \left|F\left(a_{n-1}\left(1\right)\Xi^{\left(n\right)}\left(t\right)\right)-F\left(a_{n-1}\left(1\right)\Xi_{0}^{\left(n\right)}\left(t\right)\right)\right|\lesssim_{\delta}\\
\lesssim_{\delta} & K_{n}\mathrm{arccosh}\left(C^{k_{\max}\left(\frac{1}{1-\gamma}\right)^{n}}\right)\left[C^{-\beta\delta\gamma\left(\frac{1}{1-\gamma}\right)^{n-1}}+\frac{M_{n-2}}{M_{n-1}}\right]=\\
\lesssim_{\delta} & K_{n}\mathrm{arccosh}\left(C^{k_{\max}\left(\frac{1}{1-\gamma}\right)^{n}}\right)\left[C^{-\beta\delta\gamma\left(\frac{1}{1-\gamma}\right)^{n-1}}+C^{\delta\left(\frac{1}{1-\gamma}\right)^{n-2}}C^{-\delta\left(\frac{1}{1-\gamma}\right)^{n-1}}\right]=\\
\lesssim_{\delta} & K_{n}\mathrm{arccosh}\left(C^{k_{\max}\left(\frac{1}{1-\gamma}\right)^{n}}\right)\left[C^{-\beta\delta\gamma\left(\frac{1}{1-\gamma}\right)^{n-1}}+C^{-\delta\left(\frac{1}{1-\gamma}\right)^{n-1}\left[1-\left(1-\gamma\right)\right]}\right]=\\
\lesssim_{\delta} & K_{n}\mathrm{arccosh}\left(C^{k_{\max}\left(\frac{1}{1-\gamma}\right)^{n}}\right)\left[C^{-\beta\delta\gamma\left(\frac{1}{1-\gamma}\right)^{n-1}}+C^{-\delta\gamma\left(\frac{1}{1-\gamma}\right)^{n-1}}\right]\quad\forall t\in\mathcal{I}_{n}.
\end{aligned}
\]
On the other hand, Lemma \ref{lem:arccosh by ln} guarantees that
\[
\begin{aligned} & \left|F\left(a_{n-1}\left(1\right)\Xi^{\left(n\right)}\left(t\right)\right)-F\left(a_{n-1}\left(1\right)\Xi_{0}^{\left(n\right)}\left(t\right)\right)\right|\lesssim_{\delta}\\
\lesssim_{\delta} & K_{n}\left(\ln\left(2\right)+\ln\left(C\right)k_{\max}\left(\frac{1}{1-\gamma}\right)^{n}\right)\left[C^{-\beta\delta\gamma\left(\frac{1}{1-\gamma}\right)^{n-1}}+C^{-\delta\gamma\left(\frac{1}{1-\gamma}\right)^{n-1}}\right]\quad\forall t\in\mathcal{I}_{n}.
\end{aligned}
\]
Arguing like in the proof of Proposition \ref{prop:time convergence}
from equation \eqref{eq:time convergence the log 1} to equation \eqref{eq:time convergence the log 2},
we deduce that
\begin{equation}
\begin{aligned} & \left|F\left(a_{n-1}\left(1\right)\Xi^{\left(n\right)}\left(t\right)\right)-F\left(a_{n-1}\left(1\right)\Xi_{0}^{\left(n\right)}\left(t\right)\right)\right|\lesssim_{\delta}\\
\lesssim_{\delta} & K_{n}\left(\frac{1}{1-\gamma}\right)^{n}\ln\left(C\right)\left[C^{-\beta\delta\gamma\left(\frac{1}{1-\gamma}\right)^{n-1}}+C^{-\delta\gamma\left(\frac{1}{1-\gamma}\right)^{n-1}}\right]\quad\forall t\in\mathcal{I}_{n}.
\end{aligned}
\label{eq:bound Fn v0}
\end{equation}
As $\beta<1$, we can bound
\[
C^{-\delta\gamma\left(\frac{1}{1-\gamma}\right)^{n-1}}\le C^{-\beta\delta\gamma\left(\frac{1}{1-\gamma}\right)^{n-1}}.
\]
Hence, we may transform \eqref{eq:bound Fn v0} into
\[
\left|F\left(a_{n-1}\left(1\right)\Xi^{\left(n\right)}\left(t\right)\right)-F\left(a_{n-1}\left(1\right)\Xi_{0}^{\left(n\right)}\left(t\right)\right)\right|\lesssim_{\delta}K_{n}\left(\frac{1}{1-\gamma}\right)^{n}\ln\left(C\right)C^{-\beta\delta\gamma\left(\frac{1}{1-\gamma}\right)^{n-1}}\quad\forall t\in\mathcal{I}_{n}.
\]
Considering that $\frac{1}{1-\gamma}\ge2$ by Choice \ref{choice:min requirements on C gamma and kmax},
we may apply Lemma \ref{lem:exponetial superexponential bound} taking
$a\leftarrow\frac{1}{1-\gamma}$, $b\leftarrow\beta\delta\gamma$,
$n\leftarrow n-1$ and $\beta\leftarrow\beta'$, achieving
\[
\left|F\left(a_{n-1}\left(1\right)\Xi^{\left(n\right)}\left(t\right)\right)-F\left(a_{n-1}\left(1\right)\Xi_{0}^{\left(n\right)}\left(t\right)\right)\right|\lesssim_{\delta}K_{n}\ln\left(C\right)\frac{4\mathrm{e}^{-2}}{\left(1-\beta'\right)^{2}\left(\beta\gamma\delta\right)^{2}\ln\left(C\right)^{2}}C^{-\beta\beta'\delta\gamma\left(\frac{1}{1-\gamma}\right)^{n-1}}\quad\forall t\in\mathcal{I}_{n}.
\]
Since $\gamma\ge\frac{1}{2}$ by Choice \ref{choice:min requirements on C gamma and kmax},
we deduce that
\begin{equation}
\begin{aligned}\left|F\left(a_{n-1}\left(1\right)\Xi^{\left(n\right)}\left(t\right)\right)-F\left(a_{n-1}\left(1\right)\Xi_{0}^{\left(n\right)}\left(t\right)\right)\right| & \lesssim_{\delta}K_{n}\frac{4e^{-2}}{\left(1-\beta'\right)^{2}\beta^{2}\delta^{2}\left(\frac{1}{2}\right)^{2}\ln\left(C\right)}C^{-\beta\beta'\delta\gamma\left(\frac{1}{1-\gamma}\right)^{n-1}}\lesssim_{\beta,\beta',\delta}\\
 & \lesssim_{\beta,\beta',\delta}K_{n}\frac{C^{-\beta\beta'\delta\gamma\left(\frac{1}{1-\gamma}\right)^{n-1}}}{\ln\left(C\right)}\quad\forall t\in\mathcal{I}_{n}.
\end{aligned}
\label{eq:bound Fn v2}
\end{equation}

Let us think a little bit about how we could choose $K_{n}$. Thanks
to Choices \ref{choice:wnmax and Ji(tn)} and \ref{choice:Mn}, we
are in the second to last case of Remark \ref{rem:relation delta M}.
This means that $\sin\left(a_{n-1}\left(1\right)\Xi_{0}^{\left(n\right)}\left(t\right)\right)$
is increasing $\forall t\in\left[t_{n},t_{n}+\frac{1-t_{n}}{2}\right]$
and achieves its maximum value $1$ at $t_{n}+\frac{1-t_{n}}{2}$.
Thanks to Choice \ref{choice:wnmax and Ji(tn)}, we can ensure that
$a_{n-1}\left(1\right)\Xi_{0}^{\left(n\right)}\left(t\right)\le\frac{\pi}{2}$
$\forall t\in\left[t_{n},t_{n}+\frac{1-t_{n}}{2}\right]$. If $\Xi^{\left(n\right)}\left(t\right)$
and $\Xi_{0}^{\left(n\right)}\left(t\right)$ are close, the same
thing should happen to $\Xi^{\left(n\right)}\left(t\right)$. Then,
taking a look at equation \eqref{eq:Kn}, we should be able to choose
$K_{n}=\frac{1}{\frac{\pi}{2}-\varepsilon}$ for some $\varepsilon>0$
small. Under this choice, \eqref{eq:bound Fn v2} becomes
\begin{equation}
\begin{aligned} & \left|F\left(a_{n-1}\left(1\right)\Xi^{\left(n\right)}\left(t\right)\right)-F\left(a_{n-1}\left(1\right)\Xi_{0}^{\left(n\right)}\left(t\right)\right)\right|\lesssim_{\beta,\beta',\delta,\varepsilon}\\
\lesssim_{\beta,\beta',\delta,\varepsilon} & \frac{C^{-\beta\beta'\delta\gamma\left(\frac{1}{1-\gamma}\right)^{n-1}}}{\ln\left(C\right)}\quad\forall t\in\left[t_{n},t_{n}+\frac{1-t_{n}}{2}\right].
\end{aligned}
\label{eq:bound first zone}
\end{equation}
We can compensate the constant that appears in the inequality above
via the factor $\ln\left(C\right)$ by taking $C$ big enough. Let
us say that we need $C\ge\Upsilon_{2}\left(\beta,\beta',\delta,\varepsilon\right)$
for the constant to be $\frac{1}{2}$ (or smaller). Then,
\[
\left|F\left(a_{n-1}\left(1\right)\Xi^{\left(n\right)}\left(t\right)\right)-F\left(a_{n-1}\left(1\right)\Xi_{0}^{\left(n\right)}\left(t\right)\right)\right|\le\frac{1}{2}C^{-\beta\beta'\delta\gamma\left(\frac{1}{1-\gamma}\right)^{n-1}}.
\]
As $\frac{1}{2}C^{-\beta\beta'\delta\gamma\left(\frac{1}{1-\gamma}\right)^{n-1}}\le\frac{1}{2}$,
we may apply Lemma \ref{lem:inverse of primitive of 1/sin(x)}, which
leads us to
\begin{equation}
a_{n-1}\left(1\right)\left|\Xi^{\left(n\right)}\left(t\right)-\Xi_{0}^{\left(n\right)}\left(t\right)\right|\leq C^{-\beta\beta'\delta\gamma\left(\frac{1}{1-\gamma}\right)^{n-1}}\sin\left(a_{n-1}\left(1\right)\Xi_{0}^{\left(n\right)}\left(t\right)\right)\quad\forall t\in\left[t_{n},t_{n}+\frac{1-t_{n}}{2}\right].\label{eq:bound first zone v1}
\end{equation}
This proves the first statement, provided that our choice of $K_{n}$
is consistent. Notice that it is indeed the case, since, thanks to
equation \eqref{eq:bound first zone v1} and the bound
\[
C^{-\beta\beta'\delta\gamma\left(\frac{1}{1-\gamma}\right)^{n-1}}\sin\left(a_{n-1}\left(1\right)\Xi_{0}^{\left(n\right)}\left(t\right)\right)\leq C^{-\beta\beta'\delta\frac{\gamma}{1-\gamma}}\le C^{-\beta\beta'\delta\frac{\frac{1}{2}}{1-\frac{1}{2}}}\le C^{-\beta\beta'\delta},
\]
which is satisfied because $n\ge2$, $\gamma\ge\frac{1}{2}$ (by Choice
\ref{choice:min requirements on C gamma and kmax}) and the function
$x\to\frac{x}{1-x}$ is increasing, taking $C$ big enough (let us
say $C\ge\Upsilon_{3}\left(\beta,\beta',\delta,\varepsilon\right)$),
we can assure that
\[
a_{n-1}\left(1\right)\left|\Xi^{\left(n\right)}\left(t\right)-\Xi_{0}^{\left(n\right)}\left(t\right)\right|\leq\varepsilon\quad\forall t\in\left[t_{n},t_{n}+\frac{1-t_{n}}{2}\right]\;\land\;\forall n\in\mathbb{N}
\]
and, as a consequence,
\[
\frac{1}{\pi-a_{n-1}\left(1\right)\Xi^{\left(n\right)}\left(t\right)}\le\frac{1}{\pi-\underbrace{a_{n-1}\left(1\right)\Xi_{0}^{\left(n\right)}\left(t\right)}_{\le\frac{\pi}{2}}-\varepsilon}\le\frac{1}{\pi-\frac{\pi}{2}-\varepsilon}\le\frac{1}{\frac{\pi}{2}-\varepsilon},
\]
which is exactly the choice of $K_{n}$ we have made.

Now, consider times $t\in\left[t_{n}+\frac{1-t_{n}}{2},1\right]\cap\mathcal{I}_{n}\eqqcolon\mathcal{I}_{n}^{\rightarrow}$
in \eqref{eq:bound Fn v2}. Taking a look at equation \eqref{eq:bound Fn v2},
it is clear that, to go beyond $t_{n}+\frac{1-t_{n}}{2}$, we will
have to take $K_{n}$ bigger than $\frac{1}{\frac{\pi}{2}-\varepsilon}$.
In fact, let us try to take $K_{n}$ of the same order than $C^{\beta\beta'\delta\gamma\left(\frac{1}{1-\gamma}\right)^{n-1}}$,
more precisely, we shall take
\[
K_{n}=C^{\beta\beta'\beta''\delta\gamma\left(\frac{1}{1-\gamma}\right)^{n-1}},
\]
where we recall that $\beta''\in\left(0,1\right)$. Then, \eqref{eq:bound Fn v2}
becomes
\[
\left|F\left(a_{n-1}\left(1\right)\Xi^{\left(n\right)}\left(t\right)\right)-F\left(a_{n-1}\left(1\right)\Xi_{0}^{\left(n\right)}\left(t\right)\right)\right|\lesssim_{\beta,\beta',\delta}\frac{C^{-\beta\beta'\left(1-\beta''\right)\delta\gamma\left(\frac{1}{1-\gamma}\right)^{n-1}}}{\ln\left(C\right)^ {}}\quad\forall t\in\mathcal{I}_{n}^{\rightarrow}.
\]
Again, we can compensate the constant of the inequality above with
the factor $\ln\left(C\right)$ choosing $C$ big enough. Assume $C\ge\Upsilon_{4}\left(\beta,\beta',\delta\right)$
is required for the constant to be $\frac{1}{2}$ (or smaller). Then,
\[
\left|F\left(a_{n-1}\left(1\right)\Xi^{\left(n\right)}\left(t\right)\right)-F\left(a_{n-1}\left(1\right)\Xi_{0}^{\left(n\right)}\left(t\right)\right)\right|\leq\frac{1}{2}C^{-\beta\beta'\left(1-\beta''\right)\delta\gamma\left(\frac{1}{1-\gamma}\right)^{n-1}}.
\]
As $\frac{1}{2}C^{-\beta\beta'\left(1-\beta''\right)\delta\gamma\left(\frac{1}{1-\gamma}\right)^{n-1}}\leq\frac{1}{2}$,
we may apply Lemma \ref{lem:inverse of primitive of 1/sin(x)}, which
leads us to
\begin{equation}
a_{n-1}\left(1\right)\left|\Xi^{\left(n\right)}\left(t\right)-\Xi_{0}^{\left(n\right)}\left(t\right)\right|\leq C^{-\beta\beta'\left(1-\beta''\right)\delta\gamma\left(\frac{1}{1-\gamma}\right)^{n-1}}\sin\left(a_{n-1}\left(1\right)\Xi_{0}^{\left(n\right)}\left(t\right)\right)\quad\forall t\in\mathcal{I}_{n}^{\rightarrow}.\label{eq:bound intermidiate}
\end{equation}
In other words, taking $C$ big enough, we can ensure that $\Xi^{\left(n\right)}\left(t\right)$
and $\Xi_{0}^{\left(n\right)}\left(t\right)$ remain as close as we
want, but only for $t\in\mathcal{I}_{n}^{\rightarrow}$ (recall that
$\mathcal{I}_{n}^{\rightarrow}$ is defined in terms of $K_{n}$ through
equation \eqref{eq:Kn}). Which interval is $\mathcal{I}_{n}^{\rightarrow}$?
Can we say something about it? On the one hand, it is clear, by definition,
that it must begin at $t=t_{n}+\frac{1-t_{n}}{2}$. On the other hand,
we do not know when it ends. Nonetheless, we can distinguish two cases:
$1\in\mathcal{I}_{n}^{\rightarrow}$ and $1\notin\mathcal{I}_{n}^{\rightarrow}$.
In the first one, we take $t_{n}^{*}=1$. In the second one, there
is $t_{n}^{*}\in\left[t_{n}+\frac{1-t_{n}}{2},1\right)$ such that
$\mathcal{I}_{n}^{\rightarrow}=\left[t_{n}+\frac{1-t_{n}}{2},t_{n}^{*}\right]$
(notice that we know $\mathcal{I}_{n}^{\rightarrow}$ must be closed
because of the non-strict inequality that appears in its definition
{[}see equation \eqref{eq:Kn}{]}). In any case, this proves the second
statement.

Now, only the proof of the third statement remains. This will require
a different approach. Our objective is to make use of Lemma \ref{lem:trap}.
If $t_{n}^{*}=1$, there is nothing to prove. Assume that $t_{n}^{*}\neq1$.
By the definitions of $t_{n}^{*}$ and $\mathcal{I}_{n}^{\rightarrow}$
and equation \eqref{eq:Kn}, we know that
\begin{equation}
\pi-a_{n-1}\left(1\right)\Xi^{\left(n\right)}\left(t_{n}^{*}\right)=C^{-\beta\beta'\beta''\delta\gamma\left(\frac{1}{1-\gamma}\right)^{n-1}}.\label{eq:third argument closeness to pi}
\end{equation}
What can we say about $\frac{\mathrm{d}}{\mathrm{d}t}\left(a_{n-1}\left(1\right)\Xi^{\left(n\right)}\left(t_{n}^{*}\right)\right)$?
By equation \eqref{eq:ODE JIn}, we have
\begin{equation}
\begin{aligned}\frac{\mathrm{d}}{\mathrm{d}t}\left(a_{n-1}\left(1\right)\Xi^{\left(n\right)}\left(t\right)\right)\left(t\right) & =B_{n-1}\left(t\right)a_{n-1}\left(1\right)b_{n-1}\left(t\right)\sin\left(a_{n-1}\left(t\right)\Xi^{\left(n\right)}\left(t\right)\right)+\\
 & \quad+a_{n-1}\left(1\right)\sum_{m=1}^{n-2}B_{m}\left(t\right)b_{m}\left(t\right)\left[\sin\left(a_{m}\left(t\right)\left(\phi_{1}^{\left(n\right)}\left(t,0\right)-\phi_{1}^{\left(m\right)}\left(t,0\right)\right)\right)+\right.\\
 & \qquad\left.-\sin\left(a_{m}\left(t\right)\left(\phi_{1}^{\left(n-1\right)}\left(t,0\right)-\phi_{1}^{\left(m\right)}\left(t,0\right)\right)\right)\right]\quad\forall t\in\left[t_{n}^{*},1\right].
\end{aligned}
\label{eq:derivative an-1Jin}
\end{equation}
We will call
\begin{equation}
\text{LT}\eqref{eq:derivative an-1Jin}\left(t\right)\coloneqq B_{n-1}\left(t\right)a_{n-1}\left(1\right)b_{n-1}\left(t\right)\sin\left(a_{n-1}\left(t\right)\Xi^{\left(n\right)}\left(t\right)\right)\label{eq:third argument definition leading term}
\end{equation}
and we will denote the other summand that appears in equation \eqref{eq:derivative an-1Jin}
by $\text{OT}\eqref{eq:derivative an-1Jin}\left(t\right)$. LT stands
for ``leading term'' and OT stands for ``other term''. We want
to prove that $\text{LT}\eqref{eq:derivative an-1Jin}\left(t\right)$
is the dominant term when $a_{n-1}\left(1\right)\Xi^{\left(n\right)}\left(t\right)=a_{n-1}\left(1\right)\Xi^{\left(n\right)}\left(t_{n}^{*}\right)$.
If it were the case, since we expect $a_{n-1}\left(t_{n}^{*}\right)\approx a_{n-1}\left(1\right)$
by Proposition \ref{prop:time convergence}, we should have $\sin\left(a_{n-1}\left(t\right)\Xi^{\left(n\right)}\left(t\right)\right)>0$
because $a_{n-1}\left(t\right)\Xi^{\left(n\right)}\left(t\right)\approx a_{n-1}\left(1\right)\Xi^{\left(n\right)}\left(t\right)=a_{n-1}\left(1\right)\Xi^{\left(n\right)}\left(t_{n}^{*}\right)$
and $\frac{\pi}{2}\le a_{n-1}\left(1\right)\Xi^{\left(n\right)}\left(t_{n}^{*}\right)<\pi$
by definition of $\mathcal{I}_{n}^{\rightarrow}$ and equation \eqref{eq:Kn},
which would prove that $\frac{\mathrm{d}}{\mathrm{d}t}\left(a_{n-1}\left(1\right)\Xi^{\left(n\right)}\left(t\right)\right)\left(t\right)>0$.
Moreover, to apply Lemma \ref{lem:trap}, we wish to see that, if
$a_{n-1}\left(1\right)\Xi^{\left(n\right)}\left(t\right)$ is greater
than $\pi$, assuming again $a_{n-1}\left(t\right)\approx a_{n-1}\left(1\right)$,
the leading term becomes negative and, consequently, $\frac{\mathrm{d}}{\mathrm{d}t}\left(a_{n-1}\left(1\right)\Xi^{\left(n\right)}\left(t\right)\right)\left(t\right)\leq\text{OT}\eqref{eq:derivative an-1Jin}\left(t\right)$,
which will be small compared to the time scale $1-t_{n}^{*}$. To
prove all this rigorously, we will need multiple steps.
\begin{enumerate}
\item \label{enu:uniform bounds}Uniform bounds between $a_{n-1}\left(1\right)\Xi^{\left(n\right)}\left(t\right)$
and $a_{n-1}\left(t\right)\Xi^{\left(n\right)}\left(t\right)$. By
Proposition \ref{prop:time convergence} (which we can apply because
$C\ge\Upsilon_{1}\left(\beta,\delta\right)$), we know that, $\forall t\in\left[t_{n}^{*},1\right]$,
\begin{equation}
\frac{\left|a_{n-1}\left(t\right)\Xi^{\left(n\right)}\left(t\right)-a_{n-1}\left(1\right)\Xi^{\left(n\right)}\left(t\right)\right|}{a_{n-1}\left(1\right)\Xi^{\left(n\right)}\left(t\right)}\le\left|\frac{a_{n-1}\left(t\right)}{a_{n-1}\left(1\right)}-1\right|\le C^{-\beta\delta\gamma\left(\frac{1}{1-\gamma}\right)^{n-1}}.\label{eq:third argument uniform bounds an-1}
\end{equation}
\item Bound for $a_{n-1}\left(t_{n}^{*}\right)\Xi^{\left(n\right)}\left(t_{n}^{*}\right)$.
By equations \eqref{eq:third argument closeness to pi} and \eqref{eq:third argument uniform bounds an-1},
we deduce that
\begin{equation}
\begin{aligned}a_{n-1}\left(t_{n}^{*}\right)\Xi^{\left(n\right)}\left(t_{n}^{*}\right) & \le a_{n-1}\left(1\right)\Xi^{\left(n\right)}\left(t_{n}^{*}\right)\left(1+C^{-\beta\delta\gamma\left(\frac{1}{1-\gamma}\right)^{n-1}}\right)=\\
 & \leq\left(\pi-C^{-\beta\beta'\beta''\delta\gamma\left(\frac{1}{1-\gamma}\right)^{n-1}}\right)\left(1+C^{-\beta\delta\gamma\left(\frac{1}{1-\gamma}\right)^{n-1}}\right),\\
a_{n-1}\left(t_{n}^{*}\right)\Xi^{\left(n\right)}\left(t_{n}^{*}\right) & \ge a_{n-1}\left(1\right)\Xi^{\left(n\right)}\left(t_{n}^{*}\right)\left(1-C^{-\beta\delta\gamma\left(\frac{1}{1-\gamma}\right)^{n-1}}\right)=\\
 & \ge\left(\pi-C^{-\beta\beta'\beta''\delta\gamma\left(\frac{1}{1-\gamma}\right)^{n-1}}\right)\left(1-C^{-\beta\delta\gamma\left(\frac{1}{1-\gamma}\right)^{n-1}}\right).
\end{aligned}
\label{eq:third argument bounds time tn*}
\end{equation}
Thereby, as we have the bounds
\begin{equation}
\begin{aligned}C^{-\beta\beta'\beta''\delta\gamma\left(\frac{1}{1-\gamma}\right)^{n-1}} & \le C^{-\beta\beta'\beta''\delta\frac{\gamma}{1-\gamma}}\le C^{-\beta\beta'\beta''\delta},\\
C^{-\beta\delta\gamma\left(\frac{1}{1-\gamma}\right)^{n-1}} & \le C^{-\beta\delta\frac{\gamma}{1-\gamma}}\le C^{-\beta\delta}
\end{aligned}
\label{eq:third argument bounds}
\end{equation}
because $n\ge2$, $\gamma\ge\frac{1}{2}$ by Choice \ref{choice:min requirements on C gamma and kmax}
and the function $x\to\frac{x}{1-x}$ is increasing in $\left(0,1\right)$,
thanks to the lower bound given in \eqref{eq:third argument bounds time tn*},
choosing $C$ big enough, let us say $C\ge\Upsilon_{4}\left(\beta,\beta',\beta",\delta\right)$,
we can easily ensure that
\[
a_{n-1}\left(t_{n}^{*}\right)\Xi^{\left(n\right)}\left(t_{n}^{*}\right)\ge\frac{\pi}{2}.
\]
Furthermore, taking a look at the upper bound given in \eqref{eq:third argument bounds time tn*}
and expanding the terms that appear, we see that
\[
\begin{aligned}a_{n-1}\left(t_{n}^{*}\right)\Xi^{\left(n\right)}\left(t_{n}^{*}\right) & \le\pi-C^{-\beta\beta'\beta''\delta\gamma\left(\frac{1}{1-\gamma}\right)^{n-1}}+\pi C^{-\beta\delta\gamma\left(\frac{1}{1-\gamma}\right)^{n-1}}=\\
 & \leq\pi-C^{-\beta\beta'\beta''\delta\gamma\left(\frac{1}{1-\gamma}\right)^{n-1}}\left(1-\pi C^{-\left[1-\beta'\beta''\right]\beta\delta\gamma\left(\frac{1}{1-\gamma}\right)^{n-1}}\right).
\end{aligned}
\]
Since we have the bound
\begin{equation}
C^{-\left[1-\beta'\beta''\right]\beta\delta\gamma\left(\frac{1}{1-\gamma}\right)^{n-1}}\le C^{-\left[1-\beta\beta''\right]\beta\delta\frac{\gamma}{1-\gamma}}\le C^{-\left[1-\beta\beta''\right]\beta\delta}\label{eq:third argument bound 1-betabeta thingy}
\end{equation}
because $n\ge2$, $\gamma\ge\frac{1}{2}$ by Choice \ref{choice:min requirements on C gamma and kmax}
and the function $x\to\frac{x}{1-x}$ is increasing in $\left(0,1\right)$,
and $1-\beta'\beta''>0$, taking $C$ large enough (let us say $C\ge\Upsilon_{5}\left(\beta,\beta',\beta'',\delta\right)$),
we can assure that
\[
a_{n-1}\left(t_{n}^{*}\right)\Xi^{\left(n\right)}\left(t_{n}^{*}\right)\leq\pi-\frac{1}{2}C^{-\beta\beta'\beta''\delta\gamma\left(\frac{1}{1-\gamma}\right)^{n-1}}.
\]
In other words, we have proven that
\begin{equation}
\frac{\pi}{2}\leq a_{n-1}\left(t_{n}^{*}\right)\Xi^{\left(n\right)}\left(t_{n}^{*}\right)\leq\pi-\frac{1}{2}C^{-\beta\beta'\beta''\delta\gamma\left(\frac{1}{1-\gamma}\right)^{n-1}}.\label{eq:third argument bounds modified anjin tn*}
\end{equation}
\item The time intervals $\mathcal{J}_{n}$. Let $\mathcal{J}_{n}$ be the
largest time interval that begins at $t_{n}^{*}$, is contained in
$\left[t_{n}^{*},1\right]$ and satisfies 
\begin{equation}
a_{n-1}\left(t\right)\Xi^{\left(n\right)}\left(t\right),a_{n-1}\left(1\right)\Xi^{\left(n\right)}\left(t\right)\leq\frac{3\pi}{2}\quad\forall t\in\mathcal{J}_{n}.\label{eq:third argument defining feature of Jn}
\end{equation}
By equations \eqref{eq:third argument closeness to pi} and \eqref{eq:third argument bounds modified anjin tn*}
and the continuity of both $a_{n-1}\left(t\right)\Xi^{\left(n\right)}\left(t\right)$
and $a_{n-1}\left(1\right)\Xi^{\left(n\right)}\left(t\right)$, we
deduce that the interior of $\mathcal{J}_{n}$ is non-empty. In fact,
we will see that, actually, taking $C$ appropriately, we will have
$\mathcal{J}_{n}=\left[t_{n}^{*},1\right]$.
\item Estimates for $\text{LT}\eqref{eq:derivative an-1Jin}$ when $a_{n-1}\left(1\right)\Xi^{\left(n\right)}\left(\tau\right)=a_{n-1}\left(1\right)\Xi^{\left(n\right)}\left(t_{n}^{*}\right)$
and $\tau\in\mathcal{J}_{n}$. By Choice \ref{choice:anbncn} and
Proposition \ref{prop:time convergence}, we have
\[
\begin{aligned}a_{n-1}\left(\tau\right)\Xi^{\left(n\right)}\left(\tau\right) & =a_{n-1}\left(\tau\right)\Xi^{\left(n\right)}\left(t_{n}^{*}\right)=\frac{a_{n-1}\left(\tau\right)}{a_{n-1}\left(1\right)}\frac{a_{n-1}\left(1\right)}{a_{n-1}\left(t_{n}^{*}\right)}a_{n-1}\left(t_{n}^{*}\right)\Xi^{\left(n\right)}\left(t_{n}^{*}\right)=\\
 & =\frac{a_{n-1}\left(\tau\right)}{a_{n-1}\left(1\right)}\frac{a_{n-1}\left(1\right)}{a_{n-1}\left(1\right)b_{n-1}\left(1\right)\frac{1}{b_{n-1}\left(t_{n}^{*}\right)}}a_{n-1}\left(t_{n}^{*}\right)\Xi^{\left(n\right)}\left(t_{n}^{*}\right)=\\
 & =\left(\frac{a_{n-1}\left(\tau\right)}{a_{n-1}\left(1\right)}-1+1\right)\left(\frac{b_{n-1}\left(t_{n}^{*}\right)}{b_{n-1}\left(1\right)}-1+1\right)a_{n-1}\left(t_{n}^{*}\right)\Xi^{\left(n\right)}\left(t_{n}^{*}\right)\leq\\
 & \leq\left(1+C^{-\beta\delta\gamma\left(\frac{1}{1-\gamma}\right)^{n-1}}\right)^{2}a_{n-1}\left(t_{n}^{*}\right)\Xi^{\left(n\right)}\left(t_{n}^{*}\right).
\end{aligned}
\]
As $C^{-\beta\delta\gamma\left(\frac{1}{1-\gamma}\right)^{n-1}}\leq1$,
we can easily bound
\[
a_{n-1}\left(\tau\right)\Xi^{\left(n\right)}\left(\tau\right)\leq\left(1+3C^{-\beta\delta\gamma\left(\frac{1}{1-\gamma}\right)^{n-1}}\right)a_{n-1}\left(t_{n}^{*}\right)\Xi^{\left(n\right)}\left(t_{n}^{*}\right).
\]
 Then, in view of equation \eqref{eq:third argument bounds modified anjin tn*},
since $\sin\left(x\right)\ge\frac{2}{\pi}\left(\pi-x\right)$ $\forall x\in\left[\frac{\pi}{2},\pi\right]$,
we infer that
\[
\begin{aligned}\sin\left(a_{n-1}\left(\tau\right)\Xi^{\left(n\right)}\left(\tau\right)\right) & \gtrsim\pi-a_{n-1}\left(\tau\right)\Xi^{\left(n\right)}\left(\tau\right)\ge\pi-\left(1+3C^{-\beta\delta\gamma\left(\frac{1}{1-\gamma}\right)^{n-1}}\right)a_{n-1}\left(t_{n}^{*}\right)\Xi^{\left(n\right)}\left(t_{n}^{*}\right)\ge\\
 & \gtrsim\pi-\left(1+3C^{-\beta\delta\gamma\left(\frac{1}{1-\gamma}\right)^{n-1}}\right)\left(\pi-\frac{1}{2}C^{-\beta\beta'\beta''\delta\gamma\left(\frac{1}{1-\gamma}\right)^{n-1}}\right)\ge\\
 & \gtrsim\frac{1}{2}C^{-\beta\beta'\beta''\delta\gamma\left(\frac{1}{1-\gamma}\right)^{n-1}}-3\pi C^{-\beta\delta\gamma\left(\frac{1}{1-\gamma}\right)^{n-1}}\gtrsim\\
 & \gtrsim C^{-\beta\beta'\beta''\delta\gamma\left(\frac{1}{1-\gamma}\right)^{n-1}}\left[1-6\pi C^{-\left[1-\beta'\beta''\right]\beta\delta\gamma\left(\frac{1}{1-\gamma}\right)^{n-1}}\right].
\end{aligned}
\]
Thanks to bound \eqref{eq:third argument bound 1-betabeta thingy},
as long as $C$ is taken large enough (let us say $C\ge\Upsilon_{6}\left(\beta,\beta',\beta'',\delta\right)$),
we may ensure that
\[
1-6\pi C^{-\left[1-\beta'\beta''\right]\beta\delta\gamma\left(\frac{1}{1-\gamma}\right)^{n-1}}\ge\frac{1}{2}
\]
and, consequently,
\begin{equation}
\sin\left(a_{n-1}\left(\tau\right)\Xi^{\left(n\right)}\left(\tau\right)\right)\gtrsim C^{-\beta\beta'\beta''\delta\gamma\left(\frac{1}{1-\gamma}\right)^{n-1}}.\label{eq:third argument lower bound sine}
\end{equation}
Now, observe that, by Proposition \ref{prop:time convergence} (which
we can apply because $C\ge\Upsilon_{1}\left(\beta,\delta\right)$
and $\mathcal{J}_{n}\subseteq\left[t_{n},1\right]$), we can guarantee
that $B_{n-1}\left(\tau\right)\gtrsim B_{n-1}\left(1\right)$ and
$b_{n-1}\left(\tau\right)\gtrsim b_{n-1}\left(1\right)$. In view
of \eqref{eq:third argument definition leading term}, this, along
with equation \eqref{eq:third argument lower bound sine}, ensures
that
\[
\begin{aligned}\text{LT}\eqref{eq:derivative an-1Jin}\left(\tau\right) & \gtrsim B_{n-1}\left(1\right)a_{n-1}\left(1\right)b_{n-1}\left(1\right)C^{-\beta\beta'\beta''\delta\gamma\left(\frac{1}{1-\gamma}\right)^{n-1}}\end{aligned}
\]
provided that $a_{n-1}\left(1\right)\Xi^{\left(n\right)}\left(\tau\right)=a_{n-1}\left(1\right)\Xi^{\left(n\right)}\left(t_{n}^{*}\right)$.
Lastly, by Choices \ref{choice:Bnanbn} and \ref{choice:Mn}, we conclude
that
\begin{equation}
\text{LT}\eqref{eq:derivative an-1Jin}\left(\tau\right)\gtrsim YC^{\delta\left(\frac{1}{1-\gamma}\right)^{n-1}}C^{-\beta\beta'\beta''\delta\gamma\left(\frac{1}{1-\gamma}\right)^{n-1}}\gtrsim YC^{\delta\left(1-\beta\beta'\beta''\gamma\right)\left(\frac{1}{1-\gamma}\right)^{n-1}}\label{eq:third argument bound dominant term}
\end{equation}
as long as $a_{n-1}\left(1\right)\Xi^{\left(n\right)}\left(\tau\right)=a_{n-1}\left(1\right)\Xi^{\left(n\right)}\left(t_{n}^{*}\right)$.
Notice that the exponent $\delta\left(1-\beta\beta'\beta''\gamma\right)$
is strictly positive because $\beta,\beta',\beta'',\gamma<1$. 
\item Estimates for $\text{OT}\eqref{eq:derivative an-1Jin}$ at times $t\in\mathcal{J}_{n}$.
Making use of the fact that $\sin\left(\cdot\right)$ is a Lipschitz
function of constant $1$ and employing equation \eqref{eq:third argument defining feature of Jn},
Proposition \ref{prop:time convergence}, Choices \ref{choice:anbncn},
\ref{choice:Bnanbn} and \ref{choice:Mn} and Lemma \ref{lem:estimate sum superexponential},
we can write
\begin{equation}
\begin{aligned}\left|\text{OT}\eqref{eq:derivative an-1Jin}\left(t\right)\right|= & \left|a_{n-1}\left(1\right)\sum_{m=1}^{n-2}B_{m}\left(t\right)b_{m}\left(t\right)\left[\sin\left(a_{m}\left(t\right)\left(\phi_{1}^{\left(n\right)}\left(t,0\right)-\phi_{1}^{\left(m\right)}\left(t,0\right)\right)\right)+\right.\right.\\
 & \quad\left.\left.-\sin\left(a_{m}\left(t\right)\left(\phi_{1}^{\left(n-1\right)}\left(t,0\right)-\phi_{1}^{\left(m\right)}\left(t,0\right)\right)\right)\right]\right|\le\\
 & \le a_{n-1}\left(1\right)\underbrace{\sum_{m=1}^{n-2}\underbrace{B_{m}\left(t\right)b_{m}\left(t\right)a_{m}\left(t\right)}_{\lesssim B_{m}\left(1\right)b_{m}\left(1\right)a_{m}\left(1\right)=YC^{\delta\left(\frac{1}{1-\gamma}\right)^{m}}}}_{\lesssim_{\delta}YC^{\delta\left(\frac{1}{1-\gamma}\right)^{n-2}}}\quad\underbrace{\Xi^{\left(n\right)}\left(t\right)}_{=\underbrace{a_{n-1}\left(1\right)\Xi^{\left(n\right)}\left(t\right)}_{\leq\frac{3\pi}{2}}\frac{1}{a_{n-1}\left(1\right)}}\lesssim_{\delta}\\
 & \lesssim_{\delta}YC^{\delta\left(\frac{1}{1-\gamma}\right)^{n-2}}\quad\forall t\in\mathcal{J}_{n}.
\end{aligned}
\label{eq:third argument bound other term}
\end{equation}
\item $\frac{\mathrm{d}}{\mathrm{d}t}\left(a_{n-1}\left(1\right)\Xi^{\left(n\right)}\left(t\right)\right)\left(\tau\right)>0$
when $a_{n-1}\left(1\right)\Xi^{\left(n\right)}\left(\tau\right)=a_{n-1}\left(1\right)\Xi^{\left(n\right)}\left(t_{n}^{*}\right)$
and $\tau\in\mathcal{J}_{n}$. Equations \eqref{eq:third argument bound dominant term}
and \eqref{eq:third argument bound other term} imply that
\[
\frac{\left|\text{OT}\eqref{eq:derivative an-1Jin}\left(\tau\right)\right|}{\left|\text{LT}\eqref{eq:derivative an-1Jin}\left(\tau\right)\right|}\lesssim_{\delta}\frac{YC^{\delta\left(\frac{1}{1-\gamma}\right)^{n-2}}}{YC^{\delta\left(1-\beta\beta'\beta''\gamma\right)\left(\frac{1}{1-\gamma}\right)^{n-1}}}=C^{-\delta\left(1-\beta\beta'\beta"\gamma-\left(1-\gamma\right)\right)\left(\frac{1}{1-\gamma}\right)^{n-1}}=C^{-\delta\gamma\left(1-\beta\beta'\beta''\right)\left(\frac{1}{1-\gamma}\right)^{n-1}}.
\]
Observe that the exponent is strictly negative because $\beta,\beta,\beta''<1$.
Moreover, since $n\ge2$, $\gamma\ge\frac{1}{2}$ (by Choice \ref{choice:min requirements on C gamma and kmax})
and the function $x\to\frac{x}{1-x}$ is increasing in the interval
$\left(0,1\right)$, we may bound
\[
C^{-\delta\gamma\left(1-\beta\beta'\beta''\right)\left(\frac{1}{1-\gamma}\right)^{n-1}}\le C^{-\delta\frac{\gamma}{1-\gamma}\left(1-\beta\beta'\beta''\right)}\le C^{-\delta\left(1-\beta\beta'\beta''\right)}.
\]
In this manner, taking $C$ big enough (let us say $C\ge\Upsilon_{7}\left(\beta,\beta',\beta'',\delta\right)$),
we can make $\text{OT}\eqref{eq:derivative an-1Jin}\left(\tau\right)$
negligible in comparison to $\text{LT}\eqref{eq:derivative an-1Jin}\left(\tau\right)$
$\forall n\in\mathbb{N}$. As $\text{LT}\eqref{eq:derivative an-1Jin}\left(\tau\right)$
is positive by equation , we conclude that
\begin{equation}
\frac{\mathrm{d}}{\mathrm{d}t}\left(a_{n-1}\left(1\right)\Xi^{\left(n\right)}\left(t\right)\right)\left(\tau\right)>0\quad\text{when \ensuremath{a_{n-1}\left(1\right)\Xi^{\left(n\right)}\left(\tau\right)}=\ensuremath{a_{n-1}\left(1\right)\Xi^{\left(n\right)}\left(t_{n}^{*}\right)} and }\tau\in\mathcal{J}_{n}.\label{eq:third argument positive derivative at tn*}
\end{equation}
\item The trap. Now, suppose that $\exists s\in\left[t_{n}^{*},1\right]$
such that $a_{n-1}\left(s\right)\Xi^{\left(n\right)}\left(s\right)=\pi$.
Then, it makes sense to study equation \eqref{eq:derivative an-1Jin}
when $\pi\le a_{n-1}\left(t\right)\Xi^{\left(n\right)}\left(t\right)\le\frac{3\pi}{2}$.
Recall that this upper bound is satisfied $\forall t\in\mathcal{J}_{n}$.
Under this assumption, it is evident that the term $\text{LT}\eqref{eq:derivative an-1Jin}\left(t\right)$
is now negative. Thus, equation \eqref{eq:third argument bound other term}
tells us that
\begin{equation}
\frac{\mathrm{d}}{\mathrm{d}t}\left(a_{n-1}\left(1\right)\Xi^{\left(n\right)}\left(t\right)\right)\left(t\right)\leq\left|\text{OT}(\ref{eq:derivative an-1Jin})\left(t\right)\right|\lesssim_{\delta}YC^{\delta\left(\frac{1}{1-\gamma}\right)^{n-2}}\quad\text{if }\pi\le a_{n-1}\left(t\right)\Xi^{\left(n\right)}\left(t\right)\le\frac{3\pi}{2}.\label{eq:third argument bounded derivative after pi v0}
\end{equation}
Notice that, by Choice \ref{choice:anbncn} and Proposition \ref{prop:time convergence},
\[
\begin{aligned}a_{n-1}\left(t\right)\Xi^{\left(n\right)}\left(t\right) & =\frac{a_{n-1}\left(t\right)}{a_{n-1}\left(1\right)}a_{n-1}\left(1\right)\Xi^{\left(n\right)}\left(t\right)=\left(\frac{a_{n-1}\left(t\right)}{a_{n-1}\left(1\right)}-1+1\right)a_{n-1}\left(1\right)\Xi^{\left(n\right)}\left(t\right)\ge\\
 & \ge\left(1-C^{-\beta\delta\gamma\left(\frac{1}{1-\gamma}\right)^{n-1}}\right)a_{n-1}\left(1\right)\Xi^{\left(n\right)}\left(t\right).
\end{aligned}
\]
Thanks to the second equation of \eqref{eq:third argument bounds},
choosing $C$ large enough (let us say $C\ge\Upsilon_{8}\left(\beta,\delta\right)$),
we can ensure that $C^{-\beta\delta\gamma\left(\frac{1}{1-\gamma}\right)^{n-1}}\leq\frac{1}{2}$
$\forall n\in\mathbb{N}$. In this way, as we have $1-x\ge\frac{1}{1+2x}$
$\forall x\in\left[0,\frac{1}{2}\right]$, we can write
\[
a_{n-1}\left(t\right)\Xi^{\left(n\right)}\left(t\right)\ge\frac{1}{1+2C^{-\beta\delta\gamma\left(\frac{1}{1-\gamma}\right)^{n-1}}}a_{n-1}\left(1\right)\Xi^{\left(n\right)}\left(t\right).
\]
Hence, we can guarantee that $a_{n-1}\left(t\right)\Xi^{\left(n\right)}\left(t\right)\ge\pi$
as long as 
\[
a_{n-1}\left(1\right)\Xi^{\left(n\right)}\left(t\right)\ge\pi\left(1+2C^{-\beta\delta\gamma\left(\frac{1}{1-\gamma}\right)^{n-1}}\right).
\]
Thereby, equation \eqref{eq:third argument bounded derivative after pi v0}
implies that
\begin{equation}
\begin{aligned}\frac{\mathrm{d}}{\mathrm{d}t}\left(a_{n-1}\left(1\right)\Xi^{\left(n\right)}\left(t\right)\right)\left(t\right) & \leq\left|\text{OT}(\ref{eq:derivative an-1Jin})\left(t\right)\right|\lesssim_{\delta}YC^{\delta\left(\frac{1}{1-\gamma}\right)^{n-2}}\\
 & \quad\text{if }\pi\left(1+2C^{-\beta\delta\gamma\left(\frac{1}{1-\gamma}\right)^{n-1}}\right)\le a_{n-1}\left(1\right)\Xi^{\left(n\right)}\left(t\right)\le\frac{3\pi}{2}.
\end{aligned}
\label{eq:third argument bounded derivative after pi}
\end{equation}
If, on the other hand, $\nexists s\in\left[t_{n}^{*},1\right]$ such
that $a_{n-1}\left(s\right)\Xi^{\left(n\right)}\left(s\right)=\pi$,
by the continuity of $a_{n-1}\left(t\right)\Xi^{\left(n\right)}\left(t\right)$
and equation \eqref{eq:third argument bounds modified anjin tn*},
we deduce that $a_{n-1}\left(t\right)\Xi^{\left(n\right)}\left(t\right)<\pi$
$\forall t\in\left[t_{n}^{*},1\right]$. Then, by Choice \ref{choice:anbncn}
and Proposition \ref{prop:time convergence}, we can bound
\[
\begin{aligned}a_{n-1}\left(1\right)\Xi^{\left(n\right)}\left(t\right) & =\frac{a_{n-1}\left(1\right)}{a_{n-1}\left(t\right)}a_{n-1}\left(t\right)\Xi^{\left(n\right)}\left(t\right)=\frac{a_{n-1}\left(1\right)}{a_{n-1}\left(1\right)b_{n-1}\left(1\right)\frac{1}{b_{n-1}\left(t\right)}}a_{n-1}\left(t\right)\Xi^{\left(n\right)}\left(t\right)=\\
 & =\frac{b_{n-1}\left(t\right)}{b_{n-1}\left(1\right)}a_{n-1}\left(t\right)\Xi^{\left(n\right)}\left(t\right)=\left(\frac{b_{n-1}\left(t\right)}{b_{n-1}\left(1\right)}-1+1\right)a_{n-1}\left(t\right)\Xi^{\left(n\right)}\left(t\right)<\\
 & <\pi\left(1+C^{-\beta\delta\gamma\left(\frac{1}{1-\gamma}\right)^{n-1}}\right)<\pi\left(1+2C^{-\beta\delta\gamma\left(\frac{1}{1-\gamma}\right)^{n-1}}\right).
\end{aligned}
\]
Consequently, equation \eqref{eq:third argument bounded derivative after pi}
is trivially true in this case as its assumption is never satisfied.
Thereby, as we are considering $t\in\mathcal{J}_{n}$, thanks to equations
\eqref{eq:third argument positive derivative at tn*} and \eqref{eq:third argument bounded derivative after pi},
we can apply Lemma \ref{lem:trap} with 
\[
a\leftarrow a_{n-1}\left(1\right)\Xi^{\left(n\right)}\left(t_{n}^{*}\right),\quad b\leftarrow\pi\left(1+2C^{-\beta\delta\gamma\left(\frac{1}{1-\gamma}\right)^{n-1}}\right),\quad c\leftarrow\frac{3\pi}{2},\quad t\leftarrow t-t_{n}^{*},
\]
which assures us that
\begin{equation}
a_{n-1}\left(1\right)\Xi^{\left(n\right)}\left(t\right)\in\left[a_{n-1}\left(1\right)\Xi^{\left(n\right)}\left(t_{n}^{*}\right),\pi\left(1+2C^{-\beta\delta\gamma\left(\frac{1}{1-\gamma}\right)^{n-1}}\right)+L\left(\delta\right)YC^{\delta\left(\frac{1}{1-\gamma}\right)^{n-2}}\left(t-t_{n}^{*}\right)\right]\label{eq:result of trapping lemma}
\end{equation}
as long as $t\in\mathcal{J}_{n}$ and
\[
\pi\left(1+2C^{-\beta\delta\gamma\left(\frac{1}{1-\gamma}\right)^{n-1}}\right)+L\left(\delta\right)YC^{\delta\left(\frac{1}{1-\gamma}\right)^{n-2}}\left(t-t_{n}^{*}\right)\le\frac{3\pi}{2},
\]
where $L\left(\delta\right)$ is the constant of inequality \eqref{eq:third argument bounded derivative after pi}.
As $t\in\mathcal{J}_{n}\subseteq\left[t_{n}^{*},1\right]$, in particular,
we know that $t\le1$ and, in this way, thanks to Choice \ref{choice:Mn}
and the definition of $t_{n}^{*}$ , we have
\[
t-t_{n}^{*}\le1-t_{n}^{*}\le1-t_{n}=\frac{1}{Y}C^{-\delta\left(\frac{1}{1-\gamma}\right)^{n-1}}\mathrm{arccosh}\left(C^{k_{\max}\left(\frac{1}{1-\gamma}\right)^{n}}\right)
\]
and, as a consequence,
\[
\begin{aligned}L\left(\delta\right)YC^{\delta\left(\frac{1}{1-\gamma}\right)^{n-2}}\left(t-t_{n}^{*}\right) & \leq L\left(\delta\right)C^{-\delta\left(\frac{1}{1-\gamma}\right)^{n-1}\left[1-\left(1-\gamma\right)\right]}\mathrm{arccosh}\left(C^{k_{\max}\left(\frac{1}{1-\gamma}\right)^{n}}\right)\leq\\
 & \leq L\left(\delta\right)C^{-\delta\gamma\left(\frac{1}{1-\gamma}\right)^{n-1}}\mathrm{arccosh}\left(C^{k_{\max}\left(\frac{1}{1-\gamma}\right)^{n}}\right).
\end{aligned}
\]
By Lemma \ref{lem:arccosh by ln},
\[
L\left(\delta\right)YC^{\delta\left(\frac{1}{1-\gamma}\right)^{n-2}}\left(t-t_{n}^{*}\right)\le L\left(\delta\right)C^{-\delta\gamma\left(\frac{1}{1-\gamma}\right)^{n-1}}\left[\ln\left(2\right)+\ln\left(C\right)k_{\max}\left(\frac{1}{1-\gamma}\right)^{n}\right].
\]
Proceeding like in the proof of Proposition \ref{prop:time convergence}
from equation \eqref{eq:time convergence the log 1} to equation \eqref{eq:time convergence the log 2},
we deduce that
\[
L\left(\delta\right)YC^{\delta\left(\frac{1}{1-\gamma}\right)^{n-2}}\left(t-t_{n}^{*}\right)\lesssim_{\delta}C^{-\delta\gamma\left(\frac{1}{1-\gamma}\right)^{n-1}}\ln\left(C\right)\left(\frac{1}{1-\gamma}\right)^{n}.
\]
Since $\frac{1}{1-\gamma}\ge2$ by Choice \ref{choice:min requirements on C gamma and kmax},
we are in position to apply Lemma \ref{lem:exponetial superexponential bound}
with $a\leftarrow\frac{1}{1-\gamma}$, $n\leftarrow n-1$ and $b\leftarrow\delta\gamma$,
which lets us bound
\[
L\left(\delta\right)YC^{\delta\left(\frac{1}{1-\gamma}\right)^{n-2}}\left(t-t_{n}^{*}\right)\lesssim_{\delta}L\left(\delta\right)\frac{4\mathrm{e}^{-2}}{\left(1-\beta\right)^{2}\gamma^{2}\delta^{2}\ln\left(C\right)}C^{-\beta\gamma\delta\left(\frac{1}{1-\gamma}\right)^{n-1}}.
\]
As $\gamma\ge\frac{1}{2}$ by Choice \ref{choice:min requirements on C gamma and kmax},
we obtain
\begin{equation}
L\left(\delta\right)YC^{\delta\left(\frac{1}{1-\gamma}\right)^{n-2}}\left(t-t_{n}^{*}\right)\lesssim_{\beta,\delta}\frac{C^{-\beta\gamma\delta\left(\frac{1}{1-\gamma}\right)^{n-1}}}{\ln\left(C\right)}.\label{eq:third argument small over pi}
\end{equation}
Then, taking $C$ big enough, we can compensate the constant of the
inequality above through the factor $\ln\left(C\right)$. Suppose
we need $C\ge\Upsilon_{9}\left(\beta,\delta\right)$ to make this
constant $1$ (or smaller). Thereby,
\begin{equation}
L\left(\delta\right)YC^{\delta\left(\frac{1}{1-\gamma}\right)^{n-2}}\left(t-t_{n}^{*}\right)\le C^{-\beta\gamma\delta\left(\frac{1}{1-\gamma}\right)^{n-1}}.\label{eq:third argument bound L(delta)blah}
\end{equation}
Using this result in equation \eqref{eq:result of trapping lemma},
along with equation \eqref{eq:third argument closeness to pi}, provides
\begin{equation}
a_{n-1}\left(1\right)\Xi^{\left(n\right)}\left(t\right)\in\left[\pi-C^{-\beta\beta'\beta''\delta\gamma\left(\frac{1}{1-\gamma}\right)^{n-1}},\pi\left(1+2C^{-\beta\delta\gamma\left(\frac{1}{1-\gamma}\right)^{n-1}}\right)+C^{-\beta\gamma\delta\left(\frac{1}{1-\gamma}\right)^{n-1}}\right]\label{eq:third argument trap v0}
\end{equation}
as long as $t\in\mathcal{J}_{n}$ and
\begin{equation}
\pi\left(1+2C^{-\beta\delta\gamma\left(\frac{1}{1-\gamma}\right)^{n-1}}\right)+C^{-\beta\gamma\delta\left(\frac{1}{1-\gamma}\right)^{n-1}}\leq\frac{3\pi}{2}.\label{eq:third argument as long as condition}
\end{equation}
Thanks to the second bound exposed in \eqref{eq:third argument bounds},
it is evident that, choosing $C$ large enough, let us say $C\ge\Upsilon_{10}\left(\beta,\delta\right)$,
we can ensure that
\[
\pi\left(1+2C^{-\beta\delta\gamma\left(\frac{1}{1-\gamma}\right)^{n-1}}\right)+C^{-\beta\gamma\delta\left(\frac{1}{1-\gamma}\right)^{n-1}}\le\frac{5}{4}\pi<\frac{3\pi}{2},
\]
which makes condition \eqref{eq:third argument as long as condition}
superfluous. Bounding $\beta,\beta'<1$ and $1\leq\pi$, we can convert
\eqref{eq:third argument trap v0} into
\begin{equation}
\left|\pi-a_{n-1}\left(1\right)\Xi^{\left(n\right)}\left(t\right)\right|\le3\pi C^{-\beta\beta'\beta''\delta\gamma\left(\frac{1}{1-\gamma}\right)^{n-1}}\quad\forall t\in\mathcal{J}_{n}.\label{eq:third argument trap v1}
\end{equation}
Recall that, when we defined the interval $\mathcal{J}_{n}$ in equation
\eqref{eq:third argument defining feature of Jn}, we did not know
its length. Equation \eqref{eq:third argument trap v1}, along with
the first bound of equation \eqref{eq:third argument bounds}, tells
us that, provided that $C$ is selected large enough, let us say $C\ge\Upsilon_{11}\left(\beta,\beta',\beta'',\delta\right)$,
we can guarantee that, $\forall t\in\mathcal{J}_{n}$, $a_{n-1}\left(1\right)\Xi^{\left(n\right)}\left(t\right)$
is as close to $\pi$ as we want. Similarly, thanks to point \ref{enu:uniform bounds},
we have
\[
\begin{aligned}\left|\pi-a_{n-1}\left(t\right)\Xi^{\left(n\right)}\left(t\right)\right| & \le\left|\pi-a_{n-1}\left(1\right)\Xi^{\left(n\right)}\left(t\right)\right|+\left|a_{n-1}\left(1\right)\Xi^{\left(n\right)}\left(t\right)-a_{n-1}\left(t\right)\Xi^{\left(n\right)}\left(t\right)\right|\leq\\
 & \leq3\pi C^{-\beta\beta'\beta''\delta\gamma\left(\frac{1}{1-\gamma}\right)^{n-1}}+C^{-\beta\delta\gamma\left(\frac{1}{1-\gamma}\right)^{n-1}}a_{n-1}\left(1\right)\Xi^{\left(n\right)}\left(t\right)\leq\\
 & \leq3\pi C^{-\beta\beta'\beta''\delta\gamma\left(\frac{1}{1-\gamma}\right)^{n-1}}+C^{-\beta\delta\gamma\left(\frac{1}{1-\gamma}\right)^{n-1}}\left(1+3\pi C^{-\beta\beta'\beta''\delta\gamma\left(\frac{1}{1-\gamma}\right)^{n-1}}\right).
\end{aligned}
\]
In this manner, by the bounds presented in equation \eqref{eq:third argument bounds},
choosing $C$ big enough, let us say $C\ge\Upsilon_{12}\left(\beta,\beta',\beta'',\delta\right)$,
we may assure that, $\forall t\in\mathcal{J}_{n}$, $a_{n-1}\left(t\right)\Xi^{\left(n\right)}\left(t\right)$
is also as close to $\pi$ as we want. In other words, we have proven
that, in the conditions employed in the definition of $\mathcal{J}_{n}$,
the conditions are always fulfilled with a ``large'' margin $\forall t\in\mathcal{J}_{n}$,
in other words, they are not really restrictions! This means we must
have $\mathcal{J}_{n}=\left[t_{n}^{*},1\right]$. Consequently, equation
\eqref{eq:third argument trap v1} becomes
\begin{equation}
\left|\pi-a_{n-1}\left(1\right)\Xi^{\left(n\right)}\left(t\right)\right|\le3\pi C^{-\beta\beta'\beta''\delta\gamma\left(\frac{1}{1-\gamma}\right)^{n-1}}\quad\forall t\in\left[t_{n}^{*},1\right].\label{eq:third argument trap}
\end{equation}
\item The sandwich. By the second statement of this Proposition,
\[
a_{n-1}\left(1\right)\left|\Xi^{\left(n\right)}\left(t_{n}^{*}\right)-\Xi_{0}^{\left(n\right)}\left(t_{n}^{*}\right)\right|\leq C^{-\beta\beta'\left(1-\beta''\right)\delta\gamma\left(\frac{1}{1-\gamma}\right)^{n-1}}\sin\left(a_{n-1}\left(1\right)\Xi_{0}^{\left(n\right)}\left(t_{n}^{*}\right)\right).
\]
Bounding $\sin\left(x\right)\le1$, we obtain that
\[
\left|a_{n-1}\left(1\right)\Xi^{\left(n\right)}\left(t_{n}^{*}\right)-a_{n-1}\left(1\right)\Xi_{0}^{\left(n\right)}\left(t_{n}^{*}\right)\right|\le C^{-\beta\beta'\left(1-\beta''\right)\delta\gamma\left(\frac{1}{1-\gamma}\right)^{n-1}}.
\]
Thereby, the triangular inequality, along with equation \eqref{eq:third argument closeness to pi}
assures us that
\begin{equation}
\begin{aligned}\left|\pi-a_{n-1}\left(1\right)\Xi_{0}^{\left(n\right)}\left(t_{n}^{*}\right)\right| & \le\left|\pi-a_{n-1}\left(1\right)\Xi^{\left(n\right)}\left(t_{n}^{*}\right)\right|+\left|a_{n-1}\left(1\right)\Xi^{\left(n\right)}\left(t_{n}^{*}\right)-a_{n-1}\left(1\right)\Xi_{0}^{\left(n\right)}\left(t_{n}^{*}\right)\right|\le\\
 & \le C^{-\beta\beta'\beta''\delta\gamma\left(\frac{1}{1-\gamma}\right)^{n-1}}+C^{-\beta\beta'\left(1-\beta''\right)\delta\gamma\left(\frac{1}{1-\gamma}\right)^{n-1}}\le2C^{-\beta\beta'\min\left\{ \beta'',1-\beta''\right\} \delta\gamma\left(\frac{1}{1-\gamma}\right)^{n-1}}.
\end{aligned}
\label{eq:third argument JI0tn* also close to pi}
\end{equation}
Since $a_{n-1}\left(1\right)\Xi_{0}^{\left(n\right)}\left(t\right)=F_{n}\left(B_{n-1}\left(1\right)a_{n-1}\left(1\right)b_{n-1}\left(1\right)\left(t-t_{n}\right)\right)$
and $F_{n}$ is a solution of the ODE presented in Lemma \ref{lem:the good ODE},
by point 1 of the mentioned Lemma, we know that $0\le a_{n-1}\left(1\right)\Xi_{0}^{\left(n\right)}\left(t\right)\le\pi$
$\forall t\in\left[t_{n}^{*},1\right]$. Consequently, equation \eqref{eq:ODE Jin0}
guarantees that
\[
\frac{\mathrm{d}}{\mathrm{d}t}\left(a_{n-1}\left(1\right)\Xi_{0}^{\left(n\right)}\left(t\right)\right)=B_{n-1}\left(1\right)a_{n-1}\left(1\right)b_{n-1}\left(1\right)\sin\left(a_{n-1}\left(1\right)\Xi_{0}^{\left(n\right)}\left(t\right)\right)\ge0
\]
and, in this manner, we conclude that $a_{n-1}\left(1\right)\Xi_{0}^{\left(n\right)}\left(t\right)$
is a non-decreasing function of time. Hence, thanks to equation \eqref{eq:third argument JI0tn* also close to pi},
we now know that
\begin{equation}
\pi-a_{n-1}\left(1\right)\Xi_{0}^{\left(n\right)}\left(t\right)\le\pi-a_{n-1}\left(1\right)\Xi_{0}^{\left(n\right)}\left(t_{n}^{*}\right)\le2C^{-\beta\beta'\min\left\{ \beta'',1-\beta''\right\} \delta\gamma\left(\frac{1}{1-\gamma}\right)^{n-1}}.\label{eq:third argument ideal close to pi}
\end{equation}
Lastly, from equations \eqref{eq:third argument trap} and \eqref{eq:third argument ideal close to pi},
by the triangular inequality, it easily follows that
\[
\begin{aligned}a_{n-1}\left(1\right)\left|\Xi^{\left(n\right)}\left(t\right)-\Xi_{0}^{\left(n\right)}\left(t\right)\right| & \le\left|a_{n-1}\left(1\right)\Xi^{\left(n\right)}\left(t\right)-\pi\right|+\left|\pi-a_{n-1}\left(1\right)\Xi_{0}^{\left(n\right)}\left(t\right)\right|\leq\\
 & \le3\pi C^{-\beta\beta'\beta''\delta\gamma\left(\frac{1}{1-\gamma}\right)^{n-1}}+2C^{-\beta\beta'\min\left\{ \beta'',1-\beta''\right\} \delta\gamma\left(\frac{1}{1-\gamma}\right)^{n-1}}\le\\
 & \le14C^{-\beta\beta'\min\left\{ \beta'',1-\beta''\right\} \delta\gamma\left(\frac{1}{1-\gamma}\right)^{n-1}},
\end{aligned}
\]
where we have bounded $\pi\le4$. This proves the third statement.
\end{enumerate}
Obtaining the uniform bound in the case $\beta''=\frac{1}{2}$ is
immediate from the previous results and the bounds $1\le14$, $\sin\left(x\right)\le x$,
$\beta''=\frac{1}{2}\le1$.
\end{proof}

\subsection{Convergence of $k_{n}\left(t\right)$}

We finish our proofs of uniform convergence in time to the ideal model
by showing that $k_{n}\left(t\right)$ also lies close to our idealized
$\overline{k}_{n}\left(t\right)$, whose limit in $n\in\mathbb{N}$
was explicitly computed in subsection \ref{subsec:completion of the toy model}.
\begin{prop}
\label{prop:convergence kn to ideal model}Let $n\in\mathbb{N}$ with
$n\ge2$ and $\beta,\beta'\in\left(0,1\right)$. Moreover, let $k_{n}\left(t\right)$
be as defined in Choice \ref{choice:anbn} and $\overline{k}_{n}\left(t\right)$
be the ideal model of $k_{n}\left(t\right)$, which is given as the
solution of ODE \eqref{eq:ideal dkntdt}. Then, provided that $C$
is big enough (let us say $C\ge\Upsilon\left(\beta,\beta',\delta\right)$),
\[
\left|k_{n}\left(t\right)-\overline{k}_{n}\left(t\right)\right|\lesssim_{\delta}C^{-\frac{1}{2}\beta\beta'\delta\gamma\left(\frac{1}{1-\gamma}\right)^{n-1}}\quad\forall t\in\left[t_{n},1\right].
\]
\end{prop}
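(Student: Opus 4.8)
The plan is to integrate the difference of the exact and idealized evolution equations. By Choice \ref{choice:time picture} we have $k_n(1)=\overline{k}_n(1)=0$, so
\[
\left|k_n(t)-\overline{k}_n(t)\right|\le\int_{t_n}^{1}\left|\frac{\mathrm{d}k_n}{\mathrm{d}s}(s)-\frac{\mathrm{d}\overline{k}_n}{\mathrm{d}s}(s)\right|\mathrm{d}s\le(1-t_n)\max_{s\in[t_n,1]}\left|\frac{\mathrm{d}k_n}{\mathrm{d}s}(s)-\frac{\mathrm{d}\overline{k}_n}{\mathrm{d}s}(s)\right|,
\]
and it suffices to estimate the integrand using equations \eqref{eq:real dkntdt} and \eqref{eq:ideal dkntdt}. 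I would split the sum in \eqref{eq:real dkntdt} into the leading term $m=n-1$ and the tail $1\le m\le n-2$; only the former has a counterpart in \eqref{eq:ideal dkntdt}. For the tail, I bound $\left|\cos(\cdot)\right|\le1$ and estimate $\sum_{m=1}^{n-2}B_m(s)b_m(s)a_m(s)$: since $s\in[t_n,1]\subseteq[t_m,1]$ for every $m\le n-1$, Proposition \ref{prop:time convergence} replaces each $B_m(s)$, $b_m(s)$, $a_m(s)$ by its value at $t=1$ up to a factor $2$; Choices \ref{choice:Bnanbn} and \ref{choice:Mn} give $B_m(1)a_m(1)b_m(1)=M_m=YC^{\delta(\frac{1}{1-\gamma})^m}$, and Lemma \ref{lem:estimate sum superexponential} yields a tail bound $\lesssim_\delta M_{n-2}$.

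For the leading term, the key simplification is that for $t\in[t_n,1]$ we have $B_{n-1}(t)=B_{n-1}(1)$ (Choice \ref{choice:no two simultanous densities}, since $h^{(n-1)}$ vanishes on $[t_n,1]$) and $a_{n-1}(t)b_{n-1}(t)=a_{n-1}(1)b_{n-1}(1)$ (Choice \ref{choice:anbncn}), so $B_{n-1}(t)a_{n-1}(t)b_{n-1}(t)=M_{n-1}$ identically on $[t_n,1]$. Hence the leading-term contribution to the difference is $M_{n-1}\bigl(\cos(a_{n-1}(t)\Xi^{(n)}(t))-\cos(a_{n-1}(1)\Xi_0^{(n)}(t))\bigr)$, and since cosine is $1$-Lipschitz this is bounded by $M_{n-1}\bigl(\left|a_{n-1}(t)-a_{n-1}(1)\right|\left|\Xi^{(n)}(t)\right|+a_{n-1}(1)\left|\Xi^{(n)}(t)-\Xi_0^{(n)}(t)\right|\bigr)$. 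The first summand is controlled by Proposition \ref{prop:time convergence} together with the boundedness of $a_{n-1}(1)\Xi^{(n)}(t)$ (which follows from $a_{n-1}(1)\Xi_0^{(n)}(t)\in[0,\pi]$ and Proposition \ref{prop:JI convergence to ideal}), giving $\lesssim C^{-\beta\delta\gamma(\frac{1}{1-\gamma})^{n-1}}$; the second, by Proposition \ref{prop:JI convergence to ideal} with $\beta''=\frac{1}{2}$, giving $a_{n-1}(1)\left|\Xi^{(n)}(t)-\Xi_0^{(n)}(t)\right|\le14\,C^{-\frac{1}{2}\beta\beta'\delta\gamma(\frac{1}{1-\gamma})^{n-1}}$.

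Collecting the estimates, and using $M_{n-2}=M_{n-1}C^{-\delta\gamma(\frac{1}{1-\gamma})^{n-1}}\le M_{n-1}C^{-\frac{1}{2}\beta\beta'\delta\gamma(\frac{1}{1-\gamma})^{n-1}}$ together with $C^{-\beta\delta\gamma(\frac{1}{1-\gamma})^{n-1}}\le C^{-\frac{1}{2}\beta\beta'\delta\gamma(\frac{1}{1-\gamma})^{n-1}}$ to absorb every term into the slowest-decaying one, I obtain
\[
\max_{s\in[t_n,1]}\left|\frac{\mathrm{d}k_n}{\mathrm{d}s}(s)-\frac{\mathrm{d}\overline{k}_n}{\mathrm{d}s}(s)\right|\lesssim_\delta\frac{M_{n-1}}{\ln(C)\left(\frac{1}{1-\gamma}\right)^n}\,C^{-\frac{1}{2}\beta\beta'\delta\gamma(\frac{1}{1-\gamma})^{n-1}}.
\]
Multiplying by $1-t_n$ and using Choice \ref{choice:Mn}, $M_{n-1}(1-t_n)=\mathrm{arccosh}\bigl(C^{k_{\max}(\frac{1}{1-\gamma})^n}\bigr)$, which by Lemma \ref{lem:arccosh by ln} and the argument used in Proposition \ref{prop:time convergence} (from \eqref{eq:time convergence the log 1} to \eqref{eq:time convergence the log 2}) is $\lesssim\ln(C)(\frac{1}{1-\gamma})^n$, so the factors cancel and the claimed bound follows, with $\Upsilon(\beta,\beta',\delta)$ taken to be the maximum of the thresholds demanded by Propositions \ref{prop:time convergence} and \ref{prop:JI convergence to ideal} (the latter with $\beta''=\frac{1}{2}$). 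The only mildly delicate point is the bookkeeping of exponents — checking that $\frac{1}{2}\beta\beta'\delta\gamma$ is indeed the smallest decay rate appearing and that every $\ln(C)$ constant can be absorbed by taking $C$ large; no Grönwall argument is needed here because, unlike in Proposition \ref{prop:JI convergence to ideal}, the right-hand side of \eqref{eq:real dkntdt} does not feed back on $k_n(t)$ itself, so the heavy lifting has already been done in the convergence of $\Xi^{(n)}$ to $\Xi_0^{(n)}$.
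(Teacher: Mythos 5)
Your proof follows the same route as the paper's: split \eqref{eq:real dkntdt} into the $m=n-1$ term and the tail $m\le n-2$, control the leading term with the Lipschitz bound on the cosine together with Propositions \ref{prop:time convergence} and \ref{prop:JI convergence to ideal} (with $\beta''=\tfrac{1}{2}$), control the tail with Proposition \ref{prop:time convergence}, Choices \ref{choice:Bnanbn} and \ref{choice:Mn} and Lemma \ref{lem:estimate sum superexponential}, and finally trade the factor $1-t_{n}$ against $M_{n-1}/\bigl(\ln(C)\left(\frac{1}{1-\gamma}\right)^{n}\bigr)$ via Choice \ref{choice:Mn} and Lemma \ref{lem:arccosh by ln}; the exponent bookkeeping at the end is also the paper's.

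The one incorrect step is the claim that $B_{n-1}(t)=B_{n-1}(1)$, hence $B_{n-1}(t)a_{n-1}(t)b_{n-1}(t)=M_{n-1}$, identically on $[t_{n},1]$. Choice \ref{choice:no two simultanous densities} only forces the ratio of integrals in Choice \ref{choice:amplitude density} to equal $1$ there, so $B_{n-1}(t)=z_{n-1}/\bigl(a_{n-1}(t)^{2}+b_{n-1}(t)^{2}\bigr)$; the denominator still varies on $[t_{n},1]$, because $k_{n-1}(t)$ keeps being driven by layers $1,\dots,n-2$ and Choice \ref{choice:anbncn} only conserves the product $a_{n-1}(t)b_{n-1}(t)$, not the sum of squares. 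Consequently the leading contribution is $\frac{1}{\ln(C)\left(\frac{1}{1-\gamma}\right)^{n}}\bigl[B_{n-1}(t)a_{n-1}(t)b_{n-1}(t)\cos\bigl(a_{n-1}(t)\Xi^{(n)}(t)\bigr)-M_{n-1}\cos\bigl(a_{n-1}(1)\Xi_{0}^{(n)}(t)\bigr)\bigr]$, and you must add and subtract once more to pick up the extra term $M_{n-1}\bigl|\frac{B_{n-1}(t)a_{n-1}(t)b_{n-1}(t)}{M_{n-1}}-1\bigr|$. This is harmless: since $a_{n-1}(t)b_{n-1}(t)=a_{n-1}(1)b_{n-1}(1)$, the bracket reduces to $\bigl|\frac{B_{n-1}(t)}{B_{n-1}(1)}-1\bigr|\le C^{-\beta\delta\gamma\left(\frac{1}{1-\gamma}\right)^{n-1}}$ by Proposition \ref{prop:time convergence}, which decays faster than $C^{-\frac{1}{2}\beta\beta'\delta\gamma\left(\frac{1}{1-\gamma}\right)^{n-1}}$ and is absorbed exactly like your $\left|a_{n-1}(t)-a_{n-1}(1)\right|$ contribution; this is precisely the term the paper keeps in its bound of $I_{1}$. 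With that correction your argument is complete and coincides with the paper's proof.
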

\begin{proof}
We depart from equations \eqref{eq:real dkntdt} and \eqref{eq:ideal dkntdt}.
Subtracting both equations leads to
\begin{equation}
\begin{aligned}\frac{\mathrm{d}}{\mathrm{d}t}\left(k_{n}\left(t\right)-\overline{k}_{n}\left(t\right)\right) & =\frac{1}{\ln\left(C\right)\left(\frac{1}{1-\gamma}\right)^{n}}\sum_{m=1}^{n-1}B_{m}\left(t\right)b_{m}\left(t\right)a_{m}\left(t\right)\cos\left(a_{m}\left(t\right)\left(\phi_{1}^{\left(n\right)}\left(t,0\right)-\phi_{1}^{\left(m\right)}\left(t,0\right)\right)\right)+\\
 & \quad-\frac{B_{n-1}\left(1\right)b_{n-1}\left(1\right)a_{n-1}\left(1\right)}{\ln\left(C\right)\left(\frac{1}{1-\gamma}\right)^{n}}\cos\left(a_{n-1}\left(1\right)\Xi_{0}^{\left(n\right)}\left(t\right)\right)=\\
 & =\frac{1}{\ln\left(C\right)\left(\frac{1}{1-\gamma}\right)^{n}}\left[B_{n-1}\left(t\right)b_{n-1}\left(t\right)a_{n-1}\left(t\right)\cos\left(a_{n-1}\left(t\right)\Xi^{\left(n\right)}\left(t\right)\right)+\right.\\
 & \qquad\left.-B_{n-1}\left(1\right)b_{n-1}\left(1\right)a_{n-1}\left(1\right)\cos\left(a_{n-1}\left(1\right)\Xi_{0}^{\left(n\right)}\left(t\right)\right)\right]+\\
 & \quad+\frac{1}{\ln\left(C\right)\left(\frac{1}{1-\gamma}\right)^{n}}\sum_{m=1}^{n-2}B_{m}\left(t\right)b_{m}\left(t\right)a_{m}\left(t\right)\cos\left(a_{m}\left(t\right)\left(\phi_{1}^{\left(n\right)}\left(t,0\right)-\phi_{1}^{\left(m\right)}\left(t,0\right)\right)\right).
\end{aligned}
\label{eq:convergence of kn ODE difference}
\end{equation}
Let us denote the first summand by $I_{1}$ and the summand that contains
the sum from $m=1$ to $n-2$ by $I_{2}$. We shall bound each term
independently.
\begin{itemize}
\item $I_{1}$. Using Choice \ref{choice:anbncn} to write $b_{n-1}\left(t\right)a_{n-1}\left(t\right)=b_{n-1}\left(1\right)a_{n-1}\left(1\right)$,
adding and subtracting some terms and employing the triangular inequality,
we arrive to
\[
\begin{aligned}\left|I_{1}\right|\leq & \frac{B_{n-1}\left(1\right)a_{n-1}\left(1\right)b_{n-1}\left(1\right)}{\ln\left(C\right)\left(\frac{1}{1-\gamma}\right)^{n}}\left[\left|\frac{B_{n-1}\left(t\right)}{B_{n-1}\left(1\right)}-1\right|\cos\left(a_{n-1}\left(t\right)\Xi^{\left(n\right)}\left(t\right)\right)+\right.\\
 & \quad\left.+\left|\cos\left(a_{n-1}\left(t\right)\Xi^{\left(n\right)}\left(t\right)\right)-\cos\left(a_{n-1}\left(1\right)\Xi_{0}^{\left(n\right)}\left(t\right)\right)\right|\right].
\end{aligned}
\]
As $\left|\cos\left(\cdot\right)\right|$ is Lipschitz of constant
$1$ and $\left|\cos\left(\cdot\right)\right|\le1$, we deduce that
\[
\begin{aligned}\left|I_{1}\right| & \le\frac{B_{n-1}\left(1\right)a_{n-1}\left(1\right)b_{n-1}\left(1\right)}{\ln\left(C\right)\left(\frac{1}{1-\gamma}\right)^{n}}\left[\left|\frac{B_{n-1}\left(t\right)}{B_{n-1}\left(1\right)}-1\right|+\left|a_{n-1}\left(t\right)\Xi^{\left(n\right)}\left(t\right)-a_{n-1}\left(1\right)\Xi_{0}^{\left(n\right)}\left(t\right)\right|\right]\le\\
 & \le\frac{B_{n-1}\left(1\right)a_{n-1}\left(1\right)b_{n-1}\left(1\right)}{\ln\left(C\right)\left(\frac{1}{1-\gamma}\right)^{n}}\left[\left|\frac{B_{n-1}\left(t\right)}{B_{n-1}\left(1\right)}-1\right|+a_{n-1}\left(t\right)\left|\Xi^{\left(n\right)}\left(t\right)-\Xi_{0}^{\left(n\right)}\left(t\right)\right|+\right.\\
 & \quad\left.+\left|a_{n-1}\left(t\right)-a_{n-1}\left(1\right)\right|\Xi_{0}^{\left(n\right)}\left(t\right)\right]\le\\
 & \le\frac{B_{n-1}\left(1\right)a_{n-1}\left(1\right)b_{n-1}\left(1\right)}{\ln\left(C\right)\left(\frac{1}{1-\gamma}\right)^{n}}\left[\left|\frac{B_{n-1}\left(t\right)}{B_{n-1}\left(1\right)}-1\right|+\right.\\
 & \quad\left.+\left(\frac{a_{n-1}\left(t\right)}{a_{n-1}\left(1\right)}-1+1\right)a_{n-1}\left(1\right)\left|\Xi^{\left(n\right)}\left(t\right)-\Xi_{0}^{\left(n\right)}\left(t\right)\right|+a_{n-1}\left(1\right)\Xi_{0}^{\left(n\right)}\left(t\right)\left|\frac{a_{n-1}\left(t\right)}{a_{n-1}\left(1\right)}-1\right|\right].
\end{aligned}
\]
Taking $C$ big enough (let us say $C\ge\Upsilon_{}\left(\beta,\beta',\delta\right)$)
so that we can make use of Propositions \ref{prop:time convergence}
and \ref{prop:JI convergence to ideal} (with $\beta''=\frac{1}{2}$)
and recalling that $a_{n-1}\left(1\right)\Xi_{0}^{\left(n\right)}\left(t\right)\leq\pi$
by Lemma \ref{lem:the good ODE}, we infer that
\[
\begin{aligned}\left|I_{1}\right| & \leq\frac{B_{n-1}\left(1\right)a_{n-1}\left(1\right)b_{n-1}\left(1\right)}{\ln\left(C\right)\left(\frac{1}{1-\gamma}\right)^{n}}\left[C^{-\beta\delta\gamma\left(\frac{1}{1-\gamma}\right)^{n-1}}+\left(C^{-\beta\delta\gamma\left(\frac{1}{1-\gamma}\right)^{n-1}}+1\right)14C^{-\frac{1}{2}\beta\beta'\delta\gamma\left(\frac{1}{1-\gamma}\right)^{n-1}}+\right.\\
 & \quad\left.+\pi C^{-\beta\delta\gamma\left(\frac{1}{1-\gamma}\right)^{n-1}}\right].
\end{aligned}
\]
As $\beta,\beta',\gamma,\delta<1$, the term that decreases the slowest
is the one that contains $C^{-\frac{1}{2}\beta\beta'\delta\gamma\left(\frac{1}{1-\gamma}\right)^{n-1}}$.
Thus, employing Choices \ref{choice:Bnanbn} and \ref{choice:Mn},
\begin{equation}
\left|I_{1}\right|\lesssim\frac{B_{n-1}\left(1\right)a_{n-1}\left(1\right)b_{n-1}\left(1\right)}{\ln\left(C\right)\left(\frac{1}{1-\gamma}\right)^{n}}C^{-\frac{1}{2}\beta\beta'\delta\gamma\left(\frac{1}{1-\gamma}\right)^{n-1}}=\frac{YC^{\delta\left(\frac{1}{1-\gamma}\right)^{n-1}}}{\ln\left(C\right)\left(\frac{1}{1-\gamma}\right)^{n}}C^{-\frac{1}{2}\beta\beta'\delta\gamma\left(\frac{1}{1-\gamma}\right)^{n-1}}.\label{eq:convergence of kn bound I1}
\end{equation}
\item $I_{2}$. Bounding $\left|\cos\left(\cdot\right)\right|\le1$ and
using Choice \ref{choice:anbncn} and Proposition \ref{prop:time convergence}
provides
\[
\begin{aligned}\left|I_{2}\right| & \leq\frac{1}{\ln\left(C\right)\left(\frac{1}{1-\gamma}\right)^{n}}\sum_{m=1}^{n-2}B_{m}\left(t\right)b_{m}\left(t\right)a_{m}\left(t\right)=\frac{1}{\ln\left(C\right)\left(\frac{1}{1-\gamma}\right)^{n}}\sum_{m=1}^{n-2}B_{m}\left(t\right)b_{m}\left(1\right)a_{m}\left(1\right)\lesssim\\
 & \lesssim\frac{1}{\ln\left(C\right)\left(\frac{1}{1-\gamma}\right)^{n}}\sum_{m=1}^{n-2}B_{m}\left(1\right)b_{m}\left(1\right)a_{m}\left(1\right).
\end{aligned}
\]
Now, by Choices \ref{choice:Bnanbn} and \ref{choice:Mn}, we obtain
\[
\left|I_{2}\right|\lesssim\frac{1}{\ln\left(C\right)\left(\frac{1}{1-\gamma}\right)^{n}}\sum_{m=1}^{n-2}YC^{\delta\left(\frac{1}{1-\gamma}\right)^{m}}.
\]
Finally, Lemma \ref{lem:estimate sum superexponential} allows us
to bound
\begin{equation}
\left|I_{2}\right|\lesssim_{\delta}\frac{Y}{\ln\left(C\right)\left(\frac{1}{1-\gamma}\right)^{n}}C^{\delta\left(\frac{1}{1-\gamma}\right)^{n-2}}.\label{eq:convergence of kn bound I2}
\end{equation}
\end{itemize}
Combining equations \eqref{eq:convergence of kn bound I1} and \eqref{eq:convergence of kn bound I2}
in \eqref{eq:convergence of kn ODE difference} and recalling that
$-\frac{\mathrm{d}\left|f\right|}{\mathrm{d}t}\leq\left|\frac{\mathrm{d}\left|f\right|}{\mathrm{d}t}\right|\leq\left|\frac{\mathrm{d}f}{\mathrm{d}t}\right|$,
one obtains
\[
-\frac{\mathrm{d}}{\mathrm{d}t}\left(\left|k_{n}\left(t\right)-\overline{k}_{n}\left(t\right)\right|\right)\left(t\right)\lesssim_{\delta}\frac{Y}{\ln\left(C\right)\left(\frac{1}{1-\gamma}\right)^{n}}\left[C^{\delta\left(\frac{1}{1-\gamma}\right)^{n-1}}C^{-\frac{1}{2}\beta\beta'\delta\gamma\left(\frac{1}{1-\gamma}\right)^{n-1}}+C^{\delta\left(\frac{1}{1-\gamma}\right)^{n-2}}\right].
\]
Integrating from a general $t$ to $t=1$ at both sides provides,
since $k_{n}\left(1\right)=0=\overline{k}_{n}\left(1\right)$ by Choice
\ref{choice:time picture}, 
\[
\left|k_{n}\left(t\right)-\overline{k}_{n}\left(t\right)\right|-\underbrace{\left|k_{n}\left(1\right)-\overline{k}_{n}\left(1\right)\right|}_{=0}\lesssim_{\delta}\frac{Y\left(1-t\right)}{\ln\left(C\right)\left(\frac{1}{1-\gamma}\right)^{n}}\left[C^{\delta\left(\frac{1}{1-\gamma}\right)^{n-1}}C^{-\frac{1}{2}\beta\beta'\delta\gamma\left(\frac{1}{1-\gamma}\right)^{n-1}}+C^{\delta\left(\frac{1}{1-\gamma}\right)^{n-2}}\right].
\]
Since $t\in\left[t_{n},1\right]$, we can bound $1-t\le1-t_{n}$.
Thereby, Choice \ref{choice:Mn} allows us to write
\[
\begin{aligned}\left|k_{n}\left(t\right)-\overline{k}_{n}\left(t\right)\right| & \lesssim_{\delta}\frac{C^{-\delta\left(\frac{1}{1-\gamma}\right)^{n-1}}\mathrm{arccosh}\left(C^{k_{\max}\left(\frac{1}{1-\gamma}\right)^{n}}\right)}{\ln\left(C\right)\left(\frac{1}{1-\gamma}\right)^{n}}\left[C^{\delta\left(\frac{1}{1-\gamma}\right)^{n-1}}C^{-\frac{1}{2}\beta\beta'\delta\gamma\left(\frac{1}{1-\gamma}\right)^{n-1}}+C^{\delta\left(\frac{1}{1-\gamma}\right)^{n-2}}\right]=\\
 & \lesssim_{\delta}\frac{\mathrm{arccosh}\left(C^{k_{\max}\left(\frac{1}{1-\gamma}\right)^{n}}\right)}{\ln\left(C\right)\left(\frac{1}{1-\gamma}\right)^{n}}\left[C^{-\frac{1}{2}\beta\beta'\delta\gamma\left(\frac{1}{1-\gamma}\right)^{n-1}}+C^{-\delta\left(1-\left(1-\gamma\right)\right)\left(\frac{1}{1-\gamma}\right)^{n-1}}\right]=\\
 & \lesssim_{\delta}\frac{\mathrm{arccosh}\left(C^{k_{\max}\left(\frac{1}{1-\gamma}\right)^{n}}\right)}{\ln\left(C\right)\left(\frac{1}{1-\gamma}\right)^{n}}\left[C^{-\frac{1}{2}\beta\beta'\delta\gamma\left(\frac{1}{1-\gamma}\right)^{n-1}}+C^{-\delta\gamma\left(\frac{1}{1-\gamma}\right)^{n-1}}\right].
\end{aligned}
\]
As $\beta,\beta'<1$, we deduce that
\[
\left|k_{n}\left(t\right)-\overline{k}_{n}\left(t\right)\right|\lesssim_{\delta}\frac{\mathrm{arccosh}\left(C^{k_{\max}\left(\frac{1}{1-\gamma}\right)^{n}}\right)}{\ln\left(C\right)\left(\frac{1}{1-\gamma}\right)^{n}}C^{-\frac{1}{2}\beta\beta'\delta\gamma\left(\frac{1}{1-\gamma}\right)^{n-1}}.
\]
Employing Lemma \ref{lem:arccosh by ln} and proceeding like in the
proof of Proposition \ref{prop:time convergence} from equation \eqref{eq:time convergence the log 1}
to \eqref{eq:time convergence the log 2} leads to
\[
\begin{aligned}\left|k_{n}\left(t\right)-\overline{k}_{n}\left(t\right)\right| & \lesssim_{\delta}\frac{\ln\left(2\right)+k_{\max}\left(\frac{1}{1-\gamma}\right)^{n}\ln\left(C\right)}{\ln\left(C\right)\left(\frac{1}{1-\gamma}\right)^{n}}C^{-\frac{1}{2}\beta\beta'\delta\gamma\left(\frac{1}{1-\gamma}\right)^{n-1}}\lesssim C^{-\frac{1}{2}\beta\beta'\delta\gamma\left(\frac{1}{1-\gamma}\right)^{n-1}}.\end{aligned}
\]
\end{proof}
\begin{cor}
\label{cor:bound for an(t), bn(t)}Let $n\in\mathbb{N}$ with $n\ge2$
and $\mu>0$. Provided that $C$ is big enough (let us say $C\ge\Upsilon\left(\delta,\mu\right)$),
then
\[
\begin{aligned} & C^{\left(1-\overline{k}_{n}\left(t\right)-\mu\right)\left(\frac{1}{1-\gamma}\right)^{n}}\le a_{n}\left(t\right)\leq C^{\left(1-\overline{k}_{n}\left(t\right)+\mu\right)\left(\frac{1}{1-\gamma}\right)^{n}}\le C^{\left(1+\mu\right)\left(\frac{1}{1-\gamma}\right)^{n}},\\
 & C^{\left(1-\mu\right)\left(\frac{1}{1-\gamma}\right)^{n}}\le C^{\left(1+\overline{k}_{n}\left(t\right)-\mu\right)\left(\frac{1}{1-\gamma}\right)^{n}}\le b_{n}\left(t\right)\leq C^{\left(1+\overline{k}_{n}\left(t\right)+\mu\right)\left(\frac{1}{1-\gamma}\right)^{n}}.
\end{aligned}
\]
Furthermore,
\[
\begin{aligned}C^{\left(1+\overline{k}_{n}\left(t\right)-\mu\right)\left(\frac{1}{1-\gamma}\right)^{n}}\le\max\left\{ a_{n}\left(t\right),b_{n}\left(t\right)\right\}  & \leq C^{\left(1+\overline{k}_{n}\left(t\right)+\mu\right)\left(\frac{1}{1-\gamma}\right)^{n}}\le b_{n}\left(t\right)C^{2\mu\left(\frac{1}{1-\gamma}\right)^{n}},\\
\min\left\{ a_{n}\left(t\right),b_{n}\left(t\right)\right\}  & \ge C^{\left(1-\overline{k}_{n}\left(t\right)-\mu\right)\left(\frac{1}{1-\gamma}\right)^{n}}\ge a_{n}\left(t\right)C^{-2\mu\left(\frac{1}{1-\gamma}\right)^{n}}.
\end{aligned}
\]
\end{cor}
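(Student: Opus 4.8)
The plan is to deduce the whole statement from the convergence estimate of Proposition \ref{prop:convergence kn to ideal model}, the explicit formulas $a_n(t)=C^{(1-k_n(t))(\frac{1}{1-\gamma})^n}$, $b_n(t)=C^{(1+k_n(t))(\frac{1}{1-\gamma})^n}$ of Choice \ref{choice:anbn}, and the non-negativity $\overline{k}_n(t)\ge 0$ of Corollary \ref{cor:ideal kn non negative}. No further ODE analysis is needed; everything reduces to elementary algebra with the inequality $|k_n(t)-\overline{k}_n(t)|\le\mu$.

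\textbf{Step 1: a uniform additive bound.} First I would fix, say, $\beta=\beta'=\frac12$ in Proposition \ref{prop:convergence kn to ideal model}, which gives, for $C$ sufficiently large, $|k_n(t)-\overline{k}_n(t)|\le K_0(\delta)\,C^{-\frac18\delta\gamma(\frac{1}{1-\gamma})^{n-1}}$ for all $n\ge 2$ and $t\in[t_n,1]$. Since $\gamma\ge\frac12$, hence $\frac{1}{1-\gamma}\ge 2$, by Choice \ref{choice:min requirements on C gamma and kmax}, one has $\gamma(\frac{1}{1-\gamma})^{n-1}\ge\frac12\cdot 2^{n-1}\ge 1$ for every $n\ge 2$, so the superexponential factor is bounded, uniformly in $n$ and in $\gamma$, by $C^{-c\delta}$ for an absolute constant $c>0$. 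Choosing $C\ge\Upsilon(\delta,\mu)$ large enough that $K_0(\delta)\,C^{-c\delta}\le\mu$ (on top of the largeness required by Proposition \ref{prop:convergence kn to ideal model}) then yields the clean estimate $|k_n(t)-\overline{k}_n(t)|\le\mu$ for all $n\ge 2$ and $t\in[t_n,1]$, with a threshold $\Upsilon$ depending only on $\delta$ and $\mu$.

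\textbf{Step 2: the bounds for $a_n$ and $b_n$.} Since $x\mapsto C^x$ is increasing ($C>1$), Step 1 immediately gives $(1-\overline{k}_n(t)-\mu)(\tfrac{1}{1-\gamma})^n\le(1-k_n(t))(\tfrac{1}{1-\gamma})^n\le(1-\overline{k}_n(t)+\mu)(\tfrac{1}{1-\gamma})^n$, which is the two-sided estimate for $a_n(t)=C^{(1-k_n(t))(\frac{1}{1-\gamma})^n}$; the final inequality $C^{(1-\overline{k}_n(t)+\mu)(\frac{1}{1-\gamma})^n}\le C^{(1+\mu)(\frac{1}{1-\gamma})^n}$ is just $\overline{k}_n(t)\ge 0$. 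The estimate for $b_n(t)=C^{(1+k_n(t))(\frac{1}{1-\gamma})^n}$ is obtained the same way, with $\overline{k}_n(t)\ge 0$ used for the lower bound $C^{(1-\mu)(\frac{1}{1-\gamma})^n}\le C^{(1+\overline{k}_n(t)-\mu)(\frac{1}{1-\gamma})^n}$. For the max/min part I would write $\max\{a_n(t),b_n(t)\}=C^{(1+|k_n(t)|)(\frac{1}{1-\gamma})^n}$ and $\min\{a_n(t),b_n(t)\}=C^{(1-|k_n(t)|)(\frac{1}{1-\gamma})^n}$, and control $|k_n(t)|$: from $k_n(t)\le\overline{k}_n(t)+\mu$ and $-k_n(t)\le-\overline{k}_n(t)+\mu\le\overline{k}_n(t)+\mu$ (using $\overline{k}_n(t)\ge 0$) one gets $|k_n(t)|\le\overline{k}_n(t)+\mu$; and either $\overline{k}_n(t)\le\mu$, so $\overline{k}_n(t)-\mu\le 0\le|k_n(t)|$, or $\overline{k}_n(t)>\mu$, so $k_n(t)\ge\overline{k}_n(t)-\mu>0$ and $|k_n(t)|=k_n(t)\ge\overline{k}_n(t)-\mu$. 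This gives the two-sided bound for $\max\{a_n(t),b_n(t)\}$. Finally $\max\{a_n(t),b_n(t)\}/b_n(t)=C^{(|k_n(t)|-k_n(t))(\frac{1}{1-\gamma})^n}$ equals $1$ if $k_n(t)\ge 0$, while if $k_n(t)<0$ then $-k_n(t)\le\overline{k}_n(t)-k_n(t)\le\mu$, so $|k_n(t)|-k_n(t)=-2k_n(t)\le 2\mu$; in either case $\max\{a_n(t),b_n(t)\}\le b_n(t)\,C^{2\mu(\frac{1}{1-\gamma})^n}$, and the bound $\min\{a_n(t),b_n(t)\}\ge a_n(t)\,C^{-2\mu(\frac{1}{1-\gamma})^n}$ follows symmetrically by examining $C^{(k_n(t)-|k_n(t)|)(\frac{1}{1-\gamma})^n}$.

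\textbf{Main obstacle.} There is no real analytic difficulty: the substance is entirely inside Proposition \ref{prop:convergence kn to ideal model}. The one place that needs attention is Step 1, namely verifying that the superexponential factor $C^{-\frac12\beta\beta'\delta\gamma(\frac{1}{1-\gamma})^{n-1}}$ can be bounded \emph{uniformly in $n$ and in $\gamma$} by something that tends to $0$ as $C\to\infty$ and depends only on $\delta$, so that the resulting threshold $\Upsilon$ depends only on $\delta$ and $\mu$ (not on $\gamma$, $\beta$, $\beta'$); this is exactly where the standing assumption $\gamma\ge\frac12$ of Choice \ref{choice:min requirements on C gamma and kmax} enters, through $\frac{1}{1-\gamma}\ge 2$. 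All the rest is bookkeeping with $|k_n(t)-\overline{k}_n(t)|\le\mu$, the monotonicity of $x\mapsto C^x$, and $\overline{k}_n(t)\ge 0$.
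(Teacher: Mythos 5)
Your proposal is correct and follows essentially the same route as the paper: apply Proposition \ref{prop:convergence kn to ideal model} with $\beta=\beta'=\tfrac12$, use $\gamma\ge\tfrac12$ (so $\frac{\gamma}{1-\gamma}\ge1$) to absorb the superexponential factor into a threshold $\Upsilon(\delta,\mu)$ giving $\left|k_{n}\left(t\right)-\overline{k}_{n}\left(t\right)\right|\le\mu$, and then conclude by monotonicity of $x\mapsto C^{x}$ together with $\overline{k}_{n}\left(t\right)\ge0$ from Corollary \ref{cor:ideal kn non negative}. Your handling of the max/min inequalities via $\left|k_{n}\left(t\right)\right|$ and a sign case analysis is only a cosmetic variant of the paper's bookkeeping, which combines the two-sided bounds on $a_{n}$ and $b_{n}$ directly.
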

\begin{proof}
By Choice \ref{choice:anbn}, we can write
\[
a_{n}\left(t\right)=C^{\left(1-k_{n}\left(t\right)\right)\left(\frac{1}{1-\gamma}\right)^{n}},\quad b_{n}\left(t\right)=C^{\left(1+k_{n}\left(t\right)\right)\left(\frac{1}{1-\gamma}\right)^{n}}.
\]
To continue, recall that, by Corollary \ref{cor:ideal kn non negative},
we have $\overline{k}_{n}\left(t\right)\ge0$ $\forall t\in\left[t_{n},1\right]$
and $\forall n\in\mathbb{N}$. Intuitively, since $k_{n}\left(t\right)$
and $\overline{k}_{n}\left(t\right)$ are close, we expect $k_{n}\left(t\right)$
to be almost non-negative as well. If $C$ is big enough (let us say
$C\ge\Upsilon_{1}\left(\frac{1}{2},\frac{1}{2},\delta\right)$), choosing
$\beta=\beta'=\frac{1}{2}$, we may make use of Proposition \ref{prop:convergence kn to ideal model},
which provides
\[
\left|k_{n}\left(t\right)-\overline{k}_{n}\left(t\right)\right|\lesssim_{\delta}C^{-\frac{1}{2}\beta\beta'\delta\gamma\left(\frac{1}{1-\gamma}\right)^{n-1}}\quad\forall t\in\left[t_{n},1\right].
\]
 Since
\[
C^{-\frac{1}{2}\beta\beta'\delta\gamma\left(\frac{1}{1-\gamma}\right)^{n-1}}=C^{-\frac{1}{8}\delta\gamma\left(\frac{1}{1-\gamma}\right)^{n-1}}\le C^{-\frac{1}{8}\delta\frac{\gamma}{1-\gamma}}\le C^{-\frac{1}{8}\delta}
\]
because $\gamma\ge\frac{1}{2}$ by Choice \ref{choice:min requirements on C gamma and kmax}
and the function $x\to\frac{x}{1-x}$ is increasing, choosing $C$
large enough (let us say $C\ge\Upsilon_{2}\left(\delta,\mu\right)$),
we can ensure that
\[
\left|k_{n}\left(t\right)-\overline{k}_{n}\left(t\right)\right|\le\mu\quad\forall t\in\left[t_{n},1\right].
\]
Thus, 
\begin{equation}
\begin{aligned} & C^{\left(1-\overline{k}_{n}\left(t\right)-\mu\right)\left(\frac{1}{1-\gamma}\right)^{n}}\le a_{n}\left(t\right)\le C^{\left(1-\overline{k}_{n}\left(t\right)+\mu\right)\left(\frac{1}{1-\gamma}\right)^{n}},\\
 & C^{\left(1+\overline{k}_{n}\left(t\right)-\mu\right)\left(\frac{1}{1-\gamma}\right)^{n}}\leq b_{n}\left(t\right)\le C^{\left(1+\overline{k}_{n}\left(t\right)+\mu\right)\left(\frac{1}{1-\gamma}\right)^{n}}.
\end{aligned}
\label{eq:bounds bn an}
\end{equation}
As $\overline{k}_{n}\left(t\right)\ge0$ (see Corollary \ref{cor:ideal kn non negative}),
we deduce that
\[
a_{n}\left(t\right)\le C^{\left(1+\mu\right)\left(\frac{1}{1-\gamma}\right)^{n}},\quad b_{n}\left(t\right)\ge C^{\left(1-\mu\right)\left(\frac{1}{1-\gamma}\right)^{n}}
\]
and, consequently,
\[
\begin{aligned}\max\left\{ a_{n}\left(t\right),b_{n}\left(t\right)\right\}  & \leq C^{\left(1+\overline{k}_{n}\left(t\right)+\mu\right)\left(\frac{1}{1-\gamma}\right)^{n}}\le b_{n}\left(t\right)C^{2\mu\left(\frac{1}{1-\gamma}\right)^{n}},\\
\min\left\{ a_{n}\left(t\right),b_{n}\left(t\right)\right\}  & \ge C^{\left(1-\overline{k}_{n}\left(t\right)-\mu\right)\left(\frac{1}{1-\gamma}\right)^{n}}\ge a_{n}\left(t\right)C^{-2\mu\left(\frac{1}{1-\gamma}\right)^{n}}.
\end{aligned}
\]
Furthermore, from \eqref{eq:bounds bn an}, because $\overline{k}_{n}\left(t\right)\ge0$
by Corollary \ref{cor:ideal kn non negative}, it also follows that
\[
\max\left\{ a_{n}\left(t\right),b_{n}\left(t\right)\right\} \ge\max\left\{ C^{\left(1-\overline{k}_{n}\left(t\right)-\mu\right)\left(\frac{1}{1-\gamma}\right)^{n}},C^{\left(1+\overline{k}_{n}\left(t\right)-\mu\right)\left(\frac{1}{1-\gamma}\right)^{n}}\right\} =C^{\left(1+\overline{k}_{n}\left(t\right)-\mu\right)\left(\frac{1}{1-\gamma}\right)^{n}}.
\]
\end{proof}

\subsection{Transport of the layer centers revisited}

The objective of this last subsection is to prove that the assumption
of Proposition \ref{prop:relation between anbn and jacobian} is superfluous.
\begin{lem}
\label{lem:exponential x2}$\forall x\in\left[0,\infty\right)$ one
has $\mathrm{e}^{x}\ge x^{2}$.
\end{lem}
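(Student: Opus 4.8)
The plan is to reduce the inequality to one that follows immediately from the power series of the exponential. First I would dispose of the endpoint: at $x=0$ we have $e^{0}=1\geq 0=0^{2}$, so it suffices to treat $x>0$. For $x>0$ both $e^{x}$ and $x^{2}$ are strictly positive, so, taking positive square roots, $e^{x}\geq x^{2}$ is equivalent to $e^{x/2}\geq x$.

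To establish $e^{x/2}\geq x$ for $x\geq 0$, I would write $e^{x/2}=\sum_{k\geq 0}\frac{1}{k!}\left(\frac{x}{2}\right)^{k}$; since every summand is nonnegative for $x\geq 0$, discarding all terms beyond the quadratic one gives $e^{x/2}\geq 1+\frac{x}{2}+\frac{x^{2}}{8}$. It then remains to check $1+\frac{x}{2}+\frac{x^{2}}{8}\geq x$, i.e. $x^{2}-4x+8\geq 0$, which is clear after completing the square: $x^{2}-4x+8=(x-2)^{2}+4\geq 4>0$. Squaring the resulting inequality $e^{x/2}\geq x$ (both sides nonnegative) yields $e^{x}\geq x^{2}$, as desired.

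An alternative, perhaps more in keeping with the elementary tone of the neighbouring lemmas, is a monotonicity argument: set $f(x)=e^{x}-x^{2}$, so $f'(x)=e^{x}-2x$ and $f''(x)=e^{x}-2$. Then $f'$ decreases on $[0,\ln 2]$ and increases on $[\ln 2,\infty)$, so its minimum over $[0,\infty)$ is $f'(\ln 2)=2-2\ln 2=2(1-\ln 2)>0$; hence $f'>0$ on $[0,\infty)$, so $f$ is increasing and $f(x)\geq f(0)=1>0$, giving $e^{x}\geq x^{2}+1\geq x^{2}$.

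There is essentially no obstacle here. The only points deserving a word of care are the passage to square roots — legitimate precisely because both sides are nonnegative on $[0,\infty)$ — and, in the monotonicity version, the numerical fact $\ln 2<1$, which is what makes the minimum of $f'$ strictly positive.
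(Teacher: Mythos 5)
Both of your arguments are correct. Your second (monotonicity) version is essentially the paper's own proof: the paper also sets $f\left(x\right)=\mathrm{e}^{x}-x^{2}$, locates the unique critical point of $f'$ at $x=\ln\left(2\right)$, checks $f'\left(\ln\left(2\right)\right)=2-2\ln\left(2\right)>0$, and concludes $f\left(x\right)\ge f\left(0\right)=1>0$; so on that route you and the paper coincide. Your primary argument is a genuinely different and somewhat cleaner route: reducing $\mathrm{e}^{x}\ge x^{2}$ (for $x\ge0$) to $\mathrm{e}^{x/2}\ge x$ by taking square roots of nonnegative quantities, and then getting $\mathrm{e}^{x/2}\ge1+\frac{x}{2}+\frac{x^{2}}{8}\ge x$ from the positivity of the power-series terms together with $\left(x-2\right)^{2}+4\ge0$. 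This avoids calculus entirely and needs no numerical fact such as $\ln\left(2\right)<1$, whereas the paper's (and your alternative) derivative argument is slightly longer but stays within the elementary monotonicity toolkit used by the neighbouring lemmas. Either proof is acceptable; no gaps.
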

\begin{proof}
Let $f\left(x\right)=\mathrm{e}^{x}-x^{2}$. Differentiating, we get
\[
f'\left(x\right)=\mathrm{e}^{x}-2x.
\]
And, differentiating again,
\[
f''\left(x\right)=\mathrm{e}^{x}-2.
\]
Notice that
\[
f''\left(x\right)=0\iff x=\ln\left(2\right).
\]
Furthermore,
\[
f'''\left(x\right)=\mathrm{e}^{x}\ge1\quad\forall x\in\left[0,\infty\right).
\]
All this information is telling us that $x=\ln\left(2\right)$ is
the only critical point of $f'$ and that it is a local minimum. As
$f''\left(0\right)=-1<0$, $\lim_{x\to\infty}f'\left(x\right)=\infty$
and $x=\ln\left(2\right)$ is the only critical point of $f'$, by
Weierstrass' Theorem, we deduce that $x=\ln\left(2\right)$ is the
absolute minimum of $f'$. As $f'\left(\ln\left(2\right)\right)=2-2\ln\left(2\right)>0$,
we conclude that $f'\left(x\right)>0$ $\forall x\in\left[0,\infty\right)$.
Thus, $f$ is an increasing function and, in this way, $f\left(x\right)\ge f\left(0\right)=1>0$
$\forall x\in\left[0,\infty\right)$.
\end{proof}
\begin{prop}
\label{prop:layer center never leafs cutoff area}Let $n\in\mathbb{N}$.
Then, provided that $C$ is big enough (let us say $C\ge\Upsilon\left(\delta\right)$),
$\forall m\in\mathbb{N}$ with $m\le n$, we have 
\[
\left|\phi_{1}^{\left(n\right)}\left(t,0\right)-\phi_{1}^{\left(m\right)}\left(t,0\right)\right|\le\frac{8\pi}{a_{m}\left(t\right)}\quad\forall t\in\left[t_{n},1\right].
\]
\end{prop}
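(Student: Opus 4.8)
The plan is to exploit the telescoping identity
\[
\phi_{1}^{\left(n\right)}\left(t,0\right)-\phi_{1}^{\left(m\right)}\left(t,0\right)=\sum_{j=m+1}^{n}\Xi^{\left(j\right)}\left(t\right),
\]
which holds by the very definition $\Xi^{\left(j\right)}\left(t\right)=\phi_{1}^{\left(j\right)}\left(t,0\right)-\phi_{1}^{\left(j-1\right)}\left(t,0\right)$. If $m=n$ the statement is trivial, so assume $m<n$; then every index appearing in the sum satisfies $j\ge m+1\ge2$, which is exactly the range where Proposition \ref{prop:JI convergence to ideal} applies. It is worth stressing that there is no circularity: Proposition \ref{prop:JI convergence to ideal} (and the auxiliary Proposition \ref{prop:time convergence}) were proved solely from the ODEs imposed in Choice \ref{choice:anbncn}, never invoking the hypothesis of Proposition \ref{prop:relation between anbn and jacobian}; hence we are free to use their conclusions to verify that hypothesis a posteriori.

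First I would bound each summand. Fix $j$ with $2\le j\le n$ and let $t\in\left[t_{n},1\right]\subseteq\left[t_{j},1\right]$. Since $F_{j}$ solves the ODE of Lemma \ref{lem:the good ODE} with $F_{j}\left(0\right)=a_{j-1}\left(1\right)\Xi^{\left(j\right)}\left(t_{j}\right)\in\left(0,\frac{\pi}{2}\right)$ by Choice \ref{choice:wnmax and Ji(tn)}, point 1 of that Lemma gives $a_{j-1}\left(1\right)\Xi_{0}^{\left(j\right)}\left(t\right)\in\left[0,\pi\right]$. Applying Proposition \ref{prop:JI convergence to ideal} with $\beta=\beta'=\beta''=\frac{1}{2}$ (so that $C\ge\Upsilon\left(\delta\right)$ suffices) yields $a_{j-1}\left(1\right)\left|\Xi^{\left(j\right)}\left(t\right)-\Xi_{0}^{\left(j\right)}\left(t\right)\right|\le14C^{-\frac{1}{8}\delta\gamma\left(\frac{1}{1-\gamma}\right)^{j-1}}\le1$, the last step because the exponent is $\ge\frac{\delta}{8}$ (using $\gamma\ge\frac{1}{2}$, $\frac{1}{1-\gamma}\ge2$) and $C$ is large. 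Hence $a_{j-1}\left(1\right)\left|\Xi^{\left(j\right)}\left(t\right)\right|\le\pi+1$. Using $k_{j-1}\left(1\right)=0$ (Choice \ref{choice:time picture}), so that $a_{j-1}\left(1\right)=C^{\left(\frac{1}{1-\gamma}\right)^{j-1}}$, this reads $\left|\Xi^{\left(j\right)}\left(t\right)\right|\le\left(\pi+1\right)C^{-\left(\frac{1}{1-\gamma}\right)^{j-1}}$.

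Next I would sum the resulting super-fast decaying series. Grouping the exponents via $j-1=m+l$, for any $m\ge1$,
\[
\sum_{j=m+1}^{\infty}C^{-\left(\frac{1}{1-\gamma}\right)^{j-1}}=C^{-\left(\frac{1}{1-\gamma}\right)^{m}}\sum_{l=0}^{\infty}C^{-\left(\frac{1}{1-\gamma}\right)^{m}\left[\left(\frac{1}{1-\gamma}\right)^{l}-1\right]}\le C^{-\left(\frac{1}{1-\gamma}\right)^{m}}\left(1+\sum_{l=1}^{\infty}C^{-l}\right)\le\frac{2}{a_{m}\left(1\right)},
\]
where we used that $\frac{1}{1-\gamma}\ge2$ (Choice \ref{choice:min requirements on C gamma and kmax}) forces $\left(\frac{1}{1-\gamma}\right)^{m}\left[\left(\frac{1}{1-\gamma}\right)^{l}-1\right]\ge2^{l}-1\ge l$ for $l\ge1$, together with $C\ge2$ so that $\sum_{l\ge1}C^{-l}=\frac{1}{C-1}\le1$; this is the ``reverse'' analogue of Lemma \ref{lem:estimate sum superexponential}. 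Combining the three displays, $\left|\phi_{1}^{\left(n\right)}\left(t,0\right)-\phi_{1}^{\left(m\right)}\left(t,0\right)\right|\le2\left(\pi+1\right)/a_{m}\left(1\right)$ for all $t\in\left[t_{n},1\right]$. Finally, since $m\le n$ and $t\in\left[t_{n},1\right]\subseteq\left[t_{m+1},1\right]$, Proposition \ref{prop:time convergence} (with $\beta=\frac{1}{2}$) gives $\left|\frac{a_{m}\left(t\right)}{a_{m}\left(1\right)}-1\right|\le\frac{1}{2}$ for $C$ large, hence $a_{m}\left(1\right)\ge\frac{2}{3}a_{m}\left(t\right)$ and therefore
\[
\left|\phi_{1}^{\left(n\right)}\left(t,0\right)-\phi_{1}^{\left(m\right)}\left(t,0\right)\right|\le\frac{3\left(\pi+1\right)}{a_{m}\left(t\right)}\le\frac{8\pi}{a_{m}\left(t\right)}\quad\forall t\in\left[t_{n},1\right],
\]
since $3(\pi+1)\approx12.4<25.1\approx8\pi$. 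Taking $C\ge\Upsilon\left(\delta\right)$ to be the maximum of the finitely many lower bounds invoked above (all depending only on $\delta$) completes the argument. The only genuinely delicate point is the non-circularity flagged in the first paragraph — one must check that the earlier time-convergence results used here were established without the hypothesis under verification; beyond that the proof is a routine bookkeeping of very comfortable numerical constants.
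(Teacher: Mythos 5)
Your proof is correct and follows essentially the same route as the paper's: telescoping over $\Xi^{\left(k\right)}$, bounding each term by the ideal model's bound $\pi$ plus the small deviation from Proposition \ref{prop:JI convergence to ideal}, summing the superexponentially decaying series, and trading $a_{m}\left(1\right)$ for $a_{m}\left(t\right)$ via Proposition \ref{prop:time convergence}. The only (harmless) deviation is that you bound the tail sum by an elementary geometric comparison ($2^{l}-1\ge l$, $C\ge2$) where the paper invokes Lemma \ref{lem:exponential x2}, and your remark on non-circularity is consistent with the paper's own logic, since the ODEs of Choice \ref{choice:anbncn} are taken as definitions and the time-evolution results never use the hypothesis being verified.
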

\begin{proof}
First of all, notice that the statement is trivial for $n=1$ (which
implies $m=1$). From this point on, we will assume that $n\ge2$.
The proof proceeds by expressing $\phi_{1}^{\left(n\right)}\left(t,0\right)-\phi_{1}^{\left(m\right)}\left(t,0\right)$
as a sum of $\Xi^{\left(k\right)}\left(t\right)$. Notice that, since
$\Xi^{\left(k\right)}\left(t\right)=\phi_{1}^{\left(k\right)}\left(t,0\right)-\phi_{1}^{\left(k-1\right)}\left(t,0\right)$,
\begin{equation}
\begin{aligned}\phi_{1}^{\left(n\right)}\left(t,0\right)-\phi_{1}^{\left(m\right)}\left(t,0\right) & =\sum_{k=m+1}^{n}\Xi^{\left(k\right)}\left(t\right)=\sum_{k=m+1}^{n}\left[\left(\Xi^{\left(k\right)}\left(t\right)-\Xi_{0}^{\left(k\right)}\left(t\right)\right)+\Xi_{0}^{\left(k\right)}\left(t\right)\right]=\\
 & =\sum_{k=m+1}^{n}\frac{1}{a_{k-1}\left(1\right)}\left[a_{k-1}\left(1\right)\left(\Xi^{\left(k\right)}\left(t\right)-\Xi_{0}^{\left(k\right)}\left(t\right)\right)+a_{k-1}\left(1\right)\Xi_{0}^{\left(k\right)}\left(t\right)\right],
\end{aligned}
\label{eq:difference of phis}
\end{equation}
where $\Xi_{0}^{\left(k\right)}\left(t\right)$ is as defined in Proposition
\ref{prop:JI convergence to ideal}. On the one hand, by Lemma \ref{lem:the good ODE},
we can bound $a_{k-1}\left(1\right)\Xi_{0}^{\left(k\right)}\left(t\right)\le\pi$.
On the other hand, by Proposition \ref{prop:JI convergence to ideal}
with $\beta=\beta'=\beta''=\frac{1}{2}$, which we can apply as long
as $C\ge\Upsilon_{1}\left(\frac{1}{2},\frac{1}{2},\frac{1}{2},\delta\right)$,
we have
\[
a_{k-1}\left(1\right)\left|\Xi^{\left(k\right)}\left(t\right)-\Xi_{0}^{\left(k\right)}\left(t\right)\right|\le14C^{-\frac{1}{8}\delta\gamma\left(\frac{1}{1-\gamma}\right)^{n-1}}.
\]
As
\[
C^{-\frac{1}{8}\delta\gamma\left(\frac{1}{1-\gamma}\right)^{n-1}}\le C^{-\frac{1}{8}\delta\frac{\gamma}{1-\gamma}}\le C^{-\frac{1}{8}\delta}
\]
because $n\ge2$ and $\gamma\ge\frac{1}{2}$ by Choice \ref{choice:min requirements on C gamma and kmax},
provided that $C$ is large enough (let us say $C\ge\Upsilon_{2}\left(\delta\right)$),
we may ensure that
\[
C^{-\frac{1}{8}\delta\gamma\left(\frac{1}{1-\gamma}\right)^{n-1}}\le\frac{\pi}{14}
\]
and, thereby,
\[
a_{k-1}\left(1\right)\left|\Xi^{\left(k\right)}\left(t\right)-\Xi_{0}^{\left(k\right)}\left(t\right)\right|\le\pi.
\]
Consequently, equation \eqref{eq:difference of phis} implies that
\begin{equation}
\begin{aligned}\left|\phi_{1}^{\left(n\right)}\left(t,0\right)-\phi_{1}^{\left(m\right)}\left(t,0\right)\right| & \le\sum_{k=m+1}^{n}\frac{2\pi}{a_{k-1}\left(1\right)}=\frac{2\pi}{a_{m}\left(1\right)}\sum_{k=m+1}^{n}\frac{a_{m}\left(1\right)}{a_{k-1}\left(1\right)}=\\
 & \le\frac{2\pi}{a_{m}\left(t\right)}\left(\frac{a_{m}\left(t\right)}{a_{m}\left(1\right)}-1+1\right)\sum_{k=m+1}^{n}\frac{a_{m}\left(1\right)}{a_{k-1}\left(1\right)}.
\end{aligned}
\label{eq:layer centers general bound}
\end{equation}
On the one hand, by Proposition \ref{prop:time convergence},
\begin{equation}
\frac{a_{m}\left(t\right)}{a_{m}\left(1\right)}-1+1\le1+C^{-\frac{1}{2}\delta\gamma\left(\frac{1}{1-\gamma}\right)^{n}}\le2.\label{eq:layer centers bound 1}
\end{equation}
On the other hand, by Choices \ref{choice:anbn} and \ref{choice:time picture},
\[
\begin{aligned}\sum_{k=m+1}^{n}\frac{a_{m}\left(1\right)}{a_{k-1}\left(1\right)} & =\sum_{k=m+1}^{n}\frac{C^{\left(\frac{1}{1-\gamma}\right)^{m}}}{C^{\left(\frac{1}{1-\gamma}\right)^{k-1}}}=1+\sum_{k=m+2}^{n}C^{\left(\frac{1}{1-\gamma}\right)^{m}}C^{-\left(\frac{1}{1-\gamma}\right)^{k-1}}=\\
 & =1+\sum_{k=m+2}^{n}C^{-\left(\frac{1}{1-\gamma}\right)^{k-1}\left[1-\left(1-\gamma\right)^{k-1-m}\right]}.
\end{aligned}
\]
Since $k-1-m\ge1$ and $\gamma\in\left(0,1\right)$, we can bound
\[
1-\left(1-\gamma\right)^{k-1-m}\ge1-\left(1-\gamma\right)=\gamma
\]
and, as a consequence, we obtain
\[
\sum_{k=m+1}^{n}\frac{a_{m}\left(1\right)}{a_{k-1}\left(1\right)}\le1+\sum_{k=m+2}^{n}C^{-\gamma\left(\frac{1}{1-\gamma}\right)^{k-1}}=1+\sum_{k=m+1}^{n-1}C^{-\gamma\left(\frac{1}{1-\gamma}\right)^{k}}=1+\sum_{k=m+1}^{n-1}\exp\left(-\gamma\left(\frac{1}{1-\gamma}\right)^{k}\ln\left(C\right)\right).
\]
As $\mathrm{e}^{x}\ge x^{2}$ $\forall x\in\left[0,\infty\right)$
by Lemma \ref{lem:exponential x2} and, equivalently, $\mathrm{e}^{-x}\le\frac{1}{x^{2}}$
$\forall x\in\left[0,\infty\right)$, we infer that
\[
\begin{aligned}\sum_{k=m+1}^{n}\frac{a_{m}\left(1\right)}{a_{k-1}\left(1\right)} & \le1+\sum_{k=m+1}^{n-1}\frac{\left(1-\gamma\right)^{2k}}{\gamma^{2}\ln\left(C\right)^{2}}\le1+\frac{1}{\gamma^{2}\ln\left(C\right)^{2}}\sum_{k=0}^{\infty}\left[\left(1-\gamma\right)^{2}\right]^{k}=\\
 & \le1+\frac{1}{\gamma^{2}\ln\left(C\right)^{2}}\frac{1}{1-\left(1-\gamma\right)^{2}}\le1+\frac{1}{\gamma^{3}\left(2-\gamma\right)\ln\left(C\right)^{2}}.
\end{aligned}
\]
Since $\gamma\ge\frac{1}{2}$ and $\gamma<1$ by Choice \ref{choice:min requirements on C gamma and kmax},
we deduce that
\[
\sum_{k=m+1}^{n}\frac{a_{m}\left(1\right)}{a_{k-1}\left(1\right)}\le1+\frac{8}{\ln\left(C\right)^{2}}.
\]
As long as $C\ge\exp\left(2\,\sqrt{2}\right)\eqqcolon\Upsilon_{3}$,
we will have
\[
C\ge\exp\left(2\,\sqrt{2}\right)\iff\ln\left(C\right)\ge2\,\sqrt{2}=\sqrt{8}\iff\ln\left(C\right)^{2}\ge8,
\]
which means that
\begin{equation}
\sum_{k=m+1}^{n}\frac{a_{m}\left(1\right)}{a_{k-1}\left(1\right)}\le1+1=2.\label{eq:layer centers bound 2}
\end{equation}
Combining \eqref{eq:layer centers general bound}, \eqref{eq:layer centers bound 1}
and \eqref{eq:layer centers bound 2} provides the result.
\end{proof}

\section{\label{sec:bounds for density force}Bounds for density force}

Equation \eqref{eq:Boussinesq density Taylor} tells us exactly how
to compute the force $\widetilde{f_{\rho}^{\left(n\right)}}^{n}\left(t,x\right)$.
We distinguish the following terms:
\begin{equation}
\begin{aligned} & \underbrace{\frac{\partial\widetilde{\rho^{\left(n\right)}}^{n}}{\partial t}\left(t,x\right)}_{\text{time derivative}}+\\
 & +\underbrace{\left(\widetilde{U^{\left(n-1\right)}}^{n}\left(t,x\right)-\widetilde{U^{\left(n-1\right)}}^{n}\left(t,0\right)-\mathrm{J}\widetilde{U^{\left(n-1\right)}}^{n}\left(t,0\right)\cdot\left(\begin{matrix}x_{1}\\
x_{2}
\end{matrix}\right)\right)\cdot\widetilde{\nabla}^{n}\widetilde{\rho^{\left(n\right)}}^{n}\left(t,x\right)}_{\text{transport term}}+\\
 & +\underbrace{\widetilde{u^{\left(n\right)}}^{n}\left(t,x\right)\cdot\widetilde{\nabla}^{n}\widetilde{P^{\left(n-1\right)}}^{n}\left(t,x\right)}_{\text{new transport of old density}}+\underbrace{\widetilde{u^{\left(n\right)}}^{n}\left(t,x\right)\cdot\widetilde{\nabla}^{n}\widetilde{\rho^{\left(n\right)}}^{n}\left(t,x\right)}_{\text{pure cuadratic term}}=\\
= & \widetilde{f_{\rho}^{\left(n\right)}}^{n}\left(t,x\right).
\end{aligned}
\label{eq:decomposition force density}
\end{equation}
We will study each of these terms in a different subsection. In most
bounds, we will need an additional small parameter which will control
how good some approximations are. We will call this parameter $\mu$.

\subsection{\label{subsec:bound density time derivative}Time derivative}

In subsection \ref{subsec:bounding the time derivative of the density},
we already studied the time derivative of the density, albeit perfunctorily.
Actually, the steps we followed until equation \eqref{eq:rigorous bound time derivative density}
are rigorous. In this subsection, we will improve the bounds we found.

In the following Lemma, we will formalize the intuition that, because
$h^{\left(n\right)}\sim O\left(1\right)$, we should have
\[
\int_{t_{n}}^{1}h^{\left(n\right)}\left(s\right)b_{n}\left(s\right)\mathrm{d}s\sim\left(1-t_{n}\right)\max_{s\in\left[t_{n},1\right]}b_{n}\left(s\right),
\]
which was developed during subsection \ref{subsec:bounding the time derivative of the density}.
\begin{lem}
\label{lem:estimate of integral hn bn}Let $n\in\mathbb{N}$ with
$n\ge2$. For every $\mu>0$, provided that $C$ is big enough (let
us say $C\ge\Upsilon\left(\delta,\mu\right)$), we have
\[
\int_{t_{n}}^{1}h_{\varepsilon}^{\left(n\right)}\left(s\right)b_{n}\left(s\right)\mathrm{d}s\gtrsim_{\mu}\frac{1}{Y}C^{\left(1+k_{\max}-\mu-\delta\left(1-\gamma\right)\right)\left(\frac{1}{1-\gamma}\right)^{n}}.
\]
\end{lem}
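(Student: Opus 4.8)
The plan is to bound the integral from below by its contribution over a short time window around the ``peak'' instant $t_n+\frac{1-t_n}{2}$, where $h_\varepsilon^{(n)}$ is essentially $1$ and $b_n$ is essentially maximal. First I would recall that, by Choice \ref{choice:time picture} together with equations \eqref{eq:h_eps near one} and \eqref{eq:h_eps after tn+1}, the function $h_\varepsilon^{(n)}$ is supported in $[t_n,t_{n+1}]$, takes values in $[0,1]$, and satisfies $h_\varepsilon^{(n)}(s)\ge 1-2\varepsilon$ for every $s$ in the plateau $P_n\coloneqq[t_n+(1-t_n)\zeta,\;t_{n+1}-(1-t_n)\zeta]$. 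Since $h_\varepsilon^{(n)}b_n\ge 0$, it is enough to exhibit a subinterval $J\subseteq P_n$, of length comparable to $1-t_n$ and sitting in a small neighbourhood of $t_n+\frac{1-t_n}{2}$, on which $b_n$ is bounded below by (almost) its maximal value.

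To locate $J$, I would use the exact formula \eqref{eq:kn second version exact} for $\overline{k}_n$. Writing $A\coloneqq\mathrm{arccosh}\bigl(C^{k_{\max}(\frac{1}{1-\gamma})^{n}}\bigr)$ and $A_0\coloneqq\ln(C)(\frac{1}{1-\gamma})^{n}$, that formula reads $\overline{k}_n\bigl(t_n+(1-t_n)\hat{\hat t}\bigr)=k_{\max}-A_0^{-1}\ln\cosh\bigl(A|1-2\hat{\hat t}|\bigr)$. Using $\ln\cosh(x)\le|x|$ and Lemma \ref{lem:arccosh by ln} (which gives $A\le\ln2+k_{\max}A_0\le 2k_{\max}A_0$ once $C$ is large, since $A_0\ge 4\ln C$ for $n\ge2$ by Choice \ref{choice:min requirements on C gamma and kmax}), one gets $\overline{k}_n\ge k_{\max}-2k_{\max}|1-2\hat{\hat t}|$, so that $\overline{k}_n(t)\ge k_{\max}-\frac{\mu}{2}$ on the interval $J$ corresponding to $|1-2\hat{\hat t}|\le\frac{\mu}{4k_{\max}}$, i.e.\ $J$ is centred at $t_n+\frac{1-t_n}{2}$ and has length $(1-t_n)\frac{\mu}{2k_{\max}}$. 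Applying Corollary \ref{cor:bound for an(t), bn(t)} with parameter $\frac{\mu}{2}$ then yields $b_n(t)\ge C^{(1+k_{\max}-\mu)(\frac{1}{1-\gamma})^{n}}$ for all $t\in J$. (We may assume $\mu$ small, since enlarging $\mu$ only weakens the claimed bound.)

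It remains to verify $J\subseteq P_n$, i.e.\ in $\hat{\hat t}$--coordinates that $[\tfrac12-\tfrac{\mu}{4k_{\max}},\tfrac12+\tfrac{\mu}{4k_{\max}}]\subseteq[\zeta,\;1-r_n-\zeta]$ with $r_n\coloneqq\frac{1-t_{n+1}}{1-t_n}$: the left inclusion holds because $\zeta<\frac14$ and $\mu$ is small, and the right one amounts to $r_n\le\frac12-\zeta-\frac{\mu}{4k_{\max}}$, a fixed positive constant, which holds uniformly in $n\ge2$ for $C$ large because, by Choice \ref{choice:Mn} and Lemma \ref{lem:arccosh by ln}, $r_n\lesssim(1-\gamma)^{-1}C^{-\delta\gamma(\frac{1}{1-\gamma})^{n}}$ decays superexponentially in $n$. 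Once this is in place,
\[
\int_{t_n}^{1}h_\varepsilon^{(n)}(s)b_n(s)\,\mathrm{d}s\ \ge\ \int_{J}h_\varepsilon^{(n)}(s)b_n(s)\,\mathrm{d}s\ \ge\ (1-2\varepsilon)\,C^{(1+k_{\max}-\mu)(\frac{1}{1-\gamma})^{n}}\,|J|,
\]
and, using $|J|=(1-t_n)\frac{\mu}{2k_{\max}}$ together with the time--scale formula of Choice \ref{choice:Mn} and $\mathrm{arccosh}(\cdot)\ge1$ (valid for $C$ large), $1-t_n\ge\frac1Y C^{-\delta(\frac{1}{1-\gamma})^{n-1}}=\frac1Y C^{-\delta(1-\gamma)(\frac{1}{1-\gamma})^{n}}$; combining these yields the asserted bound with implicit constant $\tfrac{(1-2\varepsilon)\mu}{4k_{\max}}$. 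The main obstacle is precisely this geometric verification: guaranteeing that the window where $\overline{k}_n$ (hence $b_n$) is nearly maximal genuinely lies inside the plateau of $h_\varepsilon^{(n)}$, uniformly in $n$, which forces one to control $\frac{1-t_{n+1}}{1-t_n}$ and to keep all the ``$C$ large'' thresholds depending only on $\delta,\gamma,k_{\max},\mu,\zeta,\varepsilon$ and not on $n$.
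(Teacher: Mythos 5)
Your proof is correct and follows essentially the same route as the paper: restrict to the plateau of $h_{\varepsilon}^{\left(n\right)}$, localize the integral to a window of length $\sim\mu\left(1-t_{n}\right)$ around $t_{n}+\frac{1-t_{n}}{2}$ on which $\overline{k}_{n}\ge k_{\max}-\frac{\mu}{2}$ (you via the explicit formula \eqref{eq:kn second version exact} and $\ln\cosh x\le\left|x\right|$, the paper via Lemma \ref{lem:monotone convergence kn} and the limit profile), lower-bound $b_{n}$ there using the closeness of $k_{n}$ to $\overline{k}_{n}$, and conclude with the time-scale formula of Choice \ref{choice:Mn}. Your explicit check that the window lies inside the plateau (through the superexponential smallness of $\frac{1-t_{n+1}}{1-t_{n}}$) is a point the paper leaves implicit, and your stated intermediate bound on that ratio omits a harmless factor of order $\left(\frac{1}{1-\gamma}\right)^{n+1}\ln C$, which does not affect the conclusion.
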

\begin{proof}
The first step is to restrict the domain of integration to the region
where $h_{\varepsilon}^{\left(n\right)}$ is almost one. By virtue
of Choice \ref{choice:time picture} and equations \eqref{eq:def dhndt second way}
and \eqref{eq:h_eps near one}, we have
\[
\begin{aligned}\int_{t_{n}}^{1}h_{\varepsilon}^{\left(n\right)}b_{n}\left(s\right)\mathrm{d}s & \ge\end{aligned}
\int_{t_{n}+\left(1-t_{n}\right)\zeta}^{t_{n+1}-\left(1-t_{n}\right)\zeta}\underbrace{h_{\varepsilon}^{\left(n\right)}\left(s\right)}_{\ge1-2\varepsilon}b_{n}\left(s\right)\mathrm{d}s\ge\left(1-2\varepsilon\right)\int_{t_{n}+\left(1-t_{n}\right)\zeta}^{t_{n+1}-\left(1-t_{n}\right)\zeta}b_{n}\left(s\right)\mathrm{d}s.
\]
Since $\varepsilon<\frac{1}{4}$ by Choice \ref{choice:time picture},
we get
\[
\int_{t_{n}}^{1}h_{\varepsilon}^{\left(n\right)}b_{n}\left(s\right)\mathrm{d}s\gtrsim\int_{t_{n}+\left(1-t_{n}\right)\zeta}^{t_{n+1}-\left(1-t_{n}\right)\zeta}b_{n}\left(s\right)\mathrm{d}s.
\]
Now, by Choice \ref{choice:anbn},
\[
\int_{t_{n}}^{1}h_{\varepsilon}^{\left(n\right)}b_{n}\left(s\right)\mathrm{d}s\gtrsim\int_{t_{n}+\left(1-t_{n}\right)\zeta}^{t_{n+1}-\left(1-t_{n}\right)\zeta}C^{\left(1+k_{n}\left(s\right)\right)\left(\frac{1}{1-\gamma}\right)^{n}}\mathrm{d}s.
\]
Manipulating the exponent, adding and subtracting multiple terms,
we arrive to
\begin{equation}
\int_{t_{n}}^{1}h_{\varepsilon}^{\left(n\right)}b_{n}\left(s\right)\mathrm{d}s\gtrsim\int_{t_{n}+\left(1-t_{n}\right)\zeta}^{t_{n+1}-\left(1-t_{n}\right)\zeta}C^{\left(1+k_{n}\left(s\right)-\overline{k}_{n}\left(s\right)+\overline{k}_{n}\left(s\right)-k_{\max}\left(1-\left|1-2\frac{s-t_{n}}{1-t_{n}}\right|\right)+k_{\max}\left(1-\left|1-2\frac{s-t_{n}}{1-t_{n}}\right|\right)\right)\left(\frac{1}{1-\gamma}\right)^{n}}\mathrm{d}s.\label{eq:bound integral hb 1}
\end{equation}
On the one hand, by Lemma \ref{lem:monotone convergence kn}, the
convergence of $\overline{k}_{n}\left(s\right)$ to $k_{\max}\left(1-\left|1-2\frac{s-t_{n}}{1-t_{n}}\right|\right)$
is monotonously decreasing in $n\in\mathbb{N}$. (Notice that we know
that $\overline{k}_{n}\left(s\right)\xrightarrow[n\to\infty]{}k_{\max}\left(1-\left|1-2\frac{s-t_{n}}{1-t_{n}}\right|\right)$
by the computations we developed in subsection \ref{subsec:completion of the toy model}:
see equation \eqref{eq:kn second version approximate}). Consequently,
\[
\overline{k}_{n}\left(s\right)-k_{\max}\left(1-\left|1-2\frac{s-t_{n}}{1-t_{n}}\right|\right)\ge0.
\]
On the other hand, by Proposition \ref{prop:convergence kn to ideal model}
taking $\beta=\beta'=\frac{1}{2}$, which we can apply as long as
$C$ is big enough (let us say $C\ge\Upsilon_{1}\left(\frac{1}{2},\frac{1}{2},\delta\right)$),
\[
\left|k_{n}\left(s\right)-\overline{k}_{n}\left(s\right)\right|\lesssim_{\delta}C^{-\frac{1}{8}\delta\gamma\left(\frac{1}{1-\gamma}\right)^{n-1}}\quad\forall s\in\left[t_{n},1\right].
\]
As we can bound
\[
C^{-\frac{1}{8}\delta\gamma\left(\frac{1}{1-\gamma}\right)^{n-1}}\le C^{-\frac{1}{8}\delta\frac{\gamma}{1-\gamma}}\le C^{-\frac{1}{8}\delta}
\]
because $n\ge2$, $\gamma\ge\frac{1}{2}$ (by Choice \ref{choice:min requirements on C gamma and kmax})
and the function $x\to\frac{x}{1-x}$ is increasing, taking $C$ bigger
than some $\Upsilon_{2}\left(\delta,\mu\right)>0$ we can ensure that
\[
\left|k_{n}\left(s\right)-\overline{k}_{n}\left(s\right)\right|\le\frac{\mu}{2}\quad\forall s\in\left[t_{n},1\right].
\]

Thereby, \eqref{eq:bound integral hb 1} becomes
\[
\int_{t_{n}}^{1}h_{\varepsilon}^{\left(n\right)}b_{n}\left(s\right)\mathrm{d}s\gtrsim\int_{t_{n}+\left(1-t_{n}\right)\zeta}^{t_{n+1}-\left(1-t_{n}\right)\zeta}C^{\left(1+k_{\max}\left(1-\left|1-2\frac{s-t_{n}}{1-t_{n}}\right|\right)-\frac{\mu}{2}\right)\left(\frac{1}{1-\gamma}\right)^{n}}\mathrm{d}s.
\]
Let us restrict the integration domain to the region where
\[
1+k_{\max}\left(1-\left|1-2\frac{s-t_{n}}{1-t_{n}}\right|\right)\ge1+k_{\max}-\frac{\mu}{2}.
\]
Solving for $s$ provides
\[
\begin{aligned} & -\left|1-2\frac{s-t_{n}}{1-t_{n}}\right|\ge-\frac{\mu}{2}\iff\left|1-2\frac{s-t_{n}}{1-t_{n}}\right|\le\frac{\mu}{2}\iff\left|\frac{1}{2}-\frac{s-t_{n}}{1-t_{n}}\right|\le\frac{\mu}{4}\iff\\
\iff & s\in\left[t_{n}+\left(1-t_{n}\right)\left(\frac{1}{2}-\frac{\mu}{4}\right),t_{n}+\left(1-t_{n}\right)\left(\frac{1}{2}+\frac{\mu}{4}\right)\right].
\end{aligned}
\]
In this way,
\[
\begin{aligned}\int_{t_{n}}^{1}h_{\varepsilon}^{\left(n\right)}b_{n}\left(s\right)\mathrm{d}s & \gtrsim\int_{t_{n}+\left(1-t_{n}\right)\left(\frac{1}{2}-\frac{\mu}{4}\right)}^{t_{n}+\left(1-t_{n}\right)\left(\frac{1}{2}+\frac{\mu}{4}\right)}C^{\left(\overbrace{1+k_{\max}\left(1-\left|1-2\frac{s-t_{n}}{1-t_{n}}\right|\right)}^{\ge1+k_{\max}-\frac{\mu}{2}}-\frac{\mu}{2}\right)\left(\frac{1}{1-\gamma}\right)^{n}}\mathrm{d}s\ge\\
 & \gtrsim\left(1-t_{n}\right)\frac{\mu}{2}C^{\left(1+k_{\max}-\mu\right)\left(\frac{1}{1-\gamma}\right)^{n}}.
\end{aligned}
\]
Employing Choice \ref{choice:Mn}, we deduce that
\[
\int_{t_{n}}^{1}h_{\varepsilon}^{\left(n\right)}b_{n}\left(s\right)\mathrm{d}s\gtrsim_{\mu}\frac{1}{Y}C^{-\delta\left(\frac{1}{1-\gamma}\right)^{n-1}}\mathrm{arccosh}\left(C^{k_{\max}\left(\frac{1}{1-\gamma}\right)^{n}}\right)C^{\left(1+k_{\max}-\mu\right)\left(\frac{1}{1-\gamma}\right)^{n}}.
\]
Notice that
\[
C^{k_{\max}\left(\frac{1}{1-\gamma}\right)^{n}}\ge C^{k_{\max}}\ge2^{\frac{1}{100}}>1
\]
because $n>0$ and $k_{\max}\ge\frac{1}{100}$, $C>2$ and $\gamma\ge\frac{1}{2}$
by Choice \ref{choice:min requirements on C gamma and kmax}. Since
$\mathrm{arccosh}\left(x\right)=1\iff x=0$ and $\mathrm{arccosh}$
is increasing for positive arguments, we can bound
\[
\int_{t_{n}}^{1}h_{\varepsilon}^{\left(n\right)}b_{n}\left(s\right)\mathrm{d}s\gtrsim_{\mu}\frac{1}{Y}C^{\left(1+k_{\max}-\mu-\delta\left(1-\gamma\right)\right)\left(\frac{1}{1-\gamma}\right)^{n}},
\]
which proves the result of the statement.
\end{proof}
\begin{prop}
\label{prop:bound density time derivative}Let $\mu>0$, $\alpha\in\left(0,1\right)$
and $n\ge n_{0}\left(\mu\right)\in\mathbb{N}$. Then, provided that
$C$ is taken sufficiently big (let us say $C\ge\Upsilon\left(\delta,\mu\right)$),
\[
\left|\left|\frac{\partial\widetilde{\rho^{\left(n\right)}}^{n}}{\partial t}\left(t,\left(\phi^{\left(n\right)}\right)^{-1}\left(t,\cdot\right)\right)\right|\right|_{C^{1,\alpha}\left(\mathbb{R}^{2}\right)}\lesssim_{\varphi,\mu}YC^{\left[\alpha-k_{\max}+4\zeta+7\mu+3\delta\right]\left(\frac{1}{1-\gamma}\right)^{n}}.
\]
This expression remains decreasing in $n\in\mathbb{N}$ as long as
\[
\alpha<k_{\max}-4\zeta-7\mu-3\delta.
\]
\end{prop}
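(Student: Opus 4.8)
The plan is to start from the fully explicit formula for the time derivative of the density. By Choices \ref{choice:density} and \ref{choice:amplitude density}, and since the spatial factors $\varphi(\lambda_n x_1)\varphi(\lambda_n x_2)\sin(x_1)\cos(x_2)$ carry no time dependence,
\[
\frac{\partial\widetilde{\rho^{(n)}}^{n}}{\partial t}(t,x) = -z_n\,\frac{\frac{\mathrm{d}h^{(n)}}{\mathrm{d}t}(t)}{\int_{t_n}^1 h^{(n)}(s)b_n(s)\,\mathrm{d}s}\,\varphi(\lambda_n x_1)\varphi(\lambda_n x_2)\sin(x_1)\cos(x_2).
\]
The passage from here to estimating the $C^{1,\alpha}$ norm of the $x$-profile (via the product and composition Hölder inequalities \eqref{eq:property Ckalpha multiplication}, \eqref{eq:property Ckalpha composition}, using $\|\sin\|_{C^{1,\alpha}}=\|\cos\|_{C^{1,\alpha}}=1$, $\lambda_n\le 1$, $\|\varphi\|_{C^{1,\alpha}}<\infty$) and then absorbing the affine change of variables $(\phi^{(n)})^{-1}$, whose Jacobian is $\mathrm{diag}(a_n(t),b_n(t))$ by \eqref{eq:jacobian inverse}, is already carried out in subsection \ref{subsec:bounding the time derivative of the density} and culminates in \eqref{eq:rigorous bound time derivative density}. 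So the remaining task is purely to feed into that inequality the quantitative estimates now available for its four ingredients and to count exponents of $C$. (For $t$ outside the support of $\tfrac{\mathrm{d}h^{(n)}}{\mathrm{d}t}$ the left-hand side vanishes, so nothing needs to be proved there.)

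For the three ``global'' ingredients I would use: (i) $z_n = 2M_n = 2YC^{\delta(\frac{1}{1-\gamma})^n}$, obtained by substituting $k_n(1)=0$ (Choice \ref{choice:time picture}) into \eqref{eq:relation Mn and zn} so the $\cosh$ is $1$, together with Choice \ref{choice:Mn}; (ii) $|\frac{\mathrm{d}h^{(n)}}{\mathrm{d}t}(t)|\le \frac{1}{(1-t_n)\zeta}$ directly from \eqref{eq:def dhndt second way} (as $\vartheta\le 1$), with $\frac{1}{1-t_n}=\frac{Y\,C^{\delta(\frac{1}{1-\gamma})^{n-1}}}{\mathrm{arccosh}(C^{k_{\max}(\frac{1}{1-\gamma})^n})}$ from Choice \ref{choice:Mn} and $\mathrm{arccosh}(C^{k_{\max}(\frac{1}{1-\gamma})^n})\ge k_{\max}\ln(C)(\frac{1}{1-\gamma})^n$; and (iii) $\int_{t_n}^1 h^{(n)}b_n\gtrsim_\mu \frac{1}{Y}C^{(1+k_{\max}-\mu-\delta(1-\gamma))(\frac{1}{1-\gamma})^n}$ from Lemma \ref{lem:estimate of integral hn bn}. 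Since $C^{\delta(\frac{1}{1-\gamma})^{n-1}}=C^{\delta(1-\gamma)(\frac{1}{1-\gamma})^n}$, these three contribute to the exponent of $C$ the terms $+\delta$, $+\delta(1-\gamma)$, and $+\delta(1-\gamma)-1-k_{\max}+\mu$ respectively, plus scalar factors ($Y$'s, $1/\zeta$, $1/\mathrm{arccosh}$) to be dealt with at the end.

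The heart of the argument is the fourth ingredient, the prefactor $(1+\max\{a_n(t),b_n(t)\})^{1+\alpha}$, and here one must exploit that $\frac{\mathrm{d}h^{(n)}}{\mathrm{d}t}$ is supported only on $[t_n,t_n+(1-t_n)\zeta]\cup[t_{n+1}-(1-t_n)\zeta,t_{n+1}]$, where — as flagged as ``a key idea of the construction'' in Summary \ref{sum:ideas time derivative density} — $\overline{k}_n(t)$ is of size $O(\zeta)$. I would prove $\overline{k}_n(t)\le 2\zeta+\mu$ on this support for $n\ge n_0(\mu)$: near $t_n$, with $\hat{\hat t}=\frac{t-t_n}{1-t_n}\in[0,\zeta]$, plug into the exact formula \eqref{eq:kn second version exact}, use $1-2\hat{\hat t}\ge 1-2\zeta$, $\cosh(y)\ge\frac12 e^y$ and $\mathrm{arccosh}(X)\ge\ln X$ to bound the $\ln\frac{1}{\cosh(\cdots)}$ term, leaving $\overline{k}_n\le 2k_{\max}\zeta+\frac{\ln 2}{\ln(C)(\frac{1}{1-\gamma})^n}\le 2\zeta+\mu$; near $t_{n+1}$ argue identically after observing $\frac{t_{n+1}-t_n}{1-t_n}=1-\frac{1-t_{n+1}}{1-t_n}$ and that $\frac{1-t_{n+1}}{1-t_n}$ is superexponentially small (immediate from the formula for $1-t_n$ in Choice \ref{choice:Mn} plus Lemma \ref{lem:arccosh by ln}), so $\hat{\hat t}\ge\frac12$ and $1-\hat{\hat t}\le\zeta+o(1)$. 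Then Corollary \ref{cor:bound for an(t), bn(t)}, run with its parameter equal to $\mu$ (this also handles passing from $\overline{k}_n$ to the true $k_n$ inside $a_n,b_n$), gives $\max\{a_n(t),b_n(t)\}\le C^{(1+\overline{k}_n(t)+\mu)(\frac{1}{1-\gamma})^n}\le C^{(1+2\zeta+2\mu)(\frac{1}{1-\gamma})^n}$, hence $(1+\max\{a_n,b_n\})^{1+\alpha}\lesssim C^{(1+\alpha)(1+2\zeta+2\mu)(\frac{1}{1-\gamma})^n}\le C^{(1+\alpha+4\zeta+4\mu)(\frac{1}{1-\gamma})^n}$, using $\alpha<1$ to bound $\alpha\zeta\le\zeta$, $\alpha\mu\le\mu$.

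Multiplying the four pieces adds the exponents of $C$:
\[
(1+\alpha+4\zeta+4\mu)+\delta+\delta(1-\gamma)+\big(-1-k_{\max}+\mu+\delta(1-\gamma)\big)=\alpha-k_{\max}+4\zeta+5\mu+\delta(3-2\gamma),
\]
and $\delta(3-2\gamma)<2\delta\le 3\delta$ since $\gamma>\tfrac12$ (Choice \ref{choice:min requirements on C gamma and kmax}). It then only remains to absorb the leftover scalars — $\frac{1}{\mathrm{arccosh}(\cdots)}\le\frac{1}{k_{\max}\ln C}$ (a constant $\le 1$ for $C$ large, as $k_{\max}\ge\frac1{100}$), the $\frac1\zeta$, the surplus powers of $Y$ (itself $\lesssim\ln C/C^\delta\le1$ for $C$ large, so these only help), and the extra $(\frac{1}{1-\gamma})^{-n}$ — into a $C^{\mu(\frac{1}{1-\gamma})^n}$ correction for $n\ge n_0(\mu)$, keeping a single overall $Y$; this turns $5\mu$ into $7\mu$ and yields $\lesssim_{\varphi,\mu}YC^{[\alpha-k_{\max}+4\zeta+7\mu+3\delta](\frac{1}{1-\gamma})^n}$. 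The final monotonicity claim is immediate: $\frac{1}{1-\gamma}>1$, so $YC^{e(\frac{1}{1-\gamma})^n}$ is decreasing in $n$ precisely when $e<0$, i.e.\ when $\alpha<k_{\max}-4\zeta-7\mu-3\delta$. I expect the main obstacle to be exactly the fourth-ingredient step: establishing $\overline{k}_n(t)=O(\zeta)$ (hence the bound on $\max\{a_n,b_n\}$) on the support of $\frac{\mathrm{d}h^{(n)}}{\mathrm{d}t}$, which is where the careful design of the time profile in subsection \ref{subsec:completion of the toy model} is cashed in and where, near $t_{n+1}$, one needs the quantitative superexponential smallness of consecutive time scales; the remaining difficulty is the bookkeeping of how $\zeta,\mu,\delta$ accumulate and of keeping $n_0$ dependent on $\mu$ alone.
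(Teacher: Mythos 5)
Your proposal is correct and follows essentially the same route as the paper: both start from the rigorous bound \eqref{eq:rigorous bound time derivative density} and feed in $z_{n}=2M_{n}$ (Choices \ref{choice:time picture}, \ref{choice:Mn}), the bound on $\frac{\mathrm{d}h^{\left(n\right)}}{\mathrm{d}t}$ through the time scale $1-t_{n}$, the lower bound of Lemma \ref{lem:estimate of integral hn bn}, Corollary \ref{cor:bound for an(t), bn(t)}, and the key observation that $\overline{k}_{n}\left(t\right)\lesssim\zeta+\mu$ on the support of $\frac{\mathrm{d}h^{\left(n\right)}}{\mathrm{d}t}$, so the exponent bookkeeping comes out the same. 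The only (harmless) variation is that you establish the smallness of $\overline{k}_{n}$ on that support quantitatively from the exact formula \eqref{eq:kn second version exact} via $\cosh\left(y\right)\ge\frac{1}{2}\mathrm{e}^{\left|y\right|}$ and $\mathrm{arccosh}\left(x\right)\ge\ln\left(x\right)$, and you lower-bound the $\mathrm{arccosh}$ by $k_{\max}\ln\left(C\right)\left(\frac{1}{1-\gamma}\right)^{n}$ instead of a constant, whereas the paper invokes the uniform convergence of $\overline{k}_{n}$ to its limit profile away from $\hat{\hat{t}}=\frac{1}{2}$ together with a separate estimate of $\frac{1-t_{n+1}}{1-t_{n}}$.
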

\begin{rem}
Taking $k_{\max}$ close to one and $\delta,\zeta$ and $\mu$ small,
which is something we can always do, we may choose $\alpha$ as close
to $1$ as we wish. As we will see later, this means that the time
derivative of the density is not the limiting factor of the regularity
of the force.
\end{rem}
\begin{proof}
All the steps followed in the deduction of equation \eqref{eq:rigorous bound time derivative density}
are rigorous. Therefore, we may start from the aforementioned equation:
\[
\begin{aligned}\left|\left|\frac{\partial\widetilde{\rho^{\left(n\right)}}^{n}}{\partial t}\left(t,\left(\phi^{\left(n\right)}\right)^{-1}\left(t,\cdot\right)\right)\right|\right|_{C^{1,\alpha}\left(\mathbb{R}^{2}\right)} & \lesssim_{\varphi}\left(1+\max\left\{ a_{n}\left(t\right),b_{n}\left(t\right)\right\} \right)^{1+\alpha}z_{n}\frac{\frac{\mathrm{d}h^{\left(n\right)}}{\mathrm{d}t}\left(t\right)}{\int_{t_{n}}^{1}h^{\left(n\right)}\left(s\right)b_{n}\left(s\right)\mathrm{d}s}.\end{aligned}
\]
Since we wrote equation \eqref{eq:rigorous bound time derivative density}
down, we have learned a lot of things about our construction. As a
consequence, we are now able to bound every term that appears in the
equation above.
\begin{enumerate}
\item By Corollary \ref{cor:bound for an(t), bn(t)}, as long as $C$ is
big enough (let us say $C\ge\Upsilon_{1}\left(\delta,\mu\right)$)
and $n_{0}\ge2$, we may assure that
\[
\max\left\{ a_{n}\left(t\right),b_{n}\left(t\right)\right\} \le C^{\left(1+\overline{k}_{n}\left(t\right)+\mu\right)\left(\frac{1}{1-\gamma}\right)^{n}}.
\]
Since
\[
C^{\left(1+\overline{k}_{n}\left(t\right)+\mu\right)\left(\frac{1}{1-\gamma}\right)^{n}}\ge C^{1+\overline{k}_{n}\left(t\right)+\mu}\ge C\ge2
\]
as $n\ge0$, $\overline{k}_{n}\left(t\right)\ge0$ by Corollary \ref{cor:ideal kn non negative}
and $C>2$ by Choice \ref{choice:min requirements on C gamma and kmax},
we deduce that
\[
1+\max\left\{ a_{n}\left(t\right),b_{n}\left(t\right)\right\} \lesssim C^{\left(1+\overline{k}_{n}\left(t\right)+\mu\right)\left(\frac{1}{1-\gamma}\right)^{n}}.
\]
\item By equation \eqref{eq:relation Mn and zn} and Choices \ref{choice:time picture}
and \ref{choice:Mn}, we can write
\[
z_{n}=2M_{n}\cosh\left(2\overbrace{k_{n}\left(1\right)}^{=0}\left(\frac{1}{1-\gamma}\right)^{n}\right)=2M_{n}=2YC^{\delta\left(\frac{1}{1-\gamma}\right)^{n}}.
\]
\item By Choice \ref{choice:time picture} and Lemma \ref{lem:estimate of integral hn bn},
provided that $C$ is chosen large enough (let us say $C\ge\Upsilon_{2}\left(\delta,\mu\right)$),
\[
\int_{t_{n}}^{1}h_{\varepsilon}^{\left(n\right)}\left(s\right)b_{n}\left(s\right)\mathrm{d}s\gtrsim_{\mu}\frac{1}{Y}C^{\left(1+k_{\max}-\mu-\delta\left(1-\gamma\right)\right)\left(\frac{1}{1-\gamma}\right)^{n}}.
\]
\item By equation \eqref{eq:def dhndt second way} and Choice \ref{choice:Mn},
\[
\begin{aligned}\left|\frac{\mathrm{d}h^{\left(n\right)}}{\mathrm{d}t}\left(t\right)\right| & \le\frac{\chi_{\left[t_{n},t_{n}+\left(1-t_{n}\right)\zeta\right]\cup\left[t_{n+1}-\left(1-t_{n}\right)\zeta,t_{n+1}\right]}\left(t\right)}{\left(1-t_{n}\right)\zeta}=\\
 & =\frac{\chi_{\left[t_{n},t_{n}+\left(1-t_{n}\right)\zeta\right]\cup\left[t_{n+1}-\left(1-t_{n}\right)\zeta,t_{n+1}\right]}\left(t\right)}{Y}\frac{C^{\delta\left(\frac{1}{1-\gamma}\right)^{n-1}}}{\mathrm{arccosh}\left(C^{k_{\max}\left(\frac{1}{1-\gamma}\right)^{n}}\right)}.
\end{aligned}
\]
Since $\mathrm{arccosh}$ is increasing for positive arguments, $n>0$
and $C>2$ and $k_{\max}\ge\frac{1}{100}$ by Choice \ref{choice:min requirements on C gamma and kmax},
we can bound
\begin{equation}
\mathrm{arccosh}\left(C^{k_{\max}\left(\frac{1}{1-\gamma}\right)^{n}}\right)\ge\mathrm{arccosh}\left(C^{\frac{1}{100}}\right)\ge\mathrm{arccosh}\left(2^{\frac{1}{100}}\right).\label{eq:argument arccosh}
\end{equation}
Thereby,
\[
\left|\frac{\mathrm{d}h^{\left(n\right)}}{\mathrm{d}t}\left(t\right)\right|\lesssim\frac{\chi_{\left[t_{n},t_{n}+\left(1-t_{n}\right)\zeta\right]\cup\left[t_{n+1}-\left(1-t_{n}\right)\zeta,t_{n+1}\right]}\left(t\right)}{Y}C^{\delta\left(\frac{1}{1-\gamma}\right)^{n-1}}.
\]
\end{enumerate}
Combining all these factors, we arrive to
\[
\begin{aligned}\left|\left|\frac{\partial\widetilde{\rho^{\left(n\right)}}^{n}}{\partial t}\left(t,\left(\phi^{\left(n\right)}\right)^{-1}\left(t,\cdot\right)\right)\right|\right|_{C^{1,\alpha}\left(\mathbb{R}^{2}\right)} & \lesssim_{\varphi,\mu}C^{\left(1+\alpha\right)\left(1+\overline{k}_{n}\left(t\right)+\mu\right)\left(\frac{1}{1-\gamma}\right)^{n}}YC^{\delta\left(\frac{1}{1-\gamma}\right)^{n}}\cdot\\
 & \quad\cdot\frac{\chi_{\left[t_{n},t_{n}+\left(1-t_{n}\right)\zeta\right]\cup\left[t_{n+1}-\left(1-t_{n}\right)\zeta,t_{n+1}\right]}\left(t\right)\frac{1}{Y}C^{\delta\left(\frac{1}{1-\gamma}\right)^{n-1}}}{\frac{1}{Y}C^{\left(1+k_{\max}-\mu-\delta\left(1-\gamma\right)\right)\left(\frac{1}{1-\gamma}\right)^{n}}}=\\
 & \lesssim_{\varphi,\mu}YC^{\left[\left(1+\alpha\right)\left(1+\overline{k}_{n}\left(t\right)+\mu\right)+\mu+2\delta\left(1-\gamma\right)+\delta-1-k_{\max}\right]\left(\frac{1}{1-\gamma}\right)^{n}}\cdot\\
 & \quad\cdot\chi_{\left[t_{n},t_{n}+\left(1-t_{n}\right)\zeta\right]\cup\left[t_{n+1}-\left(1-t_{n}\right)\zeta,t_{n+1}\right]}\left(t\right).
\end{aligned}
\]
Given that $\gamma\in\left(0,1\right)$, we may write
\begin{equation}
\begin{aligned}\left|\left|\frac{\partial\widetilde{\rho^{\left(n\right)}}^{n}}{\partial t}\left(t,\left(\phi^{\left(n\right)}\right)^{-1}\left(t,\cdot\right)\right)\right|\right|_{C^{1,\alpha}\left(\mathbb{R}^{2}\right)} & \lesssim_{\varphi,\mu}YC^{\left[\left(1+\alpha\right)\left(1+\overline{k}_{n}\left(t\right)+\mu\right)+\mu+3\delta-1-k_{\max}\right]\left(\frac{1}{1-\gamma}\right)^{n}}\cdot\\
 & \quad\cdot\chi_{\left[t_{n},t_{n}+\left(1-t_{n}\right)\zeta\right]\cup\left[t_{n+1}-\left(1-t_{n}\right)\zeta,t_{n+1}\right]}\left(t\right).
\end{aligned}
\label{eq:time derivative v0}
\end{equation}

Now, we can use the time characteristic function to bound the size
of $\overline{k}_{n}\left(t\right)$. Indeed, recall that, as we explained
in subsection \ref{subsec:completion of the toy model}, we expect
$k_{n}\left(t\right)$ to be small when $\frac{\mathrm{d}h^{\left(n\right)}}{\mathrm{d}t}\neq0$.
In subsection \ref{subsec:completion of the toy model}, we saw that
the convergence of $\overline{k}_{n}\left(t\right)$ to $k_{\max}\left(1-\left|1-2\frac{t-t_{n}}{1-t_{n}}\right|\right)$
was uniform in any closed time interval contained in $\left[t_{n},1\right]$
that did not contain $t_{n}+\frac{1-t_{n}}{2}$. As $\zeta<\frac{1}{4}$
by Choice \ref{choice:time picture}, we can assure that
\[
t_{n}+\frac{1-t_{n}}{2}\notin\left[t_{n},t_{n}+\left(1-t_{n}\right)\zeta\right]\cup\left[t_{n+1}-\left(1-t_{n}\right)\zeta,t_{n+1}\right]\eqqcolon\mathcal{T}_{n}.
\]
Therefore, taking $n$ big enough, let us say $n\ge n_{0}\left(\mu\right)$,
we can guarantee that
\begin{equation}
\overline{k}_{n}\left(t\right)\le k_{\max}\left(1-\left|1-2\frac{t-t_{n}}{1-t_{n}}\right|\right)+\mu\quad\forall t\in\mathcal{T}_{n}.\label{eq:time derivative bound kn(t)}
\end{equation}
Moreover, $\forall t\in\left[t_{n},t_{n}+\left(1-t_{n}\right)\zeta\right]$,
\begin{equation}
1-\left|1-2\frac{t-t_{n}}{1-t_{n}}\right|=1-\left[1-2\frac{t-t_{n}}{1-t_{n}}\right]=2\frac{t-t_{n}}{1-t_{n}}\le2\zeta\label{eq:time derivative bound kn(t) sub 1}
\end{equation}
and, $\forall t\in\left[t_{n+1}-\left(1-t_{n}\right)\zeta,t_{n+1}\right]$,
\[
\begin{aligned}1-\left|1-2\frac{t-t_{n}}{1-t_{n}}\right| & =1+1-2\frac{t-t_{n}}{1-t_{n}}\le2-2\frac{t_{n+1}-t_{n}-\left(1-t_{n}\right)\zeta}{1-t_{n}}=\\
 & \leq2-2\frac{1-t_{n}-\left(1-t_{n}\right)\zeta}{1-t_{n}}+2\frac{1-t_{n+1}}{1-t_{n}}=2\zeta+\frac{1-t_{n+1}}{1-t_{n}}.
\end{aligned}
\]
Making use of Choice \ref{choice:Mn}, we conclude that, $\forall t\in\left[t_{n+1}-\left(1-t_{n}\right)\zeta,t_{n+1}\right]$,
\begin{equation}
\begin{aligned}1-\left|1-2\frac{t-t_{n}}{1-t_{n}}\right| & \leq2\zeta+\frac{\frac{1}{Y}C^{-\delta\left(\frac{1}{1-\gamma}\right)^{n}}\mathrm{arccosh}\left(C^{k_{\max}\left(\frac{1}{1-\gamma}\right)^{n+1}}\right)}{\frac{1}{Y}C^{-\delta\left(\frac{1}{1-\gamma}\right)^{n-1}}\mathrm{arccosh}\left(C^{k_{\max}\left(\frac{1}{1-\gamma}\right)^{n}}\right)}=\\
 & \leq2\zeta+C^{-\delta\left(1-\left(1-\gamma\right)\right)\left(\frac{1}{1-\gamma}\right)^{n}}\frac{\mathrm{arccosh}\left(C^{k_{\max}\left(\frac{1}{1-\gamma}\right)^{n+1}}\right)}{\mathrm{arccosh}\left(C^{k_{\max}\left(\frac{1}{1-\gamma}\right)^{n}}\right)}=\\
 & \leq2\zeta+\underbrace{C^{-\delta\gamma\left(\frac{1}{1-\gamma}\right)^{n}}\frac{\mathrm{arccosh}\left(C^{k_{\max}\left(\frac{1}{1-\gamma}\right)^{n+1}}\right)}{\mathrm{arccosh}\left(C^{k_{\max}\left(\frac{1}{1-\gamma}\right)^{n}}\right)}}_{\eqqcolon I}.
\end{aligned}
\label{eq:time derivative bound kn(t) sub 2 v0}
\end{equation}
Applying Lemma \ref{lem:arccosh by ln} in the numerator and repeating
the same argument as in equation \eqref{eq:argument arccosh} for
the denominator, we may bound
\[
I\lesssim C^{-\delta\gamma\left(\frac{1}{1-\gamma}\right)^{n}}\left[\ln\left(2\right)+k_{\max}\left(\frac{1}{1-\gamma}\right)^{n+1}\ln\left(C\right)\right].
\]
Following the same procedure as in the proof of Proposition \ref{prop:time convergence}
from equation \eqref{eq:time convergence the log 1} to equation \eqref{eq:time convergence the log 2},
we can write
\[
I\lesssim\left(\frac{1}{1-\gamma}\right)^{n+1}\ln\left(C\right)C^{-\delta\gamma\left(\frac{1}{1-\gamma}\right)^{n}}.
\]
Now, using the fact that $\gamma\ge\frac{1}{2}$ (by Choice \ref{choice:min requirements on C gamma and kmax})
and recurring to Lemma \ref{lem:exponetial superexponential bound}
with $a\leftarrow\frac{1}{1-\gamma}$, $b\leftarrow\delta\gamma$,
we obtain that
\[
I\lesssim_{\beta,\delta}\frac{C^{-\beta\delta\gamma\left(\frac{1}{1-\gamma}\right)^{n}}}{\ln\left(C\right)}\le\frac{1}{\ln\left(C\right)}.
\]
Then, taking $\beta=\frac{1}{2}$, as long as $C\ge\Upsilon_{3}\left(\delta,\mu\right)$,
we can make $I\leq\mu$. Introducing this finding back into equation
\eqref{eq:time derivative bound kn(t) sub 2 v0} and combining equations
\eqref{eq:time derivative bound kn(t)}, \eqref{eq:time derivative bound kn(t) sub 1}
and \eqref{eq:time derivative bound kn(t) sub 2 v0} provides
\[
\overline{k}_{n}\left(t\right)\le k_{\max}\left(2\zeta+\mu\right)+\mu\quad\forall t\in\mathcal{T}_{n}.
\]
As $k_{\max}\le1$ by Choice \ref{choice:ideal kn}, we conclude that
\begin{equation}
\overline{k}_{n}\left(t\right)\le2\zeta+2\mu\quad\forall t\in\mathcal{T}_{n}.\label{eq:time derivative bound kn(t) v1}
\end{equation}

Thanks to equation \eqref{eq:time derivative bound kn(t) v1}, \eqref{eq:time derivative v0}
becomes
\[
\begin{aligned}\left|\left|\frac{\partial\widetilde{\rho^{\left(n\right)}}^{n}}{\partial t}\left(t,\left(\phi^{\left(n\right)}\right)^{-1}\left(t,\cdot\right)\right)\right|\right|_{C^{1,\alpha}\left(\mathbb{R}^{2}\right)} & \lesssim_{\varphi,\mu}YC^{\left[\left(1+\alpha\right)\left(1+2\zeta+3\mu\right)+\mu+3\delta-1-k_{\max}\right]\left(\frac{1}{1-\gamma}\right)^{n}}=\\
 & \lesssim_{\varphi,\mu}YC^{\left[\alpha-k_{\max}+\left(1+\alpha\right)\left(2\zeta+3\mu\right)+\mu+3\delta\right]\left(\frac{1}{1-\gamma}\right)^{n}}.
\end{aligned}
\]
As $\alpha\le1$, we can write
\[
\left|\left|\frac{\partial\widetilde{\rho^{\left(n\right)}}^{n}}{\partial t}\left(t,\left(\phi^{\left(n\right)}\right)^{-1}\left(t,\cdot\right)\right)\right|\right|_{C^{1,\alpha}\left(\mathbb{R}^{2}\right)}\lesssim_{\varphi,\mu}YC^{\left[\alpha-k_{\max}+4\zeta+7\mu+3\delta\right]\left(\frac{1}{1-\gamma}\right)^{n}}.
\]
\end{proof}

\subsection{Pure quadratic term}
\begin{prop}
\label{prop:form of gradient of rho}Let $n\in\mathbb{N}$. The tilde
gradient of the density of layer $n$ is given by
\[
\begin{aligned}\widetilde{\nabla}^{n}\widetilde{\rho^{\left(n\right)}}^{n}\left(t,x\right)= & -2M_{n}\frac{h^{\left(n\right)}\left(t\right)}{\int_{t_{n}}^{1}h^{\left(n\right)}\left(s\right)b_{n}\left(s\right)\mathrm{d}s}\left[\varphi\left(\lambda_{n}x_{1}\right)\varphi\left(\lambda_{n}x_{2}\right)\left(\begin{matrix}a_{n}\left(t\right)\cos\left(x_{1}\right)\cos\left(x_{2}\right)\\
-b_{n}\left(t\right)\sin\left(x_{1}\right)\sin\left(x_{2}\right)
\end{matrix}\right)+\right.\\
 & \quad\left.+\lambda_{n}\left(\begin{matrix}a_{n}\left(t\right)\varphi'\left(\lambda_{n}x_{1}\right)\varphi\left(\lambda_{n}x_{2}\right)\\
b_{n}\left(t\right)\varphi\left(\lambda_{n}x_{1}\right)\varphi'\left(\lambda_{n}x_{2}\right)
\end{matrix}\right)\sin\left(x_{1}\right)\cos\left(x_{2}\right)\right].
\end{aligned}
\]
Furthermore, let $\mu>0$ and $n\ge2$. Provided that $C$ is large
enough (lets us say $C\ge\Upsilon\left(\delta,\mu\right)$), we can
guarantee that
\[
\begin{aligned}\left|\left|\widetilde{\nabla}^{n}\widetilde{\rho^{\left(n\right)}}^{n}\left(t,\left(\phi^{\left(n\right)}\right)^{-1}\left(t,\cdot\right)\right)\right|\right|_{L^{\infty}\left(\mathbb{R}^{2};\mathbb{R}^{2}\right)} & \lesssim_{\varphi,\mu}YC^{\left[2\delta+2\mu\right]\left(\frac{1}{1-\gamma}\right)^{n}},\\
\left|\left|\widetilde{\nabla}^{n}\widetilde{\rho^{\left(n\right)}}^{n}\left(t,\left(\phi^{\left(n\right)}\right)^{-1}\left(t,\cdot\right)\right)\right|\right|_{\dot{C}^{\alpha}\left(\mathbb{R}^{2};\mathbb{R}^{2}\right)} & \lesssim_{\varphi,\mu}YC^{\left[\alpha\left(1+k_{\max}\right)+2\delta+3\mu\right]\left(\frac{1}{1-\gamma}\right)^{n}},\\
\left|\left|\widetilde{\nabla}^{n}\widetilde{\rho^{\left(n\right)}}^{n}\left(t,\left(\phi^{\left(n\right)}\right)^{-1}\left(t,\cdot\right)\right)\right|\right|_{\dot{C}^{1}\left(\mathbb{R}^{2};\mathbb{R}^{2}\right)} & \lesssim_{\varphi,\mu}YC^{\left[\left(1+k_{\max}\right)+2\delta+3\mu\right]\left(\frac{1}{1-\gamma}\right)^{n}},\\
\left|\left|\widetilde{\nabla}^{n}\widetilde{\rho^{\left(n\right)}}^{n}\left(t,\left(\phi^{\left(n\right)}\right)^{-1}\left(t,\cdot\right)\right)\right|\right|_{\dot{C}^{1,\alpha}\left(\mathbb{R}^{2};\mathbb{R}^{2}\right)} & \lesssim_{\varphi,\mu}YC^{\left[\left(1+\alpha\right)\left(1+k_{\max}\right)+2\delta+4\mu\right]\left(\frac{1}{1-\gamma}\right)^{n}}.
\end{aligned}
\]
\end{prop}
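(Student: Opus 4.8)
The plan has two parts: first derive the closed expression for $\widetilde{\nabla}^{n}\widetilde{\rho^{\left(n\right)}}^{n}\left(t,x\right)$, then estimate it in the four norms. For the formula, I would start from the density of Choice \ref{choice:density} and rewrite its amplitude via Choice \ref{choice:amplitude density} together with the identity $z_{n}=2M_{n}$ (which follows from \eqref{eq:relation Mn and zn} and $k_{n}\left(1\right)=0$, Choice \ref{choice:time picture}), so that $\widetilde{\rho^{\left(n\right)}}^{n}\left(t,x\right)$ equals $-\Theta_{n}\left(t\right)\varphi\left(\lambda_{n}x_{1}\right)\varphi\left(\lambda_{n}x_{2}\right)\sin\left(x_{1}\right)\cos\left(x_{2}\right)$ with $\Theta_{n}\left(t\right):=2M_{n}h^{\left(n\right)}\left(t\right)\big/\int_{t_{n}}^{1}h^{\left(n\right)}\left(s\right)b_{n}\left(s\right)\mathrm{d}s$. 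Applying $\widetilde{\nabla}^{n}=\left(a_{n}\left(t\right)\partial_{x_{1}},b_{n}\left(t\right)\partial_{x_{2}}\right)$ by the Leibniz rule of Lemma \ref{lem:operadores tilde}, and separating the derivative hitting the trigonometric factor $\sin\left(x_{1}\right)\cos\left(x_{2}\right)$ (zeroth order in $\lambda_{n}$) from the one hitting the cutoff $\varphi\left(\lambda_{n}x_{1}\right)\varphi\left(\lambda_{n}x_{2}\right)$ (which pulls out an extra $\lambda_{n}$), exactly as in the proof of Proposition \ref{prop:form gradient omega}, the four resulting summands are precisely the displayed expression.

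The preliminary estimate is a bound on the prefactor $\Theta_{n}\left(t\right)$: using $h^{\left(n\right)}\le1$ (Choice \ref{choice:amplitude density}), $M_{n}=YC^{\delta\left(\frac{1}{1-\gamma}\right)^{n}}$ (Choice \ref{choice:Mn}), and the lower bound $\int_{t_{n}}^{1}h^{\left(n\right)}\left(s\right)b_{n}\left(s\right)\mathrm{d}s\gtrsim_{\mu}\frac{1}{Y}C^{\left(1+k_{\max}-\mu-\delta\left(1-\gamma\right)\right)\left(\frac{1}{1-\gamma}\right)^{n}}$ from Lemma \ref{lem:estimate of integral hn bn} (this is where $n\ge2$ and $C$ large enter), one obtains, up to a fixed power of $Y$, $\left|\Theta_{n}\left(t\right)\right|\lesssim_{\varphi,\mu}C^{\left[2\delta+\mu-1-k_{\max}\right]\left(\frac{1}{1-\gamma}\right)^{n}}$ after using $\delta\left(1-\gamma\right)\le\delta$; the gain $-1-k_{\max}$ in this exponent is the heart of the matter.

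For the norms, write $\widetilde{\nabla}^{n}\widetilde{\rho^{\left(n\right)}}^{n}=\Theta_{n}\left(t\right)\,B^{\left(n\right)}\left(t,x\right)$ with $B^{\left(n\right)}$ the bracket. Each component of $B^{\left(n\right)}$ is a sum of products of one-variable functions ($\varphi^{\left(j\right)}\left(\lambda_{n}\cdot\right)$ with $j\le1$, $\sin$, $\cos$) with coefficients among $a_{n},\lambda_{n}a_{n},b_{n},\lambda_{n}b_{n}$; since $\lambda_{n}\le1$, $\|\varphi\left(\lambda_{n}\cdot\right)\|_{\dot{C}^{\alpha}}\le K_{0}\|\varphi\|_{\dot{C}^{1}}$, and $\sin,\cos$ and their $\dot{C}^{\alpha}$-seminorms are $\le1$, repeated use of \eqref{eq:property Calpha multiplication}--\eqref{eq:property Ckalpha multiplication} gives $\|B^{\left(n\right)}\left(t,\cdot\right)\|_{L^{\infty}}$, $\|B^{\left(n\right)}\left(t,\cdot\right)\|_{\dot{C}^{\alpha}}$, $\|B^{\left(n\right)}\left(t,\cdot\right)\|_{\dot{C}^{1}}$, $\|B^{\left(n\right)}\left(t,\cdot\right)\|_{\dot{C}^{1,\alpha}}\lesssim_{\varphi}\max\left\{a_{n}\left(t\right),b_{n}\left(t\right)\right\}$ (for the last two one checks that differentiating once only spawns extra $\lambda_{n}\le1$ and $a_{n},b_{n}$ factors and leaves trigonometric/cutoff factors). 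Passing the change of variables $\left(\phi^{\left(n\right)}\right)^{-1}$ costs, by \eqref{eq:property Calpha composition} and its first-order analogue — using $\|\left(\phi^{\left(n\right)}\right)^{-1}\left(t,\cdot\right)\|_{\dot{C}^{1}}=\max\left\{a_{n}\left(t\right),b_{n}\left(t\right)\right\}$ from \eqref{eq:jacobian inverse} and that its Jacobian is the constant matrix $\mathrm{diag}\left(a_{n}\left(t\right),b_{n}\left(t\right)\right)$, whose higher $\dot{C}^{\alpha}$-seminorms vanish — further factors $1$, $\max\left\{a_{n},b_{n}\right\}^{\alpha}$, $\max\left\{a_{n},b_{n}\right\}$, $\max\left\{a_{n},b_{n}\right\}^{1+\alpha}$, so the transported bracket contributes $\max\left\{a_{n},b_{n}\right\}$, $\max\left\{a_{n},b_{n}\right\}^{1+\alpha}$, $\max\left\{a_{n},b_{n}\right\}^{2}$, $\max\left\{a_{n},b_{n}\right\}^{2+\alpha}$ to the four estimates. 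Bounding $\max\left\{a_{n}\left(t\right),b_{n}\left(t\right)\right\}\le C^{\left(1+k_{\max}+\mu\right)\left(\frac{1}{1-\gamma}\right)^{n}}$ via Corollary \ref{cor:bound for an(t), bn(t)} with $\overline{k}_{n}\left(t\right)\le k_{\max}$ (valid for $C$ large, $n\ge2$), multiplying by the bound on $\Theta_{n}$, and tidying the exponents (using $\delta\left(1-\gamma\right)\le\delta$, $\alpha\mu\le\mu$, $\alpha\le1$) yields exactly the four stated bounds, the $-1-k_{\max}$ cancelling the $+1+k_{\max}$ and leaving only $\alpha$-order growth plus harmless $\delta,\mu$ corrections.

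There is no conceptual obstacle here — it is all Hölder-norm bookkeeping. The point requiring care, and the reason the $\alpha$-coefficient in the bounds is $1+k_{\max}$ rather than near $1$, is that unlike in Proposition \ref{prop:bound density time derivative} the weight $h^{\left(n\right)}\left(t\right)$ is of order one precisely on the part of $\left[t_{n},1\right]$ where $\overline{k}_{n}\left(t\right)\approx k_{\max}$, so one must allow $\max\left\{a_{n},b_{n}\right\}$ to reach $C^{\left(1+k_{\max}+\mu\right)\left(\frac{1}{1-\gamma}\right)^{n}}$; the estimate closes only because the factor $C^{-\left(1+k_{\max}\right)\left(\frac{1}{1-\gamma}\right)^{n}}$ hidden in $\Theta_{n}$ through Lemma \ref{lem:estimate of integral hn bn} exactly compensates this zeroth-order growth. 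The only other thing to watch is collecting the various lower bounds on $C$ (from Corollary \ref{cor:bound for an(t), bn(t)}, Lemma \ref{lem:estimate of integral hn bn}, and the composition estimates) into a single threshold $\Upsilon\left(\delta,\mu\right)$, and keeping $n\ge2$ throughout.
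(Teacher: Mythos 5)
Your proposal is correct and follows essentially the same route as the paper's proof: the formula is obtained from Choices \ref{choice:density} and \ref{choice:amplitude density} with $z_{n}=2M_{n}$, the prefactor is controlled by Lemma \ref{lem:estimate of integral hn bn} together with Choice \ref{choice:Mn}, the geometric factors by Corollary \ref{cor:bound for an(t), bn(t)} with $\overline{k}_{n}\le k_{\max}$, and the composition costs $\max\left\{ a_{n},b_{n}\right\} ^{0},\max\left\{ a_{n},b_{n}\right\} ^{\alpha},\max\left\{ a_{n},b_{n}\right\} ,\max\left\{ a_{n},b_{n}\right\} ^{1+\alpha}$ come from \eqref{eq:property Calpha composition} and the constant Jacobian \eqref{eq:jacobian inverse}, exactly as in the paper, and your exponents reproduce the stated ones. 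The only differences are presentational (you factor out $\Theta_{n}\left(t\right)$ once and bound the bracket in all four seminorms, where the paper recomputes the explicit gradient \eqref{eq:gradient gradient rho} for the $\dot{C}^{1}$ and $\dot{C}^{1,\alpha}$ cases), and your remark about ``a fixed power of $Y$'' is the same harmless bookkeeping already present in the paper's own proof.
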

\begin{proof}
By Choices \ref{choice:density} and \ref{choice:amplitude density},
we have
\[
\widetilde{\rho^{\left(n\right)}}^{n}\left(t,x\right)=-z_{n}\frac{h^{\left(n\right)}\left(t\right)}{\int_{t_{n}}^{1}h^{\left(n\right)}\left(s\right)b_{n}\left(s\right)\mathrm{d}s}\varphi\left(\lambda_{n}x_{1}\right)\varphi\left(\lambda_{n}x_{2}\right)\sin\left(x_{1}\right)\cos\left(x_{2}\right).
\]
Next, we eliminate $z_{n}$ via equation \eqref{eq:relation Mn and zn}
and Choice \ref{choice:time picture}, leading to
\[
\widetilde{\rho^{\left(n\right)}}^{n}\left(t,x\right)=-2M_{n}\frac{h^{\left(n\right)}\left(t\right)}{\int_{t_{n}}^{1}h^{\left(n\right)}\left(s\right)b_{n}\left(s\right)\mathrm{d}s}\varphi\left(\lambda_{n}x_{1}\right)\varphi\left(\lambda_{n}x_{2}\right)\sin\left(x_{1}\right)\cos\left(x_{2}\right).
\]
Differentiating the expression above, we obtain
\begin{equation}
\begin{aligned}\widetilde{\nabla}^{n}\widetilde{\rho^{\left(n\right)}}^{n}\left(t,x\right)= & -2M_{n}\frac{h^{\left(n\right)}\left(t\right)}{\int_{t_{n}}^{1}h^{\left(n\right)}\left(s\right)b_{n}\left(s\right)\mathrm{d}s}\left[\varphi\left(\lambda_{n}x_{1}\right)\varphi\left(\lambda_{n}x_{2}\right)\left(\begin{matrix}a_{n}\left(t\right)\cos\left(x_{1}\right)\cos\left(x_{2}\right)\\
-b_{n}\left(t\right)\sin\left(x_{1}\right)\sin\left(x_{2}\right)
\end{matrix}\right)+\right.\\
 & \quad\left.+\lambda_{n}\left(\begin{matrix}a_{n}\left(t\right)\varphi'\left(\lambda_{n}x_{1}\right)\varphi\left(\lambda_{n}x_{2}\right)\\
b_{n}\left(t\right)\varphi\left(\lambda_{n}x_{1}\right)\varphi'\left(\lambda_{n}x_{2}\right)
\end{matrix}\right)\sin\left(x_{1}\right)\cos\left(x_{2}\right)\right],
\end{aligned}
\label{eq:gradient rho}
\end{equation}
which is the result of the statement.

As $\lambda_{n}\leq1$ by Choice \ref{choice:psin}, inspecting equation
\eqref{eq:gradient rho}, we obtain that
\[
\left|\left|\widetilde{\nabla}^{n}\widetilde{\rho^{\left(n\right)}}^{n}\left(t,\cdot\right)\right|\right|_{L^{\infty}\left(\mathbb{R}^{2};\mathbb{R}^{2}\right)}\lesssim_{\varphi}M_{n}\frac{h^{\left(n\right)}\left(t\right)}{\int_{t_{n}}^{1}h^{\left(n\right)}\left(s\right)b_{n}\left(s\right)\mathrm{d}s}\max\left\{ a_{n}\left(t\right),b_{n}\left(t\right)\right\} .
\]
As long as $C$ is big enough (let us say $C\ge\Upsilon_{1}\left(\delta,\gamma\right)$)
and $n\ge2$, we can apply Corollary \ref{cor:bound for an(t), bn(t)}
and Lemma \ref{lem:estimate of integral hn bn}. These two results,
along with equation \eqref{eq:h_eps near one} and Choice \ref{choice:Mn}
provide
\[
\left|\left|\widetilde{\nabla}^{n}\widetilde{\rho^{\left(n\right)}}^{n}\left(t,\cdot\right)\right|\right|_{L^{\infty}\left(\mathbb{R}^{2};\mathbb{R}^{2}\right)}\lesssim_{\varphi,\mu}YC^{\left[\delta-\left(1+k_{\max}-\mu-\delta\left(1-\gamma\right)\right)+\left(1+\overline{k}_{n}\left(t\right)+\mu\right)\right]\left(\frac{1}{1-\gamma}\right)^{n}}.
\]
Bounding $\overline{k}_{n}\left(t\right)\le k_{\max}$ (see Choice
\ref{choice:ideal kn}) and $1-\gamma\le1$ (see Choice \ref{choice:anbn}),
we arrive to
\[
\left|\left|\widetilde{\nabla}^{n}\widetilde{\rho^{\left(n\right)}}^{n}\left(t,\cdot\right)\right|\right|_{L^{\infty}\left(\mathbb{R}^{2};\mathbb{R}^{2}\right)}\lesssim_{\varphi,\mu}YC^{\left[2\delta+2\mu\right]\left(\frac{1}{1-\gamma}\right)^{n}}.
\]

In view of equation \eqref{eq:gradient rho}, bearing in mind equations
\eqref{eq:property Calpha multiplication} and \eqref{eq:property Calpha composition}
and the fact that $\lambda_{n}\le1$ by Choice \ref{choice:psin},
we deduce that
\[
\left|\left|\widetilde{\nabla}^{n}\widetilde{\rho^{\left(n\right)}}^{n}\left(t,\cdot\right)\right|\right|_{\dot{C}^{\alpha}\left(\mathbb{R}^{2};\mathbb{R}^{2}\right)}\lesssim_{\varphi}M_{n}\frac{h^{\left(n\right)}\left(t\right)}{\int_{t_{n}}^{1}h^{\left(n\right)}\left(s\right)b_{n}\left(s\right)\mathrm{d}s}\max\left\{ a_{n}\left(t\right),b_{n}\left(t\right)\right\} .
\]
Now, equation \eqref{eq:property Calpha composition} implies that
\[
\begin{aligned}\left|\left|\widetilde{\nabla}^{n}\widetilde{\rho^{\left(n\right)}}^{n}\left(t,\left(\phi^{\left(n\right)}\right)^{-1}\left(t,\cdot\right)\right)\right|\right|_{\dot{C}^{\alpha}\left(\mathbb{R}^{2};\mathbb{R}^{2}\right)} & \lesssim\left|\left|\left(\phi^{\left(n\right)}\right)^{-1}\left(t,\cdot\right)\right|\right|_{\dot{C}^{1}\left(\mathbb{R}^{2};\mathbb{R}^{2}\right)}^{\alpha}\left|\left|\widetilde{\nabla}^{n}\widetilde{\rho^{\left(n\right)}}^{n}\left(t,\cdot\right)\right|\right|_{\dot{C}^{\alpha}\left(\mathbb{R}^{2};\mathbb{R}^{2}\right)}\lesssim_{\varphi}\\
 & \lesssim_{\varphi}M_{n}\frac{h^{\left(n\right)}\left(t\right)}{\int_{t_{n}}^{1}h^{\left(n\right)}\left(s\right)b_{n}\left(s\right)\mathrm{d}s}\max\left\{ a_{n}\left(t\right)^{1+\alpha},b_{n}\left(t\right)^{1+\alpha}\right\} .
\end{aligned}
\]
Corollary \ref{cor:bound for an(t), bn(t)}, Lemma \ref{lem:estimate of integral hn bn},
equation \eqref{eq:h_eps near one} and Choice \ref{choice:Mn} provide
\[
\begin{aligned}\left|\left|\widetilde{\nabla}^{n}\widetilde{\rho^{\left(n\right)}}^{n}\left(t,\left(\phi^{\left(n\right)}\right)^{-1}\left(t,\cdot\right)\right)\right|\right|_{\dot{C}^{\alpha}\left(\mathbb{R}^{2};\mathbb{R}^{2}\right)} & \lesssim_{\varphi,\mu}YC^{\left[\delta-\left(1+k_{\max}-\mu-\delta\left(1-\gamma\right)\right)+\left(1+\alpha\right)\left(1+\overline{k}_{n}\left(t\right)+\mu\right)\right]\left(\frac{1}{1-\gamma}\right)^{n}}\end{aligned}
.
\]
Bounding $\overline{k}_{n}\left(t\right)\le k_{\max}$ (see Choice
\ref{choice:ideal kn}), $1-\gamma\le1$ (see Choice \ref{choice:anbn})
and $\alpha\le1$, we infer that
\[
\left|\left|\widetilde{\nabla}^{n}\widetilde{\rho^{\left(n\right)}}^{n}\left(t,\left(\phi^{\left(n\right)}\right)^{-1}\left(t,\cdot\right)\right)\right|\right|_{\dot{C}^{\alpha}\left(\mathbb{R}^{2};\mathbb{R}^{2}\right)}\lesssim_{\varphi,\mu}YC^{\left[\alpha\left(1+k_{\max}\right)+2\delta+3\mu\right]\left(\frac{1}{1-\gamma}\right)^{n}}.
\]

Next, we proceed with the $\left|\left|\cdot\right|\right|_{\dot{C}^{1}\left(\mathbb{R}^{2};\mathbb{R}^{2}\right)}$
norm. On the one hand, differentiating \eqref{eq:gradient rho}, we
obtain
\begin{equation}
{\small \begin{aligned}\nabla\left(\widetilde{\nabla}^{n}\widetilde{\rho^{\left(n\right)}}^{n}\left(t,x\right)\right) & =-2M_{n}\frac{h^{\left(n\right)}\left(t\right)}{\int_{t_{n}}^{1}h^{\left(n\right)}\left(s\right)b_{n}\left(s\right)\mathrm{d}s}\\
 & \quad\left[\left(\begin{matrix}\lambda_{n}\varphi'\left(\lambda_{n}x_{1}\right)\varphi\left(\lambda_{n}x_{2}\right)a_{n}\left(t\right)\cos\left(x_{1}\right)\cos\left(x_{2}\right) & -\lambda_{n}\varphi'\left(\lambda_{n}x_{1}\right)\varphi\left(\lambda_{n}x_{2}\right)b_{n}\left(t\right)\sin\left(x_{1}\right)\sin\left(x_{2}\right)\\
\varphi\left(\lambda_{n}x_{1}\right)\lambda_{n}\varphi'\left(\lambda_{n}x_{2}\right)a_{n}\left(t\right)\cos\left(x_{1}\right)\cos\left(x_{2}\right) & -\varphi\left(\lambda_{n}x_{1}\right)\lambda_{n}\varphi'\left(\lambda_{n}x_{2}\right)b_{n}\left(t\right)\sin\left(x_{1}\right)\sin\left(x_{2}\right)
\end{matrix}\right)+\right.\\
 & \quad+\varphi\left(\lambda_{n}x_{1}\right)\varphi\left(\lambda_{n}x_{2}\right)\left(\begin{matrix}-a_{n}\left(t\right)\sin\left(x_{1}\right)\cos\left(x_{2}\right) & -b_{n}\left(t\right)\cos\left(x_{1}\right)\sin\left(x_{2}\right)\\
-a_{n}\left(t\right)\cos\left(x_{1}\right)\sin\left(x_{2}\right) & -b_{n}\left(t\right)\sin\left(x_{1}\right)\cos\left(x_{2}\right)
\end{matrix}\right)+\\
 & \quad+\lambda_{n}\left(\begin{matrix}a_{n}\left(t\right)\lambda_{n}\varphi''\left(\lambda_{n}x_{1}\right)\varphi\left(\lambda_{n}x_{2}\right) & b_{n}\left(t\right)\lambda_{n}\varphi'\left(\lambda_{n}x_{1}\right)\varphi'\left(\lambda_{n}x_{2}\right)\\
a_{n}\left(t\right)\varphi'\left(\lambda_{n}x_{1}\right)\lambda_{n}\varphi'\left(\lambda_{n}x_{2}\right) & b_{n}\left(t\right)\varphi\left(\lambda_{n}x_{1}\right)\lambda_{n}\varphi''\left(\lambda_{n}x_{2}\right)
\end{matrix}\right)\sin\left(x_{1}\right)\cos\left(x_{2}\right)+\\
 & \quad\left.+\left(\begin{matrix}a_{n}\left(t\right)\varphi'\left(\lambda_{n}x_{1}\right)\varphi\left(\lambda_{n}x_{2}\right)\cos\left(x_{1}\right)\cos\left(x_{2}\right) & b_{n}\left(t\right)\varphi\left(\lambda_{n}x_{1}\right)\varphi'\left(\lambda_{n}x_{2}\right)\cos\left(x_{1}\right)\cos\left(x_{2}\right)\\
-a_{n}\left(t\right)\varphi'\left(\lambda_{n}x_{1}\right)\varphi\left(\lambda_{n}x_{2}\right)\sin\left(x_{1}\right)\sin\left(x_{2}\right) & -b_{n}\left(t\right)\varphi\left(\lambda_{n}x_{1}\right)\varphi'\left(\lambda_{n}x_{2}\right)\sin\left(x_{1}\right)\sin\left(x_{2}\right)
\end{matrix}\right)\right].
\end{aligned}
}\label{eq:gradient gradient rho}
\end{equation}
On the other hand, by the chain rule and equation \eqref{eq:jacobian inverse},
\begin{equation}
\begin{aligned}\nabla\left(\widetilde{\nabla}^{n}\widetilde{\rho^{\left(n\right)}}^{n}\left(t,\left(\phi^{\left(n\right)}\right)^{-1}\left(t,x\right)\right)\right) & =\nabla\left(\phi^{\left(n\right)}\right)^{-1}\left(t,x\right)\cdot\nabla\left(\widetilde{\nabla}^{n}\widetilde{\rho^{\left(n\right)}}^{n}\right)\left(t,\left(\phi^{\left(n\right)}\right)^{-1}\left(t,x\right)\right)=\\
 & =\left(\begin{matrix}a_{n}\left(t\right) & 0\\
0 & b_{n}\left(t\right)
\end{matrix}\right)\cdot\nabla\left(\widetilde{\nabla}^{n}\widetilde{\rho^{\left(n\right)}}^{n}\right)\left(t,\left(\phi^{\left(n\right)}\right)^{-1}\left(t,x\right)\right).
\end{aligned}
\label{eq:chain rule gradient gradient rho}
\end{equation}
Equations \eqref{eq:gradient gradient rho} and \eqref{eq:chain rule gradient gradient rho}
and Choice \ref{choice:psin} imply
\[
\left|\left|\widetilde{\nabla}^{n}\widetilde{\rho^{\left(n\right)}}^{n}\left(t,\left(\phi^{\left(n\right)}\right)^{-1}\left(t,\cdot\right)\right)\right|\right|_{\dot{C}^{1}\left(\mathbb{R}^{2};\mathbb{R}^{2}\right)}\lesssim_{\varphi}M_{n}\frac{h^{\left(n\right)}\left(t\right)}{\int_{t_{n}}^{1}h^{\left(n\right)}\left(s\right)b_{n}\left(s\right)\mathrm{d}s}\max\left\{ a_{n}\left(t\right)^{2},b_{n}\left(t\right)^{2}\right\} .
\]
Then, we may proceed formally like in the $\left|\left|\cdot\right|\right|_{\dot{C}^{\alpha}\left(\mathbb{R}^{2};\mathbb{R}^{2}\right)}$
case with $\alpha=1$, which provides the bound of the statement.

Lastly, we consider the $\left|\left|\cdot\right|\right|_{\dot{C}^{1,\alpha}\left(\mathbb{R}^{2};\mathbb{R}^{2}\right)}$
seminorm. In view of equation \eqref{eq:gradient gradient rho}, employing
Choice \ref{choice:psin}, it is clear that
\[
\left|\left|\nabla\left(\widetilde{\nabla}^{n}\widetilde{\rho^{\left(n\right)}}^{n}\right)\left(t,\cdot\right)\right|\right|_{\dot{C}^{1,\alpha}\left(\mathbb{R}^{2};\mathbb{R}^{2}\right)}\lesssim_{\varphi}2M_{n}\frac{h^{\left(n\right)}\left(t\right)}{\int_{t_{n}}^{1}h^{\left(n\right)}\left(s\right)b_{n}\left(s\right)\mathrm{d}s}\max\left\{ a_{n}\left(t\right),b_{n}\left(t\right)\right\} .
\]
This, along with equations \eqref{eq:property Calpha composition},
\eqref{eq:chain rule gradient gradient rho} and \eqref{eq:jacobian inverse},
allows us to write
\[
\begin{aligned}\left|\left|\widetilde{\nabla}^{n}\widetilde{\rho^{\left(n\right)}}^{n}\left(t,\left(\phi^{\left(n\right)}\right)^{-1}\left(t,\cdot\right)\right)\right|\right|_{\dot{C}^{1,\alpha}\left(\mathbb{R}^{2};\mathbb{R}^{2}\right)} & =\left|\left|\nabla\left(\widetilde{\nabla}^{n}\widetilde{\rho^{\left(n\right)}}^{n}\left(t,\left(\phi^{\left(n\right)}\right)^{-1}\left(t,\cdot\right)\right)\right)\right|\right|_{\dot{C}^{\alpha}\left(\mathbb{R}^{2};\mathbb{R}^{2\times2}\right)}=\\
 & =\left|\left|\left(\begin{matrix}a_{n}\left(t\right) & 0\\
0 & b_{n}\left(t\right)
\end{matrix}\right)\cdot\nabla\left(\widetilde{\nabla}^{n}\widetilde{\rho^{\left(n\right)}}^{n}\right)\left(t,\left(\phi^{\left(n\right)}\right)^{-1}\left(t,x\right)\right)\right|\right|_{\dot{C}^{\alpha}\left(\mathbb{R}^{2};\mathbb{R}^{2\times2}\right)}\lesssim\\
 & \lesssim\left|\left|\left(\phi^{\left(n\right)}\right)^{-1}\left(t,\cdot\right)\right|\right|_{\dot{C}^{1}\left(\mathbb{R}^{2};\mathbb{R}^{2}\right)}^{\alpha}\cdot\\
 & \quad\cdot\left|\left|\left(\begin{matrix}a_{n}\left(t\right) & 0\\
0 & b_{n}\left(t\right)
\end{matrix}\right)\cdot\nabla\left(\widetilde{\nabla}^{n}\widetilde{\rho^{\left(n\right)}}^{n}\right)\left(t,\cdot\right)\right|\right|_{\dot{C}^{\alpha}\left(\mathbb{R}^{2};\mathbb{R}^{2\times2}\right)}\lesssim_{\varphi,\mu}\\
 & \lesssim_{\varphi,\mu}2M_{n}\frac{h^{\left(n\right)}\left(t\right)}{\int_{t_{n}}^{1}h^{\left(n\right)}\left(s\right)b_{n}\left(s\right)\mathrm{d}s}\max\left\{ a_{n}\left(t\right)^{2+\alpha},b_{n}\left(t\right)^{2+\alpha}\right\} .
\end{aligned}
\]
From here, me may proceed like in the $\left|\left|\cdot\right|\right|_{\dot{C}^{\alpha}\left(\mathbb{R}^{2};\mathbb{R}^{2}\right)}$
case, but with $\alpha\leftarrow1+\alpha$. In this way, we obtain
the expression of the statement.
\end{proof}
\begin{prop}
\label{prop:bound density pure quadratic term}Let $\mu>0$, $\alpha\in\left(0,1\right)$
and $n\in\mathbb{N}$ with $n\ge2$. As long as $C$ is taken large
enough (let us say $C\ge\Upsilon\left(\delta,\mu\right)$), the pure
quadratic term
\[
\widetilde{Q_{\rho}^{\left(n\right)}}^{n}\left(t,x\right)\coloneqq\widetilde{u^{\left(n\right)}}^{n}\left(t,x\right)\cdot\widetilde{\nabla}^{n}\widetilde{\rho^{\left(n\right)}}^{n}\left(t,x\right)
\]
satisfies the bounds
\[
\begin{aligned}\left|\left|Q_{\rho}^{\left(n\right)}\left(t,\cdot\right)\right|\right|_{C^{1,\alpha}\left(\mathbb{R}^{2}\right)} & \lesssim_{\varphi,\mu}Y^{3}C^{\left(\frac{1}{1-\gamma}\right)^{n}\left[\alpha-\min\left\{ k_{\max},\left(1-\alpha\right)k_{\max}+\alpha\Lambda,\Lambda-\alpha k_{\max}\right\} +3\delta+3\mu\right]}.\end{aligned}
\]
This expression remains decreasing in $n\in\mathbb{N}$ as long as
\[
\alpha<\min\left\{ k_{\max},\left(1-\alpha\right)k_{\max}+\alpha\Lambda,\Lambda-\alpha k_{\max}\right\} -3\delta-3\mu.
\]
\end{prop}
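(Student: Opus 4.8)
The plan is to reduce the estimate to the explicit formulas for $\widetilde{u^{(n)}}^{n}$ (Proposition \ref{prop:computations vorticity}) and $\widetilde{\nabla}^{n}\widetilde{\rho^{(n)}}^{n}$ (Proposition \ref{prop:form of gradient of rho}), and then track exponents. First I would expand
\[
\widetilde{Q_{\rho}^{\left(n\right)}}^{n}\left(t,x\right)=\widetilde{u^{\left(n\right)}}^{n}\left(t,x\right)\cdot\widetilde{\nabla}^{n}\widetilde{\rho^{\left(n\right)}}^{n}\left(t,x\right)
\]
as an explicit finite sum of terms of the shape $\lambda_{n}^{j}\,B_{n}\left(t\right)M_{n}\dfrac{h^{\left(n\right)}\left(t\right)}{\int_{t_{n}}^{1}h^{\left(n\right)}b_{n}}\cdot\left\{ \text{monomial of degree }2\text{ in }a_{n}\left(t\right),b_{n}\left(t\right)\right\} \cdot\left\{ \text{product of derivatives of }\varphi\text{ at }\lambda_{n}x_{1},\lambda_{n}x_{2}\right\} \cdot\left\{ \text{product of sines and cosines of }x_{1},x_{2}\right\} $, with $j\in\left\{ 0,1,2\right\} $. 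The key structural observation is that the scalar product pairs the first (resp. second) component of $\widetilde{u^{\left(n\right)}}^{n}$, which carries $b_{n}\left(t\right)$ (resp. $a_{n}\left(t\right)$), with the first (resp. second) component of $\widetilde{\nabla}^{n}\widetilde{\rho^{\left(n\right)}}^{n}$, which carries $a_{n}\left(t\right)$ (resp. $b_{n}\left(t\right)$), so \emph{every} such monomial is in fact $a_{n}\left(t\right)b_{n}\left(t\right)$; moreover the unique $j=0$ term collapses, via $\cos^{2}+\sin^{2}=1$, to a single term proportional to $B_{n}\left(t\right)M_{n}a_{n}\left(t\right)b_{n}\left(t\right)\dfrac{h^{\left(n\right)}\left(t\right)}{\int_{t_{n}}^{1}h^{\left(n\right)}b_{n}}\varphi\left(\lambda_{n}x_{1}\right)^{2}\varphi\left(\lambda_{n}x_{2}\right)^{2}\sin\left(2x_{1}\right)$. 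Since $Q_{\rho}^{\left(n\right)}=\widetilde{Q_{\rho}^{\left(n\right)}}^{n}\circ\left(\phi^{\left(n\right)}\right)^{-1}$ and $\left(\phi^{\left(n\right)}\right)^{-1}\left(t,\cdot\right)$ is the affine map with linear part $\mathrm{diag}\left(a_{n}\left(t\right),b_{n}\left(t\right)\right)$, the $L^{\infty}$ norm of each term is composition--invariant, and for the $\dot{C}^{1}$ and $\dot{C}^{1,\alpha}$ seminorms I would invoke the marginal Hölder seminorm decomposition \eqref{eq:Holder seminorm by marginals}: in the $x_{1}$--direction the composition costs a factor $a_{n}\left(t\right)^{1+\alpha}$ when the $x_{1}$--dependence runs through a sine or cosine but only $\left(\lambda_{n}a_{n}\left(t\right)\right)^{1+\alpha}$ when it runs through a $\varphi^{\left(k\right)}\left(\lambda_{n}\cdot\right)$, and symmetrically in $x_{2}$; this is what keeps the leading term's $x_{2}$--loss down to $\left(\lambda_{n}b_{n}\left(t\right)\right)^{1+\alpha}$, and is analogous to the argument in the proof of Proposition \ref{prop:form of gradient of rho}. (Equivalently one may phrase this as the $C^{1,\alpha}$--Leibniz rule \eqref{eq:property Ckalpha multiplication} applied to $Q_{\rho}^{\left(n\right)}=u^{\left(n\right)}\cdot\nabla\rho^{\left(n\right)}$, provided one is careful not to throw away the degree--$\left(1,1\right)$ structure of the scalar product.)

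\textbf{Parameter inputs.} Next I would substitute the bounds on the parameters. By \eqref{eq:relation Mn and zn} and Choice \ref{choice:time picture}, $z_{n}=2M_{n}$ with $M_{n}=YC^{\delta\left(\frac{1}{1-\gamma}\right)^{n}}$; by Choice \ref{choice:amplitude density}, $B_{n}\left(t\right)=\dfrac{2M_{n}}{a_{n}\left(t\right)^{2}+b_{n}\left(t\right)^{2}}\dfrac{\int_{t_{n}}^{t}h^{\left(n\right)}b_{n}}{\int_{t_{n}}^{1}h^{\left(n\right)}b_{n}}$, hence $B_{n}\left(t\right)a_{n}\left(t\right)b_{n}\left(t\right)\le2M_{n}\dfrac{a_{n}\left(t\right)b_{n}\left(t\right)}{a_{n}\left(t\right)^{2}+b_{n}\left(t\right)^{2}}$, which by Corollary \ref{cor:bound for an(t), bn(t)} is $\lesssim_{\mu}M_{n}C^{\left(-2\overline{k}_{n}\left(t\right)+O\left(\mu\right)\right)\left(\frac{1}{1-\gamma}\right)^{n}}$; by Lemma \ref{lem:estimate of integral hn bn}, $\int_{t_{n}}^{1}h^{\left(n\right)}b_{n}\gtrsim_{\mu}\dfrac{1}{Y}C^{\left(1+k_{\max}-\mu-\delta\left(1-\gamma\right)\right)\left(\frac{1}{1-\gamma}\right)^{n}}$; and $C^{\left(1-\overline{k}_{n}\left(t\right)-\mu\right)\left(\frac{1}{1-\gamma}\right)^{n}}\le a_{n}\left(t\right)\le b_{n}\left(t\right)\le C^{\left(1+\overline{k}_{n}\left(t\right)+\mu\right)\left(\frac{1}{1-\gamma}\right)^{n}}$, again by Corollary \ref{cor:bound for an(t), bn(t)}, with $\left|k_{n}\left(t\right)-\overline{k}_{n}\left(t\right)\right|\le\mu$ from Proposition \ref{prop:convergence kn to ideal model}. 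The crucial contrast with the time--derivative estimate of Proposition \ref{prop:bound density time derivative} is that $Q_{\rho}^{\left(n\right)}$ is proportional to $h^{\left(n\right)}\left(t\right)$, which is non--zero on the \emph{whole} of $\left[t_{n},t_{n+1}\right]$ --- including the plateau where $\overline{k}_{n}\left(t\right)$ reaches $k_{\max}$ --- so I may not restrict $\overline{k}_{n}$ to be small (which is why no $\zeta$ appears in the bound); I must keep it as a free quantity in $\left[0,k_{\max}\right]$ and maximize over it at the end.

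\textbf{Assembling the exponent.} Combining the termwise amplitude bound $\lesssim_{\mu}\lambda_{n}^{j}\,Y^{3}C^{\left(3\delta-1-k_{\max}+\mu-2\overline{k}_{n}\left(t\right)\right)\left(\frac{1}{1-\gamma}\right)^{n}}$ (the $Y^{3}$ coming from $M_{n}^{2}\cdot Y$ in $M_{n}^{2}/\int_{t_{n}}^{1}h^{\left(n\right)}b_{n}$) with the direction--dependent composition losses and maximizing over $\overline{k}_{n}\left(t\right)\in\left[0,k_{\max}\right]$, the leading ($j=0$) term yields the exponent $\alpha-k_{\max}+3\delta+3\mu$; the $j=1$ terms --- in which one power of $\lambda_{n}=C^{-\Lambda\left(\frac{1}{1-\gamma}\right)^{n}}$ is carried by a differentiated cutoff, but the surviving dependence in the complementary variable may still run through a sine or cosine --- yield, after the same maximization, the exponents $\alpha-\left(\Lambda-\alpha k_{\max}\right)+3\delta+3\mu$ and $\alpha-\left(\left(1-\alpha\right)k_{\max}+\alpha\Lambda\right)+3\delta+3\mu$, the split depending on whether the factor $\lambda_{n}$ in the coefficient does or does not coincide with the $\lambda_{n}$ inside the differentiated cutoff (equivalently, whether the relevant frequency loss in that variable is $\max\left\{ a_{n},b_{n}\right\} ^{1+\alpha}$ or $\left(\lambda_{n}\max\left\{ a_{n},b_{n}\right\} \right)^{1+\alpha}$); the $j=2$ terms are dominated. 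Taking the maximum over all terms reproduces $\alpha-\min\left\{ k_{\max},\left(1-\alpha\right)k_{\max}+\alpha\Lambda,\Lambda-\alpha k_{\max}\right\} +3\delta+3\mu$, and this exponent is negative --- so the bound is decreasing in $n$ --- precisely when $\alpha<\min\left\{ k_{\max},\left(1-\alpha\right)k_{\max}+\alpha\Lambda,\Lambda-\alpha k_{\max}\right\} -3\delta-3\mu$, as asserted.

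\textbf{Main obstacle.} The hard part is the exponent bookkeeping: there are on the order of a dozen explicit summands, and for each one must simultaneously track the power of $\lambda_{n}$, the degree--$2$ monomial in $a_{n}\left(t\right),b_{n}\left(t\right)$ (which, through the identity $B_{n}a_{n}b_{n}\lesssim M_{n}C^{-2\overline{k}_{n}\left(\frac{1}{1-\gamma}\right)^{n}}$, interacts delicately with the value of $\overline{k}_{n}\left(t\right)$), and the variable--by--variable frequency losses coming from $\left(\phi^{\left(n\right)}\right)^{-1}$; and one must verify that maximizing over $\overline{k}_{n}\left(t\right)\in\left[0,k_{\max}\right]$ genuinely produces exactly the three--way minimum of the statement and nothing larger --- in particular that the $\lambda_{n}$--dependent candidates are the binding ones for small $\Lambda$ while the $k_{\max}$ candidate dominates for large $\Lambda$. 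A secondary, purely organizational difficulty is keeping the smallness thresholds ``$C\ge\Upsilon\left(\delta,\mu\right)$'' required by Corollary \ref{cor:bound for an(t), bn(t)}, Lemma \ref{lem:estimate of integral hn bn} and Proposition \ref{prop:convergence kn to ideal model} mutually compatible.
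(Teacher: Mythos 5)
Your proposal follows essentially the same route as the paper's proof: the same collapse of the zeroth-order term (via $\cos^{2}+\sin^{2}=1$) to a function with no $x_{2}$-dependence, the same marginal Hölder seminorm bookkeeping for the composition with $\left(\phi^{\left(n\right)}\right)^{-1}$, and the same parameter inputs ($z_{n}=2M_{n}$, Lemma \ref{lem:estimate of integral hn bn}, Corollary \ref{cor:bound for an(t), bn(t)}), leading to the identical three-way minimum in the exponent. The only (harmless) wrinkle is that the exponents you quote for the $j=1$ terms correspond to the cruder amplitude bound $B_{n}a_{n}b_{n}\lesssim M_{n}$ actually used in the paper rather than to your sharper $B_{n}a_{n}b_{n}\lesssim_{\mu}M_{n}C^{-2\overline{k}_{n}\left(t\right)\left(\frac{1}{1-\gamma}\right)^{n}}$, which would only improve the estimate, so the stated bound follows either way.
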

\begin{proof}
By Propositions \ref{prop:computations vorticity} and \ref{prop:form of gradient of rho},
we obtain
\begin{equation}
\begin{aligned}\widetilde{Q_{\rho}^{\left(n\right)}}^{n}\left(t,x\right) & =-\overbrace{2M_{n}\frac{h^{\left(n\right)}\left(t\right)}{\int_{t_{n}}^{1}h^{\left(n\right)}\left(s\right)b_{n}\left(s\right)\mathrm{d}s}B_{n}\left(t\right)a_{n}\left(t\right)b_{n}\left(t\right)}^{\eqqcolon T^{\left(n\right)}\left(t\right)}\left[\varphi\left(\lambda_{n}x_{1}\right)^{2}\varphi\left(\lambda_{n}x_{2}\right)^{2}\cdot\right.\\
 & \qquad\left.\cdot\left(\sin\left(x_{1}\right)\cos\left(x_{2}\right)\cos\left(x_{1}\right)\cos\left(x_{2}\right)+\cos\left(x_{1}\right)\sin\left(x_{2}\right)\sin\left(x_{1}\right)\sin\left(x_{2}\right)\right)+\lambda_{n}\widetilde{\epsilon^{\left(n\right)}}^{n}\left(t,x\right)\right]=\\
 & =-T^{\left(n\right)}\left(t\right)\left[\varphi\left(\lambda_{n}x_{1}\right)^{2}\varphi\left(\lambda_{n}x_{2}\right)^{2}\sin\left(2x_{1}\right)+\lambda_{n}\widetilde{\epsilon^{\left(n\right)}}^{n}\left(t,x\right)\right],
\end{aligned}
\label{eq:expression Qrho}
\end{equation}
where $\widetilde{\epsilon^{\left(n\right)}}^{n}\left(t,x\right)$
denotes some other terms we do not need to calculate. Nevertheless,
attending to Propositions \ref{prop:computations vorticity} and \ref{prop:form of gradient of rho}
and recurring to equations \eqref{eq:property Ckalpha multiplication}
and \eqref{eq:property Ckalpha composition}, it is not difficult
to see that
\begin{equation}
\left|\left|\widetilde{\epsilon^{\left(n\right)}}^{n}\left(t,\cdot\right)\right|\right|_{C^{1,\alpha}\left(\mathbb{R}^{2}\right)}\lesssim_{\varphi}\left(1+\lambda_{n}\right)^{1+\alpha}\lesssim1,\label{eq:bounds epsilon}
\end{equation}
where we have applied the fact that $\lambda_{n}\leq1$ (see Choice
\ref{choice:psin}) in the last step. Thus, as the $\left|\left|\cdot\right|\right|_{L^{\infty}\left(\mathbb{R}^{2}\right)}$
norm is invariant under diffeomorphisms, we deduce that
\begin{equation}
\left|\left|Q_{\rho}^{\left(n\right)}\left(t,\cdot\right)\right|\right|_{L^{\infty}\left(\mathbb{R}^{2}\right)}=\left|\left|\widetilde{Q_{\rho}^{\left(n\right)}}^{n}\left(t,\cdot\right)\right|\right|_{L^{\infty}\left(\mathbb{R}^{2}\right)}\lesssim_{\varphi}T^{\left(n\right)}\left(t\right).\label{eq:Qrho Linfty}
\end{equation}

One fundamental fact of equation \eqref{eq:expression Qrho} that
must be emphasized is that the main factor (we are using the same
terminology as in the proof of Proposition \ref{prop:form gradient omega})
of the leading order term in $\lambda_{n}$ has no $x_{2}$ dependence.
This will prove to be crucial, because the only zeroth order term
in $\lambda_{n}$ of the $\left|\left|\cdot\right|\right|_{C^{1,\alpha}}$
norm of \eqref{eq:expression Qrho} will depend exclusively on $a_{n}\left(t\right)$
and not on $b_{n}\left(t\right)$, which will lead to an unexpected
``improvement'' in regularity. To compute the $\left|\left|\cdot\right|\right|_{C^{1,\alpha}\left(\mathbb{R}\right)}$
norm of \eqref{eq:expression Qrho} precisely it will be convenient
to have an expression for the derivatives. Clearly,
\[
\begin{aligned}\frac{\partial\widetilde{Q_{\rho}^{\left(n\right)}}^{n}}{\partial x_{1}}\left(t,x\right) & =-T^{\left(n\right)}\left(t\right)\left[2\lambda_{n}\varphi\left(\lambda_{n}x_{1}\right)\varphi'\left(\lambda_{n}x_{1}\right)\varphi\left(\lambda_{n}x_{2}\right)^{2}\sin\left(2x_{1}\right)+2\varphi\left(\lambda_{n}x_{1}\right)^{2}\varphi\left(\lambda_{n}x_{2}\right)^{2}\cos\left(2x_{1}\right)\right.+\\
 & \quad\left.+\lambda_{n}\frac{\partial\widetilde{\epsilon^{\left(n\right)}}^{n}}{\partial x_{1}}\left(t,x\right)\right],\\
\frac{\partial\widetilde{Q_{\rho}^{\left(n\right)}}^{n}}{\partial x_{2}}\left(t,x\right) & =-T^{\left(n\right)}\left(t\right)\left[2\lambda_{n}\varphi\left(\lambda_{n}x_{1}\right)^{2}\varphi\left(\lambda_{n}x_{2}\right)\varphi'\left(\lambda_{n}x_{2}\right)\sin\left(2x_{1}\right)+\lambda_{n}\frac{\partial\widetilde{\epsilon^{\left(n\right)}}^{n}}{\partial x_{2}}\left(t,x\right)\right].
\end{aligned}
\]
It will prove advantageous to work with marginal Hölder seminorms
(see equation \eqref{eq:marginal seminorms}). From the equation above,
bearing in mind equations \eqref{eq:bounds epsilon}, \eqref{eq:property Calpha multiplication}
and \eqref{eq:property Calpha composition} and the fact that $\lambda_{n}\le1$
by Choice \ref{choice:psin}, it follows that
\begin{equation}
\begin{matrix}\begin{aligned}\left|\left|\frac{\partial\widetilde{Q_{\rho}^{\left(n\right)}}^{n}}{\partial x_{1}}\left(t,\cdot\right)\right|\right|_{L^{\infty}\left(\mathbb{R}^{2}\right)} & \lesssim_{\varphi}T^{\left(n\right)}\left(t\right),\\
\left|\left|\frac{\partial\widetilde{Q_{\rho}^{\left(n\right)}}^{n}}{\partial x_{1}}\left(t,\cdot\right)\right|\right|_{\dot{C}_{1}^{\alpha}\left(\mathbb{R}^{2}\right)} & \lesssim_{\varphi}T^{\left(n\right)}\left(t\right),\\
\left|\left|\frac{\partial\widetilde{Q_{\rho}^{\left(n\right)}}^{n}}{\partial x_{1}}\left(t,\cdot\right)\right|\right|_{\dot{C}_{2}^{\alpha}\left(\mathbb{R}^{2}\right)} & \lesssim_{\varphi}\lambda_{n}^{\alpha}T^{\left(n\right)}\left(t\right),
\end{aligned}
 &  & \begin{aligned}\left|\left|\frac{\partial\widetilde{Q_{\rho}^{\left(n\right)}}^{n}}{\partial x_{2}}\left(t,\cdot\right)\right|\right|_{L^{\infty}\left(\mathbb{R}^{2}\right)} & \lesssim_{\varphi}\lambda_{n}T^{\left(n\right)}\left(t\right),\\
\left|\left|\frac{\partial\widetilde{Q_{\rho}^{\left(n\right)}}^{n}}{\partial x_{2}}\left(t,\cdot\right)\right|\right|_{\dot{C}_{1}^{\alpha}\left(\mathbb{R}^{2}\right)} & \lesssim_{\varphi}\lambda_{n}T^{\left(n\right)}\left(t\right),\\
\left|\left|\frac{\partial\widetilde{Q_{\rho}^{\left(n\right)}}^{n}}{\partial x_{2}}\left(t,\cdot\right)\right|\right|_{\dot{C}_{2}^{\alpha}\left(\mathbb{R}^{2}\right)} & \lesssim_{\varphi}\lambda_{n}T^{\left(n\right)}\left(t\right).
\end{aligned}
\end{matrix}\label{eq:Qrho C0}
\end{equation}
Next in line would be to use \eqref{eq:Qrho C0} to find bounds for
$\frac{\partial Q^{\left(n\right)}}{\partial x_{1}}$ and $\frac{\partial Q^{\left(n\right)}}{\partial x_{2}}$.
To achieve this, first, we need to relate $\frac{\partial\widetilde{Q_{\rho}^{\left(n\right)}}^{n}}{\partial x_{1}}$
and $\frac{\partial Q_{\rho}^{\left(n\right)}}{\partial x_{1}}$.
We may do this through equations \eqref{eq:cv der esp} and \eqref{eq:jacobian inverse},
which allow us to write
\begin{equation}
\frac{\partial Q_{\rho}^{\left(n\right)}}{\partial x_{1}}\left(t,x\right)=a_{n}\left(t\right)\frac{\partial\widetilde{Q_{\rho}^{\left(n\right)}}^{n}}{\partial x_{1}}\left(t,\left(\phi^{\left(n\right)}\right)^{-1}\left(t,x\right)\right),\quad\frac{\partial Q_{\rho}^{\left(n\right)}}{\partial x_{2}}\left(t,x\right)=b_{n}\left(t\right)\frac{\partial\widetilde{Q_{\rho}^{\left(n\right)}}^{n}}{\partial x_{2}}\left(t,\left(\phi^{\left(n\right)}\right)^{-1}\left(t,x\right)\right).\label{eq:relation Qrho tilde and not tilde}
\end{equation}
Therefore, as the $\left|\left|\cdot\right|\right|_{L^{\infty}\left(\mathbb{R}^{2}\right)}$
is invariant under diffeomorphisms, equations \eqref{eq:Qrho C0}
and \eqref{eq:relation Qrho tilde and not tilde} provide
\begin{equation}
\begin{aligned}\left|\left|\frac{\partial Q_{\rho}^{\left(n\right)}}{\partial x_{1}}\left(t,\cdot\right)\right|\right|_{L^{\infty}\left(\mathbb{R}^{2}\right)} & \lesssim_{\varphi}T^{\left(n\right)}\left(t\right)a_{n}\left(t\right)^ {},\\
\left|\left|\frac{\partial Q_{\rho}^{\left(n\right)}}{\partial x_{2}}\left(t,\cdot\right)\right|\right|_{L^{\infty}\left(\mathbb{R}^{2}\right)} & \lesssim_{\varphi}T^{\left(n\right)}\left(t\right)\lambda_{n}b_{n}\left(t\right)^ {}.
\end{aligned}
\label{eq:Qrho C1}
\end{equation}
As for $\left|\left|\frac{\partial Q^{\left(n\right)}}{\partial x_{1}}\left(t,\cdot\right)\right|\right|_{\dot{C}^{\alpha}\left(\mathbb{R}^{2}\right)}$
and $\left|\left|\frac{\partial Q^{\left(n\right)}}{\partial x_{2}}\left(t,\cdot\right)\right|\right|_{\dot{C}^{\alpha}\left(\mathbb{R}^{2}\right)}$,
recall that $\left(\phi_{1}^{\left(n\right)}\right)^{-1}\left(t,x\right)$
only depends on $x_{1}$ and that $\left(\phi_{2}^{\left(n\right)}\right)^{-1}\left(t,x\right)$
only depends on $x_{2}$. Then, equations \eqref{eq:relation Qrho tilde and not tilde}
and \eqref{eq:property Calpha composition} lead to
\[
\begin{aligned}\left|\left|\frac{\partial Q_{\rho}^{\left(n\right)}}{\partial x_{1}}\left(t,\cdot\right)\right|\right|_{\dot{C}_{1}^{\alpha}\left(\mathbb{R}^{2}\right)} & \leq a_{n}\left(t\right)\left|\left|\left(\phi_{1}^{\left(n\right)}\right)^{-1}\left(t,\cdot\right)\right|\right|_{\dot{C}^{1}\left(\mathbb{R}\right)}^{\alpha}\left|\left|\frac{\partial\widetilde{Q_{\rho}^{\left(n\right)}}^{n}}{\partial x_{1}}\left(t,\cdot\right)\right|\right|_{\dot{C}_{1}^{\alpha}\left(\mathbb{R}\right)},\\
\left|\left|\frac{\partial Q_{\rho}^{\left(n\right)}}{\partial x_{1}}\left(t,\cdot\right)\right|\right|_{\dot{C}_{2}^{\alpha}\left(\mathbb{R}^{2}\right)} & \leq a_{n}\left(t\right)\left|\left|\left(\phi_{2}^{\left(n\right)}\right)^{-1}\left(t,\cdot\right)\right|\right|_{\dot{C}^{1}\left(\mathbb{R}\right)}^{\alpha}\left|\left|\frac{\partial\widetilde{Q_{\rho}^{\left(n\right)}}^{n}}{\partial x_{1}}\left(t,\cdot\right)\right|\right|_{\dot{C}_{2}^{\alpha}\left(\mathbb{R}\right)},\\
\left|\left|\frac{\partial Q_{\rho}^{\left(n\right)}}{\partial x_{2}}\left(t,\cdot\right)\right|\right|_{\dot{C}_{1}^{\alpha}\left(\mathbb{R}^{2}\right)} & \leq b_{n}\left(t\right)\left|\left|\left(\phi_{1}^{\left(n\right)}\right)^{-1}\left(t,\cdot\right)\right|\right|_{\dot{C}^{1}\left(\mathbb{R}\right)}^{\alpha}\left|\left|\frac{\partial\widetilde{Q_{\rho}^{\left(n\right)}}^{n}}{\partial x_{2}}\left(t,\cdot\right)\right|\right|_{\dot{C}_{1}^{\alpha}\left(\mathbb{R}\right)}.\\
\left|\left|\frac{\partial Q_{\rho}^{\left(n\right)}}{\partial x_{2}}\left(t,\cdot\right)\right|\right|_{\dot{C}_{2}^{\alpha}\left(\mathbb{R}^{2}\right)} & \leq b_{n}\left(t\right)\left|\left|\left(\phi_{2}^{\left(n\right)}\right)^{-1}\left(t,\cdot\right)\right|\right|_{\dot{C}^{1}\left(\mathbb{R}\right)}^{\alpha}\left|\left|\frac{\partial\widetilde{Q_{\rho}^{\left(n\right)}}^{n}}{\partial x_{2}}\left(t,\cdot\right)\right|\right|_{\dot{C}_{2}^{\alpha}\left(\mathbb{R}\right)}.
\end{aligned}
\]
Exploiting the fact that the marginal Hölder seminorms are bounded
by the ``joint'' Hölder seminorms, equations \eqref{eq:Qrho C0}
and \eqref{eq:jacobian inverse} guarantee that
\[
\begin{aligned}\left|\left|\frac{\partial Q_{\rho}^{\left(n\right)}}{\partial x_{1}}\left(t,\cdot\right)\right|\right|_{\dot{C}_{1}^{\alpha}\left(\mathbb{R}^{2}\right)} & \lesssim_{\varphi}a_{n}\left(t\right)^{1+\alpha}T^{\left(n\right)}\left(t\right),\\
\left|\left|\frac{\partial Q_{\rho}^{\left(n\right)}}{\partial x_{1}}\left(t,\cdot\right)\right|\right|_{\dot{C}_{1}^{\alpha}\left(\mathbb{R}^{2}\right)} & \lesssim_{\varphi}a_{n}\left(t\right)b_{n}\left(t\right)^{\alpha}\lambda_{n}^{\alpha}T^{\left(n\right)}\left(t\right),\\
\left|\left|\frac{\partial Q_{\rho}^{\left(n\right)}}{\partial x_{2}}\left(t,\cdot\right)\right|\right|_{\dot{C}_{1}^{\alpha}\left(\mathbb{R}^{2}\right)} & \lesssim_{\varphi}b_{n}\left(t\right)a_{n}\left(t\right)^{\alpha}\lambda_{n}T^{\left(n\right)}\left(t\right).\\
\left|\left|\frac{\partial Q_{\rho}^{\left(n\right)}}{\partial x_{2}}\left(t,\cdot\right)\right|\right|_{\dot{C}_{2}^{\alpha}\left(\mathbb{R}^{2}\right)} & \lesssim_{\varphi}b_{n}\left(t\right)^{1+\alpha}\lambda_{n}T^{\left(n\right)}\left(t\right).
\end{aligned}
\]
Now, we are able to combine the estimates above into bounds for the
``joint'' Hölder seminorms via equation \eqref{eq:Holder seminorm by marginals}.
Indeed,
\begin{equation}
\begin{aligned}\left|\left|\frac{\partial Q_{\rho}^{\left(n\right)}}{\partial x_{1}}\left(t,\cdot\right)\right|\right|_{\dot{C}^{\alpha}\left(\mathbb{R}^{2}\right)} & \lesssim_{\varphi}T^{\left(n\right)}\left(t\right)a_{n}\left(t\right)\max\left\{ a_{n}\left(t\right)^{\alpha},\lambda_{n}^{\alpha}b_{n}\left(t\right)^{\alpha}\right\} ,\\
\left|\left|\frac{\partial Q_{\rho}^{\left(n\right)}}{\partial x_{2}}\left(t,\cdot\right)\right|\right|_{\dot{C}^{\alpha}\left(\mathbb{R}^{2}\right)} & \lesssim_{\varphi}T^{\left(n\right)}\left(t\right)\lambda_{n}b_{n}\left(t\right)\max\left\{ a_{n}\left(t\right)^{\alpha},b_{n}\left(t\right)^{\alpha}\right\} .
\end{aligned}
\label{eq:Qrho C1alpha}
\end{equation}
Blending together equations \eqref{eq:Qrho Linfty}, \eqref{eq:Qrho C1}
and \eqref{eq:Qrho C1alpha}, we deduce that
\begin{equation}
\begin{aligned}\left|\left|Q_{\rho}^{\left(n\right)}\left(t,\cdot\right)\right|\right|_{C^{1,\alpha}\left(\mathbb{R}^{2}\right)} & \lesssim_{\varphi}T^{\left(n\right)}\left(t\right)\max\left\{ 1,a_{n}\left(t\right),\lambda_{n}b_{n}\left(t\right),a_{n}\left(t\right)^{1+\alpha},a_{n}\left(t\right)\lambda_{n}^{\alpha}b_{n}\left(t\right)^{\alpha},\lambda_{n}b_{n}\left(t\right)a_{n}\left(t\right)^{\alpha},\lambda_{n}b_{n}\left(t\right)^{1+\alpha}\right\} .\end{aligned}
\label{eq:bound quadratic term v0}
\end{equation}
The ``crucial fact'' we introduced at the beginning of this paragraph
is directly responsible for the following: the term $b_{n}\left(t\right)^{1+\alpha}$
(alone) does not appear inside the $\max$. It is also behind the
fact that the term $a_{n}\left(t\right)\lambda_{n}^{\alpha}b_{n}\left(t\right)^{\alpha}$
has a $\lambda_{n}^{\alpha}$ multiplying.

We shall study each term separately. 
\begin{enumerate}
\item As long as $C$ is big enough (let us say $C\ge\Upsilon_{1}\left(\delta,\mu\right)$),
since $n\ge2$, by Corollary \ref{cor:bound for an(t), bn(t)} and
Choice \ref{choice:lambdan}, we obtain
\[
\begin{aligned}\max\left\{ \dots\right\}  & \lesssim C^{\left(\frac{1}{1-\gamma}\right)^{n}\text{exponent}},\\
\text{exponent} & =\max\left\{ 0,1-\overline{k}_{n}\left(t\right)+\mu,1+\overline{k}_{n}\left(t\right)+\mu-\Lambda,\left(1+\alpha\right)\left(1-\overline{k}_{n}\left(t\right)+\mu\right),\right.\\
 & \quad\left(1-\overline{k}_{n}\left(t\right)+\mu\right)+\alpha\left(1+\overline{k}_{n}\left(t\right)+\mu\right)-\Lambda\alpha,\left(1+\overline{k}_{n}\left(t\right)+\mu\right)+\alpha\left(1-\overline{k}_{n}\left(t\right)+\mu\right)-\Lambda,\\
 & \quad\left.\left(1+\alpha\right)\left(1+\overline{k}_{n}\left(t\right)+\mu\right)-\Lambda\right\} .
\end{aligned}
\]
Bounding $0\le\overline{k}_{n}\left(t\right)\le k_{\max}$ (which
we can do thanks to Corollary \ref{cor:ideal kn non negative} and
Choice \ref{choice:ideal kn}), we arrive to
\[
\begin{aligned}\text{exponent} & \le\max\left\{ 0,1+\mu,1+k_{\max}+\mu-\Lambda,\left(1+\alpha\right)\left(1+\mu\right),\right.\\
 & \quad\left(1+\mu\right)+\alpha\left(1+k_{\max}+\mu\right)-\Lambda\alpha,\left(1+k_{\max}+\mu\right)+\alpha\left(1+\mu\right)-\Lambda,\\
 & \quad\left.\left(1+\alpha\right)\left(1+k_{\max}+\mu\right)-\Lambda\right\} .
\end{aligned}
\]
Employing the fact that $\alpha\in\left(0,1\right)$, we see that
some terms of the max are bounded by other terms of the max. Indeed,
\[
\begin{aligned} & 0\le1+\mu\le\left(1+\alpha\right)\left(1+\mu\right),\\
 & \left(1+k_{\max}+\mu\right)-\Lambda\le\left(1+\alpha\right)\left(1+k_{\max}+\mu\right)-\Lambda,\\
 & \left(1+k_{\max}+\mu\right)+\alpha\left(1+\mu\right)-\Lambda=\left(1+\alpha\right)+k_{\max}+\left(1+\alpha\right)\mu-\Lambda\le\\
 & \leq\left(1+\alpha\right)+\left(1+\alpha\right)k_{\max}+\left(1+\alpha\right)\mu-\Lambda=\left(1+\alpha\right)\left(1+k_{\max}+\mu\right)-\Lambda.
\end{aligned}
\]
Thereby,
\[
\text{exponent}\le\max\left\{ \left(1+\alpha\right)\left(1+\mu\right),1+\mu+\alpha\left(1+k_{\max}+\mu-\Lambda\right),\left(1+\alpha\right)\left(1+k_{\max}+\mu\right)-\Lambda\right\} .
\]
Besides, observe that every term contains the summand $\left(1+\alpha\right)\left(1+\mu\right)$,
so we may extract it, leading to
\[
\begin{aligned}\text{exponent} & \le\left(1+\alpha\right)\left(1+\mu\right)+\max\left\{ 0,\alpha\left(k_{\max}-\Lambda\right),\left(1+\alpha\right)k_{\max}-\Lambda\right\} .\end{aligned}
\]
\item By equations \eqref{eq:def dhndt second way} and \eqref{eq:h_eps near one},
we can bound $h^{\left(n\right)}\left(t\right)\le1$.
\item As $n\ge2$, by Lemma \ref{lem:estimate of integral hn bn}, if $C$
is taken large enough (let us say $C\ge\Upsilon_{2}\left(\delta,\mu\right)$),
we have
\[
\int_{t_{n}}^{1}h_{\varepsilon}^{\left(n\right)}\left(s\right)b_{n}\left(s\right)\mathrm{d}s\gtrsim_{\mu}\frac{1}{Y}C^{\left(1+k_{\max}-\mu-\delta\left(1-\gamma\right)\right)\left(\frac{1}{1-\gamma}\right)^{n}}.
\]
\item By Choice \ref{choice:anbncn} and Proposition \ref{prop:time convergence}
taking $\beta=\frac{1}{2}$, provided that $C$ is large enough (let
us say $C\ge\Upsilon_{3}\left(\frac{1}{2},\delta\right)$), we know
that
\[
B_{n}\left(t\right)a_{n}\left(t\right)b_{n}\left(t\right)=B_{n}\left(t\right)a_{n}\left(1\right)b_{n}\left(1\right)\lesssim B_{n}\left(1\right)a_{n}\left(1\right)b_{n}\left(1\right)=M_{n}.
\]
\item Lastly, by Choice \ref{choice:Mn}, it is
\[
M_{n}=YC^{\delta\left(\frac{1}{1-\gamma}\right)^{n}}.
\]
\end{enumerate}
Combining al these observations back into equation \eqref{eq:bound quadratic term v0}
provides
\[
\begin{aligned}\left|\left|Q_{\rho}^{\left(n\right)}\left(t,\cdot\right)\right|\right|_{C^{1,\alpha}\left(\mathbb{R}^{2}\right)} & \lesssim_{\varphi,\mu}\frac{Y^{2}C^{2\delta\left(\frac{1}{1-\gamma}\right)^{n}}}{\frac{1}{Y}C^{\left(1+k_{\max}-\mu-\delta\left(1-\gamma\right)\right)\left(\frac{1}{1-\gamma}\right)^{n}}}C^{\left[\left(1+\alpha\right)\left(1+\mu\right)+\max\left\{ 0,\alpha\left(k_{\max}-\Lambda\right),\left(1+\alpha\right)k_{\max}-\Lambda\right\} \right]\left(\frac{1}{1-\gamma}\right)^{n}}\leq\\
 & \lesssim_{\varphi,\mu}Y^{3}C^{\left(\frac{1}{1-\gamma}\right)^{n}\left[\delta\left(3-\gamma\right)+\mu+\left(1+\alpha\right)\left(1+\mu\right)-1+\max\left\{ -k_{\max},-\left(1-\alpha\right)k_{\max}-\alpha\Lambda,\alpha k_{\max}-\Lambda\right\} \right]}=\\
 & \lesssim_{\varphi,\mu}Y^{3}C^{\left(\frac{1}{1-\gamma}\right)^{n}\left[\delta\left(3-\gamma\right)+\mu+\left(1+\alpha\right)\left(1+\mu\right)-1-\min\left\{ k_{\max},\left(1-\alpha\right)k_{\max}+\alpha\Lambda,\Lambda-\alpha k_{\max}\right\} \right]}.
\end{aligned}
\]
Bounding $\alpha\leq1$ in the terms that contain $\mu$ and taking
into account that $\gamma\in\left(0,1\right)$, we arrive to
\[
\left|\left|Q_{\rho}^{\left(n\right)}\left(t,\cdot\right)\right|\right|_{C^{1,\alpha}\left(\mathbb{R}^{2}\right)}\lesssim_{\varphi,\mu}Y^{3}C^{\left(\frac{1}{1-\gamma}\right)^{n}\left[\alpha-\min\left\{ k_{\max},\left(1-\alpha\right)k_{\max}+\alpha\Lambda,\Lambda-\alpha k_{\max}\right\} +3\delta+3\mu\right]}.
\]
\end{proof}

\subsection{New transport of old density}
\begin{prop}
\label{prop:bound density new transport of old density}Let $n\in\mathbb{N}$.
$\forall t\in\left[t_{n},1\right]$, we have
\[
\widetilde{u^{\left(n\right)}}^{n}\left(t,x\right)\cdot\widetilde{\nabla}^{n}\widetilde{P^{\left(n-1\right)}}^{n}\left(t,x\right)=0.
\]
\end{prop}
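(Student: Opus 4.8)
The plan is to show that, for the times under consideration, the combined density of the first $n-1$ layers vanishes identically, so that the factor $\widetilde{\nabla}^{n}\widetilde{P^{\left(n-1\right)}}^{n}\left(t,x\right)$ is zero and the dot product is trivially zero. First I would dispose of the case $n=1$: by the convention adopted in Proposition \ref{prop:Bousinessq in layers}, $P^{\left(0\right)}\equiv0$, so the statement is immediate. From now on assume $n\ge2$.

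Next I would recall that $P^{\left(n-1\right)}=\sum_{m=1}^{n-1}\rho^{\left(m\right)}$, so it suffices to prove that $\rho^{\left(m\right)}\left(t,\cdot\right)\equiv0$ for every $m\in\left\{1,\dots,n-1\right\}$ and every $t\in\left[t_{n},1\right]$. Fix such an $m$ and $t$. Since the sequence $\left(t_{k}\right)_{k\in\mathbb{N}}$ is increasing (Choice \ref{choice:time (initial)}) and $m+1\le n$, we have $t\ge t_{n}\ge t_{m+1}$, hence $t\in\left[t_{m+1},1\right]$. By Choice \ref{choice:no two simultanous densities}, $\left.h^{\left(m\right)}\right|_{\left[t_{m+1},1\right]}\equiv0$, so $h^{\left(m\right)}\left(t\right)=0$. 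Now, by Choices \ref{choice:density} and \ref{choice:amplitude density},
\[
\widetilde{\rho^{\left(m\right)}}^{m}\left(t,x\right)=-z_{m}\frac{h^{\left(m\right)}\left(t\right)}{\int_{t_{m}}^{1}h^{\left(m\right)}\left(s\right)b_{m}\left(s\right)\mathrm{d}s}\varphi\left(\lambda_{m}x_{1}\right)\varphi\left(\lambda_{m}x_{2}\right)\sin\left(x_{1}\right)\cos\left(x_{2}\right),
\]
which is proportional to $h^{\left(m\right)}\left(t\right)$; therefore $\widetilde{\rho^{\left(m\right)}}^{m}\left(t,\cdot\right)\equiv0$, and, since $\rho^{\left(m\right)}\left(t,x\right)=\widetilde{\rho^{\left(m\right)}}^{m}\left(t,\left(\phi^{\left(m\right)}\right)^{-1}\left(t,x\right)\right)$, also $\rho^{\left(m\right)}\left(t,\cdot\right)\equiv0$. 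Summing over $m$ gives $P^{\left(n-1\right)}\left(t,\cdot\right)\equiv0$, hence $\widetilde{P^{\left(n-1\right)}}^{n}\left(t,\cdot\right)\equiv0$ and $\widetilde{\nabla}^{n}\widetilde{P^{\left(n-1\right)}}^{n}\left(t,x\right)=0$, from which $\widetilde{u^{\left(n\right)}}^{n}\left(t,x\right)\cdot\widetilde{\nabla}^{n}\widetilde{P^{\left(n-1\right)}}^{n}\left(t,x\right)=0$.

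There is no real obstacle here: the statement is a direct bookkeeping consequence of the fact (built into Choice \ref{choice:no two simultanous densities}) that at most one density layer is ever active, so that by the time layer $n$ is introduced all earlier density layers have been switched off. The only point requiring a word of care is the monotonicity of $\left(t_{k}\right)$, which guarantees $t_{n}\ge t_{m+1}$ for $m\le n-1$, and the explicit proportionality of $\widetilde{\rho^{\left(m\right)}}^{m}$ to $h^{\left(m\right)}$; both are already available from the Choices of Section \ref{sec:construction}.
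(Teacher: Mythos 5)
Your proof is correct and follows essentially the same route as the paper, which simply invokes Choice \ref{choice:no two simultanous densities} to conclude $\widetilde{P^{\left(n-1\right)}}^{n}\left(t,\cdot\right)\equiv0$ on $\left[t_{n},1\right]$. Your version merely spells out the bookkeeping (the $n=1$ convention, monotonicity of $\left(t_{k}\right)$, and the proportionality of $\widetilde{\rho^{\left(m\right)}}^{m}$ to $h^{\left(m\right)}$) that the paper leaves implicit.
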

\begin{proof}
By Choice \ref{choice:no two simultanous densities}, we have no two
simultaneous densities, i.e., $\widetilde{P^{\left(n-1\right)}}^{n}\left(t,x\right)\equiv0$
$\forall t\in\left[t_{n},1\right]$. This proves the result.
\end{proof}

\subsection{Transport term}

The first step in bounding the transport term is finding estimates
for the spatial second order derivatives of $\widetilde{U^{\left(n-1\right)}}^{n}$.
\begin{lem}
\label{lem:bounds second order derivatives}Let $n\in\mathbb{N}$
with $n\ge2$, $k\in\left\{ 0,1\right\} $ and $\alpha\in\left[0,1\right)$.
We have
\[
\begin{aligned}\left|\left|\frac{\partial^{2}\widetilde{U^{\left(n-1\right)}}^{n}}{\partial x_{1}^{2}}\left(t,\left(\phi^{\left(n\right)}\right)^{-1}\left(t,\cdot\right)\right)\right|\right|_{\dot{C}^{k,\alpha}\left(\mathbb{R}^{2};\mathbb{R}^{2}\right)} & \lesssim_{\delta,\varphi}\frac{F_{k,\alpha}^{\left(n\right)}}{a_{n}\left(t\right)^{2}},\\
\left|\left|\frac{\partial^{2}\widetilde{U^{\left(n-1\right)}}^{n}}{\partial x_{1}\partial x_{2}}\left(t,\left(\phi^{\left(n\right)}\right)^{-1}\left(t,\cdot\right)\right)\right|\right|_{\dot{C}^{k,\alpha}\left(\mathbb{R}^{2};\mathbb{R}^{2}\right)} & \lesssim_{\delta,\varphi}\frac{F_{k,\alpha}^{\left(n\right)}}{a_{n}\left(t\right)b_{n}\left(t\right)},\\
\left|\left|\frac{\partial^{2}\widetilde{U^{\left(n-1\right)}}^{n}}{\partial x_{2}^{2}}\left(t,\left(\phi^{\left(n\right)}\right)^{-1}\left(t,\cdot\right)\right)\right|\right|_{\dot{C}^{k,\alpha}\left(\mathbb{R}^{2};\mathbb{R}^{2}\right)} & \lesssim_{\delta,\varphi}\frac{F_{k,\alpha}^{\left(n\right)}}{b_{n}\left(t\right)^{2}},
\end{aligned}
\]
where
\[
\begin{aligned}F_{k,\alpha}^{\left(n\right)} & =YC^{\left(1+k+\alpha+\delta\right)\left(\frac{1}{1-\gamma}\right)^{n-1}}.\end{aligned}
\]
In the above, the case $k=0$ and $\alpha=0$ should be understood
as the $\left|\left|\cdot\right|\right|_{L^{\infty}\left(\mathbb{R}^{2}\right)}$
norm and the case $k=1$ and $\alpha=0$ should be interpreted as
the $\left|\left|\cdot\right|\right|_{\dot{C}^{1}\left(\mathbb{R}^{2}\right)}$
seminorm.
\end{lem}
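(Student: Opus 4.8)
The plan is to reduce everything to the explicit formula for $\widetilde{U^{\left(n-1\right)}}^{n}$ found in the proof of Proposition \ref{prop:relation between anbn and jacobian}, differentiate it twice, compose with $\left(\phi^{\left(n\right)}\right)^{-1}$, and estimate the resulting finite sum layer by layer, in the spirit of the diagrammatic arguments in the proofs of Propositions \ref{prop:form of the velocity} and \ref{prop:form gradient omega}. First I would record that, by Proposition \ref{prop:layer center never leafs cutoff area}, for $t\in\left[t_{n},1\right]$ and $x$ in the range of interest the point $\phi^{\left(n\right)}\left(t,x\right)$ always lies in the region where the cutoffs of all past layers equal $1$, so that \eqref{eq:velocity background layers} is available: $\widetilde{U^{\left(n-1\right)}}^{n}\left(t,x\right)$ is literally $\sum_{m=1}^{n-1}$ of products of sines and cosines of $a_{m}\left(t\right)\left(\phi_{1}^{\left(n\right)}\left(t,x\right)-\phi_{1}^{\left(m\right)}\left(t,0\right)\right)$ and $b_{m}\left(t\right)\left(\phi_{2}^{\left(n\right)}\left(t,x\right)-\phi_{2}^{\left(m\right)}\left(t,0\right)\right)$ with amplitudes $B_{m}\left(t\right)b_{m}\left(t\right)$ or $B_{m}\left(t\right)a_{m}\left(t\right)$.

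Next I would differentiate. Since $\phi^{\left(n\right)}$ is affine with diagonal inverse Jacobian $\mathrm{diag}\left(a_{n}\left(t\right),b_{n}\left(t\right)\right)$ (Choice \ref{choice:phin}, \eqref{eq:jacobian inverse}), each $\partial_{x_{1}}$ acting on a sine/cosine with argument $a_{m}\left(\phi_{1}^{\left(n\right)}-\cdot\right)$ pulls down a factor $a_{m}\left(t\right)/a_{n}\left(t\right)$ and preserves the trigonometric structure, and each $\partial_{x_{2}}$ pulls down $b_{m}\left(t\right)/b_{n}\left(t\right)$. Hence $\frac{\partial^{2}\widetilde{U^{\left(n-1\right)}}^{n}}{\partial x_{1}^{2}}$, $\frac{\partial^{2}\widetilde{U^{\left(n-1\right)}}^{n}}{\partial x_{1}\partial x_{2}}$, $\frac{\partial^{2}\widetilde{U^{\left(n-1\right)}}^{n}}{\partial x_{2}^{2}}$ equal, respectively, $\frac{1}{a_{n}\left(t\right)^{2}}$, $\frac{1}{a_{n}\left(t\right)b_{n}\left(t\right)}$, $\frac{1}{b_{n}\left(t\right)^{2}}$ times $\sum_{m=1}^{n-1}\left(\text{degree-}2\text{ monomial in }a_{m}\left(t\right),b_{m}\left(t\right)\right)\left(\text{amplitude }B_{m}a_{m}\text{ or }B_{m}b_{m}\right)\left(\text{trig product}\right)$. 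A key point is that composing with $\left(\phi^{\left(n\right)}\right)^{-1}$ (which is applied \emph{after} differentiating, and satisfies $\phi^{\left(n\right)}\circ\left(\phi^{\left(n\right)}\right)^{-1}=\mathrm{id}$) merely turns the trig arguments into $a_{m}\left(t\right)\left(x_{1}-\phi_{1}^{\left(m\right)}\left(t,0\right)\right)$ and $b_{m}\left(t\right)\left(x_{2}-\phi_{2}^{\left(m\right)}\left(t,0\right)\right)$, leaving the prefactors $1/a_{n}\left(t\right)^{2}$, etc., untouched and introducing \emph{no further} powers of $a_{n},b_{n}$.

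Then I would estimate the $\dot{C}^{k,\alpha}$ norms termwise. A product $\sin\left(a_{m}x_{1}-\cdot\right)\cos\left(b_{m}x_{2}-\cdot\right)$ has $L^{\infty}$ norm $1$, and by \eqref{eq:property Calpha multiplication} and \eqref{eq:property Calpha composition} its $\dot{C}^{\alpha}$, $\dot{C}^{1}$, $\dot{C}^{1,\alpha}$ norms are $\lesssim\max\left\{ a_{m},b_{m}\right\} ^{\alpha}$, $\lesssim\max\left\{ a_{m},b_{m}\right\}$, $\lesssim\max\left\{ a_{m},b_{m}\right\} ^{1+\alpha}$, i.e. $\lesssim\max\left\{ a_{m}\left(t\right),b_{m}\left(t\right)\right\} ^{k+\alpha}$ in all four cases of the statement. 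For the amplitudes, since $t\in\left[t_{n},1\right]\subseteq\left[t_{m+1},1\right]$ for $m\le n-1$, Proposition \ref{prop:time convergence} (with $\beta=\tfrac12$, $C$ large) gives $a_{m}\left(t\right)\lesssim a_{m}\left(1\right)$, $b_{m}\left(t\right)\lesssim b_{m}\left(1\right)$, $B_{m}\left(t\right)\lesssim B_{m}\left(1\right)$; by Choice \ref{choice:time picture}, $k_{m}\left(1\right)=0$ so $a_{m}\left(1\right)=b_{m}\left(1\right)=C^{\left(\frac{1}{1-\gamma}\right)^{m}}$, and by Choices \ref{choice:Bnanbn} and \ref{choice:Mn}, $B_{m}\left(1\right)a_{m}\left(1\right)b_{m}\left(1\right)=M_{m}=YC^{\delta\left(\frac{1}{1-\gamma}\right)^{m}}$, whence $B_{m}\left(1\right)=YC^{\left(\delta-2\right)\left(\frac{1}{1-\gamma}\right)^{m}}$. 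Multiplying the degree-$3$ monomial in $a_{m},b_{m}$ (the degree-$2$ monomial times the amplitude's $a_{m}$ or $b_{m}$), the $B_{m}$, and the extra $\max\left\{ a_{m},b_{m}\right\} ^{k+\alpha}$, each summand of (say) $a_{n}\left(t\right)^{2}\bigl(\frac{\partial^{2}\widetilde{U^{\left(n-1\right)}}^{n}}{\partial x_{1}^{2}}\circ\left(\phi^{\left(n\right)}\right)^{-1}\bigr)$ contributes $\lesssim YC^{\left(\delta+1+k+\alpha\right)\left(\frac{1}{1-\gamma}\right)^{m}}$; Lemma \ref{lem:estimate sum superexponential} (applicable by Choice \ref{choice:min requirements on C gamma and kmax}, with its generic exponent constant taken to be $\delta+1+k+\alpha>0$) sums the series to $\lesssim_{\delta}YC^{\left(\delta+1+k+\alpha\right)\left(\frac{1}{1-\gamma}\right)^{n-1}}=F_{k,\alpha}^{\left(n\right)}$. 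Dividing back by $a_{n}\left(t\right)^{2}$ (resp.\ $a_{n}\left(t\right)b_{n}\left(t\right)$, $b_{n}\left(t\right)^{2}$ for the mixed and pure-$x_{2}$ derivatives, which are handled identically) yields the stated bounds; for $k=\alpha=0$ the $L^{\infty}$ case is even simpler, using only that $L^{\infty}$ is invariant under diffeomorphisms.

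The hard part is not any single estimate — the lemma is essentially bookkeeping on top of the explicit formula — but rather three points of discipline. First, one must be sure the elementary form \eqref{eq:velocity background layers} is genuinely available on the whole relevant domain $D^{\left(n\right)}\left(t\right)$, which is exactly the content of Proposition \ref{prop:layer center never leafs cutoff area} and already rests on the delicate convergence machinery of Section \ref{sec:time evolution}. Second, one must keep rigorous track of which frequency ($a_{m}$ versus $b_{m}$) each of the two derivatives pulls down and, crucially, avoid double-counting the change-of-variables Jacobian: because the composition with $\left(\phi^{\left(n\right)}\right)^{-1}$ restores the trig arguments to frequency $a_{m},b_{m}$ in $x$, no spurious $a_{n}^{\alpha},b_{n}^{\alpha}$ appears and the only $a_{n},b_{n}$ dependence is the explicit $1/a_{n}^{2},1/(a_{n}b_{n}),1/b_{n}^{2}$ from the two tilde-differentiations. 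Third, the replacement of the time-dependent past-layer parameters by their values at $t=1$ must be invoked through Proposition \ref{prop:time convergence} on the correct intervals, which is legitimate precisely because $t\ge t_{n}\ge t_{m+1}$ for every $m\le n-1$.
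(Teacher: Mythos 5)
There is a genuine gap at your very first step: you replace $\widetilde{U^{\left(n-1\right)}}^{n}$ by the cutoff-free formula \eqref{eq:velocity background layers}, citing Proposition \ref{prop:layer center never leafs cutoff area}. That proposition only controls the distance between the layer \emph{centers}, i.e. it is a statement about $x=0$; it says nothing about general evaluation points. Moreover, the norms in the lemma are taken over all of $\mathbb{R}^{2}$, and on $\mathbb{R}^{2}$ the identity \eqref{eq:velocity background layers} is simply false: the true $U^{\left(n-1\right)}$ is compactly supported while the pure sine--cosine expression is not, and near the edges of the supports the derivatives of $U^{\left(n-1\right)}$ contain terms in which derivatives fall on the cutoff factors (producing $\lambda_{m}\varphi'\left(\lambda_{m}\cdot\right)$, etc.), which your argument never sees. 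Even if you weakened the statement to the region $D^{\left(n\right)}\left(t\right)$ (which would force you to re-examine how the lemma feeds into Proposition \ref{prop:bounds for Taylor development of transport}), you would still need a quantitative comparison between the real-space size $\sim\frac{16\pi}{\lambda_{n}a_{n}\left(t\right)}$ of $D^{\left(n\right)}\left(t\right)$ and the plateau width $\sim\frac{8\pi}{\lambda_{m}a_{m}\left(t\right)}$ of the $m$-th cutoff, on top of the center estimate $\left|\phi_{1}^{\left(n\right)}\left(t,0\right)-\phi_{1}^{\left(m\right)}\left(t,0\right)\right|\le\frac{8\pi}{a_{m}\left(t\right)}$; you neither state nor prove such an estimate, and the paper deliberately avoids needing one (its cutoff-free computation of this type, before Choice \ref{choice:anbn}, is explicitly flagged as a non-rigorous simplification).

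The repair is to argue as the paper does: keep the full expression for each $\widetilde{u^{\left(m\right)}}^{m}$ from Proposition \ref{prop:computations vorticity}, i.e. work with \eqref{eq:velocity backgrond layers full expression}, and observe that each derivative either hits a trigonometric factor (producing a factor $\frac{a_{m}}{a_{n}}$ or $\frac{b_{m}}{b_{n}}$) or a cutoff factor (producing the same ratio times an extra harmless $\lambda_{m}\le1$), so the structure is preserved and the cutoff-related terms are dominated by the ones you already estimate. With that modification the rest of your bookkeeping --- the Jacobian accounting (no powers of $a_{n},b_{n}$ beyond the explicit $a_{n}^{-2}$, $\left(a_{n}b_{n}\right)^{-1}$, $b_{n}^{-2}$), the use of Proposition \ref{prop:time convergence} on $\left[t_{n},1\right]\subseteq\left[t_{m+1},1\right]$, the identities $a_{m}\left(1\right)=b_{m}\left(1\right)$ and $B_{m}\left(1\right)a_{m}\left(1\right)b_{m}\left(1\right)=M_{m}$, and the summation via Lemma \ref{lem:estimate sum superexponential} --- reproduces the paper's proof and yields $F_{k,\alpha}^{\left(n\right)}$ as claimed.
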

\begin{proof}
As we did at the beginning of the proof of Proposition \ref{prop:relation between anbn and jacobian},
we can express $\widetilde{U^{\left(n-1\right)}}^{n}$ as given by
equation \eqref{eq:phi as sum of u layers}:
\[
\widetilde{U^{\left(n-1\right)}}^{n}\left(t,x\right)=\sum_{m=1}^{n-1}\widetilde{u^{\left(m\right)}}^{m}\left(t,\left(\begin{matrix}a_{m}\left(t\right)\left(\phi_{1}^{\left(n\right)}\left(t,x\right)-\phi_{2}^{\left(m\right)}\left(t,0\right)\right)\\
b_{m}\left(t\right)\left(\phi_{2}^{\left(n\right)}\left(t,x\right)-\phi_{2}^{\left(m\right)}\left(t,0\right)\right)
\end{matrix}\right)\right).
\]
On the other hand, by Proposition \ref{prop:computations vorticity},
we have
\[
\begin{aligned}\widetilde{u^{\left(n\right)}}^{n}\left(t,x\right) & =B_{n}\left(t\right)\varphi\left(\lambda_{n}x_{1}\right)\varphi\left(\lambda_{n}x_{2}\right)\left(\begin{matrix}b_{n}\left(t\right)\sin\left(x_{1}\right)\cos\left(x_{2}\right)\\
-a_{n}\left(t\right)\cos\left(x_{1}\right)\sin\left(x_{2}\right)
\end{matrix}\right)+\\
 & \quad+\lambda_{n}B_{n}\left(t\right)\left(\begin{matrix}b_{n}\left(t\right)\varphi\left(\lambda_{n}x_{1}\right)\varphi'\left(\lambda_{n}x_{2}\right)\\
-a_{n}\left(t\right)\varphi'\left(\lambda_{n}x_{1}\right)\varphi\left(\lambda_{n}x_{2}\right)
\end{matrix}\right)\sin\left(x_{1}\right)\sin\left(x_{2}\right).
\end{aligned}
\]
In this way, 
\begin{equation}
\begin{aligned}\widetilde{U_{1}^{\left(n-1\right)}}^{n}\left(t,x\right) & =\sum_{m=1}^{n-1}B_{m}\left(t\right)b_{m}\left(t\right)\left[\varphi\left(\lambda_{m}y_{1}^{\left(n,m\right)}\right)\varphi\left(\lambda_{m}y_{2}^{\left(n,m\right)}\right)\sin\left(y_{1}^{\left(n,m\right)}\right)\cos\left(y_{2}^{\left(n,m\right)}\right)+\right.\\
 & \quad\left.+\lambda_{m}\varphi\left(\lambda_{m}y_{1}^{\left(n,m\right)}\right)\varphi'\left(\lambda_{m}y_{2}^{\left(n,m\right)}\right)\sin\left(y_{1}^{\left(n,m\right)}\right)\sin\left(y_{2}^{\left(n,m\right)}\right)\right],\\
\widetilde{U_{2}^{\left(n-1\right)}}^{n}\left(t,x\right) & =-\sum_{m=1}^{n-1}B_{m}\left(t\right)a_{m}\left(t\right)\left[\varphi\left(\lambda_{m}y_{1}^{\left(n,m\right)}\right)\varphi\left(\lambda_{m}y_{2}^{\left(n,m\right)}\right)\cos\left(y_{1}^{\left(n,m\right)}\right)\sin\left(y_{2}^{\left(n,m\right)}\right)+\right.\\
 & \quad\left.+\lambda_{m}\varphi'\left(\lambda_{m}y_{1}^{\left(n,m\right)}\right)\varphi\left(\lambda_{m}y_{2}^{\left(n,m\right)}\right)\sin\left(y_{1}^{\left(n,m\right)}\right)\sin\left(y_{2}^{\left(n,m\right)}\right)\right],
\end{aligned}
\label{eq:velocity backgrond layers full expression}
\end{equation}
where
\begin{equation}
y^{\left(n,m\right)}=\left(\begin{matrix}a_{m}\left(t\right)\left(\phi_{1}^{\left(n\right)}\left(t,x\right)-\phi_{1}^{\left(m\right)}\left(t,0\right)\right)\\
b_{m}\left(t\right)\left(\phi_{2}^{\left(n\right)}\left(t,x\right)-\phi_{2}^{\left(m\right)}\left(t,0\right)\right)
\end{matrix}\right).\label{eq:definition y}
\end{equation}
Since
\[
\frac{\partial\phi_{1}^{\left(n\right)}}{\partial x_{1}}\left(t,x\right)=\frac{1}{a_{n}\left(t\right)},\quad\frac{\partial\phi_{2}^{\left(n\right)}}{\partial x_{2}}=\frac{1}{b_{n}\left(t\right)},\quad\frac{\partial\phi_{1}^{\left(n\right)}}{\partial x_{2}}\left(t,x\right)=0=\frac{\partial\phi_{2}^{\left(n\right)}}{\partial x_{1}}\left(t,x\right)
\]
by Choice \ref{choice:phin}, we deduce that
\begin{equation}
\frac{\partial y_{1}^{\left(n,m\right)}}{\partial x_{1}}=\frac{a_{m}\left(t\right)}{a_{n}\left(t\right)},\quad\frac{\partial y_{2}^{\left(n,m\right)}}{\partial x_{2}}=\frac{b_{m}\left(t\right)}{b_{n}\left(t\right)},\quad\frac{\partial y_{1}^{\left(n,m\right)}}{\partial x_{2}}=0=\frac{\partial y_{2}^{\left(n,m\right)}}{\partial x_{1}}.\label{eq:derivatives y}
\end{equation}

To continue, we wish to differentiate equation \eqref{eq:velocity backgrond layers full expression}.
To do this, it will be advantageous to study the framework behind
\eqref{eq:velocity backgrond layers full expression}. As we can see,
the first component of $\widetilde{U^{\left(n-1\right)}}^{n}$ is
the sum in $m\in\mathbb{N}$ of $B_{m}\left(t\right)b_{m}\left(t\right)$
times a sum of terms that have the following structure
\begin{equation}
\begin{matrix}\text{sign}\cdot\text{non-negative power of }\lambda_{n}\cdot\underbrace{\left(\text{derivative of }\varphi\right)\left(\lambda_{m}y_{1}^{\left(n,m\right)}\right)\cdot\left(\text{derivative of }\varphi\right)\left(\lambda_{m}y_{2}^{\left(n,m\right)}\right)}_{\text{cutoff factor}}\cdot\\
\cdot\underbrace{\left(\begin{matrix}\sin\\
\text{or}\\
\cos
\end{matrix}\right)\left(y_{1}^{\left(n,m\right)}\right)\left(\begin{matrix}\sin\\
\text{or}\\
\cos
\end{matrix}\right)\left(y_{2}^{\left(n,m\right)}\right)}_{\text{main factor}}
\end{matrix}\label{eq:SPCM}
\end{equation}
The same thing happens for the second component of $\widetilde{U^{\left(n-1\right)}}^{n}$,
except that we have $a_{m}\left(t\right)$ in the place of $b_{m}\left(t\right)$.
A term that has the structure exposed in equation \eqref{eq:SPCM}
will be denoted $\text{SPCM}\left(m,y^{\left(n,m\right)}\right)$,
which stands for ``sign, power, cutoff, main''. In this way, we
can rewrite \eqref{eq:velocity backgrond layers full expression}
as
\[
\begin{aligned}\widetilde{U_{1}^{\left(n-1\right)}}^{n}\left(t,x\right) & =\sum_{m=1}^{n-1}B_{m}\left(t\right)b_{m}\left(t\right)\sum_{2}\text{SPCM}\left(m,y^{\left(n,m\right)}\right),\\
\widetilde{U_{2}^{\left(n-1\right)}}^{n}\left(t,x\right) & =\sum_{m=1}^{n-1}B_{m}\left(t\right)a_{m}\left(t\right)\sum_{2}\text{SPCM}\left(m,y^{\left(n,m\right)}\right),
\end{aligned}
\]
where the subindex of $\sum$ denotes the number of summands. Furthermore,
notice that this structure is preserved by differentiation. Indeed,
since the cross derivatives of $y^{\left(n,m\right)}$ are zero by
equation \eqref{eq:derivatives y}, when differentiating a summand
of the form \eqref{eq:SPCM} with respect to $x_{1}$, we will obtain
two summands: one where the derivative hits the $y_{1}^{\left(n,m\right)}$-dependent
subfactor of the cutoff factor and one where the derivative acts on
the $y_{1}^{\left(n,m\right)}$-dependent subfactor of the main factor.
In any case, the structure of the cutoff factor or the main factor
remains unchanged, since the derivative of a derivative of $\varphi$
is another derivative of $\varphi$ and sines and cosines are obtained
from each other by differentiation. The only difference is that, when
the derivative hits the cutoff factor, we will obtain an extra $\lambda_{m}\frac{a_{m}\left(t\right)}{a_{n}\left(t\right)}$
multiplying, whereas, when the derivative acts on the main factor,
we will obtain just $\frac{a_{m}\left(t\right)}{a_{n}\left(t\right)}$.
A similar argument applies when differentiating with respect to $x_{2}$.
Thereby,
\[
\begin{aligned}\frac{\partial\widetilde{U_{1}^{\left(n-1\right)}}^{n}}{\partial x_{1}}\left(t,x\right) & =\sum_{m=1}^{n-1}B_{m}\left(t\right)\frac{a_{m}\left(t\right)b_{m}\left(t\right)}{a_{n}\left(t\right)}\sum_{4}\text{SPCM}\left(m,y^{\left(n,m\right)}\right),\\
\frac{\partial\widetilde{U_{1}^{\left(n-1\right)}}^{n}}{\partial x_{2}}\left(t,x\right) & =\sum_{m=1}^{n-1}B_{m}\left(t\right)\frac{b_{m}\left(t\right)^{2}}{b_{n}\left(t\right)}\sum_{4}\text{SPCM}\left(m,y^{\left(n,m\right)}\right),\\
\frac{\partial\widetilde{U_{2}^{\left(n-1\right)}}^{n}}{\partial x_{1}}\left(t,x\right) & =\sum_{m=1}^{n-1}B_{m}\left(t\right)\frac{a_{m}\left(t\right)^{2}}{a_{n}\left(t\right)}\sum_{4}\text{SPCM}\left(m,y^{\left(n,m\right)}\right),\\
\frac{\partial\widetilde{U_{2}^{\left(n-1\right)}}^{n}}{\partial x_{2}}\left(t,x\right) & =\sum_{m=1}^{n-1}B_{m}\left(t\right)\frac{a_{m}\left(t\right)b_{m}\left(t\right)}{b_{n}\left(t\right)}\sum_{4}\text{SPCM}\left(m,y^{\left(n,m\right)}\right).
\end{aligned}
\]
Following the same procedure, differentiating again, we obtain
\begin{equation}
\begin{aligned}\frac{\partial^{2}\widetilde{U_{1}^{\left(n-1\right)}}^{n}}{\partial x_{1}^{2}}\left(t,x\right) & =\sum_{m=1}^{n-1}B_{m}\left(t\right)\frac{a_{m}\left(t\right)^{2}b_{m}\left(t\right)}{a_{n}\left(t\right)^{2}}\sum_{8}\text{SPCM}\left(m,y^{\left(n,m\right)}\right),\\
\frac{\partial^{2}\widetilde{U_{1}^{\left(n-1\right)}}^{n}}{\partial x_{1}\partial x_{2}}\left(t,x\right) & =\sum_{m=1}^{n-1}B_{m}\left(t\right)\frac{a_{m}\left(t\right)b_{m}\left(t\right)^{2}}{a_{n}\left(t\right)b_{n}\left(t\right)}\sum_{8}\text{SPCM}\left(m,y^{\left(n,m\right)}\right),\\
\frac{\partial^{2}\widetilde{U_{1}^{\left(n-1\right)}}^{n}}{\partial x_{2}^{2}}\left(t,x\right) & =\sum_{m=1}^{n-1}B_{m}\left(t\right)\frac{b_{m}\left(t\right)^{3}}{b_{n}\left(t\right)^{2}}\sum_{8}\text{SPCM}\left(m,y^{\left(n,m\right)}\right),\\
\frac{\partial^{2}\widetilde{U_{2}^{\left(n-1\right)}}^{n}}{\partial x_{1}^{2}}\left(t,x\right) & =\sum_{m=1}^{n-1}B_{m}\left(t\right)\frac{a_{m}\left(t\right)^{3}}{a_{n}\left(t\right)^{2}}\sum_{8}\text{SPCM}\left(m,y^{\left(n,m\right)}\right),\\
\frac{\partial^{2}\widetilde{U_{2}^{\left(n-1\right)}}^{n}}{\partial x_{1}\partial x_{2}}\left(t,x\right) & =\sum_{m=1}^{n-1}B_{m}\left(t\right)\frac{a_{m}\left(t\right)^{2}b_{m}\left(t\right)}{a_{n}\left(t\right)b_{n}\left(t\right)}\sum_{8}\text{SPCM}\left(m,y^{\left(n,m\right)}\right),\\
\frac{\partial^{2}\widetilde{U_{2}^{\left(n-1\right)}}^{n}}{\partial x_{2}^{2}}\left(t,x\right) & =\sum_{m=1}^{n-1}B_{m}\left(t\right)\frac{a_{m}\left(t\right)b_{m}\left(t\right)^{2}}{b_{n}\left(t\right)^{2}}\sum_{8}\text{SPCM}\left(m,y^{\left(n,m\right)}\right).
\end{aligned}
\label{eq:second derivatives velocity past layers}
\end{equation}

The next step is to bound the expressions of equation \eqref{eq:second derivatives velocity past layers}
in $C^{k,\alpha}\left(\mathbb{R}^{2}\right)$. In view of equation
\eqref{eq:SPCM}, taking into account that $\lambda_{n}\le1$ by Choice
\ref{choice:psin}, we deduce that
\begin{equation}
\begin{aligned}\left|\left|\text{SPCM}\left(m,y^{\left(n,m\right)}\left(t,\cdot\right)\right)\right|\right|_{L^{\infty}\left(\mathbb{R}^{2}\right)} & =\left|\left|\text{SPCM}\left(m,\cdot\right)\right|\right|_{L^{\infty}\left(\mathbb{R}^{2}\right)}\lesssim_{\varphi}1.\end{aligned}
\label{eq:bounds SPCM Linfty}
\end{equation}
The fact that $y_{1}^{\left(n,m\right)}$ only depends on $x_{1}$
(and not $x_{2}$) and the analogous fact for $y_{2}^{\left(n,m\right)}$,
along with equations \eqref{eq:SPCM}, \eqref{eq:property Calpha composition},
\eqref{eq:marginal seminorms} and \eqref{eq:derivatives y} and Choice
\ref{choice:psin}, provide
\begin{equation}
\begin{aligned}\left|\left|\text{SPCM}\left(m,y^{\left(n,m\right)}\left(t,\cdot\right)\right)\right|\right|_{\dot{C}_{1}^{\alpha}\left(\mathbb{R}^{2}\right)} & \lesssim\left|\left|y_{1}^{\left(n,m\right)}\left(t,\cdot\right)\right|\right|_{\dot{C}^{1}\left(\mathbb{R}\right)}^{\alpha}\underbrace{\left|\left|\text{SPCM}\left(m,\cdot\right)\right|\right|_{\dot{C}_{1}^{\alpha}\left(\mathbb{R}^{2}\right)}}_{\lesssim_{\varphi}1}\lesssim_{\varphi}\left(\frac{a_{m}\left(t\right)}{a_{n}\left(t\right)}\right)^{\alpha},\\
\left|\left|\text{SPCM}\left(m,y^{\left(n,m\right)}\left(t,\cdot\right)\right)\right|\right|_{\dot{C}_{2}^{\alpha}\left(\mathbb{R}^{2}\right)} & \lesssim\left|\left|y_{2}^{\left(n,m\right)}\left(t,\cdot\right)\right|\right|_{\dot{C}^{1}\left(\mathbb{R}\right)}^{\alpha}\underbrace{\left|\left|\text{SPCM}\left(m,\cdot\right)\right|\right|_{\dot{C}_{2}^{\alpha}\left(\mathbb{R}^{2}\right)}}_{\lesssim_{\varphi}1}\lesssim_{\varphi}\left(\frac{b_{m}\left(t\right)}{b_{n}\left(t\right)}\right)^{\alpha}.
\end{aligned}
\label{eq:bounds SPCM v1}
\end{equation}

Thanks to equation \eqref{eq:bounds SPCM Linfty}, it is not difficult
to obtain the $\left|\left|\cdot\right|\right|_{L^{\infty}\left(\mathbb{R}^{2}\right)}$
norm of the second derivatives given in \eqref{eq:second derivatives velocity past layers}.
Indeed, making use of \eqref{eq:bounds SPCM Linfty} in \eqref{eq:second derivatives velocity past layers}
leads to
\[
\left|\left|\frac{\partial^{2}\widetilde{U_{1}^{\left(n-1\right)}}^{n}}{\partial x_{1}^{2}}\left(t,\cdot\right)\right|\right|_{L^{\infty}\left(\mathbb{R}^{2}\right)}\lesssim_{\varphi}\sum_{m=1}^{n-1}B_{m}\left(t\right)\frac{a_{m}\left(t\right)^{2}b_{m}\left(t\right)}{a_{n}\left(t\right)^{2}}.
\]
As $n\ge2$, we may apply Proposition \ref{prop:time convergence},
which assures us that, provided that $C$ is taken big enough (let
us say $C\ge\Upsilon_{1}\left(\beta,\delta\right)$), we may bound
\[
\left|\left|\frac{\partial^{2}\widetilde{U_{1}^{\left(n-1\right)}}^{n}}{\partial x_{1}^{2}}\left(t,\cdot\right)\right|\right|_{L^{\infty}\left(\mathbb{R}^{2}\right)}\lesssim_{\varphi}\frac{1}{a_{n}\left(t\right)^{2}}\sum_{m=1}^{n-1}B_{m}\left(1\right)a_{m}\left(1\right)b_{m}\left(1\right)a_{m}\left(1\right).
\]
Moreover, by Choices \ref{choice:time picture} and \ref{choice:anbn},
we infer that $a_{m}\left(1\right)=b_{m}\left(1\right)$, which means
that the bound we have obtained for the sum (we exclude the factor
$\frac{1}{a_{n}\left(t\right)^{2}}$) is valid for every second order
derivative of \eqref{eq:second derivatives velocity past layers}.
Resorting to Choices \ref{choice:anbn}, \ref{choice:Mn} and \ref{choice:time picture}
leads us to
\[
\left|\left|\frac{\partial^{2}\widetilde{U_{1}^{\left(n-1\right)}}^{n}}{\partial x_{1}^{2}}\left(t,\cdot\right)\right|\right|_{L^{\infty}\left(\mathbb{R}^{2}\right)}\lesssim_{\varphi}\frac{Y}{a_{n}\left(t\right)^{2}}\sum_{m=1}^{n-1}C^{\left(1+\delta\right)\left(\frac{1}{1-\gamma}\right)^{m}}.
\]
Now, Lemma \ref{lem:estimate sum superexponential} (which we can
apply by virtue of Choice \ref{choice:min requirements on C gamma and kmax})
allows us to write
\[
\left|\left|\frac{\partial^{2}\widetilde{U_{1}^{\left(n-1\right)}}^{n}}{\partial x_{1}^{2}}\left(t,\cdot\right)\right|\right|_{L^{\infty}\left(\mathbb{R}^{2}\right)}\lesssim_{\delta,\varphi}Y\frac{C^{\left(1+\delta\right)\left(\frac{1}{1-\gamma}\right)^{n-1}}}{a_{n}\left(t\right)^{2}}.
\]
As $\left|\left|\cdot\right|\right|_{L^{\infty}\left(\mathbb{R}^{2}\right)}$
is invariant under diffeomorphisms of the domain, this proves the
result. The same argument works for all the other second order derivatives,
providing
\begin{equation}
\begin{aligned}\left|\left|\frac{\partial^{2}\widetilde{U_{1}^{\left(n-1\right)}}^{n}}{\partial x_{1}^{2}}\left(t,\left(\phi^{\left(n\right)}\right)^{-1}\left(t,\cdot\right)\right)\right|\right|_{L^{\infty}\left(\mathbb{R}^{2}\right)} & \lesssim_{\delta,\varphi}Y\frac{C^{\left(1+\delta\right)\left(\frac{1}{1-\gamma}\right)^{n-1}}}{a_{n}\left(t\right)^{2}},\\
\left|\left|\frac{\partial^{2}\widetilde{U_{1}^{\left(n-1\right)}}^{n}}{\partial x_{1}\partial x_{2}}\left(t,\left(\phi^{\left(n\right)}\right)^{-1}\left(t,\cdot\right)\right)\right|\right|_{L^{\infty}\left(\mathbb{R}^{2}\right)} & \lesssim_{\delta,\varphi}Y\frac{C^{\left(1+\delta\right)\left(\frac{1}{1-\gamma}\right)^{n-1}}}{a_{n}\left(t\right)b_{n}\left(t\right)},\\
\left|\left|\frac{\partial^{2}\widetilde{U_{1}^{\left(n-1\right)}}^{n}}{\partial x_{2}^{2}}\left(t,\left(\phi^{\left(n\right)}\right)^{-1}\left(t,\cdot\right)\right)\right|\right|_{L^{\infty}\left(\mathbb{R}^{2}\right)} & \lesssim_{\delta,\varphi}Y\frac{C^{\left(1+\delta\right)\left(\frac{1}{1-\gamma}\right)^{n-1}}}{b_{n}\left(t\right)^{2}},\\
\left|\left|\frac{\partial^{2}\widetilde{U_{2}^{\left(n-1\right)}}^{n}}{\partial x_{1}^{2}}\left(t,\left(\phi^{\left(n\right)}\right)^{-1}\left(t,\cdot\right)\right)\right|\right|_{L^{\infty}\left(\mathbb{R}^{2}\right)} & \lesssim_{\delta,\varphi}Y\frac{C^{\left(1+\delta\right)\left(\frac{1}{1-\gamma}\right)^{n-1}}}{a_{n}\left(t\right)^{2}},\\
\left|\left|\frac{\partial^{2}\widetilde{U_{2}^{\left(n-1\right)}}^{n}}{\partial x_{1}\partial x_{2}}\left(t,\left(\phi^{\left(n\right)}\right)^{-1}\left(t,\cdot\right)\right)\right|\right|_{L^{\infty}\left(\mathbb{R}^{2}\right)} & \lesssim_{\delta,\varphi}Y\frac{C^{\left(1+\delta\right)\left(\frac{1}{1-\gamma}\right)^{n-1}}}{a_{n}\left(t\right)b_{n}\left(t\right)},\\
\left|\left|\frac{\partial^{2}\widetilde{U_{2}^{\left(n-1\right)}}^{n}}{\partial x_{2}^{2}}\left(t,\left(\phi^{\left(n\right)}\right)^{-1}\left(t,\cdot\right)\right)\right|\right|_{L^{\infty}\left(\mathbb{R}^{2}\right)} & \lesssim_{\delta,\varphi}Y\frac{C^{\left(1+\delta\right)\left(\frac{1}{1-\gamma}\right)^{n-1}}}{b_{n}\left(t\right)^{2}},
\end{aligned}
\label{eq:second derivatives bounds in Linfty}
\end{equation}
which is the result of the statement.

Now, we concentrate on the $\left|\left|\cdot\right|\right|_{\dot{C}^{\alpha}\left(\mathbb{R}^{2}\right)}$
seminorms. Again, we shall study these via marginal Hölder seminorms
(see equation \eqref{eq:marginal seminorms}). In view of equations
\eqref{eq:second derivatives velocity past layers} and \eqref{eq:bounds SPCM v1},
we deduce that
\[
\begin{aligned}\left|\left|\frac{\partial^{2}\widetilde{U_{1}^{\left(n-1\right)}}^{n}}{\partial x_{1}^{2}}\left(t,\cdot\right)\right|\right|_{\dot{C}_{1}^{\alpha}\left(\mathbb{R}^{2}\right)} & \lesssim_{\varphi}\sum_{m=1}^{n-1}B_{m}\left(t\right)\frac{a_{m}\left(t\right)^{2}b_{m}\left(t\right)}{a_{n}\left(t\right)^{2}}\left(\frac{a_{m}\left(t\right)}{a_{n}\left(t\right)}\right)^{\alpha},\\
\left|\left|\frac{\partial^{2}\widetilde{U_{1}^{\left(n-1\right)}}^{n}}{\partial x_{1}^{2}}\left(t,\cdot\right)\right|\right|_{\dot{C}_{2}^{\alpha}\left(\mathbb{R}^{2}\right)} & \lesssim_{\varphi}\sum_{m=1}^{n-1}B_{m}\left(t\right)\frac{a_{m}\left(t\right)^{2}b_{m}\left(t\right)}{a_{n}\left(t\right)^{2}}\left(\frac{b_{m}\left(t\right)}{b_{n}\left(t\right)}\right)^{\alpha}.
\end{aligned}
\]
In this manner, equations \eqref{eq:property Calpha composition}
and \eqref{eq:jacobian inverse}, along with the fact that $\left(\phi_{1}^{\left(n\right)}\right)^{-1}$
only depends on $x_{1}$ and that $\left(\phi_{2}^{\left(n\right)}\right)^{-2}$
only depends on $x_{2}$, allow us to write
\[
\begin{aligned} & \left|\left|\frac{\partial^{2}\widetilde{U_{1}^{\left(n-1\right)}}^{n}}{\partial x_{1}^{2}}\left(t,\left(\phi^{\left(n\right)}\right)^{-1}\left(t,\cdot\right)\right)\right|\right|_{\dot{C}_{1}^{\alpha}\left(\mathbb{R}^{2}\right)}\lesssim\\
\lesssim_{\phantom{\varphi}} & \left|\left|\left(\phi_{1}^{\left(n\right)}\right)^{-1}\left(t,\cdot\right)\right|\right|_{\dot{C}^{1}\left(\mathbb{R}\right)}^{\alpha}\left|\left|\frac{\partial^{2}\widetilde{U_{1}^{\left(n-1\right)}}^{n}}{\partial x_{1}^{2}}\left(t,\cdot\right)\right|\right|_{\dot{C}_{1}^{\alpha}\left(\mathbb{R}^{2}\right)}\lesssim_{\varphi}\\
\lesssim_{\varphi} & a_{n}\left(t\right)^{\alpha}\sum_{m=1}^{n-1}B_{m}\left(t\right)\frac{a_{m}\left(t\right)^{2}b_{m}\left(t\right)}{a_{n}\left(t\right)^{2}}\left(\frac{a_{m}\left(t\right)}{a_{n}\left(t\right)}\right)^{\alpha}=\sum_{m=1}^{n-1}B_{m}\left(t\right)\frac{a_{m}\left(t\right)^{2+\alpha}b_{m}\left(t\right)}{a_{n}\left(t\right)^{2}}.
\end{aligned}
\]
Analogously,
\[
\left|\left|\frac{\partial^{2}\widetilde{U_{1}^{\left(n-1\right)}}^{n}}{\partial x_{1}^{2}}\left(t,\left(\phi^{\left(n\right)}\right)^{-1}\left(t,\cdot\right)\right)\right|\right|_{\dot{C}_{2}^{\alpha}\left(\mathbb{R}^{2}\right)}\lesssim_{\varphi}\sum_{m=1}^{n-1}B_{m}\left(t\right)\frac{a_{m}\left(t\right)^{2}b_{m}\left(t\right)^{1+\alpha}}{a_{n}\left(t\right)^{2}}.
\]
To finish the argument, we need to recover the $\left|\left|\cdot\right|\right|_{\dot{C}^{\alpha}}$
seminorm form the marginal seminorms, which can be done through equation
\eqref{eq:Holder seminorm by marginals}. In this manner, we conclude
that
\[
\left|\left|\frac{\partial^{2}\widetilde{U_{1}^{\left(n-1\right)}}^{n}}{\partial x_{1}^{2}}\left(t,\left(\phi^{\left(n\right)}\right)^{-1}\left(t,\cdot\right)\right)\right|\right|_{\dot{C}^{\alpha}\left(\mathbb{R}^{2}\right)}\lesssim_{\varphi}\sum_{m=1}^{n-1}B_{m}\left(t\right)\frac{a_{m}\left(t\right)^{2}b_{m}\left(t\right)}{a_{n}\left(t\right)^{2}}\max\left\{ a_{m}\left(t\right)^{\alpha},b_{m}\left(t\right)^{\alpha}\right\} .
\]
Now, we can argue like we did in the $\left|\left|\cdot\right|\right|_{L^{\infty}\left(\mathbb{R}^{2}\right)}$
case to obtain that
\[
\left|\left|\frac{\partial^{2}\widetilde{U_{1}^{\left(n-1\right)}}^{n}}{\partial x_{1}^{2}}\left(t,\left(\phi^{\left(n\right)}\right)^{-1}\left(t,\cdot\right)\right)\right|\right|_{\dot{C}^{\alpha}\left(\mathbb{R}^{2}\right)}\lesssim_{\delta,\varphi}Y\frac{C^{\left(1+\alpha+\delta\right)\left(\frac{1}{1-\gamma}\right)^{n-1}}}{a_{n}\left(t\right)^{2}}.
\]
The same argument may be applied to the other second order derivatives
of \eqref{eq:second derivatives velocity past layers}, leading to
\[
\begin{aligned}\left|\left|\frac{\partial^{2}\widetilde{U_{1}^{\left(n-1\right)}}^{n}}{\partial x_{1}^{2}}\left(t,\left(\phi^{\left(n\right)}\right)^{-1}\left(t,\cdot\right)\right)\right|\right|_{\dot{C}^{\alpha}\left(\mathbb{R}^{2}\right)} & \lesssim_{\delta,\varphi}Y\frac{C^{\left(1+\alpha+\delta\right)\left(\frac{1}{1-\gamma}\right)^{n-1}}}{a_{n}\left(t\right)^{2}},\\
\left|\left|\frac{\partial^{2}\widetilde{U_{1}^{\left(n-1\right)}}^{n}}{\partial x_{1}\partial x_{2}}\left(t,\left(\phi^{\left(n\right)}\right)^{-1}\left(t,\cdot\right)\right)\right|\right|_{\dot{C}^{\alpha}\left(\mathbb{R}^{2}\right)} & \lesssim_{\delta,\varphi}Y\frac{C^{\left(1+\alpha+\delta\right)\left(\frac{1}{1-\gamma}\right)^{n-1}}}{a_{n}\left(t\right)b_{n}\left(t\right)},\\
\left|\left|\frac{\partial^{2}\widetilde{U_{1}^{\left(n-1\right)}}^{n}}{\partial x_{2}^{2}}\left(t,\left(\phi^{\left(n\right)}\right)^{-1}\left(t,\cdot\right)\right)\right|\right|_{\dot{C}^{\alpha}\left(\mathbb{R}^{2}\right)} & \lesssim_{\delta,\varphi}Y\frac{C^{\left(1+\alpha+\delta\right)\left(\frac{1}{1-\gamma}\right)^{n-1}}}{b_{n}\left(t\right)^{2}},\\
\left|\left|\frac{\partial^{2}\widetilde{U_{2}^{\left(n-1\right)}}^{n}}{\partial x_{1}^{2}}\left(t,\left(\phi^{\left(n\right)}\right)^{-1}\left(t,\cdot\right)\right)\right|\right|_{\dot{C}^{\alpha}\left(\mathbb{R}^{2}\right)} & \lesssim_{\delta,\varphi}Y\frac{C^{\left(1+\alpha+\delta\right)\left(\frac{1}{1-\gamma}\right)^{n-1}}}{a_{n}\left(t\right)^{2}},\\
\left|\left|\frac{\partial^{2}\widetilde{U_{2}^{\left(n-1\right)}}^{n}}{\partial x_{1}\partial x_{2}}\left(t,\left(\phi^{\left(n\right)}\right)^{-1}\left(t,\cdot\right)\right)\right|\right|_{\dot{C}^{\alpha}\left(\mathbb{R}^{2}\right)} & \lesssim_{\delta,\varphi}Y\frac{C^{\left(1+\alpha+\delta\right)\left(\frac{1}{1-\gamma}\right)^{n-1}}}{a_{n}\left(t\right)^ {}b_{n}\left(t\right)},\\
\left|\left|\frac{\partial^{2}\widetilde{U_{2}^{\left(n-1\right)}}^{n}}{\partial x_{2}^{2}}\left(t,\left(\phi^{\left(n\right)}\right)^{-1}\left(t,\cdot\right)\right)\right|\right|_{\dot{C}^{\alpha}\left(\mathbb{R}^{2}\right)} & \lesssim_{\delta,\varphi}Y\frac{C^{\left(1+\alpha+\delta\right)\left(\frac{1}{1-\gamma}\right)^{n-1}}}{b_{n}\left(t\right)^{2}}.
\end{aligned}
\]

What about the $\left|\left|\cdot\right|\right|_{\dot{C}^{1}\left(\mathbb{R}^{2}\right)}$
and $\left|\left|\cdot\right|\right|_{\dot{C}^{1,\alpha}\left(\mathbb{R}^{2}\right)}$
seminorms? Bearing in mind equations \eqref{eq:derivatives y} and
\eqref{eq:SPCM}, differentiating \eqref{eq:second derivatives velocity past layers}
provides something of the form
\begin{equation}
\frac{\partial^{3}\widetilde{U_{r}^{\left(n-1\right)}}^{n}}{\partial x_{i}\partial x_{j}\partial x_{l}}\left(t,x\right)=\sum_{m=1}^{n-1}B_{m}\left(t\right)\frac{\text{binomial in }\left\{ a_{m}\left(t\right),b_{m}\left(t\right)\right\} \text{ of order }4}{a_{n}\left(t\right)^{1_{\left\{ i=1\right\} }+1_{\left\{ j=1\right\} }+1_{\left\{ l=1\right\} }}b_{n}\left(t\right)^{1_{\left\{ i=2\right\} }+1_{\left\{ j=2\right\} }+1_{\left\{ l=2\right\} }}}\sum_{16}\text{SPCM}\left(m,y^{\left(n,m\right)}\right),\label{eq:form third derivatives in good variables}
\end{equation}
where $i,j,l,r\in\left\{ 1,2\right\} $ and $1_{\left\{ \text{set}\right\} }$
is the indicator function. Notice that $1_{\left\{ i=1\right\} }+1_{\left\{ j=1\right\} }+1_{\left\{ l=1\right\} }$
counts the number of times that $x_{1}$ appears in $\left\{ x_{i},x_{j},x_{r}\right\} $,
i.e., it counts the number of times we differentiate with respect
to $x_{1}$. Similarly, $1_{\left\{ i=2\right\} }+1_{\left\{ j=2\right\} }+l_{\left\{ l=2\right\} }$
counts the number of times we differentiate with respect to $x_{2}$.
In this way, the indicator functions of equation \eqref{eq:form third derivatives in good variables}
have the purpose of encoding the fact that we get one $a_{n}\left(t\right)$
in the denominator each time we differentiate with respect to $x_{1}$
and one $b_{n}\left(t\right)$ each time we differentiate with respect
to $b_{n}\left(t\right)$. By the chain rule and equation \eqref{eq:jacobian inverse},
as $\phi_{1}^{\left(n\right)}$ only depends on $x_{1}$ and $\phi_{2}^{\left(n\right)}$
only depends on $x_{2}$, we have
\begin{equation}
\begin{aligned} & \frac{\partial}{\partial x_{l}}\left(\frac{\partial^{2}\widetilde{U_{r}^{\left(n-1\right)}}^{n}}{\partial x_{i}\partial x_{j}}\left(t,\left(\phi^{\left(n\right)}\right)^{-1}\left(t,x\right)\right)\right)=\\
= & \nabla\left(\frac{\partial^{2}\widetilde{U_{r}^{\left(n-1\right)}}^{n}}{\partial x_{i}\partial x_{j}}\right)\left(t,\left(\phi^{\left(n\right)}\right)^{-1}\left(t,x\right)\right)\cdot\frac{\partial\left(\phi^{\left(n\right)}\right)^{-1}}{\partial x_{l}}\left(t,x\right)=\\
= & a_{n}\left(t\right)^{1_{\left\{ l=1\right\} }}b_{n}\left(t\right)^{1_{\left\{ l=2\right\} }}\frac{\partial^{3}\widetilde{U_{r}^{\left(n-1\right)}}^{n}}{\partial x_{i}\partial x_{j}\partial x_{l}}\left(t,\left(\phi^{\left(n\right)}\right)^{-1}\left(t,x\right)\right)
\end{aligned}
\label{eq:third derivatives in bad variables}
\end{equation}
$\forall\left\{ i,j,l,r\right\} \in\left\{ 1,2\right\} $. Since the
$\left|\left|\cdot\right|\right|_{L^{\infty}\left(\mathbb{R}^{2}\right)}$
norm is invariant under diffeomorphisms, employing equations \eqref{eq:bounds SPCM Linfty},
\eqref{eq:form third derivatives in good variables} and \eqref{eq:third derivatives in bad variables}
and the same techniques we have used before, we obtain
\[
\begin{aligned} & \left|\left|\frac{\partial^{2}\widetilde{U_{1}^{\left(n-1\right)}}^{n}}{\partial x_{1}^{2}}\left(t,\left(\phi^{\left(n\right)}\right)^{-1}\left(t,\cdot\right)\right)\right|\right|_{\dot{C}^{1}\left(\mathbb{R}^{2}\right)}=\\
=_{\phantom{\delta,\varphi}} & \max_{l\in\left\{ 1,2\right\} }\left\{ \left|\left|\frac{\partial}{\partial x_{l}}\left(\frac{\partial^{2}\widetilde{U_{1}^{\left(n-1\right)}}^{n}}{\partial x_{i}\partial x_{j}}\left(t,\left(\phi^{\left(n\right)}\right)^{-1}\left(t,\cdot\right)\right)\right)\right|\right|_{L^{\infty}\left(\mathbb{R}^{2}\right)}\right\} =\\
=_{\phantom{\delta,\varphi}} & \max\left\{ a_{n}\left(t\right)\left|\left|\frac{\partial^{3}\widetilde{U^{\left(n-1\right)}}^{n}}{\partial x_{1}^{3}}\left(t,\cdot\right)\right|\right|_{L^{\infty}\left(\mathbb{R}^{2}\right)},b_{n}\left(t\right)\left|\left|\frac{\partial^{3}\widetilde{U^{\left(n-1\right)}}^{n}}{\partial x_{1}^{2}\partial x_{2}}\left(t,\cdot\right)\right|\right|_{L^{\infty}\left(\mathbb{R}^{2}\right)}\right\} \lesssim_{\delta,\varphi}\\
\lesssim_{\delta,\varphi} & Y\frac{C^{\left(2+\delta\right)\left(\frac{1}{1-\gamma}\right)^{n-1}}}{a_{n}\left(t\right)^{2}}.
\end{aligned}
\]
Analogously, we can deduce the result for the $\left|\left|\cdot\right|\right|_{\dot{C}^{1}\left(\mathbb{R}^{2}\right)}$
seminorms of the other second order derivatives given in \eqref{eq:second derivatives velocity past layers}. 

Lastly, to compute the $\left|\left|\cdot\right|\right|_{\dot{C}^{1,\alpha}\left(\mathbb{R}^{2}\right)}$
seminorm, we throw ourselves into the hands of the marginal Hölder
seminorms again, repeating the argument we have exposed for the $\left|\left|\cdot\right|\right|_{\dot{C}^{\alpha}}$
case, but applied to equations \eqref{eq:third derivatives in bad variables}
and \eqref{eq:form third derivatives in good variables}.
\end{proof}
The next result we need has to do with bounding second order polynomials
evaluated in the inverse of $\phi^{\left(n\right)}\left(t,x\right)$.
\begin{lem}
\label{lem:bounds polynomials}Consider
\[
D^{\left(n\right)}\left(t\right)=\left(\phi^{\left(n\right)}\right)^{-1}\left(t,\left[-\frac{16\pi}{\lambda_{n}},\frac{16\pi}{\lambda_{n}}\right]^{2}\right),
\]
where $\left(\phi^{\left(n\right)}\right)^{-1}\left(t,\cdot\right)$
should be understood as the preimage at fixed $t$. Then,
\begin{center}
\begin{tabular}{|c|c|c|c|c|}
\hline 
 & $\left|\left|\cdot\right|\right|_{L^{\infty}\left(D^{\left(n\right)}\left(t\right)\right)}$ & $\left|\left|\cdot\right|\right|_{\dot{C}^{\alpha}\left(D^{\left(n\right)}\left(t\right)\right)}$ & $\left|\left|\cdot\right|\right|_{\dot{C}^{1}\left(D^{\left(n\right)}\left(t\right)\right)}$ & $\left|\left|\cdot\right|\right|_{\dot{C}^{1,\alpha}\left(D^{\left(n\right)}\left(t\right)\right)}$\tabularnewline
\hline 
\hline 
$\left[\left(\phi^{\left(n\right)}\right)_{1}^{-1}\left(t,\cdot\right)\right]^{2}$ & $\frac{1}{\lambda_{n}^{2}}$ & $\frac{a_{n}\left(t\right)^{\alpha}}{\lambda_{n}}$ & $\frac{a_{n}\left(t\right)^ {}}{\lambda_{n}}$ & $a_{n}\left(t\right)^{1+\alpha}$\tabularnewline
\hline 
$\left(\phi^{\left(n\right)}\right)_{1}^{-1}\left(t,\cdot\right)\left(\phi^{\left(n\right)}\right)_{2}^{-1}\left(t,\cdot\right)$ & $\frac{1}{\lambda_{n}^{2}}$ & $\frac{\max\left\{ a_{n}\left(t\right)^{\alpha},b_{n}\left(t\right)^{\alpha}\right\} }{\lambda_{n}}$ & $\frac{\max\left\{ a_{n}\left(t\right),b_{n}\left(t\right)\right\} }{\lambda_{n}}$ & $\begin{matrix}\max\left\{ a_{n}\left(t\right)b_{n}\left(t\right)^{\alpha},\right.\\
\left.a_{n}\left(t\right)^{\alpha}b_{n}\left(t\right)\right\} 
\end{matrix}$\tabularnewline
\hline 
$\left[\left(\phi^{\left(n\right)}\right)_{2}^{-1}\left(t,\cdot\right)\right]^{2}$ & $\frac{1}{\lambda_{n}^{2}}$ & $\frac{b_{n}\left(t\right)^{\alpha}}{\lambda_{n}}$ & $\frac{b_{n}\left(t\right)}{\lambda_{n}}$ & $b_{n}\left(t\right)^{1+\alpha}$\tabularnewline
\hline 
\end{tabular}
\par\end{center}

The table above has to be read as follows: the first column contains
the terms that we wish to bound, the headers of the other columns
specify the norm (or seminorm) we are using and the rest of the table
contains the (upper) bounds corresponding to each case. The bounds
are to be understood up to a constant.

\end{lem}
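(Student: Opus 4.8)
The plan is to reduce the whole statement to the affine structure of $\left(\phi^{\left(n\right)}\right)^{-1}\left(t,\cdot\right)$ recorded in \eqref{eq:phin inverse}--\eqref{eq:jacobian inverse}, together with the elementary observation that each of the three functions is a product of two coordinates of that map. First I would isolate the only geometric input that is really needed: by definition $D^{\left(n\right)}\left(t\right)$ is the set of $x$ for which $\left(\phi^{\left(n\right)}\right)^{-1}\left(t,x\right)$ lies in the box $\left[-16\pi/\lambda_n,16\pi/\lambda_n\right]^2$, so on $D^{\left(n\right)}\left(t\right)$ one has $\left|\left(\phi^{\left(n\right)}\right)^{-1}_1\left(t,\cdot\right)\right|\le 16\pi/\lambda_n$ and $\left|\left(\phi^{\left(n\right)}\right)^{-1}_2\left(t,\cdot\right)\right|\le 16\pi/\lambda_n$; multiplying two such factors and using that $\left\|\cdot\right\|_{L^\infty}$ of a product is bounded by the product of the $L^\infty$-norms yields the entire first column at once. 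Moreover, by Choice \ref{choice:phin} the map $\left(\phi^{\left(n\right)}\right)^{-1}\left(t,\cdot\right)$ is affine, with $\left(\phi^{\left(n\right)}\right)^{-1}_1\left(t,\cdot\right)$ a function of $x_1$ only with slope $a_n\left(t\right)$ and $\left(\phi^{\left(n\right)}\right)^{-1}_2\left(t,\cdot\right)$ a function of $x_2$ only with slope $b_n\left(t\right)$ --- this is exactly \eqref{eq:jacobian inverse} --- so $D^{\left(n\right)}\left(t\right)$ is a rectangle whose $x_1$- and $x_2$-extents are $\sim 1/(\lambda_n a_n\left(t\right))$ and $\sim 1/(\lambda_n b_n\left(t\right))$.

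For the $\dot C^1$ column I would differentiate the three quadratics directly; for instance $\nabla\bigl([(\phi^{\left(n\right)})^{-1}_1(t,\cdot)]^2\bigr)$ has only a first component, equal to $2a_n\left(t\right)(\phi^{\left(n\right)})^{-1}_1\left(t,\cdot\right)$, which on $D^{\left(n\right)}\left(t\right)$ is controlled by the first-column bound on $(\phi^{\left(n\right)})^{-1}_1\left(t,\cdot\right)$, and likewise for the mixed monomial (whose gradient combines $a_n\left(t\right)(\phi^{\left(n\right)})^{-1}_2\left(t,\cdot\right)$ and $b_n\left(t\right)(\phi^{\left(n\right)})^{-1}_1\left(t,\cdot\right)$, producing the $\max\{a_n,b_n\}$) and for $[(\phi^{\left(n\right)})^{-1}_2\left(t,\cdot\right)]^2$. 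The $\dot C^{1,\alpha}$ column then follows by differentiating once more: these gradients are themselves affine, so one is left with the $\dot C^\alpha$ seminorm of a linear function times $a_n\left(t\right)$ or $b_n\left(t\right)$, which is handled exactly as in the $\dot C^\alpha$ column.

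For the two Hölder columns the plan is to promote the $L^\infty$ and $\dot C^1$ information through the multiplication and composition rules \eqref{eq:property Calpha multiplication}--\eqref{eq:property Ckalpha composition}: write each function as $g\circ(\phi^{\left(n\right)})^{-1}_i\left(t,\cdot\right)$ with $g\left(s\right)=s^2$ (applying \eqref{eq:property Calpha multiplication} first to the mixed monomial to split it into its two scalar coordinates), peel off the affine map via \eqref{eq:property Calpha composition}/\eqref{eq:property Ckalpha composition} at the cost of $\left\|(\phi^{\left(n\right)})^{-1}_i\left(t,\cdot\right)\right\|_{\dot C^1}^{\alpha}=a_n\left(t\right)^\alpha$ or $b_n\left(t\right)^\alpha$, and finally use that a monomial of degree $k$ on a set where its argument ranges over an interval of length $\ell$ has $\dot C^\alpha$ seminorm $\lesssim\ell^{\,k-\alpha}$, with $\ell\sim 1/\lambda_n$ for the range of each $(\phi^{\left(n\right)})^{-1}_i\left(t,\cdot\right)$ on $D^{\left(n\right)}\left(t\right)$. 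Collecting the resulting powers of $a_n\left(t\right)$, $b_n\left(t\right)$ (or $\max\{a_n,b_n\}$ for the mixed term) and of $\lambda_n$ gives the remaining entries of the table.

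I do not expect a genuine difficulty here: the content is entirely the separable affine geometry of $D^{\left(n\right)}\left(t\right)$, and every estimate is a one-line consequence of the rules already recorded in the notation section. The only thing that needs care --- and the closest there is to a ``hard part'' --- is the bookkeeping in the last two columns: keeping track of which of $a_n\left(t\right)$, $b_n\left(t\right)$ (or their maximum) appears, since each monomial factors through $x_1$ alone or $x_2$ alone and one should not inflate the bound by using the full Jacobian norm $\max\{a_n,b_n\}$ when only one slope is relevant, and pinning down the exact power of $\lambda_n$ coming from the size of $D^{\left(n\right)}\left(t\right)$.
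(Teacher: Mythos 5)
Your treatment of the $L^{\infty}$ and $\dot{C}^{1}$ columns coincides with the paper's (invariance of $L^{\infty}$ under the change of variables, respectively the explicit gradients \eqref{eq:polynomials the gradient 1}--\eqref{eq:polynomials the gradient 2} combined with the $L^{\infty}$ information), and your use of the composition rule \eqref{eq:property Calpha composition} to peel off the factor $a_{n}\left(t\right)^{\alpha}$ or $b_{n}\left(t\right)^{\alpha}$ is also how the paper proceeds. The gap is in the final step of the two Hölder columns. You bound the model monomial on the rescaled box by the sharp estimate ``degree $k$ on an interval of length $\ell$ has $\dot{C}^{\alpha}$ seminorm $\lesssim\ell^{k-\alpha}$'' with $\ell\sim\lambda_{n}^{-1}$. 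Carrying that through gives, for the first row, $a_{n}\left(t\right)^{\alpha}\lambda_{n}^{-\left(2-\alpha\right)}$ in the $\dot{C}^{\alpha}$ column and $a_{n}\left(t\right)^{1+\alpha}\lambda_{n}^{-\left(1-\alpha\right)}$ in the $\dot{C}^{1,\alpha}$ column (similarly for the other rows), which are strictly larger than the table's entries $a_{n}\left(t\right)^{\alpha}/\lambda_{n}$ and $a_{n}\left(t\right)^{1+\alpha}$ since $\lambda_{n}\ll1$. So ``collecting the resulting powers'' does not give the remaining entries of the table: as written, your argument proves a weaker statement than the lemma claims, and the proposal does not notice the discrepancy.

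For comparison, the paper obtains the stated powers by a different quantitative step: after the composition rule it replaces the Hölder seminorm of the model function by its Lipschitz seminorm, namely $\left|\left|x_{1}^{2}\right|\right|_{\dot{C}^{\alpha}}\le\left|\left|x_{1}^{2}\right|\right|_{\dot{C}^{1}}=\left|\left|2x_{1}\right|\right|_{L^{\infty}}\lesssim\lambda_{n}^{-1}$ on the box $\left[-16\pi/\lambda_{n},16\pi/\lambda_{n}\right]^{2}$, and analogously bounds the $\dot{C}^{\alpha}$ seminorm of the (linear) gradient components by their constant slopes; this is exactly where the $\lambda_{n}$-powers in the table come from. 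Be aware that the inequality $\left|\left|f\right|\right|_{\dot{C}^{\alpha}}\le\left|\left|f\right|\right|_{\dot{C}^{1}}$ is only valid on domains of diameter $O\left(1\right)$, whereas the box here has diameter $\sim\lambda_{n}^{-1}$; your own rule (take $x_{1}=16\pi/\lambda_{n}$, $y_{1}=0$) shows the actual seminorm of $x_{1}^{2}$ there is of order $\lambda_{n}^{-\left(2-\alpha\right)}$, so the two routes genuinely disagree. To prove the lemma as stated you would have to adopt the paper's Lipschitz-reduction step (and justify it, or restrict to unit-scale increments); keeping your sharp monomial estimate forces the extra factor $\lambda_{n}^{\alpha-1}$ in the last two columns, and you cannot simultaneously use the rule you stated and conclude the table as written.
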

\begin{proof}
Since the $\left|\left|\cdot\right|\right|_{L^{\infty}\left(D^{\left(n\right)}\left(t\right)\right)}$
norm is invariant under diffemorphisms, we obtain
\[
\left|\left|\left[\left(\phi^{\left(n\right)}\right)_{1}^{-1}\left(t,\cdot\right)\right]^{2}\right|\right|_{L^{\infty}\left(D^{\left(n\right)}\left(t\right)\right)}=\left|\left|x_{1}^{2}\right|\right|_{L^{\infty}\left(\phi^{\left(n\right)}\left(t,D^{\left(n\right)}\left(t\right)\right)\right)}=\left|\left|x_{1}^{2}\right|\right|_{L^{\infty}\left(\left[-\frac{16\pi}{\lambda_{n}},\frac{16\pi}{\lambda_{n}}\right]^{2}\right)}\lesssim\frac{1}{\lambda_{n}^{2}}.
\]
The same argument applies for the other two elements of the column.

For the $\left|\left|\cdot\right|\right|_{\dot{C}^{\alpha}\left(D^{\left(n\right)}\left(t\right)\right)}$
seminorm, we apply equation \eqref{eq:property Calpha composition},
which provides
\[
\left|\left|\left[\left(\phi^{\left(n\right)}\right)_{1}^{-1}\left(t,\cdot\right)\right]^{2}\right|\right|_{\dot{C}^{\alpha}\left(D^{\left(n\right)}\left(t\right)\right)}\leq\left|\left|\left(\phi^{\left(n\right)}\right)_{1}^{-1}\left(t,\cdot\right)\right|\right|_{\dot{C}^{1}\left(\mathbb{R}\right)}^{\alpha}\left|\left|x_{1}^{2}\right|\right|_{\dot{C}^{\alpha}\left(\left[-\frac{16\pi}{\lambda_{n}},\frac{16\pi}{\lambda_{n}}\right]^{2}\right)}.
\]
On the one hand, equation \eqref{eq:jacobian inverse} gives us that
\[
\left|\left|\left(\phi^{\left(n\right)}\right)_{1}^{-1}\left(t,\cdot\right)\right|\right|_{\dot{C}^{1}\left(\mathbb{R}\right)}^{\alpha}=a_{n}\left(t\right)^{\alpha}.
\]
On the other hand,
\[
\left|\left|x_{1}^{2}\right|\right|_{\dot{C}^{\alpha}\left(\left[-\frac{16\pi}{\lambda_{n}},\frac{16\pi}{\lambda_{n}}\right]^{2}\right)}\leq\left|\left|x_{1}^{2}\right|\right|_{\dot{C}^{1}\left(\left[-\frac{16\pi}{\lambda_{n}},\frac{16\pi}{\lambda_{n}}\right]^{2}\right)}=\left|\left|2x_{1}\right|\right|_{L^{\infty}\left(\left[-\frac{16\pi}{\lambda_{n}},\frac{16\pi}{\lambda_{n}}\right]\right)}\lesssim\frac{1}{\lambda_{n}}.
\]
This proves the result for $\left|\left|\left[\left(\phi^{\left(n\right)}\right)_{1}^{-1}\left(t,\cdot\right)\right]^{2}\right|\right|_{\dot{C}^{\alpha}\left(D^{\left(n\right)}\left(t\right)\right)}$.
A similar argument may be applied to the remaining two elements of
the column.

Notice that
\[
\nabla\left(\left[\left(\phi^{\left(n\right)}\right)_{1}^{-1}\left(t,x\right)\right]^{2}\right)=2\left(\phi^{\left(n\right)}\right)_{1}^{-1}\left(t,x\right)\nabla\left[\left(\phi^{\left(n\right)}\right)_{1}^{-1}\left(t,x\right)\right].
\]
Hence, the chain rule and equation \eqref{eq:jacobian inverse} tell
us that
\begin{equation}
\nabla\left(\left[\left(\phi^{\left(n\right)}\right)_{1}^{-1}\left(t,x\right)\right]^{2}\right)=\left(2a_{n}\left(t\right)\left(\phi^{\left(n\right)}\right)_{1}^{-1}\left(t,x\right),0\right).\label{eq:polynomials the gradient 1}
\end{equation}
Similarly,
\begin{equation}
\begin{aligned}\nabla\left(\left(\phi^{\left(n\right)}\right)_{1}^{-1}\left(t,x\right)\left(\phi^{\left(n\right)}\right)_{2}^{-1}\left(t,x\right)\right) & =\left(a_{n}\left(t\right)\left(\phi^{\left(n\right)}\right)_{2}^{-1}\left(t,x\right),b_{n}\left(t\right)\left(\phi^{\left(n\right)}\right)_{1}^{-1}\left(t,x\right)\right),\\
\nabla\left(\left[\left(\phi^{\left(n\right)}\right)_{2}^{-1}\left(t,x\right)\right]^{2}\right) & =\left(0,2b_{n}\left(t\right)\left(\phi^{\left(n\right)}\right)_{2}^{-1}\left(t,x\right)\right).
\end{aligned}
\label{eq:polynomials the gradient 2}
\end{equation}
Here, it is of utmost importance to point out that the first component
of $\nabla\left(\left(\phi^{\left(n\right)}\right)_{1}^{-1}\left(t,x\right)\left(\phi^{\left(n\right)}\right)_{2}^{-1}\left(t,x\right)\right)$
only depends on $x_{2}$, whereas its second component only depends
on $x_{1}$. From equations \eqref{eq:polynomials the gradient 1}
and \eqref{eq:polynomials the gradient 2}, it is immediate to deduce
the results corresponding to the third column of the statement.

Employing the same arguments we have used to compute $\left|\left|\cdot\right|\right|_{L^{\infty}\left(D^{\left(n\right)}\left(t\right)\right)}$
and $\left|\left|\cdot\right|\right|_{\dot{C}^{\alpha}\left(D^{\left(n\right)}\left(t\right)\right)}$,
but applied to the gradients \eqref{eq:polynomials the gradient 1}
and \eqref{eq:polynomials the gradient 2} leads to the results about
the $\left|\left|\cdot\right|\right|_{\dot{C}^{1}\left(D^{\left(n\right)}\left(t\right)\right)}$
and $\left|\left|\cdot\right|\right|_{\dot{C}^{1,\alpha}\left(D^{\left(n\right)}\left(t\right)\right)}$
seminorms.
\end{proof}
Now that we have bounded the second order derivatives of $\widetilde{U^{\left(n-1\right)}}^{n}$
and have obtained estimates for second order polynomials evaluated
in $\left(\phi^{\left(n\right)}\right)^{-1}$, we have the required
tools to compute some bounds for the first factor of the transport
term. This is exactly the task we tackle in the next Proposition.
\begin{prop}
\label{prop:bounds for Taylor development of transport}Let $n\in\mathbb{N}$
with $n\ge2$ and 
\[
\begin{aligned}\widetilde{W^{\left(n\right)}}^{n}\left(t,x\right) & \coloneqq\widetilde{U^{\left(n-1\right)}}^{n}\left(t,x\right)-\widetilde{U^{\left(n-1\right)}}^{n}\left(t,0\right)-\mathrm{J}\widetilde{U^{\left(n-1\right)}}^{n}\left(t,0\right)\cdot\left(\begin{matrix}x_{1}\\
x_{2}
\end{matrix}\right),\\
D^{\left(n\right)}\left(t\right) & \coloneqq\left(\phi^{\left(n\right)}\right)^{-1}\left(t,\left[-\frac{16\pi}{\lambda_{n}},\frac{16\pi}{\lambda_{n}}\right]^{2}\right),
\end{aligned}
\]
where $\left(\phi^{\left(n\right)}\right)^{-1}\left(t,\cdot\right)$
should be understood as the preimage at fixed $t$. Then, as long
as $C$ is big enough (let us say $C\ge\Upsilon\left(\delta,\mu\right)$),
\[
\begin{aligned}\left|\left|W^{\left(n\right)}\left(t,\cdot\right)\right|\right|_{L^{\infty}\left(D^{\left(n\right)}\left(t\right);\mathbb{R}^{2}\right)} & \lesssim_{\delta,\varphi}YC^{\left[-2\left(1-\Lambda-k_{\max}\right)+2\mu+\left(1+\delta\right)\left(1-\gamma\right)\right]\left(\frac{1}{1-\gamma}\right)^{n}},\\
\left|\left|W^{\left(n\right)}\left(t,\cdot\right)\right|\right|_{\dot{C}^{\alpha}\left(D^{\left(n\right)}\left(t\right);\mathbb{R}^{2}\right)} & \lesssim_{\delta,\varphi}YC^{\left[-2\left(1-\Lambda-k_{\max}\right)+\max\left\{ \alpha\left(1-k_{\max}\right)-\Lambda,0\right\} +2\mu+\left(2+\delta\right)\left(1-\gamma\right)\right]\left(\frac{1}{1-\gamma}\right)^{n}},\\
\left|\left|W^{\left(n\right)}\left(t,\cdot\right)\right|\right|_{\dot{C}^{1}\left(D^{\left(n\right)}\left(t\right);\mathbb{R}^{2}\right)} & \lesssim_{\delta,\varphi}YC^{\left[-\min\left\{ 1-\Lambda-k_{\max},2\left(1-\Lambda-k_{\max}\right)\right\} +2\mu+\left(2+\delta\right)\left(1-\gamma\right)\right]\left(\frac{1}{1-\gamma}\right)^{n}},\\
\left|\left|W^{\left(n\right)}\left(t,\cdot\right)\right|\right|_{\dot{C}^{1,\alpha}\left(D^{\left(n\right)}\left(t\right);\mathbb{R}^{2}\right)} & \lesssim_{\delta,\varphi}YC^{\left[-\min\left\{ \left(1-\alpha\right)\left(1-k_{\max}\right),1-k_{\max}-\Lambda,2\left(1-k_{\max}-\Lambda\right)\right\} +2\mu+\left(3+\delta\right)\left(1-\gamma\right)\right]\left(\frac{1}{1-\gamma}\right)^{n}}.
\end{aligned}
\]
Under the hypothesis $1-k_{\max}-\Lambda>0$, which is necessary for
the convergence in $n\in\mathbb{N}$ of the bounds above, they simplify
to
\[
\begin{aligned}\left|\left|W^{\left(n\right)}\left(t,\cdot\right)\right|\right|_{L^{\infty}\left(D^{\left(n\right)}\left(t\right);\mathbb{R}^{2}\right)} & \lesssim_{\delta,\varphi}YC^{\left[-2\left(1-\Lambda-k_{\max}\right)+2\mu+\left(1+\delta\right)\left(1-\gamma\right)\right]\left(\frac{1}{1-\gamma}\right)^{n}},\\
\left|\left|W^{\left(n\right)}\left(t,\cdot\right)\right|\right|_{\dot{C}^{\alpha}\left(D^{\left(n\right)}\left(t\right);\mathbb{R}^{2}\right)} & \lesssim_{\delta,\varphi}YC^{\left[-2\left(1-\Lambda-k_{\max}\right)+\max\left\{ \alpha\left(1-k_{\max}\right)-\Lambda,0\right\} +2\mu+\left(2+\delta\right)\left(1-\gamma\right)\right]\left(\frac{1}{1-\gamma}\right)^{n}},\\
\left|\left|W^{\left(n\right)}\left(t,\cdot\right)\right|\right|_{\dot{C}^{1}\left(D^{\left(n\right)}\left(t\right);\mathbb{R}^{2}\right)} & \lesssim_{\delta,\varphi}YC^{\left[-\left(1-\Lambda-k_{\max}\right)+2\mu+\left(2+\delta\right)\left(1-\gamma\right)\right]\left(\frac{1}{1-\gamma}\right)^{n}},\\
\left|\left|W^{\left(n\right)}\left(t,\cdot\right)\right|\right|_{\dot{C}^{1,\alpha}\left(D^{\left(n\right)}\left(t\right);\mathbb{R}^{2}\right)} & \lesssim_{\delta,\varphi}YC^{\left[-\min\left\{ \left(1-\alpha\right)\left(1-k_{\max}\right),1-k_{\max}-\Lambda\right\} +2\mu+\left(3+\delta\right)\left(1-\gamma\right)\right]\left(\frac{1}{1-\gamma}\right)^{n}}.
\end{aligned}
\]
\end{prop}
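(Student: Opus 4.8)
The plan is to start from the integral (Lagrange) form of the second–order Taylor remainder of $\widetilde{U^{(n-1)}}^n$ around the origin, exactly as it was written informally in the subsection ``First look at the transport term'':
\[
\widetilde{W^{(n)}}^n(t,x)=\tfrac12\Big[\int_0^1(1-s)\tfrac{\partial^2\widetilde{U^{(n-1)}}^n}{\partial x_1^2}(t,sx)\,\mathrm ds\Big]x_1^2+\Big[\int_0^1(1-s)\tfrac{\partial^2\widetilde{U^{(n-1)}}^n}{\partial x_1\partial x_2}(t,sx)\,\mathrm ds\Big]x_1x_2+\tfrac12\Big[\int_0^1(1-s)\tfrac{\partial^2\widetilde{U^{(n-1)}}^n}{\partial x_2^2}(t,sx)\,\mathrm ds\Big]x_2^2.
\]
Composing with $(\phi^{(n)})^{-1}(t,\cdot)$ turns $W^{(n)}(t,x)$ into a sum of three products, each being an ``integrated second derivative'' factor times one of the quadratic monomials $[(\phi^{(n)})^{-1}_1(t,x)]^2$, $(\phi^{(n)})^{-1}_1(t,x)(\phi^{(n)})^{-1}_2(t,x)$, $[(\phi^{(n)})^{-1}_2(t,x)]^2$; the monomials are precisely the objects estimated on $D^{(n)}(t)$ by Lemma \ref{lem:bounds polynomials}.

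For the integrated-derivative factors I would first note the elementary identity $s\,(\phi^{(n)})^{-1}(t,x)=(\phi^{(n)})^{-1}\big(t,\,sx+(1-s)\phi^{(n)}(t,0)\big)$, which is immediate from the explicit form \eqref{eq:phin inverse}. Hence $\partial^2\widetilde{U^{(n-1)}}^n(t,s(\phi^{(n)})^{-1}(t,x))$ is just $\partial^2\widetilde{U^{(n-1)}}^n(t,(\phi^{(n)})^{-1}(t,\cdot))$ precomposed with the affine map $x\mapsto sx+(1-s)\phi^{(n)}(t,0)$, whose linear part has operator norm $s\le1$. By \eqref{eq:property Calpha composition} and \eqref{eq:property Ckalpha composition}, such precomposition does not increase the $\dot C^{k,\alpha}$ seminorm (up to a dimensional constant), so each integrand is controlled, uniformly in $s\in[0,1]$, by the bounds of Lemma \ref{lem:bounds second order derivatives} for $\partial^2\widetilde{U^{(n-1)}}^n(t,(\phi^{(n)})^{-1}(t,\cdot))$. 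Pulling the norm through $\int_0^1(1-s)\,\mathrm ds=\tfrac12$ then yields, for $k\in\{0,1\}$ and $\alpha\in[0,1)$, bounds of size $F^{(n)}_{k,\alpha}$ divided by the product of the two scales ($a_n$ or $b_n$) corresponding to the variables being differentiated.

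Next I would combine the two factors of each Taylor term with the Leibniz rule for Hölder seminorms: \eqref{eq:property Calpha multiplication} for the $L^\infty$ and $\dot C^\alpha$ bounds, and its one-derivative consequences (writing $\partial_i(pq)=(\partial_ip)q+p(\partial_iq)$ and reapplying \eqref{eq:property Calpha multiplication}) for the $\dot C^1$ and $\dot C^{1,\alpha}$ bounds. This produces, for each Taylor term, a handful of products (a $\dot C^{k_1,\alpha_1}$ bound of the integrated derivative) $\times$ (a $\dot C^{k_2,\alpha_2}$ bound of the monomial from Lemma \ref{lem:bounds polynomials}). I would then rewrite everything in the common form $C^{[\cdots](1/(1-\gamma))^n}$ by invoking Corollary \ref{cor:bound for an(t), bn(t)} to replace $a_n(t),b_n(t)$ (using also $a_n(t)\le b_n(t)$ up to $C^{O(\mu)(1/(1-\gamma))^n}$, and $0\le\overline k_n(t)\le k_{\max}$ from Corollary \ref{cor:ideal kn non negative} and Choice \ref{choice:ideal kn}), Choice \ref{choice:lambdan} for $\lambda_n$, and the identity $(1/(1-\gamma))^{n-1}=(1-\gamma)(1/(1-\gamma))^n$ to absorb the $F^{(n)}_{k,\alpha}=YC^{(1+k+\alpha+\delta)(1/(1-\gamma))^{n-1}}$ factors. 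Taking the maximum over the resulting exponents and noting that the term from $\partial^2_{x_1}$ paired with $[(\phi^{(n)})^{-1}_1]^2$ dominates (since $1/a_n^2$ beats $1/(a_nb_n)$ and $1/b_n^2$, and $a_n^\alpha$ is the smallest of the monomial seminorm weights) collapses the $\max$ to the stated expressions; when $1-k_{\max}-\Lambda>0$ the exponent $-2(1-\Lambda-k_{\max})$ is beaten by $-(1-\Lambda-k_{\max})$, which gives the simplified bounds.

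The main obstacle is not a single estimate but the exponent bookkeeping: three Taylor terms, up to four Leibniz sub-terms each, and four norms, so one must be systematic about which powers of $a_n$, $b_n$, $\lambda_n$ and $F^{(n)}_{k,\alpha}$ appear, always bounding $\overline k_n(t)\in[0,k_{\max}]$ in the safe direction (it enters with a negative sign in the $a_n$-powers that sit in denominators), absorbing the various $O(\mu)$ and $O(1-\gamma)$ losses into the single parameter $\mu$ (shrinking, if necessary, the $\mu$ used when applying Corollary \ref{cor:bound for an(t), bn(t)}), and verifying the dominance claims that yield the clean $\max$'s and $\min$'s in the statement. One must also check that every application of Lemma \ref{lem:bounds second order derivatives} and Corollary \ref{cor:bound for an(t), bn(t)} is legitimate, which is exactly why the hypothesis $n\ge2$ and the smallness-of-error requirement ``$C$ large'' propagate into the conclusion.
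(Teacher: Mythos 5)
Your proposal follows essentially the same route as the paper's proof: the integral Taylor remainder composed with $\left(\phi^{\left(n\right)}\right)^{-1}$, the bounds of Lemma \ref{lem:bounds second order derivatives} for the integrated second-derivative factors (handling the dilation by $s\le1$ via the composition property, which is what the paper does, only phrased through your affine identity), Lemma \ref{lem:bounds polynomials} for the quadratic monomials, the product/Leibniz rules \eqref{eq:property Calpha multiplication}--\eqref{eq:informal leibniz rule}, and then Corollary \ref{cor:bound for an(t), bn(t)}, Choice \ref{choice:lambdan} and $0\le\overline{k}_{n}\left(t\right)\le k_{\max}$ to reach the stated exponents. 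The dominance and simplification observations at the end match the paper's bookkeeping, so the proposal is correct and not a genuinely different argument.
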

\begin{proof}
The expression for the remainder of the Multivariate Taylor Theorem
assures us that
\[
\begin{aligned}\widetilde{W^{\left(n\right)}}^{n}\left(t,\cdot\right) & =\frac{1}{2}\left[\int_{0}^{1}\left(1-s\right)\frac{\partial^{2}\widetilde{U^{\left(n-1\right)}}^{n}}{\partial x_{1}^{2}}\left(sx\right)\mathrm{d}s\right]x_{1}^{2}+\\
 & \quad+\left[\int_{0}^{1}\left(1-s\right)\frac{\partial^{2}\widetilde{U^{\left(n-1\right)}}^{n}}{\partial x_{1}\partial x_{2}}\left(sx\right)\mathrm{d}s\right]x_{1}x_{2}+\\
 & \quad+\frac{1}{2}\left[\int_{0}^{1}\left(1-s\right)\frac{\partial^{2}\widetilde{U^{\left(n-1\right)}}^{n}}{\partial x_{2}^{2}}\left(sx\right)\mathrm{d}s\right]x_{2}^{2}.
\end{aligned}
\]
Let us rewrite this in ``normal'' variables, i.e.,
\begin{equation}
\begin{aligned}W^{\left(n\right)}\left(t,x\right) & =\frac{1}{2}\left[\int_{0}^{1}\left(1-s\right)\frac{\partial^{2}\widetilde{U^{\left(n-1\right)}}^{n}}{\partial x_{1}^{2}}\left(s\left(\phi^{\left(n\right)}\right)^{-1}\left(t,x\right)\right)\mathrm{d}s\right]\left[\left(\phi^{\left(n\right)}\right)_{1}^{-1}\left(t,x\right)\right]^{2}+\\
 & \quad+\left[\int_{0}^{1}\left(1-s\right)\frac{\partial^{2}\widetilde{U^{\left(n-1\right)}}^{n}}{\partial x_{1}\partial x_{2}}\left(s\left(\phi^{\left(n\right)}\right)^{-1}\left(t,x\right)\right)\mathrm{d}s\right]\left(\phi^{\left(n\right)}\right)_{1}^{-1}\left(t,x\right)\left(\phi^{\left(n\right)}\right)_{2}^{-1}\left(t,x\right)+\\
 & \quad+\frac{1}{2}\left[\int_{0}^{1}\left(1-s\right)\frac{\partial^{2}\widetilde{U^{\left(n-1\right)}}^{n}}{\partial x_{2}^{2}}\left(s\left(\phi^{\left(n\right)}\right)^{-1}\left(t,x\right)\right)\mathrm{d}s\right]\left[\left(\phi^{\left(n\right)}\right)_{2}^{-1}\left(t,x\right)\right]^{2}.
\end{aligned}
\label{eq:Taylor remainder real variables}
\end{equation}
Bounding $1-s\leq1$ in the integrals and using Lemmas \ref{lem:bounds second order derivatives}
(which we can use because $n\ge2$) and \ref{lem:bounds polynomials},
as the $\left|\left|\cdot\right|\right|_{L^{\infty}\left(\mathbb{R}^{2}\right)}$
remains invariant under diffeomorphisms of the domain, we deduce that
\[
\left|\left|W^{\left(n\right)}\left(t,\cdot\right)\right|\right|_{L^{\infty}\left(D^{\left(n\right)}\left(t\right);\mathbb{R}^{2}\right)}\lesssim_{\delta,\varphi}Y\frac{C^{\left(1+\delta\right)\left(\frac{1}{1-\gamma}\right)^{n-1}}}{a_{n}\left(t\right)^{2}}\frac{1}{\lambda_{n}^{2}}+Y\frac{C^{\left(1+\delta\right)\left(\frac{1}{1-\gamma}\right)^{n-1}}}{a_{n}\left(t\right)b_{n}\left(t\right)}\frac{1}{\lambda_{n}^{2}}+Y\frac{C^{\left(1+\delta\right)\left(\frac{1}{1-\gamma}\right)^{n-1}}}{b_{n}\left(t\right)^{2}}\frac{1}{\lambda_{n}^{2}}.
\]
Provided that $C$ is taken large enough (let us say $C\ge\Upsilon_{1}\left(\delta,\mu\right)$),
Corollary \ref{cor:bound for an(t), bn(t)} and Choice \ref{choice:lambdan}
provide
\[
\begin{aligned}\left|\left|W^{\left(n\right)}\left(t,\cdot\right)\right|\right|_{L^{\infty}\left(D^{\left(n\right)}\left(t\right);\mathbb{R}^{2}\right)} & \lesssim_{\delta,\gamma}Y\frac{C^{\left(1+\delta\right)\left(\frac{1}{1-\gamma}\right)^{n-1}}}{\lambda_{n}^{2}}\frac{1}{\min\left\{ a_{n}\left(t\right)^{2},b_{n}\left(t\right)^{2}\right\} }\leq\\
 & \lesssim_{\delta,\gamma}YC^{\left[-2+2\overline{k}_{n}\left(t\right)+2\Lambda+2\mu+\left(1+\delta\right)\left(1-\gamma\right)\right]\left(\frac{1}{1-\gamma}\right)^{n}}.
\end{aligned}
\]
By Choice \ref{choice:ideal kn}, we can bound
\[
\left|\left|W^{\left(n\right)}\left(t,\cdot\right)\right|\right|_{L^{\infty}\left(D^{\left(n\right)}\left(t\right);\mathbb{R}^{2}\right)}\lesssim_{\delta,\gamma}YC^{\left[-2\left(1-k_{\max}-\Lambda\right)+2\mu+\left(1+\delta\right)\left(1-\gamma\right)\right]\left(\frac{1}{1-\gamma}\right)^{n}}.
\]

Let us continue with the $\left|\left|\cdot\right|\right|_{\dot{C}^{\alpha}}$
seminorm. By equation \eqref{eq:property Calpha multiplication},
we will obtain two summands for each summand of \eqref{eq:Taylor remainder real variables}.
On the one hand, when the $\left|\left|\cdot\right|\right|_{\dot{C}^{\alpha}}$
acts on one of the second derivatives, we resort to Lemma \ref{lem:bounds second order derivatives}.
Nonetheless, notice that we may not apply this Lemma directly because
there is an $s$ multiplying $\left(\phi^{\left(n\right)}\right)^{-1}\left(t,x\right)$.
However, this is not a problem, since we can make use of equation
\eqref{eq:property Calpha composition} to deal with this $s$. On
the other hand, when $\left|\left|\cdot\right|\right|_{\dot{C}^{\alpha}}$
hits one of the polynomials in $\left(\phi^{\left(n\right)}\right)^{-1}$,
we employ Lemma \ref{lem:bounds polynomials}. Thereby, bounding $1-s\leq1$
and $s\leq1$, we obtain
\[
\begin{aligned}\left|\left|W^{\left(n\right)}\left(t,\cdot\right)\right|\right|_{\dot{C}^{\alpha}\left(D^{\left(n\right)}\left(t\right);\mathbb{R}^{2}\right)} & \lesssim_{\delta,\varphi}Y\frac{C^{\left(1+\delta\right)\left(\frac{1}{1-\gamma}\right)^{n-1}}}{a_{n}\left(t\right)^{2}}\frac{a_{n}\left(t\right)^{\alpha}}{\lambda_{n}}+\frac{C^{\left(1+\alpha+\delta\right)\left(\frac{1}{1-\gamma}\right)^{n-1}}}{a_{n}\left(t\right)^{2}}\frac{1}{\lambda_{n}^{2}}+\\
 & \quad+Y\frac{C^{\left(1+\delta\right)\left(\frac{1}{1-\gamma}\right)^{n-1}}}{a_{n}\left(t\right)b_{n}\left(t\right)}\frac{\max\left\{ a_{n}\left(t\right)^{\alpha},b_{n}\left(t\right)^{\alpha}\right\} }{\lambda_{n}}+\frac{C^{\left(1+\alpha+\delta\right)\left(\frac{1}{1-\gamma}\right)^{n-1}}}{a_{n}\left(t\right)b_{n}\left(t\right)}\frac{1}{\lambda_{n}^{2}}+\\
 & \quad+Y\frac{C^{\left(1+\delta\right)\left(\frac{1}{1-\gamma}\right)^{n-1}}}{b_{n}\left(t\right)^{2}}\frac{b_{n}\left(t\right)^{\alpha}}{\lambda_{n}}+\frac{C^{\left(1+\alpha+\delta\right)\left(\frac{1}{1-\gamma}\right)^{n-1}}}{b_{n}\left(t\right)^{2}}\frac{1}{\lambda_{n}^{2}}.
\end{aligned}
\]
Clearly,
\[
\begin{aligned}\left|\left|W^{\left(n\right)}\left(t,\cdot\right)\right|\right|_{\dot{C}^{\alpha}\left(D^{\left(n\right)}\left(t\right);\mathbb{R}^{2}\right)} & \lesssim_{\delta,\varphi}Y\frac{C^{\left(1+\delta\right)\left(\frac{1}{1-\gamma}\right)^{n-1}}}{\lambda_{n}}\max\left\{ \frac{1}{a_{n}\left(t\right)^{2-\alpha}},\frac{\max\left\{ a_{n}\left(t\right)^{\alpha},b_{n}\left(t\right)^{\alpha}\right\} }{a_{n}\left(t\right)b_{n}\left(t\right)},\frac{1}{b_{n}\left(t\right)^{2-\alpha}}\right\} +\\
 & \quad+Y\frac{C^{\left(1+\alpha+\delta\right)\left(\frac{1}{1-\gamma}\right)^{n-1}}}{\min\left\{ a_{n}\left(t\right)^{2},b_{n}\left(t\right)^{2}\right\} }\frac{1}{\lambda_{n}^{2}}.
\end{aligned}
\]
Corollary \ref{cor:bound for an(t), bn(t)} and Choices \ref{choice:anbn}
and \ref{choice:lambdan} provide
\[
\begin{aligned} & \left|\left|W^{\left(n\right)}\left(t,\cdot\right)\right|\right|_{\dot{C}^{\alpha}\left(D^{\left(n\right)}\left(t\right);\mathbb{R}^{2}\right)}\lesssim_{\delta,\varphi}\\
\lesssim_{\delta,\varphi} & YC^{\left[\Lambda+\max\left\{ -\left(2-\alpha\right)\left(1-\overline{k}_{n}\left(t\right)-\mu\right),-2+\alpha\left(1+\overline{k}_{n}\left(t\right)+\mu\right),-\left(2-\alpha\right)\left(1+\overline{k}_{n}\left(t\right)-\mu\right)\right\} +\left(1+\delta\right)\left(1-\gamma\right)\right]\left(\frac{1}{1-\gamma}\right)^{n}}+\\
 & +YC^{\left[2\Lambda-2\left(1-\overline{k}_{n}\left(t\right)-\mu\right)+\left(1+\alpha+\delta\right)\left(1-\gamma\right)\right]\left(\frac{1}{1-\gamma}\right)^{n}}.
\end{aligned}
\]
Notice that
\[
\begin{aligned}-\left(2-\alpha\right)\left(1-\overline{k}_{n}\left(t\right)-\mu\right) & =-2+\alpha+\left(2-\alpha\right)\overline{k}_{n}\left(t\right)+\left(2-\alpha\right)\mu,\\
-2+\alpha\left(1+\overline{k}_{n}\left(t\right)+\mu\right) & =-2+\alpha+\alpha\overline{k}_{n}\left(t\right)+\alpha\mu.
\end{aligned}
\]
As $\alpha\in\left(0,1\right)$, which implies that $\alpha\le2-\alpha$,
and $\overline{k}_{n}\left(t\right)\ge0$ by Corollary \ref{cor:ideal kn non negative},
we infer that
\[
\begin{aligned} & \left|\left|W^{\left(n\right)}\left(t,\cdot\right)\right|\right|_{\dot{C}^{\alpha}\left(D^{\left(n\right)}\left(t\right);\mathbb{R}^{2}\right)}\lesssim_{\delta,\varphi}\\
\lesssim_{\delta,\varphi} & YC^{\left[\Lambda-\left(2-\alpha\right)\left(1-\overline{k}_{n}\left(t\right)-\mu\right)+\left(1+\delta\right)\left(1-\gamma\right)\right]\left(\frac{1}{1-\gamma}\right)^{n}}+YC^{\left[2\Lambda-2\left(1-\overline{k}_{n}\left(t\right)-\mu\right)+\left(2+\delta\right)\left(1-\gamma\right)\right]\left(\frac{1}{1-\gamma}\right)^{n}}.
\end{aligned}
\]
Bounding $\overline{k}_{n}\left(t\right)\leq k_{\max}$ (see Choice
\ref{choice:ideal kn}), we obtain
\[
\begin{aligned}\left|\left|W^{\left(n\right)}\left(t,\cdot\right)\right|\right|_{\dot{C}^{\alpha}\left(D^{\left(n\right)}\left(t\right);\mathbb{R}^{2}\right)} & \lesssim_{\delta,\varphi}YC^{\left[-\min\left\{ \left(2-\alpha\right)\left(1-k_{\max}-\mu\right)-\Lambda,2\left(1-k_{\max}-\mu-\Lambda\right)\right\} +\left(2+\delta\right)\left(1-\gamma\right)\right]\left(\frac{1}{1-\gamma}\right)^{n}}\lesssim\\
 & \lesssim_{\delta,\varphi}YC^{\left[-2\left(1-\Lambda-k_{\max}\right)+\max\left\{ \alpha\left(1-k_{\max}\right)-\Lambda,0\right\} +2\mu+\left(2+\delta\right)\left(1-\gamma\right)\right]\left(\frac{1}{1-\gamma}\right)^{n}}.
\end{aligned}
\]

Next in line would be to compute $\left|\left|W^{\left(n\right)}\left(t,\cdot\right)\right|\right|_{\dot{C}^{1}\left(D^{\left(n\right)}\left(t\right);\mathbb{R}^{2}\right)}$.
Differentiating under the integral sign in \eqref{eq:Taylor remainder real variables},
we will obtain two summands for each term that appears in equation
\eqref{eq:Taylor remainder real variables}. Making use of Lemmas
\ref{lem:bounds second order derivatives} and \ref{lem:bounds polynomials},
formally, we can follow exactly the same procedure we introduced to
compute $\left|\left|W^{\left(n\right)}\left(t,\cdot\right)\right|\right|_{\dot{C}^{\alpha}\left(D^{\left(n\right)}\left(t\right);\mathbb{R}^{2}\right)}$
taking $\alpha=1$. Therefore,
\[
\begin{aligned}\left|\left|W^{\left(n\right)}\left(t,\cdot\right)\right|\right|_{\dot{C}^{1}\left(D^{\left(n\right)}\left(t\right);\mathbb{R}^{2}\right)} & \lesssim_{\delta,\varphi}YC^{\left[-2\left(1-\Lambda-k_{\max}\right)+\max\left\{ 1-k_{\max}-\Lambda,0\right\} +2\mu+\left(2+\delta\right)\left(1-\gamma\right)\right]\left(\frac{1}{1-\gamma}\right)^{n}}=\\
 & \lesssim_{\delta,\varphi}YC^{\left[-\min\left\{ 1-\Lambda-k_{\max},2\left(1-\Lambda-k_{\max}\right)\right\} +2\mu+\left(2+\delta\right)\left(1-\gamma\right)\right]\left(\frac{1}{1-\gamma}\right)^{n}}.
\end{aligned}
\]

Lastly, we have to calculate $\left|\left|W^{\left(n\right)}\left(t,\cdot\right)\right|\right|_{\dot{C}^{1,\alpha}\left(D^{\left(n\right)}\left(t\right);\mathbb{R}^{2}\right)}$.
To undertake this task, we will first differentiate under the integral
sign in \eqref{eq:Taylor remainder real variables}, duplicating the
number of summands that appear and, then, we will calculate the $\left|\left|\cdot\right|\right|_{\dot{C}^{\alpha}\left(D^{\left(n\right)}\left(t\right);\mathbb{R}^{2}\right)}$
norm of this derivative. By equation \eqref{eq:property Calpha multiplication},
this will again duplicate the number of summands. Basically, this
corresponds to applying a sort of ``Leibniz rule'' to each summand
of \eqref{eq:Taylor remainder real variables}. Schematically, for
each summand, we will have
\begin{equation}
\begin{aligned}\left|\left|F_{1}F_{2}\right|\right|_{\dot{C}^{1,\alpha}\left(D^{\left(n\right)}\left(t\right);\mathbb{R}^{2}\right)} & \lesssim\left|\left|F_{1}\right|\right|_{L^{\infty}\left(D^{\left(n\right)}\left(t\right);\mathbb{R}^{2}\right)}\left|\left|F_{2}\right|\right|_{\dot{C}^{1,\alpha}\left(D^{\left(n\right)}\left(t\right);\mathbb{R}^{2}\right)}+\\
 & \quad+\left|\left|F_{1}\right|\right|_{\dot{C}^{\alpha}\left(D^{\left(n\right)}\left(t\right);\mathbb{R}^{2}\right)}\left|\left|F_{2}\right|\right|_{\dot{C}^{1}\left(D^{\left(n\right)}\left(t\right);\mathbb{R}^{2}\right)}+\\
 & \quad+\left|\left|F_{1}\right|\right|_{\dot{C}^{1}\left(D^{\left(n\right)}\left(t\right);\mathbb{R}^{2}\right)}\left|\left|F_{2}\right|\right|_{\dot{C}^{\alpha}\left(D^{\left(n\right)}\left(t\right);\mathbb{R}^{2}\right)}+\\
 & \quad+\left|\left|F_{1}\right|\right|_{\dot{C}^{1,\alpha}\left(D^{\left(n\right)}\left(t\right);\mathbb{R}^{2}\right)}\left|\left|F_{2}\right|\right|_{L^{\infty}\left(D^{\left(n\right)}\left(t\right);\mathbb{R}^{2}\right)}.
\end{aligned}
\label{eq:informal leibniz rule}
\end{equation}
Thereby,
\[
\begin{aligned} & \left|\left|W^{\left(n\right)}\left(t,\cdot\right)\right|\right|_{\dot{C}^{1,\alpha}\left(D^{\left(n\right)}\left(t\right);\mathbb{R}^{2}\right)}\lesssim_{\delta,\varphi}\\
\lesssim_{\delta,\varphi} & Y\frac{C^{\left(1+\delta\right)\left(\frac{1}{1-\gamma}\right)^{n-1}}}{a_{n}\left(t\right)^{2}}a_{n}\left(t\right)^{1+\alpha}+Y\frac{C^{\left(1+\alpha+\delta\right)\left(\frac{1}{1-\gamma}\right)^{n-1}}}{a_{n}\left(t\right)^{2}}\frac{a_{n}\left(t\right)}{\lambda_{n}}+\\
 & \quad+Y\frac{C^{\left(2+\delta\right)\left(\frac{1}{1-\gamma}\right)^{n-1}}}{a_{n}\left(t\right)^{2}}\frac{a_{n}\left(t\right)^{\alpha}}{\lambda_{n}}+Y\frac{C^{\left(2+\alpha+\delta\right)\left(\frac{1}{1-\gamma}\right)^{n-1}}}{a_{n}\left(t\right)^{2}}\frac{1}{\lambda_{n}^{2}}+\\
 & +Y\frac{C^{\left(1+\delta\right)\left(\frac{1}{1-\gamma}\right)^{n-1}}}{a_{n}\left(t\right)b_{n}\left(t\right)}\max\left\{ a_{n}\left(t\right)b_{n}\left(t\right)^{\alpha},a_{n}\left(t\right)^{\alpha}b_{n}\left(t\right)\right\} +Y\frac{C^{\left(1+\alpha+\delta\right)\left(\frac{1}{1-\gamma}\right)^{n-1}}}{a_{n}\left(t\right)b_{n}\left(t\right)}\frac{\max\left\{ a_{n}\left(t\right),b_{n}\left(t\right)\right\} }{\lambda_{n}}+\\
 & \quad+Y\frac{C^{\left(2+\delta\right)\left(\frac{1}{1-\gamma}\right)^{n-1}}}{a_{n}\left(t\right)b_{n}\left(t\right)}\frac{\max\left\{ a_{n}\left(t\right)^{\alpha},b_{n}\left(t\right)^{\alpha}\right\} }{\lambda_{n}}+Y\frac{C^{\left(2+\alpha+\delta\right)\left(\frac{1}{1-\gamma}\right)^{n-1}}}{a_{n}\left(t\right)b_{n}\left(t\right)}\frac{1}{\lambda_{n}^{2}}+\\
 & +Y\frac{C^{\left(1+\delta\right)\left(\frac{1}{1-\gamma}\right)^{n-1}}}{b_{n}\left(t\right)^{2}}b_{n}\left(t\right)^{1+\alpha}+Y\frac{C^{\left(1+\alpha+\delta\right)\left(\frac{1}{1-\gamma}\right)^{n-1}}}{b_{n}\left(t\right)^{2}}\frac{b_{n}\left(t\right)}{\lambda_{n}}+\\
 & \quad+Y\frac{C^{\left(2+\delta\right)\left(\frac{1}{1-\gamma}\right)^{n-1}}}{b_{n}\left(t\right)^{2}}\frac{b_{n}\left(t\right)^{\alpha}}{\lambda_{n}}+Y\frac{C^{\left(2+\alpha+\delta\right)\left(\frac{1}{1-\gamma}\right)^{n-1}}}{b_{n}\left(t\right)^{2}}\frac{1}{\lambda_{n}^{2}}.
\end{aligned}
\]
Clearly,
\[
\begin{aligned} & \left|\left|W^{\left(n\right)}\left(t,\cdot\right)\right|\right|_{\dot{C}^{1,\alpha}\left(D^{\left(n\right)}\left(t\right);\mathbb{R}^{2}\right)}\lesssim_{\delta,\varphi}\\
\lesssim_{\delta,\varphi} & YC^{\left(1+\delta\right)\left(\frac{1}{1-\gamma}\right)^{n-1}}\max\left\{ \frac{1}{a_{n}\left(t\right)^{1-\alpha}},\frac{\max\left\{ a_{n}\left(t\right)b_{n}\left(t\right)^{\alpha},a_{n}\left(t\right)^{\alpha}b_{n}\left(t\right)\right\} }{a_{n}\left(t\right)b_{n}\left(t\right)},\frac{1}{b_{n}\left(t\right)^{1-\alpha}}\right\} +\\
 & \quad+Y\frac{C^{\left(1+\alpha+\delta\right)\left(\frac{1}{1-\gamma}\right)^{n-1}}}{\lambda_{n}}\max\left\{ \frac{1}{a_{n}\left(t\right)},\frac{\max\left\{ a_{n}\left(t\right),b_{n}\left(t\right)\right\} }{a_{n}\left(t\right)b_{n}\left(t\right)},\frac{1}{b_{n}\left(t\right)}\right\} +\\
 & \quad+Y\frac{C^{\left(2+\delta\right)\left(\frac{1}{1-\gamma}\right)^{n-1}}}{\lambda_{n}}\max\left\{ \frac{1}{a_{n}\left(t\right)^{2-\alpha}},\frac{\max\left\{ a_{n}\left(t\right)^{\alpha},b_{n}\left(t\right)^{\alpha}\right\} }{a_{n}\left(t\right)b_{n}\left(t\right)},\frac{1}{b_{n}\left(t\right)^{2-\alpha}}\right\} +\\
 & \quad+Y\frac{C^{\left(2+\alpha+\delta\right)\left(\frac{1}{1-\gamma}\right)^{n-1}}}{\lambda_{n}^{2}}\frac{1}{\min\left\{ a_{n}\left(t\right)^{2},b_{n}\left(t\right)^{2}\right\} }.
\end{aligned}
\]
Observe that
\[
\frac{\max\left\{ a_{n}\left(t\right)b_{n}\left(t\right)^{\alpha},a_{n}\left(t\right)^{\alpha}b_{n}\left(t\right)\right\} }{a_{n}\left(t\right)b_{n}\left(t\right)}=\max\left\{ \frac{1}{b_{n}\left(t\right)^{1-\alpha}},\frac{1}{a_{n}\left(t\right)^{1-\alpha}}\right\} =\frac{1}{\min\left\{ a_{n}\left(t\right)^{1-\alpha},b_{n}\left(t\right)^{1-\alpha}\right\} }.
\]
In this way,
\[
\begin{aligned}\left|\left|W^{\left(n\right)}\left(t,\cdot\right)\right|\right|_{\dot{C}^{1,\alpha}\left(D^{\left(n\right)}\left(t\right);\mathbb{R}^{2}\right)} & \lesssim_{\delta,\varphi}YC^{\left(1+\delta\right)\left(\frac{1}{1-\gamma}\right)^{n-1}}\frac{1}{\min\left\{ a_{n}\left(t\right)^{1-\alpha},b_{n}\left(t\right)^{1-\alpha}\right\} }+\\
 & \quad+Y\frac{C^{\left(1+\alpha+\delta\right)\left(\frac{1}{1-\gamma}\right)^{n-1}}}{\lambda_{n}}\max\left\{ \frac{1}{a_{n}\left(t\right)},\frac{1}{b_{n}\left(t\right)}\right\} +\\
 & \quad+Y\frac{C^{\left(2+\delta\right)\left(\frac{1}{1-\gamma}\right)^{n-1}}}{\lambda_{n}}\max\left\{ \frac{1}{a_{n}\left(t\right)^{2-\alpha}},\frac{\max\left\{ a_{n}\left(t\right)^{\alpha},b_{n}\left(t\right)^{\alpha}\right\} }{a_{n}\left(t\right)b_{n}\left(t\right)},\frac{1}{b_{n}\left(t\right)^{2-\alpha}}\right\} +\\
 & \quad+Y\frac{C^{\left(2+\alpha+\delta\right)\left(\frac{1}{1-\gamma}\right)^{n-1}}}{\lambda_{n}^{2}}\frac{1}{\min\left\{ a_{n}\left(t\right)^{2},b_{n}\left(t\right)^{2}\right\} }.
\end{aligned}
\]
We can apply the same procedure we have used for $\left|\left|W^{\left(n\right)}\left(t,\cdot\right)\right|\right|_{\dot{C}^{\alpha}\left(D^{\left(n\right)}\left(t\right);\mathbb{R}^{2}\right)}$
to bound the second and third terms. The fact that $\alpha\in\left(0,1\right)$,
Corollary \ref{cor:bound for an(t), bn(t)} and Choices \ref{choice:anbn}
and \ref{choice:lambdan} let us write
\[
\begin{aligned}\left|\left|W^{\left(n\right)}\left(t,\cdot\right)\right|\right|_{\dot{C}^{1,\alpha}\left(D^{\left(n\right)}\left(t\right);\mathbb{R}^{2}\right)} & \lesssim_{\delta,\varphi}YC^{\left[-\left(1-\alpha\right)\left(1-\overline{k}_{n}\left(t\right)-\mu\right)+\left(1+\delta\right)\left(1-\gamma\right)\right]\left(\frac{1}{1-\gamma}\right)^{n}}+\\
 & \quad+YC^{\left[\Lambda-\left(1-\overline{k}_{n}\left(t\right)-\mu\right)+\left(2+\delta\right)\left(1-\gamma\right)\right]\left(\frac{1}{1-\gamma}\right)^{n}}+\\
 & \quad+YC^{\left[\Lambda-\left(2-\alpha\right)\left(1-\overline{k}_{n}\left(t\right)-\mu\right)+\left(2+\delta\right)\left(1-\gamma\right)\right]\left(\frac{1}{1-\gamma}\right)^{n}}+\\
 & \quad+YC^{\left[2\Lambda-2\left(1-\overline{k}_{n}\left(t\right)-\mu\right)+\left(3+\delta\right)\left(1-\gamma\right)\right]\left(\frac{1}{1-\gamma}\right)^{n}}.
\end{aligned}
\]
Bounding $\overline{k}_{n}\left(t\right)\leq k_{\max}$ (see Choice
\ref{choice:ideal kn}), we obtain
\[
\begin{aligned} & \left|\left|W^{\left(n\right)}\left(t,\cdot\right)\right|\right|_{\dot{C}^{1,\alpha}\left(D^{\left(n\right)}\left(t\right);\mathbb{R}^{2}\right)}\lesssim_{\delta,\varphi}\\
\lesssim_{\delta,\varphi} & YC^{\left[-\min\left\{ \left(1-\alpha\right)\left(1-k_{\max}\right),1-k_{\max}-\Lambda,\left(2-\alpha\right)\left(1-k_{\max}\right)-\Lambda,2\left(1-k_{\max}-\Lambda\right)\right\} +2\mu+\left(3+\delta\right)\left(1-\gamma\right)\right]\left(\frac{1}{1-\gamma}\right)^{n}}\lesssim\\
\lesssim_{\delta,\varphi} & YC^{\left[-\min\left\{ \left(1-\alpha\right)\left(1-k_{\max}\right),1-k_{\max}-\Lambda,2\left(1-k_{\max}-\Lambda\right)\right\} +2\mu+\left(3+\delta\right)\left(1-\gamma\right)\right]\left(\frac{1}{1-\gamma}\right)^{n}},
\end{aligned}
\]
where we have applied that $\alpha\le2-\alpha$ in the last step to
bound the third term of the minimum by the second one.
\end{proof}
Lastly, we are prepared to bound the transport term of the density
equation!
\begin{prop}
\label{prop:bound density transport}Let $n\in\mathbb{N}$ with $n\ge2$,
$\alpha\in\left(0,1\right)$, $\mu>0$ and assume that $1-k_{\max}-\Lambda>0$.
As long as $C$ is taken large enough (let us say $C\ge\Upsilon\left(\delta,\mu\right)$),
the transport term
\[
\widetilde{T_{\rho}^{\left(n\right)}}^{n}\left(t,x\right)\coloneqq\left(\widetilde{U^{\left(n-1\right)}}^{n}\left(t,x\right)-\widetilde{U^{\left(n-1\right)}}^{n}\left(t,0\right)-\mathrm{J}\widetilde{U^{\left(n-1\right)}}^{n}\left(t,0\right)\cdot\left(\begin{matrix}x_{1}\\
x_{2}
\end{matrix}\right)\right)\cdot\widetilde{\nabla}^{n}\widetilde{\rho^{\left(n\right)}}^{n}\left(t,x\right)
\]
satisfies the bound
\[
\begin{aligned}\left|\left|T_{\rho}^{\left(n\right)}\left(t,\cdot\right)\right|\right|_{C^{1,\alpha}\left(\mathbb{R}^{2}\right)} & \lesssim Y^{2}C^{\left[6\mu+3\delta+3\left(1-\gamma\right)\right]\left(\frac{1}{1-\gamma}\right)^{n}}\cdot\\
 & \quad\cdot\left[C^{-\min\left\{ \left(1-\alpha\right)\left(1-k_{\max}\right),1-k_{\max}-\Lambda\right\} \left(\frac{1}{1-\gamma}\right)^{n}}+C^{\left[2\Lambda+3k_{\max}-1+\max\left\{ \alpha\left(1-k_{\max}\right)-\Lambda,0\right\} \right]\left(\frac{1}{1-\gamma}\right)^{n}}+\right.\\
 & \qquad\left.+C^{\left[\alpha\left(1+k_{\max}\right)+2\Lambda+3k_{\max}-1\right]\left(\frac{1}{1-\gamma}\right)^{n}}\right].
\end{aligned}
\]
In other words, for the term above to be decreasing in $n\in\mathbb{N}$,
we need
\begin{enumerate}
\item ~
\[
-\left(1-\alpha\right)\left(1-k_{\max}\right)+6\mu+3\delta+3\left(1-\gamma\right)<0,
\]
\item ~
\[
-\left(1-k_{\max}-\Lambda\right)+6\mu+3\delta+3\left(1-\gamma\right)<0,
\]
\item ~
\[
\alpha\left(1+k_{\max}\right)+2\Lambda+3k_{\max}-1+6\mu+3\delta+3\left(1-\gamma\right)<0,
\]
\item if $\alpha\left(1-k_{\max}\right)-\Lambda>0$, we also require
\[
\alpha\left(1-k_{\max}\right)+\Lambda+3k_{\max}-1+6\mu+3\delta+3\left(1-\gamma\right)<0.
\]
\end{enumerate}
\end{prop}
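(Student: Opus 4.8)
The plan is to realize the transport term as a product $T_\rho^{(n)}=W^{(n)}\cdot\bigl(\widetilde{\nabla}^n\widetilde{\rho^{(n)}}^n\circ(\phi^{(n)})^{-1}\bigr)$ and to bound it by combining the estimates already established in Propositions \ref{prop:bounds for Taylor development of transport} and \ref{prop:form of gradient of rho}. Indeed, the factor in parentheses in the definition of $\widetilde{T_\rho^{(n)}}^n$ is exactly the Taylor remainder $\widetilde{W^{(n)}}^n$ of Proposition \ref{prop:bounds for Taylor development of transport}, so, since $\phi^{(n)}$ is affine, $T_\rho^{(n)}(t,\cdot)=\sum_{i=1}^2 W_i^{(n)}(t,\cdot)\bigl[\widetilde{\nabla}^n\widetilde{\rho^{(n)}}^n\bigr]_i(t,(\phi^{(n)})^{-1}(t,\cdot))$. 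The first step is to reduce everything to $D^{(n)}(t)$: by Proposition \ref{prop:form of gradient of rho} every summand of $\widetilde{\nabla}^n\widetilde{\rho^{(n)}}^n(t,\cdot)$ carries a cutoff factor supported in $[-16\pi/\lambda_n,16\pi/\lambda_n]^2$, so $T_\rho^{(n)}(t,\cdot)$ is supported in $D^{(n)}(t)$, and since all factors vanish smoothly towards $\partial D^{(n)}(t)$ one has $\|T_\rho^{(n)}(t,\cdot)\|_{C^{1,\alpha}(\mathbb{R}^2)}\approx\|T_\rho^{(n)}(t,\cdot)\|_{C^{1,\alpha}(D^{(n)}(t))}$. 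This is what makes the $W^{(n)}$ bounds — which only hold on $D^{(n)}(t)$ — usable.

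Next I would distribute the $C^{1,\alpha}(D^{(n)}(t))$ norm over the product. Applying \eqref{eq:property Ckalpha multiplication} directly would be too lossy, so instead I would split the $\dot{C}^{1,\alpha}$ seminorm of the product via the Leibniz-type inequality \eqref{eq:informal leibniz rule}, producing the four pairings in which $W^{(n)}$ and the factor $\widetilde{\nabla}^n\widetilde{\rho^{(n)}}^n(t,(\phi^{(n)})^{-1}(t,\cdot))$ carry, respectively, the seminorms $(L^\infty,\dot{C}^{1,\alpha})$, $(\dot{C}^{\alpha},\dot{C}^1)$, $(\dot{C}^1,\dot{C}^{\alpha})$ and $(\dot{C}^{1,\alpha},L^\infty)$, together with the analogous but simpler splittings of the $L^\infty$ and $\dot{C}^1$ parts of the product. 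Into these I would substitute the bounds of Proposition \ref{prop:form of gradient of rho} for $\widetilde{\nabla}^n\widetilde{\rho^{(n)}}^n(t,(\phi^{(n)})^{-1}(t,\cdot))$ and the simplified bounds of Proposition \ref{prop:bounds for Taylor development of transport} (available because $1-k_{\max}-\Lambda>0$) for $W^{(n)}$, valid for $n\ge2$ and $C\ge\Upsilon(\delta,\mu)$. Adding the exponents of $C$ and using $\overline{k}_n(t)\in[0,k_{\max}]$, $\alpha\le1$ and $1-\gamma\le1$ to absorb the various error contributions into a common factor $C^{[6\mu+3\delta+3(1-\gamma)](1/(1-\gamma))^n}$, one finds that the pairing $(\dot{C}^{1,\alpha},L^\infty)$ produces the exponent $-\min\{(1-\alpha)(1-k_{\max}),1-k_{\max}-\Lambda\}$, the pairing $(\dot{C}^{\alpha},\dot{C}^1)$ produces $2\Lambda+3k_{\max}-1+\max\{\alpha(1-k_{\max})-\Lambda,0\}$, the pairing $(L^\infty,\dot{C}^{1,\alpha})$ produces $\alpha(1+k_{\max})+2\Lambda+3k_{\max}-1$, while the pairing $(\dot{C}^1,\dot{C}^{\alpha})$ and all $L^\infty$, $\dot{C}^1$ contributions to the product give strictly smaller exponents (this uses $\Lambda+2k_{\max}>0$). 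Collecting the three surviving terms yields the stated inequality; imposing negativity of the first surviving exponent gives conditions (1)--(2), imposing it on the third gives condition (3) (which, since it dominates, also handles the second whenever $\alpha(1-k_{\max})\le\Lambda$), and condition (4) is what the second term requires in the remaining case $\alpha(1-k_{\max})>\Lambda$.

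The main work — rather than a genuine obstacle — is bookkeeping: one must keep the precise Leibniz splitting (not the crude product rule) to land exactly on the claimed exponents, and then carefully collapse the resulting nested $\max$/$\min$ expressions (using $\alpha\le2-\alpha$, $\overline{k}_n(t)\le k_{\max}$, $1-\gamma\le1$, $\Lambda+2k_{\max}>0$) down to the three terms shown, checking along the way that the $L^\infty$ and $\dot{C}^1$ contributions to the product are dominated. The one genuinely structural point to be careful about is the passage between $\|\cdot\|_{C^{1,\alpha}(\mathbb{R}^2)}$ and $\|\cdot\|_{C^{1,\alpha}(D^{(n)}(t))}$, which rests on the cutoffs present in $\widetilde{\nabla}^n\widetilde{\rho^{(n)}}^n$ together with the fact that $\phi^{(n)}$ is affine, so that composing with $(\phi^{(n)})^{-1}$ commutes with the product structure and creates no new growth.
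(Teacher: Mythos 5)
Your proposal is correct and follows essentially the same route as the paper's proof: the same factorization of $T_{\rho}^{\left(n\right)}$ into $W^{\left(n\right)}$ times $\widetilde{\nabla}^{n}\widetilde{\rho^{\left(n\right)}}^{n}$ composed with $\left(\phi^{\left(n\right)}\right)^{-1}$, the same use of the compact support of the density factor to pass from $D^{\left(n\right)}\left(t\right)$ to all of $\mathbb{R}^{2}$, the same Leibniz splitting \eqref{eq:informal leibniz rule} fed with Propositions \ref{prop:form of gradient of rho} and \ref{prop:bounds for Taylor development of transport}, and the same absorption of the $\left(\dot{C}^{1},\dot{C}^{\alpha}\right)$ pairing and of the $L^{\infty}$ and $\dot{C}^{1}$ contributions into the three surviving exponents. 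No gaps.
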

\begin{proof}
We shall make use of Propositions \ref{prop:form of gradient of rho}
and \ref{prop:bounds for Taylor development of transport}, which
we may make use of because $n\ge2$ and provided that $C$ is big
enough (let us say $C\ge\Upsilon_{1}\left(\delta,\mu\right)$). Notice
that, as $\widetilde{\nabla}^{n}\widetilde{\rho^{\left(n\right)}}^{n}$
has compact support exactly where the bounds of Proposition \ref{prop:bounds for Taylor development of transport}
are computed (see Choices \ref{choice:density} and \ref{choice:psin}),
we can use the bounds of Proposition \ref{prop:bounds for Taylor development of transport}
to bound $T_{\rho}^{\left(n\right)}$ in all $\mathbb{R}^{2}$. Bounding
$1-\gamma\le1$ in the terms of the form $\delta\left(1-\gamma\right)$,
it is immediate to obtain that
\begin{equation}
\begin{aligned}\left|\left|T_{\rho}^{\left(n\right)}\left(t,\cdot\right)\right|\right|_{L^{\infty}\left(\mathbb{R}^{2}\right)} & \lesssim_{\delta,\varphi,\mu}YC^{\left[-2\left(1-\Lambda-k_{\max}\right)+2\mu+\left(1+\delta\right)\left(1-\gamma\right)\right]\left(\frac{1}{1-\gamma}\right)^{n}}YC^{\left[2\delta+2\mu\right]\left(\frac{1}{1-\gamma}\right)^{n}}=\\
 & \lesssim_{\delta,\varphi,\mu}Y^{2}C^{\left[-2\left(1-\Lambda-k_{\max}\right)+4\mu+3\delta+\left(1-\gamma\right)\right]\left(\frac{1}{1-\gamma}\right)^{n}}.
\end{aligned}
\label{eq:bound transport density Linfty}
\end{equation}

Applying Leibniz's rule, it is not difficult to deduce a bound for
the $\left|\left|\cdot\right|\right|_{\dot{C}^{1}\left(\mathbb{R}^{2}\right)}$
seminorm. Indeed,
\[
\begin{aligned}\left|\left|T_{\rho}^{\left(n\right)}\left(t,\cdot\right)\right|\right|_{\dot{C}^{1}\left(\mathbb{R}^{2}\right)} & \lesssim_{\delta,\varphi,\mu}YC^{\left[-2\left(1-\Lambda-k_{\max}\right)+2\mu+\left(1+\delta\right)\left(1-\gamma\right)\right]\left(\frac{1}{1-\gamma}\right)^{n}}YC^{\left[\left(1+k_{\max}\right)+2\delta+3\mu\right]\left(\frac{1}{1-\gamma}\right)^{n}}+\\
 & \quad+YC^{\left[-\left(1-\Lambda-k_{\max}\right)+2\mu+\left(2+\delta\right)\left(1-\gamma\right)\right]\left(\frac{1}{1-\gamma}\right)^{n}}YC^{\left[2\delta+2\mu\right]\left(\frac{1}{1-\gamma}\right)^{n}}.
\end{aligned}
\]
Since
\[
-2\left(1-\Lambda-k_{\max}\right)+1+k_{\max}=-1+2\Lambda+3k_{\max}\ge-1+\Lambda+k_{\max}=-\left(1-\Lambda-k_{\max}\right),
\]
we conclude that
\begin{equation}
\left|\left|T_{\rho}^{\left(n\right)}\left(t,\cdot\right)\right|\right|_{\dot{C}^{1}\left(\mathbb{R}^{2}\right)}\lesssim_{\delta,\varphi,\mu}YC^{\left[-\left(1-2\Lambda-3k_{\max}\right)+5\mu+3\delta+2\left(1-\gamma\right)\right]\left(\frac{1}{1-\gamma}\right)^{n}}.\label{eq:bound transport density C1}
\end{equation}

Next in line is to compute the $\left|\left|\cdot\right|\right|_{\dot{C}^{1,\alpha}\left(\mathbb{R}^{2}\right)}$
seminorm. For this, we can apply the ``Leibniz rule'' introduced
in equation \eqref{eq:informal leibniz rule}. Thereby,
\[
\begin{aligned}\left|\left|T_{\rho}^{\left(n\right)}\left(t,\cdot\right)\right|\right|_{\dot{C}^{1,\alpha}\left(\mathbb{R}^{2}\right)} & \lesssim_{\delta,\varphi,\mu}YC^{\left[-\min\left\{ \left(1-\alpha\right)\left(1-k_{\max}\right),1-k_{\max}-\Lambda\right\} +2\mu+\left(3+\delta\right)\left(1-\gamma\right)\right]\left(\frac{1}{1-\gamma}\right)^{n}}YC^{\left[2\delta+2\mu\right]\left(\frac{1}{1-\gamma}\right)^{n}}+\\
 & \quad+YC^{\left[-\left(1-\Lambda-k_{\max}\right)+2\mu+\left(2+\delta\right)\left(1-\gamma\right)\right]\left(\frac{1}{1-\gamma}\right)^{n}}YC^{\left[\alpha\left(1+k_{\max}\right)+2\delta+3\mu\right]\left(\frac{1}{1-\gamma}\right)^{n}}+\\
 & \quad+YC^{\left[-2\left(1-\Lambda-k_{\max}\right)+\max\left\{ \alpha\left(1-k_{\max}\right)-\Lambda,0\right\} +2\mu+\left(2+\delta\right)\left(1-\gamma\right)\right]}YC^{\left[\left(1+k_{\max}\right)+2\delta+3\mu\right]\left(\frac{1}{1-\gamma}\right)^{n}}+\\
 & \quad+YC^{\left[-2\left(1-\Lambda-k_{\max}\right)+2\mu+\left(1+\delta\right)\left(1-\gamma\right)\right]\left(\frac{1}{1-\gamma}\right)^{n}}YC^{\left(\left(1+\alpha\right)\left(1+k_{\max}\right)+2\delta+4\mu\right)\left(\frac{1}{1-\gamma}\right)^{n}}\lesssim\\
 & \lesssim_{\delta,\varphi,\mu}Y^{2}C^{\left[6\mu+3\delta+3\left(1-\gamma\right)\right]\left(\frac{1}{1-\gamma}\right)^{n}}\cdot\\
 & \quad\cdot\left[C^{-\min\left\{ \left(1-\alpha\right)\left(1-k_{\max}\right),1-k_{\max}-\Lambda\right\} \left(\frac{1}{1-\gamma}\right)^{n}}+C^{\left[\alpha\left(1+k_{\max}\right)+\Lambda+k_{\max}-1\right]\left(\frac{1}{1-\gamma}\right)^{n}}+\right.\\
 & \qquad\left.C^{\left[2\Lambda+3k_{\max}-1+\max\left\{ \alpha\left(1-k_{\max}\right)-\Lambda,0\right\} \right]\left(\frac{1}{1-\gamma}\right)^{n}}+C^{\left[\alpha\left(1+k_{\max}\right)+2\Lambda+3k_{\max}-1\right]\left(\frac{1}{1-\gamma}\right)^{n}}\right].
\end{aligned}
\]
As $\Lambda>0$ by Choice \ref{choice:lambdan} and $k_{\max}>0$
by Choice \ref{choice:min requirements on C gamma and kmax}, we can
bound
\[
\alpha\left(1+k_{\max}\right)+\Lambda+k_{\max}-1\le\alpha\left(1+k_{\max}\right)+2\Lambda+3k_{\max}-1.
\]
Consequently,
\begin{equation}
\begin{aligned}\left|\left|T_{\rho}^{\left(n\right)}\left(t,\cdot\right)\right|\right|_{\dot{C}^{1,\alpha}\left(\mathbb{R}^{2}\right)} & \lesssim Y^{2}C^{\left[6\mu+3\delta+3\left(1-\gamma\right)\right]\left(\frac{1}{1-\gamma}\right)^{n}}\cdot\\
 & \quad\cdot\left[C^{-\min\left\{ \left(1-\alpha\right)\left(1-k_{\max}\right),1-k_{\max}-\Lambda\right\} \left(\frac{1}{1-\gamma}\right)^{n}}+C^{\left[2\Lambda+3k_{\max}-1+\max\left\{ \alpha\left(1-k_{\max}\right)-\Lambda,0\right\} \right]\left(\frac{1}{1-\gamma}\right)^{n}}+\right.\\
 & \qquad\left.+C^{\left[\alpha\left(1+k_{\max}\right)+2\Lambda+3k_{\max}-1\right]\left(\frac{1}{1-\gamma}\right)^{n}}\right].
\end{aligned}
\label{eq:bound transport density C1alpha}
\end{equation}

Combining equations \eqref{eq:bound transport density Linfty}, \eqref{eq:bound transport density C1}
and \eqref{eq:bound transport density C1alpha}, we see that the $\left|\left|\cdot\right|\right|_{\dot{C}^{1,\alpha}\left(\mathbb{R}^{2}\right)}$
seminorm is the most restrictive. Indeed, since, by hypothesis, $1-k_{\max}-\Lambda>0$,
we deduce that $1-k_{\max}-\Lambda<2\left(1-k_{\max}-\Lambda\right)$.
In this way, the $\left|\left|\cdot\right|\right|_{L^{\infty}\left(\mathbb{R}^{2}\right)}$
norm is bounded by the first summand of equation \eqref{eq:bound transport density C1alpha}.
Moreover, the $\left|\left|\cdot\right|\right|_{\dot{C}^{1}\left(\mathbb{R}^{2}\right)}$
seminorm is bounded by the second summand of equation \eqref{eq:bound transport density C1alpha}.
This provides the result of the statement.
\end{proof}

\section{\label{sec:bounds for vorticity force}Bounds for vorticity force}

Equation \eqref{eq:Boussinesq vorticity Taylor}, along with Choice
\ref{choice:density}, tells us how to exactly compute the force $\widetilde{f_{\omega}^{\left(n\right)}}^{n}\left(t,x\right)$.
We distinguish the following terms:
\begin{equation}
\begin{aligned} & \underbrace{\frac{\partial\widetilde{\omega^{\left(n\right)}}^{n}}{\partial t}\left(t,x\right)-\left(\begin{matrix}0 & 1\end{matrix}\right)\cdot\left[\widetilde{\nabla}^{n}\widetilde{\rho^{\left(n\right)}}^{n}\left(t,x\right)\right]}_{\text{time factor}}+\\
 & +\underbrace{\left(\widetilde{U^{\left(n-1\right)}}^{n}\left(t,x\right)-\widetilde{U^{\left(n-1\right)}}^{n}\left(t,0\right)-\mathrm{J}\widetilde{U^{\left(n-1\right)}}^{n}\left(t,0\right)\cdot\left(\begin{matrix}x_{1}\\
x_{2}
\end{matrix}\right)\right)\cdot\widetilde{\nabla}^{n}\widetilde{\omega^{\left(n\right)}}^{n}\left(t,x\right)}_{\text{transport term}}+\\
 & +\underbrace{\widetilde{u^{\left(n\right)}}^{n}\left(t,x\right)\cdot\widetilde{\nabla}^{n}\widetilde{\Omega^{\left(n-1\right)}}^{n}}_{\text{new transport of old vorticity}}+\underbrace{\widetilde{u^{\left(n\right)}}^{n}\left(t,x\right)\cdot\widetilde{\nabla}^{n}\widetilde{\omega^{\left(n\right)}}^{n}\left(t,x\right)}_{\text{pure quadratic term}}=\\
= & \widetilde{f_{\omega}^{\left(n\right)}}^{n}\left(t,x\right).
\end{aligned}
\label{eq:decomposition force vorticity}
\end{equation}

\subsection{Time factor}
\begin{lem}
\label{lem:time derivatives amplitudes vorticity}Let $n\in\mathbb{N}$
with $n\ge2$ and $\mu>0$. Provided that $C$ is taken large enough
(let us say $C\ge\Upsilon\left(\delta,\mu\right)$), we have
\[
\begin{aligned}\left|\frac{\mathrm{d}}{\mathrm{d}t}\left(B_{n}\left(t\right)a_{n}\left(t\right)^{2}\right)\right| & \lesssim_{\delta,\mu}Y^{2}C^{\left(2\delta+2\mu\right)\left(\frac{1}{1-\gamma}\right)^{n}},\\
\left|\frac{\mathrm{d}}{\mathrm{d}t}\left(B_{n}\left(t\right)b_{n}\left(t\right)^{2}\right)\right| & \lesssim_{\delta,\mu}Y^{2}C^{\left(2\delta+2\mu\right)\left(\frac{1}{1-\gamma}\right)^{n}},\\
\left|\frac{\mathrm{d}}{\mathrm{d}t}\left(B_{n}\left(t\right)\left(a_{n}\left(t\right)^{2}+b_{n}\left(t\right)^{2}\right)\right)\right| & \lesssim_{\mu}Y^{2}C^{\left(2\delta+2\mu\right)\left(\frac{1}{1-\gamma}\right)^{n}}.
\end{aligned}
\]
\end{lem}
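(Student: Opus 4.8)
The idea is to use the explicit formula for $B_n(t)$ derived from Choice~\ref{choice:amplitude density} (together with equation \eqref{eq:relation Mn and zn} and Choice~\ref{choice:time picture}),
\[
B_n(t)\bigl(a_n(t)^2+b_n(t)^2\bigr)=2M_n\,\frac{\int_{t_n}^t h^{(n)}(s)b_n(s)\,\mathrm{d}s}{\int_{t_n}^1 h^{(n)}(s)b_n(s)\,\mathrm{d}s},
\]
differentiate it directly in time, and then bound the resulting expression using the estimates for $b_n(s)$, for $h^{(n)}$ and its derivative, and for the integral $\int_{t_n}^1 h^{(n)}(s)b_n(s)\,\mathrm{d}s$ that are already available (Corollary~\ref{cor:bound for an(t), bn(t)}, equations \eqref{eq:def dhndt second way} and \eqref{eq:h_eps near one}, and Lemma~\ref{lem:estimate of integral hn bn}). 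Since $\frac{\mathrm d}{\mathrm dt}\bigl[a_n(t)b_n(t)\bigr]=0$ by Choice~\ref{choice:anbncn}, the product $a_n b_n$ is constant, so the only genuinely new object to control is $\frac{\mathrm d}{\mathrm dt}\bigl(a_n(t)^2+b_n(t)^2\bigr)$, or equivalently $\frac{\mathrm d}{\mathrm dt}(a_n/b_n)$, which can be written in terms of $\dot k_n(t)$ via Choice~\ref{choice:anbn} and the identity $\frac{\mathrm d}{\mathrm dt}\ln b_n(t)=\bigl(\tfrac{1}{1-\gamma}\bigr)^n\ln C\,\dot k_n(t)$ from the ``Equation for $k_n(t)$'' subsection.

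\textbf{Main steps.} First I would start from the third estimate (the one for $\frac{\mathrm d}{\mathrm dt}(B_n(a_n^2+b_n^2))$), because it is the cleanest: differentiating the displayed formula gives
\[
\frac{\mathrm d}{\mathrm dt}\Bigl(B_n(t)(a_n(t)^2+b_n(t)^2)\Bigr)=2M_n\,\frac{h^{(n)}(t)b_n(t)}{\int_{t_n}^1 h^{(n)}(s)b_n(s)\,\mathrm{d}s},
\]
and now bound $h^{(n)}(t)\le 1$ (from \eqref{eq:h_eps near one}), $b_n(t)\le C^{(1+k_{\max}+\mu)(\frac{1}{1-\gamma})^n}$ (from Corollary~\ref{cor:bound for an(t), bn(t)}, using $\overline k_n\le k_{\max}$ of Choice~\ref{choice:ideal kn}), $\int_{t_n}^1 h^{(n)}(s)b_n(s)\,\mathrm{d}s\gtrsim_\mu \tfrac1Y C^{(1+k_{\max}-\mu-\delta(1-\gamma))(\frac{1}{1-\gamma})^n}$ (Lemma~\ref{lem:estimate of integral hn bn}), and $M_n=YC^{\delta(\frac{1}{1-\gamma})^n}$ (Choice~\ref{choice:Mn}). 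Collecting exponents: $\delta + (1+k_{\max}+\mu) - (1+k_{\max}-\mu-\delta(1-\gamma)) = 2\mu + \delta(2-\gamma) \le 2\mu+2\delta$, giving the claimed $Y^2 C^{(2\delta+2\mu)(\frac{1}{1-\gamma})^n}$ after absorbing the $\tfrac1Y$ against $M_n$ and a further $Y$ that appears because the hidden constant in the integral lower bound contributes a $Y$.

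For the first two estimates I would split $B_n(t)a_n(t)^2 = \tfrac{1}{2}B_n(t)(a_n^2+b_n^2) + \tfrac{1}{2}B_n(t)(a_n^2-b_n^2)$, and similarly with the sign flipped; the first half is handled by the above, and for the second half I use $B_n(t)(a_n^2-b_n^2) = B_n(t)(a_n^2+b_n^2)\cdot\frac{a_n^2-b_n^2}{a_n^2+b_n^2}$, where the quotient lies in $[-1,1]$ and its time derivative is controlled by $\frac{\mathrm d}{\mathrm dt}(a_n/b_n)$, hence by $\dot k_n(t)$. To bound $\dot k_n(t)$ one uses equation \eqref{eq:real dkntdt}: $\dot k_n(t)$ equals $\bigl(\ln C(\tfrac{1}{1-\gamma})^n\bigr)^{-1}$ times a sum $\sum_{m=1}^{n-1}B_m b_m a_m\cos(\cdots)$, which by Proposition~\ref{prop:time convergence}, Choices~\ref{choice:Bnanbn} and \ref{choice:Mn}, and Lemma~\ref{lem:estimate sum superexponential} is $\lesssim_\delta \bigl(\ln C(\tfrac{1}{1-\gamma})^n\bigr)^{-1}YC^{\delta(\frac{1}{1-\gamma})^{n-1}}$; when this is multiplied by $(\tfrac{1}{1-\gamma})^n\ln C$ (the factor coming from differentiating the exponent in $a_n/b_n=C^{-2k_n(\frac{1}{1-\gamma})^n}$) one gets $\lesssim_\delta YC^{\delta(\frac{1}{1-\gamma})^{n-1}}$, which is even smaller than $C^{(2\delta+2\mu)(\frac{1}{1-\gamma})^n}$ for large $n$. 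Combining the two halves via the product rule yields the same bound $Y^2 C^{(2\delta+2\mu)(\frac{1}{1-\gamma})^n}$ for $\frac{\mathrm d}{\mathrm dt}(B_na_n^2)$ and $\frac{\mathrm d}{\mathrm dt}(B_nb_n^2)$.

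\textbf{Expected obstacle.} The only subtle point is bookkeeping the factors of $Y$: the lower bound of Lemma~\ref{lem:estimate of integral hn bn} carries a $\tfrac1Y$, $M_n$ carries a $Y$, and the bound for $\dot k_n$ carries another $Y$, so one must check that the product rule does not produce a worse-than-$Y^2$ power — this is why the statement has $Y^2$ rather than $Y$ or $Y^3$. A secondary nuisance is that all these auxiliary estimates require $C$ to exceed various thresholds $\Upsilon(\delta,\mu)$, so one simply takes $C$ larger than the maximum of them; and one must remember that differentiating $h^{(n)}$ only produces a nonzero contribution on the two short intervals near $t_n$ and $t_{n+1}$ where, conveniently, $\overline k_n(t)$ is small (as shown in the proof of Proposition~\ref{prop:bound density time derivative}), so even the first-half argument does not lose anything there. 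Apart from that the proof is a routine differentiation followed by term-by-term size estimates, all with tools already in hand.
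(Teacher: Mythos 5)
Your proof is correct, and it rests on exactly the same ingredients as the paper's proof (the identity $\frac{\mathrm{d}}{\mathrm{d}t}\bigl[B_{n}(t)(a_{n}(t)^{2}+b_{n}(t)^{2})\bigr]=2M_{n}h^{(n)}(t)b_{n}(t)/\int_{t_{n}}^{1}h^{(n)}b_{n}$, the bound $\bigl|\frac{\mathrm{d}}{\mathrm{d}t}\ln b_{n}(t)\bigr|\lesssim_{\delta}YC^{\delta\left(\frac{1}{1-\gamma}\right)^{n-1}}$ obtained from Choice~\ref{choice:anbncn}, Proposition~\ref{prop:time convergence}, Choices~\ref{choice:Bnanbn}, \ref{choice:Mn} and Lemma~\ref{lem:estimate sum superexponential}, plus Lemma~\ref{lem:estimate of integral hn bn} and Corollary~\ref{cor:bound for an(t), bn(t)}), but the decomposition is organized differently. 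The paper attacks $\frac{\mathrm{d}}{\mathrm{d}t}(B_{n}b_{n}^{2})$ head-on by Leibniz, which forces an explicit computation of $\frac{\mathrm{d}B_{n}}{\mathrm{d}t}$ (differentiating the quotient of integrals and the factor $(a_{n}^{2}+b_{n}^{2})^{-1}$, with the bounded ratio $\frac{b_{n}^{2}-a_{n}^{2}}{b_{n}^{2}+a_{n}^{2}}$ absorbing the size), and then recovers $\frac{\mathrm{d}}{\mathrm{d}t}(B_{n}a_{n}^{2})$ by subtracting from the sum estimate; you prove the sum estimate first and recover both individual terms from $B_{n}a_{n}^{2}=\tfrac12 B_{n}(a_{n}^{2}+b_{n}^{2})\bigl(1+\tfrac{a_{n}^{2}-b_{n}^{2}}{a_{n}^{2}+b_{n}^{2}}\bigr)$, using $B_{n}(a_{n}^{2}+b_{n}^{2})\le 2M_{n}$ and the product rule, which never requires differentiating $B_{n}$ alone and is a small but genuine simplification; your $Y$ bookkeeping closes correctly in both pieces ($Y$ from $M_{n}$ times $Y$ from inverting the $\tfrac1Y$ of Lemma~\ref{lem:estimate of integral hn bn}, respectively $Y$ from $M_{n}$ times $Y$ from the $\frac{\mathrm{d}}{\mathrm{d}t}\ln b_{n}$ bound). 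Two points to tighten when writing it up: the derivative of the ratio is controlled by the \emph{logarithmic} derivative of $a_{n}/b_{n}$, i.e.\ by $4\bigl|\frac{\mathrm{d}}{\mathrm{d}t}\ln b_{n}\bigr|$ (the factor $r^{2}/(1+r^{2})^{2}\le\tfrac14$ with $r=a_{n}/b_{n}$ absorbs the possibly large size of $r$ when $k_{n}(t)$ dips slightly below zero), not by $\frac{\mathrm{d}}{\mathrm{d}t}(a_{n}/b_{n})$ itself — though the quantitative bound you then state, $|\dot k_{n}|\left(\frac{1}{1-\gamma}\right)^{n}\ln C\lesssim_{\delta}YC^{\delta\left(\frac{1}{1-\gamma}\right)^{n-1}}$, is exactly right and coincides with the paper's estimate for $\frac{\mathrm{d}}{\mathrm{d}t}\ln b_{n}$. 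Also, the worry in your last paragraph about $\frac{\mathrm{d}h^{(n)}}{\mathrm{d}t}$ is moot here: differentiating $B_{n}(a_{n}^{2}+b_{n}^{2})$ produces $h^{(n)}$ itself, never its derivative, so no discussion of the ramp intervals is needed in this lemma (that issue belongs to Proposition~\ref{prop:bound density time derivative}).
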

\begin{proof}
First, we will bound $\frac{\mathrm{d}}{\mathrm{d}t}\left(B_{n}\left(t\right)b_{n}\left(t\right)^{2}\right)$.
By Leibniz's rule, we deduce that
\begin{equation}
\begin{aligned}\frac{\mathrm{d}}{\mathrm{d}t}\left(B_{n}\left(t\right)b_{n}\left(t\right)^{2}\right) & =\frac{\mathrm{d}B_{n}}{\mathrm{d}t}\left(t\right)b_{n}\left(t\right)^{2}+2B_{n}\left(t\right)b_{n}\left(t\right)\frac{\mathrm{d}b_{n}}{\mathrm{d}t}\left(t\right)=\\
 & =\left[\frac{\mathrm{d}B_{n}}{\mathrm{d}t}\left(t\right)+2B_{n}\left(t\right)\frac{\mathrm{d}}{\mathrm{d}t}\left(\ln\left(b_{n}\left(t\right)\right)\right)\right]b_{n}\left(t\right)^{2}.
\end{aligned}
\label{eq:Leibniz rule Bn(t)bn(t)2}
\end{equation}
On the one hand, by Choice \ref{choice:amplitude density}, equation
\eqref{eq:relation Mn and zn} and Choice \ref{choice:time picture},
we have
\begin{equation}
B_{n}\left(t\right)=\frac{2M_{n}}{a_{n}\left(t\right)^{2}+b_{n}\left(t\right)^{2}}\frac{\int_{t_{n}}^{t}h^{\left(n\right)}\left(s\right)b_{n}\left(s\right)\mathrm{d}s}{\int_{t_{n}}^{1}h^{\left(n\right)}\left(s\right)b_{n}\left(s\right)\mathrm{d}s}.\label{eq:expression for Bn(t)}
\end{equation}
Hence,
\begin{equation}
\begin{aligned}\frac{\mathrm{d}B_{n}}{\mathrm{d}t}\left(t\right) & =-\frac{2M_{n}}{\left(a_{n}\left(t\right)^{2}+b_{n}\left(t\right)\right)^{2}}\frac{\mathrm{d}}{\mathrm{d}t}\left(a_{n}\left(t\right)^{2}+b_{n}\left(t\right)^{2}\right)\frac{\int_{t_{n}}^{t}h^{\left(n\right)}\left(s\right)b_{n}\left(s\right)\mathrm{d}s}{\int_{t_{n}}^{1}h^{\left(n\right)}\left(s\right)b_{n}\left(s\right)\mathrm{d}s}+\\
 & \quad+\frac{2M_{n}}{a_{n}\left(t\right)^{2}+b_{n}\left(t\right)^{2}}\frac{h^{\left(n\right)}\left(t\right)b_{n}\left(t\right)}{\int_{t_{n}}^{1}h^{\left(n\right)}\left(s\right)b_{n}\left(s\right)\mathrm{d}s}=\\
 & =\frac{2M_{n}}{a_{n}\left(t\right)^{2}+b_{n}\left(t\right)^{2}}\left[\frac{h^{\left(n\right)}\left(t\right)b_{n}\left(t\right)}{\int_{t_{n}}^{1}h^{\left(n\right)}\left(s\right)b_{n}\left(s\right)\mathrm{d}s}-\frac{\frac{\mathrm{d}}{\mathrm{d}t}\left(a_{n}\left(t\right)^{2}+b_{n}\left(t\right)^{2}\right)}{a_{n}\left(t\right)^{2}+b_{n}\left(t\right)^{2}}\frac{\int_{t_{n}}^{t}h^{\left(n\right)}\left(s\right)b_{n}\left(s\right)\mathrm{d}s}{\int_{t_{n}}^{1}h^{\left(n\right)}\left(s\right)b_{n}\left(s\right)\mathrm{d}s}\right].
\end{aligned}
\label{eq:dBndt v0}
\end{equation}
To continue, we need to work on $\frac{\frac{\mathrm{d}}{\mathrm{d}t}\left(a_{n}\left(t\right)^{2}+b_{n}\left(t\right)^{2}\right)}{a_{n}\left(t\right)^{2}+b_{n}\left(t\right)^{2}}$
a little more.
\[
\begin{aligned}\frac{\frac{\mathrm{d}}{\mathrm{d}t}\left(a_{n}\left(t\right)^{2}+b_{n}\left(t\right)^{2}\right)}{a_{n}\left(t\right)^{2}+b_{n}\left(t\right)^{2}} & =2\frac{a_{n}\left(t\right)\frac{\mathrm{d}a_{n}}{\mathrm{d}t}\left(t\right)+b_{n}\left(t\right)\frac{\mathrm{d}b_{n}}{\mathrm{d}t}\left(t\right)}{a_{n}\left(t\right)^{2}+b_{n}\left(t\right)^{2}}=2\frac{a_{n}\left(t\right)^{2}\frac{\mathrm{d}}{\mathrm{d}t}\left(\ln\left(a_{n}\left(t\right)\right)\right)+b_{n}\left(t\right)^{2}\frac{\mathrm{d}}{\mathrm{d}t}\left(\ln\left(b_{n}\left(t\right)\right)\right)}{a_{n}\left(t\right)^{2}+b_{n}\left(t\right)^{2}}.\end{aligned}
\]
By Choice \ref{choice:anbncn}, we have
\[
\frac{\mathrm{d}}{\mathrm{d}t}\left(a_{n}\left(t\right)b_{n}\left(t\right)\right)=0\iff\frac{\mathrm{d}}{\mathrm{d}t}\left(\ln\left(a_{n}\left(t\right)b_{n}\left(t\right)\right)\right)=0\iff\frac{\mathrm{d}}{\mathrm{d}t}\left(\ln\left(a_{n}\left(t\right)\right)\right)=-\frac{\mathrm{d}}{\mathrm{d}t}\left(\ln\left(b_{n}\left(t\right)\right)\right)
\]
and, consequently,
\[
\frac{\frac{\mathrm{d}}{\mathrm{d}t}\left(a_{n}\left(t\right)^{2}+b_{n}\left(t\right)^{2}\right)}{a_{n}\left(t\right)^{2}+b_{n}\left(t\right)^{2}}=2\frac{\mathrm{d}}{\mathrm{d}t}\left(\ln\left(b_{n}\left(t\right)\right)\right)\underbrace{\frac{b_{n}\left(t\right)^{2}-a_{n}\left(t\right)^{2}}{b_{n}\left(t\right)^{2}+a_{n}\left(t\right)^{2}}}_{\left(*\right)}.
\]
Notice that $\left(*\right)$ is bounded in absolute value by $1$.
As long as $C$ is big enough (let us say $C\ge\Upsilon_{1}\left(\delta\right)$),
making use of Choice \ref{choice:anbncn}, Proposition \ref{prop:time convergence}
with $\beta=\frac{1}{2}$, Choices \ref{choice:Bnanbn} and \ref{choice:Mn}
and Lemma \ref{lem:estimate sum superexponential} (which we can apply
thanks to Choice \ref{choice:min requirements on C gamma and kmax}),
in that order, we obtain
\begin{equation}
\begin{aligned}\left|\frac{\frac{\mathrm{d}}{\mathrm{d}t}\left(a_{n}\left(t\right)^{2}+b_{n}\left(t\right)^{2}\right)}{a_{n}\left(t\right)^{2}+b_{n}\left(t\right)^{2}}\right| & \lesssim\left|\frac{\mathrm{d}}{\mathrm{d}t}\left(\ln\left(b_{n}\left(t\right)\right)\right)\right|=\sum_{m=1}^{n-1}B_{m}\left(t\right)a_{m}\left(t\right)b_{m}\left(t\right)\cos\left(a_{m}\left(t\right)\left(\phi_{1}^{\left(n\right)}\left(t,0\right)-\phi_{1}^{\left(m\right)}\left(t,0\right)\right)\right)\leq\\
 & \lesssim\sum_{m=1}^{n-1}B_{m}\left(t\right)a_{m}\left(1\right)b_{m}\left(1\right)\lesssim\sum_{m=1}^{n-1}B_{m}\left(1\right)a_{m}\left(1\right)b_{m}\left(1\right)=\sum_{m=1}^{n-1}YC^{\delta\left(\frac{1}{1-\gamma}\right)^{m}}\lesssim_{\delta}\\
 & \lesssim_{\delta}YC^{\delta\left(\frac{1}{1-\gamma}\right)^{n-1}}.
\end{aligned}
\label{eq:bound derivative of lnbnt}
\end{equation}
Employing \eqref{eq:bound derivative of lnbnt} in equation \eqref{eq:dBndt v0}
provides
\[
\left|\frac{\mathrm{d}B_{n}}{\mathrm{d}t}\left(t\right)\right|\lesssim_{\delta}\frac{2M_{n}}{a_{n}\left(t\right)^{2}+b_{n}\left(t\right)^{2}}\left[YC^{\delta\left(\frac{1}{1-\gamma}\right)^{n-1}}\underbrace{\frac{\int_{t_{n}}^{t}h^{\left(n\right)}\left(s\right)b_{n}\left(s\right)\mathrm{d}s}{\int_{t_{n}}^{1}h^{\left(n\right)}\left(s\right)b_{n}\left(s\right)\mathrm{d}s}}_{\leq1}+\frac{h^{\left(n\right)}\left(t\right)b_{n}\left(t\right)}{\int_{t_{n}}^{1}h^{\left(n\right)}\left(s\right)b_{n}\left(s\right)\mathrm{d}s}\right].
\]
As $n\ge2$, if we take $C$ big enough (let us say $C\ge\Upsilon_{2}\left(\delta,\mu\right)$),
Lemma \ref{lem:estimate of integral hn bn}, equation \eqref{eq:h_eps near one},
Corollary \ref{cor:bound for an(t), bn(t)} and Choice \ref{choice:ideal kn}
allow us to write
\begin{equation}
\begin{aligned}\left|\frac{\mathrm{d}B_{n}}{\mathrm{d}t}\left(t\right)\right| & \lesssim_{\delta,\mu}\frac{2M_{n}}{a_{n}\left(t\right)^{2}+b_{n}\left(t\right)^{2}}\left[YC^{\delta\left(\frac{1}{1-\gamma}\right)^{n-1}}+\frac{C^{\left(1+k_{\max}+\mu\right)\left(\frac{1}{1-\gamma}\right)^{n}}}{\frac{1}{Y}C^{\left(1+k_{\max}-\mu-\delta\left(1-\gamma\right)\right)\left(\frac{1}{1-\gamma}\right)^{n}}}\right]=\\
 & \lesssim_{\delta,\mu}\frac{2M_{n}}{a_{n}\left(t\right)^{2}+b_{n}\left(t\right)^{2}}YC^{\left(2\mu+\delta\left(1-\gamma\right)\right)\left(\frac{1}{1-\gamma}\right)^{n}}.
\end{aligned}
\label{eq:dBndt v1}
\end{equation}
Going back to equation \eqref{eq:Leibniz rule Bn(t)bn(t)2}, while
bounding the term $\frac{\mathrm{d}}{\mathrm{d}t}\left(\ln\left(b_{n}\left(t\right)\right)\right)$
as we did in equation \eqref{eq:bound derivative of lnbnt} and taking
equation \eqref{eq:expression for Bn(t)} into account, we arrive
to
\[
\begin{aligned}\left|\frac{\mathrm{d}}{\mathrm{d}t}\left(B_{n}\left(t\right)b_{n}\left(t\right)^{2}\right)\right| & \lesssim_{\delta,\mu}\left[2YM_{n}C^{\left(2\mu+\delta\left(1-\gamma\right)\right)\left(\frac{1}{1-\gamma}\right)^{n}}+YC^{\delta\left(\frac{1}{1-\gamma}\right)^{n-1}}2M_{n}\underbrace{\frac{\int_{t_{n}}^{t}h^{\left(n\right)}\left(s\right)b_{n}\left(s\right)\mathrm{d}s}{\int_{t_{n}}^{1}h^{\left(n\right)}\left(s\right)b_{n}\left(s\right)\mathrm{d}s}}_{\leq1}\right]\cdot\\
 & \quad\cdot\underbrace{\frac{b_{n}\left(t\right)^{2}}{a_{n}\left(t\right)^{2}+b_{n}\left(t\right)^{2}}}_{\leq1}\leq\\
 & \lesssim_{\delta,\mu}YM_{n}C^{\left(2\mu+\delta\left(1-\gamma\right)\right)\left(\frac{1}{1-\gamma}\right)^{n}}.
\end{aligned}
\]
By Choice \ref{choice:Mn}, we deduce that
\begin{equation}
\left|\frac{\mathrm{d}}{\mathrm{d}t}\left(B_{n}\left(t\right)b_{n}\left(t\right)^{2}\right)\right|\lesssim_{\delta,\mu}Y^{2}C^{\left(\delta+2\mu+\delta\left(1-\gamma\right)\right)\left(\frac{1}{1-\gamma}\right)^{n}}\leq Y^{2}C^{\left(2\delta+2\mu\right)\left(\frac{1}{1-\gamma}\right)^{n}},\label{eq:bound dBnbn2dt}
\end{equation}
where we have employed the fact that $1-\gamma\leq1$.

Next, we shall bound $\frac{\mathrm{d}}{\mathrm{d}t}\left(B_{n}\left(t\right)\left(a_{n}\left(t\right)^{2}+b_{n}\left(t\right)^{2}\right)\right)$.
By Choice \ref{choice:amplitude density}, equation \eqref{eq:relation Mn and zn}
and Choice \ref{choice:time picture}, we have
\[
\frac{\mathrm{d}}{\mathrm{d}t}\left(B_{n}\left(t\right)\left(a_{n}\left(t\right)^{2}+b_{n}\left(t\right)^{2}\right)\right)=2M_{n}\frac{h^{\left(n\right)}\left(t\right)b_{n}\left(t\right)}{\int_{t_{n}}^{1}h^{\left(n\right)}\left(s\right)b_{n}\left(s\right)\mathrm{d}s}.
\]
Choices \ref{choice:Mn} and \ref{choice:ideal kn}, Corollary \ref{cor:bound for an(t), bn(t)},
Lemma \ref{lem:estimate of integral hn bn} and equation \eqref{eq:h_eps near one}
provide
\begin{equation}
\frac{\mathrm{d}}{\mathrm{d}t}\left(B_{n}\left(t\right)\left(a_{n}\left(t\right)^{2}+b_{n}\left(t\right)^{2}\right)\right)\lesssim_{\mu}YC^{\delta\left(\frac{1}{1-\gamma}\right)^{n}}\frac{C^{\left(1+k_{\max}+\mu\right)\left(\frac{1}{1-\gamma}\right)^{n}}}{\frac{1}{Y}C^{\left(1+k_{\max}-\mu-\delta\left(1-\gamma\right)\right)\left(\frac{1}{1-\gamma}\right)^{n}}}\lesssim Y^{2}C^{\left(2\delta+2\mu\right)\left(\frac{1}{1-\gamma}\right)^{n}},\label{eq:bound dBnan2bn2dt}
\end{equation}
where we have also made use of the bound $1-\gamma\leq1$.

The triangular inequality, along with the equality
\[
\frac{\mathrm{d}}{\mathrm{d}t}\left[B_{n}\left(t\right)\left(a_{n}\left(t\right)^{2}+b_{n}\left(t\right)^{2}\right)\right]=\frac{\mathrm{d}}{\mathrm{d}t}\left[B_{n}\left(t\right)a_{n}\left(t\right)^{2}\right]+\frac{\mathrm{d}}{\mathrm{d}t}\left[B_{n}\left(t\right)b_{n}\left(t\right)^{2}\right]
\]
and equations \eqref{eq:bound dBnbn2dt} and \eqref{eq:bound dBnan2bn2dt}
imply the result about $\frac{\mathrm{d}}{\mathrm{d}t}\left[B_{n}\left(t\right)a_{n}\left(t\right)^{2}\right]$.
\end{proof}
\begin{prop}
\label{prop:bound vorticity time factor}Let $n\in\mathbb{N}$ with
$n\ge2$, $\mu>0$ and $\alpha\in\left(0,1\right)$. As long as $C$
is big enough (let us say $C\ge\Upsilon\left(\delta,\mu\right)$),
the time factor
\[
\widetilde{\tau^{\left(n\right)}}^{n}\left(t,x\right)\coloneqq\frac{\partial\widetilde{\omega^{\left(n\right)}}^{n}}{\partial t}\left(t,x\right)-\left(\begin{matrix}0 & 1\end{matrix}\right)\cdot\left[\widetilde{\nabla}^{n}\widetilde{\rho^{\left(n\right)}}^{n}\left(t,x\right)\right]
\]
is bounded by
\[
\left|\left|\tau^{\left(n\right)}\left(t,\cdot\right)\right|\right|_{C^{\alpha}\left(\mathbb{R}^{2}\right)}\lesssim_{\delta,\varphi,\mu}Y^{2}C^{\left(\alpha\left(1+k_{\max}\right)-\Lambda+3\mu+2\delta\right)\left(\frac{1}{1-\gamma}\right)^{n}}.
\]
This expression remains decreasing in $n\in\mathbb{N}$ as long as
\[
\alpha<\frac{\Lambda-3\mu-2\delta}{1+k_{\max}}.
\]
\end{prop}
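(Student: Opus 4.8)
The plan is to exploit the fact that Choice \ref{choice:density} was rigged precisely so that the part of $(0\ 1)\cdot\widetilde{\nabla}^{n}\widetilde{\rho^{\left(n\right)}}^{n}$ of order zero in $\lambda_{n}$ cancels the part of $\frac{\partial\widetilde{\omega^{\left(n\right)}}^{n}}{\partial t}$ of order zero in $\lambda_{n}$. Indeed, by Proposition \ref{prop:computations vorticity} the unique term of $\widetilde{\omega^{\left(n\right)}}^{n}\left(t,x\right)$ that is of order zero in $\lambda_{n}$ is $B_{n}\left(t\right)\left(a_{n}\left(t\right)^{2}+b_{n}\left(t\right)^{2}\right)\varphi\left(\lambda_{n}x_{1}\right)\varphi\left(\lambda_{n}x_{2}\right)\sin\left(x_{1}\right)\sin\left(x_{2}\right)$, and since neither $\lambda_{n}$ nor the cutoff factors depend on $t$, the unique order-zero term of $\frac{\partial\widetilde{\omega^{\left(n\right)}}^{n}}{\partial t}$ is $\frac{\mathrm{d}}{\mathrm{d}t}\left[B_{n}\left(t\right)\left(a_{n}\left(t\right)^{2}+b_{n}\left(t\right)^{2}\right)\right]\varphi\left(\lambda_{n}x_{1}\right)\varphi\left(\lambda_{n}x_{2}\right)\sin\left(x_{1}\right)\sin\left(x_{2}\right)$, which is exactly the order-zero term of $(0\ 1)\cdot\widetilde{\nabla}^{n}\widetilde{\rho^{\left(n\right)}}^{n}$ written down in Choice \ref{choice:density}. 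Consequently $\widetilde{\tau^{\left(n\right)}}^{n}\left(t,\cdot\right)$ is a finite sum of terms each carrying at least one explicit factor $\lambda_{n}$, and — just as in the proof of Proposition \ref{prop:form gradient omega} — each such term factors as a time factor times a cutoff factor (a product of derivatives of $\varphi$ of order $\le2$ evaluated at $\lambda_{n}x_{1}$ or $\lambda_{n}x_{2}$) times a main factor (a product of two sines/cosines of $x_{1}$ and $x_{2}$). Differentiating in $t$ only acts on $B_{n}\left(t\right)$ and on the degree-$2$ monomials in $a_{n}\left(t\right),b_{n}\left(t\right)$, so up to a harmless power of $\lambda_{n}\le1$ every time factor equals one of $\frac{\mathrm{d}}{\mathrm{d}t}\left(B_{n}\left(t\right)a_{n}\left(t\right)^{2}\right)$, $\frac{\mathrm{d}}{\mathrm{d}t}\left(B_{n}\left(t\right)b_{n}\left(t\right)^{2}\right)$, $\frac{\mathrm{d}}{\mathrm{d}t}\left(B_{n}\left(t\right)\left(a_{n}\left(t\right)^{2}+b_{n}\left(t\right)^{2}\right)\right)$, all bounded by $Y^{2}C^{\left(2\delta+2\mu\right)\left(\frac{1}{1-\gamma}\right)^{n}}$ via Lemma \ref{lem:time derivatives amplitudes vorticity} (which forces $n\ge2$).

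With this structure in hand, the second step is to estimate $\left|\left|\widetilde{\tau^{\left(n\right)}}^{n}\left(t,\cdot\right)\right|\right|_{C^{\alpha}\left(\mathbb{R}^{2}\right)}$ in the same way as in Proposition \ref{prop:form gradient omega}: the cutoff factor is bounded in $L^{\infty}$ by $\left|\left|\varphi\right|\right|_{C^{2}\left(\mathbb{R}\right)}^{2}$ and in $\dot{C}^{\alpha}$ by $K_{0}\left(1\right)\lambda_{n}^{\alpha}\left|\left|\varphi\right|\right|_{C^{3}\left(\mathbb{R}\right)}^{2}\lesssim_{\varphi}1$ (using $\lambda_{n}\le1$ from Choice \ref{choice:psin} and equation \eqref{eq:property Calpha composition}), while the main factor is bounded in $L^{\infty}$ and $\dot{C}^{\alpha}$ by $1$; equation \eqref{eq:property Calpha multiplication} then gives $\left|\left|\widetilde{\tau^{\left(n\right)}}^{n}\left(t,\cdot\right)\right|\right|_{C^{\alpha}\left(\mathbb{R}^{2}\right)}\lesssim_{\delta,\varphi,\mu}\lambda_{n}Y^{2}C^{\left(2\delta+2\mu\right)\left(\frac{1}{1-\gamma}\right)^{n}}$. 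To pass to the physical variables I would use that the $L^{\infty}$ norm is diffeomorphism-invariant together with \eqref{eq:property Calpha composition}: since $\tau^{\left(n\right)}\left(t,x\right)=\widetilde{\tau^{\left(n\right)}}^{n}\bigl(t,\left(\phi^{\left(n\right)}\right)^{-1}\left(t,x\right)\bigr)$ and, by \eqref{eq:jacobian inverse}, $\left|\left|\left(\phi^{\left(n\right)}\right)^{-1}\left(t,\cdot\right)\right|\right|_{\dot{C}^{1}\left(\mathbb{R}^{2};\mathbb{R}^{2}\right)}=\max\left\{a_{n}\left(t\right),b_{n}\left(t\right)\right\}$, one gets $\left|\left|\tau^{\left(n\right)}\left(t,\cdot\right)\right|\right|_{C^{\alpha}\left(\mathbb{R}^{2}\right)}\lesssim\max\left\{a_{n}\left(t\right),b_{n}\left(t\right)\right\}^{\alpha}\left|\left|\widetilde{\tau^{\left(n\right)}}^{n}\left(t,\cdot\right)\right|\right|_{C^{\alpha}\left(\mathbb{R}^{2}\right)}$, the $\dot{C}^{\alpha}$ part dominating because $a_{n},b_{n}\ge1$.

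The last step is bookkeeping of the exponents. Taking $C$ large enough that Lemma \ref{lem:time derivatives amplitudes vorticity} and Corollary \ref{cor:bound for an(t), bn(t)} apply, the corollary and Choice \ref{choice:ideal kn} give $\max\left\{a_{n}\left(t\right),b_{n}\left(t\right)\right\}\le C^{\left(1+k_{\max}+\mu\right)\left(\frac{1}{1-\gamma}\right)^{n}}$, Choice \ref{choice:lambdan} gives $\lambda_{n}=C^{-\Lambda\left(\frac{1}{1-\gamma}\right)^{n}}$, and combining everything yields
\[
\left|\left|\tau^{\left(n\right)}\left(t,\cdot\right)\right|\right|_{C^{\alpha}\left(\mathbb{R}^{2}\right)}\lesssim_{\delta,\varphi,\mu}Y^{2}C^{\left(\alpha\left(1+k_{\max}+\mu\right)-\Lambda+2\delta+2\mu\right)\left(\frac{1}{1-\gamma}\right)^{n}};
\]
bounding $\alpha\le1$ in the summand $\alpha\mu$ collapses the $\mu$-contribution to $3\mu$ and produces the claimed estimate, which is decreasing in $n$ exactly when $\alpha\left(1+k_{\max}\right)-\Lambda+3\mu+2\delta<0$, i.e. $\alpha<\frac{\Lambda-3\mu-2\delta}{1+k_{\max}}$. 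The only genuinely delicate point is the first step — confirming that the leading terms cancel identically so that every surviving term of $\widetilde{\tau^{\left(n\right)}}^{n}$ carries a factor $\lambda_{n}=C^{-\Lambda\left(\frac{1}{1-\gamma}\right)^{n}}$; without that gain the amplitude $B_{n}\left(t\right)\left(a_{n}\left(t\right)^{2}+b_{n}\left(t\right)^{2}\right)\sim M_{n}\left(\tfrac{a_{n}}{b_{n}}+\tfrac{b_{n}}{a_{n}}\right)$ would be hopelessly large, and everything after it is a routine repetition of the estimates already performed in Propositions \ref{prop:form gradient omega} and \ref{prop:form of gradient of rho}.
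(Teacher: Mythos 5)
Your argument is correct and follows essentially the same route as the paper's proof: the exact cancellation of the zeroth-order terms in $\lambda_{n}$ guaranteed by Choice \ref{choice:density}, the bound on the remaining time factors via Lemma \ref{lem:time derivatives amplitudes vorticity}, the $L^{\infty}$/$\dot{C}^{\alpha}$ estimates of the cutoff and main factors, and the passage to physical variables through \eqref{eq:property Calpha composition}, \eqref{eq:jacobian inverse} and Corollary \ref{cor:bound for an(t), bn(t)} with the same final bookkeeping (including bounding $\alpha\le1$ in the $\alpha\mu$ term). No gaps to report.
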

\begin{proof}
On the one hand, to obtain an expression for $\frac{\partial\widetilde{\omega^{\left(n\right)}}^{n}}{\partial t}\left(t,x\right)$,
we resort to Proposition \ref{prop:computations vorticity}. On the
other hand, for an expression for $\left(\begin{matrix}0 & 1\end{matrix}\right)\cdot\left[\widetilde{\nabla}^{n}\widetilde{\rho^{\left(n\right)}}^{n}\left(t,x\right)\right]$,
we apply Choice \ref{choice:density}. From these two previous results,
it is easy to see that the only zeroth order term in $\lambda_{n}$
of $\frac{\partial\widetilde{\omega^{\left(n\right)}}^{n}}{\partial t}\left(t,x\right)$
exactly cancels the only zeroth order term in $\lambda_{n}$ of $\left(\begin{matrix}0 & 1\end{matrix}\right)\cdot\left[\widetilde{\nabla}^{n}\widetilde{\rho^{\left(n\right)}}^{n}\left(t,x\right)\right]$.
In this way, $\widetilde{T^{\left(n\right)}}^{n}\left(t,x\right)$
has the following form:
\[
\begin{aligned}\widetilde{\tau^{\left(n\right)}}^{n}\left(t,x\right) & =\sum\pm\lambda_{n}^{1\text{ or }2}\left(\begin{matrix}\frac{\mathrm{d}}{\mathrm{d}t}\left(B_{n}\left(t\right)\left(a_{n}\left(t\right)^{2}+b_{n}\left(t\right)^{2}\right)\right)\\
\text{or}\\
\frac{\mathrm{d}}{\mathrm{d}t}\left(B_{n}\left(t\right)a_{n}\left(t\right)^{2}\right)\\
\text{or}\\
\frac{\mathrm{d}}{\mathrm{d}t}\left(B_{n}\left(t\right)b_{n}\left(t\right)^{2}\right)
\end{matrix}\right)\cdot\left(\text{derivative of }\varphi\right)\left(\lambda_{n}x_{1}\right)\cdot\\
 & \quad\cdot\left(\text{derivative of }\varphi\right)\left(\lambda_{n}x_{2}\right)\cdot\left(\begin{matrix}\sin\\
\text{or}\\
\text{cos}
\end{matrix}\right)\left(x_{1}\right)\cdot\left(\begin{matrix}\sin\\
\text{or}\\
\text{cos}
\end{matrix}\right)\left(x_{2}\right).
\end{aligned}
\]

We can bound each term separately.
\begin{enumerate}
\item As $\lambda_{n}\le1$ by Choice \ref{choice:psin}, we have
\[
\lambda_{n}^{1\text{ or }2}\le\lambda_{n}.
\]
\item As long as $C$ is taken big enough (let us say $C\ge\Upsilon_{1}\left(\delta,\mu\right)$),
since $n\ge2$, Lemma \ref{lem:time derivatives amplitudes vorticity}
guarantees that
\[
\left|\left(\begin{matrix}\frac{\mathrm{d}}{\mathrm{d}t}\left(B_{n}\left(t\right)\left(a_{n}\left(t\right)^{2}+b_{n}\left(t\right)^{2}\right)\right)\\
\text{or}\\
\frac{\mathrm{d}}{\mathrm{d}t}\left(B_{n}\left(t\right)a_{n}\left(t\right)^{2}\right)\\
\text{or}\\
\frac{\mathrm{d}}{\mathrm{d}t}\left(B_{n}\left(t\right)b_{n}\left(t\right)^{2}\right)
\end{matrix}\right)\right|\lesssim_{\delta,\mu}Y^{2}C^{\left(2\delta+2\mu\right)\left(\frac{1}{1-\gamma}\right)^{n}}.
\]
\item Because $\left|\left|\cdot\right|\right|_{L^{\infty}\left(\mathbb{R}^ {}\right)}$
is invariant under diffeomorphisms and because of equation \eqref{eq:property Calpha composition},
we have
\[
\left|\left|\left(\text{derivative of }\varphi\right)\left(\lambda_{n}x_{1}\right)\right|\right|_{L^{\infty}\left(\mathbb{R}\right)}\lesssim_{\varphi}1,\quad\left|\left|\left(\text{derivative of }\varphi\right)\left(\lambda_{n}x_{1}\right)\right|\right|_{\dot{C}^{\alpha}\left(\mathbb{R}\right)}\lesssim_{\varphi}\lambda_{n}^{\alpha}\leq1.
\]
Analogous bounds can be obtained for $\left(\text{derivative of }\varphi\right)\left(\lambda_{n}x_{2}\right)$.
\item Furthermore,
\[
\left|\left|\left(\begin{matrix}\sin\\
\text{or}\\
\text{cos}
\end{matrix}\right)\left(\cdot\right)\right|\right|_{L^{\infty}\left(\mathbb{R}\right)}\leq1,\quad\left|\left|\left(\begin{matrix}\sin\\
\text{or}\\
\text{cos}
\end{matrix}\right)\left(\cdot\right)\right|\right|_{\dot{C}^{\alpha}\left(\mathbb{R}\right)}\leq1.
\]
\end{enumerate}
Thereby, we deduce that
\[
\begin{aligned}\left|\left|\widetilde{\tau^{\left(n\right)}}^{n}\left(t,\cdot\right)\right|\right|_{L^{\infty}\left(\mathbb{R}^{2}\right)} & \lesssim_{\delta,\mu,\varphi}\lambda_{n}Y^{2}C^{\left(2\delta+2\mu\right)\left(\frac{1}{1-\gamma}\right)^{n}},\\
\left|\left|\widetilde{\tau^{\left(n\right)}}^{n}\left(t,\cdot\right)\right|\right|_{\dot{C}^{\alpha}\left(\mathbb{R}^{2}\right)} & \lesssim_{\delta,\mu,\varphi}\lambda_{n}Y^{2}C^{\left(2\delta+2\mu\right)\left(\frac{1}{1-\gamma}\right)^{n}}.
\end{aligned}
\]
Choice \ref{choice:lambdan} lets us write
\begin{equation}
\begin{aligned}\left|\left|\widetilde{\tau^{\left(n\right)}}^{n}\left(t,\cdot\right)\right|\right|_{L^{\infty}\left(\mathbb{R}^{2}\right)} & \lesssim_{\delta,\mu,\varphi}Y^{2}C^{\left(-\Lambda+2\delta+2\mu\right)\left(\frac{1}{1-\gamma}\right)^{n}},\\
\left|\left|\widetilde{\tau^{\left(n\right)}}^{n}\left(t,\cdot\right)\right|\right|_{\dot{C}^{\alpha}\left(\mathbb{R}^{2}\right)} & \lesssim_{\delta,\mu,\varphi}Y^{2}C^{\left(-\Lambda+2\delta+2\mu\right)\left(\frac{1}{1-\gamma}\right)^{n}}.
\end{aligned}
\label{eq:T cv}
\end{equation}
Since the $\left|\left|\cdot\right|\right|_{L^{\infty}\left(\mathbb{R}^{2}\right)}$
norm is invariant under diffeomorphisms, the result above is also
valid for $\tau^{\left(n\right)}\left(t,\cdot\right)$. On the other
hand, equations \eqref{eq:property Calpha composition} and \eqref{eq:jacobian inverse}
lead us to
\[
\left|\left|\tau^{\left(n\right)}\left(t,\cdot\right)\right|\right|_{\dot{C}^{\alpha}\left(\mathbb{R}^{2}\right)}=\left|\left|\widetilde{\tau^{\left(n\right)}}^{n}\left(t,\left(\phi^{\left(n\right)}\right)^{-1}\left(t,\cdot\right)\right)\right|\right|_{\dot{C}^{\alpha}\left(\mathbb{R}^{2}\right)}\lesssim\max\left\{ a_{n}\left(t\right)^{\alpha},b_{n}\left(t\right)^{\alpha}\right\} \left|\left|\widetilde{\tau^{\left(n\right)}}^{n}\left(t,\cdot\right)\right|\right|_{\dot{C}^{\alpha}\left(\mathbb{R}^{2}\right)}.
\]
If $C$ is large enough (let us say $C\ge\Upsilon_{2}\left(\delta,\mu\right)$),
Corollary \ref{cor:bound for an(t), bn(t)}, Choice \ref{choice:ideal kn}
and equation \eqref{eq:T cv} provide
\[
\begin{aligned}\left|\left|\tau^{\left(n\right)}\left(t,\cdot\right)\right|\right|_{\dot{C}^{\alpha}\left(\mathbb{R}^{2}\right)} & \lesssim_{\delta,\varphi,\mu}C^{\alpha\left(1+k_{\max}+\mu\right)\left(\frac{1}{1-\gamma}\right)^{n}}Y^{2}C^{\left(-\Lambda+2\delta+2\mu\right)\left(\frac{1}{1-\gamma}\right)^{n}}=\\
 & \lesssim_{\delta,\varphi,\mu}Y^{2}C^{\left(\alpha\left(1+k_{\max}\right)-\Lambda+\left(2+\alpha\right)\mu+2\delta\right)\left(\frac{1}{1-\gamma}\right)^{n}}.
\end{aligned}
\]
Bounding $\alpha<1$ when it is multiplied by $\mu$, we obtain
\[
\left|\left|\tau^{\left(n\right)}\left(t,\cdot\right)\right|\right|_{\dot{C}^{\alpha}\left(\mathbb{R}^{2}\right)}\lesssim_{\delta,\varphi,\mu}Y^{2}C^{\left(\alpha\left(1+k_{\max}\right)-\Lambda+3\mu+2\delta\right)\left(\frac{1}{1-\gamma}\right)^{n}}.
\]
This result, along with equation \eqref{eq:T cv}, implies the statement.
\end{proof}

\subsection{Pure quadratic term}
\begin{prop}
\label{prop:bound vorticity pure quadratic term}Let $n\in\mathbb{N}$
with $n\ge2$, $\mu>0$ and $\alpha\in\left(0,1\right)$. Provided
that $C$ is chosen large enough (let us say $C\ge\Upsilon\left(\delta,\mu\right)$),
the pure quadratic term
\[
\widetilde{Q_{\omega}^{\left(n\right)}}^{n}\left(t,x\right)\coloneqq\widetilde{u^{\left(n\right)}}^{n}\left(t,x\right)\cdot\widetilde{\nabla}^{n}\widetilde{\omega^{\left(n\right)}}^{n}\left(t,x\right)
\]
satisfies the bounds
\[
\begin{aligned}\left|\left|Q_{\omega}^{\left(n\right)}\left(t,\cdot\right)\right|\right|_{C^{\alpha}\left(\mathbb{R}^{2}\right)} & \lesssim_{\varphi}Y^{2}C^{\left(\alpha\left(1+k_{\max}\right)-\Lambda+2\delta+5\mu\right)\left(\frac{1}{1-\gamma}\right)^{n}}.\end{aligned}
\]
This expression remains decreasing in $n\in\mathbb{N}$ as long as
\[
\alpha<\frac{\Lambda-2\delta-5\mu}{1+k_{\max}}.
\]
\end{prop}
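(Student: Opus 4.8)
The plan is to mimic exactly the structure of Proposition \ref{prop:bounds pure quadratic term}, where the analogous bound was obtained at a generic time without the time-convergence information, and then feed in the sharper estimates on $a_n(t)$, $b_n(t)$, $B_n(t)$ that are now available. First I would start from the decomposition of the dot product into powers of $\lambda_n$: using Propositions \ref{prop:form of the velocity} and \ref{prop:form gradient omega}, write
\[
Q_{\omega}^{\left(n\right)}=\underbrace{V^{\left(n\right)}\cdot\Gamma^{\left(n\right)}}_{=0}+\lambda_{n}\left[\sum_{i=1}^{2}W_{i}^{\left(n\right)}\Gamma_{i}^{\left(n\right)}+\sum_{i=1}^{2}V_{i}^{\left(n\right)}G_{i}^{\left(n\right)}\right]+\lambda_{n}^{2}\sum_{i=1}^{2}W_{i}^{\left(n\right)}G_{i}^{\left(n\right)},
\]
the zeroth order term vanishing by orthogonality, exactly as in the proof of Proposition \ref{prop:bounds pure quadratic term}. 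Then the bounds of Propositions \ref{prop:form of the velocity} and \ref{prop:form gradient omega}, together with the product rule \eqref{eq:property Calpha multiplication} and the bound $\lambda_n^2\le\lambda_n$ from Choice \ref{choice:psin}, give directly
\[
\left|\left|Q_{\omega}^{\left(n\right)}\left(t,\cdot\right)\right|\right|_{\dot{C}^{\alpha}\left(\mathbb{R}^{2}\right)}\lesssim_{\varphi}\lambda_{n}B_{n}\left(t\right)^{2}a_{n}\left(t\right)b_{n}\left(t\right)\max\left\{a_{n}\left(t\right)^{2+\alpha},b_{n}\left(t\right)^{2+\alpha}\right\},
\]
and the same with the $\max$ replaced by $1$ for the $\left|\left|\cdot\right|\right|_{L^\infty}$ estimate; this is literally the content of Proposition \ref{prop:bounds pure quadratic term}, so I would just invoke it.

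The second step is to convert the right-hand side into an explicit power of $C^{\left(\frac{1}{1-\gamma}\right)^{n}}$. By Choice \ref{choice:anbncn} and Proposition \ref{prop:time convergence} (with $\beta=\frac12$, valid once $C\ge\Upsilon(\tfrac12,\delta)$) we have $B_{n}(t)a_{n}(t)b_{n}(t)=B_{n}(t)a_{n}(1)b_{n}(1)\lesssim M_{n}=YC^{\delta\left(\frac{1}{1-\gamma}\right)^{n}}$. For the remaining factor $B_n(t)b_n(t)^{1+\alpha}$ (or with $a_n$ in place of $b_n$) I write it as $\left(B_n(t)a_n(t)b_n(t)\right)\cdot\frac{b_n(t)^{1+\alpha}}{a_n(t)b_n(t)}=M_n\cdot\frac{b_n(t)^{\alpha}}{a_n(t)}$, and then bound $a_n(t)$ and $b_n(t)$ with Corollary \ref{cor:bound for an(t), bn(t)}: $a_n(t)\ge C^{(1-\overline k_n(t)-\mu)\left(\frac{1}{1-\gamma}\right)^{n}}$, $b_n(t)\le C^{(1+\overline k_n(t)+\mu)\left(\frac{1}{1-\gamma}\right)^{n}}$, then use $\overline k_n(t)\le k_{\max}$ (Choice \ref{choice:ideal kn}) and $\overline k_n(t)\ge 0$ (Corollary \ref{cor:ideal kn non negative}). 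This yields $\max\{a_n(t)^{2+\alpha},b_n(t)^{2+\alpha}\}\lesssim C^{(2+\alpha)(1+k_{\max}+\mu)\left(\frac{1}{1-\gamma}\right)^{n}}$ and, after multiplying by $M_n^2$ (which absorbs one power of $a_nb_n=$ one of degree $2$), the combination works out to $Y^2 C^{(\alpha(1+k_{\max})+2\delta+(\text{controlled }\mu\text{-terms}))\left(\frac{1}{1-\gamma}\right)^{n}}$; finally Choice \ref{choice:lambdan} contributes the factor $\lambda_n=C^{-\Lambda\left(\frac{1}{1-\gamma}\right)^{n}}$. Collecting exponents and bounding $\alpha\le1$ wherever it multiplies a $\mu$ gives the claimed $Y^2C^{(\alpha(1+k_{\max})-\Lambda+2\delta+5\mu)\left(\frac{1}{1-\gamma}\right)^{n}}$; the $\left|\left|\cdot\right|\right|_{L^\infty}$ part is subsumed since $\alpha>0$ makes it smaller.

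I expect the only genuinely delicate bookkeeping to be tracking the $\mu$'s: each application of Corollary \ref{cor:bound for an(t), bn(t)} and Lemma \ref{lem:estimate of integral hn bn}-style estimates introduces an additive $\mu$ in the exponent, and I must verify the total lands at $5\mu$ (not more) after using $\alpha\le1$ on the mixed terms $\alpha\mu$. There is no real analytic obstacle here — all the hard work (closeness to the ideal model, the $\max\{a_n,b_n\}\le b_n C^{2\mu(\ldots)^n}$ type bounds, positivity of $\overline k_n$) has already been established in Section \ref{sec:time evolution}; this proposition is a clean corollary of Proposition \ref{prop:bounds pure quadratic term} combined with Corollary \ref{cor:bound for an(t), bn(t)}. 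The ``remains decreasing'' clause is then immediate: the exponent $\alpha(1+k_{\max})-\Lambda+2\delta+5\mu$ is negative precisely when $\alpha<\frac{\Lambda-2\delta-5\mu}{1+k_{\max}}$.
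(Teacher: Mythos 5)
Your first step is fine and coincides with the paper: the decomposition with the vanishing zeroth-order term and the resulting bound
\[
\left|\left|Q_{\omega}^{\left(n\right)}\left(t,\cdot\right)\right|\right|_{\dot{C}^{\alpha}\left(\mathbb{R}^{2}\right)}\lesssim_{\varphi}\lambda_{n}B_{n}\left(t\right)^{2}a_{n}\left(t\right)b_{n}\left(t\right)\max\left\{ a_{n}\left(t\right)^{2+\alpha},b_{n}\left(t\right)^{2+\alpha}\right\}
\]
is exactly Proposition \ref{prop:bounds pure quadratic term}. The gap is in your second step. If the only information you use about $B_{n}\left(t\right)$ is $B_{n}\left(t\right)a_{n}\left(t\right)b_{n}\left(t\right)\lesssim M_{n}$, then, taking the worst case $\max=b_{n}\left(t\right)^{2+\alpha}$ and using $a_{n}\left(t\right)b_{n}\left(t\right)=a_{n}\left(1\right)b_{n}\left(1\right)=C^{2\left(\frac{1}{1-\gamma}\right)^{n}}$, you get
\[
\lambda_{n}B_{n}\left(t\right)^{2}a_{n}\left(t\right)b_{n}\left(t\right)b_{n}\left(t\right)^{2+\alpha}\lesssim\lambda_{n}M_{n}^{2}\frac{b_{n}\left(t\right)^{2+\alpha}}{a_{n}\left(t\right)b_{n}\left(t\right)}\lesssim Y^{2}C^{\left[\alpha\left(1+k_{\max}\right)+2k_{\max}-\Lambda+2\delta+O\left(\mu\right)\right]\left(\frac{1}{1-\gamma}\right)^{n}},
\]
since $\left(2+\alpha\right)\left(1+k_{\max}\right)-2=\alpha\left(1+k_{\max}\right)+2k_{\max}$. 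This carries an extra $+2k_{\max}$ relative to the claimed exponent; your own arithmetic is inconsistent here (the ``remaining factor'' after extracting one $B_{n}a_{n}b_{n}$ is $B_{n}b_{n}^{2+\alpha}$, not $B_{n}b_{n}^{1+\alpha}$, and ``$\max\left\{ a_{n}^{2+\alpha},b_{n}^{2+\alpha}\right\} \lesssim C^{\left(2+\alpha\right)\left(1+k_{\max}+\mu\right)\left(\cdot\right)}$ times $M_{n}^{2}$ absorbing degree $2$'' does not land at $\alpha\left(1+k_{\max}\right)$ unless $k_{\max}=0$). The loss is fatal, not cosmetic: with the final choices $\Lambda_{*}=\alpha_{*}\left(1+\alpha_{*}\right)\approx0.179$ and $k_{\max*}=\alpha_{*}\approx0.155$ one has $\Lambda_{*}<2k_{\max*}$, so the exponent $\alpha\left(1+k_{\max}\right)+2k_{\max}-\Lambda+2\delta+O\left(\mu\right)$ is positive for every $\alpha>0$ and the bound would not decrease in $n$.

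The missing idea is the paper's absorption mechanism: by Choice \ref{choice:amplitude density}, equation \eqref{eq:relation Mn and zn} and Choice \ref{choice:time picture}, $B_{n}\left(t\right)\left(a_{n}\left(t\right)^{2}+b_{n}\left(t\right)^{2}\right)\le2M_{n}$, so one factor of $B_{n}\left(t\right)$ swallows $\max\left\{ a_{n}\left(t\right)^{2},b_{n}\left(t\right)^{2}\right\} $ through $\frac{b_{n}\left(t\right)^{2}}{a_{n}\left(t\right)^{2}+b_{n}\left(t\right)^{2}}\le1$ and the other swallows $a_{n}\left(t\right)b_{n}\left(t\right)$ through $\frac{a_{n}\left(t\right)b_{n}\left(t\right)}{a_{n}\left(t\right)^{2}+b_{n}\left(t\right)^{2}}\le\frac{1}{2}$; only $\max\left\{ a_{n}\left(t\right)^{\alpha},b_{n}\left(t\right)^{\alpha}\right\} \le C^{\alpha\left(1+k_{\max}+\mu\right)\left(\frac{1}{1-\gamma}\right)^{n}}$ and the $C^{4\mu\left(\frac{1}{1-\gamma}\right)^{n}}$ from Corollary \ref{cor:bound for an(t), bn(t)} survive, which is precisely how the $-\Lambda+2\delta+5\mu$ exponent arises. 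The large factor must be cancelled against $a_{n}^{2}+b_{n}^{2}$ in the denominator of $B_{n}$, not against the constant product $a_{n}b_{n}$. As a secondary point, your appeal to Proposition \ref{prop:time convergence} to get $B_{n}\left(t\right)\lesssim B_{n}\left(1\right)$ is outside its stated range (it covers $t\in\left[t_{n+1},1\right]$ for layer $n$, while here $t\in\left[t_{n},t_{n+1}\right]$ matters); the inequality does hold from the explicit formula for $B_{n}\left(t\right)$, but the paper's proof avoids needing it altogether.
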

\begin{proof}
In Proposition \ref{prop:bounds pure quadratic term}, we already
found that
\[
\begin{aligned}\left|\left|Q_{\omega}^{\left(n\right)}\left(t,\cdot\right)\right|\right|_{L^{\infty}\left(\mathbb{R}^{2}\right)} & \lesssim_{\varphi}\lambda_{n}B_{n}\left(t\right)^{2}a_{n}\left(t\right)b_{n}\left(t\right)\max\left\{ a_{n}\left(t\right)^{2},b_{n}\left(t\right)^{2}\right\} ,\\
\left|\left|Q_{\omega}^{\left(n\right)}\left(t,\cdot\right)\right|\right|_{\dot{C}^{\alpha}\left(\mathbb{R}^{2}\right)} & \lesssim_{\varphi}\lambda_{n}B_{n}\left(t\right)^{2}a_{n}\left(t\right)b_{n}\left(t\right)\max\left\{ a_{n}\left(t\right)^{2+\alpha},b_{n}\left(t\right)^{2+\alpha}\right\} .
\end{aligned}
\]
Notice that we can write this as
\[
\begin{aligned}\left|\left|Q_{\omega}^{\left(n\right)}\left(t,\cdot\right)\right|\right|_{L^{\infty}\left(\mathbb{R}^{2}\right)} & \lesssim_{\varphi}\lambda_{n}B_{n}\left(t\right)^{2}a_{n}\left(t\right)b_{n}\left(t\right)\max\left\{ a_{n}\left(t\right)^{2},b_{n}\left(t\right)^{2}\right\} ,\\
\left|\left|Q_{\omega}^{\left(n\right)}\left(t,\cdot\right)\right|\right|_{\dot{C}^{\alpha}\left(\mathbb{R}^{2}\right)} & \lesssim_{\varphi}\lambda_{n}B_{n}\left(t\right)^{2}a_{n}\left(t\right)b_{n}\left(t\right)\max\left\{ a_{n}\left(t\right)^{2},b_{n}\left(t\right)^{2}\right\} \max\left\{ a_{n}\left(t\right)^{\alpha},b_{n}\left(t\right)^{\alpha}\right\} .
\end{aligned}
\]
As long as $C$ is big enough (let us say $C\ge\Upsilon_{1}\left(\delta,\mu\right)$),
as $n\ge2$, by Corollary \ref{cor:bound for an(t), bn(t)}, we can
bound
\begin{equation}
\begin{aligned}\left|\left|Q_{\omega}^{\left(n\right)}\left(t,\cdot\right)\right|\right|_{L^{\infty}\left(\mathbb{R}^{2}\right)} & \lesssim_{\varphi}\lambda_{n}B_{n}\left(t\right)^{2}a_{n}\left(t\right)b_{n}\left(t\right)b_{n}\left(t\right)^{2}C^{4\mu\left(\frac{1}{1-\gamma}\right)^{n}},\\
\left|\left|Q_{\omega}^{\left(n\right)}\left(t,\cdot\right)\right|\right|_{\dot{C}^{\alpha}\left(\mathbb{R}^{2}\right)} & \lesssim_{\varphi}\lambda_{n}B_{n}\left(t\right)^{2}a_{n}\left(t\right)b_{n}\left(t\right)b_{n}\left(t\right)^{2}C^{4\mu\left(\frac{1}{1-\gamma}\right)^{n}}\max\left\{ a_{n}\left(t\right)^{\alpha},b_{n}\left(t\right)^{\alpha}\right\} .
\end{aligned}
\label{eq:pure quadratic v1}
\end{equation}

Now, employing Choice \ref{choice:amplitude density}, equation \eqref{eq:relation Mn and zn}
and Choice \ref{choice:time picture}, we infer that
\begin{equation}
\begin{aligned}B_{n}\left(t\right)^{2}a_{n}\left(t\right)b_{n}\left(t\right)b_{n}\left(t\right)^{2} & =\left(2M_{n}\frac{1}{a_{n}\left(t\right)^{2}+b_{n}\left(t\right)^{2}}\underbrace{\frac{\int_{t_{n}}^{t}h^{\left(n\right)}\left(s\right)b_{n}\left(s\right)\mathrm{d}s}{\int_{t_{n}}^{1}h^{\left(n\right)}\left(s\right)b_{n}\left(s\right)\mathrm{d}s}}_{\leq1}\right)^{2}a_{n}\left(t\right)b_{n}\left(t\right)b_{n}\left(t\right)^{2}\leq\\
 & \leq4M_{n}^{2}\underbrace{\frac{b_{n}\left(t\right)^{2}}{a_{n}\left(t\right)^{2}+b_{n}\left(t\right)^{2}}}_{\leq1}\frac{a_{n}\left(t\right)b_{n}\left(t\right)}{a_{n}\left(t\right)^{2}+b_{n}\left(t\right)^{2}}.
\end{aligned}
\label{eq:bound Bn2anbnbn2 v0}
\end{equation}
Applying the bound
\[
0\le\left(a-b\right)^{2}=a^{2}+b^{2}-2ab\iff ab\le\frac{1}{2}\left(a^{2}+b^{2}\right),
\]
we deduce that
\[
\frac{a_{n}\left(t\right)b_{n}\left(t\right)}{a_{n}\left(t\right)^{2}+b_{n}\left(t\right)^{2}}\le\frac{1}{2}
\]
and, consequently, by Choice \ref{choice:Mn}, \eqref{eq:bound Bn2anbnbn2 v0}
becomes
\begin{equation}
B_{n}\left(t\right)^{2}a_{n}\left(t\right)b_{n}\left(t\right)\le2M_{n}^{2}=2Y^{2}C^{2\delta\left(\frac{1}{1-\gamma}\right)^{n}}.\label{eq:bound Bn2anbnbn2 v1}
\end{equation}

Making use of \eqref{eq:bound Bn2anbnbn2 v1} in \eqref{eq:pure quadratic v1},
along with Corollary \ref{cor:bound for an(t), bn(t)} and Choice
\ref{choice:lambdan} leads to
\[
\begin{aligned}\left|\left|Q_{\omega}^{\left(n\right)}\left(t,\cdot\right)\right|\right|_{L^{\infty}\left(\mathbb{R}^{2}\right)} & \lesssim_{\varphi}C^{-\Lambda\left(\frac{1}{1-\gamma}\right)^{n}}Y^{2}C^{2\delta\left(\frac{1}{1-\gamma}\right)^{n}}C^{4\mu\left(\frac{1}{1-\gamma}\right)^{n}},\\
\left|\left|Q_{\omega}^{\left(n\right)}\left(t,\cdot\right)\right|\right|_{\dot{C}^{\alpha}\left(\mathbb{R}^{2}\right)} & \lesssim_{\varphi}C^{-\Lambda\left(\frac{1}{1-\gamma}\right)^{n}}Y^{2}C^{2\delta\left(\frac{1}{1-\gamma}\right)^{n}}C^{4\mu\left(\frac{1}{1-\gamma}\right)^{n}}C^{\alpha\left(1+\overline{k}_{n}\left(t\right)+\mu\right)\left(\frac{1}{1-\gamma}\right)^{n}}.
\end{aligned}
\]
Bounding $\overline{k}_{n}\left(t\right)\leq k_{\max}$ (which we
can do by Choice \ref{choice:ideal kn}), we obtain
\[
\begin{aligned}\left|\left|Q_{\omega}^{\left(n\right)}\left(t,\cdot\right)\right|\right|_{L^{\infty}\left(\mathbb{R}^{2}\right)} & \lesssim_{\varphi}Y^{2}C^{\left(-\Lambda+2\delta+4\mu\right)\left(\frac{1}{1-\gamma}\right)^{n}},\\
\left|\left|Q_{\omega}^{\left(n\right)}\left(t,\cdot\right)\right|\right|_{\dot{C}^{\alpha}\left(\mathbb{R}^{2}\right)} & \lesssim_{\varphi}Y^{2}C^{\left(\alpha\left(1+k_{\max}\right)-\Lambda+2\delta+\left(4+\alpha\right)\mu\right)\left(\frac{1}{1-\gamma}\right)^{n}}.
\end{aligned}
\]
Using that $\alpha<1$ in the terms where $\alpha$ is multiplied
by $\mu$, we may write
\[
\begin{aligned}\left|\left|Q_{\omega}^{\left(n\right)}\left(t,\cdot\right)\right|\right|_{L^{\infty}\left(\mathbb{R}^{2}\right)} & \lesssim_{\varphi}Y^{2}C^{\left(-\Lambda+2\delta+4\mu\right)\left(\frac{1}{1-\gamma}\right)^{n}},\\
\left|\left|Q_{\omega}^{\left(n\right)}\left(t,\cdot\right)\right|\right|_{\dot{C}^{\alpha}\left(\mathbb{R}^{2}\right)} & \lesssim_{\varphi}Y^{2}C^{\left(\alpha\left(1+k_{\max}\right)-\Lambda+2\delta+5\mu\right)\left(\frac{1}{1-\gamma}\right)^{n}}.
\end{aligned}
\]
From these two bounds, it is straightforward to derive the result
of the statement.
\end{proof}

\subsection{New transport of old vorticity}
\begin{prop}
\label{prop:bound vorticity new transport of old vorticity}Let $n\in\mathbb{N}$
with $n\ge2$, $\mu>0$, $\alpha\in\left(0,1\right)$ and 
\[
\widetilde{\mathrm{O}^{\left(n\right)}\left(t,x\right)}^{n}\coloneqq\widetilde{u^{\left(n\right)}}^{n}\left(t,x\right)\cdot\widetilde{\nabla}^{n}\widetilde{\Omega^{\left(n-1\right)}}^{n}\left(t,x\right).
\]
Then, provided that $C$ is big enough (let us say $C\ge\Upsilon\left(\delta,\mu\right)$),
\[
\left|\left|\mathrm{O}^{\left(n\right)}\left(t,\cdot\right)\right|\right|_{C^{\alpha}\left(\mathbb{R}^{2}\right)}\lesssim_{\varphi,\delta}Y^{2}C^{\left[-\left(1-\alpha\right)+4\mu+2\left(1-\gamma\right)+2\delta\right]\left(\frac{1}{1-\gamma}\right)^{n}}.
\]
\end{prop}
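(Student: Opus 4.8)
The plan is first to recognise the term in ordinary coordinates. Since $\widetilde{\nabla}^{n}\widetilde{g}^{n}=\widetilde{\nabla g}^{n}$ for every scalar $g\in C^{1}$ (combine \eqref{eq:cv der esp} with \eqref{eq:gradient tilde phi}), one has $\widetilde{\nabla}^{n}\widetilde{\Omega^{(n-1)}}^{n}=\widetilde{\nabla\Omega^{(n-1)}}^{n}$, so, undoing the change of variables $\phi^{(n)}$,
\[
\mathrm{O}^{(n)}\left(t,x\right)=u^{(n)}\left(t,x\right)\cdot\nabla\Omega^{(n-1)}\left(t,x\right)=u^{(n)}\left(t,x\right)\cdot\sum_{m=1}^{n-1}\nabla\omega^{(m)}\left(t,x\right).
\]
By Proposition \ref{prop:form of the velocity} we have $u^{(n)}=V^{(n)}+\lambda_{n}W^{(n)}$, and by Proposition \ref{prop:form gradient omega} together with $\widetilde{\nabla}^{m}\widetilde{\omega^{(m)}}^{m}=\widetilde{\nabla\omega^{(m)}}^{m}$ we have $\nabla\omega^{(m)}=\Gamma^{(m)}+\lambda_{m}G^{(m)}$; both propositions furnish the $\left|\left|\cdot\right|\right|_{L^{\infty}\left(\mathbb{R}^{2}\right)}$ and $\left|\left|\cdot\right|\right|_{\dot{C}^{\alpha}\left(\mathbb{R}^{2}\right)}$ bounds of the real-variable objects $V^{(n)},W^{(n)},\Gamma^{(m)},G^{(m)}$. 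As $\lambda_{n},\lambda_{m}\le1$ by Choice \ref{choice:psin}, these collapse to $\left|\left|u^{(n)}\left(t,\cdot\right)\right|\right|_{L^{\infty}}\lesssim_{\varphi}B_{n}\left(t\right)\max\left\{ a_{n},b_{n}\right\}$, $\left|\left|u^{(n)}\left(t,\cdot\right)\right|\right|_{\dot{C}^{\alpha}}\lesssim_{\varphi}B_{n}\left(t\right)\max\left\{ a_{n},b_{n}\right\} ^{1+\alpha}$, $\left|\left|\nabla\omega^{(m)}\left(t,\cdot\right)\right|\right|_{L^{\infty}}\lesssim_{\varphi}B_{m}\left(t\right)\max\left\{ a_{m},b_{m}\right\} ^{3}$ and $\left|\left|\nabla\omega^{(m)}\left(t,\cdot\right)\right|\right|_{\dot{C}^{\alpha}}\lesssim_{\varphi}B_{m}\left(t\right)\max\left\{ a_{m},b_{m}\right\} ^{3+\alpha}$, all $a_{k},b_{k}$ being evaluated at $t$.

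Next I would estimate $\nabla\Omega^{(n-1)}$. For each $m\le n-1$ and $t\in\left[t_{n},1\right]\subseteq\left[t_{m+1},1\right]$, Proposition \ref{prop:time convergence} (with $\beta=\tfrac12$, $C$ large, $n\ge2$) allows replacing $B_{m}\left(t\right),a_{m}\left(t\right),b_{m}\left(t\right)$ by their values at $t=1$, where $a_{m}\left(1\right)=b_{m}\left(1\right)=C^{\left(\frac{1}{1-\gamma}\right)^{m}}$ by Choices \ref{choice:anbn} and \ref{choice:time picture} and $B_{m}\left(1\right)a_{m}\left(1\right)b_{m}\left(1\right)=M_{m}=YC^{\delta\left(\frac{1}{1-\gamma}\right)^{m}}$ by Choices \ref{choice:Bnanbn} and \ref{choice:Mn}. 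Hence $B_{m}\left(1\right)\max\left\{ a_{m}\left(1\right),b_{m}\left(1\right)\right\} ^{3+\alpha}=M_{m}b_{m}\left(1\right)^{1+\alpha}=YC^{\left(1+\alpha+\delta\right)\left(\frac{1}{1-\gamma}\right)^{m}}$, and summing over $m$ and using Lemma \ref{lem:estimate sum superexponential},
\[
\left|\left|\nabla\Omega^{(n-1)}\left(t,\cdot\right)\right|\right|_{L^{\infty}}\lesssim_{\delta,\varphi}YC^{\left(1+\delta\right)\left(\frac{1}{1-\gamma}\right)^{n-1}},\qquad\left|\left|\nabla\Omega^{(n-1)}\left(t,\cdot\right)\right|\right|_{\dot{C}^{\alpha}}\lesssim_{\delta,\varphi}YC^{\left(1+\alpha+\delta\right)\left(\frac{1}{1-\gamma}\right)^{n-1}}.
\]

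Finally I would combine the two factors through the product rule \eqref{eq:property Calpha multiplication}. The key point about $u^{(n)}$ is that, by Choice \ref{choice:amplitude density}, \eqref{eq:relation Mn and zn} and Choice \ref{choice:time picture}, $B_{n}\left(t\right)\le2M_{n}\left(a_{n}\left(t\right)^{2}+b_{n}\left(t\right)^{2}\right)^{-1}\le2M_{n}\max\left\{ a_{n},b_{n}\right\} ^{-2}$, so $B_{n}\left(t\right)\max\left\{ a_{n},b_{n}\right\} ^{1+\alpha}\le2M_{n}\max\left\{ a_{n},b_{n}\right\} ^{-\left(1-\alpha\right)}$; Corollary \ref{cor:bound for an(t), bn(t)} (in the form $\max\left\{ a_{n},b_{n}\right\} \ge C^{\left(1-\mu\right)\left(\frac{1}{1-\gamma}\right)^{n}}$) and Choice \ref{choice:Mn} then give $B_{n}\left(t\right)\max\left\{ a_{n},b_{n}\right\} ^{1+\alpha}\lesssim YC^{\left(\delta-\left(1-\alpha\right)+\mu\right)\left(\frac{1}{1-\gamma}\right)^{n}}$ and, dropping $\alpha$, $B_{n}\left(t\right)\max\left\{ a_{n},b_{n}\right\} \lesssim YC^{\left(\delta-1+\mu\right)\left(\frac{1}{1-\gamma}\right)^{n}}$. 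Feeding these and the bounds above into
\[
\left|\left|\mathrm{O}^{(n)}\left(t,\cdot\right)\right|\right|_{\dot{C}^{\alpha}}\le\left|\left|u^{(n)}\left(t,\cdot\right)\right|\right|_{L^{\infty}}\left|\left|\nabla\Omega^{(n-1)}\left(t,\cdot\right)\right|\right|_{\dot{C}^{\alpha}}+\left|\left|u^{(n)}\left(t,\cdot\right)\right|\right|_{\dot{C}^{\alpha}}\left|\left|\nabla\Omega^{(n-1)}\left(t,\cdot\right)\right|\right|_{L^{\infty}}
\]
(and the first summand alone for $\left|\left|\cdot\right|\right|_{L^{\infty}}$), every product is a power $C^{\left[\,\cdot\,\right]\left(\frac{1}{1-\gamma}\right)^{n}+\left[\,\cdot\,\right]\left(\frac{1}{1-\gamma}\right)^{n-1}}$; since $\left(\frac{1}{1-\gamma}\right)^{n-1}=\left(1-\gamma\right)\left(\frac{1}{1-\gamma}\right)^{n}$ the exponents collapse, and bounding $\alpha\le1$, $1-\gamma\le1$, $\delta\left(1-\gamma\right)\le\delta$ shows all of them, hence $\left|\left|\mathrm{O}^{(n)}\left(t,\cdot\right)\right|\right|_{C^{\alpha}\left(\mathbb{R}^{2}\right)}$, are $\lesssim_{\varphi,\delta}Y^{2}C^{\left[-\left(1-\alpha\right)+\mu+\left(1-\gamma\right)+2\delta\right]\left(\frac{1}{1-\gamma}\right)^{n}}$, which is in turn $\le$ the claimed $Y^{2}C^{\left[-\left(1-\alpha\right)+4\mu+2\left(1-\gamma\right)+2\delta\right]\left(\frac{1}{1-\gamma}\right)^{n}}$.

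The main obstacle, as throughout this section, is the exponent bookkeeping rather than any single estimate: one must check that the extra derivative carried by $\nabla\Omega^{(n-1)}$ (naively adding $+1$ to the scale) is absorbed by the superexponential decay of the past-layer scales, i.e. that $\sum_{m\le n-1}$ is dominated by its $m=n-1$ term and therefore lives on the scale $\left(\frac{1}{1-\gamma}\right)^{n-1}=\left(1-\gamma\right)\left(\frac{1}{1-\gamma}\right)^{n}$, while the factor $B_{n}\left(t\right)$, via $B_{n}\left(t\right)\lesssim M_{n}\left(a_{n}\left(t\right)^{2}+b_{n}\left(t\right)^{2}\right)^{-1}$, contributes the negative power $\max\left\{ a_{n},b_{n}\right\} ^{-\left(1-\alpha\right)}$ which is what produces the gain $-\left(1-\alpha\right)$ in the final exponent. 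One also has to keep the additive $\mu$'s coming from Corollary \ref{cor:bound for an(t), bn(t)} inside the $4\mu$ budget of the statement and use Proposition \ref{prop:time convergence} and Corollary \ref{cor:bound for an(t), bn(t)} only for $n\ge2$, which is why the statement is restricted to $n\ge2$.
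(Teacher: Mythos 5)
Your proposal is correct and follows essentially the same route as the paper: bound $\nabla\Omega^{\left(n-1\right)}$ by summing the layer bounds of Proposition \ref{prop:form gradient omega}, freeze past-layer parameters at $t=1$ via Proposition \ref{prop:time convergence} and Lemma \ref{lem:estimate sum superexponential}, bound $u^{\left(n\right)}$ through $B_{n}\left(t\right)\le2M_{n}\left(a_{n}^{2}+b_{n}^{2}\right)^{-1}$ together with Corollary \ref{cor:bound for an(t), bn(t)}, and combine with the product rule \eqref{eq:property Calpha multiplication}. The only differences are cosmetic: you use the commutation identity $\widetilde{\nabla}^{n}\widetilde{g}^{n}=\widetilde{\nabla g}^{n}$ to work in real coordinates directly instead of re-deriving the bounds for $\widetilde{\Pi^{\left(n\right)}}^{n}$ via marginal Hölder seminorms as the paper does, and your exponent bookkeeping is slightly sharper (ending with $\mu+\left(1-\gamma\right)$ in place of $4\mu+2\left(1-\gamma\right)$) before relaxing to the stated bound.
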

\begin{rem}
Proposition \ref{prop:bound vorticity new transport of old vorticity}
basically tells us that, as long as we take $\mu$ and $\delta$ small
and $\gamma$ close to one, $\left|\left|\mathrm{O}^{\left(n\right)}\left(t,\cdot\right)\right|\right|_{C^{\alpha}\left(\mathbb{R}^{2}\right)}$
remains bounded in $n\in\mathbb{N}$ $\forall\alpha\in\left(0,1\right)$.
In other words, this term is not a limiting factor for the regularity
of the force.
\end{rem}
\begin{proof}
First of all, we need some estimates for $\Pi^{\left(n\right)}\left(t,x\right)$,
where
\[
\begin{aligned}\widetilde{\Pi^{\left(n\right)}}^{n}\left(t,x\right) & \coloneqq\widetilde{\nabla}^{n}\widetilde{\Omega^{\left(n-1\right)}}^{n}\left(t,x\right)=\widetilde{\nabla}^{n}\left(\sum_{m=1}^{n-1}\widetilde{\omega^{\left(m\right)}}^{n}\left(t,x\right)\right).\end{aligned}
\]
By equation \eqref{eq:gradient tilde phi}, we have
\[
\begin{aligned}\widetilde{\Pi^{\left(n\right)}}^{n}\left(t,x\right) & =\nabla\left(\sum_{m=1}^{n-1}\widetilde{\omega^{\left(m\right)}}^{n}\left(t,x\right)\right)\cdot\left(\begin{matrix}a_{n}\left(t\right) & 0\\
0 & b_{n}\left(t\right)
\end{matrix}\right)=\\
 & =\nabla\left(\sum_{m=1}^{n-1}\omega^{\left(m\right)}\left(t,\phi^{\left(n\right)}\left(t,x\right)\right)\right)\cdot\left(\begin{matrix}a_{n}\left(t\right) & 0\\
0 & b_{n}\left(t\right)
\end{matrix}\right)=\\
 & =\nabla\left(\sum_{m=1}^{n-1}\widetilde{\omega^{\left(m\right)}}^{m}\left(t,\left(\phi^{\left(m\right)}\right)^{-1}\left(t,\phi^{\left(n\right)}\left(t,x\right)\right)\right)\right)\cdot\left(\begin{matrix}a_{n}\left(t\right) & 0\\
0 & b_{n}\left(t\right)
\end{matrix}\right).
\end{aligned}
\]
Next, equation \eqref{eq:phin inverse} and Choice \ref{choice:phin}
allow us to write
\begin{equation}
\begin{aligned}\widetilde{\Pi^{\left(n\right)}}^{n}\left(t,x\right) & =\nabla\left(\sum_{m=1}^{n-1}\widetilde{\omega^{\left(m\right)}}^{m}\left(t,\left(\begin{matrix}a_{m}\left(t\right)\left(\phi_{1}^{\left(n\right)}\left(t,x\right)-\phi_{1}^{\left(m\right)}\left(t,0\right)\right)\\
b_{m}\left(t\right)\left(\phi_{2}^{\left(n\right)}\left(t,x\right)-\phi_{2}^{\left(m\right)}\left(t,0\right)\right)
\end{matrix}\right)\right)\right)\cdot\left(\begin{matrix}a_{n}\left(t\right) & 0\\
0 & b_{n}\left(t\right)
\end{matrix}\right)=\\
 & =\sum_{m=1}^{n-1}\nabla\widetilde{\omega^{\left(m\right)}}^{m}\left(t,\left(\begin{matrix}a_{m}\left(t\right)\left(\phi_{1}^{\left(n\right)}\left(t,x\right)-\phi_{1}^{\left(m\right)}\left(t,0\right)\right)\\
b_{m}\left(t\right)\left(\phi_{2}^{\left(n\right)}\left(t,x\right)-\phi_{2}^{\left(m\right)}\left(t,0\right)\right)
\end{matrix}\right)\right)\cdot\left(\begin{matrix}\frac{a_{m}\left(t\right)}{a_{n}\left(t\right)} & 0\\
0 & \frac{b_{m}\left(t\right)}{b_{n}\left(t\right)}
\end{matrix}\right)\cdot\left(\begin{matrix}a_{n}\left(t\right) & 0\\
0 & b_{n}\left(t\right)
\end{matrix}\right)=\\
 & =\sum_{m=1}^{n-1}\nabla\widetilde{\omega^{\left(m\right)}}^{m}\left(t,\left(\begin{matrix}a_{m}\left(t\right)\left(\phi_{1}^{\left(n\right)}\left(t,x\right)-\phi_{1}^{\left(m\right)}\left(t,0\right)\right)\\
b_{m}\left(t\right)\left(\phi_{2}^{\left(n\right)}\left(t,x\right)-\phi_{2}^{\left(m\right)}\left(t,0\right)\right)
\end{matrix}\right)\right)\cdot\left(\begin{matrix}a_{m}\left(t\right) & 0\\
0 & b_{m}\left(t\right)
\end{matrix}\right)=\\
 & =\sum_{m=1}^{n-1}\widetilde{\nabla}^{m}\widetilde{\omega^{\left(m\right)}}^{m}\left(t,\left(\begin{matrix}a_{m}\left(t\right)\left(\phi_{1}^{\left(n\right)}\left(t,x\right)-\phi_{1}^{\left(m\right)}\left(t,0\right)\right)\\
b_{m}\left(t\right)\left(\phi_{2}^{\left(n\right)}\left(t,x\right)-\phi_{2}^{\left(m\right)}\left(t,0\right)\right)
\end{matrix}\right)\right),
\end{aligned}
\label{eq:form PIn}
\end{equation}
where we have used equation \eqref{eq:gradient tilde phi} in the
last step.

In this way, as the $\left|\left|\cdot\right|\right|_{L^{\infty}\left(\mathbb{R}^{2}\right)}$
norm is invariant under diffeomorphisms, it is immediate that
\[
\left|\left|\Pi_{i}^{\left(n\right)}\left(t,\cdot\right)\right|\right|_{L^{\infty}\left(\mathbb{R}^{2}\right)}\leq\sum_{m=1}^{n-1}\left|\left|\left(\widetilde{\nabla}^{m}\widetilde{\omega^{\left(m\right)}}^{m}\left(t,\cdot\right)\right)_{i}\right|\right|_{L^{\infty}\left(\mathbb{R}^{2}\right)}\quad\forall i\in\left\{ 1,2\right\} ,
\]
where the subscript $i$ denotes the $i$-th component. Then, Proposition
\ref{prop:form gradient omega} provides
\begin{equation}
\begin{aligned}\left|\left|\Pi_{1}^{\left(n\right)}\left(t,\cdot\right)\right|\right|_{L^{\infty}\left(\mathbb{R}^{2}\right)} & \lesssim_{\varphi}\sum_{m=1}^{n-1}B_{m}\left(t\right)a_{m}\left(t\right)\max\left\{ a_{m}\left(t\right)^{2},b_{m}\left(t\right)^{2}\right\} ,\\
\left|\left|\Pi_{2}^{\left(n\right)}\left(t,\cdot\right)\right|\right|_{L^{\infty}\left(\mathbb{R}^{2}\right)} & \lesssim_{\varphi}\sum_{m=1}^{n-1}B_{m}\left(t\right)b_{m}\left(t\right)\max\left\{ a_{m}\left(t\right)^{2},b_{m}\left(t\right)^{2}\right\} .
\end{aligned}
\label{eq:bounds pi Linfty v0}
\end{equation}
The $\left|\left|\cdot\right|\right|_{\dot{C}^{\alpha}\left(\mathbb{R}^{2}\right)}$
seminorm is a little more difficult to compute. As we have done in
multiple proofs by now, we will resort to marginal Hölder seminorms
(see equation \eqref{eq:marginal seminorms}). Since $\phi_{1}^{\left(n\right)}\left(t,x\right)$
only depends on $x_{1}$ and $\phi_{2}^{\left(n\right)}\left(t,x\right)$
only depends on $x_{2}$, by equations \eqref{eq:form PIn} and \eqref{eq:property Calpha composition},
we have
\[
\begin{aligned}\left|\left|\widetilde{\Pi_{1}^{\left(n\right)}}^{n}\left(t,\cdot\right)\right|\right|_{\dot{C}_{1}^{\alpha}\left(\mathbb{R}\right)} & \leq\sum_{m=1}^{n-1}\left|\left|a_{m}\left(t\right)\left(\phi_{1}^{\left(n\right)}\left(t,x\right)-\phi_{1}^{\left(m\right)}\left(t,0\right)\right)\right|\right|_{\dot{C}^{1}\left(\mathbb{R}\right)}^{\alpha}\left|\left|\left(\widetilde{\nabla}^{m}\widetilde{\omega^{\left(m\right)}}^{m}\left(t,\cdot\right)\right)_{1}\right|\right|_{\dot{C}_{1}^{\alpha}\left(\mathbb{R}^{2}\right)},\\
\left|\left|\widetilde{\Pi_{1}^{\left(n\right)}}^{n}\left(t,\cdot\right)\right|\right|_{\dot{C}_{2}^{\alpha}\left(\mathbb{R}\right)} & \leq\sum_{m=1}^{n-1}\left|\left|b_{m}\left(t\right)\left(\phi_{2}^{\left(n\right)}\left(t,x\right)-\phi_{2}^{\left(m\right)}\left(t,0\right)\right)\right|\right|_{\dot{C}^{1}\left(\mathbb{R}\right)}^{\alpha}\left|\left|\left(\widetilde{\nabla}^{m}\widetilde{\omega^{\left(m\right)}}^{m}\left(t,\cdot\right)\right)_{1}\right|\right|_{\dot{C}_{2}^{\alpha}\left(\mathbb{R}^{2}\right)},\\
\left|\left|\widetilde{\Pi_{2}^{\left(n\right)}}^{n}\left(t,\cdot\right)\right|\right|_{\dot{C}_{1}^{\alpha}\left(\mathbb{R}\right)} & \leq\sum_{m=1}^{n-1}\left|\left|a_{m}\left(t\right)\left(\phi_{1}^{\left(n\right)}\left(t,x\right)-\phi_{1}^{\left(m\right)}\left(t,0\right)\right)\right|\right|_{\dot{C}^{1}\left(\mathbb{R}\right)}^{\alpha}\left|\left|\left(\widetilde{\nabla}^{m}\widetilde{\omega^{\left(m\right)}}^{m}\left(t,\cdot\right)\right)_{2}\right|\right|_{\dot{C}_{1}^{\alpha}\left(\mathbb{R}^{2}\right)},\\
\left|\left|\widetilde{\Pi_{2}^{\left(n\right)}}^{n}\left(t,\cdot\right)\right|\right|_{\dot{C}_{2}^{\alpha}\left(\mathbb{R}\right)} & \leq\sum_{m=1}^{n-1}\left|\left|b_{m}\left(t\right)\left(\phi_{2}^{\left(n\right)}\left(t,x\right)-\phi_{2}^{\left(m\right)}\left(t,0\right)\right)\right|\right|_{\dot{C}^{1}\left(\mathbb{R}\right)}^{\alpha}\left|\left|\left(\widetilde{\nabla}^{m}\widetilde{\omega^{\left(m\right)}}^{m}\left(t,\cdot\right)\right)_{2}\right|\right|_{\dot{C}_{2}^{\alpha}\left(\mathbb{R}^{2}\right)}.
\end{aligned}
\]
By equation \eqref{eq:jacobian inverse} and Proposition \ref{prop:form gradient omega},
we deduce that
\[
\begin{aligned}\left|\left|\widetilde{\Pi_{1}^{\left(n\right)}}^{n}\left(t,\cdot\right)\right|\right|_{\dot{C}_{1}^{\alpha}\left(\mathbb{R}\right)} & \lesssim_{\varphi}\sum_{m=1}^{n-1}\left(\frac{a_{m}\left(t\right)}{a_{n}\left(t\right)}\right)^{\alpha}B_{m}\left(t\right)a_{m}\left(t\right)\max\left\{ a_{m}\left(t\right)^{2},b_{m}\left(t\right)^{2}\right\} \\
\left|\left|\widetilde{\Pi_{1}^{\left(n\right)}}^{n}\left(t,\cdot\right)\right|\right|_{\dot{C}_{2}^{\alpha}\left(\mathbb{R}\right)} & \lesssim_{\varphi}\sum_{m=1}^{n-1}\left(\frac{b_{m}\left(t\right)}{b_{n}\left(t\right)}\right)^{\alpha}B_{m}\left(t\right)a_{m}\left(t\right)\max\left\{ a_{m}\left(t\right)^{2},b_{m}\left(t\right)^{2}\right\} ,\\
\left|\left|\widetilde{\Pi_{2}^{\left(n\right)}}^{n}\left(t,\cdot\right)\right|\right|_{\dot{C}_{1}^{\alpha}\left(\mathbb{R}\right)} & \lesssim_{\varphi}\sum_{m=1}^{n-1}\left(\frac{a_{m}\left(t\right)}{a_{n}\left(t\right)}\right)^{\alpha}B_{m}\left(t\right)b_{m}\left(t\right)\max\left\{ a_{m}\left(t\right)^{2},b_{m}\left(t\right)^{2}\right\} ,\\
\left|\left|\widetilde{\Pi_{2}^{\left(n\right)}}^{n}\left(t,\cdot\right)\right|\right|_{\dot{C}_{2}^{\alpha}\left(\mathbb{R}\right)} & \lesssim_{\varphi}\sum_{m=1}^{n-1}\left(\frac{b_{m}\left(t\right)}{b_{n}\left(t\right)}\right)^{\alpha}B_{m}\left(t\right)b_{m}\left(t\right)\max\left\{ a_{m}\left(t\right)^{2},b_{m}\left(t\right)^{2}\right\} .
\end{aligned}
\]
Now, we may obtain the bounds for $\Pi^{\left(n\right)}$ via equations
\eqref{eq:property Calpha composition} and \eqref{eq:jacobian inverse},
which provide
\[
\begin{aligned}\left|\left|\Pi_{1}^{\left(n\right)}\left(t,\cdot\right)\right|\right|_{\dot{C}_{1}^{\alpha}\left(\mathbb{R}\right)} & =\left|\left|\widetilde{\Pi_{1}^{\left(n\right)}}^{n}\left(t,\left(\phi^{\left(n\right)}\right)^{-1}\left(t,\cdot\right)\right)\right|\right|_{\dot{C}_{1}^{\alpha}\left(\mathbb{R}\right)}\leq\\
 & \leq\left|\left|\left(\phi_{1}^{\left(n\right)}\right)^{-1}\left(t,\cdot\right)\right|\right|_{\dot{C}^{1}\left(\mathbb{R}\right)}^{\alpha}\left|\left|\widetilde{\Pi_{1}^{\left(n\right)}}^{n}\left(t,\cdot\right)\right|\right|_{\dot{C}_{1}^{\alpha}\left(\mathbb{R}\right)}\lesssim_{\varphi}\\
 & \lesssim_{\varphi}a_{n}\left(t\right)^{\alpha}\sum_{m=1}^{n-1}\left(\frac{a_{m}\left(t\right)}{a_{n}\left(t\right)}\right)^{\alpha}B_{m}\left(t\right)a_{m}\left(t\right)\max\left\{ a_{m}\left(t\right)^{2},b_{m}\left(t\right)^{2}\right\} =\\
 & \lesssim_{\varphi}\sum_{m=1}^{n-1}a_{m}\left(t\right)^{\alpha}B_{m}\left(t\right)a_{m}\left(t\right)\max\left\{ a_{m}\left(t\right)^{2},b_{m}\left(t\right)^{2}\right\} \leq\\
 & \lesssim_{\varphi}\sum_{m=1}^{n-1}B_{m}\left(t\right)a_{m}\left(t\right)\max\left\{ a_{m}\left(t\right)^{2+\alpha},b_{m}\left(t\right)^{2+\alpha}\right\} .
\end{aligned}
\]
Analogously, we conclude that
\[
\begin{aligned}\left|\left|\Pi_{1}^{\left(n\right)}\left(t,\cdot\right)\right|\right|_{\dot{C}_{2}^{\alpha}\left(\mathbb{R}\right)} & \lesssim_{\varphi}\sum_{m=1}^{n-1}B_{m}\left(t\right)a_{m}\left(t\right)\max\left\{ a_{m}\left(t\right)^{2+\alpha},b_{m}\left(t\right)^{2+\alpha}\right\} ,\\
\left|\left|\Pi_{2}^{\left(n\right)}\left(t,\cdot\right)\right|\right|_{\dot{C}_{1}^{\alpha}\left(\mathbb{R}\right)} & \lesssim_{\varphi}\sum_{m=1}^{n-1}B_{m}\left(t\right)b_{m}\left(t\right)\max\left\{ a_{m}\left(t\right)^{2+\alpha},b_{m}\left(t\right)^{2+\alpha}\right\} ,\\
\left|\left|\Pi_{2}^{\left(n\right)}\left(t,\cdot\right)\right|\right|_{\dot{C}_{2}^{\alpha}\left(\mathbb{R}\right)} & \lesssim_{\varphi}\sum_{m=1}^{n-1}B_{m}\left(t\right)b_{m}\left(t\right)\max\left\{ a_{m}\left(t\right)^{2+\alpha},b_{m}\left(t\right)^{2+\alpha}\right\} .
\end{aligned}
\]
With these computations done, we can recover the ``joint'' $\left|\left|\cdot\right|\right|_{\dot{C}^{\alpha}\left(\mathbb{R}^{2}\right)}$
seminorm via equation \eqref{eq:Holder seminorm by marginals}. In
this manner,
\begin{equation}
\begin{aligned}\left|\left|\Pi_{1}^{\left(n\right)}\left(t,\cdot\right)\right|\right|_{\dot{C}^{\alpha}\left(\mathbb{R}^{2}\right)} & \lesssim_{\varphi}\sum_{m=1}^{n-1}B_{m}\left(t\right)a_{m}\left(t\right)\max\left\{ a_{m}\left(t\right)^{2+\alpha},b_{m}\left(t\right)^{2+\alpha}\right\} ,\\
\left|\left|\Pi_{2}^{\left(n\right)}\left(t,\cdot\right)\right|\right|_{\dot{C}^{\alpha}\left(\mathbb{R}^{2}\right)} & \lesssim_{\varphi}\sum_{m=1}^{n-1}B_{m}\left(t\right)b_{m}\left(t\right)\max\left\{ a_{m}\left(t\right)^{2+\alpha},b_{m}\left(t\right)^{2+\alpha}\right\} .
\end{aligned}
\label{eq:bounds pi Calpha v0}
\end{equation}

As long as $C$ is chosen large enough (let us say $C\ge\Upsilon_{1}\left(\beta,\delta\right)$),
Proposition \ref{prop:time convergence} and the equality $a_{m}\left(1\right)=b_{m}\left(1\right)$
(which is due to Choices \ref{choice:anbn} and \ref{choice:time picture})
allow us to transform \eqref{eq:bounds pi Linfty v0} and \eqref{eq:bounds pi Calpha v0}
into
\[
\begin{aligned}\left|\left|\Pi_{1}^{\left(n\right)}\left(t,\cdot\right)\right|\right|_{L^{\infty}\left(\mathbb{R}^{2}\right)} & \lesssim_{\varphi}\sum_{m=1}^{n-1}\overbrace{B_{m}\left(1\right)a_{m}\left(1\right)b_{m}\left(1\right)}^{=M_{m}}b_{m}\left(1\right)=\sum_{m=1}^{n-1}YC^{\left(1+\delta\right)\left(\frac{1}{1-\gamma}\right)^{m}},\\
\left|\left|\Pi_{2}^{\left(n\right)}\left(t,\cdot\right)\right|\right|_{L^{\infty}\left(\mathbb{R}^{2}\right)} & \lesssim_{\varphi}\sum_{m=1}^{n-1}\overbrace{B_{m}\left(1\right)a_{m}\left(1\right)b_{m}\left(1\right)}^{=M_{m}}b_{m}\left(1\right)=\sum_{m=1}^{n-1}YC^{\left(1+\delta\right)\left(\frac{1}{1-\gamma}\right)^{m}},
\end{aligned}
\]
\[
\begin{aligned}\left|\left|\Pi_{1}^{\left(n\right)}\left(t,\cdot\right)\right|\right|_{\dot{C}^{\alpha}\left(\mathbb{R}^{2}\right)} & \lesssim_{\varphi}\sum_{m=1}^{n-1}\overbrace{B_{m}\left(1\right)a_{m}\left(1\right)b_{m}\left(1\right)}^{=M_{m}}b_{m}\left(1\right)^{1+\alpha}=\sum_{m=1}^{n-1}YC^{\left(1+\alpha+\delta\right)\left(\frac{1}{1-\gamma}\right)^{m}},\\
\left|\left|\Pi_{2}^{\left(n\right)}\left(t,\cdot\right)\right|\right|_{\dot{C}^{\alpha}\left(\mathbb{R}^{2}\right)} & \lesssim_{\varphi}\sum_{m=1}^{n-1}\overbrace{B_{m}\left(1\right)a_{m}\left(1\right)b_{m}\left(1\right)}^{=M_{m}}b_{m}\left(1\right)^{1+\alpha}=\sum_{m=1}^{n-1}YC^{\left(1+\alpha+\delta\right)\left(\frac{1}{1-\gamma}\right)^{m}}.
\end{aligned}
\]
Since $\alpha<1$, we can easily bound
\[
\begin{aligned}\left|\left|\Pi^{\left(n\right)}\left(t,\cdot\right)\right|\right|_{L^{\infty}\left(\mathbb{R}^{2};\mathbb{R}^{2}\right)} & \lesssim_{\varphi}\sum_{m=1}^{n-1}YC^{\left(1+\delta\right)\left(\frac{1}{1-\gamma}\right)^{m}},\\
\left|\left|\Pi^{\left(n\right)}\left(t,\cdot\right)\right|\right|_{\dot{C}^{\alpha}\left(\mathbb{R}^{2};\mathbb{R}^{2}\right)} & \lesssim_{\varphi}\sum_{m=1}^{n-1}YC^{\left(2+\delta\right)\left(\frac{1}{1-\gamma}\right)^{m}}.
\end{aligned}
\]
Now, Lemma \ref{lem:estimate sum superexponential} (which we can
make use of thanks to Choice \ref{choice:min requirements on C gamma and kmax})
provides
\begin{equation}
\begin{aligned}\left|\left|\Pi^{\left(n\right)}\left(t,\cdot\right)\right|\right|_{L^{\infty}\left(\mathbb{R}^{2};\mathbb{R}^{2}\right)} & \lesssim_{\varphi,\delta}YC^{\left(1+\delta\right)\left(\frac{1}{1-\gamma}\right)^{n-1}},\\
\left|\left|\Pi^{\left(n\right)}\left(t,\cdot\right)\right|\right|_{\dot{C}^{\alpha}\left(\mathbb{R}^{2};\mathbb{R}^{2}\right)} & \lesssim_{\varphi,\delta}YC^{\left(2+\delta\right)\left(\frac{1}{1-\gamma}\right)^{n-1}}.
\end{aligned}
\label{eq:bounds pi}
\end{equation}

Before obtaining the result of the statement, we have to rewrite a
little bit the expressions we achieved in Proposition \ref{prop:form of the velocity}.
In view of that Proposition, we have
\[
\begin{aligned}\left|\left|u^{\left(n\right)}\left(t,\cdot\right)\right|\right|_{L^{\infty}\left(\mathbb{R}^{2};\mathbb{R}^{2}\right)} & \lesssim_{\varphi}B_{n}\left(t\right)\max\left\{ a_{n}\left(t\right),b_{n}\left(t\right)\right\} ,\\
\left|\left|u^{\left(n\right)}\left(t,\cdot\right)\right|\right|_{\dot{C}^{\alpha}\left(\mathbb{R}^{2};\mathbb{R}^{2}\right)} & \lesssim_{\varphi}B_{n}\left(t\right)\max\left\{ a_{n}\left(t\right)^{1+\alpha},b_{n}\left(t\right)^{1+\alpha}\right\} .
\end{aligned}
\]
Provided that $C$ is large enough (let us say $C\ge\Upsilon_{2}\left(\delta,\mu\right)$),
Choice \ref{choice:amplitude density} and Corollary \ref{cor:bound for an(t), bn(t)}
let us write
\[
\begin{aligned}\left|\left|u^{\left(n\right)}\left(t,\cdot\right)\right|\right|_{L^{\infty}\left(\mathbb{R}^{2};\mathbb{R}^{2}\right)} & \lesssim_{\varphi}\frac{z_{n}}{a_{n}\left(t\right)^{2}+b_{n}\left(t\right)^{2}}\overbrace{\frac{\int_{t_{n}}^{t}h^{\left(n\right)}\left(s\right)b_{n}\left(s\right)\mathrm{d}s}{\int_{t_{n}}^{1}h^{\left(n\right)}\left(s\right)b_{n}\left(s\right)\mathrm{d}s}}^{\le1}C^{\left(1+\overline{k}_{n}\left(t\right)+\mu\right)\left(\frac{1}{1-\gamma}\right)^{n}},\\
\left|\left|u^{\left(n\right)}\left(t,\cdot\right)\right|\right|_{\dot{C}^{\alpha}\left(\mathbb{R}^{2};\mathbb{R}^{2}\right)} & \lesssim_{\varphi}\frac{z_{n}}{a_{n}\left(t\right)^{2}+b_{n}\left(t\right)^{2}}\underbrace{\frac{\int_{t_{n}}^{t}h^{\left(n\right)}\left(s\right)b_{n}\left(s\right)\mathrm{d}s}{\int_{t_{n}}^{1}h^{\left(n\right)}\left(s\right)b_{n}\left(s\right)\mathrm{d}s}}_{\le1}C^{\left(1+\alpha\right)\left(1+\overline{k}_{n}\left(t\right)+\mu\right)\left(\frac{1}{1-\gamma}\right)^{n}}.
\end{aligned}
\]
By Choice \ref{choice:time picture}, equation \eqref{eq:relation Mn and zn}
and Choice \ref{choice:Mn}, we have
\begin{equation}
\begin{aligned}\left|\left|u^{\left(n\right)}\left(t,\cdot\right)\right|\right|_{L^{\infty}\left(\mathbb{R}^{2};\mathbb{R}^{2}\right)} & \lesssim_{\varphi}\frac{Y}{a_{n}\left(t\right)^{2}+b_{n}\left(t\right)^{2}}C^{\left(1+\overline{k}_{n}\left(t\right)+\mu+\delta\right)\left(\frac{1}{1-\gamma}\right)^{n}},\\
\left|\left|u^{\left(n\right)}\left(t,\cdot\right)\right|\right|_{\dot{C}^{\alpha}\left(\mathbb{R}^{2};\mathbb{R}^{2}\right)} & \lesssim_{\varphi}\frac{Y}{a_{n}\left(t\right)^{2}+b_{n}\left(t\right)^{2}}C^{\left[\left(1+\alpha\right)\left(1+\overline{k}_{n}\left(t\right)+\mu\right)+\delta\right]\left(\frac{1}{1-\gamma}\right)^{n}}.
\end{aligned}
\label{eq:bounds un v0}
\end{equation}
To continue, notice that
\[
\frac{1}{a_{n}\left(t\right)^{2}+b_{n}\left(t\right)^{2}}\le\frac{1}{a_{n}\left(t\right)^{2}},\quad\frac{1}{a_{n}\left(t\right)^{2}+b_{n}\left(t\right)^{2}}\le\frac{1}{b_{n}\left(t\right)^{2}}.
\]
In other words,
\[
\frac{1}{a_{n}\left(t\right)^{2}+b_{n}\left(t\right)^{2}}\le\min\left\{ \frac{1}{a_{n}\left(t\right)^{2}},\frac{1}{b_{n}\left(t\right)^{2}}\right\} =\frac{1}{\max\left\{ a_{n}\left(t\right)^{2},b_{n}\left(t\right)^{2}\right\} }.
\]
Resorting to Corollary \ref{cor:bound for an(t), bn(t)}, we obtain
\begin{equation}
\frac{1}{a_{n}\left(t\right)^{2}+b_{n}\left(t\right)^{2}}\le\frac{1}{C^{2\left(1+\overline{k}_{n}\left(t\right)-\mu\right)\left(\frac{1}{1-\gamma}\right)^{n}}}.\label{eq:bound 1 over sum an2 bn2}
\end{equation}
Making use of \eqref{eq:bound 1 over sum an2 bn2} in \eqref{eq:bounds un v0},
we get
\[
\begin{aligned}\left|\left|u^{\left(n\right)}\left(t,\cdot\right)\right|\right|_{L^{\infty}\left(\mathbb{R}^{2};\mathbb{R}^{2}\right)} & \lesssim_{\varphi}YC^{\left(-1-\overline{k}_{n}\left(t\right)+3\mu+\delta\right)\left(\frac{1}{1-\gamma}\right)^{n}},\\
\left|\left|u^{\left(n\right)}\left(t,\cdot\right)\right|\right|_{\dot{C}^{\alpha}\left(\mathbb{R}^{2};\mathbb{R}^{2}\right)} & \lesssim_{\varphi}YC^{\left[-\left(1-\alpha\right)\left(1+\overline{k}_{n}\left(t\right)\right)+\left(3+\alpha\right)\mu+\delta\right]\left(\frac{1}{1-\gamma}\right)^{n}}.
\end{aligned}
\]
Since $\overline{k}_{n}\left(t\right)\ge0$ by Corollary \ref{cor:ideal kn non negative}
and $\alpha<1$, we have
\begin{equation}
\begin{aligned}\left|\left|u^{\left(n\right)}\left(t,\cdot\right)\right|\right|_{L^{\infty}\left(\mathbb{R}^{2};\mathbb{R}^{2}\right)} & \lesssim_{\varphi}YC^{\left(-1+3\mu+\delta\right)\left(\frac{1}{1-\gamma}\right)^{n}},\\
\left|\left|u^{\left(n\right)}\left(t,\cdot\right)\right|\right|_{\dot{C}^{\alpha}\left(\mathbb{R}^{2};\mathbb{R}^{2}\right)} & \lesssim_{\varphi}YC^{\left[-\left(1-\alpha\right)+4\mu+\delta\right]\left(\frac{1}{1-\gamma}\right)^{n}}.
\end{aligned}
\label{eq:bounds un v1}
\end{equation}

Now, we can combine equations \eqref{eq:bounds pi} and \eqref{eq:bounds un v1},
applying equation \eqref{eq:property Calpha multiplication} to obtain
\[
\begin{aligned}\left|\left|\mathrm{O}^{\left(n\right)}\left(t,\cdot\right)\right|\right|_{L^{\infty}\left(\mathbb{R}^{2}\right)} & \lesssim_{\varphi,\delta}Y^{2}C^{\left(-1+3\mu+\delta\left(2-\gamma\right)+\left(1-\gamma\right)\right)\left(\frac{1}{1-\gamma}\right)^{n}},\\
\left|\left|\mathrm{O}^{\left(n\right)}\left(t,\cdot\right)\right|\right|_{\dot{C}^{\alpha}\left(\mathbb{R}^{2}\right)} & \lesssim_{\varphi,\delta}\left|\left|u^{\left(n\right)}\left(t,\cdot\right)\right|\right|_{L^{\infty}\left(\mathbb{R}^{2};\mathbb{R}^{2}\right)}\left|\left|\Pi^{\left(n\right)}\left(t,\cdot\right)\right|\right|_{\dot{C}^{\alpha}\left(\mathbb{R}^{2};\mathbb{R}^{2}\right)}+\\
 & \quad+\left|\left|u^{\left(n\right)}\left(t,\cdot\right)\right|\right|_{\dot{C}^{\alpha}\left(\mathbb{R}^{2};\mathbb{R}^{2}\right)}\left|\left|\Pi^{\left(n\right)}\left(t,\cdot\right)\right|\right|_{L^{\infty}\left(\mathbb{R}^{2};\mathbb{R}^{2}\right)}=\\
 & \lesssim_{\varphi,\delta}YC^{\left(-1+3\mu+\delta\right)\left(\frac{1}{1-\gamma}\right)^{n}}YC^{\left(2+\delta\right)\left(\frac{1}{1-\gamma}\right)^{n-1}}+\\
 & \quad+YC^{\left[-\left(1-\alpha\right)+4\mu+\delta\right]\left(\frac{1}{1-\gamma}\right)^{n}}YC^{\left(1+\delta\right)\left(\frac{1}{1-\gamma}\right)^{n-1}}\leq\\
 & \lesssim_{\varphi,\delta}Y^{2}C^{\left[-\left(1-\alpha\right)+4\mu+2\left(1-\gamma\right)+\delta\left(2-\gamma\right)\right]\left(\frac{1}{1-\gamma}\right)^{n}}.
\end{aligned}
\]
Bounding $\gamma\ge0$ where it is multiplied by $\delta$ provides
the result of the statement.
\end{proof}

\subsection{Transport term}
\begin{prop}
\label{prop:bound vorticity transport}Let $n\in\mathbb{N}$ with
$n\ge2$, $\mu>0$ and $\alpha\in\left(0,1\right)$. As long as $C$
is big enough (let us say $C\ge\Upsilon\left(\delta,\mu\right)$),
the transport term
\[
\widetilde{T_{\omega}^{\left(n\right)}}^{n}\left(t,x\right)\coloneqq\left(\widetilde{U^{\left(n-1\right)}}^{n}\left(t,x\right)-\widetilde{U^{\left(n-1\right)}}^{n}\left(t,0\right)-\mathrm{J}\widetilde{U^{\left(n-1\right)}}^{n}\left(t,0\right)\cdot\left(\begin{matrix}x_{1}\\
x_{2}
\end{matrix}\right)\right)\cdot\widetilde{\nabla}^{n}\widetilde{\omega^{\left(n\right)}}^{n}\left(t,x\right)
\]
satisfies the bound
\[
\begin{aligned}\left|\left|T_{\omega}^{\left(n\right)}\left(t,\cdot\right)\right|\right|_{\dot{C}^{\alpha}\left(\mathbb{R}^{2}\right)} & \lesssim_{\varphi,\delta}Y^{2}C^{\left[4\mu+2\delta+2\left(1-\gamma\right)\right]\left(\frac{1}{1-\gamma}\right)^{n}}\left[C^{\left[\alpha\left(1+k_{\max}\right)-1+2\Lambda+3k_{\max}\right]\left(\frac{1}{1-\gamma}\right)^{n}}+\right.\\
 & \quad\left.+C^{\left[-1+2\Lambda+3k_{\max}+\max\left\{ \alpha\left(1-k_{\max}\right)-\Lambda,0\right\} \right]\left(\frac{1}{1-\gamma}\right)^{n}}\right].
\end{aligned}
\]
In other words, for the term above to be decreasing in $n\in\mathbb{N}$,
we need
\begin{enumerate}
\item ~
\[
\alpha\left(1+k_{\max}\right)-1+2\Lambda+3k_{\max}+4\mu+2\delta+2\left(1-\gamma\right)<0,
\]
\item ~
\[
-1+2\Lambda+3k_{\max}+4\mu+2\delta+2\left(1-\gamma\right)<0,
\]
\item if $\alpha\left(1-k_{\max}\right)-\Lambda>0$, we also require
\[
\alpha\left(1-k_{\max}\right)-1+\Lambda+3k_{\max}+4\mu+2\delta+2\left(1-\gamma\right)<0.
\]
\end{enumerate}
\end{prop}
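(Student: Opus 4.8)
The plan is to mimic the proof of Proposition \ref{prop:bound density transport}, replacing the density gradient by the vorticity gradient and working only with $C^{\alpha}$ norms instead of $C^{1,\alpha}$ ones. Undoing the change of variables $\phi^{\left(n\right)}$, the transport term reads
\[
T_{\omega}^{\left(n\right)}\left(t,\cdot\right)=W^{\left(n\right)}\left(t,\cdot\right)\cdot\left(\Gamma^{\left(n\right)}\left(t,\cdot\right)+\lambda_{n}G^{\left(n\right)}\left(t,\cdot\right)\right),
\]
where $W^{\left(n\right)}$ is the second-order Taylor remainder of $\widetilde{U^{\left(n-1\right)}}^{n}$ estimated in Proposition \ref{prop:bounds for Taylor development of transport} and $\Gamma^{\left(n\right)},G^{\left(n\right)}$ are the two pieces of $\widetilde{\nabla}^{n}\widetilde{\omega^{\left(n\right)}}^{n}$ produced in Proposition \ref{prop:form gradient omega}. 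Since $\widetilde{\omega^{\left(n\right)}}^{n}$ has compact support exactly in the region where the estimates of Proposition \ref{prop:bounds for Taylor development of transport} hold, the product $T_{\omega}^{\left(n\right)}$ is supported in $D^{\left(n\right)}\left(t\right)$, so the bounds for $W^{\left(n\right)}$ on $D^{\left(n\right)}\left(t\right)$ suffice to control $T_{\omega}^{\left(n\right)}$ on all of $\mathbb{R}^{2}$. First I would apply the product inequality \eqref{eq:property Calpha multiplication} on $D^{\left(n\right)}\left(t\right)$, reducing matters to estimating $\left|\left|W^{\left(n\right)}\right|\right|_{L^{\infty}}$, $\left|\left|W^{\left(n\right)}\right|\right|_{\dot{C}^{\alpha}}$ (given by Proposition \ref{prop:bounds for Taylor development of transport}) and $\left|\left|\Gamma^{\left(n\right)}+\lambda_{n}G^{\left(n\right)}\right|\right|_{L^{\infty}}$, $\left|\left|\Gamma^{\left(n\right)}+\lambda_{n}G^{\left(n\right)}\right|\right|_{\dot{C}^{\alpha}}$, all on $D^{\left(n\right)}\left(t\right)$.

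The decisive point in estimating the vorticity gradient is to group the amplitude as $B_{n}\left(t\right)\left(a_{n}\left(t\right)^{2}+b_{n}\left(t\right)^{2}\right)\le 2M_{n}$ — which holds by Choice \ref{choice:amplitude density}, equation \eqref{eq:relation Mn and zn} and Choice \ref{choice:time picture} since $k_{n}\left(1\right)=0$ — rather than as $B_{n}a_{n}b_{n}\le M_{n}$ followed by multiplying back by $\max\left\{a_{n},b_{n}\right\}$; the latter route would introduce a spurious extra factor $C^{2k_{\max}\left(\frac{1}{1-\gamma}\right)^{n}}$ in the exponent. With this grouping, Proposition \ref{prop:form gradient omega} gives $\left|\left|\Gamma^{\left(n\right)}\left(t,\cdot\right)\right|\right|_{L^{\infty}}\lesssim_{\varphi}M_{n}\max\left\{a_{n}\left(t\right),b_{n}\left(t\right)\right\}$ and $\left|\left|\Gamma^{\left(n\right)}\left(t,\cdot\right)\right|\right|_{\dot{C}^{\alpha}}\lesssim_{\varphi}M_{n}\max\left\{a_{n}\left(t\right),b_{n}\left(t\right)\right\}^{1+\alpha}$, while $\lambda_{n}G^{\left(n\right)}$ obeys the same bounds multiplied by $\lambda_{n}=C^{-\Lambda\left(\frac{1}{1-\gamma}\right)^{n}}$ and is therefore dominated. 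Invoking Corollary \ref{cor:bound for an(t), bn(t)} together with $\overline{k}_{n}\left(t\right)\le k_{\max}$ (Choice \ref{choice:ideal kn}) and $M_{n}=YC^{\delta\left(\frac{1}{1-\gamma}\right)^{n}}$ (Choice \ref{choice:Mn}), this becomes a bound of the form $YC^{c\left(\frac{1}{1-\gamma}\right)^{n}}$ with $c=\left(1+k_{\max}\right)$ up to corrections of order $\mu$ and $\delta$ for the $L^{\infty}$ norm, and with $\left(1+k_{\max}\right)$ replaced by $\left(1+\alpha\right)\left(1+k_{\max}\right)$ for the $\dot{C}^{\alpha}$ seminorm (using $\alpha<1$ to absorb the $\left(1+\alpha\right)\mu$ into $2\mu$).

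Then I would multiply these against the $W^{\left(n\right)}$ bounds of Proposition \ref{prop:bounds for Taylor development of transport}, collect exponents, and carry out the two algebraic simplifications $-2\left(1-\Lambda-k_{\max}\right)+\left(1+\alpha\right)\left(1+k_{\max}\right)=\alpha\left(1+k_{\max}\right)-1+2\Lambda+3k_{\max}$ and $-2\left(1-\Lambda-k_{\max}\right)+\left(1+k_{\max}\right)=-1+2\Lambda+3k_{\max}$. The term $\left|\left|W^{\left(n\right)}\right|\right|_{L^{\infty}}\left|\left|\widetilde{\nabla}^{n}\widetilde{\omega^{\left(n\right)}}^{n}\left(t,\left(\phi^{\left(n\right)}\right)^{-1}\left(t,\cdot\right)\right)\right|\right|_{\dot{C}^{\alpha}}$ yields the first exponent in the statement; the term $\left|\left|W^{\left(n\right)}\right|\right|_{\dot{C}^{\alpha}}\left|\left|\widetilde{\nabla}^{n}\widetilde{\omega^{\left(n\right)}}^{n}\left(t,\left(\phi^{\left(n\right)}\right)^{-1}\left(t,\cdot\right)\right)\right|\right|_{L^{\infty}}$ yields the second, the extra $\max\left\{\alpha\left(1-k_{\max}\right)-\Lambda,0\right\}$ being carried along directly from the $\dot{C}^{\alpha}$-bound on $W^{\left(n\right)}$; and the $L^{\infty}$ norm of $T_{\omega}^{\left(n\right)}$ has exponent $-1+2\Lambda+3k_{\max}$ plus the same small corrections, hence is dominated. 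All the $\mu$-, $\delta$- and $\left(1-\gamma\right)$-contributions (which come from the $\left(k+\delta\right)\left(1-\gamma\right)$ factors already present in Proposition \ref{prop:bounds for Taylor development of transport}) fit inside the common prefactor $Y^{2}C^{\left[4\mu+2\delta+2\left(1-\gamma\right)\right]\left(\frac{1}{1-\gamma}\right)^{n}}$. The stated conditions for $\left|\left|T_{\omega}^{\left(n\right)}\left(t,\cdot\right)\right|\right|_{\dot{C}^{\alpha}}$ to be decreasing in $n$ are then precisely the requirement that both exponents (and hence the whole bound) be negative, the third condition appearing only when $\alpha\left(1-k_{\max}\right)-\Lambda>0$ so that the corresponding $\max$ is nonzero.

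The main obstacle is not analytic — the product rule, the Taylor-remainder estimates and all needed parameter bounds are already established — but bookkeeping: tracking the competing powers of $a_{n}\left(t\right)$, $b_{n}\left(t\right)$, $\lambda_{n}$ and $M_{n}$, and in particular keeping $B_{n}\left(a_{n}^{2}+b_{n}^{2}\right)$ grouped so that the exponent of $C$ comes out as $3k_{\max}$ rather than $5k_{\max}$. One must also keep in mind that the $W^{\left(n\right)}$ estimates are only available on $D^{\left(n\right)}\left(t\right)$, which is harmless here because $\widetilde{\nabla}^{n}\widetilde{\omega^{\left(n\right)}}^{n}$ vanishes outside that set.
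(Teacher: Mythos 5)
Your proposal is correct and follows essentially the same route as the paper's proof: the compact-support observation, the product rule \eqref{eq:property Calpha multiplication}, the crucial regrouping $B_{n}\left(t\right)\left(a_{n}\left(t\right)^{2}+b_{n}\left(t\right)^{2}\right)\le2M_{n}$ (the paper implements it via $\frac{\max\left\{ a_{n}\left(t\right)^{2},b_{n}\left(t\right)^{2}\right\} }{a_{n}\left(t\right)^{2}+b_{n}\left(t\right)^{2}}\le1$) to bound $\widetilde{\nabla}^{n}\widetilde{\omega^{\left(n\right)}}^{n}$ by $M_{n}\max\left\{ a_{n},b_{n}\right\} $ and $M_{n}\max\left\{ a_{n},b_{n}\right\} ^{1+\alpha}$, and the combination with Proposition \ref{prop:bounds for Taylor development of transport} producing exactly the two stated exponents with the $L^{\infty}$ contribution absorbed. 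No gaps.
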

\begin{proof}
First, we should readapt the bounds given in Proposition \ref{prop:form gradient omega}.
By the aforementioned Proposition, as $\lambda_{n}\leq1$ by Choice
\ref{choice:psin}, it is clear that
\[
\begin{aligned}\left|\left|\widetilde{\nabla}^{n}\widetilde{\omega^{\left(n\right)}}^{n}\left(t,\left(\phi^{\left(n\right)}\right)^{-1}\left(t,\cdot\right)\right)\right|\right|_{L^{\infty}\left(\mathbb{R}^{2};\mathbb{R}^{2}\right)} & \lesssim_{\varphi}B_{n}\left(t\right)\max\left\{ a_{n}\left(t\right)^{3},b_{n}\left(t\right)^{3}\right\} ,\\
\left|\left|\widetilde{\nabla}^{n}\widetilde{\omega^{\left(n\right)}}^{n}\left(t,\left(\phi^{\left(n\right)}\right)^{-1}\left(t,\cdot\right)\right)\right|\right|_{\dot{C}^{\alpha}\left(\mathbb{R}^{2};\mathbb{R}^{2}\right)} & \lesssim_{\varphi}B_{n}\left(t\right)\max\left\{ a_{n}\left(t\right)^{3+\alpha},b_{n}\left(t\right)^{3+\alpha}\right\} .
\end{aligned}
\]
By Choice \ref{choice:amplitude density}, equation \eqref{eq:relation Mn and zn}
and Choice \ref{choice:time picture}, we have
\[
\begin{aligned}\left|\left|\widetilde{\nabla}^{n}\widetilde{\omega^{\left(n\right)}}^{n}\left(t,\left(\phi^{\left(n\right)}\right)^{-1}\left(t,\cdot\right)\right)\right|\right|_{L^{\infty}\left(\mathbb{R}^{2};\mathbb{R}^{2}\right)} & \lesssim_{\varphi}2M_{n}\underbrace{\frac{\max\left\{ a_{n}\left(t\right)^{2},b_{n}\left(t\right)^{2}\right\} }{a_{n}\left(t\right)^{2}+b_{n}\left(t\right)^{2}}}_{\leq1}\underbrace{\frac{\int_{t_{n}}^{t}h^{\left(n\right)}\left(s\right)b_{n}\left(s\right)\mathrm{d}s}{\int_{t_{n}}^{1}h^{\left(n\right)}\left(s\right)b_{n}\left(s\right)\mathrm{d}s}}_{\leq1}\cdot\\
 & \quad\cdot\max\left\{ a_{n}\left(t\right),b_{n}\left(t\right)\right\} \leq\\
 & \lesssim_{\varphi}M_{n}\max\left\{ a_{n}\left(t\right),b_{n}\left(t\right)\right\} .\\
\left|\left|\widetilde{\nabla}^{n}\widetilde{\omega^{\left(n\right)}}^{n}\left(t,\left(\phi^{\left(n\right)}\right)^{-1}\left(t,\cdot\right)\right)\right|\right|_{\dot{C}^{\alpha}\left(\mathbb{R}^{2};\mathbb{R}^{2}\right)} & \lesssim_{\varphi}2M_{n}\underbrace{\frac{\max\left\{ a_{n}\left(t\right)^{2},b_{n}\left(t\right)^{2}\right\} }{a_{n}\left(t\right)^{2}+b_{n}\left(t\right)^{2}}}_{\leq1}\underbrace{\frac{\int_{t_{n}}^{t}h^{\left(n\right)}\left(s\right)b_{n}\left(s\right)\mathrm{d}s}{\int_{t_{n}}^{1}h^{\left(n\right)}\left(s\right)b_{n}\left(s\right)\mathrm{d}s}}_{\leq1}\cdot\\
 & \quad\cdot\max\left\{ a_{n}\left(t\right)^{1+\alpha},b_{n}\left(t\right)^{1+\alpha}\right\} \leq\\
 & \lesssim_{\varphi}M_{n}\max\left\{ a_{n}\left(t\right)^{1+\alpha},b_{n}\left(t\right)^{1+\alpha}\right\} .
\end{aligned}
\]
As long as $C$ is taken large enough (let us say $C\ge\Upsilon_{1}\left(\delta,\mu\right)$),
since $n\ge2$, Corollary \ref{cor:bound for an(t), bn(t)} and Choices
\ref{choice:Mn} and \ref{choice:ideal kn} provide
\begin{equation}
\begin{aligned}\left|\left|\widetilde{\nabla}^{n}\widetilde{\omega^{\left(n\right)}}^{n}\left(t,\left(\phi^{\left(n\right)}\right)^{-1}\left(t,\cdot\right)\right)\right|\right|_{L^{\infty}\left(\mathbb{R}^{2};\mathbb{R}^{2}\right)} & \lesssim_{\varphi}YC^{\left[1+k_{\max}+\mu+\delta\right]\left(\frac{1}{1-\gamma}\right)^{n}},\\
\left|\left|\widetilde{\nabla}^{n}\widetilde{\omega^{\left(n\right)}}^{n}\left(t,\left(\phi^{\left(n\right)}\right)^{-1}\left(t,\cdot\right)\right)\right|\right|_{\dot{C}^{\alpha}\left(\mathbb{R}^{2};\mathbb{R}^{2}\right)} & \lesssim_{\varphi}YC^{\left[\left(1+\alpha\right)\left(1+k_{\max}+\mu\right)+\delta\right]\left(\frac{1}{1-\gamma}\right)^{n}}.
\end{aligned}
\label{eq:updated bounds for grad omega}
\end{equation}

To obtain the result, we need to combine Proposition \ref{prop:bounds for Taylor development of transport}
with equation \eqref{eq:updated bounds for grad omega} via equation
\eqref{eq:property Calpha multiplication}. Notice that, as $\widetilde{\nabla}^{n}\widetilde{\omega^{\left(n\right)}}^{n}$
has compact support exactly where the bounds of Proposition \ref{prop:bounds for Taylor development of transport}
are computed (see Choice \ref{choice:psin}), we can use the bounds
of Proposition \ref{prop:bounds for Taylor development of transport}
to bound $T_{\omega}^{\left(n\right)}$ in all $\mathbb{R}^{2}$.
In this way, bounding $\gamma\ge0$ when it is multiplied by $\delta$,
\[
\begin{aligned}\left|\left|T_{\omega}^{\left(n\right)}\left(t,\cdot\right)\right|\right|_{L^{\infty}\left(\mathbb{R}^{2}\right)} & \leq\left|\left|W^{\left(n\right)}\left(t,\cdot\right)\right|\right|_{L^{\infty}\left(D^{\left(n\right)}\left(t\right);\mathbb{R}^{2}\right)}\left|\left|\widetilde{\nabla}^{n}\widetilde{\omega^{\left(n\right)}}^{n}\left(t,\left(\phi^{\left(n\right)}\right)^{-1}\left(t,\cdot\right)\right)\right|\right|_{L^{\infty}\left(\mathbb{R}^{2};\mathbb{R}^{2}\right)}\lesssim_{\varphi,\delta}\\
 & \lesssim_{\varphi,\delta}YC^{\left[-2\left(1-\Lambda-k_{\max}\right)+2\mu+\left(1+\delta\right)\left(1-\gamma\right)\right]\left(\frac{1}{1-\gamma}\right)^{n}}YC^{\left[1+k_{\max}+\mu+\delta\right]\left(\frac{1}{1-\gamma}\right)^{n}}=\\
 & \lesssim_{\varphi,\delta}Y^{2}C^{\left[-1+2\Lambda+3k_{\max}+3\mu+2\delta+\left(1-\gamma\right)\right]\left(\frac{1}{1-\gamma}\right)^{n}}.
\end{aligned}
\]
\[
\begin{aligned}\left|\left|T_{\omega}^{\left(n\right)}\left(t,\cdot\right)\right|\right|_{\dot{C}^{\alpha}\left(\mathbb{R}^{2}\right)} & \leq\left|\left|W^{\left(n\right)}\left(t,\cdot\right)\right|\right|_{L^{\infty}\left(D^{\left(m\right)}\left(t\right);\mathbb{R}^{2}\right)}\left|\left|\widetilde{\nabla}^{n}\widetilde{\omega^{\left(n\right)}}^{n}\left(t,\left(\phi^{\left(n\right)}\right)^{-1}\left(t,\cdot\right)\right)\right|\right|_{\dot{C}^{\alpha}\left(\mathbb{R}^{2};\mathbb{R}^{2}\right)}+\\
 & \quad+\left|\left|W^{\left(n\right)}\left(t,\cdot\right)\right|\right|_{\dot{C}^{\alpha}\left(D^{\left(m\right)}\left(t\right);\mathbb{R}^{2}\right)}\left|\left|\widetilde{\nabla}^{n}\widetilde{\omega^{\left(n\right)}}^{n}\left(t,\left(\phi^{\left(n\right)}\right)^{-1}\left(t,\cdot\right)\right)\right|\right|_{L^{\infty}\left(\mathbb{R}^{2};\mathbb{R}^{2}\right)}\lesssim_{\varphi,\delta}\\
 & \lesssim_{\varphi,\delta}YC^{\left[-2\left(1-\Lambda-k_{\max}\right)+2\mu+\left(1+\delta\right)\left(1-\gamma\right)\right]\left(\frac{1}{1-\gamma}\right)^{n}}YC^{\left[\left(1+\alpha\right)\left(1+k_{\max}+\mu\right)+\delta\right]\left(\frac{1}{1-\gamma}\right)^{n}}+\\
 & \quad+YC^{\left[-2\left(1-\Lambda-k_{\max}\right)+\max\left\{ \alpha\left(1-k_{\max}\right)-\Lambda,0\right\} +2\mu+\left(2+\delta\right)\left(1-\gamma\right)\right]\left(\frac{1}{1-\gamma}\right)^{n}}YC^{\left[1+k_{\max}+\mu+\delta\right]\left(\frac{1}{1-\gamma}\right)^{n}}\lesssim_{\varphi,\delta}\\
 & \lesssim_{\varphi,\delta}Y^{2}C^{\left[\alpha\left(1+k_{\max}\right)-1+2\Lambda+3k_{\max}+4\mu+2\delta+\left(1-\gamma\right)\right]\left(\frac{1}{1-\gamma}\right)^{n}}+\\
 & \quad+Y^{2}C^{\left[-1+2\Lambda+3k_{\max}+\max\left\{ \alpha\left(1-k_{\max}\right)-\Lambda,0\right\} +3\mu+2\delta+2\left(1-\gamma\right)\right]\left(\frac{1}{1-\gamma}\right)^{n}}\lesssim\\
 & \lesssim_{\varphi,\delta}Y^{2}C^{\left[4\mu+2\delta+2\left(1-\gamma\right)\right]\left(\frac{1}{1-\gamma}\right)^{n}}\left[C^{\left[\alpha\left(1+k_{\max}\right)-1+2\Lambda+3k_{\max}\right]\left(\frac{1}{1-\gamma}\right)^{n}}+\right.\\
 & \quad\left.+C^{\left[-1+2\Lambda+3k_{\max}+\max\left\{ \alpha\left(1-k_{\max}\right)-\Lambda,0\right\} \right]\left(\frac{1}{1-\gamma}\right)^{n}}\right].
\end{aligned}
\]
As $\left|\left|T_{\omega}^{\left(n\right)}\left(t,\cdot\right)\right|\right|_{L^{\infty}\left(\mathbb{R}^{2}\right)}$
is bounded by the last summand of $\left|\left|T_{\omega}^{\left(n\right)}\left(t,\cdot\right)\right|\right|_{\dot{C}^{\alpha}\left(\mathbb{R}^{2}\right)}$,
we arrive to the result of the statement.
\end{proof}

\section{\label{sec:final parameter optimization and closing arguments}Final
parameter optimization and closing arguments}

Having accomplished the task of finding bounds for the different summands
of the forces $f_{\rho}^{\left(n\right)}$ and $f_{\omega}^{\left(n\right)}$
in sections \ref{sec:bounds for density force} and \ref{sec:bounds for vorticity force},
in this section, we will finally compute the regularity of the force
and prove that it has compact support. Finally, we will prove the
main Theorem (see \ref{thm:MAIN THM}).

\subsection{Regularity}
\begin{prop}
\label{prop:maximal regularity}$\forall\alpha\in\left(0,\alpha_{*}\right)$,
where $\alpha_{*}=\sqrt{\frac{4}{3}}-1$, taking
\[
k_{\max}=k_{\max*}\coloneqq\alpha_{*}=\sqrt{\frac{4}{3}}-1,\quad\Lambda=\Lambda_{*}\coloneqq\left(\alpha_{*}+1\right)\alpha_{*}=\sqrt{\frac{4}{3}}\left(\sqrt{\frac{4}{3}}-1\right),
\]
we may choose $\delta,\mu,\zeta$ sufficiently small (i.e., $\delta,\mu,\zeta\le\Upsilon_{1}\left(\alpha\right)$)
and $\gamma$ sufficiently close to one (i.e., $1-\gamma\le\Upsilon_{2}\left(\alpha\right)$),
so that, once the value of $\mu,\delta,\gamma,\zeta$ are fixed (whatever
they are but satisfying the restrictions above), the forces $F_{\rho}^{\left(n\right)}$
and $F_{\omega}^{\left(n\right)}$satisfy $\lim_{n\to\infty}F_{\rho}^{\left(n\right)}\in C_{t}^{0}C_{x}^{1,\alpha}\left(\left[0,1\right]\times\mathbb{R}^{2}\right)$
and $\lim_{n\to\infty}F_{\omega}^{\left(n\right)}\in C_{t}^{0}C_{x}^{\alpha}\left(\left[0,1\right]\times\mathbb{R}^{2}\right)$.
\end{prop}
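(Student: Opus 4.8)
The plan is to prove the statement by showing that each individual layer force has super‑exponentially small norm in the relevant Hölder space, so that the series $\sum_{m}f_{\rho}^{(m)}(t,\cdot)=\lim_{n\to\infty}F_{\rho}^{(n)}(t,\cdot)$ (resp. for $f_{\omega}$) converges in that space. Recall that \eqref{eq:decomposition force density} splits $f_{\rho}^{(n)}$ into a time‑derivative term, a transport term, a ``new transport of old density'' term and a pure quadratic term, bounded in $C^{1,\alpha}(\mathbb{R}^{2})$ by Propositions \ref{prop:bound density time derivative}, \ref{prop:bound density transport}, \ref{prop:bound density new transport of old density} (which vanishes identically) and \ref{prop:bound density pure quadratic term}; similarly \eqref{eq:decomposition force vorticity} splits $f_{\omega}^{(n)}$ into a time factor, a transport term, a ``new transport of old vorticity'' term and a pure quadratic term, bounded in $C^{\alpha}(\mathbb{R}^{2})$ by Propositions \ref{prop:bound vorticity time factor}, \ref{prop:bound vorticity transport}, \ref{prop:bound vorticity new transport of old vorticity} and \ref{prop:bound vorticity pure quadratic term}. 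Each of these bounds is of the form $Y^{k}C^{e_{n}(\frac{1}{1-\gamma})^{n}}$ with $Y^{k}$ a fixed power of $Y$ and $e_{n}$ an exponent that is strictly negative --- hence the bound super‑exponentially small, as $\frac{1}{1-\gamma}>1$ --- precisely under explicit inequalities relating $\alpha,k_{\max},\Lambda$ to the small parameters $\delta,\mu,\zeta,1-\gamma$. The whole argument is then the verification that the choices $k_{\max}=\alpha_{*}$, $\Lambda=(\alpha_{*}+1)\alpha_{*}$ make all of those inequalities simultaneously available for every $\alpha<\alpha_{*}$.

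The computational engine is the identity $(\alpha_{*}+1)^{2}=\tfrac{4}{3}$, i.e. $\alpha_{*}^{2}+2\alpha_{*}=\tfrac{1}{3}$. With $k_{\max}=\alpha_{*}$ and $\Lambda=\alpha_{*}(\alpha_{*}+1)=k_{\max}(1+k_{\max})$ one reads off
\[
\frac{\Lambda}{1+k_{\max}}=\alpha_{*},\qquad 1-k_{\max}-\Lambda=1-2\alpha_{*}-\alpha_{*}^{2}=\tfrac{2}{3}>0,\qquad 2\Lambda+3k_{\max}=\alpha_{*}+\tfrac{2}{3}<1,
\]
and, using $\Lambda=\alpha_{*}(1+\alpha_{*})=\tfrac{1}{3}-\alpha_{*}$, also $\dfrac{1-2\Lambda-3k_{\max}}{1+k_{\max}}=\dfrac{\tfrac{1}{3}-\alpha_{*}}{1+\alpha_{*}}=\dfrac{\Lambda}{1+\alpha_{*}}=\alpha_{*}$. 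Thus the two structural hypotheses $1-k_{\max}-\Lambda>0$ and $2\Lambda+3k_{\max}<1$ required in Propositions \ref{prop:bound density transport} and \ref{prop:bound vorticity transport} hold with a fixed margin, and every potentially binding threshold among $k_{\max}$, $\frac{\Lambda}{1+k_{\max}}$ and $\frac{1-2\Lambda-3k_{\max}}{1+k_{\max}}$ collapses onto the common value $\alpha_{*}$ --- which is exactly why $\alpha_{*}=\sqrt{4/3}-1$ is the optimal exponent that comes out of optimising over $(k_{\max},\Lambda)$.

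Next I would go through, one at a time, the ``decreasing in $n$'' conditions of Propositions \ref{prop:bound density time derivative} ($\alpha<k_{\max}-4\zeta-7\mu-3\delta$), \ref{prop:bound density pure quadratic term} ($\alpha<\min\{k_{\max},(1-\alpha)k_{\max}+\alpha\Lambda,\Lambda-\alpha k_{\max}\}-3\delta-3\mu$), \ref{prop:bound density transport}, \ref{prop:bound vorticity time factor} ($\alpha<\frac{\Lambda-3\mu-2\delta}{1+k_{\max}}$), \ref{prop:bound vorticity pure quadratic term} ($\alpha<\frac{\Lambda-2\delta-5\mu}{1+k_{\max}}$), \ref{prop:bound vorticity new transport of old vorticity} (no condition beyond $\alpha<1$ once $\delta,\mu$ are small and $\gamma$ close to one) and \ref{prop:bound vorticity transport}, substituting the values above. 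Each condition then reduces either to $\alpha<\alpha_{*}-(\text{a nonnegative linear combination of }\delta,\mu,\zeta,1-\gamma)$, or to an inequality holding with a fixed positive gap (the $(1-\alpha)$‑type conditions, $1-k_{\max}-\Lambda>0$, and the thresholds $\frac{k_{\max}}{1+k_{\max}-\Lambda}$, $\Lambda-\alpha k_{\max}$, which all exceed $\alpha_{*}$ when $\alpha<\alpha_{*}$); the conditional branches ``if $\alpha(1-k_{\max})-\Lambda>0$'' in Propositions \ref{prop:bound density transport} and \ref{prop:bound vorticity transport} never activate since $\frac{\Lambda}{1-k_{\max}}=\frac{\alpha_{*}(1+\alpha_{*})}{1-\alpha_{*}}>\alpha_{*}>\alpha$. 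Now fix $\alpha\in(0,\alpha_{*})$: since $\alpha_{*}-\alpha>0$ and only finitely many inequalities are at stake, choose $\delta,\mu,\zeta$ small and $\gamma$ close to one so that every such linear combination is $<\alpha_{*}-\alpha$; then take $C$ above the finitely many thresholds $\Upsilon(\cdot)$ occurring in the invoked propositions and in Choice \ref{choice:min requirements on C gamma and kmax}, which fixes $Y=Y(C,\delta)$. Then there are $A,c>0$ with $\|f_{\rho}^{(n)}(t,\cdot)\|_{C^{1,\alpha}(\mathbb{R}^{2})}+\|f_{\omega}^{(n)}(t,\cdot)\|_{C^{\alpha}(\mathbb{R}^{2})}\lesssim C^{-c(\frac{1}{1-\gamma})^{n}}$ for $n\ge n_{0}$ and $t\in[t_{n},1]$, and these forces vanish for $t<t_{n}$; since $\frac{1}{1-\gamma}>1$ the sum $\sum_{n}C^{-c(\frac{1}{1-\gamma})^{n}}$ converges, while the finitely many layers with $n<n_{0}$ give, at each fixed $t\in[0,1]$, explicit finite combinations of $C^{\infty}_{c}(\mathbb{R}^{2})$ functions (all parameters $a_{m},b_{m},B_{m},\phi^{(m)},\lambda_{m}$ being finite at every $t$, with smooth compactly supported spatial profiles), hence lie in $C^{1,\alpha}(\mathbb{R}^{2})$ and $C^{\alpha}(\mathbb{R}^{2})$. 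Therefore $\sum_{m}\|f_{\rho}^{(m)}(t,\cdot)\|_{C^{1,\alpha}}<\infty$ and $\sum_{m}\|f_{\omega}^{(m)}(t,\cdot)\|_{C^{\alpha}}<\infty$ for every $t\in[0,1]$, and since $C^{1,\alpha}(\mathbb{R}^{2})$ and $C^{\alpha}(\mathbb{R}^{2})$ are Banach spaces the partial sums $F_{\rho}^{(n)}(t,\cdot)$, $F_{\omega}^{(n)}(t,\cdot)$ converge in them, so their limits belong to them.

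The main obstacle is not analytic --- all the hard estimates live in Sections \ref{sec:time evolution}--\ref{sec:bounds for vorticity force} --- but bookkeeping: one must check that each of the roughly ten ``decreasing in $n$'' inequalities, together with the two structural hypotheses of the transport‑term propositions, genuinely holds (with a gap wherever the perturbation parameters appear) under the single choice $k_{\max}=\alpha_{*}$, $\Lambda=\alpha_{*}(\alpha_{*}+1)$, and to recognise that this is forced by the relation $(\alpha_{*}+1)^{2}=\tfrac{4}{3}$, which is precisely what makes the several competing thresholds coincide and thereby pins down $\alpha_{*}=\sqrt{4/3}-1$ as the sharp exponent.
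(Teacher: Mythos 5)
Your proposal is correct and follows essentially the same route as the paper's own proof: collect the ``decreasing in $n$'' conditions from Propositions \ref{prop:bound density time derivative}--\ref{prop:bound vorticity transport}, observe via the identity $\left(\alpha_{*}+1\right)^{2}=\frac{4}{3}$ that with $k_{\max}=\alpha_{*}$, $\Lambda=\alpha_{*}\left(1+\alpha_{*}\right)$ all binding thresholds collapse to $\alpha_{*}$ while the remaining conditions (including the structural ones $1-k_{\max}-\Lambda=\frac{2}{3}>0$, $2\Lambda+3k_{\max}<1$, and the non-activation of the $\alpha\left(1-k_{\max}\right)-\Lambda>0$ branches) hold with fixed margin, then absorb the small parameters into the gap $\alpha_{*}-\alpha$ and conclude by superexponential summability of the layer bounds plus completeness of the Hölder spaces. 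The only cosmetic difference is that the paper first solves the equality system for the three key inequalities to derive $\left(\alpha_{*},\Lambda_{*},k_{\max*}\right)$ and then checks the rest, whereas you substitute the given values directly; the content is the same.
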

\begin{proof}
First of all, let us compute the maximal value of $\alpha$. To do
this, we need to combine the restrictions imposed by Propositions
\ref{prop:bound density time derivative}, \ref{prop:bound density pure quadratic term},
\ref{prop:bound density new transport of old density}, \ref{prop:bound density transport},
\ref{prop:bound vorticity time factor}, \ref{prop:bound vorticity pure quadratic term},
\ref{prop:bound vorticity new transport of old vorticity} and \ref{prop:bound vorticity transport}.
These are
\begin{enumerate}
\item (Proposition \ref{prop:bound density time derivative})
\[
\alpha<k_{\max}-\epsilon_{1}\left(\zeta,\mu,\delta\right),
\]
\item (Proposition \ref{prop:bound density pure quadratic term})
\[
\alpha<\min\left\{ k_{\max},\left(1-\alpha\right)k_{\max}+\alpha\Lambda,\Lambda-\alpha k_{\max}\right\} -\epsilon_{2}\left(\mu,\delta\right),
\]
\item (Proposition \ref{prop:bound density transport})
\[
-\left(1-\alpha\right)\left(1-k_{\max}\right)+\epsilon_{3}\left(\mu,\delta,\gamma\right)<0\iff\alpha<1-\frac{\epsilon_{3}\left(\mu,\delta,\gamma\right)}{1-k_{\max}},
\]
\item (Proposition \ref{prop:bound density transport})
\[
-\left(1-k_{\max}-\Lambda\right)+\epsilon_{4}\left(\mu,\delta,\gamma\right)<0,
\]
\item (Proposition \ref{prop:bound density transport})
\[
\alpha\left(1+k_{\max}\right)+2\Lambda+3k_{\max}-1+\epsilon_{5}\left(\mu,\delta,\gamma\right)<0\iff\alpha<\frac{1-2\Lambda-3k_{\max}-\epsilon_{5}\left(\mu,\delta,\gamma\right)}{1+k_{\max}},
\]
\item (Proposition \ref{prop:bound density transport}) if $\alpha\left(1-k_{\max}\right)-\Lambda>0$,
we also require
\[
\alpha\left(1-k_{\max}\right)+\Lambda+3k_{\max}-1+\epsilon_{6}\left(\mu,\delta,\gamma\right)<0\iff\alpha<\frac{1-\Lambda-3k_{\max}-\epsilon_{6}\left(\mu,\delta,\gamma\right)}{1-k_{\max}},
\]
\item (Proposition \ref{prop:bound vorticity time factor})
\[
\alpha<\frac{\Lambda-\epsilon_{7}\left(\mu,\delta\right)}{1+k_{\max}},
\]
\item (Proposition \ref{prop:bound vorticity pure quadratic term})
\[
\alpha<\frac{\Lambda-\epsilon_{8}\left(\mu,\delta\right)}{1+k_{\max}},
\]
\item (Proposition \ref{prop:bound vorticity new transport of old vorticity})
\[
-\left(1-\alpha\right)+\epsilon_{9}\left(\mu,\delta,\gamma\right)<0\iff\alpha<1-\epsilon_{9}\left(\mu,\delta,\gamma\right),
\]
\item (Proposition \ref{prop:bound vorticity transport})
\[
\alpha\left(1+k_{\max}\right)-1+2\Lambda+3k_{\max}+\epsilon_{10}\left(\mu,\delta,\gamma\right)<0\iff\alpha<\frac{1-2\Lambda-3k_{\max}-\epsilon_{10}\left(\mu,\delta,\gamma\right)}{1+k_{\max}},
\]
\item (Proposition \ref{prop:bound vorticity transport})
\[
-1+2\Lambda+3k_{\max}+\epsilon_{11}\left(\mu,\delta,\gamma\right)<0,
\]
\item (Proposition \ref{prop:bound vorticity transport}) if $\alpha\left(1-k_{\max}\right)-\Lambda>0$,
we also require
\[
\alpha\left(1-k_{\max}\right)-1+\Lambda+3k_{\max}+\epsilon_{12}\left(\mu,\delta,\gamma\right)<0\iff\alpha<\frac{1-\Lambda-3k_{\max}-\epsilon_{12}\left(\mu,\delta,\gamma\right)}{1-k_{\max}},
\]
\end{enumerate}
where the $\epsilon_{i}$ are continuous functions that vanish when
$\mu=0$, $\delta=0$, $\gamma=1$ and $\zeta=0$. We will call $\epsilon\coloneqq\max_{i\in\left\{ 1,\dots,12\right\} }\epsilon_{i}$,
which is also continuous and vanishes when $\mu=0$, $\delta=0$,
$\gamma=1$ and $\zeta=0$. Thereby, replacing the $\epsilon_{i}$
by $\epsilon$, notice that we obtain equations that are more restrictive,
i.e., if we find a solution for the inequalities with $\epsilon$,
that solution will also be a solution for the real inequalities (with
the $\epsilon_{i}$). Then, in terms of $\epsilon$, some equations
are superfluous. For example, 5 and 10 become the same. The same happens
with equations 6 and 12 and equations 7 and 8. Besides, as $k_{\max}<1$
by Choice \ref{choice:ideal kn}, if equation 1 is satisfied, so will
be equation 9.

To solve the system, we will focus on just three equations and prove
that any solution that satisfies those three will also satisfy the
other nine. In this manner, we will restrict our attention to equations
1, 5 and 7.
\begin{equation}
\alpha<k_{\max}-\epsilon\left(\zeta,\mu,\delta,\gamma\right),\quad\alpha<\frac{1-2\Lambda-3k_{\max}-\epsilon\left(\zeta,\mu,\delta,\gamma\right)}{1+k_{\max}},\quad\alpha<\frac{\Lambda-\epsilon\left(\zeta,\mu,\delta,\gamma\right)}{1+k_{\max}}.\label{eq:the three subequations}
\end{equation}
Furthermore, we shall first solve the case of equality and as if it
were $\epsilon\equiv0$, i.e., 
\begin{equation}
\alpha=k_{\max},\quad\alpha=\frac{1-2\Lambda-3k_{\max}}{1+k_{\max}},\quad\alpha=\frac{\Lambda}{1+k_{\max}}.\label{eq:reduced equations}
\end{equation}
From the second and third equations of \eqref{eq:reduced equations},
we get
\[
1-2\Lambda-3k_{\max}=\alpha\left(1+k_{\max}\right)=\Lambda\iff1-3k_{\max}=3\Lambda\iff\Lambda=\frac{1-3k_{\max}}{3}=\frac{1}{3}-k_{\max}.
\]
By the first equation of \eqref{eq:reduced equations}, we obtain
\[
\Lambda=\frac{1}{3}-\alpha.
\]
In this way, the third equation of \eqref{eq:reduced equations} becomes
\[
\alpha=\frac{\frac{1}{3}-\alpha}{1+\alpha}\iff\alpha\left(1+\alpha\right)=\frac{1}{3}-\alpha\iff\alpha^{2}+2\alpha-\frac{1}{3}=0\iff\alpha=-1\pm\sqrt{\frac{4}{3}}.
\]
The negative solution is of no interest to us, so we deduce $\alpha=\sqrt{\frac{4}{3}}-1$.
We shall call this $\alpha_{*}\coloneqq\sqrt{\frac{4}{3}}-1$. The
corresponding $\Lambda$ and $k_{\max}$ values are
\[
\Lambda_{*}=\alpha_{*}\left(1+\alpha_{*}\right)=\sqrt{\frac{4}{3}}\left(\sqrt{\frac{4}{3}}-1\right),\quad k_{\max*}=\alpha_{*}=\sqrt{\frac{4}{3}}-1.
\]
Notice that $k_{\max*}$ and $\Lambda_{*}$ coincide with the ones
given in the statement.

Now, since the inequalities given in \eqref{eq:the three subequations}
are all of the form $\alpha<\text{something}$, if we choose any $\alpha<\alpha_{*}$,
provided that $\epsilon$ is sufficiently small, for example, satisfying
\[
\epsilon\left(\zeta,\mu,\delta,\gamma\right)<\alpha_{*}-\alpha,
\]
the triple $\left(\alpha,\Lambda_{*},k_{\max*}\right)$ will be a
solution of inequalities \eqref{eq:the three subequations}. Indeed,
since \eqref{eq:reduced equations} are satisfied for the triple $\left(\alpha_{*},\Lambda_{*},k_{\max*}\right)$
and $k_{\max*}>0$, we have
\[
\begin{aligned}\alpha & =\alpha_{*}-\left(\alpha_{*}-\alpha\right)<k_{\max*}-\epsilon,\\
\alpha & =\alpha_{*}-\left(\alpha_{*}-\alpha\right)<\frac{1-2\Lambda_{*}-3k_{\max*}}{1+k_{\max*}}-\epsilon<\frac{1-2\Lambda_{*}-3k_{\max*}}{1+k_{\max*}}-\frac{\epsilon}{1+k_{\max*}}=\frac{1-2\Lambda_{*}-3k_{\max*}-\epsilon}{1+k_{\max*}},\\
\alpha & =\alpha_{*}-\left(\alpha_{*}-\alpha\right)<\frac{\Lambda_{*}}{1+k_{\max*}}-\epsilon<\frac{\Lambda_{*}}{1+k_{\max*}}-\frac{\epsilon}{1+k_{\max*}}=\frac{\Lambda_{*}-\epsilon}{1+k_{\max*}},
\end{aligned}
\]
where we have omitted the dependencies of $\epsilon$ for simplicity.
By the continuity of $\epsilon$, smallness of $\epsilon$ translates
into smallness of $\zeta$, $\mu$ and $\delta$ and to closeness
of $\gamma$ to $1$. To continue, we shall check that, provided that
$\epsilon$ is sufficiently small, the triple $\left(\alpha,\Lambda_{*},k_{\max*}\right)$
is a solution of equations 1 to 12. By construction, equations 1 (which
implies 9), 5 (which is the same as 10) and 7 (which is the same as
8) are satisfied. Looking at equation 3, since we are taking $k_{\max}=k_{\max*}$
and $k_{\max*}<1$, taking $\epsilon$ small enough guarantees that
equation 3 holds. As for equation 4, we have
\[
1-k_{\max*}-\Lambda_{*}=1-\left(\sqrt{\frac{4}{3}}-1\right)-\sqrt{\frac{4}{3}}\left(\sqrt{\frac{4}{3}}-1\right)=2-\frac{4}{3}=\frac{2}{3}.
\]
Thus, as long as $\epsilon$ is small enough, equation 4 is satisfied.
Concerning equation 11, we compute
\[
-1+2\Lambda_{*}+3k_{\max*}=-1+\sqrt{\frac{4}{3}}\left(\sqrt{\frac{4}{3}}-1\right)+3\left(\sqrt{\frac{4}{3}}-1\right)=-4+\frac{4}{3}+2\,\sqrt{\frac{4}{3}}=-5+\left(1+\sqrt{\frac{4}{3}}\right)^{2}.
\]
As $\sqrt{x}\le\frac{x}{2}+\frac{1}{2}$ $\forall x\ge0$, we deduce
\[
-1+2\Lambda+3k_{\max}\le-5+\left(1+\frac{2}{3}+\frac{1}{2}\right)^{2}=-5+\left(\frac{6+4+3}{6}\right)^{2}=-5+\frac{169}{36}=\frac{-180+169}{36}=-\frac{11}{36}<0.
\]
Consequently, taking $\epsilon$ sufficiently small, equation 11 is
also satisfied. Let us now turn to equation 6. We have
\[
\alpha_{*}\left(1-k_{\max*}\right)-\Lambda_{*}=\alpha_{*}\left(1-\alpha_{*}\right)-\alpha_{*}\left(1+\alpha_{*}\right)=-2\alpha_{*}^{2}<0
\]
As $\alpha<\alpha_{*}$ and $k_{\max*}<1$, we infer that
\[
\alpha\left(1-k_{\max*}\right)-\Lambda_{*}<\alpha_{*}\left(1-k_{\max*}\right)-\Lambda_{*}=-2\alpha_{*}^{2}<0
\]
and, consequently, the condition to apply equation 6 will not be satisfied.
In this way, equation 6 (which was the same as equation 12) imposes
no restriction. Only equation 2 remains. Notice that, as $\alpha<\alpha_{*}$
and $k_{\max*}>0$,
\[
\Lambda_{*}-\alpha k_{\max*}\ge\Lambda_{*}-\alpha_{*}k_{\max*}=\alpha_{*}\left(1+\alpha_{*}\right)-\alpha_{*}\alpha_{*}=\alpha_{*}=k_{\max*}.
\]
This means that the third argument of the $\min$ that appears in
equation 2 is never strictly smaller than the first one and, in this
manner, can be ignored. Let us study
\[
\left(1-\alpha\right)k_{\max*}+\alpha\Lambda_{*}=k_{\max*}+\left(\Lambda_{*}-k_{\max*}\right)\alpha.
\]
As
\[
k_{\max*}=\alpha_{*}<\alpha_{*}\left(1+\alpha_{*}\right)=\Lambda_{*},
\]
we deduce that $\left(1-\alpha\right)k_{\max*}+\alpha\Lambda_{*}$
is increasing in $\alpha$. Thereby,
\[
\left(1-\alpha\right)k_{\max*}+\alpha\Lambda_{*}\ge k_{\max*}
\]
and, as a consequence, the second argument of the $\min$ that appears
in equation 2 is never strictly smaller than the first one and, thus,
can also be ignored. In other words, equation 2 becomes
\[
\alpha<k_{\max*}-\epsilon\left(\zeta,\mu,\delta,\gamma\right).
\]
As $\alpha<\alpha_{*}=k_{\max*}$, this equation is satisfied as long
as $\epsilon$ is small enough. In conclusion, equations 1 to 12 are
all satisfied if $\epsilon$ is sufficiently small. As we have argued
before, the smallness of $\epsilon$ translates into closeness of
$\zeta$, $\mu$ and $\delta$ to zero and closeness of $\gamma$
to $1$.

To continue the argument, recall that $F_{\omega}^{\left(n\right)}=\sum_{m=1}^{n}f_{\omega}^{\left(m\right)}$,
$F_{\rho}^{\left(n\right)}=\sum_{m=1}^{n}f_{\rho}^{\left(m\right)}$
and bear in mind the decomposition of the forces given in equations
\eqref{eq:decomposition force density} and \eqref{eq:decomposition force vorticity}.
We want to show that $\lim_{n\to\infty}F_{\rho}^{\left(n\right)}\in C_{t}^{0}C_{x}^{1,\alpha}\left(\left[0,1\right]\times\mathbb{R}^{2}\right)$
and that $\lim_{n\to\infty}F_{\omega}^{\left(n\right)}\in C_{t}^{0}C_{x}^{\alpha}\left(\left[0,1\right]\times\mathbb{R}^{2}\right)$.
To do this, we will prove that $\left(F_{\rho}^{\left(n\right)}\right)_{n\in\mathbb{N}}$
and $\left(F_{\omega}^{\left(n\right)}\right)_{n\in\mathbb{N}}$ are
Cauchy sequences in $C_{t}^{0}C_{x}^{1,\alpha}\left(\left[0,1\right]\times\mathbb{R}^{2}\right)$
and $C_{t}^{0}C_{x}^{\alpha}\left(\left[0,1\right]\times\mathbb{R}^{2}\right)$,
respectively. The completeness of the aforementioned spaces will then
provide the desired result. First of all, notice that $\forall n\in\mathbb{N}$,
$F_{\rho}^{\left(n\right)}$ and $F_{\omega}^{\left(n\right)}$ are
$C^{\infty}$ in both space and time, since they only contain a finite
number of summands which are themselves compositions of smooth functions
in space and time. Thus, $\left(F_{\rho}^{\left(n\right)}\right)_{n\in\mathbb{N}}$
and $\left(F_{\omega}^{\left(n\right)}\right)_{n\in\mathbb{N}}$ are
contained in the desired spaces. By the telescopic nature of $F_{\rho}^{\left(n\right)}$
and $F_{\omega}^{\left(n\right)}$, $\forall l,m\in\mathbb{N}$ with
$l\ge m$ we have
\begin{equation}
\begin{aligned}\left|\left|F_{\rho}^{\left(l\right)}-F_{\rho}^{\left(m\right)}\right|\right|_{C_{t}^{0}C_{x}^{1,\alpha}\left(\left[0,1\right]\times\mathbb{R}^{2}\right)} & =\left|\left|\sum_{n=m+1}^{l}f_{\rho}^{\left(n\right)}\right|\right|_{C_{t}^{0}C_{x}^{1,\alpha}\left(\left[0,1\right]\times\mathbb{R}^{2}\right)}\le\sum_{n=m+1}^{\infty}\sup_{t\in\left[0,1\right]}\left|\left|f_{\rho}^{\left(n\right)}\left(t,\cdot\right)\right|\right|_{C^{1,\alpha}\left(\mathbb{R}^{2}\right)},\\
\left|\left|F_{\omega}^{\left(l\right)}-F_{\omega}^{\left(m\right)}\right|\right|_{C_{t}^{0}C_{x}^{\alpha}\left(\left[0,1\right]\times\mathbb{R}^{2}\right)} & =\left|\left|\sum_{n=m+1}^{l}f_{\omega}^{\left(n\right)}\right|\right|_{C_{t}^{0}C_{x}^{\alpha}\left(\left[0,1\right]\times\mathbb{R}^{2}\right)}\le\sum_{n=m+1}^{\infty}\sup_{t\in\left[0,1\right]}\left|\left|f_{\omega}^{\left(n\right)}\left(t,\cdot\right)\right|\right|_{C^{\alpha}\left(\mathbb{R}^{2}\right)}.
\end{aligned}
\label{eq:bounds Cauchy sequence}
\end{equation}
Now, notice that, if we sum the bounds obtained in Propositions \ref{prop:bound density time derivative},
\ref{prop:bound density pure quadratic term}, \ref{prop:bound density new transport of old density},
\ref{prop:bound density transport}, \ref{prop:bound vorticity time factor},
\ref{prop:bound vorticity pure quadratic term}, \ref{prop:bound vorticity new transport of old vorticity}
and \ref{prop:bound vorticity transport} (which are all uniform in
time), we can find a bound for $\sup_{t\in\left[0,1\right]}\left|\left|f_{\rho}^{\left(n\right)}\left(t,\cdot\right)\right|\right|_{C^{1,\alpha}\left(\mathbb{R}^{2}\right)}$
and $\sup_{t\in\left[0,1\right]}\left|\left|f_{\omega}^{\left(n\right)}\left(t,\cdot\right)\right|\right|_{C^{\alpha}\left(\mathbb{R}^{2}\right)}$.
For $\left(F_{\rho}^{\left(n\right)}\right)_{n\in\mathbb{N}}$ and
$\left(F_{\omega}^{\left(n\right)}\right)_{n\in\mathbb{N}}$ to be
Cauchy sequences, we need these bounds to be summable in $n\in\mathbb{N}$.
Moreover, observe that all the mentioned bounds are superexponential
in $n\in\mathbb{N}$. Consequently, being decreasing in $n\in\mathbb{N}$
is equivalent to being summable in $n\in\mathbb{N}$. Nevertheless,
each one of these bounds requires $n\in\mathbb{N}$ to be large enough
to be applicable. Concretely, the most restrictive bound is the one
given by Proposition \ref{prop:bound density time derivative}, where
the minimum value of $n\in\mathbb{N}$ depends on $\mu$. In this
way, to prove the result, first, we need to fix the values of $\mu,\delta,\gamma,\zeta$
so that $\epsilon$ is small enough in order for equations 1 to 12
to be satisfied. Once these values are fixed, there is an $n_{0}\left(\mu\right)\in\mathbb{N}$
starting from which all bounds given in Propositions \ref{prop:bound density time derivative},
\ref{prop:bound density pure quadratic term}, \ref{prop:bound density new transport of old density},
\ref{prop:bound density transport}, \ref{prop:bound vorticity time factor},
\ref{prop:bound vorticity pure quadratic term}, \ref{prop:bound vorticity new transport of old vorticity}
and \ref{prop:bound vorticity transport} become valid. As a finite
number of terms does not affect summability and all these bounds are
superexponentially decreasing in $n\in\mathbb{N}$ thanks to the fulfillment
of equations 1 to 12, we conclude that the series $\sum_{n\in\mathbb{N}}\sup_{t\in\left[0,1\right]}\left|\left|f_{\rho}^{\left(n\right)}\left(t,\cdot\right)\right|\right|_{C^{1,\alpha}\left(\mathbb{R}^{2}\right)}$
and $\sum_{n\in\mathbb{N}}\sup_{t\in\left[0,1\right]}\left|\left|f_{\omega}^{\left(n\right)}\left(t,\cdot\right)\right|\right|_{C^{\alpha}\left(\mathbb{R}^{2}\right)}$
are convergent. In this way, the bounds given in \eqref{eq:bounds Cauchy sequence}
are tails of convergent series and, consequently, tend to zero as
$m\to\infty$. Hence, $\left(F_{\rho}^{\left(n\right)}\right)_{n\in\mathbb{N}}$
and $\left(F_{\omega}^{\left(n\right)}\right)_{n\in\mathbb{N}}$ are
Cauchy sequences and the completeness of the spaces $C_{t}^{0}C_{x}^{1,\alpha}\left(\left[0,1\right]\times\mathbb{R}^{2}\right)$
and $C_{t}^{0}C_{x}^{\alpha}\left(\left[0,1\right]\times\mathbb{R}^{2}\right)$
provides the result.
\end{proof}

\subsection{Compact support}
\begin{prop}
\label{prop:compact support}Taking $\Lambda=\Lambda_{*}$ and $k_{\max}=k_{\max*}$,
where $\Lambda_{*}$ and $k_{\max*}$ are as given in Proposition
\ref{prop:maximal regularity}, as long as $C$ is big enough and
$\mu,\delta$ are small enough (let us say $C\ge\Upsilon_{1}\left(\delta,\mu\right)$
and $\mu,\delta\le\Upsilon_{2}$), there exists $K\subseteq\mathbb{R}^{2}$
compact such that
\[
\mathrm{supp}\psi^{\left(n\right)}\left(t,\cdot\right)\subseteq K\quad\forall t\in\left[0,1\right]\;\land\;\forall n\in\mathbb{N}.
\]
In other words, our solution is compactly supported uniformly in time.
The same is true for $\rho^{\left(n\right)}\left(t,\cdot\right)$.
\end{prop}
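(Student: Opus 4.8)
The plan is to track the spatial support of each layer through the change of variables $\phi^{\left(n\right)}\left(t,x\right)$ and to cash in the arithmetic identity $\Lambda_{*}+k_{\max*}=\frac{1}{3}<1$, which is exactly what makes the scales $\lambda_{n}a_{n}\left(t\right)$ and $\lambda_{n}b_{n}\left(t\right)$ stay at least $1$.

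First I would reduce to the relevant time interval. By Choice \ref{choice:amplitude density}, $B_{n}\left(t\right)=0$ for $t\le t_{n}$, so $\psi^{\left(n\right)}\left(t,\cdot\right)\equiv0\equiv\rho^{\left(n\right)}\left(t,\cdot\right)$ there and only $t\in\left[t_{n},1\right]$ matters. For such $t$, Choice \ref{choice:psin} and $\mathrm{supp}\,\varphi\subseteq\left[-16\pi,16\pi\right]$ give $\mathrm{supp}\,\widetilde{\psi^{\left(n\right)}}^{n}\left(t,\cdot\right)\subseteq\left[-\frac{16\pi}{\lambda_{n}},\frac{16\pi}{\lambda_{n}}\right]^{2}$, and likewise for $\widetilde{\rho^{\left(n\right)}}^{n}$ by Choice \ref{choice:density} since it carries the same cutoff $\varphi\left(\lambda_{n}x_{1}\right)\varphi\left(\lambda_{n}x_{2}\right)$. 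Since $\psi^{\left(n\right)}\left(t,y\right)=\widetilde{\psi^{\left(n\right)}}^{n}\left(t,\left(\phi^{\left(n\right)}\right)^{-1}\left(t,y\right)\right)$, the support of $\psi^{\left(n\right)}\left(t,\cdot\right)$ is contained in $\phi^{\left(n\right)}\left(t,\left[-\frac{16\pi}{\lambda_{n}},\frac{16\pi}{\lambda_{n}}\right]^{2}\right)$; by Choice \ref{choice:phin} together with $\phi_{2}^{\left(n\right)}\left(t,0\right)=0$, this is the axis-parallel rectangle centred at $\left(\phi_{1}^{\left(n\right)}\left(t,0\right),0\right)$ with half-widths $\frac{16\pi}{\lambda_{n}a_{n}\left(t\right)}$ and $\frac{16\pi}{\lambda_{n}b_{n}\left(t\right)}$.

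Next I would bound the half-widths. By Choice \ref{choice:lambdan}, $\lambda_{n}=C^{-\Lambda\left(\frac{1}{1-\gamma}\right)^{n}}$; by Corollary \ref{cor:bound for an(t), bn(t)} (valid for $C$ large and $n\ge2$, with the case $n=1$ trivial since $a_{1},b_{1}$ are constant and $>1$) and Corollary \ref{cor:ideal kn non negative}, $a_{n}\left(t\right)\ge C^{\left(1-\overline{k}_{n}\left(t\right)-\mu\right)\left(\frac{1}{1-\gamma}\right)^{n}}$ and $b_{n}\left(t\right)\ge C^{\left(1-\mu\right)\left(\frac{1}{1-\gamma}\right)^{n}}$. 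Using $\overline{k}_{n}\left(t\right)\le k_{\max}$ (Choice \ref{choice:ideal kn}),
\[
\lambda_{n}a_{n}\left(t\right)\ge C^{\left(1-k_{\max}-\mu-\Lambda\right)\left(\frac{1}{1-\gamma}\right)^{n}},\qquad\lambda_{n}b_{n}\left(t\right)\ge C^{\left(1-\mu-\Lambda\right)\left(\frac{1}{1-\gamma}\right)^{n}}.
\]
With $\Lambda=\Lambda_{*}=\sqrt{\frac{4}{3}}\left(\sqrt{\frac{4}{3}}-1\right)$ and $k_{\max}=k_{\max*}=\sqrt{\frac{4}{3}}-1$ one has $\Lambda_{*}+k_{\max*}=\frac{4}{3}-1=\frac{1}{3}$, hence $1-k_{\max*}-\Lambda_{*}=\frac{2}{3}$ and $1-\Lambda_{*}>0$; so for $\mu$ small enough and $C>2$ both exponents are positive, giving $\lambda_{n}a_{n}\left(t\right)\ge1$ and $\lambda_{n}b_{n}\left(t\right)\ge1$ for all $n$ and $t\in\left[t_{n},1\right]$. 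Thus both half-widths are at most $16\pi$ (and in fact shrink superexponentially in $n$).

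Finally I would bound the centre. Proposition \ref{prop:layer center never leafs cutoff area} (valid for $C$ large) with $m=1$ gives $\left|\phi_{1}^{\left(n\right)}\left(t,0\right)-\phi_{1}^{\left(1\right)}\left(t,0\right)\right|\le\frac{8\pi}{a_{1}\left(t\right)}\le8\pi$ for $t\in\left[t_{n},1\right]$, while $\phi_{1}^{\left(1\right)}\left(t,0\right)=c^{\left(1\right)}$ is constant in time because the ODE for $\phi_{1}^{\left(n\right)}\left(t,0\right)$ in Proposition \ref{prop:relation between anbn and jacobian} has an empty right-hand side when $n=1$. Hence $\left|\phi_{1}^{\left(n\right)}\left(t,0\right)\right|\le\left|c^{\left(1\right)}\right|+8\pi$ uniformly, and combining with the half-width bound, $\mathrm{supp}\,\psi^{\left(n\right)}\left(t,\cdot\right),\ \mathrm{supp}\,\rho^{\left(n\right)}\left(t,\cdot\right)\subseteq K\coloneqq\left[-\left|c^{\left(1\right)}\right|-24\pi,\left|c^{\left(1\right)}\right|+24\pi\right]\times\left[-16\pi,16\pi\right]$ for all $t\in\left[0,1\right]$ and $n\in\mathbb{N}$. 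I do not expect a real obstacle here: the statement is a bookkeeping exercise whose only genuine content is the identity $\Lambda_{*}+k_{\max*}=\frac{1}{3}<1$ (together with $\Lambda_{*}<1$), which keeps the spatial scale of layer $n$ bounded as $n\to\infty$; the only care needed is to merge the finitely many "$C$ large" / "$\mu,\delta$ small" thresholds from Corollary \ref{cor:bound for an(t), bn(t)} and Proposition \ref{prop:layer center never leafs cutoff area} into a single one.
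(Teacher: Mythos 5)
Your proposal is correct and takes essentially the same approach as the paper: restrict to $t\in\left[t_{n},1\right]$, observe that the support is the image under $\phi^{\left(n\right)}\left(t,\cdot\right)$ of the cutoff box, and use Corollary \ref{cor:bound for an(t), bn(t)} together with $1-k_{\max*}-\Lambda_{*}=\frac{2}{3}>0$ (and $\mu$ small) to force $\lambda_{n}a_{n}\left(t\right),\lambda_{n}b_{n}\left(t\right)\ge1$, so the rectangles have uniformly bounded half-widths. The only (harmless) deviation is your bound on the centre, obtained by applying Proposition \ref{prop:layer center never leafs cutoff area} directly with $m=1$, whereas the paper telescopes consecutive differences $\left|\phi_{1}^{\left(m\right)}\left(t,0\right)-\phi_{1}^{\left(m-1\right)}\left(t,0\right)\right|$ and sums the resulting superexponentially decaying series; both yield the same uniform control of $\phi_{1}^{\left(n\right)}\left(t,0\right)$.
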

\begin{proof}
By Choice \ref{choice:psin}, we have
\begin{equation}
\begin{aligned}\psi^{\left(n\right)}\left(t,x\right) & =B_{n}\left(t\right)\varphi\left(\lambda_{n}\left(\phi^{\left(n\right)}\right)_{1}^{-1}\left(t,x\right)\right)\varphi\left(\lambda_{n}\left(\phi^{\left(n\right)}\right)_{2}^{-1}\left(t,x\right)\right)\cdot\\
 & \quad\cdot\sin\left(\left(\phi^{\left(n\right)}\right)_{1}^{-1}\left(t,x\right)\right)\sin\left(\left(\phi^{\left(n\right)}\right)_{2}^{-1}\left(t,x\right)\right).
\end{aligned}
\label{eq:compact support explicit psin}
\end{equation}
By Choice \ref{choice:amplitude density}, we know that $B_{n}\left(t\right)=0$
$\forall t\in\left[0,t_{n}\right]$. Thus,
\begin{equation}
\mathrm{supp}\psi^{\left(n\right)}\left(t,\cdot\right)=\emptyset\quad\forall t\in\left[0,t_{n}\right]\;\land\;\forall n\in\mathbb{N}.\label{eq:compact support empty support in certain time intervals}
\end{equation}
This means that we may restrict our study of the support of $\psi^{\left(n\right)}$
to the time interval $\left[t_{n},1\right]$. 

Bearing equation \eqref{eq:phin inverse} in mind, we see that the
arguments of $\varphi$ in equation \eqref{eq:compact support explicit psin}
are
\begin{equation}
\lambda_{n}a_{n}\left(t\right)\left(x_{1}-\phi_{1}^{\left(n\right)}\left(t,0\right)\right)\quad\text{and}\quad\lambda_{n}b_{n}\left(t\right)\left(x_{2}-\phi_{2}^{\left(n\right)}\left(t,0\right)\right).\label{eq:compact support arguments of phi}
\end{equation}
By Choices \ref{choice:lambdan} and \ref{choice:anbn},
\[
\lambda_{n}a_{n}\left(t\right)=C^{\left[-\Lambda+1-k_{n}\left(t\right)\right]\left(\frac{1}{1-\gamma}\right)^{n}},\quad\lambda_{n}b_{n}\left(t\right)=C^{\left[-\Lambda+1+k_{n}\left(t\right)\right]\left(\frac{1}{1-\gamma}\right)^{n}}.
\]
Next, by Corollary \ref{cor:bound for an(t), bn(t)}, which we may
apply provided that $C$ is large enough (let us say $C\ge\Upsilon_{1}\left(\delta,\mu\right)$),
we deduce that
\[
\lambda_{n}a_{n}\left(t\right)\ge C^{\left[-\Lambda+1-\overline{k}_{n}\left(t\right)-\mu\right]\left(\frac{1}{1-\gamma}\right)^{n}},\quad\lambda_{n}b_{n}\left(t\right)\ge C^{\left[-\Lambda+1+\overline{k}_{n}\left(t\right)-\mu\right]\left(\frac{1}{1-\gamma}\right)^{n}}.
\]
As $\overline{k}_{n}\left(t\right)\ge0$ $\forall t\in\left[t_{n},1\right]$
by Corollary \ref{cor:ideal kn non negative} and $\overline{k}_{n}\left(t\right)\le k_{\max}$
$\forall t\in\left[t_{n},1\right]$ by Choice \ref{choice:ideal kn},
we arrive to
\begin{equation}
\lambda_{n}a_{n}\left(t\right)\ge C^{\left[-\Lambda+1-k_{\max}-\mu\right]\left(\frac{1}{1-\gamma}\right)^{n}},\quad\lambda_{n}b_{n}\left(t\right)\ge C^{\left[-\Lambda+1-\mu\right]\left(\frac{1}{1-\gamma}\right)^{n}}.\label{eq:compact support lambdaan lambabn}
\end{equation}
Since we have taken $\Lambda=\Lambda_{*}$ and $k_{\max}=k_{\max*}$,
by Proposition \ref{prop:maximal regularity}, we have
\[
-\Lambda+1-k_{\max}=-\,\sqrt{\frac{4}{3}}\left(\sqrt{\frac{4}{3}}-1\right)+1-\left(\sqrt{\frac{4}{3}}-1\right)=2-\frac{4}{3}=\frac{2}{3}.
\]
As a consequence, we infer that
\[
-\Lambda+1-\mu\ge-\Lambda+1-k_{\max}-\mu=\frac{2}{3}-\mu.
\]
Provided that $\mu<\frac{2}{3}\coloneqq\Upsilon_{2}$, we can guarantee
that the exponents of \eqref{eq:compact support lambdaan lambabn}
are both positive and, since $C>2$ by Choice \ref{choice:min requirements on C gamma and kmax},
also that $\lambda_{n}a_{n}\left(t\right)\ge1$ and $\lambda_{n}b_{n}\left(t\right)\ge1$
$\forall t\in\left[t_{n},1\right]$ and $\forall n\in\mathbb{N}$.
Coming back to equation \eqref{eq:compact support arguments of phi},
in view of Choice \ref{choice:psin}, this means that
\[
\mathrm{supp}\psi^{\left(n\right)}\left(t,\cdot\right)\subseteq\left[\phi_{1}^{\left(n\right)}\left(t,0\right)-16\pi,\phi_{1}^{\left(n\right)}\left(t,0\right)+16\pi\right]\times\left[\phi_{2}^{\left(n\right)}\left(t,0\right)-16\pi,\phi_{2}^{\left(n\right)}\left(t,0\right)+16\pi\right]\quad\forall t\in\left[t_{n},1\right].
\]
Since $\phi_{2}^{\left(n\right)}\left(t,0\right)\equiv0$ by Choice
\ref{choice:anbncn}, we easily obtain that
\begin{equation}
\mathrm{supp}\psi^{\left(n\right)}\left(t,\cdot\right)\subseteq\left[\phi_{1}^{\left(n\right)}\left(t,0\right)-16\pi,\phi_{1}^{\left(n\right)}\left(t,0\right)+16\pi\right]\times\left[-16\pi,16\pi\right]\quad\forall t\in\left[t_{n},1\right].\label{eq:compact support first bound support}
\end{equation}
What can we say about $\phi_{1}^{\left(n\right)}\left(t,0\right)$?
Well, by Proposition \ref{prop:layer center never leafs cutoff area},
which we can make use of provided that $C$ is large enough (let us
say $C\ge\Upsilon_{2}\left(\delta\right)$), we obtain that 
\[
\left|\phi_{1}^{\left(n\right)}\left(t,0\right)-\phi_{1}^{\left(n-1\right)}\left(t,0\right)\right|\le\frac{8\pi}{a_{n-1}\left(t\right)}\quad\forall t\in\left[t_{n},1\right].
\]
Using Proposition \ref{prop:time convergence} with $\beta=\frac{1}{2}$,
which we may do as long as $C$ is big enough (let us say $C\ge\Upsilon_{3}\left(\delta\right)$),
we can write
\[
\left|\phi_{1}^{\left(n\right)}\left(t,0\right)-\phi_{1}^{\left(n-1\right)}\left(t,0\right)\right|\lesssim\frac{1}{a_{n-1}\left(1\right)}\quad\forall t\in\left[t_{n},1\right].
\]
By Choices \ref{choice:anbn} and \ref{choice:time picture},
\begin{equation}
\left|\phi_{1}^{\left(n\right)}\left(t,0\right)-\phi_{1}^{\left(n-1\right)}\left(t,0\right)\right|\lesssim C^{-\left(\frac{1}{1-\gamma}\right)^{n-1}}\quad\forall t\in\left[t_{n},1\right].\label{eq:compact support constant closeness phi1}
\end{equation}
Therefore, we infer that $\forall n\in\mathbb{N}$,
\begin{equation}
\begin{aligned}\left|\phi_{1}^{\left(n\right)}\left(t,0\right)-\phi_{1}^{\left(1\right)}\left(t,0\right)\right| & \leq\sum_{m=2}^{n}\left|\phi_{1}^{\left(m\right)}\left(t,0\right)-\phi_{1}^{\left(m-1\right)}\left(t,0\right)\right|\le\sum_{m\in\mathbb{N}\setminus\left\{ 1\right\} }\left|\phi_{1}^{\left(m\right)}\left(t,0\right)-\phi_{1}^{\left(m-1\right)}\left(t,0\right)\right|\lesssim\\
 & \lesssim\sum_{m\in\mathbb{N}}C^{-\left(\frac{1}{1-\gamma}\right)^{m}}\quad\forall t\in\left[t_{n},1\right].
\end{aligned}
\label{eq:compact support closness phi1}
\end{equation}
As $\mathrm{e}^{x}\ge x^{2}$ $\forall x\in\left[0,\infty\right)$
by Lemma \ref{lem:exponential x2} and, equivalently, $\mathrm{e}^{-x}\le\frac{1}{x^{2}}$
$\forall x\in\left[0,\infty\right)$ , we have
\[
\begin{aligned}\sum_{m\in\mathbb{N}}C^{-\left(\frac{1}{1-\gamma}\right)^{m}} & \leq\sum_{m=0}^{\infty}\exp\left(-\left(\frac{1}{1-\gamma}\right)^{m}\ln\left(C\right)\right)\leq\sum_{m=0}^{\infty}\frac{\left(1-\gamma\right)^{2m}}{\ln\left(C\right)^{2}}=\\
 & =\frac{1}{\ln\left(C\right)^{2}}\frac{1}{1-\left(1-\gamma\right)^{2}}=\frac{1}{\ln\left(C\right)^{2}}\frac{1}{\gamma\left(2-\gamma\right)}.
\end{aligned}
\]
As $\gamma\ge\frac{1}{2}$, $\gamma<1$ and $C>2$ by Choice \ref{choice:min requirements on C gamma and kmax},
we deduce that
\[
\sum_{m\in\mathbb{N}}C^{-\left(\frac{1}{1-\gamma}\right)^{m}}\leq\frac{2}{\ln\left(C\right)^{2}}\le\frac{2}{\ln\left(2\right)^{2}}\lesssim1.
\]
Hence, bearing equations \eqref{eq:compact support closness phi1}
and \eqref{eq:compact support first bound support} in mind, there
must be $L>0$ (independent of $n\in\mathbb{N}$) such that
\[
\mathrm{supp}\psi^{\left(n\right)}\left(t,\cdot\right)\subseteq\left[\phi_{1}^{\left(1\right)}\left(t,0\right)-L-16\pi,\phi_{1}^{\left(1\right)}\left(t,0\right)+L+16\pi\right]\times\left[-16\pi,16\pi\right]\quad\forall t\in\left[t_{n},1\right].
\]
Now, by Choice \ref{choice:anbncn}, we have $\frac{\partial\phi_{1}^{\left(1\right)}}{\partial t}\left(t,0\right)=0$
and, in this way, we conclude that
\[
\mathrm{supp}\psi^{\left(n\right)}\left(t,\cdot\right)\subseteq\left[\phi_{1}^{\left(1\right)}\left(1,0\right)-L-16\pi,\phi_{1}^{\left(1\right)}\left(1,0\right)+L+16\pi\right]\times\left[-16\pi,16\pi\right],
\]
being this true $\forall t\in\left[t_{n},1\right]$ and $\forall n\in\mathbb{N}$.
Combining this with equation \eqref{eq:compact support empty support in certain time intervals}
provides the result.

Observe that the argument we have constructed has not used any property
of $\psi^{\left(n\right)}$ apart from where its cutoff factors vanish.
Since, by Choice \ref{choice:density}, the density $\rho^{\left(n\right)}\left(t,\cdot\right)$
has exactly the same cutoff factors, the argument is also applicable
to the density.
\end{proof}

\subsection{Proof of the main Theorem}
\begin{thm}
Let $\alpha\in\left(0,\alpha_{*}\right)$, where $\alpha_{*}=\sqrt{\frac{4}{3}}-1$.
There are classical solutions $\left(u,\rho\right)$ in $\left[0,1\right)\times\mathbb{R}^{2}$
of the forced Boussinesq system \eqref{eq:Boussinesq system velocity formulation}
that satisfy:
\begin{enumerate}
\item $\forall\varepsilon>0$
\[
u\in C_{t}^{\infty}C_{x,c}^{\infty}\left(\left[0,1-\varepsilon\right]\times\mathbb{R}^{2}\right),\quad\rho\in C_{t}^{\infty}C_{x,c}^{\infty}\left(\left[0,1-\varepsilon\right]\times\mathbb{R}^{2}\right).
\]
\item $\forall\varepsilon>0$
\[
f_{\omega}\in C_{t}^{\infty}C_{x,c}^{\infty}\left(\left[0,1-\varepsilon\right]\times\mathbb{R}^{2}\right),\quad f_{\rho}\in C_{t}^{\infty}C_{x,c}^{\infty}\left(\left[0,1-\varepsilon\right]\times\mathbb{R}^{2}\right).
\]
\item ~
\[
f_{\omega}\in C_{t}^{0}C_{x,c}^{\alpha}\left(\left[0,1\right]\times\mathbb{R}^{2}\right),\quad f_{\rho}\in C_{t}^{0}C_{x,c}^{1,\alpha}\left(\left[0,1\right]\times\mathbb{R}^{2}\right).
\]
In particular, $\left|\left|f_{\omega}\left(t,\cdot\right)\right|\right|_{C^{\alpha}\left(\mathbb{R}^{2}\right)}$
and $\left|\left|f_{\rho}\left(t,\cdot\right)\right|\right|_{C^{1,\alpha}\left(\mathbb{R}^{2}\right)}$
are uniformly bounded in $t\in\left[0,1\right]$.
\item $f_{\omega}\left(t,x\right)$ is odd in $x_{2}$ $\forall t\in\left[0,1\right]$.
\item There is a finite-time singularity at $t=1$, i.e.,
\[
\lim_{T\to1^{-}}\int_{0}^{T}\left|\left|\nabla\rho\left(t,\cdot\right)\right|\right|_{L^{\infty}\left(\mathbb{R}^{2};\mathbb{R}^{2}\right)}\mathrm{d}t=\infty.
\]
(See \cite{Chae Kim Nam} for the blow-up criterion).
\end{enumerate}
\end{thm}
\begin{proof}
We shall take the stream function as indicated in Choice \ref{choice:psin},
i.e.,
\[
\psi\left(t,x\right)=\sum_{n=1}^{\infty}\psi^{\left(n\right)}\left(t,x\right),\quad\widetilde{\psi^{\left(n\right)}}^{n}\left(t,x\right)=B_{n}\left(t\right)\varphi\left(\lambda_{n}x_{1}\right)\varphi\left(\lambda_{n}x_{2}\right)\sin\left(x_{1}\right)\sin\left(x_{2}\right),
\]
where
\begin{enumerate}
\item $\varphi\in C_{c}^{\infty}\left(\mathbb{R}\right)$ is any compactly
supported even smooth function that satisfies $\varphi\equiv1$ in
the interval $\left[-8\pi,8\pi\right]$ and $\varphi\equiv0$ in $\mathbb{R}\setminus\left[-16\pi,16\pi\right]$.
\item $\lambda_{n}=C^{-\Lambda\left(\frac{1}{1-\gamma}\right)^{n}}$ as
given by Choice \ref{choice:lambdan}, where $\Lambda=\Lambda_{*}=\sqrt{\frac{4}{3}}\left(\sqrt{\frac{4}{3}}-1\right)$,
as required in Proposition \ref{prop:maximal regularity} and $\gamma\in\left(0,1\right)$
is to be chosen later.
\item $\widetilde{\psi^{\left(n\right)}}^{n}\left(t,x\right)=\psi^{\left(n\right)}\left(t,\phi^{\left(n\right)}\left(t,x\right)\right)$,
where $\phi^{\left(n\right)}\left(t,x\right)$ is the change of variables
given in Choice \ref{choice:phin}:
\[
\phi^{\left(n\right)}\left(t,x\right)=\phi^{\left(n\right)}\left(t,0\right)+\left(\frac{1}{a_{n}\left(t\right)}x_{1},\frac{1}{b_{n}\left(t\right)}x_{2}\right).
\]
\item $a_{n}\left(t\right)$, $b_{n}\left(t\right)$, $B_{n}\left(t\right)$
and $\phi^{\left(n\right)}\left(t,0\right)$ evolve in time according
to Choice \ref{choice:anbncn} and Proposition \ref{prop:relation between anbn and jacobian},
i.e., $\forall t\in\left[t_{n},1\right]$,
\[
\begin{alignedat}{1}\phi_{2}^{\left(n\right)}\left(t,0\right) & =0,\\
\frac{\partial\phi_{1}^{\left(n\right)}}{\partial t}\left(t,0\right) & =\sum_{m=1}^{n-1}B_{m}\left(t\right)b_{m}\left(t\right)\sin\left(a_{m}\left(t\right)\left(\phi_{1}^{\left(n\right)}\left(t,0\right)-\phi_{1}^{\left(m\right)}\left(t,0\right)\right)\right),\\
\frac{\mathrm{d}}{\mathrm{d}t}\left(\ln\left(b_{n}\left(t\right)\right)\right) & =\sum_{m=1}^{n-1}B_{m}\left(t\right)a_{m}\left(t\right)b_{m}\left(t\right)\cos\left(a_{m}\left(t\right)\left(\phi_{1}^{\left(n\right)}\left(t,0\right)-\phi_{1}^{\left(m\right)}\left(t,0\right)\right)\right),\\
\frac{\mathrm{d}}{\mathrm{d}t}\left(a_{n}\left(t\right)b_{n}\left(t\right)\right) & =0.
\end{alignedat}
\]
Thanks to Proposition \ref{prop:layer center never leafs cutoff area},
we know that the assumption of Proposition \ref{prop:relation between anbn and jacobian}
will actually be superfluous under some conditions.
\item The time scales $\left(t_{n}\right)_{n\in\mathbb{N}}$ are given by
Choices \ref{choice:time (initial)} and \ref{choice:Mn}:
\[
1-t_{n}=\frac{1}{Y}C^{-\delta\left(\frac{1}{1-\gamma}\right)^{n-1}}\mathrm{arccosh}\left(C^{k_{\max}\left(\frac{1}{1-\gamma}\right)^{n}}\right),
\]
where $Y>0$ and $\delta>0$ are parameters whose value will be chosen
later. We set $k_{\max}=k_{\max*}=\sqrt{\frac{4}{3}}-1$ as required
by Proposition \ref{prop:maximal regularity}. 
\item The initial conditions for $a_{n}\left(t\right)$ and $b_{n}\left(t\right)$
are specified through Choices \ref{choice:anbn} and \ref{choice:time picture}:
\[
b_{n}\left(t\right)=C^{\left(1+k_{n}\left(t\right)\right)\left(\frac{1}{1-\gamma}\right)^{n}},\quad a_{n}\left(t\right)=C^{\left(1-k_{n}\left(t\right)\right)\left(\frac{1}{1-\gamma}\right)^{n}},\quad k_{n}\left(1\right)=0,
\]
were $C$ is a parameter whose value will be chosen later. The initial
conditions $\phi^{\left(n\right)}\left(t_{n},0\right)=c^{\left(n\right)}$
are given indirectly by Proposition \ref{prop:relation between anbn and jacobian}
and Remark \ref{rem:initial condition}:
\[
\begin{aligned}a_{n-1}\left(1\right)\Xi^{\left(n\right)}\left(t_{n}\right) & =\arcsin\left(C^{-k_{\max}\left(\frac{1}{1-\gamma}\right)^{n}}\right),\\
\Xi^{\left(n\right)}\left(t\right) & =\phi_{1}^{\left(n\right)}\left(t,0\right)-\phi_{1}^{\left(n-1\right)}\left(t,0\right),\\
\phi_{2}^{\left(n\right)}\left(t,0\right) & \equiv0,
\end{aligned}
\]
where we take $\phi^{\left(0\right)}\left(t,0\right)\equiv0$.
\item $B_{n}\left(t\right)$ is given by Choice \ref{choice:amplitude density},
equation \eqref{eq:relation Mn and zn} and Choice \ref{choice:time picture}:
\[
B_{n}\left(t\right)=\frac{2M_{n}}{a_{n}\left(t\right)^{2}+b_{n}\left(t\right)^{2}}\frac{\int_{t_{n}}^{t}h^{\left(n\right)}\left(s\right)b_{n}\left(s\right)\mathrm{d}s}{\int_{t_{n}}^{1}h^{\left(n\right)}\left(s\right)b_{n}\left(s\right)\mathrm{d}s}.
\]
Here, $h^{\left(n\right)}\left(s\right)$ is a function given by Choice
\ref{choice:time picture} and equation \eqref{eq:def dhndt second way},
where we choose $\varepsilon=\frac{1}{10}$ and $\zeta$ will be associated
a value later.
\item $M_{n}$ is given by Choice \ref{choice:Mn}:
\[
M_{n}\coloneqq YC^{\delta\left(\frac{1}{1-\gamma}\right)^{n}},
\]
where $Y$ and $\delta$ will be given values later.
\end{enumerate}
Notice that, actually, because of how $B_{n}\left(t\right)$ and $t_{n}$
have been defined, for every $t\in\left[0,1\right)$, only a finite
number of summands in $\Psi^{\left(\infty\right)}\left(t,x\right)=\sum_{n=1}^{\infty}\psi^{\left(n\right)}\left(t,x\right)$
are non zero. Thereby, for $t<1$, this is always a finite sum, i.e.,
the intrinsic limit associated to the series is fictitious.

Under the choice we have made for the stream function, the velocity
$u$ is given by
\begin{equation}
\begin{aligned}u\left(t,x\right) & =\sum_{n\in\mathbb{N}}u^{\left(n\right)}\left(t,x\right),\\
\widetilde{u^{\left(n\right)}}^{n}\left(t,x\right) & =B_{n}\left(t\right)\varphi\left(\lambda_{n}x_{1}\right)\varphi\left(\lambda_{n}x_{2}\right)\left(\begin{matrix}b_{n}\left(t\right)\sin\left(x_{1}\right)\cos\left(x_{2}\right)\\
-a_{n}\left(t\right)\cos\left(x_{1}\right)\sin\left(x_{2}\right)
\end{matrix}\right)+\\
 & \quad+\lambda_{n}B_{n}\left(t\right)\left(\begin{matrix}b_{n}\left(t\right)\varphi\left(\lambda_{n}x_{1}\right)\varphi'\left(\lambda_{n}x_{2}\right)\\
-a_{n}\left(t\right)\varphi'\left(\lambda_{n}x_{1}\right)\varphi\left(\lambda_{n}x_{2}\right)
\end{matrix}\right)\sin\left(x_{1}\right)\sin\left(x_{2}\right),
\end{aligned}
\label{eq:u final theorem}
\end{equation}
where $u^{\left(n\right)}\left(t,x\right)$ is as stated in Proposition
\ref{prop:computations vorticity}. Moreover, as happened with the
stream function, $\forall t\in\left[0,1\right)$, the sum is actually
finite. On the other hand, we choose the density as specified by Choice
\ref{choice:density}:
\begin{equation}
\begin{aligned}\rho\left(t,x\right) & =\sum_{n\in\mathbb{N}}\rho^{\left(n\right)}\left(t,x\right),\\
\widetilde{\rho^{\left(n\right)}}^{n}\left(t,x\right) & =-\frac{1}{b_{n}\left(t\right)}\frac{\mathrm{d}}{\mathrm{d}t}\left[B_{n}\left(t\right)\left(a_{n}\left(t\right)^{2}+b_{n}\left(t\right)^{2}\right)\right]\varphi\left(\lambda_{n}x_{1}\right)\varphi\left(\lambda_{n}x_{2}\right)\sin\left(x_{1}\right)\cos\left(x_{2}\right).
\end{aligned}
\label{eq:rho final theorem}
\end{equation}
Here, $\forall t\in\left[0,1\right)$, because of Choice \ref{choice:no two simultanous densities}
(which is satisfied thanks to equation \eqref{eq:def dhndt second way}),
only one summand is non-zero, i.e., not only is the sum finite, it
actually encompasses one single element.

Next in line is to choose the value of the remaining parameters: $\gamma$,
$Y$, $\mu$, $\delta$, $C$ and $\zeta$. First of all, we choose
$\delta,\mu,\zeta$ and $\gamma$ so that the hypothesis of Proposition
\ref{prop:maximal regularity}, Proposition \ref{prop:compact support}
and Choice \ref{choice:min requirements on C gamma and kmax} are
satisfied. Once we have fixed $\delta,\mu,\zeta$ and $\gamma$, we
choose $C$ so that Choice \ref{choice:min requirements on C gamma and kmax}
and the hypothesis of Corollary \ref{cor:bound for an(t), bn(t)},
Propositions \ref{prop:time convergence}, \ref{prop:JI convergence to ideal},
\ref{prop:convergence kn to ideal model}, \ref{prop:layer center never leafs cutoff area},
\ref{prop:bound density time derivative}, \ref{prop:form of gradient of rho},
\ref{prop:bound density pure quadratic term}, \ref{prop:bounds for Taylor development of transport},
\ref{prop:bound density transport}, \ref{prop:bound vorticity time factor},
\ref{prop:bound vorticity pure quadratic term}, \ref{prop:bound vorticity new transport of old vorticity},
\ref{prop:bound vorticity transport}, \ref{prop:maximal regularity},
\ref{prop:compact support} and Lemmas \ref{lem:estimate sum superexponential},
\ref{lem:estimate of integral hn bn}, \ref{lem:time derivatives amplitudes vorticity}
are satisfied. This makes Proposition \ref{prop:maximal regularity}
true and ensures that the assumption presented in Proposition \ref{prop:relation between anbn and jacobian}
is superfluous. Lastly, we choose the value of $Y$ so that $t_{1}=0$
as required by Choice \ref{choice:time (initial)}. With all the values
of the parameters settled, it is not difficult to see that all Choices
\ref{choice:time (initial)} to \ref{choice:min requirements on C gamma and kmax}
are fulfilled and, consequently, all the results that have appeared
in the paper until this moment are applicable to our solution.
\begin{enumerate}
\item By Proposition \ref{prop:compact support}, both the velocity and
the vorticity have compact support $\forall t\in\left[0,1\right]$.
And, concerning their regularity, for any fixed $t\in\left[0,1\right)$,
as only a finite number of terms appear in the sums that define $u$
and $\rho$ and each of those summands is but a composition of $C^{\infty}$
functions (see equations \eqref{eq:u final theorem} and \eqref{eq:rho final theorem})
in space and time, we conclude that $u$ and $\rho$ are also smooth
in space and time in the interval $t\in\left[0,1\right)$. This implies
the statement.
\item Thanks to Propositions \ref{prop:Bousinessq in layers} and \ref{prop:change of variables}
and Choice \ref{choice:phin}, the Boussinesq system \eqref{eq:Boussinesq system vorticity formulation}
is equivalent to \eqref{eq:Boussinesq 1 con tildes} and \eqref{eq:Boussinesq 2 con tildes},
where $\widetilde{\nabla}^{n}$ is defined by equation \eqref{eq:def tilde nabla}.
These, in turn, are equivalent to equations \eqref{eq:Boussinesq vorticity Taylor}
and \eqref{eq:Boussinesq density Taylor} thanks to Choice \ref{choice:anbncn}
and Proposition \ref{prop:relation between anbn and jacobian}. Thus,
the forces $f_{\rho}$ and $f_{\omega}$ that appear in \eqref{eq:Boussinesq system vorticity formulation}
are given through equations \eqref{eq:decomposition force density}
and \eqref{eq:decomposition force vorticity}, which are equivalent
to \eqref{eq:Boussinesq vorticity Taylor} and \eqref{eq:Boussinesq density Taylor}.
Looking at equations \eqref{eq:decomposition force density} and \eqref{eq:decomposition force vorticity},
we see that, for any fixed $t\in\left[0,1\right)$, $f_{\rho}$ and
$f_{\omega}$ are but a finite sum of functions which are $C^{\infty}$
compositions of $\rho^{\left(n\right)}$, $\omega^{\left(n\right)}$
and $u^{\left(n\right)}$ and their (spatial and temporal) derivatives.
We have already argued in point 1 that $\rho^{\left(n\right)}$ and
$u^{\left(n\right)}$ are $C^{\infty}$ in both space and time and,
since $\omega^{\left(n\right)}=\nabla\times u^{\left(n\right)}$,
we deduce that $\omega^{\left(n\right)}$ is smooth as well. Thereby,
we must conclude that $f_{\rho}$ and $f_{\omega}$ are also of class
$C^{\infty}$ in both space and time. Furthermore, as $\rho$, $\omega$
and $u$ are compactly supported in the same set uniformly in time
by Proposition \ref{prop:compact support}, the same argument we have
used to prove the regularity shows that $f_{\rho}$ and $f_{\omega}$
must also be compactly supported in the same set as $\rho$, $\omega$
and $u$.
\item This is true by Propositions \ref{prop:maximal regularity} and \ref{prop:compact support}.
\item Recall that
\[
f_{\omega}=\sum_{n\in\mathbb{N}}f_{\omega}^{\left(n\right)}.
\]
As the sum of odd functions is odd and the point-wise limit of odd
functions is odd, it is enough to show that $f_{\omega}^{\left(n\right)}$
is odd in $x_{2}$ $\forall t\in\left[0,1\right]$ $\forall n\in\mathbb{N}$.
Since $\phi_{2}^{\left(n\right)}\left(t,x\right)$ is a linear function
of $x_{2}$ (with no dependence on $x_{1}$) according to Choices
\ref{choice:phin} and \ref{choice:anbncn}, $f_{\omega}^{\left(n\right)}\left(t,x\right)$
is odd in $x_{2}$ $\forall t\in\left[0,1\right]$ if and only if
$\widetilde{f_{\omega}^{\left(n\right)}}^{n}\left(t,x\right)=f_{\omega}^{\left(n\right)}\left(t,\phi^{\left(n\right)}\left(t,x\right)\right)$
is odd in $x_{2}$ $\forall t\in\left[0,1\right]$. Now, according
to equation \eqref{eq:decomposition force vorticity},
\[
\begin{aligned} & \underbrace{\frac{\partial\widetilde{\omega^{\left(n\right)}}^{n}}{\partial t}\left(t,x\right)-\left(\begin{matrix}0 & 1\end{matrix}\right)\cdot\left[\widetilde{\nabla}^{n}\widetilde{\rho^{\left(n\right)}}^{n}\left(t,x\right)\right]}_{\text{time factor}}+\\
 & +\underbrace{\left(\widetilde{U^{\left(n-1\right)}}^{n}\left(t,x\right)-\widetilde{U^{\left(n-1\right)}}^{n}\left(t,0\right)-\mathrm{J}\widetilde{U^{\left(n-1\right)}}^{n}\left(t,0\right)\cdot\left(\begin{matrix}x_{1}\\
x_{2}
\end{matrix}\right)\right)\cdot\widetilde{\nabla}^{n}\widetilde{\omega^{\left(n\right)}}^{n}\left(t,x\right)}_{\text{transport term}}+\\
 & +\underbrace{\widetilde{u^{\left(n\right)}}^{n}\left(t,x\right)\cdot\widetilde{\nabla}^{n}\widetilde{\Omega^{\left(n-1\right)}}^{n}}_{\text{new transport of old vorticity}}+\underbrace{\widetilde{u^{\left(n\right)}}^{n}\left(t,x\right)\cdot\widetilde{\nabla}^{n}\widetilde{\omega^{\left(n\right)}}^{n}\left(t,x\right)}_{\text{pure quadratic term}}=\\
= & \widetilde{f_{\omega}^{\left(n\right)}}^{n}\left(t,x\right).
\end{aligned}
\]
Let us see that every summand is odd in $x_{2}$ $\forall t\in\left[0,1\right]$.
Before proceeding, notice that $\widetilde{f_{\omega}^{\left(n\right)}}^{n}\left(t,x\right)=0$
$\forall t\in\left[0,t_{n}\right]$. Hence, we may restrict the time
$t$ to the interval $t\in\left[t_{n},1\right]$.
\begin{enumerate}
\item Time factor. Looking at the expression for $\widetilde{\omega^{\left(n\right)}}^{n}$
given in Proposition \ref{prop:computations vorticity}, since $\varphi$
is an even function by Choice \ref{choice:psin}, it is easy to see
that $\widetilde{\omega^{\left(n\right)}}^{n}\left(t,x\right)$ is
odd in $x_{2}$. Consequently, $\frac{\partial\widetilde{\omega^{\left(n\right)}}^{n}}{\partial t}\left(t,x\right)$
is also odd in $x_{2}$. On the other hand, looking at Choice \ref{choice:density},
it is also immediate that $\left(0,1\right)\cdot\widetilde{\nabla}^{n}\widetilde{\rho^{\left(n\right)}}^{n}\left(t,x\right)$
is odd in $x_{2}$. Hence, the time factor is odd in $x_{2}$.
\item Pure quadratic term. First, recall the definition of the tilde gradient
(see equation \eqref{eq:def tilde nabla}). As $\widetilde{\omega^{\left(n\right)}}^{n}\left(t,x\right)$
is odd in $x_{2}$, $a_{n}\left(t\right)\frac{\partial\widetilde{\omega^{\left(n\right)}}^{n}}{\partial x_{1}}\left(t,x\right)$
will be also odd in $x_{2}$, whereas $b_{n}\left(t\right)\frac{\partial\widetilde{\omega^{\left(n\right)}}^{n}}{\partial x_{2}}$
will be even in $x_{2}$. On the other hand, looking at Proposition
\ref{prop:computations vorticity}, we deduce that $\widetilde{u_{1}^{\left(n\right)}}^{n}\left(t,x\right)$
is even in $x_{2}$ and $\widetilde{u_{2}^{\left(n\right)}}^{n}$
is odd in $x_{2}$. Thereby, the pure quadratic term is odd in $x_{2}$. 
\item New transport of old vorticity. We have already argued that $\widetilde{\omega^{\left(n\right)}}^{n}\left(t,x\right)$
is odd in $x_{2}$. As $\phi_{2}^{\left(n\right)}\left(t,x\right)$
was a linear function of $x_{2}$, $\omega^{\left(n\right)}\left(t,x\right)$
is also odd in $x_{2}$. As a consequence, so is $\Omega^{\left(n-1\right)}\left(t,x\right)=\sum_{m=1}^{n-1}\omega^{\left(m\right)}\left(t,x\right)$.
Again, since $\phi_{2}^{\left(n\right)}\left(t,x\right)$ is a linear
function of $x_{2}$. We infer that $\widetilde{\Omega^{\left(n-1\right)}}^{n}$
is also odd in $x_{2}$. In this manner, $a_{n}\left(t\right)\frac{\partial\widetilde{\Omega^{\left(n-1\right)}}^{n}}{\partial x_{1}}$
will be odd in $x_{2}$, whereas $b_{n}\left(t\right)\frac{\partial\widetilde{\Omega^{\left(n-1\right)}}^{n}}{\partial x_{2}}$
shall be even in $x_{2}$. Taking into account that $\widetilde{u_{1}^{\left(n\right)}}^{n}\left(t,x\right)$
is even in $x_{2}$ and $\widetilde{u_{2}^{\left(n\right)}}^{n}\left(t,x\right)$
is odd in $x_{2}$ (see Proposition \ref{prop:computations vorticity}),
we conclude that the new transport of old vorticity is odd in $x_{2}$. 
\item Transport term. As already explained, $a_{n}\left(t\right)\frac{\partial\widetilde{\omega^{\left(n\right)}}^{n}}{\partial x_{1}}\left(t,x\right)$
is odd in $x_{2}$, while $b_{n}\left(t\right)\frac{\partial\widetilde{\omega^{\left(n\right)}}^{n}}{\partial x_{2}}$
is even in $x_{2}$. Therefore, for the transport term to be odd in
$x_{2}$, we need 
\[
\left[\widetilde{U^{\left(n-1\right)}}^{n}\left(t,x\right)-\widetilde{U^{\left(n-1\right)}}^{n}\left(t,0\right)-\mathrm{J}\widetilde{U^{\left(n-1\right)}}^{n}\left(t,0\right)\cdot\left(\begin{matrix}x_{1}\\
x_{2}
\end{matrix}\right)\right]_{1}
\]
 to be even in $x_{2}$ and 
\[
\left[\widetilde{U^{\left(n-1\right)}}^{n}\left(t,x\right)-\widetilde{U^{\left(n-1\right)}}^{n}\left(t,0\right)-\mathrm{J}\widetilde{U^{\left(n-1\right)}}^{n}\left(t,0\right)\cdot\left(\begin{matrix}x_{1}\\
x_{2}
\end{matrix}\right)\right]_{2}
\]
 to be odd in $x_{2}$.
\begin{enumerate}
\item $\widetilde{U_{1}^{\left(n-1\right)}}^{n}\left(t,x\right)$ has to
be even in $x_{2}$ and $\widetilde{U_{2}^{\left(n-1\right)}}^{n}\left(t,x\right)$
needs to be odd in $x_{2}$. Let us see that this is the case. In
view of Proposition \ref{prop:computations vorticity}, we know that
$\widetilde{u_{1}^{\left(n\right)}}^{n}\left(t,x\right)$ is even
in $x_{2}$ and $\widetilde{u_{2}^{\left(n\right)}}^{n}\left(t,x\right)$
is odd in $x_{2}$. As $\phi_{2}^{\left(n\right)}\left(t,x\right)$
is a linear function of $x_{2}$, we deduce that $u_{1}^{\left(n\right)}\left(t,x\right)$
is even in $x_{2}$, whereas $u_{2}^{\left(n\right)}\left(t,x\right)$
is odd in $x_{2}$. Consequently, $U_{1}^{\left(n-1\right)}=\sum_{m=1}^{n-1}u_{1}^{\left(m\right)}$
is even in $x_{2}$ and $U_{2}^{\left(n-1\right)}=\sum_{m=1}^{n-1}u_{2}^{\left(m\right)}$
is odd in $x_{2}$. Again, since $\phi_{2}^{\left(n\right)}\left(t,x\right)$
is a linear function of $x_{2}$, we infer that $\widetilde{U_{1}^{\left(n-1\right)}}^{n}$
is even in $x_{2}$ while $\widetilde{U_{2}^{\left(n-1\right)}}^{n}$
is odd in $x_{2}$, as needed.
\item $\widetilde{U^{\left(n-1\right)}}^{n}\left(t,0\right)$ is constant
in space, so it is even in $x_{2}$. This is what we need for its
first component $\widetilde{U_{1}^{\left(n-1\right)}}^{n}\left(t,0\right)$,
but poses a problem for the second component $\widetilde{U_{2}^{\left(n-1\right)}}^{n}\left(t,0\right)$,
which has to be odd in $x_{2}$. This controversy might seem impossible
to overcome at first, but we have argued in point i that $\widetilde{U_{2}^{\left(n-1\right)}}^{n}\left(t,x\right)$
is odd in $x_{2}$, which means that $\widetilde{U_{2}^{\left(n-1\right)}}^{n}\left(t,0\right)=0$,
eliminating the problem.
\item Bear in mind that
\[
\begin{aligned}\left[\mathrm{J}\widetilde{U^{\left(n-1\right)}}^{n}\left(t,0\right)\cdot\left(\begin{matrix}x_{1}\\
x_{2}
\end{matrix}\right)\right]_{1} & =\frac{\partial\widetilde{U_{1}^{\left(n-1\right)}}^{n}}{\partial x_{1}}\left(t,0\right)x_{1}+\frac{\partial\widetilde{U_{1}^{\left(n-1\right)}}^{n}}{\partial x_{2}}\left(t,0\right)x_{2},\\
\left[\mathrm{J}\widetilde{U^{\left(n-1\right)}}^{n}\left(t,0\right)\cdot\left(\begin{matrix}x_{1}\\
x_{2}
\end{matrix}\right)\right]_{2} & =\frac{\partial\widetilde{U_{2}^{\left(n-1\right)}}^{n}}{\partial x_{1}}\left(t,0\right)x_{1}+\frac{\partial\widetilde{U_{2}^{\left(n-1\right)}}^{n}}{\partial x_{2}}\left(t,0\right)x_{2}.
\end{aligned}
\]
By equation \eqref{eq:jacobian at x=00003D0}, we have
\[
\frac{\partial\widetilde{U_{1}^{\left(n-1\right)}}^{n}}{\partial x_{2}}\left(t,0\right)=0=\frac{\partial\widetilde{U_{2}^{\left(n-1\right)}}^{n}}{\partial x_{1}}\left(t,0\right),
\]
which guarantees that
\[
\begin{aligned}\left[\mathrm{J}\widetilde{U^{\left(n-1\right)}}^{n}\left(t,0\right)\cdot\left(\begin{matrix}x_{1}\\
x_{2}
\end{matrix}\right)\right]_{1} & =\frac{\partial\widetilde{U_{1}^{\left(n-1\right)}}^{n}}{\partial x_{1}}\left(t,0\right)x_{1},\\
\left[\mathrm{J}\widetilde{U^{\left(n-1\right)}}^{n}\left(t,0\right)\cdot\left(\begin{matrix}x_{1}\\
x_{2}
\end{matrix}\right)\right]_{2} & =\frac{\partial\widetilde{U_{2}^{\left(n-1\right)}}^{n}}{\partial x_{2}}\left(t,0\right)x_{2}.
\end{aligned}
\]
In this way, $\left[\mathrm{J}\widetilde{U^{\left(n-1\right)}}^{n}\left(t,0\right)\cdot\left(\begin{matrix}x_{1}\\
x_{2}
\end{matrix}\right)\right]_{1}$ is independent of $x_{2}$, so it is even in $x_{2}$, as required.
Moreover, $\left[\mathrm{J}\widetilde{U^{\left(n-1\right)}}^{n}\left(t,0\right)\cdot\left(\begin{matrix}x_{1}\\
x_{2}
\end{matrix}\right)\right]_{2}$ is clearly odd in $x_{2}$, as needed.
\end{enumerate}
Thereby, we may ensure that the transport term is also odd in $x_{2}$,
which completes the proof that $f_{\omega}$ is odd in $x_{2}$.
\end{enumerate}
\item By Choice \ref{choice:density}, we have
\begin{equation}
\begin{aligned}\left(0,1\right)\cdot\widetilde{\nabla}^{n}\widetilde{\rho^{\left(n\right)}}^{n}\left(t,x\right) & =\frac{\mathrm{d}}{\mathrm{d}t}\left[B_{n}\left(t\right)\left(a_{n}\left(t\right)^{2}+b_{n}\left(t\right)^{2}\right)\right]\varphi\left(\lambda_{n}x_{1}\right)\varphi\left(\lambda_{n}x_{2}\right)\sin\left(x_{1}\right)\sin\left(x_{2}\right)+\\
 & \quad-\lambda_{n}\frac{\mathrm{d}}{\mathrm{d}t}\left[B_{n}\left(t\right)\left(a_{n}\left(t\right)^{2}+b_{n}\left(t\right)^{2}\right)\right]\varphi\left(\lambda_{n}x_{1}\right)\varphi'\left(\lambda_{n}x_{2}\right)\sin\left(x_{1}\right)\cos\left(x_{2}\right).
\end{aligned}
\label{eq:partial rho x2}
\end{equation}
By the definition of $\widetilde{\nabla}^{n}$ (see equation \eqref{eq:def tilde nabla}),
we know that
\[
\begin{aligned}\left(0,1\right)\cdot\widetilde{\nabla}^{n}\widetilde{\rho^{\left(n\right)}}^{n}\left(t,x\right) & =b_{n}\left(t\right)\frac{\partial\widetilde{\rho^{\left(n\right)}}^{n}}{\partial x_{2}}\left(t,x\right)=b_{n}\left(t\right)\frac{\partial}{\partial x_{2}}\left(\rho^{\left(n\right)}\left(t,\phi^{\left(n\right)}\left(t,x\right)\right)\right)=\\
 & =b_{n}\left(t\right)\nabla\rho^{\left(n\right)}\left(t,\phi^{\left(n\right)}\left(t,x\right)\right)\cdot\frac{\partial\phi^{\left(n\right)}}{\partial x_{2}}.
\end{aligned}
\]
Choice \ref{choice:phin} leads to
\[
\left(0,1\right)\cdot\widetilde{\nabla}^{n}\widetilde{\rho^{\left(n\right)}}^{n}\left(t,x\right)=\frac{\partial\rho^{\left(n\right)}}{\partial x_{2}}\left(t,\phi^{\left(n\right)}\left(t,x\right)\right).
\]
We wish to compute a lower bound of $\left|\left|\frac{\partial\rho^{\left(n\right)}}{\partial x_{2}}\left(t,\cdot\right)\right|\right|_{L^{\infty}\left(\mathbb{R}^{2}\right)}$.
As the $\left|\left|\cdot\right|\right|_{L^{\infty}\left(\mathbb{R}^{2}\right)}$
norm is invariant under diffemorphisms of the domain, $\lambda_{n}<1$
and $\left.\varphi\right|_{\left[-8\pi,8\pi\right]}\equiv1$ by Choice
\ref{choice:psin}, in view of equation \eqref{eq:partial rho x2},
we can guarantee that, when trying to compute $\left|\left|\frac{\partial\rho^{\left(n\right)}}{\partial x_{2}}\left(t,\cdot\right)\right|\right|_{L^{\infty}\left(\mathbb{R}^{2}\right)}$,
we would evaluate \eqref{eq:partial rho x2} at a certain point where
the sines are both $1$ and $\varphi$ is identically $1$ as well.
In that point, necessarily, the derivatives of $\varphi$ must vanish
and, in this way, only the first summand of \eqref{eq:partial rho x2}
will remain. In conclusion,
\begin{equation}
\left|\left|\frac{\partial\rho^{\left(n\right)}}{\partial x_{2}}\left(t,\cdot\right)\right|\right|_{L^{\infty}\left(\mathbb{R}^{2}\right)}\ge\frac{\mathrm{d}}{\mathrm{d}t}\left[B_{n}\left(t\right)\left(a_{n}\left(t\right)^{2}+b_{n}\left(t\right)^{2}\right)\right].\label{eq:lower bound partialrhonpartialx2}
\end{equation}
Observe that the right hand side is positive thanks of Choice \ref{choice:amplitude density}.

To continue, notice that
\[
\left|\left|\nabla\rho\left(t,\cdot\right)\right|\right|_{L^{\infty}\left(\mathbb{R}^{2};\mathbb{R}^{2}\right)}\ge\left|\left|\frac{\partial\rho}{\partial x_{2}}\left(t,\cdot\right)\right|\right|_{L^{\infty}\left(\mathbb{R}^{2}\right)}.
\]
Moreover, as we have argued before, the sum $\rho\left(t,x\right)=\sum_{n\in\mathbb{N}}\rho^{\left(n\right)}\left(t,x\right)$
is fictitious in the sense that, at any fixed time instant, there
is only one summand. Indeed, $\forall t\in\left[t_{n},t_{n+1}\right]$,
we have $\rho\left(t,x\right)=\rho^{\left(n\right)}\left(t,x\right)$.
Thereby, by equation \eqref{eq:lower bound partialrhonpartialx2},
\[
\begin{aligned}\left|\left|\nabla\rho\left(t,\cdot\right)\right|\right|_{L^{\infty}\left(\mathbb{R}^{2};\mathbb{R}^{2}\right)} & \ge\left|\left|\frac{\partial\rho}{\partial x_{2}}\left(t,\cdot\right)\right|\right|_{L^{\infty}\left(\mathbb{R}^{2}\right)}\ge\left|\left|\frac{\partial\rho^{\left(n\right)}}{\partial x_{2}}\left(t,\cdot\right)\right|\right|_{L^{\infty}\left(\mathbb{R}^{2}\right)}\geq\\
 & \geq\frac{\mathrm{d}}{\mathrm{d}t}\left[B_{n}\left(t\right)\left(a_{n}\left(t\right)^{2}+b_{n}\left(t\right)^{2}\right)\right]\quad\forall t\in\left[t_{n},t_{n+1}\right].
\end{aligned}
\]
Then, $\forall n\in\mathbb{N}$, since $B_{n}\left(t\right)=0$ $\forall t\in\left[0,t_{n}\right]$
by Choice \ref{choice:amplitude density}, we have
\[
\begin{aligned}\int_{0}^{1}\left|\left|\nabla\rho\left(t,\cdot\right)\right|\right|_{L^{\infty}\left(\mathbb{R}^{2};\mathbb{R}^{2}\right)}\mathrm{d}t & \geq\int_{0}^{1}\frac{\mathrm{d}}{\mathrm{d}t}\left[B_{n}\left(t\right)\left(a_{n}\left(t\right)^{2}+b_{n}\left(t\right)^{2}\right)\right]\mathrm{d}t=\\
 & \ge\int_{t_{n}}^{1}\frac{\mathrm{d}}{\mathrm{d}t}\left[B_{n}\left(t\right)\left(a_{n}\left(t\right)^{2}+b_{n}\left(t\right)^{2}\right)\right]\mathrm{d}t=B_{n}\left(1\right)\left(a_{n}\left(1\right)^{2}+b_{n}\left(1\right)^{2}\right).
\end{aligned}
\]
By Choice \ref{choice:amplitude density}, equation \eqref{eq:relation Mn and zn}
and Choice \ref{choice:time picture}, we obtain
\[
\int_{0}^{1}\left|\left|\nabla\rho\left(t,\cdot\right)\right|\right|_{L^{\infty}\left(\mathbb{R}^{2};\mathbb{R}^{2}\right)}\mathrm{d}t\ge2M_{n}\underbrace{\frac{\int_{t_{n}}^{1}h^{\left(n\right)}\left(s\right)b_{n}\left(s\right)\mathrm{d}s}{\int_{t_{n}}^{1}h^{\left(n\right)}\left(s\right)b_{n}\left(s\right)\mathrm{d}s}}_{=1}=2M_{n}.
\]
Lastly, Choice \eqref{choice:Mn} provides
\[
\int_{0}^{1}\left|\left|\nabla\rho\left(t,\cdot\right)\right|\right|_{L^{\infty}\left(\mathbb{R}^{2};\mathbb{R}^{2}\right)}\mathrm{d}t\ge2YC^{\delta\left(\frac{1}{1-\gamma}\right)^{n}}\xrightarrow[n\to\infty]{}\infty,
\]
which proves the result.
\end{enumerate}
\end{proof}
\newpage{}

\section{\label{sec:useful formulae}Useful formulae (we advise to print this
section)}
\begin{itemize}
\item Choice \ref{choice:phin} and equations \eqref{eq:phin inverse} and
\eqref{eq:jacobian inverse}:
\[
\begin{aligned}\phi^{\left(n\right)}\left(t,x\right) & =\phi^{\left(n\right)}\left(t,0\right)+\left(\frac{1}{a_{n}\left(t\right)}x_{1},\frac{1}{b_{n}\left(t\right)}x_{2}\right),\\
\left(\phi^{\left(n\right)}\right)^{-1}\left(t,x\right) & =\left(a_{n}\left(t\right)\left(x_{1}-\phi_{1}^{\left(n\right)}\left(t,0\right)\right),b_{n}\left(t\right)\left(x_{2}-\phi_{2}^{\left(n\right)}\left(t,0\right)\right)\right),\\
\mathrm{J}^{-1}\phi^{\left(n\right)}\left(t,x\right) & =\mathrm{J}\left(\phi^{\left(n\right)}\right)^{-1}\left(t,\phi^{\left(n\right)}\left(t,x\right)\right)=\left(\begin{matrix}a_{n}\left(t\right) & 0\\
0 & b_{n}\left(t\right)
\end{matrix}\right).
\end{aligned}
\]
\item Equation \eqref{eq:def tilde nabla}:
\[
\widetilde{\nabla}^{n}\coloneqq\left(a_{n}\left(t\right)\frac{\partial}{\partial x_{1}},b_{n}\left(t\right)\frac{\partial}{\partial x_{2}}\right).
\]
\item Proposition \ref{prop:computations vorticity}:
\[
\begin{aligned}\widetilde{u^{\left(n\right)}}^{n}\left(t,x\right) & =B_{n}\left(t\right)\varphi\left(\lambda_{n}x_{1}\right)\varphi\left(\lambda_{n}x_{2}\right)\left(\begin{matrix}b_{n}\left(t\right)\sin\left(x_{1}\right)\cos\left(x_{2}\right)\\
-a_{n}\left(t\right)\cos\left(x_{1}\right)\sin\left(x_{2}\right)
\end{matrix}\right)+\\
 & \quad+\lambda_{n}B_{n}\left(t\right)\left(\begin{matrix}b_{n}\left(t\right)\varphi\left(\lambda_{n}x_{1}\right)\varphi'\left(\lambda_{n}x_{2}\right)\\
-a_{n}\left(t\right)\varphi'\left(\lambda_{n}x_{1}\right)\varphi\left(\lambda_{n}x_{2}\right)
\end{matrix}\right)\sin\left(x_{1}\right)\sin\left(x_{2}\right).
\end{aligned}
\]
\[
\begin{aligned}\widetilde{\omega^{\left(n\right)}}^{n}\left(t,x\right) & =B_{n}\left(t\right)\left(a_{n}\left(t\right)^{2}+b_{n}\left(t\right)^{2}\right)\varphi\left(\lambda_{n}x_{1}\right)\varphi\left(\lambda_{n}x_{2}\right)\sin\left(x_{1}\right)\sin\left(x_{2}\right)+\\
 & \quad-\lambda_{n}^{2}B_{n}\left(t\right)a_{n}\left(t\right)^{2}\varphi''\left(\lambda_{n}x_{1}\right)\varphi\left(\lambda_{n}x_{2}\right)\sin\left(x_{1}\right)\sin\left(x_{2}\right)+\\
 & \quad-\lambda_{n}^{2}B_{n}\left(t\right)b_{n}\left(t\right)^{2}\varphi\left(\lambda_{n}x_{1}\right)\varphi''\left(\lambda_{n}x_{2}\right)\sin\left(x_{1}\right)\sin\left(x_{2}\right)+\\
 & \quad-2\lambda_{n}B_{n}\left(t\right)a_{n}\left(t\right)^{2}\varphi'\left(\lambda_{n}x_{1}\right)\varphi\left(\lambda_{n}x_{2}\right)\cos\left(x_{1}\right)\sin\left(x_{2}\right)+\\
 & \quad-2\lambda_{n}B_{n}\left(t\right)b_{n}\left(t\right)^{2}\varphi\left(\lambda_{n}x_{1}\right)\varphi'\left(\lambda_{n}x_{2}\right)\sin\left(x_{1}\right)\cos\left(x_{2}\right).
\end{aligned}
\]
\item Proposition \ref{prop:form of the velocity}:
\[
\begin{aligned}\widetilde{u^{\left(n\right)}}^{n}\left(t,x\right) & =\widetilde{V^{\left(n\right)}}\left(t,x\right)+\lambda_{n}\widetilde{W^{\left(n\right)}}\left(t,x\right),\\
\widetilde{V^{\left(n\right)}}\left(t,x\right) & =B_{n}\left(t\right)\varphi\left(\lambda_{n}x_{1}\right)\varphi\left(\lambda_{n}x_{2}\right)\left(\begin{matrix}b_{n}\left(t\right)\sin\left(x_{1}\right)\cos\left(x_{2}\right)\\
-a_{n}\left(t\right)\cos\left(x_{1}\right)\sin\left(x_{2}\right)
\end{matrix}\right).
\end{aligned}
\]
\[
{\small \begin{matrix}\begin{aligned}\left|\left|V_{1}^{\left(n\right)}\left(t,\cdot\right)\right|\right|_{L^{\infty}\left(\mathbb{R}^{2}\right)} & \le B_{n}\left(t\right)b_{n}\left(t\right),\\
\left|\left|W_{1}^{\left(n\right)}\left(t,\cdot\right)\right|\right|_{L^{\infty}\left(\mathbb{R}^{2}\right)} & \lesssim_{\varphi}B_{n}\left(t\right)b_{n}\left(t\right),\\
\left|\left|V_{1}^{\left(n\right)}\left(t,\cdot\right)\right|\right|_{\dot{C}^{\alpha}\left(\mathbb{R}^{2}\right)} & \lesssim_{\varphi}B_{n}\left(t\right)b_{n}\left(t\right)\max\left\{ a_{n}\left(t\right)^{\alpha},b_{n}\left(t\right)^{\alpha}\right\} ,\\
\left|\left|W_{1}^{\left(n\right)}\left(t,\cdot\right)\right|\right|_{\dot{C}^{\alpha}\left(\mathbb{R}^{2}\right)} & \lesssim_{\varphi}B_{n}\left(t\right)b_{n}\left(t\right)\max\left\{ a_{n}\left(t\right)^{\alpha},b_{n}\left(t\right)^{\alpha}\right\} ,
\end{aligned}
 &  & \begin{aligned}\left|\left|V_{2}^{\left(n\right)}\left(t,\cdot\right)\right|\right|_{L^{\infty}\left(\mathbb{R}^{2}\right)} & \le B_{n}\left(t\right)a_{n}\left(t\right),\\
\left|\left|W_{2}^{\left(n\right)}\left(t,\cdot\right)\right|\right|_{L^{\infty}\left(\mathbb{R}^{2}\right)} & \lesssim_{\varphi}B_{n}\left(t\right)a_{n}\left(t\right),\\
\left|\left|V_{2}^{\left(n\right)}\left(t,\cdot\right)\right|\right|_{\dot{C}^{\alpha}\left(\mathbb{R}^{2}\right)} & \lesssim_{\varphi}B_{n}\left(t\right)a_{n}\left(t\right)\max\left\{ a_{n}\left(t\right)^{\alpha},b_{n}\left(t\right)^{\alpha}\right\} ,\\
\left|\left|W_{2}^{\left(n\right)}\left(t,\cdot\right)\right|\right|_{\dot{C}^{\alpha}\left(\mathbb{R}^{2}\right)} & \lesssim_{\varphi}B_{n}\left(t\right)a_{n}\left(t\right)\max\left\{ a_{n}\left(t\right)^{\alpha},b_{n}\left(t\right)^{\alpha}\right\} .
\end{aligned}
\end{matrix}}
\]
\item Proposition \ref{prop:form gradient omega}:
\[
\begin{aligned}\widetilde{\nabla}^{n}\widetilde{\omega^{\left(n\right)}}^{n}\left(t,x\right) & =\widetilde{\Gamma^{\left(n\right)}}^{n}\left(t,x\right)+\lambda_{n}\widetilde{G^{\left(n\right)}}^{n}\left(t,x\right),\\
\widetilde{\Gamma^{\left(n\right)}}^{n}\left(t,x\right) & =B_{n}\left(t\right)\left(a_{n}\left(t\right)^{2}+b_{n}\left(t\right)^{2}\right)\varphi\left(\lambda_{n}x_{1}\right)\varphi\left(\lambda_{n}x_{2}\right)\left(\begin{matrix}a_{n}\left(t\right)\cos\left(x_{1}\right)\sin\left(x_{2}\right)\\
b_{n}\left(t\right)\sin\left(x_{1}\right)\cos\left(x_{2}\right)
\end{matrix}\right).
\end{aligned}
\]
\[
{\small \begin{matrix}\begin{aligned}\left|\left|\Gamma_{1}^{\left(n\right)}\left(t,\cdot\right)\right|\right|_{L^{\infty}\left(\mathbb{R}^{2}\right)} & \le B_{n}\left(t\right)a_{n}\left(t\right)\left(a_{n}\left(t\right)^{2}+b_{n}\left(t\right)^{2}\right),\\
\left|\left|G_{1}^{\left(n\right)}\left(t,\cdot\right)\right|\right|_{L^{\infty}\left(\mathbb{R}^{2}\right)} & \lesssim_{\varphi}B_{n}\left(t\right)a_{n}\left(t\right)\max\left\{ a_{n}\left(t\right)^{2},b_{n}\left(t\right)^{2}\right\} ,\\
\left|\left|\widetilde{\Gamma_{1}^{\left(n\right)}}^{n}\left(t,\cdot\right)\right|\right|_{\dot{C}^{\alpha}\left(\mathbb{R}^{2}\right)} & \lesssim_{\varphi}B_{n}\left(t\right)a_{n}\left(t\right)\left(a_{n}\left(t\right)^{2}+b_{n}\left(t\right)^{2}\right),\\
\left|\left|\widetilde{G_{1}^{\left(n\right)}}^{n}\left(t,\cdot\right)\right|\right|_{\dot{C}^{\alpha}\left(\mathbb{R}^{2}\right)} & \lesssim_{\varphi}B_{n}\left(t\right)a_{n}\left(t\right)\max\left\{ a_{n}\left(t\right)^{2},b_{n}\left(t\right)^{2}\right\} ,\\
\left|\left|\Gamma_{1}^{\left(n\right)}\left(t,\cdot\right)\right|\right|_{\dot{C}^{\alpha}\left(\mathbb{R}^{2}\right)} & \lesssim_{\varphi}B_{n}\left(t\right)a_{n}\left(t\right)\left(a_{n}\left(t\right)^{2}+b_{n}\left(t\right)^{2}\right)\cdot\\
 & \quad\cdot\max\left\{ a_{n}\left(t\right)^{\alpha},b_{n}\left(t\right)^{\alpha}\right\} ,\\
\left|\left|G_{1}^{\left(n\right)}\left(t,\cdot\right)\right|\right|_{\dot{C}^{\alpha}\left(\mathbb{R}^{2}\right)} & \lesssim_{\varphi}B_{n}\left(t\right)a_{n}\left(t\right)\cdot\\
 & \quad\cdot\max\left\{ a_{n}\left(t\right)^{2+\alpha},b_{n}\left(t\right)^{2+\alpha}\right\} ,
\end{aligned}
 &  & \begin{aligned}\left|\left|\Gamma_{2}^{\left(n\right)}\left(t,\cdot\right)\right|\right|_{L^{\infty}\left(\mathbb{R}^{2}\right)} & \le B_{n}\left(t\right)b_{n}\left(t\right)\left(a_{n}\left(t\right)^{2}+b_{n}\left(t\right)^{2}\right),\\
\left|\left|G_{2}^{\left(n\right)}\left(t,\cdot\right)\right|\right|_{L^{\infty}\left(\mathbb{R}^{2}\right)} & \lesssim_{\varphi}B_{n}\left(t\right)b_{n}\left(t\right)\max\left\{ a_{n}\left(t\right)^{2},b_{n}\left(t\right)^{2}\right\} ,\\
\left|\left|\widetilde{\Gamma_{2}^{\left(n\right)}}^{n}\left(t,\cdot\right)\right|\right|_{\dot{C}^{\alpha}\left(\mathbb{R}^{2}\right)} & \lesssim_{\varphi}B_{n}\left(t\right)b_{n}\left(t\right)\left(a_{n}\left(t\right)^{2}+b_{n}\left(t\right)^{2}\right),\\
\left|\left|\widetilde{G_{2}^{\left(n\right)}}^{n}\left(t,\cdot\right)\right|\right|_{\dot{C}^{\alpha}\left(\mathbb{R}^{2}\right)} & \lesssim_{\varphi}B_{n}\left(t\right)b_{n}\left(t\right)\max\left\{ a_{n}\left(t\right)^{2},b_{n}\left(t\right)^{2}\right\} ,\\
\left|\left|\Gamma_{2}^{\left(n\right)}\left(t,\cdot\right)\right|\right|_{\dot{C}^{\alpha}\left(\mathbb{R}^{2}\right)} & \lesssim_{\varphi}B_{n}\left(t\right)b_{n}\left(t\right)\left(a_{n}\left(t\right)^{2}+b_{n}\left(t\right)^{2}\right)\cdot\\
 & \quad\cdot\max\left\{ a_{n}\left(t\right)^{\alpha},b_{n}\left(t\right)^{\alpha}\right\} ,\\
\left|\left|G_{2}^{\left(n\right)}\left(t,\cdot\right)\right|\right|_{\dot{C}^{\alpha}\left(\mathbb{R}^{2}\right)} & \lesssim_{\varphi}B_{n}\left(t\right)b_{n}\left(t\right)\cdot\\
 & \quad\cdot\max\left\{ a_{n}\left(t\right)^{2+\alpha},b_{n}\left(t\right)^{2+\alpha}\right\} .
\end{aligned}
\end{matrix}}
\]
\item Choice \ref{choice:anbncn} and Proposition \ref{prop:relation between anbn and jacobian}:
\[
\begin{alignedat}{1}\phi_{2}^{\left(n\right)}\left(t,0\right) & =0,\\
\frac{\partial\phi_{1}^{\left(n\right)}}{\partial t}\left(t,0\right) & =\sum_{m=1}^{n-1}B_{m}\left(t\right)b_{m}\left(t\right)\sin\left(a_{m}\left(t\right)\left(\phi_{1}^{\left(n\right)}\left(t,0\right)-\phi_{1}^{\left(m\right)}\left(t,0\right)\right)\right),\\
\frac{\mathrm{d}}{\mathrm{d}t}\left(\ln\left(b_{n}\left(t\right)\right)\right) & =\sum_{m=1}^{n-1}B_{m}\left(t\right)a_{m}\left(t\right)b_{m}\left(t\right)\cos\left(a_{m}\left(t\right)\left(\phi_{1}^{\left(n\right)}\left(t,0\right)-\phi_{1}^{\left(m\right)}\left(t,0\right)\right)\right),\\
\frac{\mathrm{d}}{\mathrm{d}t}\left(a_{n}\left(t\right)b_{n}\left(t\right)\right) & =0.
\end{alignedat}
\]
\item Equation \eqref{eq:ODE JI}:
\[
\begin{aligned}\frac{\mathrm{d}\Xi^{\left(n\right)}}{\mathrm{d}t}\left(t\right) & =B_{n-1}\left(t\right)b_{n-1}\left(t\right)\sin\left(a_{n-1}\left(t\right)\Xi^{\left(n\right)}\left(t\right)\right)+\\
 & \quad+\sum_{m=1}^{n-2}B_{m}\left(t\right)b_{m}\left(t\right)\left[\sin\left(a_{m}\left(t\right)\left(\phi_{1}^{\left(n\right)}\left(t,0\right)-\phi_{1}^{\left(m\right)}\left(t,0\right)\right)\right)+\right.\\
 & \qquad\left.-\sin\left(a_{m}\left(t\right)\left(\phi_{1}^{\left(n-1\right)}\left(t,0\right)-\phi_{1}^{\left(m\right)}\left(t,0\right)\right)\right)\right].
\end{aligned}
\]
\item Equation \eqref{eq:ODE Jin0}:
\[
\frac{\mathrm{d}\Xi_{0}^{\left(n\right)}}{\mathrm{d}t}\left(t\right)=B_{n-1}\left(1\right)b_{n-1}\left(1\right)\sin\left(a_{n-1}\left(1\right)\Xi_{0}^{\left(n\right)}\left(t\right)\right).
\]
\item Choices \ref{choice:anbn} and \ref{choice:lambdan}:
\[
b_{n}\left(t\right)=C^{\left(1+k_{n}\left(t\right)\right)\left(\frac{1}{1-\gamma}\right)^{n}},\quad a_{n}\left(t\right)=C^{\left(1-k_{n}\left(t\right)\right)\left(\frac{1}{1-\gamma}\right)^{n}},\quad\lambda_{n}=C^{-\Lambda\left(\frac{1}{1-\gamma}\right)^{n}}.
\]
\item Choice \ref{choice:density}:
\[
\widetilde{\rho^{\left(n\right)}}^{n}\left(t,x\right)=-\frac{1}{b_{n}\left(t\right)}\frac{\mathrm{d}}{\mathrm{d}t}\left[B_{n}\left(t\right)\left(a_{n}\left(t\right)^{2}+b_{n}\left(t\right)^{2}\right)\right]\varphi\left(\lambda_{n}x_{1}\right)\varphi\left(\lambda_{n}x_{2}\right)\sin\left(x_{1}\right)\cos\left(x_{2}\right).
\]
\item Choice \ref{choice:amplitude density}:
\[
\begin{aligned}\frac{1}{b_{n}\left(t\right)}\frac{\mathrm{d}}{\mathrm{d}t}\left[B_{n}\left(t\right)\left(a_{n}\left(t\right)^{2}+b_{n}\left(t\right)^{2}\right)\right] & =z_{n}\frac{h^{\left(n\right)}\left(t\right)}{\int_{t_{n}}^{1}h^{\left(n\right)}\left(s\right)b_{n}\left(s\right)\mathrm{d}s},\\
B_{n}\left(t\right) & =\frac{z_{n}}{a_{n}\left(t\right)^{2}+b_{n}\left(t\right)^{2}}\frac{\int_{t_{n}}^{t}h^{\left(n\right)}\left(s\right)b_{n}\left(s\right)\mathrm{d}s}{\int_{t_{n}}^{1}h^{\left(n\right)}\left(s\right)b_{n}\left(s\right)\mathrm{d}s}.
\end{aligned}
\]
\item Proposition \ref{prop:form of gradient of rho}:
\[
\begin{aligned}\widetilde{\nabla}^{n}\widetilde{\rho^{\left(n\right)}}^{n}\left(t,x\right)= & -2M_{n}\frac{h^{\left(n\right)}\left(t\right)}{\int_{t_{n}}^{1}h^{\left(n\right)}\left(s\right)b_{n}\left(s\right)\mathrm{d}s}\left[\varphi\left(\lambda_{n}x_{1}\right)\varphi\left(\lambda_{n}x_{2}\right)\left(\begin{matrix}a_{n}\left(t\right)\cos\left(x_{1}\right)\cos\left(x_{2}\right)\\
-b_{n}\left(t\right)\sin\left(x_{1}\right)\sin\left(x_{2}\right)
\end{matrix}\right)+\right.\\
 & \quad\left.+\lambda_{n}\left(\begin{matrix}a_{n}\left(t\right)\varphi'\left(\lambda_{n}x_{1}\right)\varphi\left(\lambda_{n}x_{2}\right)\\
b_{n}\left(t\right)\varphi\left(\lambda_{n}x_{1}\right)\varphi'\left(\lambda_{n}x_{2}\right)
\end{matrix}\right)\sin\left(x_{1}\right)\cos\left(x_{2}\right)\right].
\end{aligned}
\]
\item Equation \eqref{eq:Taylor remainder real variables}:
\[
\begin{aligned}W^{\left(n\right)}\left(t,x\right) & =\frac{1}{2}\left[\int_{0}^{1}\left(1-s\right)\frac{\partial^{2}\widetilde{U^{\left(n-1\right)}}^{n}}{\partial x_{1}^{2}}\left(s\left(\phi^{\left(n\right)}\right)^{-1}\left(t,x\right)\right)\mathrm{d}s\right]\left[\left(\phi^{\left(n\right)}\right)_{1}^{-1}\left(t,x\right)\right]^{2}+\\
 & \quad+\left[\int_{0}^{1}\left(1-s\right)\frac{\partial^{2}\widetilde{U^{\left(n-1\right)}}^{n}}{\partial x_{1}\partial x_{2}}\left(s\left(\phi^{\left(n\right)}\right)^{-1}\left(t,x\right)\right)\mathrm{d}s\right]\left(\phi^{\left(n\right)}\right)_{1}^{-1}\left(t,x\right)\left(\phi^{\left(n\right)}\right)_{2}^{-1}\left(t,x\right)+\\
 & \quad+\frac{1}{2}\left[\int_{0}^{1}\left(1-s\right)\frac{\partial^{2}\widetilde{U^{\left(n-1\right)}}^{n}}{\partial x_{2}^{2}}\left(s\left(\phi^{\left(n\right)}\right)^{-1}\left(t,x\right)\right)\mathrm{d}s\right]\left[\left(\phi^{\left(n\right)}\right)_{2}^{-1}\left(t,x\right)\right]^{2}.
\end{aligned}
\]
\item Proposition \ref{prop:form of gradient of rho}:
\[
\begin{aligned}\left|\left|\widetilde{\nabla}^{n}\widetilde{\rho^{\left(n\right)}}^{n}\left(t,\left(\phi^{\left(n\right)}\right)^{-1}\left(t,\cdot\right)\right)\right|\right|_{L^{\infty}\left(\mathbb{R}^{2};\mathbb{R}^{2}\right)} & \lesssim_{\varphi,\mu}YC^{\left[2\delta+2\mu\right]\left(\frac{1}{1-\gamma}\right)^{n}},\\
\left|\left|\widetilde{\nabla}^{n}\widetilde{\rho^{\left(n\right)}}^{n}\left(t,\left(\phi^{\left(n\right)}\right)^{-1}\left(t,\cdot\right)\right)\right|\right|_{\dot{C}^{\alpha}\left(\mathbb{R}^{2};\mathbb{R}^{2}\right)} & \lesssim_{\varphi,\mu}YC^{\left[\alpha\left(1+k_{\max}\right)+2\delta+3\mu\right]\left(\frac{1}{1-\gamma}\right)^{n}},\\
\left|\left|\widetilde{\nabla}^{n}\widetilde{\rho^{\left(n\right)}}^{n}\left(t,\left(\phi^{\left(n\right)}\right)^{-1}\left(t,\cdot\right)\right)\right|\right|_{\dot{C}^{1}\left(\mathbb{R}^{2};\mathbb{R}^{2}\right)} & \lesssim_{\varphi,\mu}YC^{\left[\left(1+k_{\max}\right)+2\delta+3\mu\right]\left(\frac{1}{1-\gamma}\right)^{n}},\\
\left|\left|\widetilde{\nabla}^{n}\widetilde{\rho^{\left(n\right)}}^{n}\left(t,\left(\phi^{\left(n\right)}\right)^{-1}\left(t,\cdot\right)\right)\right|\right|_{\dot{C}^{1,\alpha}\left(\mathbb{R}^{2};\mathbb{R}^{2}\right)} & \lesssim_{\varphi,\mu}YC^{\left[\left(1+\alpha\right)\left(1+k_{\max}\right)+2\delta+4\mu\right]\left(\frac{1}{1-\gamma}\right)^{n}}.
\end{aligned}
\]
\item Proposition \ref{prop:bounds for Taylor development of transport}
under the hypothesis $1-k_{\max}-\Lambda>0$:
\[
\begin{aligned}\left|\left|W^{\left(n\right)}\left(t,\cdot\right)\right|\right|_{L^{\infty}\left(D^{\left(n\right)}\left(t\right);\mathbb{R}^{2}\right)} & \lesssim_{\delta,\varphi}YC^{\left[-2\left(1-\Lambda-k_{\max}\right)+2\mu+\left(1+\delta\right)\left(1-\gamma\right)\right]\left(\frac{1}{1-\gamma}\right)^{n}},\\
\left|\left|W^{\left(n\right)}\left(t,\cdot\right)\right|\right|_{\dot{C}^{\alpha}\left(D^{\left(n\right)}\left(t\right);\mathbb{R}^{2}\right)} & \lesssim_{\delta,\varphi}YC^{\left[-2\left(1-\Lambda-k_{\max}\right)+\max\left\{ \alpha\left(1-k_{\max}\right)-\Lambda,0\right\} +2\mu+\left(2+\delta\right)\left(1-\gamma\right)\right]\left(\frac{1}{1-\gamma}\right)^{n}},\\
\left|\left|W^{\left(n\right)}\left(t,\cdot\right)\right|\right|_{\dot{C}^{1}\left(D^{\left(n\right)}\left(t\right);\mathbb{R}^{2}\right)} & \lesssim_{\delta,\varphi}YC^{\left[-\left(1-\Lambda-k_{\max}\right)+2\mu+\left(2+\delta\right)\left(1-\gamma\right)\right]\left(\frac{1}{1-\gamma}\right)^{n}},\\
\left|\left|W^{\left(n\right)}\left(t,\cdot\right)\right|\right|_{\dot{C}^{1,\alpha}\left(D^{\left(n\right)}\left(t\right);\mathbb{R}^{2}\right)} & \lesssim_{\delta,\varphi}YC^{\left[-\min\left\{ \left(1-\alpha\right)\left(1-k_{\max}\right),1-k_{\max}-\Lambda\right\} +2\mu+\left(3+\delta\right)\left(1-\gamma\right)\right]\left(\frac{1}{1-\gamma}\right)^{n}}.
\end{aligned}
\]
\end{itemize}
\newpage{}

\section*{Acknowledgements}

This work is supported in part by the Spanish Ministry of Science
and Innovation, through the “Severo Ochoa Programme for Centres of
Excellence in R\&D (CEX2019-000904-S \& CEX2023-001347-S)” and 152878NB-I00.
We were also partially supported by the ERC Advanced Grant 788250,
and by the SNF grant FLUTURA: Fluids, Turbulence, Advection No. 212573.

\bibliographystyle{alpha}

\end{document}